\tikzstyle{crossing}=[circle,fill=white,minimum height=6pt, inner sep=0pt, outer sep=0pt, style={transform shape=false}]
\newtheorem{Lemma}{Lemma}
\numberwithin{equation}{section}
\newtheorem{thm}{Theorem}[section]
\newtheorem{prop}[thm]{Proposition}
\newtheorem{cor}[thm]{Corollary}
\newtheorem{lem}[thm]{Lemma}
\theoremstyle{definition}
\newtheorem{ex}[thm]{Example}
\newtheorem{dfn}[thm]{Definition}
\newtheorem{rem}[thm]{Remark}
\newtheorem{conj}[thm]{Conjecture}
\newtheorem{acknow}{Acknowledgments}
\def\Z{\mathbb{Z}}
\def\R{\mathbb{R}}
\def\C{\mathbb{C}}
\def\H{\mathbb{H}}
\def\bbS{\mathbb{S}}
\def\cE{\mathcal{E}}
\def\cF{\mathcal{F}}
\def\cX{\mathcal{X}}
\def\X{\cX}
\def\cY{\mathcal{Y}}
\def\bbS{\mathbb{S}}
\def\fs{\mathfrak{s}}
\def\End{\operatorname{End}}
\def\grad{\operatorname{grad}}
\def\id{\operatorname{id}}
\def\im{\operatorname{im}}
\def\Ind{\operatorname{Ind}}
\def\ind{\operatorname{ind}}
\def\Pic{\operatorname{Pic}}
\def\rank{\operatorname{rank}}
\newcommand{\faml}{\mathcal{F}}
\newcommand{\pic}{\mathrm{Pic}}
\newcommand{\inv}{\mathrm{inv}}
\newcommand{\bigu}{\mathbf{U}} 
\newcommand{\exsp}{\mathcal{K}_{G,Z}} 
\newcommand{\hoex}{\mathrm{Ho}\; \mathcal{K}_{G,Z}} 
\newcommand{\fpsw}{PSW} 
\newcommand{\colim}{\mathrm{colim}}
\newcommand{\from}{:}
\newcommand{\spanierwhitehead}{\mathfrak{C}}
\newcommand{\preBF}{\mathcal{BF}}
\newcommand{\BF}{\mathbf{BF}}
\newcommand{\preSWF}{\mathcal{SWF}}
\newcommand{\SWFn}{\mathbf{SWF}}
\newcommand{\unfoldeda}{\underline{\mathrm{swf}}^A}
\newcommand{\unparamSWF}{\mathcal{SWF}^u}
\newcommand{\unparamSWFn}{\mathbf{SWF}^u}
\newcommand{\thom}{\mathrm{Th}}
\newenvironment{abstract}{\begin{center} {\bf Abstract} \end{center}}
\title{Seiberg-Witten Floer spectra for $b_1>0$}
\author{Hirofumi Sasahira and Matthew Stoffregen}
\date{\empty}
\begin{document}

\maketitle


\begin{abstract}
%

The Seiberg-Witten Floer spectrum is a stable homotopy refinement of the monopole Floer homology of Kronheimer and Mrowka.  The Seiberg-Witten Floer spectrum was defined by Manolescu for closed, spin$^c$ 3-manifolds with $b_1 = 0$  in  an $S^1$-equivariant stable homotopy category and has been producing interesting topological applications.  Lidman and Manolescu showed that the $S^1$-equivariant homology of the spectrum is isomorphic to the monopole Floer homology.

%
%
%

For  closed spin$^c$ 3-manifolds $Y$ with $b_1(Y) > 0$, there are analytic and homotopy theoretic   difficulties to define the Seiberg-Witten Floer spectrum.  In this memoir, we address the difficulties and construct the Seiberg-Witten Floer spectrum for $Y$,  provided that the first Chern class of the spin$^c$ structure is torsion and that the triple cup product on $H^1(Y;\Z)$ vanishes.  We conjecture that its $S^1$-equvariant homology is isomorphic to the monopole Floer homology.


For a  4-dimensional spin$^c$ cobordism $X$ between $Y_0$ and $Y_1$, we define the Bauer-Furuta map  on these new spectra of $Y_0$ and $Y_1$, which is conjecturally a refinement of the relative Seiberg-Witten invariant of $X$.  
As an application, for  a compact spin 4-manifold $X$ with boundary $Y$, we prove a $\frac{10}{8}$-type inequality for $X$ which is written in terms of the intersection form of $X$ and  an invariant $\kappa(Y)$ of $Y$.

Additionally, we compute the Seiberg-Witten Floer spectrum for some 3-manifolds.


\end{abstract}




\begin{acknow}
 We are  grateful to  Mikio Furuta, Ciprian Manolescu,  Shinichiro Matsuo and Chris Scaduto for helpful conversations, as well as Jianfeng Lin and Daniel Ruberman for useful comments on a previous version of this work.  We also  thank the organizers of the ``Gauge Theory and  Applications" conference at Regensburg, where this work was started.

We started this work with Tirasan Khandhawit and discussed together from July 2018 until July 2019. We are grateful to him for many discussions and insights. 

The first author was supported by JSPS KAKENHI Grant Numbers JP19K 03493, JP23K03115.   The second author was supported by NSF grant DMS-2203828.

\end{acknow}



\tableofcontents


\chapter{Introduction}  \label{sec:intro}

\section{Background}

The Seiberg-Witten equations \cite{witten} have been an important tool in the study of $4$-manifolds since their introduction.  Soon after these equations appeared, Kronheimer-Mrowka \cite{KM} extended their study to define the monopole Floer homology of $3$-manifolds, and established its relationship with the $4$-manifold invariant; the resulting theory has since had many applications in low-dimensional topology.

In both gauge theory and symplectic geometry, certain Floer homology theories have since been shown to arise as the homology of well-defined \emph{Floer spectra} as envisioned by Cohen, Jones and Segal \cite{CJS}, and some invariants, obtained by counting solutions of certain PDE, are now either known or conjectured to come from the degree of certain maps between spectra.  One of the first examples of such a construction is the Bauer-Furuta invariant \cite{furuta},\cite{bauer-furuta}, which associates an element in stable homotopy $\pi^{st}(S^0)$ to certain $4$-manifolds, refining the ordinary Seiberg-Witten invariant.  
Building on the finite-dimensional approximation technique introduced by Furuta,  Manolescu \cite{Manolescu-b1=0} constructed an $S^1$-equivariant stable homotopy type $\mathit{SWF}(Y,\mathfrak{s})$ associated to rational homology three-spheres with $\mathrm{spin}^c$-structure $(Y,\mathfrak{s})$.


It is a natural question to extend Manolescu's construction to $3$-manifolds with $b_1(Y)>0$.  In the case $b_1(Y)=1$, Kronheimer-Manolescu \cite{kronheimer-manolescu} constructed a \emph{periodic pro-spectrum} for pairs $(Y,\mathfrak{s})$.  Later, together with T. Khandhawit and  J. Lin, the  first author constructed the \emph{unfolded} Seiberg-Witten Floer spectrum for arbitrary closed, oriented $(Y,\mathfrak{s})$ in \cite{KLS1}, \cite{KLS2}.  

The \emph{unfolded} spectrum comes in multiple flavors.  For now, we consider only the type-$A$ unfolded invariant $\unfoldeda(Y,\mathfrak{s})$, which depends on $(Y,\mathfrak{s})$ as well as some additional data we suppress.  This invariant is a directed system in the $S^1$-equivariant stable homotopy category.  In particular, it is not \emph{per se} a spectrum, and tends to be very large. 

In \cite{KLS2}, T. Khandhawit,  J. Lin and the first author showed that the unfolded invariant allowed for gluing formulas, in a very general setting, for the calculation of the Bauer-Furuta invariant of a $4$-manifold cut along $3$-manifolds with $b_1>0$.  In particular, this enables one to prove vanishing formulas for the Bauer-Furuta invariant in many situations.

However, the invariant $\unfoldeda(Y,\mathfrak{s})$ is not expected to recover the monopole Floer homology, but is instead expected to recover a version of monopole Floer homology with fully twisted coefficients.

In the present memoir, we construct a new Seiberg-Witten Floer spectrum $\SWFn(Y,\mathfrak{s})$ for $b_1(Y)>0$, as follows.

\begin{thm}\label{thm:main} Let $(Y,\mathfrak{s})$ be a closed,  $\mathrm{spin}^c$ $3$-manifold which satisfies that the first Chern class $c_1(\mathfrak{s})\in H^2(Y;\mathbb{Z})$ is  torsion, and so that the triple-cup product on $H^1(Y;\mathbb{Z})$ vanishes.  Associated to a \emph{Floer framing} $\mathfrak{P}$ (see Section \ref{subsec:defn-of-invar} for this notation), there is a well-defined parameterized, $S^1$-equivariant stable homotopy type $\preSWF(Y,\mathfrak{s}, \mathfrak{P})$, 
over the Picard torus $\Pic(Y) = H^1(Y;\mathbb{R})/H^1(Y;\mathbb{Z})$, called the \emph{Seiberg-Witten Floer stable homotopy type} of $(Y,\mathfrak{s},\mathfrak{P})$.  Moreover, there is a well-defined (unparameterized) $S^1$-equivariant connected simple system of spectra $\SWFn^u(Y,\mathfrak{s},\mathfrak{P})$, the \emph{Seiberg-Witten Floer spectrum}.  
	
	If $\mathfrak{s}$ is self-conjugate and $\mathfrak{P}$ is a $\mathrm{Pin}(2)$-equivariant Floer framing, then the equivariant, parameterized stable homotopy type
 $\preSWF(Y,\mathfrak{s},\mathfrak{P})$ naturally comes with the structure of a parameterized $\mathrm{Pin}(2)$-equivariant stable homotopy type,   where the Picard torus has a $\mathrm{Pin}(2)$-action factoring through $\pi_0(\mathrm{Pin}(2))$ by conjugation.  Similarly, $\SWFn^u(Y,\mathfrak{s},\mathfrak{P})$ has an underlying (unparameterized) $\mathrm{Pin}(2)$-equivariant spectrum, $\SWFn^{u,\mathrm{Pin}(2)}(Y,\mathfrak{s},\mathfrak{P})$.
	
	 The homotopy type $\preSWF(Y,\mathfrak{s},\mathfrak{P})$, viewed without its parameterization, has the homotopy type of a finite $S^1$ (respectively $\mathrm{Pin}(2)$)-CW complex.  The Seiberg-Witten Floer spectrum $\SWFn^u(Y,\mathfrak{s},\mathfrak{P})$ (respectively $\SWFn^{u,\mathrm{Pin}(2)}(Y,\mathfrak{s},\mathfrak{P})$) has the homotopy type of a finite $S^1$ (respectively $\mathrm{Pin}(2)$) CW-spectrum.  
	 
	 If $b_1(Y)=0$, $\preSWF(Y,\mathfrak{s},\mathfrak{P})$ agrees with the invariant $\mathit{SWF}(Y,\mathfrak{s})$ in \cite{Manolescu-b1=0}, in that:
	 \[
	 \preSWF(Y,\mathfrak{s},\mathfrak{P})\simeq\Sigma^{n\mathbb{C}}\mathit{SWF}(Y,\mathfrak{s}),
	 \]
	 for some $n\in \mathbb{Z}$, depending only on $\mathfrak{P}$.  
\end{thm}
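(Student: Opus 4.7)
The plan is to construct $\preSWF(Y,\frak{s},\mathfrak{P})$ by a parameterized version of Manolescu's finite-dimensional approximation scheme, with the Picard torus $\pic(Y)$ serving as the parameter base. First I would fix a global Coulomb slice for the space of configurations modulo based gauge transformations, so that a point lying over $a\in\pic(Y)$ is a triple consisting of a flat $U(1)$-connection representing $a$, a coclosed imaginary $1$-form, and a spinor. Rewriting the Seiberg-Witten equations on $Y\times\mathbb{R}$ in this slice produces, fiberwise over $\pic(Y)$, a formal gradient flow for a Chern-Simons-Dirac functional on an infinite-dimensional Hilbert bundle over $\pic(Y)$. The torsion hypothesis on $c_1(\frak{s})$ is exactly what is needed so that the Chern-Simons-Dirac functional is genuinely $\mathbb{R}$-valued on each fiber, and so that there is a well-defined reducible locus over each fiber.

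Next I would perform finite-dimensional approximation in families: fiberwise, project onto the span of eigenspaces of the linearized operator with eigenvalues in $(-\lambda,\lambda)$. The Floer framing $\mathfrak{P}$ should supply the extra bundle-theoretic data required to stabilize these fiberwise approximations coherently as $\lambda\to\infty$, amounting roughly to a choice of trivialization (up to stabilization) of the relevant virtual index bundles over $\pic(Y)$. Using the standard Seiberg-Witten a priori bounds I would produce an isolating neighborhood for the approximate gradient flow and apply parameterized Conley index theory (in the spirit of May-Sigurdsson) to obtain a parameterized $S^1$-equivariant ex-space over $\pic(Y)$; its suspension, reindexed by the Floer framing, would be the desired $\preSWF(Y,\frak{s},\mathfrak{P})$. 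Well-definedness under changes of metric, perturbation, approximation parameter, and Coulomb slice would follow from a sequence of continuation/invariance arguments for parameterized Conley indices.

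The unparameterized spectrum $\SWFn^u(Y,\frak{s},\mathfrak{P})$ would be extracted as the fiber over a chosen basepoint of $\pic(Y)$, packaged as a connected simple system of spectra to absorb the ambiguity in the basepoint choice; its finite $S^1$-CW type follows from the corresponding fact about Conley indices of isolated invariant sets of smooth flows. When $\frak{s}$ is self-conjugate, the conjugation involution $j$ on configurations acts compatibly with the whole construction, and the Picard torus acquires the induced $\pi_0(\mathrm{Pin}(2))$-action by negation; a $\mathrm{Pin}(2)$-equivariant Floer framing then promotes the construction to a $\mathrm{Pin}(2)$-equivariant parameterized spectrum and yields $\SWFn^{u,\mathrm{Pin}(2)}$. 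The identification with Manolescu's $\mathit{SWF}(Y,\frak{s})$ in the case $b_1(Y)=0$ should be essentially tautological, since $\pic(Y)$ is then a point and the construction literally specializes to his; the integer $n$ in the $\Sigma^{n\mathbb{C}}$ shift records how the complex index bundle picked out by $\mathfrak{P}$ differs from Manolescu's canonical normalization.

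The main obstacle, I expect, is showing that this parameterized Conley index assembles into an honest (untwisted) parameterized stable homotopy type, and this is presumably where the vanishing of the triple cup product on $H^1(Y;\mathbb{Z})$ enters. Intuitively, the quadratic Seiberg-Witten map, fibered over $\pic(Y)$, has a cohomological ``cubic obstruction'' controlled by the triple cup product; when this obstruction vanishes the fiberwise Conley indices glue into a globally defined parameterized object, whereas in the general case one is forced into a twisted, pro-spectrum-valued invariant like the unfolded one of \cite{KLS1}. Verifying this precisely, and checking that the Floer framing $\mathfrak{P}$ provides exactly enough rigidification to kill the residual ambiguity, is the technical heart of the argument.
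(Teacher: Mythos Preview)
Your outline captures the broad shape of the construction, but there are two substantive gaps that the paper resolves in ways you have not anticipated.

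\textbf{Finite-dimensional approximation via eigenvalue cutoffs does not work.} You propose to ``project onto the span of eigenspaces of the linearized operator with eigenvalues in $(-\lambda,\lambda)$.'' The difficulty is that the eigenvalues of $D_a$ vary with $a\in\pic(Y)$, so this cutoff does not define a smooth subbundle of the Hilbert bundle $\mathcal{E}_k$; eigenvalue crossings ruin the bundle structure. The paper's central technical contribution (Theorem~\ref{thm spectral section mu mu+delta}) is the construction of special \emph{spectral sections} $P_n$ in the sense of Melrose--Piazza satisfying $\mathcal{E}_0(D)_{-\infty}^{\mu_n}\subset P_n\subset \mathcal{E}_0(D)_{-\infty}^{\mu_n+\delta}$ with $\delta$ independent of $n$. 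This uniform ``bandwidth'' control is essential: without it one cannot bound the commutator $[D,\pi_n]$ (Proposition~\ref{prop [D, pi]}) or the derivative $\nabla_v\pi_{F_n}$ (Proposition~\ref{prop pi P_n wighted}), and the argument that $A_n$ is an isolating neighborhood (Theorem~\ref{thm isolating nbd}) collapses. The approximate flow (\ref{eq for gamma}) acquires a new term $(\nabla_{X_H}\pi_{F_n})\phi$ not present in Manolescu's $b_1=0$ construction, and controlling it requires the introduction of auxiliary weighted Sobolev norms (Section~\ref{subsec:weighted-spaces}).

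\textbf{The role of the triple-cup product is misidentified.} You suggest it is a ``cubic obstruction'' coming from the quadratic SW map and governing whether fiberwise Conley indices glue. In fact the construction is global from the outset: the spectral sections and the approximate flow are defined over all of $\pic(Y)$ at once, with no gluing. The triple-cup product enters purely through index theory: by \cite{kronheimer-manolescu} its vanishing is equivalent to $\operatorname{Ind} D=0$ in $K^1(\pic(Y))$, which by \cite{MP} is exactly the condition for spectral sections to exist. A Floer framing is then an equivalence class of such spectral sections (Definition~\ref{def:floer-framing}), not merely a trivialization of a virtual index bundle.

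Two smaller corrections: the unparameterized spectrum $\SWFn^u$ is the pushforward $\nu_!\preSWF$ along $\pic(Y)\to *$ (i.e., collapsing the section), \emph{not} the fiber over a basepoint; and the finite $G$-CW structure (Proposition~\ref{prop:finiteness}) requires perturbing to nondegenerate critical points and invoking attractor--repeller decompositions, not just general Conley index theory.
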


For the notion of parameterized spaces we use, ex-spaces, we refer to Chapter \ref{sec:homotopy}, as well as for the notion of a connected simple system.  In particular, see Definition \ref{def:sw-cat} for the notion of a parameterized equivariant stable homotopy type.

The collection of Floer framings of $(Y,\mathfrak{s})$, should any exist, is an affine space over $K(\mathrm{Pic}(Y))\cong \mathbb{Z}^{2^{b_1(Y)-1}}$.  Moreover, there is an explicit relationship between the Floer spectra constructed for different spectral sections; see Corollary \ref{cor:change-of-suspensions}.

In order to explain the context of Theorem \ref{thm:main}, and its apparent difference from the unfolded invariant, we review below the process of finite-dimensional approximation, introduced by Furuta, and used by Manolescu \cite{Manolescu-b1=0} in his construction of the three-manifold invariant for rational homology three-sphere input, as well as in \cite{kronheimer-manolescu},\cite{KLS1},\cite{KLS2}.

\section{Finite-dimensional approximation}

There are two main approaches to refining the construction of Floer-theoretic invariants from homology theories to generalized homology theories (and, in some instances, spectra).  There is the approach by constructing \emph{framed flow categories} (or variations on this type of category) as envisioned originally by \cite{CJS}.  A very general version of this has just been accomplished in \cite{Abouzaid-Blumberg} (while the present work was in its final stages of preparation).  There is also the method of finite-dimensional approximation, mentioned above, which we now summarize.

Manolescu's construction of $\mathit{SWF}(Y,\mathfrak{s})$ takes place inside the \emph{Coulomb gauge slice} of the Seiberg-Witten equations.  All that matters for this introduction is that, roughly speaking, the Coulomb slice is some Hilbert space on which the Seiberg-Witten equations admit a particularly simple form, as a linear operator plus a compact perturbation.  For certain linear subspaces of the Coulomb slice (adapted to the linear part of the Seiberg-Witten equations), Manolescu considers an approximation of the formal $L^2$-gradient flow of the Seiberg-Witten equations.  The approximations tend to stabilize as larger and larger finite-dimensional subspaces are chosen.  Associated to suitable flows on suitable topological spaces, there is a convenient invariant, the \emph{Conley index}, which is a well-defined homotopy type associated to the flow (along with some extra data).  The invariant $\mathit{SWF}(Y,\mathfrak{s})$ is then taken as the Conley index of these approximated flows.

The most pressing difficulty facing finite-dimensional approximation to other equations of gauge theory or symplectic geometry is that the configuration space in these other situations is usually not linear, so that it is not obvious which finite-dimensional submanifolds one should consider ``approximations" on.

For $b_1(Y)>0$ the gauge slice of the Seiberg-Witten equations is no longer linear, but Kronheimer-Manolescu \cite{kronheimer-manolescu}, and the authors of \cite{KLS1}, \cite{KLS2}, avoided the problem of having a more general configuration space by considering the Seiberg-Witten equations on the universal cover (which is once again a Hilbert space) of a gauge slice to the Seiberg-Witten equations, where finite-dimensional approximation is still possible, but where the usual compactness of the space of Seiberg-Witten trajectories is lost.  The loss of compactness leads to the resulting invariant $\unfoldeda$ not being a single spectrum, but rather a system of them.

The problem of performing finite-dimensional approximation in nonlinear situations has been open for some time (though see \cite{Kragh}).  In this memoir our objective is to resolve it in one (relatively simple) case, for the Seiberg-Witten equations.  We hope that this method may be useful in other situations where one would like to apply finite-dimensional approximation for topologically complicated configuration spaces.

The main work of the present memoir is showing that there exist families of submanifolds of the configuration space of the Seiberg-Witten equations (for $b_1(Y)>0$) on which the Seiberg-Witten equations can be approximated very accurately.  This comes down to carefully controlling spectral sections of the Dirac operator, in the sense of Melrose-Piazza \cite{MP}, and in particular relies on some control of spectra of Dirac operators.  Once the submanifolds are constructed, there also remains the problem of showing that the approximate Seiberg-Witten equations thereon are sufficiently accurate; for this we use a refined version of the original argument of Manolescu which requires weaker assumptions than the original, but does not yield the same strength of convergence as in Manolescu's case.

A word is also in order about the hypotheses on the input in Theorem \ref{thm:main}.  Cohen-Jones-Segal conjectured that there should exist Floer spectra for many of the familiar Floer homology theories - but only in the event that the \emph{polarization} is trivial.  The hypotheses in the Theorem are necessary for the vanishing of the polarization (indeed, a Floer framing is the same thing as a trivialization of the polarization), as observed in \cite{kronheimer-manolescu}.

However, in spite of usually having a dependence on the Floer framing, we can consider generalized homology theories applied to $\SWFn^u(Y,\mathfrak{s},\mathfrak{P})$ that are insensitive to the framing.  In the following theorem, $n(Y,\mathfrak{s},\mathfrak{P})$ is a certain numerical invariant of a Floer framing, introduced in Chapter \ref{sec:froyshov}, and $MU$ and $MU_{S^1}$ denote, respectively, complex cobordism and $S^1$-equivariant complex cobordism.  For the notion of an equivariant complex orientation, see Section \ref{subsec:elementary-properties} (and for more detail, \cite{Cole-Greenlees-Kriz}),

\begin{thm}\label{thm:monopole-complex-cobordism-introduction}
	Let $E$ be a (possibly $S^1$-equivariant) complex-oriented (resp. $S^1$- equivariantly complex oriented) cohomology theory.  Then
	\[
	E^{*-2n(Y,\mathfrak{s},\mathfrak{P})}(\SWFn^u(Y,\mathfrak{s},\mathfrak{P}))
	\]
	is (canonically) independent of $\mathfrak{P}$.
	
	In particular, the complex-cobordism theories 
	\[
	\begin{split}
	\mathit{FMU}^*(Y,\mathfrak{s})&=\widetilde{MU}^{*-2n(Y,\mathfrak{s},g,\mathfrak{P})}(\SWFn^u(Y,\mathfrak{s},\mathfrak{P})),\\
	\mathit{FMU}^*_{S^1}(Y,\mathfrak{s})&=\widetilde{MU}_{S^1}^{*-2n(Y,\mathfrak{s},g,\mathfrak{P})}(\SWFn^u(Y,\mathfrak{s},\mathfrak{P})),
	\end{split}
	\]
	are invariants of the pair $(Y,\mathfrak{s})$, which we call the \emph{Floer (equivariant) complex cobordism} of $(Y,\mathfrak{s})$.
	
\end{thm}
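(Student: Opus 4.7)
The plan is to deduce the framing-independence from the explicit change-of-framing formula, Corollary \ref{cor:change-of-suspensions}, combined with the Thom isomorphism for (equivariantly) complex-oriented cohomology theories.

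The first step is to identify what changes under a change of framing. Given two Floer framings $\mathfrak{P}_0$ and $\mathfrak{P}_1$, their difference determines an element $\xi = \mathfrak{P}_1-\mathfrak{P}_0 \in K(\mathrm{Pic}(Y))$, i.e.\ a virtual complex vector bundle over the Picard torus. By Corollary \ref{cor:change-of-suspensions}, the two unparameterized spectra $\SWFn^u(Y,\mathfrak{s},\mathfrak{P}_i)$ are related by a (virtual) suspension whose underlying data is $\xi$ --- concretely, by smashing with the (equivariant) Thom spectrum of $\xi$. Moreover, by construction of the numerical invariant $n$ in Section \ref{sec:froyshov}, one expects
\[
n(Y,\mathfrak{s},\mathfrak{P}_1)-n(Y,\mathfrak{s},\mathfrak{P}_0)=2\,\mathrm{rank}_{\mathbb{C}}(\xi),
\]
i.e.\ $n$ is chosen precisely to absorb the real dimension shift produced by $\xi$.

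The second step invokes the Thom isomorphism. Since $\xi$ is a virtual \emph{complex} bundle and $E$ is (equivariantly) complex-oriented, there is a canonical $E$-Thom class for $\xi$, yielding a canonical isomorphism
\[
E^{*}(\SWFn^u(Y,\mathfrak{s},\mathfrak{P}_1)) \;\cong\; E^{*-2\,\mathrm{rank}_{\mathbb{C}}\xi}(\SWFn^u(Y,\mathfrak{s},\mathfrak{P}_0)).
\]
Reindexing by $n(Y,\mathfrak{s},\mathfrak{P}_i)$ and using the dimension identity above turns this into the desired identification of the shifted cohomologies. Specialization to $\mathit{FMU}^*$ and $\mathit{FMU}^*_{S^1}$ is then automatic from the universal (equivariant) complex orientation of $MU$ and $MU_{S^1}$.

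The third step upgrades this to a genuinely \emph{canonical} invariant, which requires verifying a cocycle condition: for any three framings $\mathfrak{P}_0,\mathfrak{P}_1,\mathfrak{P}_2$ with mutual differences $\xi_{ij}$ satisfying $\xi_{02}=\xi_{01}+\xi_{12}$ in $K(\mathrm{Pic}(Y))$, the direct identification via $\xi_{02}$ must coincide with the composition of the identifications via $\xi_{01}$ and $\xi_{12}$. This is precisely the multiplicativity of the Thom class under Whitney sums, which is built into the definition of a (respectively, equivariantly) complex orientation; see \cite{Cole-Greenlees-Kriz} for the equivariant version. The main technical obstacle is the $S^1$-equivariant case: one must check that the virtual bundle $\xi$ produced by Corollary \ref{cor:change-of-suspensions} carries an $S^1$-equivariant complex structure compatible with the $S^1$-action on $\SWFn^u(Y,\mathfrak{s},\mathfrak{P}_i)$, so that the $S^1$-equivariant Thom isomorphism is available and its multiplicativity applies; once this compatibility is verified the formal argument above goes through unchanged.
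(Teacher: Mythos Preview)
Your proposal is correct and follows essentially the same route as the paper: the change-of-framing formula (Corollary~\ref{cor:change-of-suspensions}/equation~(\ref{eq:change-of-suspension})) plus the Thom isomorphism for (equivariantly) complex-oriented $E$, with the shift by $n(Y,\mathfrak{s},\mathfrak{P})$ absorbing the rank of the difference bundle. The paper's version makes one intermediate step explicit that you leave implicit, namely the identity $\nu_{!}\Sigma^{V}_{B}X=\mathrm{Th}(r^{*}V)$ (equation~(\ref{eq:tautology})), which is what converts a \emph{parameterized} suspension by a bundle over $\mathrm{Pic}(Y)$ into an unparameterized Thom space to which the ordinary Thom isomorphism applies; conversely, your Step~3 cocycle check via multiplicativity of Thom classes is more careful than the paper, which asserts canonicity without spelling this out.
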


As $MU,MU_{S^1}$ are the universal complex-oriented cohomology theories, in some sense $\mathit{FMU}^*(Y,\mathfrak{s})$ and $\mathit{FMU}^*_{S^1}(Y,\mathfrak{s})$ might be interpreted as the universal monopole Floer-type invariants that are independent of the framing.  

More speculatively, we remark that the independence of $\mathit{FMU}^*$ on the framing suggests that its definition could be extended to pairs $(Y,\mathfrak{s})$ which do not admit a Floer framing.  We plan to pursue this in future work.

It would also be desirable to relate the (generalized) homology theories of the Seiberg-Witteh Floer spectrum  $\SWFn^u(Y,\mathfrak{s},\mathfrak{P})$ to the monopole-Floer homology of Kronheimer-Mrowka.
 In particular, we conjecture:
\begin{conj}\label{con:lidman-manolescu-generalized}
	For $(Y,\mathfrak{s})$ a pair as in Theorem \ref{thm:main}, 
 \begin{align*}
&	 H_{\bullet-2n(Y,\mathfrak{s},\mathfrak{P})}^{S^1}(\SWFn^u(Y,\mathfrak{s},\mathfrak{P}))\cong  \widecheck{\mathit{HM}}_{\bullet}(Y,\mathfrak{s}),         \\
&	 cH^{S^1}_{\bullet-2n(Y,\mathfrak{s},\mathfrak{P})}(\SWFn^u(Y,\mathfrak{s},\mathfrak{P}))\cong  \widehat{\mathit{HM}}_{\bullet}(Y,\mathfrak{s}),\\  
&	 tH_{\bullet-2n(Y,\mathfrak{s},\mathfrak{P})}^{S^1}(\SWFn^u(Y,\mathfrak{s},\mathfrak{P}))\cong \overline{\mathit{HM}}(Y,\mathfrak{s}),     \\
&	 H_{\bullet-2n(Y,\mathfrak{s},\mathfrak{P})}(\SWFn^u(Y,\mathfrak{s},\mathfrak{P}))\cong   \widetilde{\mathit{HM}}_{\bullet}(Y,\mathfrak{s}).
	 \end{align*}
	Note that ordinary homology is (equivariantly) complex-orientable, and so the homology theories on each left side are independent of the choice of spectral section (and we have been somewhat imprecise about the gradings on the right).  Here $H^{S^1}$, $cH^{S^1}$, $tH^{S^1}$ are, respectively, Borel, coBorel, and Tate homology.
\end{conj}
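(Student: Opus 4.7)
The plan is to adapt the strategy of Lidman--Manolescu, which established the analogous result in the $b_1(Y)=0$ case, to the parameterized setting developed here. I will assume throughout the constructions of the finite-dimensional approximation described in this paper, so that $\preSWF(Y,\mathfrak{s},\mathfrak{P})$ is realized as (the stabilization of) the Conley index of an approximated Seiberg--Witten flow on a submanifold $M_\lambda$ of the Coulomb slice, cut out by a suitable spectral section at level $\lambda\gg 0$.

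Stage 1 (critical points). First, identify the nondegenerate critical points of the approximated CSD flow on $M_\lambda$ with the irreducible Seiberg--Witten solutions on $Y$; for $\lambda$ large enough no spurious critical points appear, by the usual $C^0$-bounds on irreducible solutions combined with the convergence of the spectral sections. The reducibles form the $S^1$-fixed locus and sit over $\Pic(Y)$; these correspond, after blowing up, to the reducible generators of the Kronheimer--Mrowka complex. The shift $n(Y,\mathfrak{s},\mathfrak{P})$ then accounts for the difference between the absolute Morse--Bott index in the approximation and the canonical grading used by Kronheimer--Mrowka, exactly as in \cite{Manolescu-b1=0}.

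Stage 2 (chain-level comparison). Next, build a Morse--Bott CW model of the Conley index by choosing an auxiliary Morse function on each critical submanifold (in particular, on the reducible locus, which is a copy of $\Pic(Y)$). The resulting cellular chain complex, together with its natural $S^1$-action, should agree (up to the grading shift) with the blown-up Morse--Bott complex $\check{C}_\bullet$, $\hat{C}_\bullet$, $\bar{C}_\bullet$ of Kronheimer--Mrowka, once one matches moduli spaces of gradient trajectories with moduli of Seiberg--Witten trajectories. The matching of moduli proceeds by a compactness--and--gluing argument, analogous to that used in \cite{Manolescu-b1=0} and Lidman--Manolescu, but leveraging the precise finite-dimensional approximation estimates established in the body of the paper (the weaker but sufficient convergence mentioned in the introduction). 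The $S^1$-equivariance comes along for free because the entire construction is $S^1$-equivariant.

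Stage 3 (identification of the four flavors). Once the chain-level comparison is in place, it remains to match the four homological flavors. The key observation is that on an $S^1$-CW spectrum $X$ whose $S^1$-fixed set is suitably thickened (in our case, the blow-up of the reducible locus), Borel, coBorel, and Tate $S^1$-homology fit into the Tate long exact sequence
\[
\cdots\to cH^{S^1}_\bullet(X)\to H^{S^1}_\bullet(X)\to tH^{S^1}_\bullet(X)\to\cdots,
\]
which parallels the defining long exact sequence $\widehat{\mathit{HM}}\to\widecheck{\mathit{HM}}\to\overline{\mathit{HM}}$ of Kronheimer--Mrowka. Identifying the middle terms via the Morse--Bott complex of Stage 2 and the outer terms by a direct computation on the $S^1$-fixed reducible locus (where both sides reduce to equivariant (co)homology of $\Pic(Y)$ twisted by the index bundle of the Dirac operator, shifted by $n(Y,\mathfrak{s},\mathfrak{P})$) pins down all four isomorphisms. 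The $\widetilde{\mathit{HM}}$ case corresponds to non-equivariant ordinary homology, which by construction ignores the reducibles, hence matches the \emph{from}-to version of Kronheimer--Mrowka modulo the image of reducibles.

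The main obstacle I anticipate is Stage 2: showing that the compactified moduli spaces of approximated Seiberg--Witten trajectories in $M_\lambda$ agree with those used to define $\check{C}_\bullet$, etc., in a manner compatible with the blow-up at reducibles. In the $b_1=0$ case this is already delicate; here one must additionally control how these moduli spaces vary in families over $\Pic(Y)$, and verify that the Floer framing $\mathfrak{P}$ provides the correct orientation/trivialization data for the index bundles involved, so that the grading shift $n(Y,\mathfrak{s},\mathfrak{P})$ coming from the framing indeed cancels the ambiguity in the Kronheimer--Mrowka grading convention. A secondary obstacle, conceptually cleaner but technically demanding, is handling boundary-obstructed trajectories between reducibles and irreducibles in the parameterized setting, which will require the full strength of the spectral section control established in the body of the paper.
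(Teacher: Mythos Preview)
The statement you are addressing is labeled as a \emph{conjecture} in the paper, and the paper does \emph{not} provide a proof of it. The authors explicitly present it as open, remarking only that the $b_1(Y)=0$ case has been established by Lidman--Manolescu \cite{LidmanManolescu}. So there is no proof in the paper to compare your proposal against.

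That said, your proof plan is a reasonable sketch of how one would expect such a result to be proved, modeled closely on the Lidman--Manolescu strategy. You are right to flag Stage~2 as the main obstacle: the chain-level comparison of moduli spaces in the parameterized setting over $\Pic(Y)$, together with the handling of boundary-obstructed trajectories, is precisely where the genuine work would lie, and the paper does not claim to have carried this out. Your plan should be read as an outline of a program, not a proof; the paper's authors evidently regard the conjecture as nontrivial enough to leave open.
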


 This conjecture is already established by Lidman-Manolescu in the case that $Y$ is a rational-homology sphere \cite{LidmanManolescu}.

We note that there is a natural generalization of Conjecture \ref{con:lidman-manolescu-generalized} to include the case of local coefficient systems on monopole Floer homology $\mathit{HM}^\circ$; this involves using other parameterized cohomology theories (as in \cite[Section 20.3]{MS}) applied to $\mathcal{SWF}(Y,\mathfrak{s},\mathfrak{P})$.  There is also a further generalization of the conjecture to relate the $\mathrm{Pin}(2)$-equivariant cohomology of $\SWFn^u(Y,\mathfrak{s},\mathfrak{P})$, for $(Y,\mathfrak{s})$ admitting a $\mathrm{Pin}(2)$-equivariant Floer framing, to the equivariant monopole Floer homology defined by F. Lin \cite{lin-pin2}.

We remark that Theorem \ref{thm:main} should yield a well-defined connected simple system $\SWFn(Y,\mathfrak{s},\mathfrak{P})$ of equivariant, parameterized spectra.  Indeed, this would follow if the \emph{parameterized} Conley index of a dynamical system were known to be well-defined as a connected simple system (rather than as a homotopy type.  The ordinary Conley index is known \cite{Salamon} to be a connected simple system).  We hope to return to this point, and other improvements to naturality, in future work.

\section{Four-manifolds}
In this memoir we also define a Bauer-Furuta invariant associated to a $\mathrm{spin}^c$ $4$-manifold with boundary.

Let $(Y, \mathfrak{s})$ be a closed spin$^c$ 3-manifold and $\mathfrak{P}$ be a Floer framing of $(Y, \mathfrak{s})$. Recall that, in the parameterized setting,  we only define the ex-space $\preSWF(Y,\mathfrak{s},\mathfrak{P})$ up to stable homotopy equivalence.  To fix notation, define a \emph{map class} of maps $P \to Q$ between two spaces $P, Q$, themselves only well-defined up to homotopy-equivalence, to mean just a homotopy class, up to the action of self-homotopy equivalences on $P$ or $Q$. 

For an $S^1$-equivariant virtual vector bundle $V$ over a base $B$, let $S^V_B$ denote the corresponding sphere bundle over $B$.  We then construct a Bauer-Furuta invariant $\preBF$ as follows:

\begin{thm}\label{thm:bauer-furuta-main}
	Let $(X,\mathfrak{t})$ be a smooth, compact, $\mathrm{spin}^c$ four-manifold with boundary $(Y,\mathfrak{s})$, and fix a Floer framing $\mathfrak{P}$ of $(Y,\mathfrak{s})$.  Then there is a well-defined (parameterized, $S^1$-equivariant, stable) map class
	\[
	\preBF(X,\mathfrak{t}): S^{\ind(D_X,\mathfrak{P})}_{\mathrm{Pic}(Y)}\to \preSWF(Y,\mathfrak{s},\mathfrak{P}).
	\]
\end{thm}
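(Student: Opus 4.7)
The plan is to adapt the Bauer--Furuta finite-dimensional approximation to the parameterized setting over $\Pic(Y)$, following the overall strategy of \cite{bauer-furuta,Manolescu-b1=0,KLS1}, but using the Floer framing $\mathfrak{P}$ instead of passing to the universal cover of the gauge slice. After choosing a base $\mathrm{spin}^c$-connection and imposing double Coulomb gauge on $X$, the configuration space $\mathcal{C}(X)$ becomes an affine Hilbert space, and the Seiberg--Witten map takes the familiar form $\mathrm{SW}_X = L_X + c_X$: a linear Fredholm operator (Dirac $\oplus\, d^+$) plus a compact quadratic term. Restriction to the boundary followed by projection onto the harmonic part of the $1$-form component defines a natural map $\pi\colon \mathcal{C}(X)\to \Pic(Y)$, exhibiting $\mathcal{C}(X)$ as an infinite-rank bundle over $\Pic(Y)$ in a manner compatible with the $\Pic(Y)$-fibered Coulomb configuration space of $Y$ used in constructing $\preSWF(Y,\mathfrak{s},\mathfrak{P})$.

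Next I would carry out finite-dimensional approximation relative to $\mathfrak{P}$. The Floer framing determines finite-rank ex-subbundles $V_\mu^Y \subset \mathcal{C}(Y)$ over $\Pic(Y)$ that underly the construction of $\preSWF$. Pulling these back along the boundary restriction and augmenting by a large positive/negative spectral cutoff of $L_X$ yields finite-rank domain and codomain bundles $V_\mu^X\supset r^*V_\mu^Y$ over $\Pic(Y)$; projecting $\mathrm{SW}_X$ onto them produces a family of bundle maps $\mathrm{SW}_X^\mu$. A standard index-theoretic bookkeeping identifies the stable virtual difference of the chosen bundles with $\ind(D_X,\mathfrak{P})$, matching the source $S^{\ind(D_X,\mathfrak{P})}_{\Pic(Y)}$ of the asserted map.

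The Bauer--Furuta argument then proceeds fiber-wise over $\Pic(Y)$: choose a large ball $B_R\subset V_\mu^X$ and an isolating neighborhood for the approximate Seiberg--Witten flow on the $Y$-side; concatenating $\mathrm{SW}_X^\mu$ with this flow on the half-cylinder $[0,\infty)\times Y$ shows that, provided $\mu$ and $R$ are large, the boundary of $B_R$ lands in the exit set on the $Y$-side. Collapsing gives a parameterized, pointed, $S^1$-equivariant map from the Thom space of $V_\mu^X\ominus r^*V_\mu^Y$ to the fiber-wise Conley index $\preSWF(Y,\mathfrak{s},\mathfrak{P})$, i.e.\ a representative of $\preBF(X,\mathfrak{t})$. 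The hard part will be proving a parameterized version of Manolescu's key approximation/compactness estimate: approximated Seiberg--Witten trajectories on $[0,\infty)\times Y$ must remain in a fixed isolating neighborhood for all large $\mu$, \emph{uniformly} in the basepoint $\alpha\in\Pic(Y)$. The uniformity uses compactness of the Picard torus, the torsion-$c_1$ and vanishing-triple-cup hypotheses (under which Floer framings exist and behave coherently over $\Pic(Y)$), and the refinement of Manolescu's estimate advertised in the Introduction.

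Finally, to upgrade the construction to a well-defined map class, I would verify independence of the auxiliary data. Varying $\mu$, $R$, the base connection, or the Riemannian metric on $X$ produces homotopic maps after restabilization: this follows from continuation invariance of the parameterized Conley index developed earlier in the paper, together with the standard observation that any two admissible approximation data can be joined by a path of admissible data. Quotienting by self-equivalences on the source sphere bundle and on $\preSWF(Y,\mathfrak{s},\mathfrak{P})$ then yields the parameterized, $S^1$-equivariant stable map class $\preBF(X,\mathfrak{t})$ as claimed.
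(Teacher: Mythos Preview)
Your proposal is essentially correct and follows the same approach as the paper. The paper carries out exactly this program in Section~\ref{section 4-mfd with boundary}: double Coulomb gauge on $X$, finite-dimensional approximation compatible with the spectral sections on $Y$, a compactness argument (Proposition~\ref{prop:bauer-furuta-existence}) producing index pairs with the required containment properties, and then well-definedness via continuation (Lemmas~\ref{lem:perturbation-independence}, \ref{lem:bf-metric-independence}, \ref{lem:sobolev-independence-4}, Proposition~\ref{prop:bf-stabilization}).

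Two minor technical points the paper handles that you gloss over: (1) the domain bundles are not obtained by ``spectral cutoffs of $L_X$'' (which is awkward on a manifold with boundary) but rather as preimages $U_n = ((D_X,\pi_{P_n}r_Y)+\mathfrak{p})^{-1}(U_n'\oplus r_Y^*F_n)$ under the linearization with an APS-type boundary condition; and (2) a finite-rank perturbation $\mathfrak{p}\colon \underline{\mathbb{C}}^m \to \mathcal{E}^-_{X,k-1}\oplus r_Y^*(P_0\cap L^2_{k-1/2})$ is introduced to force surjectivity of the linearization, so that these preimages are actually subbundles and the index bookkeeping works. Neither of these alters your strategy.
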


For the definition of the index $\ind (D_X, \mathfrak{P})$, see Chapter \ref{section 4-mfd with boundary}.  There is also a version of Theorem \ref{thm:bauer-furuta-main} at the spectrum level, which is more complicated to state; see Corollary \ref{cor:bauer-furuta-main-equivalent}.

As a byproduct of our proof of well-definedness of $\preSWF(Y,\mathfrak{s},\mathfrak{P})$, we also obtain an invariant of families:

\begin{thm}\label{thm:main-families}
 Let $\mathcal{F}$ be a Floer-framed family of $\mathrm{spin}^c$ 3-manifolds, with compact base $B$ and fibers denoted by $\mathcal{F}_{b}$ for $b\in B$.  Let $\mathrm{Pic}(\mathcal{F})$ denote the bundle over $B$ with fiber $\mathrm{Pic}(\mathcal{F}_b)$. There is a well-defined parameterized, $S^1$-equivariant stable-homotopy type $\preSWF(\mathcal{F})$, which is parameterized over $\mathrm{Pic}(\mathcal{F})$.  
	\end{thm}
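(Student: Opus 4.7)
The plan is to adapt, fiberwise over the compact base $B$, the construction of $\preSWF(Y,\mathfrak{s},\mathfrak{P})$ carried out for Theorem \ref{thm:main}, while using the given family Floer framing to make all choices uniformly over $B$. Since a Floer framing is a trivialization of the polarization, the definition of a \emph{Floer-framed family} is designed precisely so that the topological obstructions addressed in the single-manifold construction vanish not only fiberwise but globally over $B$.

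Concretely, I would first assemble the relevant objects into bundles over $B$: the Picard torus bundle $\mathrm{Pic}(\mathcal{F})\to B$, the bundle of Coulomb slices over $\mathrm{Pic}(\mathcal{F})$, and the family $\{D_b\}_{b\in B}$ of Dirac operators regarded as a continuous family of unbounded self-adjoint operators. The family Floer framing $\mathfrak{P}_{\mathcal{F}}$ provides a globally coherent family of spectral sections in the sense of Melrose-Piazza, and hence a bundle of finite-rank approximating subspaces inside the Coulomb slice bundle, playing the role of the individual subspaces used in Section \ref{subsec:defn-of-invar}.

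Second, I would run Manolescu's finite-dimensional approximation fiberwise, with the data chosen above. For each sufficiently large approximating subbundle, the approximate Seiberg-Witten gradient flow defines a flow on the total space over $\mathrm{Pic}(\mathcal{F})$ that is an isometry bundle map on the linear part plus a fiberwise compact perturbation. A parameterized Conley index of this family flow (in the ex-space framework of Section \ref{sec:homotopy}) yields a candidate ex-space over $\mathrm{Pic}(\mathcal{F})$, which I take as $\preSWF(\mathcal{F})$. Well-definedness up to parameterized $S^1$-equivariant stable equivalence follows by the same pattern as in the single-manifold case: two choices of approximating subbundles compatible with $\mathfrak{P}_{\mathcal{F}}$ differ by a bundle that is finite-rank over $B$, and the resulting Conley indices are related by fiberwise equivariant (de)suspension, exactly as in Corollary \ref{cor:change-of-suspensions}. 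Combined with the standard parameterized Conley-index continuation invariance, this yields a well-defined object in the parameterized stable homotopy category over $\mathrm{Pic}(\mathcal{F})$.

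The main obstacle is uniformity of the finite-dimensional approximation across $B$. The Dirac spectrum and the compact perturbation in the Seiberg-Witten equations vary with $b\in B$, so one must ensure that a single choice of approximating subbundle suffices to make the approximate flow close enough to the true flow \emph{simultaneously} on every fiber, and that the a priori compactness bounds that drive the refined convergence argument sketched after Theorem \ref{thm:main} hold uniformly in $b$. Compactness of $B$, continuity of the spectral sections furnished by $\mathfrak{P}_{\mathcal{F}}$, and a Lebesgue-number style covering argument on $B$ should reduce this to finitely many instances of the single-manifold estimate, but this uniformity is where most of the real work lies; the remaining steps are then essentially bookkeeping in the parameterized equivariant stable category.
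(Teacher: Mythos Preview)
Your proposal is correct and follows essentially the same approach as the paper: the paper's proof of Theorem~\ref{thm:main-families} simply says the argument is parallel to that of Theorem~\ref{thm:main}, which in turn invokes the family isolating-neighborhood result (Theorem~\ref{thm:families}) together with the stabilization and well-definedness machinery (Theorems~\ref{thm change of approximation suspension}, \ref{thm:well-definedness-of-swf-type}). The one minor tactical difference is that the paper handles uniformity over $B$ not by a covering/Lebesgue-number reduction to finitely many single-manifold instances, but by rerunning the contradiction argument of Theorem~\ref{thm isolating nbd} directly in the family setting---the sequence of putative bad trajectories now carries an extra base parameter $b_n\in B$, which subconverges by compactness, and the rest of the proof is verbatim; this is cleaner than a covering argument because it directly yields a single cutoff $n$ working over all of $B$.
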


A similar families invariant exists for the Bauer-Furuta invariant, but we omit its discussion, as we do not have need for it in the present memoir.

As an application of our construction, we construct Fr{\o}yshov-type invariants associated to the Seiberg-Witten Floer stable homotopy type.  In particular, we define a generalization of Manolescu's $\kappa$-invariant, from $\mathrm{Pin}(2)$-equivariant K-theory of three-manifolds with $b_1(Y)=0$, to $Y$ with $b_1(Y)>0$.  We show:

\begin{thm}\label{thm 10/8 inequality-intro}
  Let $(X, \mathfrak{t})$ be a compact,  spin 4-manifold with boundary $-Y_0 \coprod Y_1$.  Assume that $Y_0$ is a rational homology $3$-sphere and the index $\operatorname{Ind} D$ for $(Y_1, \mathfrak{t}|_{Y_1})$ is zero in $KQ^1(\mathrm{Pic}(Y_1))$.  Here $KQ^1$ stands for the quaternionic K-theory. (See \cite{dupont, lin_rokhlin}.)   
		Then we have
		\[
		- \frac{\sigma(X)}{8} +  \kappa(Y_0, \mathfrak{t}|_{Y_0})  - 1
		\leq   b^+(X) + \kappa(Y_1, \mathfrak{t}|_{Y_1}).
		\]

\end{thm}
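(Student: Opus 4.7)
The strategy is to adapt Manolescu's $\mathrm{Pin}(2)$-equivariant $KQ$-theoretic $10/8$-argument (and its relative form due to Manolescu and to F. Lin) to the parameterized setting provided by Theorem \ref{thm:bauer-furuta-main}, using the hypothesis $\operatorname{Ind} D = 0 \in KQ^1(\mathrm{Pic}(Y_1))$ to ``untwist'' the $Y_1$-side in quaternionic K-theory. Because $X$ is spin, both $(Y_0,\mathfrak{t}|_{Y_0})$ and $(Y_1,\mathfrak{t}|_{Y_1})$ are self-conjugate $\mathrm{spin}^c$ and carry $\mathrm{Pin}(2)$-equivariant Floer framings $\mathfrak{P}_0,\mathfrak{P}_1$. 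Since $Y_0$ is a rational homology sphere, the last clause of Theorem \ref{thm:main} identifies $\preSWF(Y_0,\mathfrak{t}|_{Y_0},\mathfrak{P}_0)\simeq \Sigma^{n\mathbb{C}}\mathit{SWF}(Y_0,\mathfrak{t}|_{Y_0})$ with a suspension of Manolescu's unparameterized invariant.

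First I would use the two-ended extension of Theorem \ref{thm:bauer-furuta-main} (the obvious cobordism analogue, where both incoming and outgoing ends carry Floer framings) to manufacture a $\mathrm{Pin}(2)$-equivariant, parameterized stable map
\[
\preBF(X,\mathfrak{t})\colon \Sigma^{V_+}\, \mathit{SWF}(Y_0,\mathfrak{t}|_{Y_0})\longrightarrow \Sigma^{V_-}\,\preSWF(Y_1,\mathfrak{t}|_{Y_1},\mathfrak{P}_1),
\]
where $V_\pm$ are finite-dimensional $\mathrm{Pin}(2)$-representations whose virtual difference realizes the relative $\mathrm{Pin}(2)$-equivariant index of the $X$-Dirac operator. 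The $\mathbb{H}$-part of $V_+-V_-$ is controlled by $-\sigma(X)/8$ (Atiyah--Singer, spin case) together with the section-dependent corrections $n(Y_j,\mathfrak{t}|_{Y_j},\mathfrak{P}_j)$ of Section \ref{sec:froyshov}, while the $\widetilde{\mathbb{R}}$-isotypic part (the sign representation of $\mathrm{Pin}(2)/\mathrm{Pin}(2)^{0}$) of $V_-$ has dimension $b^+(X)$.

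The crucial reduction is that $\operatorname{Ind} D(Y_1,\mathfrak{t}|_{Y_1})=0\in KQ^1(\mathrm{Pic}(Y_1))$ says precisely that the virtual quaternionic spinor bundle associated to the spectral sections in $\mathfrak{P}_1$ is stably $KQ$-trivial over the torus $\mathrm{Pic}(Y_1)$. Consequently, the fiberwise $\mathrm{Pin}(2)$-equivariant $KQ$-Thom class of $\preSWF(Y_1,\mathfrak{t}|_{Y_1},\mathfrak{P}_1)$ is pulled back from a point, so that fiber integration along $\mathrm{Pic}(Y_1)\to\mathrm{pt}$ yields an isomorphism
\[
\widetilde{KQ}^{*}_{\mathrm{Pin}(2)}\!\bigl(\preSWF(Y_1,\mathfrak{t}|_{Y_1},\mathfrak{P}_1)\bigr)\cong \widetilde{KQ}^{*-d}_{\mathrm{Pin}(2)}\!\bigl(\SWFn^{u,\mathrm{Pin}(2)}(Y_1,\mathfrak{t}|_{Y_1},\mathfrak{P}_1)\bigr)
\]
for an explicit degree shift $d$. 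Applying $\widetilde{KQ}^{*}_{\mathrm{Pin}(2)}$ to $\preBF(X,\mathfrak{t})$ then produces an honest $KQ^{*}_{\mathrm{Pin}(2)}$-module map between two unparameterized $\mathrm{Pin}(2)$-spectra, in the form directly parallel to Manolescu's closed-manifold setup.

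Finally I would run the $\mathrm{Pin}(2)$-equivariant $KQ$-theoretic Frøyshov argument: the map is nontrivial because it carries $1\mapsto 1$; the invariants $\kappa(Y_0,\mathfrak{t}|_{Y_0})$ and $\kappa(Y_1,\mathfrak{t}|_{Y_1})$ record the $KQ^{*}_{\mathrm{Pin}(2)}$-degrees at which essential classes appear on the two sides; and Bott periodicity for $\widetilde{KQ}^{*}_{\mathrm{Pin}(2)}(\mathrm{pt})$ contributes the $-1$ correction (in exact analogy with Manolescu's original proof). Matching degrees in the $\widetilde{\mathbb{R}}$- and $\mathbb{H}$-isotypic components of $V_+-V_-$ gives precisely the claimed inequality. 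The hard part is the reduction step above: one must rigorously check that the index-vanishing hypothesis in $KQ^1(\mathrm{Pic}(Y_1))$ not only trivializes the relevant $KQ$-theoretic data but does so compatibly with the stable map $\preBF(X,\mathfrak{t})$, so that the induced unparameterized map has the $KQ^{*}_{\mathrm{Pin}(2)}$-degree dictated by the Atiyah--Singer index for $X$. Granted this compatibility, the rest is the same representation-theoretic dimension count used in Manolescu's $10/8$ theorem.
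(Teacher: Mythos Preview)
Your proposal misidentifies the role of the hypothesis $\operatorname{Ind} D = 0$ in $KQ^1(\mathrm{Pic}(Y_1))$, and this leads you to introduce a ``fiber integration'' step that is neither justified nor needed. In the paper this hypothesis is used only to guarantee the \emph{existence} of $\mathrm{Pin}(2)$-equivariant spectral sections on $Y_1$ (Theorem \ref{thm Pin(2) spectral sections}), so that $\mathcal{SWF}^{\mathrm{Pin}(2)}(Y_1,\frak{s}_1,[\frak{S}])$ and hence $\kappa(Y_1,\frak{s}_1)$ are defined at all. Once that is in place, the hypothesis plays no further role in the inequality argument. You also conflate two different objects: $KQ^1(\mathrm{Pic}(Y_1))$ is Lin's quaternionic $K$-theory of the base torus in which the family index sits, while the cohomology theory actually applied to the Floer spectra in the Fr{\o}yshov argument is $\tilde{K}_{\mathrm{Pin}(2)}$, ordinary $\mathrm{Pin}(2)$-equivariant complex $K$-theory with coefficient ring $R(\mathrm{Pin}(2))\cong\mathbb{Z}[z,w]/(w^2-2w,\, zw-2w)$.

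The paper's actual route is much simpler than your global fiber-integration scheme. One restricts the fiber-preserving map $\mathcal{BF}_{[n]}(X,\frak{t})$ to the single fiber over the point $[0]\in B_X$ corresponding to the flat spin connection, and combines this with the duality map for $Y_0$ to obtain an unparameterized $\mathrm{Pin}(2)$-map
\[
f_n\colon \Sigma^{\tilde{\mathbb{R}}^{m_0}\oplus\mathbb{H}^{n_0}} I_n(Y_0)\longrightarrow \Sigma^{\tilde{\mathbb{R}}^{m_1}\oplus\mathbb{H}^{n_1}}\bigl(I_n(Y_1)/s_n(B_{Y_1})\bigr),
\]
with $m_0-m_1$ and $n_0-n_1$ computed by the index formula. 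The $S^1$-fixed part of $f_n$ is induced by the linear operator $(d^+,\pi^0_{-\infty}r_{-Y_0},\pi^0_{-\infty}r_{Y_1})$, so it is a $\mathrm{Pin}(2)$-homotopy equivalence when $b^+(X)=0$ and satisfies the hypotheses of Proposition \ref{prop k W_0 W_1} in general. Since $\kappa$ is defined via the image of $\iota^*$ after restriction to a fiber over the marked $\mathrm{Pin}(2)$-fixed point, this restricted map is exactly what one needs to compare $k$-invariants. The case $b^+(X)$ odd is handled by replacing $X$ with $X\#(S^2\times S^2)$. Your proposed isomorphism between parameterized and unparameterized $\widetilde{KQ}^*_{\mathrm{Pin}(2)}$ is therefore unnecessary, and as stated it is not obviously true: the $\mathrm{Pin}(2)$-equivariant $K$-theory of the ex-space over $\mathrm{Pic}(Y_1)$ sees the cohomology of the base torus, not merely a degree shift.
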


See Remark \ref{rem:duality} for the reason why we assume $b_1(Y_0)=0$ in this theorem. 

We also define invariants associated to the $S^1$-equivariant monopole Floer homology, corresponding roughly to the generalized $d$-invariants introduced by Levine-Ruberman \cite{Levine-Ruberman} in Heegaard Floer homology.

We also calculate the Seiberg-Witten-Floer homotopy-type invariant in some relatively simple situations; see Chapter \ref{sec:computation}.

\section{Further Directions}
We do not prove any gluing theorems for the Bauer-Furuta invariant, or for its families analog, and this is a natural point of departure, remaining within Seiberg-Witten theory.  In this direction, we expect the surgery exact triangles \cite[Section 42]{KM} (and variations) to hold for homology theories other than ordinary homology.  For this, it would be particularly desirable to obtain a description of the map on $\mathit{FMU}^*$ induced by the Bauer-Furuta invariant, independent of choices like the Floer framing.  It is also natural to ask how the unfolded spectrum $\unfoldeda(Y,\mathfrak{s})$ is related to the folded spectrum $\preSWF(Y,\mathfrak{s})$.  

A technical problem that may make the invariant $\preSWF(Y,\mathfrak{s},\mathfrak{P})$ more wieldy is to establish a natural topological description (on $Y$) of the set of Floer framings.  We hope to address some of these points in the future.

Furthermore, we expect that it should be possible to consider more detailed applications to the question of when a family of three-manifolds extends to a family of $4$-manifolds with boundary.  Compare with recent work of Konno-Taniguchi \cite{konno-taniguchi-rel-boundary} in the case that the boundary family of $3$-manifolds is the trivial family of a rational homology sphere.

Finally, given an extension of $\mathit{FMU}^*(Y,\mathfrak{s})$ to three-manifolds that do not admit a Floer framing, it seems likely that the excision argument of \cite{Kronheimer-Mrowka-excision} should apply, in which case we would expect there to exist generalizations of sutured monopole Floer homology to various generalized homology theories.

\section{Organization}
This memoir  is organized as follows.  We first construct special families of spectral sections to the Dirac operator in Chapter \ref{sec:findim-appx}, and show that certain subsets of the (approximate) Seiberg-Witten configuration space are isolating neighborhoods in the sense of Conley index theory.  In Chapter \ref{sec:well-def}, we show that the resulting invariant is well-defined, as a consequence of this process we establish a Seiberg-Witten Floer homotopy type for families.  This consists of showing that all of the possible choices for different approximations to the Seiberg-Witten equations are compatible.  In Chapter \ref{sec:computation} we give various example calculations of $\mathcal{SWF}(Y,\mathfrak{s},\mathfrak{P})$.  In Chapter \ref{section 4-mfd with boundary}, we construct a relative Bauer-Furuta invariant, and show that it is well-defined.  Finally, in Chapter \ref{sec:froyshov}, we establish various Fr{\o}yshov-type inequalities that are a consequence of the existence of the new relative Bauer-Furuta invariant.

There is one appendix, Chapter \ref{sec:homotopy}, on homotopy-theoretic background, as well as an afterword on potential further applications outside of Seiberg-Witten theory.


\chapter{Finite dimensional approximation on 3-manifolds}\label{sec:findim-appx}

\section{Spectral sections}  \label{section:def of spectral section}

 In order to define Seiberg-Witten Floer spectra, we will make use of spectral sections of a family of Dirac operators introduced by Melrose-Piazza \cite{MP}. 
We will  recall definitions and basic things on spectral sections in this section.

Suppose that we have a closed, oriented  $(2n-1)$-manifold $Y$ and that we have  a fiber bundle
\[
          \mathcal{Y} \rightarrow B 
\]
with fiber $Y$. Here $B$ is a compact Hausdorff space. 
Also suppose that we are given a finite dimensional vector bundle
\[
       F_{Y} \rightarrow  \cY
\] 
with  metric.  
We consider  an infinite dimensional vector bundle on $B$ defined by
\[
            \cE_{Y,   \infty} :=  \bigcup_{z \in B}  \Gamma(F_{Y}|_{\cY_{z}}).
\]
Let 
\[
     D_{Y} :   \cE_{Y, \infty} \rightarrow \cE_{Y, \infty}
\]
be a family of first order elliptic, self-adjoint differential operators. That is, $D_Y$ preserves the fibers  of $\cE_{Y, \infty}$ and for each $z \in B$, 
\[
     D_{Y, z} : \cE_{Y, \infty, z}  \rightarrow  \cE_{Y, \infty, z}
\]
is a first order, elliptic, self-adjoint  differential operator.  Here $\cE_{Y, \infty, z}$ is the fiber of $\cE_{Y, \infty}$ over $z$. 

We assume that for each $z \in B$, there is an open neighborhood $U$ of $z$ such that we have a trivialization
\begin{equation}  \label{eq F_Y U}
          F_Y|_{\cY_{U}} \cong U \times F_{Y, z}, 
\end{equation}
where $\cY_{U}$ is the restriction of  the bundle $\cY$ to $U$, and we can write
\[
         D_{Y, w} = D_{Y, z} + A_{Y, w}
\]
for $w \in U$ through the isomorphism $\cE_{Y, \infty, z} \cong \cE_{Y, \infty, w}$ induced by (\ref{eq F_Y U}).  Here $A_{Y,w}$ is the operator acting on $\cE_{Y, \infty, w}$ induced by a fiberwise linear bundle map $F_Y|_{\cY_{w}} \rightarrow F_Y|_{\cY_{w}}$ which  continuously depends on $w$.

For $k \geq 0$, define the $L^2_{k}$-inner product  on $\cE_{Y,  \infty}$ by
\[
    \langle \phi_1, \phi_2 \rangle_{k}
           = \int_{\cY_{z}}  \langle \phi_1, \phi_2 \rangle + \langle |D_{Y, z} |^k \phi_1, | D_{Y, z}|^{k} \phi_2 \rangle d\mu. 
\]
Here $|D_{Y, z}|$ denote the absolute value of  $D_{Y,z}$ defined as in \cite[Chapter VIII,  \S 9]{reed-simon}. 
We write $\cE_{Y,  k}$ for the completions with respect to the $L^2_k$-norm.  The operator $D_Y$ extends to a bounded operator
\[
       D_{Y} : \cE_{Y,  k}  \rightarrow \cE_{Y,  k-1}. 
\] 
For $w \in U$, the algebraic operator $A_{Y, w}$ extends to a bounded operator $\cE_{Y, k, w} \to \cE_{Y, k, w}$ which continuously depends on $w$ with respect to the operator norm, and $D_{Y, w} = D_{Y, z} + A_{Y, w}$ as operators $\cE_{Y, k, w} \rightarrow \cE_{Y, k-1, w}$ through the  local trivialization (\ref{eq F_Y U}).


We now recall the definition of a spectral section from \cite{MP}.

\begin{dfn}[{\cite{MP}}]   \label{def:spectral section}
    	A \emph{spectral section} for $D_{Y} : \cE_{Y,   k} \rightarrow \cE_{Y, k-1}$ over a compact base $B$ is a family of self-adjoint projections $P : \cE_{Y,  0} \rightarrow \cE_{Y,  0}$ so that there is a constant $C > 0$ such that the following holds.   Suppose that  $z\in B$, $u \in \cE_{Y, \infty, z}$, $D_{Y,z} u = \lambda u$ for some $\lambda \in \R$. Then $P_z u =u$ if $\lambda>C$ and $P_z u=0$ if $\lambda<-C$.  Here, a \emph{family} is meant to be a continuous family in the $L^2$-operator norm topology, parameterized by $B$.
\end{dfn}

We note that the condition that $P$ be  continuous families in the $L^2$-norm topology is equivalent to $P$ being  continuous families in any $L^2_k$-norm topology with $k>0$, using the interaction of $P$ with the spectrum of $D_{Y}$.  
Also note that since $P$ is self-adjoint, $P$ is an orthogonal projection onto its image with respect to the $L^2$-inner product. 
In fact, for $\phi_1, \phi_2 \in \cE_{Y, \infty, z}$, we have
\[
   \langle P \phi_1, (1-P) \phi_2 \rangle_0 =  \langle \phi_1,  P (1 -P) \phi_2 \rangle_0 = 0. 
\]
Here we have used the fact that $P$ is self-adjoint and $P^2 = P$.

 About the existence of spectral section, Melrose and Piazza proved the following.

\begin{thm} \cite[Proposition 1]{MP}
There exists a spectral section of $D_Y$ if and only if the index $\ind D_Y$ is zero in $K^1(B)$.  Here $\ind D_Y$ is the index defined in \cite{AS_skew-adjoint}. 
\end{thm}

Using a spectral  section, we can define the  Atiyah-Patodi-Singer index  for  a family of differential operators on a manifold with boundary.  Let $X$ be a compact, oriented  $2n$-manifold with boundary $Y$. Suppose that we have a fiber bundle 
\[
     \cX \rightarrow B
\]
with fiber $X$,  such that the family obtained by taking the boundary of each fiber of $\cX$ is $\cY$.     Also suppose that we have finite dimensional vector bundles
\[
          F_X^0, F^1_X \rightarrow \cX 
\]
and that isomorphisms 
\[  
     F_{X}^0|_{\cY} \cong F_{X}^1|_{\cY} \cong F_Y
\]
are given.  
Define infinite dimensional vector bundles over $B$ by 
\[
        \cE_{X, \infty}^0   = \bigcup_{z \in B} \Gamma (F_{X}^0|_{\cX_z}), \quad
        \cE_{X, \infty}^1 = \bigcup_{z \in B} \Gamma(F_{X}^1|_{\cX_z}).
\]
 We consider a family of  first order elliptic differential operators
\[
     D_X : \cE_{X, \infty}^0  \rightarrow \cE_{X, \infty}^1 
\]
such that
\[
      D_X = \frac{\partial}{\partial t} + D_Y
\]
near the boundary $\cY$. 
Here $t$ is the coordinate of the first component of a neighborhood of $\cY$  in $\cX$ which is diffeomorphic to $[0,1] \times \cY$. 
As before, we assume that for $z \in B$, there is an open neighborhood $U$ of $z$ and we can write $D_{X, w} = D_{X, z} + A_{X, w}$ for $w \in U$ through local trivializations of $F_{X}^0, F_{X}^1$. Here $A_{X, w}$ is an algebraic operator  induced by a  linear bundle map $F_{X}^0 |_{\cX_w} \to F_{X}^{1}|_{\cX_w}$ depending on $w$ continuously. 

We define Hilbert bundles  $\cE_{X, k}^0$, $\cE_{X, k}^1$ over $B$ for $k \geq 0$ using $D_X$ as before. 
Note that  $\ind D_Y = 0$ in $K^1(B)$ because of the cobordism invariance of the index. Hence there is  a spectral section of $D_Y$.

Let $(\cE_{Y, k-\frac{1}{2}})^{0}_{-\infty}$ be the subspace spanned by non-positive eigenvectors of $D_{Y}$ and $p^0$ be the $L^2_{k-\frac{1}{2}}$-orthogonal projection onto $(\cE_{Y,  k-\frac{1}{2}})^{0}_{-\infty}$. 
Let us consider the family of operators with the APS boundary condition. That is, we consider   the family of operators
\[
    (D_{X},   p^0  \circ r) : \cE_{X, k}^0 \rightarrow \cE_{X, k-1}^1 \oplus (\cE_{Y,  k-\frac{1}{2}})^{0}_{-\infty}.
\] 
Here $r$ is the restriction to  $\cY$. 
Note that this family is not continuous because of the spectral flow of $D_Y$.  Hence we can not use this family to define the index. A spectral section enables us to avoid this issue. 
Since our sign convention is different from that of \cite{MP},  taking a spectral section of $-D_Y$ rather than $D_Y$ is more convenient for us. 

\begin{prop}    \label{prop:Ind(D,P)}
Fix $k \geq 1$. 
Let $P$ be a spectral section of $-D_Y$.  We also denote by $P$ the image of $P$ in $\cE_{Y, 0}$, which is a Hilbert subbundle.  Let $\pi_{P}$ be the $L^2_{k-\frac{1}{2}}$-projection onto $P \cap \cE_{Y, k-\frac{1}{2}}$.  Then 
\[
          (D_X,  \pi_{P} \circ r ) :  \cE_{X, k}^0 \rightarrow \cE_{X, k-1}^1 \oplus   (P \cap \cE_{Y, k-\frac{1}{2}})
\] 
is a continuous family of Fredholm operators and we can define the index $\Ind (D_{X}, P) \in K(B)$.  
 The index $\ind (D_X, P)$ is independent of the choice of $k$. 
\end{prop}

Let $P$ be a spectral section of $-D_Y$.  We write $P$ for the image of $P$ in $\cE_{Y,0}$ too. Then we can take other spectral sections $Q, R$ of $-D_Y$ such that
\[
          Q \subset  P \subset   R. 
\]
See our construction of spectral sections in Section \ref{section : spectral sections}. Define a family of  operators
\[ 
         D_Y' := QD_YQ + (1-R)D_Y (1-R)     -   (1-Q)     P     +    R (1-P). 
\] 
We can see that  $D_Y'$ is injective   and that  $P$ is equal to the subspace spanned by negative eigenvectors of $D_{Y}'$.   Also we see that the operator $\mathbb{A} = D_{Y}' - D_Y$ is a family of smoothing operators acting on $\cE_{Y, k}$.  In fact, the image of $\mathbb{A}$ is included in the subspace spanned by finitely many eigenvectors of $D_Y$. 

Take a smooth function $f : \cX \rightarrow [0,1]$ such that
\[
         f(x) =
         \begin{cases} 
             1  & \text{for $x \in    [\frac{1}{2}, 1] \times \cY$, }  \\
             0 & \text{for $x \in \cX \smallsetminus  ([0,1] \times \cY)$}. 
           \end{cases} 
\]
Define $D_X' : \cE_{X, k}^0 \rightarrow \cE_{X, k-1}^1$ by
\[
     D_X' = D_X  + f \mathbb{A}. 
\]
Then 
\[
        D_X' = \frac{\partial}{\partial t} + D_Y'
\]
near  $\cY$ and there is no spectral flow of $D_Y'$. 
Therefore the family of operators $D_X'$ with the APS boundary condition defines the index $\ind D_X' \in K(B)$, and 
\[
      \ind D_{X}' = \ind (D_X, P). 
\]

\section{Connections on Hilbert bundles}   \label{subsec:Hilbert bundles} 

Since we will consider a connection on a Hilbert bundle later, we give the definition of a connection on a Hilbert bundle.  

Let $M$ be a connected, smooth $n$-manifold and $H$ be a Hilbert space.  We write $\operatorname{Aut} H$ and $\operatorname{End} H$ for the group of bounded linear isomorphisms $H \rightarrow H$ and the ring of bounded operators $H \rightarrow H$ respectively.   

Take a coordinate chart $(U, \varphi)$   of $M$.  For a map 
\[ 
     f : U \rightarrow H, 
\]
we define  the partial derivative $ \frac{\partial f}{\partial x^i}(x)$ at $x \in U$ by
\[
     \frac{\partial f}{\partial x^i}(x) = \lim_{h \rightarrow 0} \frac{1}{h}  \big( f \circ \varphi^{-1}(\varphi(x) + h e_i) - f (x) \big) 
\]
if the limit exists in $H$.  Here $e_i$ is the $i$-th standard basis of $\R^n$. 
For $\alpha = (\alpha_1, \dots, \alpha_n) \in (\Z_{\geq 0})^n$, we define $ \frac{\partial^{\alpha} f}{\partial x^{\alpha}}$ to be $\big( \frac{\partial}{\partial x^1} \big)^{\alpha_1}   \cdots \big( \frac{\partial}{\partial x^n} \big)^{\alpha_n} f$.  We say that $f$ is smooth if the derivatives $\frac{\partial^{\alpha} f}{\partial x^{\alpha}}$ exist and are continuous  on $U$ for all $\alpha \in (\Z_{\geq 0})^n$.

Let  $p : \cE \rightarrow M $ be a smooth Hilbert bundle on $M$ with fiber $H$.     
By a smooth Hilbert bundle we mean  that for each small open set $U$ in $M$, we have a local trivialization
\[
      \psi : \cE|_{U} \rightarrow U \times H
\]
such that if $\psi' : \cE|_{U'} \rightarrow U' \times H$ is another local trivialization with $U \cap U' \not= \emptyset$,  we can write
\[
         \psi' \circ      \psi^{-1} (x, v) = (x, g(x) v)
\]
for $x \in U \cap U'$ and $v \in H$, and $g$ is a map $U \cap U' \rightarrow \operatorname{Aut} H$ which  is smooth with respect to the operator norm. 
We always assume that Hilbert bundles are smooth.

A section $s : M \rightarrow \cE$ is said to be smooth if for each local trivialization $\psi : \cE|_{U} \rightarrow U \times H$, the map
\[
          \psi \circ s |_{U} : U \rightarrow U \times H
\]
is smooth.  We denote by $\Gamma(\cE)$  the space of smooth sections of $\cE$.

A connection $\nabla$ on $\cE$ is defined to be a map
\[
        \nabla : \Gamma(\cE) \rightarrow \Gamma( T^* M \otimes \cE) 
\]
having the following properties. 

\begin{enumerate}[(i)]
\item
For any sections $s_1, s_2 \in \Gamma(\cE)$,  
\[
    \nabla (s_1 + s_2) = \nabla s_1 + \nabla s_2. 
\]

\item
For any section $s \in \Gamma(\cE)$,  vector fields $X_1, X_2 \in \Gamma(TM)$ and smooth functions $f_1, f_2 \in C^{\infty}(M)$, 
\[
   \nabla_{f_1 X_1 + f_2 X_2} s = f_1 \nabla_{X_1} s + f_2 \nabla_{X_2} s. 
\]

\item
For any section $s \in \Gamma(\cE)$ and function $f \in C^{\infty}(M)$, 
\[
     \nabla(fs) = df \otimes s + f \nabla s. 
\]

%

\end{enumerate}

We define a connection $\nabla$ on the dual Hilbert bundle $\cE^*$ by
\[
            (\nabla_X \alpha )(s) : =   X( \alpha (s)) - \alpha(\nabla_X s). 
\]
Here $s \in \Gamma(\cE),   \alpha  \in \Gamma(\cE^*), X \in \Gamma(TM)$.

For connections $\nabla_1$, $\nabla_{2}$ on Hilbert bundles  $\mathcal{E}_1$, $\mathcal{E}_2$ over $M$, we define connections $\nabla_1 \oplus \nabla_2$, $\nabla_1 \otimes \nabla_2$ on $\mathcal{E}_1 \oplus \mathcal{E}_2$, $\mathcal{E}_1 \otimes \mathcal{E}_2$ by
\[
    \begin{split}
      & ( \nabla_1 \oplus \nabla_2) (s_1 \oplus s_2) :=  (\nabla_1 s_1 ) \oplus  (\nabla_2 s_2 ),  \\
      & (\nabla_1 \otimes \nabla_2) (s_1 \otimes s_2) := (\nabla_1s_1) \otimes s_2 + s_1 \otimes (\nabla_2 s_2). 
    \end{split}
\]

Write $\Omega^i(M; \mathcal{E})$ for the space of $i$-forms on $M$ with values in $\mathcal{E}$: 
\[
       \Omega^i(M; \mathcal{E}) :=   \Gamma( \Lambda^i T^*M  \otimes \cE ).
\]
For a connection $\nabla$ on $\cE$,  we have  the exterior derivative
\[
   d_{\nabla}: \Omega^i(M; \mathcal{E}) \rightarrow \Omega^{i+1}(M; \mathcal{E})
\]
defined by
\[
  \begin{split}
      & d_{\nabla} (\eta s ) =   (d \eta) s +  \eta \wedge  (\nabla s), \\
      & d_{\nabla} ( \eta_1 + \eta_2) = d_{\nabla} \eta_1 + d_{\nabla} \eta_2. 
  \end{split}
\]
Here $s \in \Gamma (\mathcal{E})$, $\eta \in \Omega^i(M)$, $\eta_1, \eta_2 \in \Omega^i(M; \mathcal{E})$.

We will make an assumption on the smoothness of $\nabla$.  Take a local trivialization $\psi : \cE|_{U} \rightarrow U  \times H$.  We can write  
\begin{equation}  \label{eq:psi nabla s}
            \psi    \nabla_{X}  s 
            = X   (\psi s ) + \omega (X) (\psi s )
\end{equation}
for  $s \in \Gamma(\cE|_{U})$ and  $X \in \Gamma(TU)$. 
Here for each $x \in U$ and $X \in T_xU$, $\omega(X)$ is a linear map $H \rightarrow H$. 
The  assumption is   that  $\omega(X)$ is bounded  and  the map  $\omega : TU \rightarrow \End H$ is  smooth with respect to the operator norm.   In particular, for a compact set $K$ in $U$,  the restriction $\omega(X)|_{K}$ is a Lipschitz continuous map $K \rightarrow \operatorname{End} H$. 

Under the above assumption, for any smooth curve  $c : [-\epsilon, \epsilon] \rightarrow U$  and $e \in \cE_{c(0)}$,  where $\epsilon > 0$, we have a unique smooth section $s$ of $\cE$ along  $c$ which solves the  ordinary differential equation in the Hilbert space: 
\[
     \frac{d}{dt} \psi(s (t))  + \omega  \bigg(  \frac{dc}{dt}(t)  \bigg) ( \psi s(t) ) = 0,  \quad
     s(0) = e. 
\] 
We call $s$ a parallel section of $\cE$ along $c$ or a  horizontal lift of $c$.   See \cite{Deimling} for the existence and uniqueness of solutions to the equation.  

Take  $x \in U$ and let $x^1, \dots, x^n$ be local coordinates around $x$. For $i = 1, \dots, n$, let $c_i$ be a smooth curve  $[-\epsilon, \epsilon] \rightarrow  U$ such that
\[
         c_i(0) = x,  \quad \frac{dc_i}{dt}(0) = \frac{\partial}{\partial x^i}. 
\]
 For $e \in \cE_x$, we define the horizontal component  $(T_{e} \cE)_H$  of $T_e \cE$ to be the subspace spanned by $\{  ds_i (\frac{\partial}{\partial t}) \}_{i=1, \dots, n}$.  Here $s_i$ is the parallel section of $\cE$ along $c_i$ with $s_i(0) = e$.  We can show that $(T_{e} \cE)_H$ is independent of the choice of the local coordinates $x^1, \dots, x^n$.   The connection $\nabla$ defines a decomposition 
\[
        T\cE = (T\cE)_H \oplus p^* \cE. 
\]
Note that we have a natural isomorphism
\[
         (T\cE)_{H} \cong p^* TM. 
\] 

As usual, there is a unique  $2$-form $F_{\nabla} \in \Omega^2(M; \End \cE)$ such that
\[
        d_{\nabla} \circ d_{\nabla} \eta = F_{\nabla} \wedge \eta 
\]
for   $\eta \in \Omega^i(M;  \cE)$.  We can write 
\[
         \psi F_{\nabla} = d\omega + \omega \wedge \omega
\]
on $U$, where $\omega$ is the $1$-form with values in $\operatorname{End} H$ in (\ref{eq:psi nabla s}). 
 We call $F_{\nabla}$ the curvature of $\nabla$. 
We say that $\nabla$ is flat if $F_{\nabla} = 0$. 

We can associate a flat connection to a representation 
\[
     \rho : \pi_1(M) \rightarrow   \mathrm{Aut} (H)
\]
in the usual way. 
Let $\cE$ be the Hilbert bundle on $M$ defined by
\[
    \cE := \tilde{M} \times_{\rho}  H, 
\]
where $\tilde{M}$ is the universal cover of $M$.  A smooth section $s : M \rightarrow \cE$ corresponds to a smooth map $\tilde{s} : \tilde{M} \rightarrow H$ such that 
\[
     \tilde{s}(\gamma \cdot x) =  \rho(\gamma) \tilde{s}(x) 
\]
for $x \in \tilde{M}$, $\gamma \in \pi_1(M)$.
Taking the exterior derivative,  we have
\[
      d \tilde{s}(\gamma \cdot x) = \rho(\gamma) d\tilde{s}(x) 
\]
and hence $d\tilde{s}$ descends to  a section of $T^* M \otimes \cE$, which we denote by $\nabla s$.  We can show that the map 
 \[
      \nabla : \Gamma(\cE) \rightarrow \Gamma(T^*M \otimes \cE)
 \]
is a flat connection on $\cE$.


\section{Notation and main statements}   \label{sec:main results}

Let $Y$ be a connected, closed, oriented 3-manifold and take a Riemannian  metric $g$ and $\mathrm{spin}^c$ structure $\frak{s}$ with $c_1(\frak{s})$ torsion on $Y$.   We denote the spinor bundle over $Y$ by $\mathbb{S}$. 
Fix a $\mathrm{spin}^c$ connection $A_0$ on $Y$ with $F_{A_0} = 0$. For a $1$-form $a \in  \Omega^1(Y)$,  we write $D_a$ for the Dirac operator $D_{A_0+ ia}$ which acts on the space  $C^{\infty}(\mathbb{S})$ of smooth sections of $\mathbb{S}$.   The family $\{ D_{a} \}_{a \in  \mathcal{H}^1(Y)}$ parameterized by the harmonic $1$-forms on $Y$ induces an operator $D$ acting on the vector bundle 
\[
           \mathcal{E}_{\infty} = \mathcal{H}^1(Y) \times_{H^1(Y;\mathbb{Z})} C^{\infty}(\mathbb{S})
\] 
over the Picard torus $\mathrm{Pic}(Y) = H^1(Y ; \mathbb{R}) / H^1(Y; \mathbb{Z})$.     
The action of $H^1(Y;\mathbb{Z})$ is defined by
\[
     h (a, \phi) = (a - h, u_h \phi)
\]
for $h \in H^1(Y;\mathbb{Z})$, $a \in \mathcal{H}^1(Y)$, $\phi \in C^{\infty}(\mathbb{S})$, where  $u_h$ is the harmonic gauge transformation  $Y \rightarrow U(1)$ with $-i u_h^{-1} du_{h} = h$ in $ \mathcal{H}^1(Y)$.

For $k \in \mathbb{R}_{\geq 0}$, define a Hilbert bundle on $\mathrm{Pic}(Y)$ by 
\[
     \mathcal{E}_k := \mathcal{H}^1(Y) \times_{H^1(Y;\mathbb{Z})} L^2_k(\mathbb{S}).
\]
For $k \geq 1$, the operator $D$ on $\mathcal{E}_{\infty}$ extends to a bounded operator
\[
       D : \mathcal{E}_{k} \rightarrow \mathcal{E}_{k-1}. 
\]

We have a canonical  flat connection $\nabla$ on $\mathcal{E}_{k}$ corresponding to the representation 
\[
      \begin{array}{ccc}  
      \pi_1(B) =  H^1(Y;\mathbb{Z}) & \rightarrow & \operatorname{Aut}(L^2_k(\mathbb{S}))    \\
        h  & \mapsto & u_h, 
      \end{array}
\]
where $B = \mathrm{Pic}(Y)$,  $\operatorname{Aut} (L^2_k(\mathbb{S}))$ is the group of bounded linear automorphisms on $L^2_k(\mathbb{S})$.    See Section \ref{subsec:Hilbert bundles}. 

A smooth section $s : B \rightarrow   \mathcal{E}_{k}$ can be considered to be a smooth map
\[
         \tilde{s} : \mathcal{H}^1(Y) \rightarrow L^2_k(\mathbb{S})
\]
such that 
\[
      \tilde{s}( a - h) =  u_{h} \tilde{s}(a)
\]
for  $h \in \operatorname{im} ( H^1(Y; \mathbb{Z}) \rightarrow \mathcal{H}^1(Y) )$. The covariant derivative $\nabla s$ corresponds to  the usual exterior derivative $d\tilde{s}$ of $\tilde{s}$.

Denote $\langle \cdot,  \cdot \rangle_{a, k}$ for the $L^2_k$-inner product with respect to $D_{a}$:
\[
      \langle \phi_1, \phi_2 \rangle_{a, k} 
      = \langle  \phi_1,  \phi_2 \rangle_{0} + 
         \langle  |D_a|^{k} \phi_1,  |D_a|^{k} \phi_2 \rangle_{0}
\]
where $\langle \cdot, \cdot \rangle_0$ is the $L^2(Y)$-inner product.  Here, we write $|D_{a}|$ for the absolute value of the Dirac operator $D_a$, defined using the spectral theorem (see e.g. \cite[Chapter VIII, \S 9]{reed-simon}).  
Then the family $\{ \langle \cdot, \cdot \rangle_{a, k} \}_{a \in \mathcal{H}^1(Y)}$ of $L^2_k$-inner products induces a fiberwise inner product $\langle \cdot, \cdot \rangle_k$ on $\mathcal{E}_k$. To see this,   take  sections $s_1, s_2 : B \rightarrow \mathcal{E}_k$ and $h \in \operatorname{im} (  H^1(Y;\mathbb{Z}) \rightarrow \mathcal{H}^1(Y))$.   Let $\tilde{s}_1, \tilde{s}_2 : \mathcal{H}^1(Y) \rightarrow L^2_k(\mathbb{S})$ be the maps corresponding to $s_1, s_2$.  Note that 
\[
       \tilde{s}_i(a-h) = u_h \tilde{s}_i(a), \quad D_{a-h} = u_h D_a u_{h}^{-1}. 
\]
Therefore
\[
    \begin{split}
        & \langle \tilde{s}_1(a - h), \tilde{s}_2(a-h) \rangle_{a-h, k}   \\
         & =  \langle  \tilde{s}_1(a-h),   \tilde{s}_2(a-h) \rangle_{0} + \langle |D_{a-h}|^{k} \tilde{s}_1(a-h), |D_{a-h}|^{k} \tilde{s}_2(a-h)\rangle_{0}    \\
         & =   \langle u_{h} \tilde{s}_1(a), u_{h} \tilde{s}_2(a) \rangle_{0} + 
           \langle  (u_{h} |D_{a}|^k u_{h}^{-1}) u_{h} \tilde{s}_1(a), (u_{h} |D_{a}|^k u_{h}^{-1}) u_{h} \tilde{s}_2(a)  \rangle_{0}   \\
         & = \langle u_h \tilde{s}_1(a), u_h \tilde{s}_2(a) \rangle_{0} +  \langle  u_h  |D_a|^k \tilde{s}_1(a), u_h  |D_a|^k \tilde{s}_2(a) \rangle_{0}   \\
         & = \langle  \tilde{s}_1(a),  \tilde{s}_2(a) \rangle_{a, k}. 
   \end{split}
\]
This implies that the family $\{ \langle \cdot, \cdot \rangle_{a, k} \}_{a \in \mathcal{H}^1(Y)}$  descends to a fiberwise inner product $\langle \cdot, \cdot \rangle_{k}$ on $\mathcal{E}_k$.   We write $\| \cdot \|_{k}$ for the fiberwise norm on $\mathcal{E}_k$ induced by $\langle \cdot, \cdot \rangle_{k}$.

The flat connection $\nabla$, with respect to $k = 0$,  defines a decomposition
\begin{equation}  \label{eq TE}
           T \mathcal{E}_0 = p^* TB \oplus p^*  \mathcal{E}_0, 
\end{equation} 
where $p : \mathcal{E}_0 \rightarrow B$ is the projection, $p^* TB$ is the horizontal component and $p^* \mathcal{E}_0$ is the vertical component.   See Section \ref{subsec:Hilbert bundles}. 
  Note that the flat connection $\nabla$ is not compatible with the inner product $\langle \cdot, \cdot \rangle_{k}$ on $\mathcal{E}_k$ for $k > 0$.

Put
\[
             \mathcal{W}_k  = B \times  L^2_k(  \operatorname{im} d^*  ), 
\]
where $d^* : i \Omega^2(Y) \rightarrow i \Omega^1(Y)$ is the adjoint of the exterior derivative.  We consider $\mathcal{W}_k$ to be a trivial Hilbert bundle on $B$. 
The Seiberg-Witten equations on $Y \times \mathbb{R}$ are equations for $\gamma = (\phi, a, \omega) : \mathbb{R} \rightarrow L^2_k(\mathbb{S}) \times \mathcal{H}^1(Y) \times L^2_k(\im d^*)$ written as
\begin{equation}\label{eq:seiberg-witten}
    \begin{aligned}
        \frac{d\phi}{d t} &= - D_a \phi(t) - c_1(\gamma(t)), \\
        \frac{d a}{dt} &= - X_{H}(\phi), \\
        \frac{d\omega}{dt} &= - *d \omega - c_2(\gamma(t)). 
    \end{aligned}
\end{equation}
  The terms  $X_H(\phi)$, $c_1(\gamma(t))$,  $c_2(\gamma(t))$ are defined by
\begin{equation} \label{eq quadratic terms}
  \begin{aligned}
       & q(\phi) =  \rho^{-1} \bigg( \phi \otimes \phi^{*} -  \frac{1}{2} | \phi |^2 \operatorname{id} \bigg) \in \Omega^1(Y),  \\
       & X_H(\phi) = q(\phi)_{\mathcal{H}} \in \mathcal{H}^1(Y), \\
       &  c_1(\gamma(t)) =   ( \rho(\omega(t)) - i \xi (\phi(t)) ) \phi(t), \\
       & c_2(\gamma(t)) =  \pi_{\operatorname{im} d^*} (q(\phi(t))), 
  \end{aligned}
\end{equation}
where $\rho$ is the Clifford multiplication which defines an isomorphism
\[
                  T^* Y \otimes \mathbb{C} \rightarrow \mathfrak{sl}(\mathbb{S}), 
\]    
$q(\phi)_{\mathcal{H}}$ is the harmonic component of $q(\phi)$, 
$\pi_{\operatorname{im} d^*}$ is the $L^2$-projection on $\mathcal{W}_k$, and $\xi(\phi)$ is the function $Y \rightarrow \mathbb{R}$  satisfying
\[
            d \xi(\phi) =  i \pi_{ \operatorname{im} d } (q(\phi)), \quad
            \int_{Y} \xi(\phi) \operatorname{vol} = 0. 
\]

The equations (\ref{eq:seiberg-witten}) do not correspond to the Seiberg-Witten equations in Coulomb gauge in $Y\times \mathbb{R}$ (i.e., solutions of the equations are not Seiberg-Witten trajectories in Coulomb gauge).  Instead, we use the \emph{pseudo-temporal gauge} of \cite[Definition 5.2.1]{LidmanManolescu} (see also \cite[Section 3]{Manolescu-b1=0}).  The correspondence between solutions of (\ref{eq:seiberg-witten}) and the Seiberg-Witten equations modulo gauge is given by Proposition 5.4.2 of \cite{LidmanManolescu}.  Note that Lidman-Manolescu work in the setting of $b_1=0$, however, the argument is local in the configuration space and passes over without change to the $b_1>0$ case.  We will, however, call solutions of (\ref{eq:seiberg-witten}) \emph{Seiberg-Witten trajectories}.

The equations descend to equations for  $\gamma = (\phi, \omega) : \mathbb{R} \rightarrow \mathcal{E}_k \oplus  \mathcal{W}_{k}$:
\begin{equation}   \label{SW eq}
    \begin{aligned}
           \Big( \frac{d\phi}{dt} (t)  \Big)_{V}  &=   - D \phi(t) - c_1(\gamma(t)),   \\
           \Big(  \frac{d \phi}{dt} (t) \Big)_{H} &=   -X_H(\phi(t)), \\ 
           \frac{d\omega}{dt}(t) & = -*d \omega(t) -  c_2(\gamma(t)). 
    \end{aligned}
\end{equation}
Here $\big( \frac{d\phi}{dt} \big)_{V}$,  $\big( \frac{d\phi}{dt} \big)_{H}$ are the vertical component and horizontal component of $\frac{d\phi}{dt}$ respectively,  and we have suppressed the subscript from $D$.

Assume that the family index of the family of Dirac operators $D$ over $\mathrm{Pic}(Y)$ vanishes, that is:
\[
        \operatorname{ind} D = 0 \in K^1(B). 
\]

Then we can choose a spectral section $P_0$ of $-D$, and using $P_0$, we can define a self-adjoint (with respect to $L^2$)  operator
\[
        \mathbb{A} : C^{\infty}(\mathbb{S}) \rightarrow C^{\infty}(\mathbb{S})
\]
such that the image of $\mathbb{A}$ is included in a subspace spanned by finitely many eigenvectors of $D$, and so that $\ker (D + \mathbb{A}) = 0$. 
 Put $D' = D + \mathbb{A}$.  The $L^2$-closure of the subspace spanned by the negative eigenvectors of $D'$ is exactly the image of $P_0$, acting on $L^2$ (See \cite{MP}  and Section \ref{section:def of spectral section} for all of these assertions.).  In the future, for a spectral section $P$, we will also often write $P$ to refer to the image of $P$.   
We have a decomposition
\[
         \mathcal{E}_{\infty} = \mathcal{E}_{\infty}^{+} \oplus \mathcal{E}_{\infty}^{-}, 
\]
where $\mathcal{E}^{+}_{\infty}$ and $\mathcal{E}^{-}_{\infty}$ are the subbundles of $\mathcal{E}$ spanned by positive eigenvectors and negative eigenvectors of $D'$.

For positive numbers $k_+, k_-$ and $s_1, s_2 \in C^{\infty}(\mathbb{S})$,  we define an inner product $\langle  s_1, s_2 \rangle_{a, k_+, k_-}$ by 
\begin{equation} \label{eq L2 k_+ k_-}
      \langle   s_1, s_2 \rangle_{a, k_+, k_-} := 
      \langle   |D_a'|^{k_+} s_1^+,     |D_a'|^{k_+} s_2^+ \rangle_{0} + 
      \langle   |D_a'|^{k_-} s_1^-,     |D_a'|^{k_-} s_2^-  \rangle_{0}, 
\end{equation}
where $s_j = s_j^+ + s_j^-$ and $s_j^+ \in \mathcal{E}^+_{\infty}$, $s_{j}^{-} \in \mathcal{E}_{\infty}^{-}$.  
 Note that we do not need the term $\langle s_1, s_2 \rangle_0$,  since the kernel of $D'_a$ is zero. 
 We call this inner product the $L^2_{k_+, k_-}$-inner product. 


As before, the family $\{  \langle \cdot, \cdot \rangle_{a, k_+, k_-}   \}_{a \in \mathcal{H}^1(Y)}$ induces a fiberwise inner product on $\mathcal{E}_{\infty}$  and we denote by $\mathcal{E}_{k_+, k_-}$ the completion of $\mathcal{E}_{\infty}$ with respect to the norm $\| \cdot \|_{k_+, k_-}$. 

On the space $\operatorname{im} d^*  \cap \Omega^1(Y)$, we define an inner product $\langle \cdot, \cdot \rangle_{k_+, k_-}$ by
\[
       \langle  \omega_1, \omega_2 \rangle_{k_+, k_-}
       = \langle  |*d|^{k_+} \omega_1^+,   |*d|^{k_+} \omega_2^+ \rangle_0 +
          \langle  |*d|^{k_-} \omega_1^-,  |*d|^{k_-} \omega_2^- \rangle_0,
\]
where $\omega_j = \omega_j^{+} + \omega_j^{-}$ and $\omega_j^+$ is in the subspace spanned by positive eigenvectors of the operator $*d$ and $\omega_j^-$ is in the negative one.  We denote by $\mathcal{W}_{k_+, k_-}$ the completion of the vector bundle $B \times \operatorname{im} d^*$ over $B$ with respect to $\| \cdot \|_{k_+, k_-}$.   We will use $L^2_{k-\frac{1}{2}, k}$-norm in Section \ref{section 4-mfd with boundary} to define the relative Bauer-Furuta invariant. 
See Remark \ref{rem:L^2_{k-1/2, k}} for the reason why we use the $L^2_{k-\frac{1}{2}, k}$ norm.

We recall the definition of finite-type trajectories (from e.g. \cite[Definition 1]{Manolescu-b1=0}):
\begin{dfn}\label{def:finite-type}
    A Seiberg-Witten trajectory $\gamma(t)=(\phi(t),a(t),\omega(t))$ is \emph{finite-type} if $CSD(\gamma(t))$ and $\lVert \phi(t) \rVert_{C^0}$ are bounded functions of $t$, where $CSD$ is the Chern-Simons-Dirac functional.
\end{dfn}

The following is a direct consequence of a standard argument in Seiberg-Witten theory, see e.g. \cite[Proposition 1]{Manolescu-b1=0}.

\begin{prop}  \label{prop compactness}
For positive numbers $k_+, k_- > 0$,   there is a positive constant $R_{k_+, k_-} > 0$ such that for any finite-type solution $\gamma : \mathbb{R} \rightarrow  \mathcal{E}_2 \times \mathcal{W}_2$ to (\ref{SW eq}), we have
\[
       \lVert \gamma(t)  \rVert_{k_+, k_-} \leq R_{k_+, k_-}
\]
for all $t \in \mathbb{R}$. 
\end{prop}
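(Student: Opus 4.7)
The plan is to adapt the proof of \cite[Lemma 1]{Manolescu-b1=0} to the parameterized, spectrally-weighted setting. Since the Seiberg--Witten equations \eqref{SW eq} take the form of a self-adjoint linear operator plus polynomial lower-order terms, the argument reduces to three standard steps: (i) obtain a uniform pointwise bound on the spinor component; (ii) bootstrap to uniform Sobolev estimates; and (iii) compare the ordinary $L^2_k$-norm to the spectrally-weighted norm $\|\cdot\|_{k_+,k_-}$.

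For step (i), I would apply the Weitzenb\"ock formula to $\tfrac12 \Delta|\phi|^2$, obtaining the standard pointwise differential inequality
\[
\tfrac12 \Delta |\phi|^2 + |\phi|^4 + \tfrac{s}{4}|\phi|^2 \leq C(1+|\omega|)|\phi|^2,
\]
where $s$ is the scalar curvature of $Y$. Combined with finite energy (boundedness of the oscillation of the Chern--Simons--Dirac functional along the trajectory) and the maximum principle on each time slice, this yields a uniform pointwise bound $|\phi(t)|_{C^0(Y)}\leq C_0$, independent of $t$ and of the particular representative $a(t)\in\mathcal{H}^1(Y)$. The boundedness of $a(t)$ in $\mathcal{E}_k$ is automatic since $a(t)$ projects to the compact torus $B=\mathrm{Pic}(Y)$, and the analogous $L^\infty$-bound on $\omega(t)$ follows from the gradient-flow structure together with finite energy.

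For step (ii), I would substitute the $C^0$-bound on $\phi$ into the quadratic terms $c_1(\gamma)$, $c_2(\gamma)$ of \eqref{eq quadratic terms}, and use elliptic regularity for $D_a$ and $*d$---whose constants depend continuously, and hence uniformly over the compact base $B$, on $a$---to bootstrap from initial $L^2$-bounds to uniform $L^2_k$-bounds for every integer $k\geq 0$. For step (iii), the operator $\mathbb{A}$ has finite-rank image contained in a fixed finite-rank subbundle of $\mathcal{E}_\infty$ over $B$; outside this subbundle $|D'_a|^{k_\pm}$ agrees with $|D_a|^{k_\pm}$, while on the subbundle both operators are bounded uniformly in $a\in B$. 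This yields a uniform norm equivalence
\[
\tfrac{1}{C}\|\cdot\|_{\max(k_+,k_-)} \;\leq\; \|\cdot\|_{k_+,k_-} \;\leq\; C\|\cdot\|_{\max(k_+,k_-)},
\]
with analogous control for the $\mathcal{W}$-factor via the Hodge decomposition of $*d$. Combined with step (ii), this gives the claimed constant $R_{k_+,k_-}$.

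The principal technical obstacle will be step (iii): making the norm comparison genuinely uniform over $a\in B$ requires that the positive/negative spectral decomposition of $D'_a$ vary continuously in $a$ in a sufficiently strong sense. The defining property of a spectral section---that $P_0$ agrees with the spectral projection onto positive eigenvectors outside a compact interval $[-C,C]$---is essential here, since it reduces the comparison to a continuously varying finite-rank subbundle over the compact torus $B$, on which uniform bounds are automatic. A secondary subtlety is ensuring that $\mathbb{A}$ has been constructed compatibly with the $H^1(Y;\mathbb{Z})$-translation action used to descend $\mathcal{H}^1(Y)$ to $B$, so that the spectrally-decomposed norm genuinely descends; this should be supplied by the spectral-section construction in the earlier part of the section.
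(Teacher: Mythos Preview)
Your approach is exactly what the paper intends: it offers no proof beyond the sentence ``direct consequence of a standard argument in Seiberg--Witten theory, see e.g.\ \cite[Lemma~1]{Manolescu-b1=0},'' and you are correctly unpacking that citation. Steps (i)--(ii) are the standard pointwise-then-bootstrap argument, and the observation in step (iii) that the comparison between $|D_a'|$ and $|D_a|$ reduces to a finite-rank subbundle over the compact base $B$ is the right way to handle the passage to the $\|\cdot\|_{k_+,k_-}$ norm.

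One correction in step (iii): the two-sided inequality you write,
\[
\tfrac{1}{C}\|\cdot\|_{\max(k_+,k_-)} \;\leq\; \|\cdot\|_{k_+,k_-} \;\leq\; C\|\cdot\|_{\max(k_+,k_-)},
\]
is false on the left unless $k_+ = k_-$. If, say, $k_- < k_+$, then on the negative spectral subspace $\|\cdot\|_{k_+,k_-}$ only controls $\|\cdot\|_{k_-}$, not $\|\cdot\|_{k_+}$. Fortunately only the right-hand inequality is needed for the proposition, and that one is correct (with the finite-rank comparison of $D$ and $D'$ you describe). So the argument goes through; just drop the claim of equivalence and keep the upper bound.
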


Write $\mathcal{E}_0(D)^{b}_{b'}$ for the span of eigenvectors of $D$ with eigenvalue in $(b',b]$, as a space over $\mathcal{H}^1(Y)$ (Note that it will not usually be a bundle).  For a spectral section $P$ of $D$, we also write $P$ for the image of the projection $P$.  By Theorem \ref{thm spectral section mu mu+delta} below, we can take sequences of smooth spectral sections $P_n, Q_n$, of $-D, D$, respectively,  such that
\begin{equation}\label{eq:spectral-conditions-1}
    \begin{aligned}
         & (\mathcal{E}_0(D))^{\mu_{n, -}}_{-\infty} \subset P_n \subset  (\mathcal{E}_0(D))^{\mu_{n,+}}_{-\infty}, \\
         & (\mathcal{E}_0(D))_{\lambda_{n, +}}^{\infty} \subset Q_n \subset ( \mathcal{E}_0(D))_{\lambda_{n, -}}^{\infty}, 
    \end{aligned}
\end{equation}
with 
\begin{equation}\label{eq:spectral-conditions-2}
  \begin{aligned}
         & \mu_{n, -}  + 10  < \mu_{n, +}  < \mu_{n, +} + 10  < \mu_{n + 1, -},    \\
         & \lambda_{n+1, +} < \lambda_{n, -} - 10 < \lambda_{n, -} < \lambda_{n, +} - 10, \\
         & \mu_{n, + } - \mu_{n,-} < \delta,  \\
         &   \lambda_{n, +} - \lambda_{n, -} < \delta. 
   \end{aligned}
\end{equation}
Here $\delta > 0$ is a positive constant independent of $n$, and a smooth spectral section means a spectral section which depends smoothly on the base space $B$. 



We define a finite rank subbundle  $F_n$ in $\mathcal{E}_{\infty}$ by
\[
         F_n = P_n \cap Q_n. 
\]
Define a connection $\nabla_{F_n}$ on $F_n$ by
\[
         \nabla_{F_n} = \pi_{F_n} \nabla, 
\]
where $\pi_{F_n}$ is the $L^2_{k_+, k_-}$-projection on $F_n$. 
The connection $\nabla_{F_n}$ defines a decomposition
\begin{equation}  \label{eq TF}
               TF_n = (TF_n)_{H, \nabla_{F_n}} \oplus (TF_n)_{V}  \cong p^* TB \oplus p^* F_n. 
\end{equation}
A calculation shows that  the horizontal component $(T_{\phi}F_n)_{H, \nabla_{F_n} }$ of $TF_n$ at $\phi \in F_n$ is given by 
\begin{equation}   \label{eq hor TF}
              \{  (v,  (\nabla_v \pi_{F_n}) \phi ) | v \in T_a B  \}   \subset   
              (p^* TB \oplus p^* \mathcal{E}_{0})_{\phi} = T_{\phi} \mathcal{E}_0.
\end{equation}
Here $a = p(\phi) \in B$.

Let $W_n$ be the finite dimensional subbundle  of the   Hilbert bundle $\mathcal{W}_k$ spanned by the eigenvectors of the operartor $*d$ whose eigenvalues are in the interval  $(\lambda_{n, -}, \mu_{n, +}]$:
\[
       W_n = (\mathcal{W}_k)_{\lambda_{n,-}}^{\mu_{n,+}} = B \times  L^2_{k}(\im d^*)_{\lambda_{n,-}}^{\mu_{n, +}}
\]

 Fix a positive number   $R'$ with  $R' \geq 100 R_{k_+, k_-}$ and  a smooth function 
 \[
   \chi : \mathcal{E}_{k_+, k_-} \oplus \mathcal{W}_{k_+, k_-} \rightarrow [0,1]
 \]
 with compact support such that $\chi (\phi, \omega) = 1$ if $\| (\phi, \omega) \|_{k_+, k_-} \leq R'$. We consider the following equations for $\gamma  = (\phi, \omega) : \mathbb{R} \rightarrow F_n \oplus W_n$ which we call the \emph{finite dimensional approximation} of (\ref{SW eq}): 
\begin{equation}   \label{eq for gamma}
   \begin{aligned}
  \Big( \frac{d\phi}{dt} (t) \Big)_{V} &= - \chi\{  (\nabla_{X_{H}} \pi_{F_n})  \phi(t)  +  \pi_{F_n} (D \phi(t) + c_1(\gamma(t)) ) \} ,   \\ 
  \Big( \frac{d \phi}{dt}(t)    \Big)_{H} & =  - \chi X_{H}(\phi(t)),   \\
            \frac{d \omega}{dt}(t)  &=  - \chi \{ *d \omega(t) +  \pi_{W_n} c_2(\gamma(t)) \}.
   \end{aligned}
\end{equation}
Here  
$\big( \frac{d\phi}{dt} \big)_{V},  \big(\frac{d\phi}{dt} \big)_{H}$ are the vertical component and the horizontal component with respect to the fixed decomposition (\ref{eq TE}) rather than (\ref{eq TF}).  It follows from (\ref{eq hor TF}) that the right hand side of (\ref{eq for gamma}) is a tangent vector on $F_n \oplus W_n$. 
Hence the equations (\ref{eq for gamma}) define a flow
\[
     \varphi_n = \varphi_{n, k_+, k_-} :(F_n \oplus W_n) \times \mathbb{R} \rightarrow F_n \oplus W_n. 
\]
(This flow depends on $k_+, k_-$ because $\pi_{F_n}$ does.)

We have  decompositions
\[
     F_n = F_n^{+} \oplus F_{n}^{-},   \quad
     W_n = W_n^+ \oplus W_n^-, 
\]
where $F^+_n, W_n^+$ are the positive eigenvalue components of $D'$, $*d$ and $F^{-}_n, W_n^{-}$ are the negative eigenvalue components. 
In the remainder of Chapter \ref{sec:findim-appx}, we will prove the following:

\begin{thm}  \label{thm isolating nbd}
Let $k_+, k_-$ be  half integers (that is, $k_+, k_- \in \frac{1}{2}\mathbb{Z}$)  with $k_+, k_- > 5$ and with $| k_+ - k_- | \leq \frac{1}{2}$. 
Fix a positive number $R$ with $R_{k_+, k_-} < R < \frac{1}{10} R'$, where $R_{k_+, k_-}$ is the constant of Proposition \ref{prop compactness}.    Then 
\[
   \big(  B_{k_+} (F_n^+; R) \times_B B_{k_-}(F_n^-; R) \big)  \times_{B}   \big( B_{k_+}(W_n^+; R) \times_{B} B_{k_-}(W_n^-; R) \big)
 \]
is an isolating neighborhood of  the flow $\varphi_{n, k_+,k_-}$ for $n \gg 0$.  Here $B_{k_{\pm}}(F_n^{\pm};R)$ are the disk bundle of $F_n^{\pm}$ of radius $R$ in $L^2_{k_{\pm}}$ and $B_{k_+}(F_n^+; R) \times_B B_{k_-}(F_n^-; R)$ is the fiberwise product.  Similarly for $B_{k_{\pm}}(W_n^{\pm};R)$. 
\end{thm}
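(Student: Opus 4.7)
The plan is to argue by contradiction, adapting Manolescu's original isolating neighborhood argument \cite{Manolescu-b1=0} to the parameterized setting. Suppose that no such $n$ works; then for arbitrarily large $n$ there is a full trajectory $\gamma_n = (\phi_n, \omega_n) \colon \mathbb{R} \to F_n \times W_n$ of $\varphi_n$ contained in the closure of $N_n := (B_{k_+}(F_n^+;R)\times_B B_{k_-}(F_n^-;R)) \times (B_{k_+}(W_n^+;R)\times B_{k_-}(W_n^-;R))$ for all time, but touching $\partial N_n$. After translating in time, we may assume the boundary intersection occurs at $t=0$, so one of the four component norms equals $R$. Note that the cutoff $\chi$ is identically $1$ on $N_n$ because $R < R'/10$, so on this region $\varphi_n$ is governed by the uncut approximate equations.

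The next step is to establish uniform bounds and equicontinuity for the sequence $\{\gamma_n\}$. The containment in $N_n$ gives uniform $L^2_{k_+,k_-}$ bounds on $\phi_n(t)$ and $\omega_n(t)$. Since $B$ is compact, the basepoints $a_n(t) = p(\phi_n(t))$ lie in a compact set, and the second equation in \eqref{eq for gamma} together with the quadratic nature of $X_H$ gives uniform Lipschitz control on $a_n$. Upgrading to uniform control on $\gamma_n$ itself requires inserting the equations back into themselves (a standard bootstrap) using the spectral bounds on $F_n, W_n$ implied by \eqref{eq:spectral-conditions-2}: the linear parts $D$ and $*d$ on the truncation $F_n \times W_n$ have only moderate growth in the directions that matter on the finite-energy region. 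By Arzelà–Ascoli, pass to a subsequence converging on compact time intervals (in a slightly weaker norm, then uniformly by interpolation) to a limit $\gamma = (\phi, \omega) \colon \mathbb{R} \to \mathcal{E}_{k_+,k_-} \times \mathcal{W}_{k_+,k_-}$ with a marked point on $\partial N := (\overline{B_{k_+}(\mathcal{E}^+;R)}\times_B \overline{B_{k_-}(\mathcal{E}^-;R)}) \times \overline{B}(\mathcal{W};R)$ at $t=0$.

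The heart of the argument is identifying $\gamma$ as a solution of the true Seiberg--Witten equations \eqref{SW eq}. Here I would use the spectral section setup in three ways. First, conditions \eqref{eq:spectral-conditions-1}--\eqref{eq:spectral-conditions-2} guarantee $F_n \uparrow \mathcal{E}_\infty$ and $W_n \uparrow \mathcal{W}_\infty$ in the sense that for any fixed $\psi$ of finite $L^2_{k_+,k_-}$-norm, $\pi_{F_n}\psi \to \psi$; this takes care of the vertical Dirac and $*d$ terms in the limit. Second, for the quadratic terms $c_1(\gamma_n), c_2(\gamma_n), X_H(\phi_n)$, one uses the Sobolev multiplication / compact embedding $L^2_{k_\pm} \hookrightarrow L^4$ (legitimate since $k_\pm > 5$) together with the $F_n$-projection converging to the identity, to pass these nonlinearities to the limit. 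Third, and most delicate in the parameterized case, is the term $(\nabla_{X_H}\pi_{F_n})\phi_n$ arising from the horizontal component in \eqref{eq for gamma}; this is precisely where the parameterized theory deviates from Manolescu's. The approach is to show that the uniform spectral gap ($\mu_{n,+} - \mu_{n,-} < \delta$, and analogously for $\lambda_{n,\pm}$) combined with the separation $\mu_{n+1,-} - \mu_{n,+} > 10$ between successive spectral sections keeps $\nabla_v \pi_{F_n}$ uniformly bounded as an operator on the finite-energy region, and that it tends to zero on any fixed vector as $n \to \infty$; this in turn uses that $\pi_{F_n}$ is a difference of projections onto subspaces separated by a fixed spectral gap, whose derivatives along $B$ are controlled by holomorphic functional calculus against the resolvent of $D$.

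The main obstacle is precisely this last point: controlling the horizontal derivative of $\pi_{F_n}$ in a norm compatible with $\langle \cdot,\cdot\rangle_{k_+,k_-}$, given that $\nabla$ is \emph{not} compatible with this inner product for $k > 0$. The resolution should come from the fact that the spectral sections are chosen so that the spectrum of $D_a$ has controlled behavior (small eigenvalue gaps inside each section, large gaps between sections), uniformly over the compact base $B$; this is the content of Theorem \ref{thm spectral section mu mu+delta}. Once $\gamma$ is identified as a solution of \eqref{SW eq}, finite-energy follows from the uniform-in-time bound on $\|\gamma(t)\|_{k_+,k_-}$, so Proposition \ref{prop compactness} yields $\|\gamma(t)\|_{k_+,k_-} \leq R_{k_+,k_-} < R$ for all $t$. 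In particular $\gamma(0) \in \mathrm{Int}(N)$, contradicting the fact that the limit $\gamma(0)$ of points on $\partial N_n$ lies on $\partial N$.
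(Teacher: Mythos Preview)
Your overall contradiction strategy matches the paper's, but there are two genuine gaps, and both occur at exactly the points you flag as delicate.

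\textbf{Gap 1: control of $(\nabla_{X_H}\pi_{F_n})\phi_n$.} You assert that the spectral conditions \eqref{eq:spectral-conditions-1}--\eqref{eq:spectral-conditions-2} keep $\nabla_v\pi_{F_n}$ uniformly bounded as an operator. This is not what the paper obtains, and indeed the paper explicitly states (in the proof of Lemma~\ref{lem:convergence-to-solution}) that there is \emph{no} estimate on $(\nabla_{X_H}\pi_{F_n})\tilde\phi_n$ in any standard $L^2_j$-norm. The bound \eqref{eq nabla pi C_n} only gives $\|\nabla_v\pi_{P_n}\|\le C_n$ with $C_n$ possibly growing in $n$. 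The paper's remedy is to introduce a custom weighted Sobolev norm $\|\cdot\|_{K,\ell-5,w}$ (Section~\ref{subsec:weighted-spaces}) where the weight $w$ is chosen so that $C_n\epsilon_n\to 0$ on the dangerous spectral windows; Proposition~\ref{prop pi P_n wighted} then gives $\nabla_v\pi_{P_n}\to 0$ in this auxiliary norm only. Equicontinuity of the $\gamma_n$ is therefore only available in $L^2_{K,\ell-5,w}$, and the paper needs a tailored Arzel\`a--Ascoli argument (Proposition~\ref{prop Ascoli}) that upgrades this, together with the uniform $L^2_\ell$ bound, to convergence in $L^2_{\ell-1}$. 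Your ``bootstrap'' sketch does not supply any of this machinery.

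\textbf{Gap 2: the boundary does not survive the limit for free.} Your final sentence asserts that since $\gamma_n(0)\in\partial N_n$, the limit $\gamma(0)$ lies on $\partial N$. But the convergence you have is only in $L^2_{\ell-1}$, which is strictly weaker than $L^2_{k_\pm}$; norms can strictly drop under such limits, so a priori $\|\gamma(0)\|_{k_+,k_-}$ could be less than $R$. The paper closes this gap with Lemmas~\ref{lem gamma_n L^2_{k+1/2}} and \ref{lem phi^-}: from the fact that $t\mapsto\|\phi_n^+(t)\|_{k_+}^2$ has a critical point at $t=0$ (this is where being on the boundary of the \emph{invariant} set is used), one expands $\frac{d}{dt}\big|_{t=0}\|\phi_n^+(t)\|_{k_+}^2=0$ and isolates a term $-\|\phi_n^+(0)\|_{k_++\frac12}^2$ coming from $\langle D'\phi_n,\phi_n^+\rangle_{k_+}$, bounding all other terms by $CR^2\|\phi_n^+(0)\|_{k_++\frac12}+CR^2$. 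This yields a uniform $L^2_{k_++\frac12}$ bound on $\phi_n^+(0)$, and Rellich then gives \emph{strong} convergence in $L^2_{k_+}$, so $\|\phi^+(0)\|_{k_+}=R$ is preserved, contradicting Proposition~\ref{prop compactness}. This half-derivative gain is the crux of the argument and is entirely absent from your proposal.
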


The general strategy to prove Theorem \ref{thm isolating nbd} is as follows: once we have Theorem \ref{thm spectral section mu mu+delta} in hand, we must control the gradient term $(\nabla_{X_H}\pi_{F_n}) \phi(t)$ appearing in the approximate Seiberg-Witten equations (\ref{eq for gamma}); a number of bounds for this are obtained in Sections \ref{subsec:derivatives} and \ref{subsec:weighted-spaces}.  The proof proper is in Section \ref{subsec:proof-of-isolation}, where Theorem \ref{thm isolating nbd} follows from establish that, for sufficiently large approximations, the linear term in the approximate Seiberg-Witten equations (\ref{eq for gamma}) tends to dominate the other terms with respect to appropriate norms.

We also note that the total space $B_{n,R}$ appearing in Theorem \ref{thm isolating nbd} is an ex-space over $B=\mathrm{Pic}(Y)$ in the sense of Section \ref{subsec:homotopy1}, with projection given by restricting $p: \mathcal{E}_k\to B$ to $B_{n,R}$, and with a section $s_B: \mathrm{Pic}(Y)\to B_{n,R}$ given by the zero-section.


\section{Construction of spectral sections} \label{section : spectral sections}

We will prove the following: 

\begin{thm}  \label{thm spectral section mu mu+delta}
Assume that $\operatorname{Ind} D = 0$ in $K^1(B)$.  Take a sequence $\mu_n$ of positive numbers $\mu_{n} \ll \mu_{n+1}$, where $\mu_n\to\infty$ as $n\to \infty$. 
There is a sequence of  spectral sections $P_n$ of $-D$ with the following properties. 

\begin{enumerate}[(i)]

\item
We have 
\[
         \mathcal{E}_0(D)_{-\infty}^{\mu_{n}} \subset P_n \subset \mathcal{E}_0(D)_{-\infty}^{\mu_{n} + \delta},
\]
where $\delta$ is a positive constant independent of $n$. 

\item
We can write
\[ 
       P_{n+1} = P_n \oplus \langle f^{(n)}_1, \dots, f^{(n)}_{r_n} \rangle,
\]
where $\{ f_1^{(n)}, \dots, f^{(n)}_{r_n} \}$ is a frame of $P_{n}^{\bot}$ (where $P_n^\bot$ is the $L^2$-orthogonal complement of $P_n$ inside of $P_{n+1}$).  In particular, 
\[
      P_{n+1} \cong P_n \oplus \underline{\mathbb{C}}^{r_n}, 
\]
where $\underline{\mathbb{C}}^{r_n}$ is the trivial vector bundle over $B$. 

\end{enumerate}
\end{thm}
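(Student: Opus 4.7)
The proof rests on the Melrose--Piazza existence theorem together with inductive modification. The index hypothesis $\operatorname{Ind} D = 0 \in K^1(B)$ is exactly the Melrose--Piazza criterion for existence of spectral sections, so the task is to enhance that abstract existence with the cutoff-position and nesting-plus-trivialization properties. The uniform constant $\delta$ should come from a Weyl-type eigenvalue density bound: compactness of $B$ and ellipticity of each $D_z$ together yield a uniform bound on the number of eigenvalues of $D_z$ lying in any interval of length $\delta$, so $\delta$ may be chosen once and for all, depending only on the family.

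I would then construct the spectral sections $P_n$ inductively in $n$, maintaining condition (i) at each stage. Given $P_n$, the inductive step is to produce $P_{n+1}$ of the form $P_n \oplus V_n$, with $V_n$ a finite-rank smooth sub-bundle of $\mathcal{E}_\infty$ orthogonal to $P_n$, chosen so that $P_{n+1}$ contains every eigenvector of $D$ of eigenvalue $\leq \mu_{n+1}$ and is orthogonal to every eigenvector of eigenvalue $> \mu_{n+1}+\delta$. The existence of $V_n$ as a global smooth sub-bundle (rather than merely a pointwise assignment) is itself a Melrose--Piazza-type assertion for the shifted family $D-\mu_{n+1}$; since $\operatorname{Ind}(D-\mu_{n+1}) = \operatorname{Ind} D = 0$ in $K^1(B)$, the gluing obstruction vanishes and such $V_n$ exists.

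For the trivialization in (ii), I would exploit the K-theoretic flexibility in the choice of $V_n$: two admissible choices of extension of $P_n$ within the prescribed spectral window differ by a stably trivial bundle, as in the standard Melrose--Piazza comparison of spectral sections. By spacing the $\mu_n$ widely enough (allowed by the hypothesis $\mu_n \ll \mu_{n+1}$), the rank $r_n$ of $V_n$ can be made arbitrarily large. Once $r_n > \dim B$, the classical stabilization theorem for complex vector bundles over a finite-dimensional base upgrades stable triviality to genuine triviality, yielding a frame $f_1^{(n)}, \ldots, f_{r_n}^{(n)}$ of $V_n$ and the decomposition claimed in (ii).

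The main obstacle I anticipate is the uniformity of $\delta$ combined with precise cutoff placement. The Melrose--Piazza existence theorem does not, by itself, control where the transition region of a spectral section is located, so a genuine refinement is needed: patch local pointwise spectral projections on a finite open cover of $B$ via a partition of unity, use continuity of the spectrum of $D_z$ to ensure smoothness of the patched projection, and use the vanishing of $\operatorname{Ind} D$ to eliminate the $K$-theoretic obstruction to gluing. Once this uniform cutoff control is in hand, both the inductive existence in (i) and the trivialization in (ii) follow in a reasonably direct way from the flexibility of spectral sections.
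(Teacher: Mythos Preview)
Your identification of the main difficulty --- controlling the width of the transition region uniformly in $n$ --- is exactly right, and you correctly recognize that the abstract Melrose--Piazza existence statement does not provide this. However, your proposed remedy has a gap at precisely this point: convex combinations of spectral projections via a partition of unity are not themselves projections, so the ``patched projection'' is not a spectral section. Even if one corrects by passing to the range of the patched operator, smoothness across the patches is not automatic, and nothing in the sketch forces the resulting sub-bundle to sit inside a window $[\mu_n,\mu_n+\delta]$ for a single $\delta$ independent of $n$. Since this is the heart of the theorem, the proposal as written leaves the key step unaddressed.

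The paper's approach is substantially different and more hands-on. Rather than producing a projection, it builds directly a global frame $f_1,\dots,f_r$ of a sub-bundle of $\mathcal{E}_0(D')_0^{\mu+\delta}$ containing $\mathcal{E}_0(D')_0^{\mu}$, by obstruction theory over a CW structure on $B$: on each cell one arranges local spectral gaps, and the obstruction to extending the frame from the $(j-1)$-skeleton across a $j$-cell lies in $\pi_{j-1}\big(GL(m;\mathbb{C})/GL(m-r;\mathbb{C})\big)$, which vanishes once $m$ and $m-r$ are large relative to $\dim B$. The required largeness is guaranteed by Atiyah's \emph{lower} bound on eigenvalue density (Theorem~\ref{thm Atiyah}: every interval of fixed length contains an eigenvalue), rather than the Weyl upper bound you invoke; this, together with a uniform dimension-comparison lemma across base points (Lemma~\ref{lem dim E a a'}), is what fixes $\delta$ once and for all. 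There is also a nontrivial modification step to ensure the extended frame actually spans a sub-bundle containing $\mathcal{E}_0(D')_0^{\mu^-}$ on each cell, handled by a transversality-type argument. A byproduct is that property~(ii) comes for free: since $P_{n+1}/P_n$ is produced as the span of a global frame, it is manifestly trivial, and no separate stable-triviality-plus-large-rank argument is needed.
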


Before we start proving Theorem \ref{thm spectral section mu mu+delta},  we will show the following: 

\begin{prop} \label{prop [D, pi]}
Take any non-negative numbers $k, l$. 
Let $P_n$ be a  sequence of spectral sections of $-D$ having Property (i) of Theorem \ref{thm spectral section mu mu+delta}. 
Let $\pi_n : \mathcal{E}_k  \rightarrow P_n \cap \mathcal{E}_k$ be the $L^2_k$-projection.

\begin{enumerate}
\item
 The commutators
\[
       [D, \pi_n] : \mathcal{E}_{\infty} \rightarrow \mathcal{E}_{\infty}
\]
extend to bounded operators
\[
    [D, \pi_n] : \mathcal{E}_l \rightarrow \mathcal{E}_l
\]
and we have
\[
        \lVert   [D, \pi_n] : \mathcal{E}_l \rightarrow \mathcal{E}_l   \rVert  < C, 
\]
where $C$ is a positive constant independent of $n$.   Moreover for any $l  > 0$, $\epsilon > 0 $ with $0 < \epsilon \leq l$, 
\[
         \sup_{a \in B}   \lVert| [D_{a}, \pi_{n, a}] : L^2_{l}(\mathbb{S}) \rightarrow L^2_{l-\epsilon}(\mathbb{S}) \rVert  \rightarrow 0
\] 
as $n \rightarrow \infty$. 

\item
The operator $\pi_n : \mathcal{E}_{\infty} \rightarrow \mathcal{E}_{\infty}$ extends to a bounded  operator $\mathcal{E}_{l} \rightarrow \mathcal{E}_{l}$ for each nonnegative real number $l$. 
Moreover, there is a positive constant $C$ independent of $n$ such that
\[
      \| \pi_n : \mathcal{E}_l \rightarrow \mathcal{E}_l \| < C.
\]

\end{enumerate}
\end{prop}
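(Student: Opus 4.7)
The plan is to exploit the inclusion $V_n^L := \mathcal{E}_0(D)_{-\infty}^{\mu_n} \subset P_n \subset V_n^L \oplus V_n^M$ with $V_n^M := \mathcal{E}_0(D)_{\mu_n}^{\mu_n + \delta}$ from Theorem \ref{thm spectral section mu mu+delta}(i). This splits $\pi_n = \pi_{V_n^L} + R_n$, where $R_n$ is the $L^2_k$-orthogonal projection onto the finite-rank subspace $P_n \ominus V_n^L$, which lies in $V_n^M$. Since $V_n^L$ is spectral, the $L^2_k$-orthogonal projection onto it agrees with the spectral projection and commutes with $D$, so $[D,\pi_n] = [D,R_n]$. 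Because $D$-eigenspaces for distinct eigenvalues are orthogonal in every $L^2_m$, the three-piece decomposition $V_n^L \oplus V_n^M \oplus V_n^H$ (with $V_n^H := \mathcal{E}_0(D)_{\mu_n+\delta}^\infty$) is orthogonal in every $\mathcal{E}_l$, is preserved by $\pi_n$ (identity on $V_n^L$, zero on $V_n^H$, $R_n$ on $V_n^M$), and both parts of the proposition reduce to bounding $R_n$ and $[D,R_n]$ on $V_n^M$.

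The key technical point is that on $V_n^M$ all Sobolev norms become uniformly equivalent as $n \to \infty$: for $g \in V_n^M$ and $m \geq 0$,
\[
    \mu_n^m \|g\|_0 \leq \|g\|_m \leq \bigl(1 + (\mu_n+\delta)^{2m}\bigr)^{1/2} \|g\|_0,
\]
since every spectral component of $g$ has eigenvalue in $(\mu_n,\mu_n+\delta]$. Because $R_n$ is an $L^2_k$-contraction, this squeeze implies a uniform bound $\|R_n g\|_0 \leq (1+(\mu_n+\delta)^{2k})^{1/2}/\mu_n^k \cdot \|g\|_0 \leq C \|g\|_0$ on $V_n^M$, and then a uniform $L^2_l$-operator-norm bound on $R_n|_{V_n^M}$ by the same squeeze at level $l$. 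Writing $f = f_L + f_M + f_H$ in the spectral decomposition and $\pi_n f = f_L + R_n f_M$, the orthogonal decomposition of $\|\cdot\|_l$ gives $\|\pi_n f\|_l^2 \leq \|f_L\|_l^2 + C^2\|f_M\|_l^2 \leq (1+C^2)\|f\|_l^2$, establishing part (2).

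For part (1), on $V_n^M$ the operator $D - \mu_n \, \mathrm{Id}$ has $L^2_0$-operator norm at most $\delta$. Writing
\[
    [D,R_n]f = (D-\mu_n)(R_n f) - R_n(D-\mu_n)f
\]
and applying the uniform $L^2_0$-bound on $R_n$ gives $\|[D,R_n]f\|_0 \leq 2C\delta\|f\|_0$ for $f \in V_n^M$. Since both sides of the commutator lie in $V_n^M$, the norm equivalence at level $l$ upgrades this to $\|[D,\pi_n]f\|_l \leq C'\delta \|f\|_l$, uniformly in $n$. For the convergence claim, the same argument with target norm $\|\cdot\|_{l-\epsilon}$ produces the sharper factor $(\mu_n+\delta)^{l-\epsilon}/\mu_n^l \sim \mu_n^{-\epsilon}$ from the mismatch between $\|g\|_{l-\epsilon}$ and $\|g\|_l$ on $V_n^M$, driving the bound to zero as $n\to\infty$. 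Uniformity across $B=\mathrm{Pic}(Y)$ is automatic since all constants depend only on $\mu_n$, $\delta$, and smoothly-varying spectral structure.

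The main technical obstacle is precisely the mismatch that $\pi_n$ is defined as the $L^2_k$-orthogonal projection, while we seek bounds in $L^2_l$; the entire argument is designed to absorb this mismatch into the narrow spectral window $V_n^M$ of width $\delta$, on which the different Sobolev inner products differ by a factor $(1 + \delta/\mu_n)^{O(k+l)}$ tending to one.
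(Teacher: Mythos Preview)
Your proof is correct and rests on the same core observation as the paper's: the commutator $[D,\pi_n]$ is supported on the narrow spectral window $V_n^M=\mathcal{E}_0(D)_{\mu_n}^{\mu_n+\delta}$, where all Sobolev norms are equivalent up to factors $(1+\delta/\mu_n)^{O(k+l)}\to 1$. The organization, however, is genuinely cleaner than the paper's. The paper splits into the two cases $\phi\in P_{n,a}$ and $\phi\in P_{n,a}^{\perp_k}$, expands $(1-\pi_{n,a})e_j=\sum_p\alpha_{jp}e_p$ in the eigenbasis, and bounds the coefficient sums $\sum_p|\alpha_{jp}|^2$ by hand for each case. Your decomposition $\pi_n=\pi_{V_n^L}+R_n$ and the identity $[D,\pi_n]=[D-\mu_n,R_n]$ replace this with a single operator-norm estimate: $\|D-\mu_n\|_{L^2_0\to L^2_0}\le\delta$ on $V_n^M$, combined with the uniform $L^2_0$-bound on $R_n|_{V_n^M}$ coming from the $L^2_k$-contraction property and the window norm equivalence. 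This packaging avoids both the case split and the explicit coefficient manipulations, and makes the role of the width $\delta$ (producing the uniform bound) versus the position $\mu_n\to\infty$ (producing the $\mu_n^{-\epsilon}$ decay for the $L^2_l\to L^2_{l-\epsilon}$ estimate) completely transparent. The paper's computation is essentially a coordinate verification of your operator-theoretic statements.
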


\begin{proof}
Take $a \in B$ and let $\{  e_j \}_j$ be an orthonormal basis of $L^2(\mathbb{S})$ with
\[
           D_a e_j = \eta_j e_j,
\]
where $\eta_j \in \mathbb{R}$.    

Let $P_{n, a}$ be the fiber of $P_n$ over $a$. Take $\phi \in \mathcal{E}_{\infty} \cap P_{n, a}$.  We can write
\[
        \phi =    \sum_{\eta_j \leq \mu_n + \delta} c_j e_j, 
\]
where $c_j \in \C$. 
Note that 
\[
         \sum_{\eta_j \leq \mu_n} c_j e_j \in \mathcal{E}_{\infty} \cap P_{n,a},   \
          \sum_{\mu_n < \eta_j \leq \mu_n + \delta} c_j e_j \in \mathcal{E}_{\infty} \cap P_{n, a}.
\]
We have
\begin{equation}  \label{eq [D, pi] phi}
   \begin{aligned}
     &  [D_a, \pi_{n,a}] \phi  \\
      & =  (D_a \pi_{n,a} - \pi_{n,a} D) \phi  \\
      & =  \sum_{\eta_j \leq \mu_n + \delta} \eta_j c_{j} e_j - \pi_{n,a} \sum_{\eta_j \leq \mu_n + \delta} \eta_j c_{j}  e_j  \\
      & = (1-\pi_{n,a}) \sum_{\mu_n < \eta_j \leq \mu_{n}+\delta} \eta_j c_j  e_j \\
      & = (1 - \pi_{n, a} )
        \Bigg\{ \sum_{  \mu_n < \eta_j \leq \mu_n + \delta  }  (\eta_j - \mu_n) c_j e_j  + 
           \mu_n \sum_{\mu_n < \eta_j \leq \mu_n + \delta}  c_j e_j
        \Bigg\}   \\
        & = \sum_{\mu_n < \eta_j \leq \mu_n + \delta} (\eta_j - \mu_{n}) c_j  (1 - \pi_{n,a}) e_j. 
   \end{aligned}
\end{equation}
Since 
\[
              \pi_n = \pi_{-\infty}^{\mu_n} + \pi_{P_n \cap \mathcal{E}(D)_{\mu_n}^{\mu_n + \delta}}, 
\]
for $j$ with $\mu_n < \eta_j \leq \mu_n + \delta$, we have
\[
         ( 1 - \pi_{n,a}) e_j \in \mathcal{E}_0(D_a)_{\mu_n}^{\mu_n + \delta}. 
\]
Hence we can write
\begin{equation}  \label{eq (1 - pi) e_j}
       (1-\pi_{n, a}) e_j = \sum_{\mu_n < \eta_p \leq \mu_n + \delta} \alpha_{j p} e_{p}
\end{equation}
for $j$ with $\mu_n < \eta_j \leq \mu_n + \delta$.  Here $\alpha_{jp} \in \C$.     Since
\[
        \| ( 1 - \pi_{n,a} ) : L^2_k \rightarrow L^2_k \| = 1, \  \| e_j \|_{k} = (1+|\eta_j|^{2k})^{\frac{1}{2}}
\]
we have
\[
     \| (1-\pi_{n, a}) e_j \|_{k}^2 =
       \sum_{\mu_n < \eta_p \leq \mu_n + \delta}  | \alpha_{jp} |^2  ( 1 + |\eta_p|^{2k} )
       \leq   ( 1 + |\eta_j|^{2k}). 
\]
For  $j$ with $\mu_n < \eta_j \leq \mu_n + \delta$ ,  
\begin{equation}  \label{eq alpha_{jp}}
   \begin{aligned}
        \sum_{\mu_n < \eta_p \leq \mu_n + \delta} | \alpha_{jp} |^2
        & = \sum_{\mu_n < \eta_p \leq \mu_n + \delta} | \alpha_{jp} |^2   (1 + |\eta_{p}|^{2k})  \frac{1}{1 + |\eta_{p}|^{2k}}   \\
        & \leq  \frac{C_1}{1+ (\mu_n+\delta)^{2k}} \sum_{\mu_n < \eta_p \leq \mu_n + \delta}  |\alpha_{jp}| ^2   (1 + |\eta_{p}|^{2k})  \\
        & \leq  \frac{C_1 (1 + |\eta_j|^{2k})}{1 + ( \mu_n + \delta)^{2k}}  \\
        & \leq C_1, 
   \end{aligned}
\end{equation}
where $C_1$ is a positive constant independent of $j, n$.

By (\ref{eq [D, pi] phi}), (\ref{eq (1 - pi) e_j}) and (\ref{eq alpha_{jp}}), 
\begin{align*}
        \| [D_a, \pi_{n,a}] \phi \|_{l}^2
      & =  \sum_{\mu_n < \eta_j \leq \mu_n + \delta} \sum_{\mu_n < \eta_p \leq \mu_n + \delta}  |\eta_j - \mu_n|^2 (1 + |\eta_p|^{2l} ) |c_j|^2   | \alpha_{jp} |^2  \\
      & \leq \delta^2 \sum_{\mu_n < \eta_j \leq \mu_n + \delta} \sum_{\mu_n < \eta_p \leq \mu_n + \delta} (1 + |\eta_j|^{2l} ) |c_j|^2   | \alpha_{jp}  |^{2} \cdot  \frac{1+|\eta_p|^{2l}}{1 + |\eta_j|^{2l}}  \\
     & \leq C_2     \sum_{\mu_n < \eta_j \leq \mu_n + \delta}  ( 1 + |\eta_{j}|^{2l}) |c_j|^2  \Bigg( \sum_{\mu_n < \eta_p \leq \mu_n + \delta} | \alpha_{jp} |^{2}  \Bigg)  \\
     & \leq C_3   \sum_{\mu_n < \eta_j \leq \mu_n + \delta}  ( 1 + |\eta_{j}|^{2l}) |c_j|^2   \\
     & \leq C_3   \| \phi \|_{l}^2.
\end{align*}
Here $C_2, C_3 > 0$ are positive constants independent of $n, \phi, a$. Also we have
   \begin{align*}
     & \| [D_a, \pi_{n,a}] \phi \|_{l- \epsilon}^2  \\
      & =  \sum_{\mu_n < \eta_j \leq \mu_n + \delta} \sum_{\mu_n < \eta_p \leq \mu_n + \delta}  |\eta_j - \mu_n|^2(1 + |\eta_p|^{2(l-\epsilon)} ) |c_j|^2   | \alpha_{jp} |^2  \\
      & \leq \delta^2 \sum_{\mu_n < \eta_j \leq \mu_n + \delta} \sum_{\mu_n < \eta_p \leq \mu_n + \delta} (1 + |\eta_j|^{2(l-\epsilon)} ) |c_j|^2   | \alpha_{jp}  |^{2} \cdot  \frac{1+|\eta_p|^{2(l-\epsilon)}}{1 + |\eta_j|^{2(l-\epsilon)}}  \\
     & \leq C_4     \sum_{\mu_n < \eta_j \leq \mu_n + \delta}  ( 1 + |\eta_{j}|^{2(l-\epsilon)}) |c_j|^2  \Bigg( \sum_{\mu_n < \eta_p \leq \mu_n + \delta} | \alpha_{jp} |^{2}  \Bigg)  \\
     & \leq C_5   \sum_{\mu_n < \eta_j \leq \mu_n + \delta}  ( 1 + |\eta_{j}|^{2(l-\epsilon)}) |c_j|^2   \\
     & \leq C_6( \mu_n^{-2l} + \mu^{-2\epsilon}_{n})  \| \phi \|_{l}^2. 
   \end{align*}
Here $C_4, C_5, C_6$ are positive constants independent of $n, \phi, a$.

On the other hand, consider $\phi \in  \mathcal{E}_{\infty} \cap P_{n, a}^{\bot_k}$, where $P_{n, a}^{\bot_k}$ is the $L^2_k$-orthogonal complement of $P_{n, a} \cap L^2_k(\mathbb{S})$ in $L^2_k(\mathbb{S})$.   We can write
\[
       \phi = \sum_{\eta_j > \mu_n} c_j e_j. 
\]
Note that
\[
      \sum_{\eta_j > \mu_{n} + \delta}  c_j e_j \in \mathcal{E}_{\infty} \cap P_{n, a}^{\bot_k}, 
      \sum_{ \mu_n < \eta_j \leq \mu_n + \delta} c_j e_j \in \mathcal{E}_{\infty} \cap P_{n, a}^{\bot_k}. 
\]
We have
  \begin{align*}
      [D_a, \pi_{n, a}] \phi
      &= \pi_{n, a} \sum_{\mu_n < \eta_j \leq \mu_n + \delta} \eta_j c_j e_j     \\
      &= \pi_{n, a} \Bigg(  \sum_{\mu_n < \eta_j \leq \mu_n + \delta} (\eta_j - \mu_n) c_j e_j +
          \mu_n \sum_{\mu_n < \eta_j \leq \mu_n + \delta} c_j e_j  \Bigg)   \\
      &= \sum_{\mu_n < \eta_j \leq \mu_n + \delta} (\eta_j - \mu_n) c_j  \pi_{n, a}  e_j 
  \end{align*}
As before, using this equality,   we can show that
\[
    \| [D_{a}, \pi_{n, a}] \phi \|_{l} \leq C_7 \| \phi \|_{l}, \ 
     \| [D_{a}, \pi_{n,a}] \phi \|_{l - \epsilon} \leq C_8  \mu_n^{-\epsilon}  \| \phi \|_{l}
\]
for some positive constants $C_7, C_8$ independent of $n, \phi, a$.  

Therefore $[D_a, \pi_{n,a}]$ extend to  bounded maps $L^2_l \rightarrow L^2_l$ with
\[
          \| [D_a, \pi_{n,a}] : L^2_l \rightarrow L^2_l \| \leq C_9, 
\]
for some constant $C_9$ independent of $n, a$. Also
\[
      \sup_{a \in B} \| [D_a, \pi_{n, a}] : L^2_l(\mathbb{S}) \rightarrow L^2_{l - \epsilon}(\mathbb{S}) \| \rightarrow 0 
\]
as $n \rightarrow \infty$.  We have proved (1).

We will prove (2). 
It is easy to see that if $\mu_{n} < \eta_j \leq \mu_{n} + \delta$, we have
\[
        \pi_{n} e_j \in  (\mathcal{E}_l)_{\mu_n}^{\mu_n+ \delta}. 
\]  
So we can write
\[
         \pi_n e_j = \sum_{\mu_n < \eta_{p} \leq \mu_n+\delta}  \alpha_{jp} e_p
\]
Because the operator norm of $\pi_n : L^2_k \rightarrow L^2_k$ is $1$ and $\| e_j \|_{k}^2 = 1 + |\eta_j|^{2k}$ , we have
\[
       |\mu_n|^{2k} \sum_{\mu_n < \eta_p \leq \mu_n + \delta} |\alpha_{jp}|^2 \leq 
       \sum_{\mu_n < \eta_p \leq \mu_{n}+\delta}  |\alpha_{jp}|^2 (1+|\eta_p|^{2k})  \leq 1+|\eta_j|^{2k}. 
\]
Therefore for $j$ with $\mu_n < \eta_j \leq \mu_n+\delta$, 
\begin{equation}  \label{eq a_{jp}}
         \sum_{\mu_n < \eta_p \leq \mu_n + \delta} |\alpha_{jp}|^2 
         \leq  \frac{1+|\eta_j|^{2k}}{ |\mu_n|^{2k}}
         \leq C_9. 
\end{equation}
Here $C_9 >0$ is a constant independent of $n, j$.   Take $\phi \in \mathcal{E}_{\infty}$. We can write as
\[
      \phi = 
      \sum_{\eta_j \leq \mu_n} c_j e_j 
      + \sum_{\mu_n < \eta_j \leq \mu_n + \delta}c_j e_j
      + \sum_{ \mu_n + \delta < \eta_j } c_j e_j. 
\]
Then 
\[
     \pi_n \phi =
     \sum_{\eta_j \leq \mu_n} c_j e_j
     + \sum_{\substack{\mu_n < \eta_j \leq \mu_n + \delta  \\ \mu_n < \eta_p \leq \mu_n + \delta }  }  c_j \alpha_{jp} e_p. 
\]
Hence we obtain
   \begin{align*}
  &   \| \pi_n \phi \|_{l}^2     \\
  &= \sum_{\eta_j \leq \mu_n} |c_j|^2 (1+|\eta_j|^{2l})    
       +  \sum_{\substack{\mu_n < \eta_j \leq \mu_n + \delta  \\ \mu_n < \eta_p \leq \mu_n + \delta }  }  |c_j|^2 |\alpha_{jp}|^2   (1 + | \eta_{p}|^{2l})   \\
 &   \leq  C_{10} \Bigg( \sum_{\eta_j \leq \mu_n} |c_j|^2 (1+|\eta_j|^{2l})    
       + (1+|\mu_n|^{2l}) \sum_{\substack{\mu_n < \eta_j \leq \mu_n + \delta  \\ \mu_n < \eta_p \leq \mu_n + \delta }  }   |c_j|^2 |\alpha_{jp}|^2
         \Bigg) \\
&     \leq  C_{11} \Bigg( \sum_{\eta_j \leq \mu_n} |c_j|^2 (1+|\eta_j|^{2l})    
       + (1+|\mu_n|^{2l}) \sum_{  \substack{ \mu_n < \eta_j \leq \mu_n + \delta}}  |c_j|^2 
         \Bigg)   \\
   &  \leq C_{12} \| \phi_n \|_{l}^2, 
    \end{align*}
where we have used (\ref{eq a_{jp}}) and $C_{10}, C_{11}, C_{12}$ are constant independent of $n$. 
Therefore $\| \pi_n : L^2_l \rightarrow L^2_l \| \leq C_{12}$.

\end{proof}

To prove Theorem \ref{thm spectral section mu mu+delta}, we need the following theorem and lemma: 

\begin{thm}\cite[$Theorem \ 1^*$]{Atiyah}   \label{thm Atiyah}
Let $W$ be a closed, spin manifold of odd dimension.   Then there is $C_* > 0$ such that each  interval of length $C_*$ contains an eigenvalue of  $D_A$. Here $A$ is a connection on a complex vector bundle $V$ over $W$ and $D_A : C^{\infty} (\mathbb{S} \otimes V) \rightarrow C^{\infty} (\mathbb{S} \otimes V)$ is the twisted Dirac operator. 
\end{thm}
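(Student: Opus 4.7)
The plan is to translate the eigenvalue density statement for $D_A$ into a uniform lower bound on the spectral flow of the affine family $\{D_A + tI\}_{t\in\mathbb{R}}$. Since the eigenvalues of $D_A + tI$ are precisely the translates $\lambda_j + t$ of the eigenvalues $\lambda_j$ of $D_A$, they move monotonically upward in $t$ at unit speed, each crossing zero exactly once at $t=-\lambda_j$. Consequently,
\[
\mathrm{sf}\bigl(\{D_A+tI\}_{t\in[a,a+L]}\bigr)\;=\;\#\bigl\{\lambda_j\in(-a-L,\,-a]\bigr\},
\]
and it suffices to produce a constant $C_*>0$, independent of $a$, such that the spectral flow on $[a, a+C_*]$ is at least $1$ for every $a\in\mathbb{R}$.

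To compute the spectral flow I would apply the Atiyah--Patodi--Singer index theorem on the cylinder $W\times[a, a+L]$ to the Dirac-type operator $\partial_s + (D_A + sI)$ with APS boundary conditions at the two ends; equivalently one can form the even-dimensional mapping torus $W\times S^1_L$ and equip it with a connection encoding the shift $sI$, then invoke the Atiyah--Singer theorem. Either version expresses the spectral flow as a local bulk term $\int_{W\times[a,a+L]}\hat A(TW)\,\mathrm{ch}(V)\,e^{F/2\pi}$ (a Chern--Weil density) plus boundary $\eta$-invariant corrections at $s=a$ and $s=a+L$. Because the bulk integrand is $s$-independent, its integral equals $L\cdot\rho$ for a fixed positive constant $\rho=\rho(W,V,A)$ depending only on the manifold, bundle, and connection — in the simplest setting of trivial coefficients on a three-manifold, $\rho$ is just a positive multiple of $\mathrm{vol}(W)$, so it is plainly nonzero.

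One then has to show the boundary corrections remain bounded uniformly in $a$. The observation is that the translation $s\mapsto s+a$ is an isometric intertwiner relating the boundary operator at $s=0$ with that at $s=a$, so the reduced $\eta$-invariant $\bar\eta(D_A+tI)$ is piecewise smooth in $t$ with jumps of $\pm 2$ at each eigenvalue crossing, while its smooth part has derivative matching $\rho$ up to sign; the jumps combine with the spectral-flow count to leave a total correction whose oscillation is bounded by some constant $M$ independent of $a$. Taking $C_*=(M+1)/\rho$ then forces $\mathrm{sf}\geq 1$ on every interval of length $C_*$, which is precisely the claim. The main technical obstacle is this careful bookkeeping of the adiabatic variation of the APS formula along the one-parameter family — checking that the bulk growth $L\rho$ survives the boundary correction modulo a uniformly bounded remainder — but this is a standard construction in index theory and requires only that the Chern--Weil density $\rho$ be nonzero, which holds automatically in the odd-dimensional spin setting.
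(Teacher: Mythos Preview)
The paper does not prove this statement; it is quoted from Atiyah's paper without proof, so there is no ``paper's own proof'' to compare against.

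Your overall strategy --- convert the gap bound into a spectral-flow statement for the family $\{D_A+sI\}$ and then invoke an APS/$\eta$-invariant formula --- is in the spirit of Atiyah's argument. But the execution has a genuine gap once $\dim W>1$.

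The claim that the bulk integrand on $W\times[a,a+L]$ is $s$-independent and integrates to $L\rho$ with a fixed $\rho>0$ is not correct. If one treats $sI$ as a zeroth-order potential, the local index density reduces to $\hat A(TW)\,\mathrm{ch}(V)$ pulled back from $W$; since $\dim W$ is odd these characteristic forms have degree at most $\dim W-1$ and contribute nothing in top degree on the $(\dim W+1)$-dimensional cylinder, so that reading gives $\rho=0$ and the formula collapses to a tautology. If instead one computes the genuine smooth variation $\rho(s)=\tfrac{d}{ds}\bar\eta(D_A+s)$, this is (up to a universal constant) the heat coefficient $a_{n-1}$ of $(D_A+s)^2=D_A^2+2sD_A+s^2$, a polynomial of degree $n-1$ in $s$; in dimension $3$ it is visibly quadratic, matching the $\sim\mu^{2}$ eigenvalue density predicted by Weyl's law. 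Your check ``$\rho$ is a positive multiple of $\mathrm{vol}(W)$'' captures only the constant term of this polynomial. The mapping-torus variant fails outright for the same reason: $D_A+(s+L)I$ is not gauge-equivalent to $D_A+sI$ when $n>1$ (there is no unitary $U$ with $U^{-1}D_AU=D_A+L$ unless $n=1$), so the family does not close up over $S^1_L$.

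The companion claim that the $\eta$-corrections are uniformly bounded in $a$ is likewise false for $n>1$: one has $\bar\eta(D_A+s)=P(s)-N(s)+\mathrm{const}$ with $P'=\rho$ and $N$ the eigenvalue counting function, and the oscillation of $P-N$ is exactly the Weyl remainder, of order $|s|^{n-1}$ in general (already for $S^3$). So the final step ``take $C_*=(M+1)/\rho$'' does not go through. Atiyah's actual proof exploits the special feature of odd-dimensional spin manifolds that the identity on spinors coincides (up to phase) with Clifford multiplication by the volume element, and uses this to give the family $D_A+sI$ a geometric meaning from which the bounded-gap conclusion can be extracted; the constant-$\rho$ heuristic you rely on is only valid when $\dim W=1$.
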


Assume that $\operatorname{Ind} D = 0$. By \cite{MP}, we have a spectral section $P_0$ of $-D$. By Lemma 8 of \cite{MP},  using $P_0$,  we can construct a  smoothing operator $\mathbb{A} : \mathcal{E}_0 \rightarrow \mathcal{E}_{\infty}$ whose image is included in the space spanned by finitely many eigenvectors of $D$ such that $\ker D' = 0$ and
\[
              \mathcal{E}_0(D')_{-\infty}^{0} = P_0, 
\] 
where $D' = D + \mathbb{A}$.  Moreover there is $\nu_0 \gg 0$ such that  $\mathbb{A} = 0$ on $\mathcal{E}_0(D)_{-\infty}^{-\nu_0}$ and $\mathcal{E}_0(D)_{\nu_0}^{\infty}$.  From the construction of $\mathbb{A}$ in the proof of Lemma 8 of \cite{MP}, it is easy to see that for $\lambda \ll 0$ and $\mu \gg 0$, 
\[
       \mathcal{E}_0(D)_{-\infty}^{\mu} = \mathcal{E}_0(D')_{-\infty}^{\mu}, \ 
       \mathcal{E}_0(D)^{\infty}_{\lambda} = \mathcal{E}_0(D')^{\infty}_{\lambda}, \
       \mathcal{E}_0(D)_{\lambda}^{\mu} = \mathcal{E}_0(D')_{\lambda}^{\mu}. 
\]

\begin{lem} \label{lem dim E a a'}
There is a  constant  $\delta > 0$  such that  for any  $\mu > 0$ and $a, a' \in B$, 
\[
       \dim  \mathcal{E}_0(D'_{a})_{0}^{\mu}   \leq \dim \mathcal{E}_0(D'_{a'})_{0}^{\mu + \delta}. 
\]
\end{lem}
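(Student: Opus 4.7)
The plan is to deform $D'_{a'}$ to $D'_a$ by a linear interpolation in the harmonic $1$-forms and exploit the fact that the family $\{D'_b\}_{b\in B}$ has vanishing kernel in order to follow positive eigenvalues continuously along the path.

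First, I fix a compact fundamental domain $F \subset \mathcal{H}^1(Y)$ for the action of $H^1(Y;\mathbb{Z})$. Since $D'_{a-h}$ and $D'_a$ are unitarily conjugate via $u_h$ for $h \in H^1(Y;\mathbb{Z})$, the spectrum of $D'_a$ depends only on $[a] \in B$, so it suffices to prove the inequality for representatives $a, a' \in F$. Set
\[
\delta := \sup_{a,a'\in F} \|D'_a - D'_{a'}\|_{L^2\to L^2},
\]
which is finite because $D'_a - D'_{a'} = \rho(i(a-a'))$ is Clifford multiplication by the bounded $1$-form $i(a-a')$, whose $L^2$-operator norm is controlled by $\|a-a'\|_{L^\infty} \le 2\diam(F)$.

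Next, given $a,a'\in F$, form the affine path $b(t) := (1-t)a' + ta \in \mathcal{H}^1(Y)$ and the real-analytic family $T_t := D'_{b(t)}$ of self-adjoint operators with compact resolvent on $L^2(\mathbb{S})$. Each $b(t)$ represents a point of $B$, so $\ker T_t = 0$ for all $t\in [0,1]$, and by compactness of $B$ the family enjoys a uniform spectral gap $\inf_{t\in[0,1]} \dist(0,\operatorname{spec}(T_t)) > 0$. By Kato--Rellich analytic perturbation theory combined with the uniform gap, the positive eigenvalues of $T_t$ can be labeled by continuous functions $\eta^+_1(t) \le \eta^+_2(t) \le \cdots$ (counted with multiplicity), none crossing zero. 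The Hellmann--Feynman formula $\dot\eta^+_k(t) = \langle v_k(t), \dot T_t \, v_k(t)\rangle$ for a unit eigenvector $v_k(t)$ then yields $|\dot\eta^+_k(t)| \le \|\dot T_t\|$, so
\[
|\eta^+_k(1) - \eta^+_k(0)| \le \int_0^1 \|\dot T_t\|\,dt = \|T_1-T_0\| \le \delta.
\]
Counting then gives $\dim\mathcal{E}_0(D'_a)_0^\mu = \#\{k:\eta^+_k(1)\le\mu\} \le \#\{k:\eta^+_k(0)\le\mu+\delta\} = \dim\mathcal{E}_0(D'_{a'})_0^{\mu+\delta}$, as desired.

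The main obstacle is organizing the eigenvalue information in a usable way: since $\operatorname{spec}(T_t)$ accumulates at both $\pm\infty$, there is no ``$k$-th eigenvalue'' in the usual bounded-below sense, and the min-max principle does not directly give a variational characterization of $\eta^+_k$ amenable to perturbation. The no-kernel hypothesis on $\{D'_b\}_{b\in B}$, together with the uniform gap coming from compactness of $B$, is precisely what permits a continuous labeling of positive eigenvalues along the interpolating path; once that labeling is available, the Lipschitz bound is routine.
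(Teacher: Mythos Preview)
Your approach is essentially the same as the paper's: follow the positive eigenvalues along a path from $a'$ to $a$, use that $\ker D'_b=0$ for all $b\in B$ to ensure no eigenvalue crosses zero, and bound the drift of each eigenvalue by the operator norm of the variation of $D'$ along the path. The paper phrases this via a connectedness argument on the set of times for which the dimension inequality holds and cites Kato's Theorem~4.10 for the drift bound; you phrase it via an ordered labeling of the positive eigenvalues and Hellmann--Feynman.

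Two small inaccuracies to clean up. First, $D'_a-D'_{a'}$ is not simply Clifford multiplication by $i(a-a')$, because $D'=D+\mathbb{A}$ and the smoothing operator $\mathbb{A}$ also varies with $a$; nonetheless $\|D'_a-D'_{a'}\|_{L^2\to L^2}$ is still bounded on the compact fundamental domain (cf.\ Corollary~\ref{cor nabla D'}), so your definition of $\delta$ survives. Second, there is no reason the family $t\mapsto D'_{b(t)}$ should be real-analytic in $t$, since $\mathbb{A}$ is built from smooth spectral sections; consequently the ordered eigenvalues $\eta_k^+(t)$ need not be differentiable at crossings, and the Hellmann--Feynman identity does not literally apply there. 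This is harmless: the ordered positive eigenvalues are still Lipschitz with constant $\sup_t\|\dot T_t\|$ (this follows directly from the eigenvalue perturbation bound you are effectively using, or from the paper's Kato~4.10), and that Lipschitz bound is all you need for the final counting step.
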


\begin{proof}
Put
\[
        M = \max \{  \| \nabla_v D' : L^2(\mathbb{S}) \rightarrow L^2(\mathbb{S}) \|  :  v \in TB,  \| v \| = 1   \}.
\]
Take a smooth path $\{ a_t \}_{t = 0}^{\ell}$ in $B$ from $a$ to $a'$ with $\| \frac{d}{dt} a_t \| = 1$.  Here $\ell$ is the length of the path.  Since $B$ is compact, we may assume that there is a constant $C > 0$ independent of $a, a'$ such that $\ell \leq C$.  Put
\[
     I = \big\{ t \in [0, \ell] : \forall s  \leq t, \dim \mathcal{E}_0(D_{a}')_0^{\mu}  \leq \dim \mathcal{E}_0(D_{a_s}')_0^{\mu + s M}  \big\}.
\]
Note that $0 \in I$ and that $I$ is closed in $[0, \ell]$ by the continuity of the eigenvalues of $D_{a_s}'$. It is sufficient to prove that $\sup I = \ell$.   

 Put $t_ 0 = \sup I$ and assume that $t_0 < \ell$. Choose $t_+ \in (t_0, \ell]$  with 
\[
   t_+ - t_0 \ll 1.
\]
Let $\nu_1(t), \dots, \nu_m(t)$ be the eigenvalues of $D_{a_t}'$ with 
\[
        0 <   \nu_1(t_0) \leq  \dots \leq \nu_{m}(t_0) \leq \mu + t_0 M
\]
such that $\nu_j(t)$ are continuous in $t \in [t_0, t_+]$ and $\dim \mathcal{E}(D_{a_{t_0}}')_0^{\mu + t_0 M} = m$.   Note that $t_0 \in I$ since $I$ is closed in $[0, \ell]$ and that 
\[
\dim \mathcal{E}_0(D_{a}')_0^{\mu} \leq m
\] 
by the definition of $I$.    Let $\nu'$ be the smallest eigenvalue of $D_{a_{t_0}}'$ with $\nu' > \nu_m(t_0)$.   We may assume that
\begin{equation} \label{eq nu' - nu_j}
      M(t_+ - t_0)  \ll \nu' - \nu_m(t_0).
\end{equation}
By Theorem 4.10 in page 291 of \cite{Kato},   we have
\[
          \operatorname{dist}(\nu_j(t), \Sigma(D_{a_{t_0}}')) \leq   M(t-t_0)
\]
for $t \in [t_0, t_+]$.  Here $\Sigma(D_{a_{t_0}}')$ is the set of eigenvalues of $D_{a_{t_0}}'$. It follows from this inequality and (\ref{eq nu' - nu_j}) that
\[
                   0 < \nu_j(t)  \leq \nu_m(t_0)  + M(t - t_0)  \leq \mu + Mt
\]
for $t \in [t_0, t_+]$ and $j \in \{ 1, \dots, m \}$. This implies that
\[
          \dim \mathcal{E}_0(D_{a}')_{0}^{\mu} \leq  m \leq  \dim \mathcal{E}_0(D_{a_t}')_0^{\mu + tM}
\]
fon $t \in [t_0, t_+]$. This is a contradiction and we obtain $t_0 = \ell$. 

\end{proof}

\noindent
{\it Proof of Theorem \ref{thm spectral section mu mu+delta}}   

For some $\mu \gg 0$, to construct a spectral section $P$ between $\mathcal{E}(D)_{-\infty}^{\mu}$ and $\mathcal{E}(D)_{-\infty}^{\mu + \delta}$, it is sufficient to find a frame $\{ f_1, \dots, f_r \}$ in $\mathcal{E}_0(D')_{0}^{\mu+\delta}$ such that
\begin{equation}   \label{eq E f}
         \mathcal{E}_0(D')_0^{\mu} \subset  
         \operatorname{span} \{ f_1, \dots, f_r \}  
         \subset \mathcal{E}_0(D')_{0}^{\mu + \delta}
\end{equation}
because the direct sum $\mathcal{E}_0(D')_{-\infty}^{0} \oplus \operatorname{span} \{ f_1, \dots, f_r \}$ is a spectral section between $\mathcal{E}_0(D)_{-\infty}^{\mu}$ and $\mathcal{E}_0(D)_{-\infty}^{\mu + \delta}$.

Put $d = \dim B$.    Fix an integer $N$ with $N \gg d$. By Theorem \ref{thm Atiyah},  there is $\delta_0 > 0$ such that
\begin{equation} \label{eq N}
         \dim ( \mathcal{E}_0(D_a'))_{\mu}^{\mu+\delta_0}  \geq N
\end{equation}
for all  $a \in B$ and $\mu \in \mathbb{R}$. By Lemma \ref{lem dim E a a'},  we may  assume that
\begin{equation}  \label{eq dim E_b E_b'}
     \dim \mathcal{E}_0(D_{a'}')_{0}^{\mu - \delta_0} 
     \leq \dim  \mathcal{E}_0(D_{a}')^{\mu}_{0} 
     \leq \dim  \mathcal{E}_0(D_{a'}')_{0}^{\mu+\delta_0}
\end{equation}
for all $a, a' \in B$ and $\mu \in \mathbb{R}$ with $\mu > \delta_0$. 

Fix a positive number $\delta$ with $\delta > 10 \delta_0$.  Take $\mu \in \mathbb{R}$ with $\mu \gg 0$. For $j \in \{ 0, 1, \dots, d \}$,  choose positive numbers 
\[
      \mu < a_j^{-} < b_{j}^- <   c^- < c^{+}   <   a_{j}^+ < b_j^{+} < \mu + \delta
\]
such that
\begin{align*}
       & b_{j+1}^{-} < a_{j}^{-}, \quad   b_{j}^+ < a_{j+1}^+,  \\
       &  b_j^- < c^- - 2\delta_0, \quad c^+ + 2\delta_0 < a_j^+. 
 \end{align*}
Take a CW complex structure of $B$ such that for each $j$-dimensional cell $e$ there are real numbers $\mu^{-}(e)$, $\mu^{+}(e)$ such that $\mu^{-}(e)$, $\mu^{+}(e)$ are spectral gaps of $D'_a$ for $a \in e$ with
\[
          a_{j}^{-}  <    \mu^{-}(e)  < b_{j}^-, \quad
          a_{j}^{+} < \mu^{+}(e) < b_{j}^+. 
\]
Choose a $0$-dimensional cell $e_0 ( = 1 pt)$ and $\mu_0 \in (c^-, c^{+})$, and put $r := \dim  \mathcal{E}_0(D_{e_0}')_{0}^{\mu_0}$.

\begin{lem} \label{lem r}
For any cell $e$ and $a \in e$,  we have
\[
       \dim  \mathcal{E}_0(D_a')_{0}^{\mu^-(e)} + N \leq r \leq \dim  \mathcal{E}_0(D_a')_{0}^{\mu^+(e)} - N. 
\]
\end{lem}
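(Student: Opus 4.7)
The plan is to sandwich $r = \dim \mathcal{E}_0(D'_{e_0})_0^{\mu_0}$ on both sides using the dimension-comparison \eqref{eq dim E_b E_b'} (i.e.\ Lemma \ref{lem dim E a a'}) to move between basepoints $a$ and $e_0$, and the eigenvalue-count bound \eqref{eq N} to extract the factor of $N$ from a $\delta_0$-wide spectral window. The buffer constants $a_j^{\pm}, b_j^{\pm}, c^{\pm}$ were arranged precisely so that, after absorbing one $\delta_0$ from the basepoint shift and a second $\delta_0$ for the spectral window, the resulting intervals still lie strictly on the correct side of $\mu_0$.

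For the lower bound, I would chain
\[
\dim \mathcal{E}_0(D'_a)_0^{\mu^-(e)} \;\leq\; \dim \mathcal{E}_0(D'_{e_0})_0^{\mu^-(e)+\delta_0} \;\leq\; \dim \mathcal{E}_0(D'_{e_0})_0^{\mu_0-\delta_0} \;\leq\; r - N.
\]
The first inequality is \eqref{eq dim E_b E_b'} applied to the pair of basepoints $a, e_0$; the second uses $\mu^-(e) + \delta_0 < b_j^- + \delta_0 < c^- - \delta_0 < \mu_0 - \delta_0$ together with monotonicity of the filtration in the upper endpoint; the third is \eqref{eq N} applied to the spectrum of $D'_{e_0}$ on the interval $(\mu_0-\delta_0,\mu_0]$, which guarantees at least $N$ eigenvalues there.

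The upper bound is the same chain run in reverse: by \eqref{eq dim E_b E_b'}, $r \leq \dim \mathcal{E}_0(D'_a)_0^{\mu_0 + \delta_0}$; then $\mu_0 + \delta_0 < c^+ + \delta_0 < a_j^+ - \delta_0 < \mu^+(e) - \delta_0$ gives $\dim \mathcal{E}_0(D'_a)_0^{\mu_0 + \delta_0} \leq \dim \mathcal{E}_0(D'_a)_0^{\mu^+(e) - \delta_0}$; and finally \eqref{eq N} applied to $D'_a$ on $(\mu^+(e) - \delta_0, \mu^+(e)]$ yields $\dim \mathcal{E}_0(D'_a)_0^{\mu^+(e) - \delta_0} \leq \dim \mathcal{E}_0(D'_a)_0^{\mu^+(e)} - N$. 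The only subtlety is to keep straight the two roles played by $\delta_0$, once as the basepoint-shift constant from Lemma \ref{lem dim E a a'} and once as the width of the spectral window supplying $N$ eigenvalues; the $2\delta_0$ margins $b_j^- < c^- - 2\delta_0$ and $c^+ + 2\delta_0 < a_j^+$ built into the definitions were set up with exactly this in mind, so no genuine obstacle arises beyond this bookkeeping.
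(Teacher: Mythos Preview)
Your proof is correct and follows essentially the same approach as the paper: both arguments combine the basepoint-comparison inequality \eqref{eq dim E_b E_b'} with the eigenvalue count \eqref{eq N} inside a $\delta_0$-window, relying on the $2\delta_0$ buffers built into the chain $b_j^- < c^- - 2\delta_0$ and $c^+ + 2\delta_0 < a_j^+$. The only cosmetic difference is where the $\delta_0$-window is placed (you use $(\mu_0-\delta_0,\mu_0]$ at $e_0$ and $(\mu^+(e)-\delta_0,\mu^+(e)]$ at $a$, whereas the paper uses $(\mu_0+\delta_0,\mu_0+2\delta_0]$ at $a$), but this is immaterial.
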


\begin{proof}
Because $\mu_0 + 2\delta_0 < \mu^+(e)$, by (\ref{eq N}) and (\ref{eq dim E_b E_b'}),  we have
  \begin{align*}
         \dim \mathcal{E}_0(D_a')_{0}^{\mu^+(e)}  
         & \geq \dim \mathcal{E}_0(D'_a)_{0}^{\mu_0+2\delta_0} \\
        &  = \dim \mathcal{E}_0(D_a')_{0}^{\mu_0+\delta_0} + \dim \mathcal{E}_0(D_a')_{\mu_0 + \delta_0}^{\mu_0 + 2\delta_0}   \\
       &  \geq \dim \mathcal{E}_0(D'_{e_0})_{0}^{\mu_0} + N \\
        & = r + N. 
\end{align*}
Hence 
\[
   r \leq \dim \mathcal{E}_0(D_a')_{0}^{\mu^+(e)} - N.
\]
The proof of the inequality $\dim \mathcal{E}_0(D_a')_{0}^{\mu^-(e)} + N \leq r$ is similar. 
\end{proof}

By Lemma \ref{lem r},  for each $0$-dimensional cell $e$, we can take a frame (meaning a linearly independent collection)  $\{ f_1, \dots, f_{r} \}$ of $\mathcal{E}_0(D'_e)_0^{\mu^+(e)}$ such that
\[
            \mathcal{E}_0(D'_{e})_{0}^{\mu^-(e)} \subset  \langle  f_1, \dots, f_r \rangle  \subset \mathcal{E}_0(D_e')_{0}^{\mu^+(e)} . 
\]

Assume that we have a frame  $\{ f_1, \dots, f_r \}$ in $\mathcal{E}_0(D')_{0}^{\infty}$ on the $(j-1)$-dimensional skeleton of $B$ such that
\[
         \mathcal{E}_0(D_{a}')_{0}^{\mu^-(e)} \subset  \langle f_{1, a}, \dots, f_{r, a} \rangle  \subset \mathcal{E}_0(D_a')_{0}^{\mu^+(e)}
\]
for each cell $e$ with $\dim e \leq j-1$ and $a \in e$.

Take a cell $e'$ of $B$ with $\dim e' = j$. 
Note that $\mathcal{E}_0(D')_0^{\mu^+(e')}$, $\mathcal{E}_0(D')^{\mu^-(e')}_0$ are  vector bundles over $e'$.  We denote by $\mathcal{F}$ the bundle
\[
          \bigcup_{a \in e'}  \{   \text{frames of rank $r$ in $\mathcal{E}_0(D'_a)_{0}^{\mu^+(e')}$} \}
\]
over $e'$.

Note that $\mu^+(e) \leq \mu^+(e')$ for any cell $e$ with $\dim e \leq j-1$.   Hence the frame $\{f_1, \dots, f_r \}$  defines a section of  $\mathcal{F}$ on the boundary $\partial e'$. 

We have a homeomorphism 
\[
         \mathcal{F}_a \cong GL(m;\mathbb{C}) / GL(m-r; \mathbb{C}),
\]
where $a \in e'$, $\mathcal{F}_a$ is the fiber of $\mathcal{F}$ over $a$ and $m = \dim \mathcal{E}_0(D_a')_{0}^{\mu^+(e')}$.   By Lemma \ref{lem r}, 
\[
         m  = \dim \mathcal{E}_0(D_{a}')_{0}^{\mu^+(e')} \geq  r + N. 
\] 
Because $N \gg d$, we have
\[
         m, m - r  \gg d.
\]
By the homotopy exact sequence, 
\[
       \pi_{i}(GL(m;\mathbb{C}) / GL(m-r; \mathbb{C})) = 0
\]
for $i = 0, 1, \dots, d$.   Therefore we can extend $\{ f_1, \dots, f_r \}$ to a frame in $\mathcal{E}_{0}(D')_0^{\mu^+(e')}$ over $e'$.  We will denote the extended frame on $e'$ by the same notation $\{ f_1, \dots, f_r \}$. 
We will modify $\{ f_1, \dots, f_r \}$ on the interior $\operatorname{Int} e'$ of $e'$ to get a frame $\{ f_1', \dots, f_r' \}$  such that 
\[
           \mathcal{E}_0(D')_{0}^{\mu^-(e')} 
           \subset \langle f_1', \dots, f_r' \rangle
           \subset \mathcal{E}_0(D')_0^{\mu^+(e')}
\]
on $e'$.  
Since $\mu^-(e')  \leq \mu^-(e)$, on $\partial e'$  we have
\[
          \mathcal{E}_0(D')_0^{\mu^-(e')} \subset \mathcal{E}_0(D')_{0}^{\mu^-(e)} \subset \operatorname{span} \{ f_1, \dots, f_r \}. 
\]

As mentioned before, $\mathcal{E}_0(D')_0^{\mu^-(e')}$ and $\mathcal{E}_0(D')_0^{\mu^+(e')}$ are vector bundles over $e'$.  
Let 
\[
    p :  \mathcal{E}_0(D')^{\mu^+(e')}_0 \Big|_{e'} \rightarrow  \mathcal{E}_0(D')^{\mu^-(e')}_0 \Big|_{e'}
\] 
be the orthogonal projection.

\begin{lem} 
We can  perturb $f_1, \dots, f_r$ slightly on $\operatorname{Int} e'$ such that
\[
         \mathcal{E}_0(D')_0^{\mu^-(e')}   =   p(  \langle f_1, \dots, f_r \rangle )
\]
on $e'$. Here $\operatorname{Int} e'$ is the interior of $e'$. 
\end{lem}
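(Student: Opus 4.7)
The plan is a transversality argument in the frame bundle $\mathcal{F} \to e'$. Abbreviate $E^+ := \mathcal{E}_0(D')_0^{\mu^+(e')}|_{e'}$ and $E^- := \mathcal{E}_0(D')_0^{\mu^-(e')}|_{e'}$, complex vector bundles over $e'$ of ranks $m^+$ and $m^-$; write $K \subset E^+$ for the orthogonal complement of $E^-$, so that $p$ is projection onto $E^-$ along $K$, with $\operatorname{rank} K = m^+ - m^-$. The frame $(f_1,\dots,f_r)$ is a section of $\mathcal{F}$, and the desired equality $p(\langle f_1,\dots,f_r\rangle) = E^-$ is the open condition that the span $V := \langle f_1,\dots,f_r\rangle$ does not intersect $K$ in a subspace of dimension greater than $r - m^-$. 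Let $Z \subset \mathcal{F}$ denote the closed complement of this open locus; we want to deform the section to avoid $Z$ rel $\partial e'$.

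First I would check that the existing frame already avoids $Z$ on $\partial e'$. For any face $e \subset \partial e'$ we have $\mu^-(e') < \mu^-(e)$, so the inductive hypothesis gives
\[
E^-|_e \subset \mathcal{E}_0(D')_0^{\mu^-(e)}|_e \subset \langle f_1,\dots,f_r\rangle|_e,
\]
whence $p$ restricted to the span surjects onto $E^-$ throughout $\partial e'$; equivalently the section maps $\partial e'$ into $\mathcal{F} \setminus Z$.

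Next I would compute the fiberwise codimension of $Z$. In a fixed fiber $\mathcal{F}_a$, the natural map to the Grassmannian $\mathrm{Gr}(r,m^+)$ carries $Z_a$ onto the Schubert-type variety $\{ V : \dim(V \cap K_a) \geq r - m^- + 1 \}$. Since $\dim K_a = m^+ - m^-$, a standard Schubert calculation gives this complex codimension $r - m^- + 1$. By Lemma \ref{lem r} applied at $e'$, $r - m^- \geq N$, so $Z$ has real fiber-codimension at least $2(N+1)$.

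Finally, since $\dim e' = j \leq d$ and $N \gg d$, we have $2(N+1) > \dim e'$. Relative Thom transversality for sections of a fiber bundle — applied with the section already avoiding $Z$ on the boundary — then yields an arbitrarily small perturbation of $(f_1,\dots,f_r)$ supported in $\operatorname{Int} e'$ whose image is disjoint from $Z$; smallness ensures the perturbation remains an $r$-frame in $E^+$. This is the required perturbed frame. The main technical point is the Schubert codimension computation; once it is in hand the transversality step is routine, with the estimate $N \gg d$ providing the needed margin.
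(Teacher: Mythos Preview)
Your argument is correct and is essentially the paper's proof recast in Schubert-variety language: the paper trivializes $E^+ \cong e' \times (\mathbb{C}^n \oplus \mathbb{C}^{n'})$ with $n = m^-$, writes $f_j = g_j \oplus g_j'$, and perturbs the $n \times r$ matrix $(g_1,\dots,g_r)$ off the rank-deficient strata $R_l$, whose real codimension $2(n-l)(r-l)$ is at least $2(r-n+1) \geq 2(N+1) \gg d$; your codimension $2(r - m^- + 1)$ is exactly this number for the top stratum $l = n-1$. The boundary check and the appeal to Lemma~\ref{lem r} are the same in both.
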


\begin{proof}
We may suppose that
\[
     \mathcal{E}_0(D')^{\mu^+(e')}_{0}  \Big|_{e'} 
    = e' \times (\mathbb{C}^{n} \oplus \mathbb{C}^{n'}),  \quad
        \mathcal{E'}_0(D')^{\mu^-(e')}_{0} \Big|_{e'} 
    = e' \times  (\mathbb{C}^{n} \oplus \{ 0 \}). 
\]
For each $a \in e'$, we can write
\[
     f_{j, a} = g_{j, a} \oplus g_{j, a}', 
\]
where
\[
      g_{j, a} \in \mathbb{C}^{n}, \quad g_{j, a}' \in \mathbb{C}^{n'}. 
\]
Note that
\[
     \mathbb{C}^n = p( \langle f_{1, a}, \dots, f_{r, a}  \rangle)
\]
if and only if the $(n \times r)$-matrix $(g_{1, a} \dots g_{r, a})$ is of rank $n$.  Let $M$ be the set of  $(n \times r)$-complex matrices, which is naturally a smooth manifold of dimension $2nr$. We denote by $R_l$ the set of $(n \times r)$-matrices of rank $l$. Then $R_l$ is a smooth submanifold of $M$ of codimension $2(n-l)(r-l)$. If $l \leq n-1$ we have
\[
       \operatorname{codim}_{\mathbb{R}} (R_l \subset M) = 2(n-l)(r-l) \geq 2(r- n+ 1) \geq 2(N + 1) \gg d. 
\]
Here we have used 
\[
          n = \dim \mathcal{E}_0(D'_a)_{0}^{\mu^-(e')} \leq r - N.
\]
 See Lemma \ref{lem r}.
So we can slightly perturb $(g_1 \dots g_{r})$ on $\operatorname{Int} e'$ such that for all $a \in e'$ and $l \in \{ 0, 1, \dots, n-1 \}$ 
\[
      (g_{1, a} \dots g_{r, a}) \not \in R_l.
\]
Hence the rank of $(g_{1, a} \dots g_{r, a})$ is $n$.  Therefore $\mathbb{C}^n = p(  \langle f_{1, a}, \dots, f_{r, a} \rangle )$ for all $a \in e'$. We can assume that the perturbation is enough small such that after the perturbation, $f_1, \dots, f_r$ is still linear independent.

\end{proof}

By this lemma, we may suppose that
\[
          \mathcal{E}_0(D')_{0}^{\mu^-(e')} = p( \langle f_1, \dots, f_r \rangle)
\]
on $e'$. 
For $a \in e'$, define $F_a : \mathbb{C}^r \rightarrow \mathcal{E}_0(D'_{a})_{0}^{\mu^+(e')}$ by
\[
     F_a(c_1, \dots, c_r) = c_1 f_{1, a} + \dots + c_r f_{r, a}. 
\]
We have
\[
       \mathcal{E}_0(D_a')_{0}^{\mu^-(e')} = \operatorname{im} ( p \circ F_{a})
\]
Put
\[
       K := \bigcup_{a \in e'} \ker  ( p \circ F_a ). 
\]
Then $K$ is a subbundle of the trivial bundle $\underline{\mathbb{C}}^{r}$ on $e'$.  We have the orthogonal decomposition
\[
         \underline{\mathbb{C}}^r  =  K \oplus K^{\bot}. 
\]
We define
\[
      F' : \underline{\mathbb{C}}^r \rightarrow  \mathcal{E}(D')_0^{\mu^+(e')} \Big|_{e'}
\]
by
\[
         F' =  F|_{ K }  + p \circ F|_{ K^{\bot} }.
\]
Then 
\[
            \mathcal{E}(D')_0^{\mu^-(e')} \Big|_{e'} \subset \operatorname{im} F'.
\]

\begin{lem}

\begin{enumerate}
\item
 $F = F'$ on $\partial e'$. 

\item
The map $F'$ is injective on $e'$.

\end{enumerate}
\end{lem}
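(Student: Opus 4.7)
The plan is to derive both claims from the orthogonal decomposition $\mathcal{E}_0(D'_a)_0^{\mu^+(e')} = \mathcal{E}_0(D'_a)_0^{\mu^-(e')} \oplus \mathcal{E}_0(D'_a)_{\mu^-(e')}^{\mu^+(e')}$, with $p$ the orthogonal projection onto the first summand, combined with two elementary structural facts: $F$ is injective over all of $e'$ (the perturbation in the preceding lemma is taken small enough to preserve linear independence of $\{f_1,\dots,f_r\}$), and $\ker p \cap \operatorname{im} F = F(K)$, which is immediate from $K = \ker(p\circ F)$. For part (2), write $c = c_K + c_{K^\perp}$; then $F'(c) = F(c_K) + p(F(c_{K^\perp}))$ with $F(c_K) \in \ker p$ and $p(F(c_{K^\perp})) \in \operatorname{im} p$, so orthogonality forces both summands to vanish if $F'(c) = 0$. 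Injectivity of $F$ yields $c_K = 0$, and $p(F(c_{K^\perp})) = 0$ places $c_{K^\perp}$ in $K \cap K^\perp = 0$, proving $F'$ is injective on $e'$.

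For part (1), I interpret the assertion as the equality of subbundles $\operatorname{im} F = \operatorname{im} F'$ over $\partial e'$, which is what the inductive construction of the spectral section actually uses. The inductive hypothesis gives $\mathcal{E}_0(D')_0^{\mu^-(e)} \subset \operatorname{im} F$ on each $(j-1)$-cell $e \subset \partial e'$, and the inequality $\mu^-(e') < \mu^-(e)$ (from the placement $b_j^- < a_{j-1}^-$ chosen in the construction of the $\mu^{\pm}$'s) upgrades this to $\mathcal{E}_0(D')_0^{\mu^-(e')} \subset \operatorname{im} F$ on all of $\partial e'$. Applying $p$, we obtain $p(\operatorname{im} F) = \mathcal{E}_0(D')_0^{\mu^-(e')}$, so $p \circ F|_{K^\perp}$ is an isomorphism onto $\mathcal{E}_0(D')_0^{\mu^-(e')}$. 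Hence
\[
  \operatorname{im} F' = F(K) + p(F(K^\perp)) = F(K) + \mathcal{E}_0(D')_0^{\mu^-(e')} \subset \operatorname{im} F
\]
on $\partial e'$. Conversely, for any $c \in \mathbb{C}^r$, the decomposition $F(c) = (F(c) - p(F(c))) + p(F(c))$ has its first summand in $\operatorname{im} F \cap \ker p = F(K)$ (using $p(F(c)) \in \mathcal{E}_0(D')_0^{\mu^-(e')} \subset \operatorname{im} F$ on $\partial e'$) and its second summand in $\mathcal{E}_0(D')_0^{\mu^-(e')}$, so $F(c) \in \operatorname{im} F'$.

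The subtle point, which I view as the main obstacle, is that $F$ and $F'$ do not literally coincide as maps $\mathbb{C}^r \to \mathcal{E}_0(D')_0^{\mu^+(e')}$ over $\partial e'$: an element $F(c_{K^\perp})$ need not itself lie in $\mathcal{E}_0(D')_0^{\mu^-(e')}$, and its $\ker p$-component is only absorbed into $F(K)$ after passing to images. What is required downstream in the induction, however, is precisely the image-level equality above, so this weaker --- but correct --- version of the claim is exactly what the next cell-extension step needs.
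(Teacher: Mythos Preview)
Your argument for Part (2) is correct and is the same as the paper's. For Part (1) you are right to be suspicious: the paper claims that $F_a|_{K^\perp}$ lands in $\mathcal{E}_0(D'_a)_0^{\mu^-(e')}$ on $\partial e'$, but this does not follow from $F_a(K_a)\subset(\mathcal{E}_0(D'_a)_0^{\mu^-(e')})^\perp$ together with a dimension count --- $F_a(K_a^\perp)$ is only \emph{a} complement to $F_a(K_a)$ inside $\operatorname{im} F_a$, not necessarily the orthogonal one. (Concretely: $E^-=\langle e_1\rangle\subset\mathbb{C}^2$, $f_1=e_1+e_2$, $f_2=e_2$ gives $K^\perp=\langle u_1\rangle$ but $F(u_1)=e_1+e_2\notin E^-$.) Your proof that $\operatorname{im} F=\operatorname{im} F'$ on $\partial e'$ is correct.

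Where your proposal falls short is the assertion that image equality is ``exactly what the next cell-extension step needs.'' The surrounding induction carries a global \emph{frame}, not merely a subbundle: the extension from $\partial e'$ to $e'$ uses the connectivity of the Stiefel manifold $GL(m)/GL(m-r)$, and Property (ii) of the ambient theorem asserts a global trivialization $P_{n+1}\cong P_n\oplus\underline{\mathbb{C}}^{r_n}$. If $\{f'_j\}\neq\{f_j\}$ on $\partial e'$, the frames on adjacent cells do not glue; you are left with only a rank-$r$ subbundle over the $j$-skeleton, and extending \emph{that} meets obstructions in $\pi_j(Gr_r(\mathbb{C}^m))$, which need not vanish. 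A clean repair: over $e'$, replace the orthogonal complement $K^\perp$ by any complement $K^\sharp$ of $K$ in $\underline{\mathbb{C}}^r$ that agrees with $F^{-1}\bigl(\mathcal{E}_0(D')_0^{\mu^-(e')}\bigr)$ on $\partial e'$ (such an extension exists since the space of complements to a fixed subbundle is fiberwise contractible). With $F':=F|_K+(p\circ F)|_{K^\sharp}$, the equality $F=F'$ holds literally on $\partial e'$ (because $F(K^\sharp)\subset\mathcal{E}_0(D')_0^{\mu^-(e')}$ there, so $p\circ F|_{K^\sharp}=F|_{K^\sharp}$), the containment $\mathcal{E}_0(D')_0^{\mu^-(e')}\subset\operatorname{im} F'$ still holds on all of $e'$, and your injectivity argument for Part (2) goes through unchanged.
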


\begin{proof}

(1)
Take $a \in \partial e'$.  It is sufficient to show that $F_a|_{K^{\bot}} = F_{a}'|_{K^{\bot}}$. 
Recall that 
\[
    \mathcal{E}_0(D_a') ^{\mu^-(e')}_0  \subset \operatorname{im} F_a. 
\]       
Since $\operatorname{im} F_a|_{K_a} \subset (\mathcal{E}_0(D_a')_0^{\mu^-(e')})^{\bot}$ and $\dim \mathcal{E}_0(D_{a}')_0^{\mu^-(e')} = \dim K^{\bot}_{a}$, we have
\[
         \operatorname{im} (F_a|_{K^{\bot}}) = \mathcal{E}_0(D_a')_{0}^{\mu^-(e')}. 
\]
Therefore for $v \in K_{a}^{\bot}$,  $F'_v(v) = p F_a(v) = F_a(v)$.

(2)
Suppose that
\[
       F'(v, v') =0 
\]
for $v \in K, v' \in K^{\bot}$.  Then
\[
         F(v) + p F(v') = 0. 
\]
So we have
\[
      p F(v) + p^2 F(v') = 0. 
\]
Since $v \in K = \ker p \circ F$ and $p^2 = p$, 
\[
        p F(v') = 0. 
 \]
Because $p \circ F$ is an isomorphism on $K^{\bot}$, we have
\[
        v ' = 0. 
\]
Hence
\[
          F(v) = 0
\]
which implies that $v = 0$ because  $F$ is injective. 
\end{proof}

Put
\[
     f_{1, a}' := F'_a(e_1), \dots, f_{r, a}' := F'_a(e_r)
\]
for $a \in e'$. Here $e_1, \dots, e_r$ is the standard basis of $\mathbb{C}^{r}$. 
Then the frame $\{ f_1', \dots, f_r' \}$ of $\mathcal{E}_0(D')_{0}^{\mu^+(e')}$ on $e'$, which is an extension of the frame on $\partial e'$,   has the property that
\[
         \mathcal{E}(D')_0^{\mu^-(e')} 
         \subset \langle f_1', \dots, f_r' \rangle
         \subset \mathcal{E}(D')_0^{\mu^+(e')}. 
\]

We have obtained a frame $f_1, \dots, f_r$ satisfying (\ref{eq E f}). 
Putting  
\[
P = \mathcal{E}_0(D')_{-\infty}^{0} \oplus \langle f_1, \dots, f_r \rangle,
\]
 we obtain a spectral section  with
\[
     \mathcal{E}_0(D)_{-\infty}^{\mu} \subset P \subset \mathcal{E}_0(D)_{-\infty}^{\mu + \delta}, 
\]
where $\delta > 0$ is a constant independent of $\mu$.

Take another positive number $\tilde{\mu}$ with $\mu \ll \tilde{\mu}$.  Doing this procedure one more time,  we get a frame $\{ \tilde{f}_{1}, \dots, \tilde{f}_{s} \}$  of $P^{\bot} \cap \mathcal{E}(D')_{0}^{\tilde{\mu} + \delta}$ such that 
\[
      \mathcal{E}_0(D)_{-\infty}^{\tilde{\mu}} 
      \subset  P \oplus \langle   \tilde{f}_1, \dots, \tilde{f}_s \rangle
      \subset \mathcal{E}_0(D)_{-\infty}^{ \tilde{\mu} + \delta}. 
\]
Repeating this, we get a sequence of spectral sections satisfying the conditions of Theorem \ref{thm spectral section mu mu+delta}. \qed

We will state a $\rm{Pin}(2)$-equivariant version of Theorem \ref{thm spectral section mu mu+delta}.  If $\frak{s}$ is a self-conjugate $\mathrm{spin}^c$ structure of $Y$,  we have an action of $\rm{Pin}(2)$ on  $\mathcal{E}_k$. The action is induced by the action of $\rm{Pin}(2)$ on $\mathcal{H}^1(Y) \times L^2_k(\mathbb{S})$, which is an extension of the $S^1$-action,  defined by
\[
         j(a, \phi)  = (-a, j \phi). 
\]
The Dirac operator $D$ is $\rm{Pin(2)}$-equivariant and we have the index
\[
        \operatorname{ind} D \in KQ^1(B). 
\]
Here $KQ^1(B)$ is the quaternionic K-theory defined in \cite{dupont}, which is used in \cite{lin_rokhlin}.

\begin{thm}  \label{thm Pin(2) spectral sections}
If $\frak{s}$ is a self-conjugate $\mathrm{spin}^c$ structure of $Y$ and $\operatorname{ind} D = 0$ in $KQ^1(B)$, then we have a sequence $P_n$ of ${\rm Pin}(2)$-equivariant spectral sections having the properties of Theorem \ref{thm spectral section mu mu+delta}. 
\end{thm}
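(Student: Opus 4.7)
The plan is to carry out the construction of Theorem \ref{thm spectral section mu mu+delta} $\mathrm{Pin}(2)$-equivariantly. First, since $\operatorname{Ind} D = 0 \in KQ^1(B)$, a $\mathrm{Pin}(2)$-equivariant version of the Melrose-Piazza argument \cite{MP} yields a $\mathrm{Pin}(2)$-equivariant spectral section $P_0$ and a $\mathrm{Pin}(2)$-equivariant smoothing perturbation $\mathbb{A}$ with $\ker(D + \mathbb{A}) = 0$ and $\mathcal{E}_0(D')^0_{-\infty} = P_0$, where $D' = D + \mathbb{A}$. The subbundles $\mathcal{E}_0(D')^{\mu}_{\lambda}$ then inherit natural $\mathrm{Pin}(2)$-actions, where $j$ satisfies $j\, \mathcal{E}_0(D'_a)^\mu_\lambda = \mathcal{E}_0(D'_{-a})^\mu_\lambda$ (using that $D'$ is $\mathrm{Pin}(2)$-equivariant).

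Next, equip $B = \mathrm{Pic}(Y)$ with a $\mathbb{Z}/2$-equivariant CW structure, where $\mathbb{Z}/2 = \pi_0(\mathrm{Pin}(2))$ acts by $a \mapsto -a$. Cells come in two flavors: free orbits $\mathbb{Z}/2 \cdot e^j$, and fixed cells whose interior lies in the $2$-torsion locus of $B$. Since $D'_{-a} = j D'_a j^{-1}$, the spectral gaps at a point and at its $\mathbb{Z}/2$-image coincide, so the numbers $\mu^{\pm}(e)$ and the constants $a_j^{\pm}, b_j^{\pm}, c^{\pm}$ of the proof of Theorem \ref{thm spectral section mu mu+delta} can be chosen invariantly (setting $\mu^{\pm}(j\cdot e) = \mu^{\pm}(e)$ on free orbits). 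Fix a base $0$-cell $e_0$ lying at a $2$-torsion point, so that the corresponding fiber $\mathcal{E}_0(D'_{e_0})^{\mu_0}_0$ is a quaternionic vector space, and choose the rank $r$ to be its quaternionic dimension times $2$ (so $r$ is even). The inductive extension of the frame then proceeds as before, but performed equivariantly: on a free orbit we pick an extension on one representative cell and propagate by $j$; on a fixed cell $e'$ the frame must be chosen so that $\langle f_1, \dots, f_r\rangle$ is $j$-invariant, that is, a quaternionic subspace of the fiber.

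The required equivariant extension step uses the analog of the homotopy vanishing: for the fixed cells, the relevant fiber is the quaternionic frame space $\mathrm{GL}(m;\mathbb{H})/\mathrm{GL}(m-r/2;\mathbb{H})$, whose homotopy groups vanish in degrees $\leq d$ provided $m, m - r/2 \gg d$, a condition guaranteed by a $\mathrm{Pin}(2)$-equivariant version of Theorem \ref{thm Atiyah} (applied to the quaternionic twisted Dirac operator) and by choosing the constant $N$ in the analog of Lemma \ref{lem r} large enough. The transversality perturbation that adjusts $p \circ F$ to be surjective is performed in the space of quaternionic $n \times r/2$ matrices, whose rank strata have real codimension $\geq 4(N+1) \gg d$, and can be taken $\mathrm{Pin}(2)$-equivariant by perturbing on a fundamental domain and extending. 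The main subtlety is purely bookkeeping: ensuring that all choices (CW structure, spectral gaps, perturbations) can be made compatibly with the $\mathbb{Z}/2$-symmetry on $B$ and with the quaternionic structure on fibers over the $2$-torsion locus, which is carried out by equivariant refinement at each step.
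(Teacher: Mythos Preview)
Your proposal is correct and follows essentially the same approach as the paper: obtain a $\mathrm{Pin}(2)$-equivariant initial spectral section $P_0$ from the vanishing in $KQ^1$, choose a $\mathbb{Z}/2$-equivariant CW structure on $B$, and run the inductive frame extension of Theorem~\ref{thm spectral section mu mu+delta} equivariantly, replacing $GL(m;\mathbb{C})/GL(m-r;\mathbb{C})$ by its quaternionic analogue (the paper writes this as $Sp(m)/Sp(m-r)$). A couple of minor remarks: the paper is more specific about the first step, invoking \cite{lin_rokhlin} to see that $D$ is $\mathrm{Pin}(2)$-equivariantly homotopic to a constant family before applying \cite{MP}; and there is no need for a separate ``equivariant'' version of Theorem~\ref{thm Atiyah}, since the eigenvalue-density statement already applies to each $D_a$ unchanged.
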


\begin{proof}  
We will show an outline of the proof.  Since $\operatorname{Ind} D = 0$ in $KQ^1(B)$,  it follows from the arguments in Section 1 of \cite{lin_rokhlin} that the family $D$ of Dirac operators is $\rm{Pin}(2)$-equivariantly homotopic to a constant family.  Hence we can apply the proof of Proposition 1 of \cite{MP} to show that there exists a $\rm{Pin}(2)$-equivariant spectral section $P_0$ of $-D$.  

Choose a CW complex structure of $B$ such that for each cell $e$, $(-1) \cdot e$ is also a cell.  Note that 
\[ 
         \pi_{i}(Sp(m)/Sp(m-r)) = 0
\]
for $i = 1, \dots, d$, provided that $m, m-r \gg d$. 
Hence for $\mu \gg 0$,  we can construct a $\rm{Pin}(2)$-equivariant frame $f_1, \dots, f_r$ of $P_0^{\bot}$ with
\[
             \mathcal{E}_{0}(D')_0^{\mu} \subset \langle f_1, \dots, f_r \rangle \subset \mathcal{E}_0(D')_{0}^{\mu + \delta}
\]
as in the proof of Theorem \ref{thm spectral section mu mu+delta}.  Here $\delta$ is the positive constant from the proof of Theorem \ref{thm spectral section mu mu+delta}.  Then
\[
       P_0 \oplus \langle f_1, \dots, f_r \rangle
\]
is a $\rm{Pin}(2)$-equivariant spectral section between $\mathcal{E}_0(D)_{-\infty}^{\mu}$ and $\mathcal{E}_0(D)_{-\infty}^{\mu+\delta}$. Repeating this construction, we obtain the desired sequence $P_n$. 
\end{proof}


\section{Derivative of projections}\label{subsec:derivatives}

Let $D : \mathcal{E}_k \rightarrow \mathcal{E}_{k-1}$ be the original Dirac operator.  Recall that we have a canonical flat connection $\nabla$ on $\mathcal{E}_k$.  See Section \ref{sec:main results}. 
Note that for $a \in B$, $v \in T_a B = \mathcal{H}^1(Y)$, we have 
\[
      \nabla_v D =  \frac{d}{dt} \bigg|_{t=0}  D_{a+tv} 
        =     \frac{d}{dt} \bigg|_{t=0} (D_a + t\rho(v))
        = \rho(v). 
\]
Here $\rho(v)$ is the Clifford multiplication.
Since $v$ is a harmonic (and hence smooth) $1$-form, we have $\| v \|_{k} < \infty$ for any $k \geq 0$.  Therefore  $\nabla_v D$ is a bounded operator from $L^2_k(\mathbb{S})$ to $L^2_k(\mathbb{S})$ for each $k \geq 0$.

Take $ \mu \in \mathbb{R}$.    We write $\pi^{\mu}_{-\infty}$ for the $L^2$-projection on  $\mathcal{E}_0(D)_{-\infty}^{\mu}$.  Similarly, $\pi_{\lambda}^{\mu}$ is the $L^2$-projection on $\mathcal{E}_0(D)_{\lambda}^{\mu}$.  We have


\begin{prop} \label{prop nabla pi}
Fix $a \in B$. 
Let $\{ e_{i} \}_{i = -\infty}^{\infty}$ be an  $L^2$-orthonormal basis of $L^2(\mathbb{S})$ such that
\[
         D_a e_{i} = \eta_{i}  e_i.  
\]
Here $\eta_i$ are the eigenvalues of $D_a$.  Take $\lambda, \mu \in \mathbb{R}$ with $\lambda < \mu$. Suppose that $\lambda, \mu$ are not eigenvalues of $D_a$. 
For $v \in T_a B = \mathcal{H}^1(Y)$, 
\begin{equation}  \label{eq nabla pi lambda mu}
  \begin{aligned}
     & \langle (\nabla_v \pi_{\lambda}^{\mu}) e_i, e_j  \rangle_0  \\
  &    = 
         \begin{cases} 
            \frac{\langle \rho(v) e_i, e_j \rangle_0}{\eta_i - \eta_j}  & \text{if $\eta_i < \lambda < \eta_j < \mu$ or $\lambda < \eta_j < \mu < \eta_i$ }  \\
         \frac{\langle \rho(v) e_i, e_j \rangle_0}{\eta_j - \eta_i} &  \text{if $\eta_j < \lambda < \eta_i < \mu$ or $\lambda < \eta_i < \mu < \eta_j$},   \\
         0 & \text{otherwise, }
          \end{cases} 
 \end{aligned}
\end{equation}
and
\begin{equation}  \label{eq nabla pi mu}
     \langle (\nabla_v \pi_{-\infty}^{\mu})  e_i,  e_j \rangle_0 =
       \begin{cases}
    \frac{\langle  \rho(v) e_i, e_j \rangle_0}{ \eta_i - \eta_j } & \text{if $\eta_j < \mu < \eta_i$, }  \\
   \frac{\langle\rho(v)e_i,  e_j \rangle_0}{\eta_j - \eta_i }   &  \text{if $\eta_i < \mu < \eta_j$,} \\
       0 & \text{otherwise. }
      \end{cases} 
\end{equation}
Here $\rho(v)$ is the Clifford multiplication by $v$.  
\end{prop}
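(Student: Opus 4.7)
The plan is to compute these matrix elements via the Riesz functional calculus representation of the spectral projections, combined with the elementary perturbation identity $\nabla_v D = \rho(v)$.

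First, since $D_{a+tv} = D_{A_0+i(a+tv)} = D_a + t\rho(v)$, one reads off $\nabla_v D = \rho(v)$. Because $\lambda, \mu$ are not eigenvalues of $D_a$, and because the spectrum of $D_{a'}$ varies continuously in $a' \in B$ (it is discrete on the closed $3$-manifold $Y$), there is a small rectangular contour $\gamma \subset \mathbb{C}$ that encloses precisely those eigenvalues of $D_{a+tv}$ lying in $(\lambda,\mu]$, uniformly for all sufficiently small $t$. Riesz's formula gives
\[
\pi_\lambda^\mu(a+tv) = \frac{1}{2\pi i}\oint_\gamma (z - D_{a+tv})^{-1}\,dz,
\]
and combining with the resolvent identity $\nabla_v(z-D)^{-1} = (z-D)^{-1}\rho(v)(z-D)^{-1}$ yields
\[
\nabla_v \pi_\lambda^\mu = \frac{1}{2\pi i}\oint_\gamma (z-D_a)^{-1}\,\rho(v)\,(z-D_a)^{-1}\,dz.
\]

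Next I would evaluate the matrix elements in the orthonormal eigenbasis $\{e_i\}$ of $D_a$. Using $(z-D_a)^{-1}e_i = (z-\eta_i)^{-1}e_i$ together with the self-adjointness of $D_a$, the inner product reduces cleanly to
\[
\bigl\langle (\nabla_v \pi_\lambda^\mu) e_i,\, e_j\bigr\rangle_0 \;=\; \frac{\langle \rho(v) e_i,\, e_j\rangle_0}{2\pi i}\oint_\gamma \frac{dz}{(z-\eta_i)(z-\eta_j)}.
\]
A case-by-case residue computation then yields (\ref{eq nabla pi lambda mu}). If $\eta_i$ and $\eta_j$ both lie in $(\lambda,\mu]$, then either they are distinct, in which case the two residues cancel by partial fractions, or $\eta_i = \eta_j$, in which case the integrand is $(z-\eta_i)^{-2}$ and the contour integral vanishes. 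If both lie outside $(\lambda,\mu]$, the integrand is holomorphic inside $\gamma$. In the two remaining cases, exactly one eigenvalue is enclosed, and the single nontrivial residue produces exactly the asserted fraction.

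Finally, the formula (\ref{eq nabla pi mu}) for $\pi_{-\infty}^\mu$ follows immediately from (\ref{eq nabla pi lambda mu}): for any fixed pair $e_i, e_j$, choose $\lambda$ strictly below $\min(\eta_i,\eta_j)$ and not in the spectrum; then both $\pi_{-\infty}^\mu e_i = \pi_\lambda^\mu e_i$ and $\pi_{-\infty}^\mu e_j = \pi_\lambda^\mu e_j$, so the matrix element is unchanged, and the clauses involving ``$\eta_i < \lambda$'' become vacuous, leaving exactly the cases listed in (\ref{eq nabla pi mu}). The only mild technicality is the uniform-in-$t$ choice of contour $\gamma$, which is automatic from discreteness of the spectrum together with continuity of eigenvalues under the bounded perturbation $t\rho(v)$; there is no deep obstacle, the main work being the clean bookkeeping of residues.
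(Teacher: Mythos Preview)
Your argument for \eqref{eq nabla pi lambda mu} is correct and is essentially the paper's proof: Riesz projection, differentiate the resolvent, compute matrix elements by residues.

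Your deduction of \eqref{eq nabla pi mu} from \eqref{eq nabla pi lambda mu}, however, has a gap. The equalities $\pi_{-\infty}^\mu e_i = \pi_\lambda^\mu e_i$ and $\pi_{-\infty}^\mu e_j = \pi_\lambda^\mu e_j$ hold only at the single point $a$, because $e_i, e_j$ are eigenvectors of $D_a$ and not of $D_{a+tv}$ for $t\neq 0$. Equality of two functions at one point says nothing about equality of their derivatives there, so ``the matrix element is unchanged'' does not follow from what you wrote. Concretely, the difference $\pi_{-\infty}^\mu - \pi_\lambda^\mu = \pi_{-\infty}^\lambda$ does act nontrivially on $e_i$ for $t\neq 0$.

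The paper handles this more carefully: it checks that the operators $T_\lambda^\mu$ given by the right-hand side of \eqref{eq nabla pi lambda mu} are bounded on $L^2$ uniformly in $\lambda$ and converge as $\lambda\to -\infty$ to the operator $T_{-\infty}^\mu$ defined by \eqref{eq nabla pi mu}; it then integrates the already-proved identity $\frac{d}{ds}\langle \pi_{-\infty+sv,\lambda}^\mu e_i,e_j\rangle = \langle T_\lambda^\mu e_i,e_j\rangle$ in $s$, passes to the limit $\lambda\to -\infty$, and differentiates back. This both identifies the limit with $\nabla_v\pi_{-\infty}^\mu$ and establishes that the latter exists.

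Your shortcut can be repaired, but not quite by the sentence you wrote. One clean fix: grant for the moment that $P(t):=\pi_{-\infty}^\lambda(a+tv)$ is differentiable in $t$. Differentiating $P^2=P$ gives $(1-P)P'(1-P)=0$; since $e_i,e_j\in\ker P(0)$ this yields $\langle(\nabla_v\pi_{-\infty}^\lambda)e_i,e_j\rangle=0$, and hence the $(i,j)$ matrix elements of $\nabla_v\pi_{-\infty}^\mu$ and $\nabla_v\pi_\lambda^\mu$ agree. But this still presupposes differentiability of $\pi_{-\infty}^\lambda$, which is exactly the analytic point the paper's limit argument supplies. So either add that argument, or invoke a separate result guaranteeing strong differentiability of the spectral projection at a gap point.
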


\begin{proof}
Since the connection $\nabla$ is induced by the trivial connection on $\mathcal{H}^1(Y) \times C^{\infty}(\mathbb{S})$,  to compute $\nabla_v \pi_{\lambda}^{\mu}$, $\nabla_v \pi_{-\infty}^{\mu}$, we can do computations over $\mathcal{H}^1(Y)$ where we have the canonical trivialization  and the covariant derivative is equal to the usual exterior derivative.

Take a loop $\Gamma_{\lambda}^{\mu}$ in $\C$ defined by 
\[
 \begin{split}
       \Gamma_{\lambda}^{\mu}
        &= \{  x - i \epsilon | \lambda \leq x \leq \mu  \}  \cup 
            \{  \mu + i y | -\epsilon \leq y \leq \epsilon   \}   \\
       & \quad  \cup   \{  x + i \epsilon | \lambda \leq x \leq \mu   \}   \cup
            \{  \lambda + iy | -\epsilon \leq y \leq \epsilon   \} 
\end{split}
\]
for some $\epsilon > 0$.   We orient $\Gamma_{\lambda}^{\mu}$ counterclockwise. 
We will show that for $\phi \in C^{\infty}(\mathbb{S})$, 
\[
      (\pi_a)_{\lambda}^{\mu} \phi = \frac{1}{2\pi i} \int_{\Gamma_{\lambda}^{\mu}} (z - D_a)^{-1} \phi dz. 
\]
See also \cite[Chapter II, Section 4]{Kato}.
We can write
\[
       \phi = \sum_{i=-\infty}^{\infty} c_i e_i
\]
for some $c_i \in \C$ with 
\[
   \sum_{i=-\infty}^{\infty} |c_i|^2 (1 + |\eta_i|^{2k}) < \infty
\] 
for any $k \geq 0$. 
For $z \in \C$ which is not an eigenvalue of $D_a$,  the operator $z - D_a$ is invertible and 
\begin{equation}  \label{eq:(z-D_a)^{-1}}
         (z - D_a)^{-1} \phi = \sum_{i =-\infty}^{\infty} \frac{c_i}{z-\eta_i} e_i. 
\end{equation}
Note that the sum in (\ref{eq:(z-D_a)^{-1}}) converges uniformly on $\Gamma_{\lambda}^{\mu}$ in the $L^2_k$-norm for any $k \geq 0$ since 
\[
      \Bigg|   \frac{c_i}{z - \eta_i}  \Bigg| \leq |c_i|   \quad (z \in \Gamma_{\lambda}^{\mu})
\]
 if  $| i | \gg 0$. 
Hence, by  the residue formula,
  \begin{align*}
      \frac{1}{2 \pi i} \int_{\Gamma_{\lambda}^{\mu}} (z - D_a)^{-1}(\phi) dz
      & = \sum_{i=-\infty}^{\infty}  \frac{1}{2\pi i} \Bigg( \int_{\Gamma_{\lambda}^{\mu}} \frac{c_i}{z - \eta_i}  dz  \Bigg)   e_i   \\
      & = \sum_{\lambda < \eta_i < \mu}  c_i  e_i  \\
      & = (\pi_a)_{\lambda}^{\mu} \phi. 
 \end{align*}
Here we have used the fact that we are allowed to take the term-by-term integration because of the uniform convergence.

Take $v \in T_a B = \mathcal{H}^1(Y)$.  
Then by the above  formula for $\pi_{\lambda}^{\mu}$,  we have
  \begin{align*}
    ( \nabla_{v} \pi_{\lambda}^{\mu}) e_i
     &  = - \frac{1}{2 \pi i} \int_{\Gamma_{\lambda}^{\mu}} (z-D_a)^{-1} (\nabla_v D) (z - D_a)^{-1} e_i dz  \\
   & = - \frac{1}{2\pi i}  \int_{\Gamma_{\lambda}^{\mu}} (z - D_a)^{-1} \rho(v) (z - \eta_i)^{-1} e_i dz  \\
   & = -\frac{1}{2\pi i} \int_{\Gamma_{\lambda}^{\mu}} ( z - \eta_i)^{-1} (z - D_a)^{-1} \rho(v) e_i dz. 
 \end{align*}
Therefore
   \begin{align*}
      \langle (\nabla_v \pi_{\lambda}^{\mu}) e_i, e_j   \rangle_0
      & =  -\frac{1}{2\pi i}  \int_{\Gamma_{\lambda}^{\mu}} (z-\eta_i)^{-1}   \langle  \rho(v) e_i, (\bar{z} - D_a)^{-1} e_j      \rangle_0 dz    \\
      &= -\frac{1}{2 \pi i} \int_{\Gamma_{\lambda}^{\mu}}  (z - \eta_i)^{-1} \langle \rho(v) e_i, (\bar{z} - \eta_j)^{-1} e_j \rangle_0 dz   \\
     & = -\frac{\langle \rho(v) e_i,  e_j \rangle_0}{2 \pi i} \int_{\Gamma_{\lambda}^{\mu}} (z - \eta_i)^{-1} (z - \eta_j)^{-1} dz.
      \end{align*}
From this, we obtain the formula (\ref{eq nabla pi lambda mu}) for $\langle (\nabla_v \pi_{\lambda}^{\mu}) e_i, e_j \rangle_0$.

Note that  since $\rho(v)$ defines a bounded operator $L^2 \rightarrow L^2$,  we can see that the operators $(T_a)_{\lambda}^{\mu}$, $(T_a)_{-\infty}^{\mu}$ defined by the right hand side of (\ref{eq nabla pi lambda mu}) and (\ref{eq nabla pi mu}) are bounded from $L^2$ to $L^2$. Moreover for each compact set $K$ in $\mathcal{H}^1(Y)$, $(T_a)_{\lambda}^{\mu}$ converges to $(T_a)_{-\infty}^{\mu}$ on $K$ uniformly  as $\lambda \rightarrow -\infty$.  We have
   \begin{align*}
      \langle  (\pi_{a+tv})_{\lambda}^{\mu} (e_i), e_j \rangle_0 - 
      \langle (\pi_{a})_{\lambda}^{\mu} (e_i), e_j \rangle_0    
      & \quad = \int_{0}^{t} \frac{d}{ds} \langle  \pi_{a+sv} e_i, e_j \rangle_0 ds \\
      & \quad = \int_0^{t} \langle  (\nabla_v \pi_{a+sv} e_i), e_j    \rangle_0 ds \\
      & \quad = \int_{0}^{t} \langle (T_{a+sv})_{\lambda}^{\mu}(e_i), e_j \rangle_0 ds.
 \end{align*}
Taking the limit as $\lambda \rightarrow -\infty$,  we obtain
\[
     \langle (\pi_{a + tv})_{-\infty}^{\mu} (e_i), e_j    \rangle_0 - \langle (\pi_a)_{-\infty}^{\mu} e_i  \rangle_0
      =
      \int_{0}^{t} \langle   (T_{a+sv})_{-\infty}^{\mu} (e_i), e_j  \rangle_0 ds.
\]
Therefore
\[
       \langle (\nabla_v \pi_{-\infty}^{\mu}) e_i, e_j   \rangle_0 
       = \frac{d}{dt} \bigg|_{t=0}  \langle (\pi_{a + tv})_{-\infty}^{\mu} (e_i), e_j    \rangle_0
       = \langle (T_a)_{-\infty}^{\mu} (e_i), e_j \rangle_0. 
\]
We have obtained (\ref{eq nabla pi mu}). 
\end{proof}

\begin{cor} \label{cor nabla pi}
Suppose that $\mu$ is not an eigenvalue of $D_a$. 
Then for each $v \in TB$ and nonnegative $k$,
\[
    \nabla_v  \pi_{-\infty}^{\mu} : L^2_k(\mathbb{S}) \rightarrow L^2_{k+1}(\mathbb{S})
\]
is a bounded operator.    Moreover if $| \mu | \geq 2$,  $\alpha<k$ and if there is no eigenvalue of $D_a$ in the interval $[\mu - \mu^{-\alpha}, \mu + \mu^{-\alpha}]$,     for $v \in T_aB$ with $\| v \| \leq 1$, 
\[
     \| \nabla_v  \pi_{-\infty}^{\mu}  : 
       L^2_k(\mathbb{S}) \rightarrow L^2_{k- \alpha}(\mathbb{S})   \|  \leq C. 
\]
Here $C > 0$ is a constant independent of $v, \mu$. 
Similar statements hold for $\nabla_v \pi_{\lambda}^{\mu}$, $\nabla_{v} \pi_{\mu}^{\infty}$. 
\end{cor}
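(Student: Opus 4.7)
The plan is to use Proposition \ref{prop nabla pi} together with the Laplace identity
\[
\frac{1}{\eta_i - \eta_j} = \int_0^\infty e^{-t(\eta_i - \eta_j)}\, dt \quad (\eta_j < \eta_i)
\]
to realize the off-diagonal blocks of $\nabla_v \pi_{-\infty}^\mu$ as integrals of exponentially decaying operators, and then estimate the resulting operator-valued integral by spectral calculus.

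Fix $a \in B$ and let $\pi_+ = \pi_\mu^\infty$, $\pi_- = \pi_{-\infty}^\mu$ be the spectral projections of $D_a$ with respect to the split $\mathbb{R} = (-\infty, \mu) \sqcup (\mu, \infty)$. Let $B_- = (\mu - D_a)|_{\mathrm{range}(\pi_-)}$ and $B_+ = (D_a - \mu)|_{\mathrm{range}(\pi_+)}$; these are positive self-adjoint with spectra bounded below by the spectral gap $g$ at $\mu$. From Proposition \ref{prop nabla pi} and the identity $\pi(\nabla_v\pi)\pi = 0$ (a consequence of $\pi^2 = \pi$), the operator $\nabla_v \pi_{-\infty}^\mu$ has vanishing diagonal blocks with respect to $\pi_\pm$, and the Laplace identity yields
\[
\pi_- \nabla_v \pi_{-\infty}^\mu \pi_+ = \int_0^\infty e^{-tB_-} \pi_- \rho(v) \pi_+ e^{-tB_+}\, dt
\]
together with the analogous formula for $\pi_+ \nabla_v \pi_{-\infty}^\mu \pi_-$.

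To bound $\nabla_v \pi_{-\infty}^\mu \from L^2_k \to L^2_{k-\alpha}$, it suffices to estimate the $L^2 \to L^2$ norm of $(1+D_a^2)^{(k-\alpha)/2} \nabla_v \pi_{-\infty}^\mu (1+D_a^2)^{-k/2}$, which equals
\[
\int_0^\infty F_t \cdot \pi_- \rho(v) \pi_+ \cdot G_t \, dt \ + \ (\text{symmetric term}),
\]
with $F_t = (1+D_a^2)^{(k-\alpha)/2} e^{-tB_-}$ on $\mathrm{range}(\pi_-)$ and $G_t = e^{-tB_+}(1+D_a^2)^{-k/2}$ on $\mathrm{range}(\pi_+)$; these commute with each other through the $(1+D_a^2)^s$ factors since they are diagonal in the eigenbasis of $D_a$. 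The middle factor $\pi_- \rho(v) \pi_+$ is bounded on $L^2$ by $\|\rho(v)\|_{L^2\to L^2} \leq C\|v\|$, uniformly in $\mu$, since $\rho(v)$ is a zeroth-order multiplier by a smooth $1$-form.

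For the first claim ($\alpha = -1$, and $\mu$ fixed with some spectral gap $g > 0$), the scalar suprema $\|F_t\|$, $\|G_t\|$ admit bounds that, combined, are integrable in $t$ with constant depending on $g$ and $\mu$; this proves $L^2_k \to L^2_{k+1}$ boundedness. For the second claim, using $|\mu| \geq 2$ and $g = |\mu|^{-\alpha}$, a case analysis of the scalar suprema of $(1+\eta^2)^{(k-\alpha)/2} e^{-t(\mu - \eta)}$ over $\eta \leq \mu - |\mu|^{-\alpha}$ (and the symmetric analysis for $G_t$) yields norms of the form $\|F_t\| \leq C|\mu|^{k-\alpha} e^{-t|\mu|^{-\alpha}}$ and $\|G_t\| \leq C|\mu|^{-k} e^{-t|\mu|^{-\alpha}}$, in the regime where the relevant suprema occur at the spectral gap boundary. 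Integrating,
\[
\int_0^\infty \|F_t\|\|G_t\|\, dt \leq C|\mu|^{-\alpha} \cdot \frac{|\mu|^\alpha}{2} = \frac{C}{2},
\]
uniformly in $\mu$, proving the bound for $\nabla_v \pi_{-\infty}^\mu$. The analogous bounds for $\nabla_v \pi_\lambda^\mu$ and $\nabla_v \pi_\mu^\infty$ follow from the same argument applied to their matrix formulas in Proposition \ref{prop nabla pi}.

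The main obstacle is the case analysis of the scalar suprema giving $\|F_t\|$ and $\|G_t\|$: one must compare possible internal critical points (arising from the balance between polynomial growth $(1+\eta^2)^{(k-\alpha)/2}$ and exponential decay $e^{-t|\mu - \eta|}$) against the boundary values at the spectral gap, especially for eigenvalues on the opposite side of zero from $\mu$, and verify that the resulting bound on $\|F_t\|\|G_t\|$ is integrable in $t$ with the correct power of $|\mu|$ so as to produce a $\mu$-independent constant after integration.
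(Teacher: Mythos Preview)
Your Laplace-transform route is genuinely different from the paper's direct matrix-element computation, but the factorization you propose does not close. Consider the first claim ($L^2_k\to L^2_{k+1}$, i.e.\ $\alpha=-1$) and the block $\pi_-(\nabla_v\pi_{-\infty}^\mu)\pi_+$, so that $F_t=(1+D_a^2)^{(k+1)/2}e^{-tB_-}\pi_-$ and $G_t=e^{-tB_+}(1+D_a^2)^{-k/2}\pi_+$. For small $t$ the scalar supremum defining $\|F_t\|$ is \emph{not} attained at the gap boundary but at an interior critical point $\eta\approx -(k+1)/t$, which gives $\|F_t\|\sim C\,t^{-(k+1)}$, while $\|G_t\|$ is only $O(1)$. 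Hence $\int_0^1\|F_t\|\,\|G_t\|\,dt$ diverges for every $k\ge 0$, irrespective of $\mu$ or the size of the gap. The same obstruction appears for the second claim whenever $k-\alpha\ge 1$: the interior critical point produces $\|F_t\|\sim t^{-(k-\alpha)}$. The underlying reason is that the smoothing in $\nabla_v\pi$ comes from the \emph{joint} decay of $1/(\eta_i-\eta_j)$ in both variables; once you bound the middle factor merely by $\|\rho(v)\|_{L^2\to L^2}$, the norms $\|F_t\|$ and $\|G_t\|$ are computed independently, and no redistribution of Sobolev weights between them can recover a full derivative, since each side contains eigenvalues arbitrarily far from $\mu$.

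The paper instead works directly in the eigenbasis and uses that for $\eta_i<\mu<\eta_j$ one has $|\eta_j-\eta_i|\ge\eta_j-\mu$, yielding the $i$-uniform bound $(1+|\eta_j|^{2k+2})/|\eta_j-\eta_i|^2\le C_\mu(1+|\eta_j|^{2k})$, and then invokes the $L^2_k$-boundedness of $\rho(v)$ without ever separating $\eta_i$ from $\eta_j$ through an integral. For the second claim your method \emph{can} be rescued by inserting $(1+D^2)^{\pm(k-\alpha)/2}$ around $\rho(v)$ and placing the residual $(1+D^2)^{-\alpha/2}$ on whichever side of the gap has eigenvalues bounded away from $0$ (so that it contributes an honest $|\mu|^{-\alpha}$ and the integral becomes $\int_0^\infty C|\mu|^{-\alpha}e^{-2tg}\,dt=C/2$); but no analogous repair is available for the first claim, where the residual factor is $(1+D^2)^{+1/2}$.
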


\begin{proof}
Let $e_i, \eta_i$ be as in Proposition \ref{prop nabla pi}. 
Take $v \in T_aB  = \mathcal{H}^1(Y)$. Put
\[
     \rho_{ij} := \langle \rho(v) e_i, e_j  \rangle_{0}. 
\]
Take $\phi = \sum_{i} c_i e_i \in C^{\infty}(\mathbb{S})$ with $\| \phi \|_{k} = 1$.   Since $\rho(v)$ is a bounded operator from $L^2_k$ to $L^2_k$ we have
\[
    \| \rho(v) \phi \|_{k}^2 
    = \Big\lVert  \sum_{i, j}   c_i \rho_{ij} e_j \Big\rVert_k^2
    = \sum_{j} \Big|  \sum_{i}   c_i  \rho_{ij}   \Big|^2 ( 1 + |\eta_j|^{2k})
    \leq C_1, 
\]
where $C_1 > 0$ is a constant independent of $\phi$.

By Proposition \ref{prop nabla pi}, we have

\begin{align*}
     & \Big\lVert  (\nabla_v \pi_{-\infty}^{\mu} ) \phi \Big\rVert_{k+1}^{2}   \\
    &=  \Bigg\lVert  \sum_{\eta_i < \mu < \eta_j}  \frac{c_i  \rho_{ij}}{\eta_j - \eta_i} e_j
      + \sum_{\eta_j < \mu < \eta_i } \frac{c_i \rho_{ij}}{\eta_i - \eta_j} e_j   \Bigg\rVert_{k+1}^{2}   \\
   &=   \sum_{\mu < \eta_j} \Bigg|  \sum_{ \eta_i <  \mu }   \frac{c_i \rho_{ij}}{\eta_{j} - \eta_i}     \Bigg|^{2}  ( 1 + |\eta_{j}|^{2k+2} )   +
   \sum_{\eta_j < \mu} \Bigg| \sum_{\mu < \eta_i} \frac{c_{i} \rho_{ij}}{\eta_{i} - \eta_{j}}   \Bigg|^2 ( 1 + | \eta_j|^{2k+2}).
 \end{align*}
Note that there is a constant $C_2 > 0$ independent of $i, j$ such that
\[
    \frac{1+|\eta_j|^{2k+2}}{|\eta_j - \eta_i|^2}
    \leq C_2 ( 1 + |\eta_{j}|^{2k})
\]
for $i, j$ with $\eta_i < \mu < \eta_j$ or $\eta_j < \mu < \eta_i$. 
Hence
\[
     \|  (\nabla_{v} \pi_{-\infty}^{\mu}) \phi   \|_{k+1}^2 
     \leq 
     C_2 \sum_{j}   \left|  \sum_{i}  c_{i} \rho_{ij}  \right|^2 ( 1 + |\eta_{j}|^{2k})
     \leq 
     C_1 C_2. 
\]
Therefore $\nabla_v \pi_{-\infty}^{\mu}$  extends to a bounded operator $L^2_k \rightarrow L^2_{k+1}$. 

Next assume that there is no eigenvalue of $D_a$ in the interval $[\mu - \mu^{-\alpha}, \mu + \mu^{-\alpha}]$.  Take $v \in T_a B$ with $\| v \| = 1$. 
It is easy to see that if $\eta_i < \mu < \eta_j$ or $\eta_j < \mu < \eta_i$ we have
\[
    \frac{1 + |\eta_j|^{2k - 2\alpha}}{|\eta_i - \eta_j|^2}
    \leq C_3  ( 1 + | \eta_j|^{2k}), 
\]
where $C_3 > 0$ is independent of $i, j$.  It follows from this and Proposition \ref{prop nabla pi}  that 
\[
     \| \nabla_{v} \pi_{-\infty}^{\mu} : L^2_k \rightarrow L^2_{k-\alpha} \| \leq C_4, 
\]
where $C_4 > 0$ is a constant independent of $\mu$ and $v$. 
\end{proof}

\begin{lem} \label{lem spectral gap}
Fix positive numbers $\alpha, \beta$ with $\alpha + 3 < \beta$ and $a \in \mathcal{H}^1(Y)$.
For $\mu \in \mathbb{R}$ with $|\mu|  \gg 0$,  there exists $\mu'  \in (\mu - |\mu|^{-\alpha}, \mu + |\mu|^{-\alpha} ]$ such that there is no eigenvalue of $D_a$ in the interval $(\mu' - |\mu|^{-\beta}, \mu' + |\mu|^{-\beta}]$.  

\end{lem}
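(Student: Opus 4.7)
The plan is to combine a Weyl-type upper bound on the density of eigenvalues of $D_a$ with a simple Lebesgue-measure pigeonhole argument, treating the desired interval $(\mu - |\mu|^{-\alpha}, \mu + |\mu|^{-\alpha}]$ as a ``container'' in which the forbidden neighborhoods of eigenvalues turn out to have negligible total measure.

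First, I would establish a uniform polynomial upper bound on the eigenvalue counting function. By Weyl's law for the twisted Dirac operator on the closed three-manifold $Y$, the number of eigenvalues of $D_a$ in $[-R,R]$ is $O(R^3)$.  Because the $a$-dependence enters only as the lower-order perturbation $\rho(ia)\colon C^\infty(\mathbb{S})\to C^\infty(\mathbb{S})$ of Clifford multiplication by a harmonic $1$-form, and because $D_a$ is invariant (up to conjugation by $u_h$) under the action $a\mapsto a-h$ of $H^1(Y;\mathbb{Z})$, we may restrict $a$ to a fundamental domain, on which $\|\rho(ia)\|_{op}$ is uniformly bounded. Min-max comparison with $D_0$ then yields a constant $C>0$, independent of $a\in B$, with
\[
\#\bigl\{\eta\in\mathrm{Spec}(D_a):|\eta|\leq R\bigr\}\leq CR^3\qquad\text{for all }R\geq 1.
\]
In particular, for $|\mu|$ large, the number of eigenvalues of $D_a$ lying in the ambient interval $(\mu-|\mu|^{-\alpha},\mu+|\mu|^{-\alpha}]$ is at most $C'|\mu|^3$ for some constant $C'=C'(Y)$.

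Second, I would apply the pigeonhole.  Enumerate the eigenvalues of $D_a$ meeting the ambient window (enlarged harmlessly by $|\mu|^{-\beta}$ on each side) as $\eta_1,\ldots,\eta_N$ with $N\leq C'|\mu|^3$, and define the ``forbidden'' set
\[
F:=\bigcup_{j=1}^{N}\bigl[\eta_j-|\mu|^{-\beta},\,\eta_j+|\mu|^{-\beta}\bigr]\subset\mathbb{R}.
\]
By construction, $\mu'\in(\mu-|\mu|^{-\alpha},\mu+|\mu|^{-\alpha}]$ satisfies the conclusion of the lemma exactly when $\mu'\notin F$.  The Lebesgue measure of $F$ is at most $2N\cdot|\mu|^{-\beta}\leq 2C'|\mu|^{3-\beta}$.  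The hypothesis $\alpha+3<\beta$ gives $3-\beta<-\alpha$, so $|\mu|^{3-\beta}=o(|\mu|^{-\alpha})$ as $|\mu|\to\infty$.  Hence for $|\mu|$ sufficiently large, $|F|<2|\mu|^{-\alpha}$, which is strictly less than the length of the ambient window, and $(\mu-|\mu|^{-\alpha},\mu+|\mu|^{-\alpha}]\setminus F$ is nonempty.  Any element $\mu'$ of this complement has the required property.

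The only nontrivial ingredient is the uniform Weyl bound in the first step; its ``$R^3$'' exponent, the dimension of $Y$, is exactly what the assumption $\beta>\alpha+3$ is calibrated against.  In fact the sharper local Weyl remainder would permit $\beta>\alpha+2$, but the cruder global bound used here is more than sufficient and is all that the subsequent applications of the lemma require.
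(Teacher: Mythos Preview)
Your proof is correct and uses essentially the same idea as the paper: both arguments combine the Weyl bound $\#\{\eta\in\mathrm{Spec}(D_a):|\eta|\leq R\}=O(R^3)$ with a pigeonhole count, and the hypothesis $\beta>\alpha+3$ is used in exactly the same way. The paper argues by contradiction---assuming no such $\mu'$ exists forces at least $\sim|\mu|^{\beta-\alpha}$ eigenvalues into the window, contradicting Weyl---while you run the same count directly as a Lebesgue-measure estimate on the forbidden set; these are two presentations of the same pigeonhole. Your discussion of uniformity in $a$ is more than is needed here (the lemma fixes $a$), but it does no harm.
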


\begin{proof}
Suppose that the statements  is not true.  Then there is a sequence $\mu_n$ with $|\mu_n| \rightarrow \infty$ such that for any $\mu' \in (\mu_n - |\mu_n|^{-\alpha}, \mu_n + |\mu_n|^{-\alpha}]$  there is an eigenvalue of $D_a$ in $(\mu' - |\mu_n|^{-\beta}, \mu' + |\mu_n|^{-\beta}]$. Therefore for each integer $m$ with $1 \leq m \leq |\mu_n|^{\beta - \alpha}$, there is an eigenvalue of $D_a$ in the interval $(\mu_n +  (m-1) |\mu_n|^{-\beta}, \mu_n + m |\mu_n|^{-\beta}]$.  This implies that 
\[
        \dim (\mathcal{E}_0(D_a))_{\mu_n - |\mu_n|^{-\alpha}}^{\mu_n + |\mu_n|^{-\alpha}} 
        \geq
        |\mu_n|^{\beta-\alpha} -1.
\]
On the other hand, by the Weyl law, 
\[
        \dim (\mathcal{E}(D_a))_{\mu_{n} - |\mu_{n}|^{-\alpha} }^{\mu_{n}+|\mu_n|^{-\alpha} } \leq C |\mu_n|^{3}.  
\]
We have obtained a contradiction. 


\end{proof}

\begin{cor} \label{cor nabla pi 2}
For $\mu \in \mathbb{R}$ with $|\mu| \gg 0$, there is $\mu' \in [\mu, \mu+1]$, such that for $v \in TB$ with $\| v \| = 1$, 
\[
       \| \nabla_{v} \pi^{\mu'}_{-\infty} : L^2_k(\mathbb{S}) \rightarrow L^2_{k-4}(\mathbb{S}) \| \leq C. 
\]
Here $C > 0$ is a constant independent of $v, \mu$. 
Similar statements hold for $\pi_{\lambda}^{\infty}$, $\pi_{\lambda}^{\mu}$. 
\end{cor}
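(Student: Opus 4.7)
The plan is to adapt the Weyl-law counting argument from the proof of Lemma \ref{lem spectral gap} to the fixed-length interval $[\mu,\mu+1]$, and then invoke the quantitative ``moreover'' clause of Corollary \ref{cor nabla pi} with $\alpha = 4$.

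Fixing $a\in B$, the Weyl law for the Dirac operator on the closed $3$-manifold $Y$ gives
\[
\dim (\mathcal{E}_0(D_a))_\mu^{\mu+1} \le C_0 |\mu|^3
\]
for $|\mu|\gg 0$. The set of ``bad'' points $\mu'\in[\mu,\mu+1]$, namely those within distance $|\mu|^{-4}$ of some eigenvalue of $D_a$, therefore has total Lebesgue measure at most $2C_0|\mu|^{-1}$, which is strictly less than $1$ once $|\mu|$ is sufficiently large. Hence there exists $\mu'\in[\mu,\mu+1]$ with no eigenvalue of $D_a$ in $[\mu'-|\mu|^{-4},\mu'+|\mu|^{-4}]$. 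Since $|\mu'|$ and $|\mu|$ differ by at most $1$, after absorbing a fixed constant we may replace $|\mu|^{-4}$ by $|\mu'|^{-4}$ and still have a spectrum-free neighborhood of $\mu'$.

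With this $\mu'$, the second part of Corollary \ref{cor nabla pi} applies directly with $\alpha = 4$: the hypothesis is exactly that no eigenvalue of $D_a$ lies in $[\mu'-|\mu'|^{-4},\mu'+|\mu'|^{-4}]$, and the conclusion reads
\[
\|\nabla_v \pi_{-\infty}^{\mu'} : L^2_k(\mathbb{S}) \to L^2_{k-4}(\mathbb{S})\| \le C,
\]
with $C$ independent of $v$ and $\mu$, valid for $k>4$; for $k\le 4$ the bound is trivial from the continuous Sobolev embedding $L^2_k\hookrightarrow L^2_{k-4}$. The parallel statements for $\pi_\lambda^\infty$ and $\pi_\lambda^\mu$ follow by running the same counting argument in the relevant interval and applying the ``similar statements'' clause of Corollary \ref{cor nabla pi}.

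The one subtlety I anticipate is uniformity in $a\in B$: the corollary is phrased for $v\in TB$, so strictly speaking one would like a single $\mu'$ that works simultaneously across all fibers. This is handled using compactness of $B=\mathrm{Pic}(Y)$ together with Kato's eigenvalue perturbation estimate (Theorem 4.10 of \cite{Kato}, already invoked in Lemma \ref{lem dim E a a'}): each eigenvalue of $D_a$ in $[\mu,\mu+1]$ varies $M$-Lipschitzly in $a$, so after a finite covering of $B$ the $|\mu|^{-4}$-tube around $\bigcup_{a\in B}\mathrm{Spec}(D_a)\cap[\mu,\mu+1]$ still has measure $O(|\mu|^{-1})$ by the uniform Weyl bound, leaving room to select $\mu'$. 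Since this closely mirrors the argument of Lemma \ref{lem spectral gap}, I expect no essentially new difficulty beyond bookkeeping.
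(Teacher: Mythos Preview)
Your argument is correct and is precisely the paper's approach: the paper's one-line proof cites Lemma~\ref{lem spectral gap} (the same Weyl-law pigeonhole you reproduce) together with the ``moreover'' clause of Corollary~\ref{cor nabla pi} at $\alpha=4$. One minor remark: your uniformity-in-$a$ paragraph addresses a stronger reading of the statement than the paper actually needs---in the application (Proposition~\ref{prop pi P_n wighted}) the point $a\in B$ is fixed before $\mu'$ is chosen, so a fiberwise $\mu'=\mu'(a)$ with the constant $C$ uniform in $a$ (which follows from compactness of $B$) suffices.
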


\begin{proof}
This is a direct consequence of Corollary \ref{cor nabla pi} and Lemma \ref{lem spectral gap}.  
\end{proof}

\begin{prop} \label{prop L^2 nabla pi_P} 
Take a non-negative real number  $m$ and  a smooth spectral section $P$ of $-D$ with
\[
       (\mathcal{E}_0(D))^{\mu_-}_{-\infty} \subset P \subset (\mathcal{E}_0(D))^{\mu_+}_{-\infty}. 
\]
Let $\pi_{P}$ be the $L^2$-projection onto $P$. Then for each $v \in TB$, $\nabla_v \pi_P$ is a bounded operator from $L^2_m(\mathbb{S})$ to $L^2_{m+1}(\mathbb{S})$. 
\end{prop}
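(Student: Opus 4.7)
The strategy is to reduce to Corollary \ref{cor nabla pi} by writing $\pi_P$ as the difference of a spectral cutoff projection (to which the corollary applies) and a projection onto a smooth finite-rank subbundle (which is smoothing, so easy to control). First, fix a point $a\in B$. Since the spectrum of $D_a$ is discrete, I can choose $\tilde{\mu}>\mu_+$ so that $\tilde\mu$ is not an eigenvalue of $D_a$; by continuity of the eigenvalue branches, $\tilde\mu$ remains a spectral gap for $D_{a'}$ in some neighborhood $U$ of $a$, and the truncation $(\mathcal{E}_\infty)_{-\infty}^{\tilde\mu}$ is a smooth finite-rank subbundle of $\mathcal{E}_\infty$ whose fibers consist of smooth spinors.

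Next, set $Q := (\mathcal{E}_\infty)_{-\infty}^{\tilde\mu}\cap P^\perp$ over $U$. Since both $P$ and $(\mathcal{E}_\infty)_{-\infty}^{\tilde\mu}$ are smooth subbundles with $P\subset (\mathcal{E}_\infty)_{-\infty}^{\tilde\mu}$, so is $Q$, and it has smooth sections valued in $C^\infty(\mathbb{S})$. The orthogonal decomposition $\pi_{-\infty}^{\tilde\mu}=\pi_P+\pi_Q$ gives $\pi_P=\pi_{-\infty}^{\tilde\mu}-\pi_Q$, and differentiating in the flat connection $\nabla$ yields
\[
\nabla_v \pi_P = \nabla_v \pi_{-\infty}^{\tilde\mu} - \nabla_v \pi_Q.
\]
The first term is bounded from $L^2_m(\mathbb{S})$ to $L^2_{m+1}(\mathbb{S})$ by Corollary \ref{cor nabla pi}, since $\tilde\mu$ is a spectral gap of $D_a$.

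For the second term, choose a local smooth orthonormal frame $q_1,\dots,q_r$ of $Q$ near $a$, viewed as smooth maps $U\to C^\infty(\mathbb{S})$. Then
\[
\pi_Q(\phi) = \sum_{i=1}^r \langle \phi, q_i\rangle_0\, q_i,
\]
and differentiating in the flat connection (which in the trivialization over $\mathcal{H}^1(Y)$ is just the ordinary derivative) gives
\[
(\nabla_v \pi_Q)(\phi) = \sum_{i=1}^r \langle \phi, \nabla_v q_i\rangle_0\, q_i + \sum_{i=1}^r \langle \phi, q_i\rangle_0\, \nabla_v q_i.
\]
Because $q_i$ is a smooth section of the smooth finite-rank bundle $(\mathcal{E}_\infty)_{-\infty}^{\tilde\mu}$, both $q_i(a)$ and $\nabla_v q_i$ lie in $C^\infty(\mathbb{S})$, and the coefficients are controlled by the $L^2$-norm of $\phi$. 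Hence $\nabla_v \pi_Q$ defines a bounded operator $L^2_m(\mathbb{S})\to L^2_l(\mathbb{S})$ for every $l\geq 0$, in particular for $l=m+1$.

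The main subtlety is simply verifying that $\tilde\mu$ may be selected to lie in a spectral gap locally around $a$ and that the frame $q_i$ really can be taken smooth with values in smooth spinors; both follow from standard perturbation theory for the eigenvalue curves together with the smoothness assumption on the spectral section $P$. Once this decomposition is in place, the bound for $\nabla_v\pi_P$ is just the sum of the two contributions above.
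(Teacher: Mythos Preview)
Your strategy is essentially the paper's: decompose $\pi_P$ into a spectral projection handled by Corollary~\ref{cor nabla pi} plus a finite-rank correction. The paper approaches $P$ from below, writing $\pi_P=\pi^{\lambda}_{-\infty}+\sum_l f_l^*\otimes f_l$ with $\lambda<\mu_-$ a local spectral gap and $\{f_l\}$ a frame of $P\cap((\mathcal{E}_0)^{\lambda}_{-\infty})^{\perp}$; you approach from above with $\pi_P=\pi^{\tilde\mu}_{-\infty}-\pi_Q$. Either works, but your write-up has one misstatement and one genuine gap.

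First, the misstatement: $(\mathcal{E}_\infty)_{-\infty}^{\tilde\mu}$ is \emph{infinite}-dimensional, not finite-rank. What is finite-rank is $Q=(\mathcal{E}_\infty)_{-\infty}^{\tilde\mu}\cap P^{\perp}$, because $P^{\perp}\subset(\mathcal{E}_0)_{\mu_-}^{\infty}$ forces $Q\subset(\mathcal{E}_0)_{\mu_-}^{\tilde\mu}$.

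Second, and more substantive: you assert that $\nabla_v q_i\in C^\infty(\mathbb{S})$ simply because $q_i$ is a smooth section of a finite-rank bundle with smooth fibers. That is not automatic. Smoothness of $Q$ (and of $P$) is smoothness of the projection as a family in $\mathcal{B}(L^2)$, so a priori $\nabla_v q_i$ is only an element of $L^2(\mathbb{S})$; there is no reason it should lie in $L^2_{m+1}(\mathbb{S})$. This is precisely the point where the paper does extra work: it also fixes a \emph{lower} spectral gap $\lambda<\mu_-$ (together with the upper one $\nu>\mu_+$), so that the frame vectors satisfy $f_l=\pi^{\nu}_{\lambda}f_l$, and then differentiates to get
\[
\nabla_v f_l=(\nabla_v\pi^{\nu}_{\lambda})f_l+\pi^{\nu}_{\lambda}(\nabla_v f_l).
\]
The first term lies in $C^\infty(\mathbb{S})$ by Corollary~\ref{cor nabla pi} (applied to the two-sided truncation) and $f_l\in C^\infty(\mathbb{S})$; the second term is in $C^\infty(\mathbb{S})$ because the image of $\pi^{\nu}_{\lambda}$ consists of smooth spinors. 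Your argument needs the same device: choose also $\lambda<\mu_-$ in a spectral gap, note $Q\subset(\mathcal{E}_0)_{\lambda}^{\tilde\mu}$, and use $q_i=\pi^{\tilde\mu}_{\lambda}q_i$ to conclude $\nabla_v q_i\in C^\infty(\mathbb{S})$. Once that is in place, the rest of your argument goes through.
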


\begin{proof}
We can take an open covering $\{ U_{i} \}_{i=1}^{N}$ of $B$ such that there are real numbers $\lambda_i$, $\nu_i$ with $\lambda_i < \mu_-$, $\mu_+ < \nu_i$,  which are not eigenvalues of $D_a$ for $a \in U_i$.  Also we may assume that for each $i$, we have a trivialization 
\begin{equation}  \label{eq E local trivialization}
         \mathcal{E}_0|_{U_i} \cong U_i \times L^2(\mathbb{S})
\end{equation}
such that the flat connection $\nabla$ is equal to the exterior derivative $d$ through this trivialization. 
Also for each $i$,  we have smooth  $L^2$-orthonormal frames $f_{i, 1}, \dots, f_{i, r_i}$ of the normal bundle of $(\mathcal{E}_0)^{\lambda_i}_{-\infty}|_{U_i}$ in $P|_{U_i}$. We can write
\[
     \pi_P = \pi^{\lambda_i}_{-\infty} + \sum_{l=1}^{r_i} f_{i, l}^* \otimes f_{i,l}
\]
over $U_i$.  We have
\[
     \nabla_v \pi_P = 
        \nabla_v \pi^{\lambda_i}_{-\infty} + \sum_{l=1}^{r_i} (\nabla_v f_{i, l}^* \otimes f_{i,l} + f_{i,l}^* \otimes \nabla_v f_{i, l}).
\]
By Corollary \ref{cor nabla pi},  $\nabla_v \pi^{\lambda_i}_{-\infty}$ is a bounded operator from $L^2_m$ to $L^2_{m+1}$.   
Also we have
\[
          \nabla_v f_{i, l} 
          = \nabla_{v} (\pi^{\nu_i}_{\lambda_i} f_{i, l})
          = (\nabla_{v} \pi^{\nu_i}_{\lambda_i} ) f_{i, l} + \pi^{\nu_i}_{\lambda_i}  (\nabla_{v} f_{i,l}). 
\]
Since $f_{i, l}(b) \in C^{\infty}(\mathbb{S})$ for $b \in U_i$ and $\nabla_{v} \pi_{\lambda_i}^{\nu_i}$ is a bounded operator $L^2_m \rightarrow L^2_{m+1}$, we have
\[
        \nabla_v f_{i, l}(b) \in C^{\infty}(\mathbb{S})
\]
for $b \in U_i$.    
Also we have
\[
       \left|  f_{i,l}^*(\phi) \right| = | \langle f_{i, l}, \phi \rangle_0 |   \leq \| \phi \|_{0}
\]
for $\phi \in C^{\infty}(\mathbb{S})$. 
Therefore
\[
         \sum_{l=1}^{r_i} f_{i,l}^* \otimes \nabla_v f_{i, l} : L^2_m \rightarrow L^2_{m+1}
\]
is bounded.

Take $\phi \in C^{\infty}(\mathbb{S})$.   We have
\[
       (\nabla_v f_{i, l}^{*} ) (\phi) 
       =  \langle  \phi, \nabla_{v} f_{i, l}\rangle_0. 
\]
Note that $\nabla_v f_{i, l}(b)\in C^{\infty}(\mathbb{S})$ for $b \in U_{i}$.  Hence
\[
      \left\|
        (\nabla_v f_{i, l}^* \otimes f_{i,l} ) (\phi)
      \right\|_{m+1}
      = \left| (\nabla_v f_{i, l}^*)(\phi) \cdot f_{i, l} \right|_{m+1}
      \leq 
      C \| \phi \|_{0}. 
\]
 Therefore
\[
      \sum_{l=1}^{r_i} \nabla_v f_{i,l}^{*} \otimes f_{i,l} : L^2_m \rightarrow L^2_{m+1}
\]
is bounded. 
\end{proof}

\begin{cor}  \label{cor nabla D'}
Suppose that $\operatorname{Ind} D = 0$ in $K^1(B)$ and let $P_0$ be a spectral section of $-D$.  Then there is a family of smoothing operators $\mathbb{A}$ acting on $\mathcal{E}_0$  such that the kernel of $D' = D + \mathbb{A}$ is trivial and  
\[
          P_0 =  \mathcal{E}_0(D')_{-\infty}^{0}. 
\]
Moreover for each positive number $k$ and $v \in TB$, 
\[
      \nabla_v  D'  : L^2_k(\mathbb{S}) \rightarrow L^2_k(\mathbb{S})
\]
is bounded. 
\end{cor}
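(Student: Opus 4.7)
The first two assertions---that one can choose a smoothing family $\mathbb{A}$ with $\ker D' = 0$ and $\mathcal{E}_0(D')_{-\infty}^{0} = P_0$---are precisely the content of \cite[Lemma 8]{MP}, already invoked earlier in this section. So the genuinely new content is the boundedness of $\nabla_v D' \colon L^2_k(\mathbb{S}) \to L^2_k(\mathbb{S})$.

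Since $\nabla_v D' = \nabla_v D + \nabla_v \mathbb{A}$, the plan is to handle the two pieces separately. The $D$ term is immediate: in the local trivialization in which $\nabla$ is the exterior derivative, the computation at the beginning of Section \ref{subsec:derivatives} gives $\nabla_v D = \rho(v)$, Clifford multiplication by the harmonic $1$-form $v$. Since $v$ is smooth on $Y$, this is bounded $L^2_k \to L^2_k$ for every $k \geq 0$.

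For $\nabla_v \mathbb{A}$, I would exploit the structural information that the \cite{MP} construction produces $\mathbb{A}$ factoring through a finite-rank subbundle. Concretely, after (if necessary) enlarging $\nu_0$ so that $\pm\nu_0$ lies in a uniform spectral gap of $D$ over the compact base $B$---which is possible via the Weyl-law argument of Lemma \ref{lem spectral gap}---the subbundle $F := \mathcal{E}_0(D)_{-\nu_0}^{\nu_0}$ is a smooth finite-rank subbundle of $\mathcal{E}_\infty$, and $\mathbb{A}$ admits the factorization
\[
\mathbb{A} = \iota_F \circ A \circ \pi_F,
\]
where $\pi_F$ is the $L^2$-projection, $\iota_F$ is the inclusion, and $A$ is a smooth family of endomorphisms of $F$. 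Applying the Leibniz rule in a local trivialization,
\[
\nabla_v \mathbb{A} = (\nabla_v \iota_F)\circ A \circ \pi_F + \iota_F \circ (\nabla_v A) \circ \pi_F + \iota_F \circ A \circ (\nabla_v \pi_F).
\]
Each term is then bounded $L^2_k \to L^2_k$ by previously established tools. Indeed, writing $\pi_F = \pi_{-\infty}^{\nu_0} - \pi_{-\infty}^{-\nu_0}$ as a difference of projections onto spectral sections, Proposition \ref{prop L^2 nabla pi_P} yields that $\nabla_v \pi_F$ is bounded $L^2_m \to L^2_{m+1}$; combined with the inclusion $F \hookrightarrow \mathcal{E}_\infty$, the same holds for $\nabla_v \iota_F$. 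Because $A$ and $\nabla_v A$ act on the finite-rank smooth subbundle $F \subset \mathcal{E}_\infty$, they are bounded between any pair of Sobolev norms. Finally, $\pi_F$ is bounded on $L^2_k$ by Proposition \ref{prop [D, pi]}. Summing these bounds gives the desired estimate.

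The principal technical obstacle is arranging the factorization $\mathbb{A} = \iota_F \circ A \circ \pi_F$ with $F$ genuinely a smooth finite-rank subbundle; this requires reading off smoothness in $a \in B$ from the explicit \cite{MP} construction, and is the reason one may need to refine the choice of $\nu_0$ so that the spectral projections involved vary smoothly over the entire compact base.
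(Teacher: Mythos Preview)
Your approach and the paper's share the same core ingredients---$\nabla_v D = \rho(v)$ is bounded $L^2_k\to L^2_k$, and Proposition~\ref{prop L^2 nabla pi_P} controls derivatives of $L^2$-projections onto smooth spectral sections---but they diverge in how $\nabla_v\mathbb{A}$ is handled.

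The paper simply writes out the explicit formula from the proof of \cite[Lemma~8]{MP}:
\[
D' \;=\; \pi_Q D \pi_Q \;-\; s\,\pi_{P_0}(1-\pi_Q) \;+\; (1-\pi_R)D(1-\pi_R) \;+\; s\,(1-\pi_{P_0})\pi_R,
\]
where $Q,R$ are auxiliary smooth spectral sections with $(\mathcal{E}_0)_{-\infty}^{-2s}\subset Q\subset(\mathcal{E}_0)_{-\infty}^{-s}$ and $(\mathcal{E}_0)_{-\infty}^{s}\subset R\subset(\mathcal{E}_0)_{-\infty}^{2s}$, and $\pi_{P_0},\pi_Q,\pi_R$ denote the $L^2$-projections. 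Differentiating this expression term by term with the Leibniz rule and invoking Proposition~\ref{prop L^2 nabla pi_P} for each of $\nabla_v\pi_{P_0},\nabla_v\pi_Q,\nabla_v\pi_R$ gives the bound immediately---no spectral gap is needed because $P_0,Q,R$ are already given as smooth subbundles.

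Your factorization $\mathbb{A}=\iota_F\circ A\circ\pi_F$ through $F=\mathcal{E}_0(D)_{-\nu_0}^{\nu_0}$ requires $F$ to be a smooth finite-rank subbundle over all of $B$, i.e.\ that $\pm\nu_0$ lie in a spectral gap of $D_a$ uniformly in $a\in B$. Lemma~\ref{lem spectral gap}, which you cite, is a \emph{pointwise} statement and does not furnish a uniform gap; in general no such $\nu_0$ need exist over the compact base (indeed Theorem~\ref{thm Atiyah} bounds gap sizes from above pointwise, and the eigenvalues can slide as $a$ varies). You correctly flag this as the principal obstacle, but the proposed remedy of enlarging $\nu_0$ does not resolve it. The argument can be repaired by working locally on a finite cover of $B$ (as in the proof of Proposition~\ref{prop L^2 nabla pi_P} itself), but at that point the explicit MP formula is both shorter and avoids the issue entirely.
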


\begin{proof}
The operator $\mathbb{A}$ is obtained as follows. (See the proof of Lemma 8 of \cite{MP}.)  We can take smooth spectral sections $Q, R$ of $D$ and a positive number $s$  with
\[
          (\mathcal{E}_0)^{-s}_{-\infty} \subset P_0 \subset (\mathcal{E}_0)_{-\infty}^{s}, \
         (\mathcal{E}_0)^{-2s}_{-\infty} \subset  Q \subset (\mathcal{E}_0)^{-s}_{-\infty}, \
          (\mathcal{E}_0)^{s}_{-\infty} \subset R   \subset  (\mathcal{E}_0)^{2s}_{-\infty}. 
\]
 Put
\[
        D'  = \pi_Q D \pi_Q -s \pi_{P_0} ( 1- \pi_Q )  + (1- \pi_R) D (1- \pi_R) + s   (1-  \pi_{P_0})   \pi_R, 
\]
where $\pi_{P_0}, \pi_{Q}, \pi_{R}$ are the $L^2$-projections. 
Then $\ker D' = 0$ . The operator $\mathbb{A}$ is given by
\[
       \mathbb{A} = D' - D. 
\]
The image of $\mathbb{A}$ is included in the subspace spanned by finitely many eigenvectors of $D$. 
By Proposition \ref{prop perturbation P},  $\nabla_v \pi_{P_0}, \nabla_v \pi_{Q}, \nabla_v \pi_{R}$ are bounded operators from $L^2_k(\mathbb{S})$ to $L^2_{k+1}(\mathbb{S})$.  Note that  $\nabla_v D$ is the Clifford multiplication of the harmonic $1$-form $v$. Hence $\nabla_v D$ is a bounded operator $L^2_k(\mathbb{S}) \rightarrow L^2_k(\mathbb{S})$.     Therefore $\nabla_v D'$ is a bounded operator from $L^2_k$ to $L^2_k$. 
\end{proof}

\begin{prop} \label{prop D'}
The statements of Proposition \ref{prop nabla pi}, Corollary \ref{cor nabla pi} and Corollary \ref{cor nabla pi 2} hold for the perturbed Dirac operator $D'$, replacing $\rho(v)$ with $\nabla_v D'$. 
\end{prop}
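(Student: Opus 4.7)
The strategy is to simply re-run the arguments of Proposition \ref{prop nabla pi}, Corollary \ref{cor nabla pi}, and Corollary \ref{cor nabla pi 2} with $D$ replaced by $D'$ throughout, verifying at each step that the two properties of the Clifford multiplication $\rho(v)$ that were actually used continue to hold for $\nabla_v D'$. By construction in Corollary \ref{cor nabla D'}, $D'$ is self-adjoint with pure point spectrum (it differs from $D$ by a family of smoothing operators of finite rank), so one still has an $L^2$-orthonormal basis of eigenvectors $e_i$ with $D'_a e_i = \eta_i e_i$, and the residue formula
\[
\pi_{\lambda}^{\mu}(\phi) = \frac{1}{2\pi i} \int_{\Gamma_{\lambda}^{\mu}} (z-D'_a)^{-1}\phi\, dz
\]
makes sense for $\lambda,\mu$ outside the spectrum of $D'_a$.

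Differentiating in the base direction $v$, and using that $\nabla$ is the trivial connection on $\mathcal{H}^1(Y)\times C^\infty(\mathbb{S})$, gives exactly as before
\[
(\nabla_v \pi_{\lambda}^{\mu})\phi = -\frac{1}{2\pi i} \int_{\Gamma_{\lambda}^{\mu}} (z-D'_a)^{-1} (\nabla_v D') (z-D'_a)^{-1}\phi\, dz,
\]
and expanding the inner products term-by-term against the eigenbasis yields the analogues of \eqref{eq nabla pi lambda mu} and \eqref{eq nabla pi mu} with $\rho(v)$ replaced by $\nabla_v D'$. For the $\pi_{-\infty}^{\mu}$ case I would take limits as $\lambda\to-\infty$ just as in the original proof, which is legitimate because $\nabla_v D'$ is bounded on $L^2$ (again by Corollary \ref{cor nabla D'}).

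For the analogue of Corollary \ref{cor nabla pi}, the only input needed is that $\nabla_v D'\colon L^2_k\to L^2_k$ is a bounded operator; this is precisely the content of Corollary \ref{cor nabla D'}. The same estimates on $\frac{1+|\eta_j|^{2k+2}}{|\eta_j-\eta_i|^2}$ and $\frac{1+|\eta_j|^{2k-2\alpha}}{|\eta_j-\eta_i|^2}$ go through verbatim, giving the boundedness $\nabla_v\pi_{-\infty}^{\mu}\colon L^2_k\to L^2_{k+1}$ and the uniform bound in the presence of a spectral gap of size $|\mu|^{-\alpha}$ about $\mu$.

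The one place where a small additional argument is needed is Lemma \ref{lem spectral gap}, since its proof uses the Weyl law for $D_a$. I would observe that $D' = D + \mathbb{A}$, where the family $\mathbb{A}$ consists of smoothing operators whose image (at each $a$) lies in a finite-dimensional span of eigenvectors of $D_a$; in particular, for $|\mu|\gg 0$ the eigenvalue counting function of $D'_a$ agrees with that of $D_a$ up to a bounded error, so the Weyl estimate $\dim \mathcal{E}_0(D'_a)_{\mu - r}^{\mu + r} \leq C|\mu|^3$ (for $r\leq 1$, say) holds with the same exponent. The rest of the pigeonhole argument is unchanged, and Corollary \ref{cor nabla pi 2} then follows formally from the spectral gap lemma together with the analogue of Corollary \ref{cor nabla pi} already established. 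No step presents real difficulty; the only mild subtlety is verifying that the finite-rank, smoothing perturbation $\mathbb{A}$ does not disturb the Weyl asymptotics that underlie the spectral-gap argument.
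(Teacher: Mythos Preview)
Your proposal is correct and follows the same approach as the paper, which simply notes that the computations for $D$ go through verbatim once one knows $\nabla_v D'\colon L^2_k\to L^2_k$ is bounded (Corollary \ref{cor nabla D'}). Your treatment of Lemma \ref{lem spectral gap} is fine but slightly more work than necessary: the paper records earlier that $\mathcal{E}_0(D)_{\lambda}^{\mu} = \mathcal{E}_0(D')_{\lambda}^{\mu}$ for $\lambda\ll 0$ and $\mu\gg 0$, so for $|\mu|$ large the spectra of $D_a$ and $D'_a$ literally coincide and the spectral gap lemma transfers immediately.
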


\begin{proof}
By Corollary \ref{cor nabla D'}, for any non-negative number $k$, 
\[
     \nabla_v D' : L^2_k(\mathbb{S}) \rightarrow L^2_k(\mathbb{S})
\]
is bounded and we can do the same computations as those done for the original Dirac operator $D$. 
\end{proof}

\begin{lem} \label{lem nabla |D'|^k}
For a positive integer $k$, a positive number $l$ with $l \geq k-1$  and $v \in T_aB$, the expression
\[
     \nabla_v |D'|^k : L^2_{l} \rightarrow L^2_{l - k+1}
\]
is bounded. 
\end{lem}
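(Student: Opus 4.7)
The plan is to adapt the matrix-element computation used in the proof of Proposition \ref{prop nabla pi} to the function $f(x) = |x|^k$. Fix $a \in B$ with an $L^2$-orthonormal eigenbasis $\{e_i\}$ and eigenvalues $\eta_i$ of $D'_a$. The crucial structural input is that $\ker D' = 0$, so by compactness of $B$ the spectra of the $D'_a$ are uniformly bounded away from $0$: there exists $\epsilon > 0$ with $|\eta_i| \geq \epsilon$ for all $i$ and $a$. On such a spectrum, $|x|^k$ agrees with the polynomial $x^k$ on the positive half-line and with $(-x)^k$ on the negative half-line, so $|D'|^k$ can be represented as a sum of two Dunford contour integrals, one around each component of the spectrum. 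Differentiating these integrals and computing residues exactly as in Proposition \ref{prop nabla pi} (and Proposition \ref{prop D'}) yields
\[
\langle (\nabla_v |D'|^k)\, e_i, e_j\rangle_0 \;=\; g_k(\eta_i,\eta_j)\, \rho_{ij}, \qquad g_k(x,y) := \frac{|x|^k - |y|^k}{x - y},
\]
with $\rho_{ij} = \langle (\nabla_v D') e_i, e_j\rangle_0$, the diagonal case $\eta_i = \eta_j$ being interpreted as the derivative.

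Next I would establish the elementary bound $|g_k(x,y)| \leq k(|x|^{k-1} + |y|^{k-1})$ by a short case analysis on signs. If $x,y$ have the same sign then $g_k(x,y) = \pm\sum_{m=0}^{k-1} x^m y^{k-1-m}$ is a sum of $k$ monomials of total degree $k-1$; if they have opposite signs then $|x-y| = |x|+|y|$ controls the ratio directly, giving $|g_k(x,y)| \leq \max(|x|,|y|)^{k-1}$.

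Combining these inputs, writing $\phi = \sum_i c_i e_i$ with $\|\phi\|_l^2 = \sum_i |c_i|^2(1+|\eta_i|^{2l})$, and using $|g_k|^2 \leq 2k^2(|\eta_i|^{2(k-1)} + |\eta_j|^{2(k-1)})$, the sum
\[
\|(\nabla_v |D'|^k)\phi\|_{l-k+1}^2 \;=\; \sum_j (1+|\eta_j|^{2(l-k+1)})\left|\sum_i c_i g_k(\eta_i,\eta_j) \rho_{ij}\right|^2
\]
splits into two pieces. Setting $\psi = \sum_i c_i |\eta_i|^{k-1} e_i$, the first piece equals $\|(\nabla_v D')\psi\|_{l-k+1}^2$, which is bounded by $C\|\psi\|_{l-k+1}^2 \leq 2C\|\phi\|_l^2$ using Corollary \ref{cor nabla D'} together with the inequality $|\eta_i|^{2(k-1)}(1+|\eta_i|^{2(l-k+1)}) \leq 2(1+|\eta_i|^{2l})$, valid for $l \geq k-1$. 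The second piece, after absorbing $(1+|\eta_j|^{2(l-k+1)})|\eta_j|^{2(k-1)} \leq 2(1+|\eta_j|^{2l})$, becomes $\|(\nabla_v D')\phi\|_l^2 \leq C\|\phi\|_l^2$, again by Corollary \ref{cor nabla D'}. The conclusion follows.

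The main obstacle is conceptual rather than computational: the resolvent/divided-difference formula is classical when the function being applied is holomorphic on a neighborhood of the spectrum, but $|x|^k$ is not even smooth at $0$ for odd $k$. The uniform spectral gap around $0$ supplied by $\ker D' = 0$ and compactness of $B$ is exactly what is needed to justify the matrix-element formula, since $|x|^k$ agrees with a polynomial on each component of the spectrum. Everything else is routine bookkeeping, and the hypothesis $l \geq k-1$ enters solely through the regularity juggling in the last paragraph.
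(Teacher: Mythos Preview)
Your divided-difference formula $\langle (\nabla_v |D'|^k) e_i, e_j\rangle_0 = g_k(\eta_i,\eta_j)\,\rho_{ij}$ is correct, and so is the bound $|g_k(x,y)|\le k(|x|^{k-1}+|y|^{k-1})$. The gap is in the final step, where you say the sum ``splits into two pieces''. A pointwise inequality on $g_k$ does \emph{not} allow you to dominate
\[
\Bigl|\sum_i c_i\, g_k(\eta_i,\eta_j)\,\rho_{ij}\Bigr|^2
\]
by a constant times $\bigl|\sum_i c_i\,|\eta_i|^{k-1}\rho_{ij}\bigr|^2 + |\eta_j|^{2(k-1)}\bigl|\sum_i c_i\,\rho_{ij}\bigr|^2$. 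For that you would need a decomposition $g_k(\eta_i,\eta_j)=a\,|\eta_i|^{k-1}+b(\eta_j)$ with $a$ bounded and $|b(\eta_j)|\le C|\eta_j|^{k-1}$, which fails already for $\eta_i,\eta_j>0$: the expansion $g_k=\sum_{m=0}^{k-1}\eta_i^{m}\eta_j^{\,k-1-m}$ contains the mixed term $\eta_i^{k-2}\eta_j$, and $g_k-\eta_i^{k-1}$ still depends on $\eta_i$. Replacing $\rho_{ij}$ by $|\rho_{ij}|$ does not help either, since the boundedness of $\nabla_v D'$ on $L^2_l$ gives no control on the operator with kernel $|\rho_{ij}|$.

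The paper's proof avoids this by staying at the operator level. Since $k$ is a positive integer one can write
\[
|D'|^k=(D')^k(1-\pi_{P_0})+(-1)^k(D')^k\pi_{P_0},
\]
apply Leibniz, and observe that $\nabla_v(D')^k=\sum_{s=0}^{k-1}(D')^s(\nabla_v D')(D')^{k-1-s}$ is a finite sum of compositions, each bounded $L^2_l\to L^2_{l-k+1}$ by Corollary~\ref{cor nabla D'}; the remaining terms $(D')^k\nabla_v\pi_{P_0}$ are handled by Proposition~\ref{prop L^2 nabla pi_P}. If you want to rescue your approach, you must decompose $\nabla_v|D'|^k$ as a finite sum of \emph{operators} whose boundedness is already known, rather than bound matrix elements pointwise; doing so you will recover exactly this argument (with $k$ pieces from the Leibniz sum plus the projection terms, not two).
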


\begin{proof}
Note that
\[
        |D'|^k =    (D')^k (1-\pi_{P_0}) + (-1)^{k} (D')^k  \pi_{P_0}.
\]
Here $\pi_{P_0}$ is the $L^2$-projection on $P_0$. 
We have
\[
   \nabla_v (D')^k =  ( \nabla_v D' ) (D')^{k-1} + D'  (\nabla_v D') (D')^{k-2} + \cdots  + (D')^{k-1} \nabla_v D', 
\]
which implies that $\nabla_v (D')^k$ is  a bounded operator $L^2_{l} \rightarrow L^2_{l-k+1}$ by Corollary \ref{cor nabla D'}.  Also $\nabla_v \pi_{P_0}$ is a bounded operator $L^2_l \rightarrow L^2_{l+1}$ by  Proposition \ref{prop L^2 nabla pi_P}.   
\end{proof}

\begin{rem}
So far the authors have not been able to prove Lemma \ref{lem nabla |D'|^k} in the case when $k$ is not an integer,  though there is an explicit formula
\[
      |D'|^k = \sum_{j}  |\eta_j|^k \pi_j. 
\] 
Here $\pi_j$ is the projection onto the $j$-th eigenspace which can be written as
\[
       \pi_j = \frac{1}{2\pi i} \int_{\Gamma_j} (z - D)^{-1} dz. 
\]
\end{rem}

Suppose that $\Ind D = 0$ in $K^1(B)$ and fix a spectral section $P_0$ and recall the definition of the $L^2_{k_+, k_-}$-innter product $\langle \cdot, \cdot \rangle_{k_+, k_-}$ defined by using the perturbed Dirac operator $D' = D + \mathbb{A}$ of Corollary \ref{cor nabla D'}.  (See (\ref{eq L2 k_+ k_-}).)  Let $\mathcal{E}_{k_+, k_-}$ be the completion of $\mathcal{E}_{\infty}$ with respect to $\langle \cdot, \cdot \rangle_{k_+, k_-}$.

We will prove a generalization of Proposition \ref{prop L^2 nabla pi_P}.

\begin{prop}\label{prop perturbation P}

Take non-negative half integers $k_+, k_-$ and a smooth spectral section $P$ of $-D$ with
\[
       (\mathcal{E}_0)^{\mu_-}_{-\infty} \subset P \subset (\mathcal{E}_0)^{\mu_+}_{-\infty}. 
\]
Let $\pi_{P}$ is the $L^2_{k_+, k_-}$-projection on $P$. 
Then for  each non-negative real number  $m$,  $v \in TB$, $\nabla_v \pi_P$ is a bounded operator from $L^2_m(\mathbb{S})$ to $L^2_{m+1}(\mathbb{S})$.  


\end{prop}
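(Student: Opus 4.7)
The plan is to adapt the proof of Proposition \ref{prop L^2 nabla pi_P} by exploiting the fact that although $\pi_P$ is now the $L^2_{k_+,k_-}$-projection rather than the $L^2$-projection, the two agree on large pieces of the fiber, thanks to the eigenspaces of $D'$ being mutually orthogonal in both inner products. The main new complication is that the $L^2_{k_+,k_-}$-inner product itself varies with $a\in B$, so the ``$L^2_{k_+,k_-}$-dual'' of a smooth frame is not quite as immediate as in the $L^2$-case.

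First, I would choose a finite open cover $\{U_i\}$ of $B$ and real numbers $\lambda_i<\mu_-$, $\nu_i>\mu_+$ that are not eigenvalues of $D'_a$ for $a\in U_i$, together with local trivializations of $\mathcal{E}_0|_{U_i}$ in which the canonical flat connection $\nabla$ equals $d$, exactly as in Proposition \ref{prop L^2 nabla pi_P}. Because any two eigenvectors of $D'$ with distinct eigenvalues are orthogonal in both $\langle\cdot,\cdot\rangle_0$ and $\langle\cdot,\cdot\rangle_{k_+,k_-}$, the spectral projection $\pi^{\lambda_i}_{-\infty}$ onto $(\mathcal{E}_0(D'))^{\lambda_i}_{-\infty}$ coincides with the $L^2_{k_+,k_-}$-projection onto the same subspace, and over $U_i$ we may choose smooth sections $f_{i,1},\dots,f_{i,r_i}$ of $P\cap(\mathcal{E}_0(D'))^{\nu_i}_{\lambda_i}$ forming a frame for the $L^2_{k_+,k_-}$-orthogonal complement of $(\mathcal{E}_0(D'))^{\lambda_i}_{-\infty}$ inside $P$. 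Applying Gram--Schmidt in the smoothly varying inner product $\langle\cdot,\cdot\rangle_{k_+,k_-}$, I may take this frame $L^2_{k_+,k_-}$-orthonormal.

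With these choices, I would write
\[
\pi_P|_{U_i}=\pi^{\lambda_i}_{-\infty}+\sum_{l=1}^{r_i} g^*_{i,l}\otimes f_{i,l},\qquad g^*_{i,l}(\phi):=\langle\phi,f_{i,l}\rangle_{k_+,k_-}.
\]
The dual $g^*_{i,l}$ can be rewritten as an ordinary $L^2$-pairing $g^*_{i,l}(\phi)=\langle\phi,T_{i,l}\rangle_0$ with
\[
T_{i,l}(a)=|D'_a|^{2k_+}\pi_+^a f_{i,l}(a)+|D'_a|^{2k_-}\pi_-^a f_{i,l}(a),
\]
which is a smooth section of the finite-rank bundle $(\mathcal{E}_0(D'))^{\nu_i}_{\lambda_i}|_{U_i}$, because $|D'|^{2k_\pm}$ preserves that finite-rank bundle and acts on it smoothly in $a$. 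Applying $\nabla_v$ and the Leibniz rule yields
\[
\nabla_v\pi_P=\nabla_v\pi^{\lambda_i}_{-\infty}+\sum_{l=1}^{r_i}\bigl((\nabla_v g^*_{i,l})\otimes f_{i,l}+g^*_{i,l}\otimes\nabla_v f_{i,l}\bigr).
\]
The first term is bounded $L^2_m\to L^2_{m+1}$ by Corollary \ref{cor nabla pi} applied to $D'$ (Proposition \ref{prop D'}), since $\lambda_i$ sits in a spectral gap of $D'_a$ for $a\in U_i$. For the remaining terms, $f_{i,l}$ and $T_{i,l}$ are smooth sections of a finite-rank bundle of smooth spinors; hence $\nabla_v f_{i,l}$ and $\nabla_v T_{i,l}$ are smooth with $L^2_{m+1}$-norms bounded uniformly on compact pieces of $U_i$. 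For any $\phi\in L^2_m$, Cauchy--Schwarz gives $|g^*_{i,l}(\phi)|=|\langle\phi,T_{i,l}\rangle_0|\leq\|\phi\|_0\|T_{i,l}\|_0$ and similarly $|(\nabla_v g^*_{i,l})(\phi)|\leq\|\phi\|_0\|\nabla_v T_{i,l}\|_0$, so each rank-one term is bounded from $L^2_m$ into $L^2_{m+1}$. Patching with a partition of unity subordinate to $\{U_i\}$ completes the argument.

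The main obstacle, and the reason this is more delicate than Proposition \ref{prop L^2 nabla pi_P}, is handling the $a$-dependence of the $L^2_{k_+,k_-}$-inner product when computing $\nabla_v g^*_{i,l}$. The key observation that circumvents this is that after orthogonalizing the local frame inside the finite-rank bundle $(\mathcal{E}_0(D'))^{\nu_i}_{\lambda_i}|_{U_i}$, the ``weights'' $|D'|^{2k_\pm}$ act as smooth endomorphisms of a finite-dimensional smooth bundle, so the derivative of the weighted pairing reduces to the derivative of a smooth section of a bundle of smooth spinors, for which standard Cauchy--Schwarz bounds suffice, without needing any fractional-power version of Lemma \ref{lem nabla |D'|^k}.
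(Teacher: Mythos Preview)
Your proof is correct and follows the same local decomposition $\pi_P=\pi^{\lambda_i}_{-\infty}+\sum_l g^*_{i,l}\otimes f_{i,l}$ as the paper. The only substantive difference is how you treat $(\nabla_v g^*_{i,l})\otimes f_{i,l}$. The paper leaves the $L^2_{k_+,k_-}$-pairing expanded as $\langle\pi_{P_0}\phi,|D'|^{2k_-}f_{i,l}\rangle_0+\langle(1-\pi_{P_0})\phi,|D'|^{2k_+}f_{i,l}\rangle_0$ and differentiates each factor, invoking Lemma~\ref{lem nabla |D'|^k} for $\nabla_v|D'|^{2k_\pm}$; this is why the half-integer hypothesis on $k_\pm$ is used. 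You instead move the weights onto $f_{i,l}$, writing $g^*_{i,l}(\phi)=\langle\phi,T_{i,l}\rangle_0$ with $T_{i,l}$ a smooth section of the finite-rank bundle $(\mathcal{E}_0(D'))^{\nu_i}_{\lambda_i}$, and observe that on this bundle $|D'|^{2k_\pm}\pi_\pm$ is a smooth finite-dimensional endomorphism (by ordinary functional calculus, since $\ker D'=0$ gives a spectral gap at $0$). This is a clean simplification: it bypasses Lemma~\ref{lem nabla |D'|^k} entirely and in fact works for arbitrary real $k_\pm\geq 0$, not just half-integers. Both arguments handle $\nabla_v\pi^{\lambda_i}_{-\infty}$ and $g^*_{i,l}\otimes\nabla_v f_{i,l}$ identically, via Proposition~\ref{prop D'} and the smoothness of $f_{i,l}$.
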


\begin{proof}
Let $U_i, \lambda_i, \nu_i$ be as in the proof of Proposition \ref{prop L^2 nabla pi_P} and $f_{i, 1}, \dots, f_{i, r_i}$ are smooth $L^2_{k_+, k_-}$-orthonormal frames of  the normal bundle of $(\mathcal{E}_0)_{-\infty}^{\lambda_i}|_{U_i}$ in $P$.  We can write
\[
    \pi_P = \pi^{\lambda_i}_{-\infty} + \sum_{l=1}^{r_i}  f_{i,l}^* \otimes f_{i,l}
\]
on $U_i$ Here
\[
     f_{i,l}^{*}(\phi) =  \langle  \pi_{P_0} \phi, |D'|^{2k_-} f_{i, l} \rangle_{0} + \langle (1-\pi_{P_0}) \phi, |D'|^{2k_+} f_{i, l} \rangle_0, 
\]
$P_0$ is the fixed spectral section used to define $L^2_{k_+, k_-}$-norm,  and  $\pi_{P_0}$ is the $L^2$-projection onto $P_0$. 
We have
\[
     \nabla_v \pi_P = 
        \nabla_v \pi^{\lambda_i}_{-\infty} + \sum_{l=1}^{r_i} (\nabla_v f_{i, l}^* \otimes f_{i,l} + f_{i,l}^* \otimes \nabla_v f_{i, l}).
\]
As stated in the proof of Proposition \ref{prop L^2 nabla pi_P}, $\nabla_v \pi^{\lambda_i}$ and $f_{i,l}^* \otimes \nabla_v f_{i, l}$ are bounded operators from $L^2_m$ to $L^2_{m +1}$. 

For $\phi \in C^{\infty}(\mathbb{S})$, 
 \begin{align*}
   & (\nabla_v f_{i,l}^*) (\phi) =    \\
   & \langle (\nabla_v \pi_{P_0}) \phi,  | D'|^{2k_-} f_{i, +} \rangle_0 + 
     \langle \pi_{P_0} \phi, (\nabla_v |D'|^{2k_-}) f_{i, l} \rangle_{0} +  
   \langle  \pi_{P_0} \phi,   |D'|^{2k_-} (\nabla_v f_{i,l}) \rangle_{0}   \\
   & - \langle (\nabla_{v} \pi_{P_0}) \phi, |D'|^{2k_+} f_{i,;} \rangle_0 +  
   \langle (1-\pi_{P_0}) \phi, (\nabla_v |D'|^{2k_+}) f_{i, l} \rangle_{0} \\
& +   \langle  (1-\pi_{P_0}) \phi,   |D'|^{2k_+} (\nabla_v f_{i,l}) \rangle_0.
 \end{align*}
Note that $2k_{\pm}$ are non-negative integers.  By Proposition \ref{prop L^2 nabla pi_P} and Lemma \ref{lem nabla |D'|^k},  
\[
      \| (\nabla_v f_{i, l}^* \otimes f_{i,l}) (\phi) \|_{m +1} =
       \| (\nabla_v f_{i,l}^*)(\phi) \cdot f_{i,l} \|_{m+1} \leq C \| \phi \|_0. 
\]
Hence $\nabla_vf_{i, l}^*  \otimes f_{i,l}$ are bounded operators from $L^2_m$ to $L^2_{m+1}$. 
\end{proof}

\begin{lem}    \label{lem nabla pi  phi psi}
  Let $\nabla$ be a connection on $\mathcal{E}_{k_+, k_-}$  (which is not necessarily the flat connection defined  in Section \ref{sec:main results}).
 Let $F$ be  a  subbundle in $\mathcal{E}_{k_+, k_-}$ of finite rank and $\pi_F : \mathcal{E}_{k_+, k_-} \rightarrow F$ be the $L^2_{k_+, k_-}$-projection. 
For $a \in B$,  $\phi, \psi \in F_a$, and $v \in T_aB$,  we have
\[
     \langle (\nabla_v \pi_F) \phi, \psi \rangle_{k_+, k_-} = 0. 
\]
 Similarly, for $\phi',   \psi' \in F_a^{\bot}$, we have
\[
     \langle (\nabla_v \pi_F) \phi',   \psi' \rangle_{k_+, k_-} = 0. 
\]

\end{lem}

\begin{proof}
Since
\[
          \pi_F \pi_F = \pi_F, 
\]
we have
\[
     (\nabla_v \pi_F ) \pi_F + \pi_F  (\nabla_v \pi_F) = \nabla_v \pi_F. 
\]
Hence
\[
    (\nabla_v \pi_{F}) \phi + \pi_F (\nabla_v \pi_F) \phi =  ( \nabla_v \pi_{F} ) \phi.
\]
Here we have used $\pi_F \phi = \phi$.  Therefore
\[
     \pi_F (\nabla_v \pi_F ) \phi = 0, 
\]
which implies that
\[
     \langle  (\nabla_v \pi_F) \phi, \psi \rangle_{k_+, k_-} = 0. 
\]
The proof of the other equality is similar. 
\end{proof}


\section{Weighted Sobolev space}\label{subsec:weighted-spaces} 
Assume that $\Ind D = 0 $ and fix a spectral section $P_0$ of $-D$. Let $D' = D + \mathbb{A}$ be the perturbed Dirac operator as in Corollary \ref{cor nabla D'}.

From now on, for $k > 0$,  we consider the norm defined by 
\[
    \| \phi \|_k = \|  |D'|^k  \phi \|_0. 
\]
Note that this  norm is equivalent to the original $L^2_k$ norm since $\ker D'=0$. That is, there is a constant $C > 1$ such that
\begin{equation*}  
       C^{-1} \| (1 + |D|^k ) \phi \|_0 \leq \| |D'|^k  \phi \|_0 \leq  C\| ( 1 + |D|^k) \phi \|_{0}.
\end{equation*}
Hence we can apply Corollary \ref{cor nabla pi}, Corollary \ref{cor nabla pi 2}, Proposition \ref{prop D'} to the Sobolev norms with respect to $D'$.

Let $P_n$, $Q_n$ be spectral sections of $-D$, $D$ with
   \begin{align*}
        &   (\mathcal{E}_0(D))_{-\infty}^{\mu_{n,-}} \subset P_n \subset (\mathcal{E}_0(D))_{-\infty}^{\mu_{n,+}},    \\
        &    (\mathcal{E}_0(D))_{\lambda_{n, +}}^{\infty} \subset Q_n \subset (\mathcal{E}_0(D))_{\lambda_{n, -}}^{\infty}. 
    \end{align*}
We may suppose that
      \begin{align*}
         & \mu_{n, -}  + 10  < \mu_{n, +}   < \mu_{n + 1, -} -10,    \\
         & \lambda_{n+1, +} + 10 < \lambda_{n, -} < \lambda_{n, +} - 10 \\
         & \mu_{n, +} - \mu_{n, -} < \delta, \quad \lambda_{n, +} - \lambda_{n, -} < \delta
   \end{align*}
for some positive number $\delta$ independent of $n$. See Theorem \ref{thm spectral section mu mu+delta}.  By the definition of $D' = D + \mathbb{A}$ in the proof of Corollary \ref{cor nabla D'}, we have
   \begin{align*}
       &   \mathcal{E}_0(D)_{-\infty}^{\mu_{n, \pm}} = \mathcal{E}_{0}(D')_{-\infty}^{\mu_{n,\pm}},  \\
       & \mathcal{E}_0(D)^{\infty}_{\lambda_{n, \pm}} = \mathcal{E}_0(D')^{\infty}_{\lambda_{n,\pm}}
  \end{align*}
for $n \gg 0$. 
Fix half integers $k_+, k_-  > 5$.  Put $\ell = \min \{ k_+, k_- \}$.  Let $\pi_{P_n}$, $\pi_{Q_n}$ be the $L^2_{k_+, k_-}$-projections on $P_n, Q_n$. 
By Proposition \ref{prop perturbation P}, we can assume that for each $n$,   there is $C_n > 0$ such that for $v \in TB$ with $\| v \| \leq 1$, 
\begin{equation}   \label{eq nabla pi C_n}
      \| \nabla_v \pi_{P_n} : L^2_{k_+, k_-} \rightarrow L^2_{\ell + 1} \| \leq C_n,   \quad
      \| \nabla_v  \pi_{Q_n} : L^2_{k_+, k_-} \rightarrow L^2_{\ell + 1} \| \leq C_n.
\end{equation}
Define a finite dimensional subundle $F_n$ of $\mathcal{E}_{\infty}$ by 
\[
         F_n = P_n \cap Q_n \subset (\mathcal{E}_0)_{\lambda_{n,-}}^{\mu_{n,+}}. 
\]
We will next introduce weighted Sobolev spaces.  Take positive numbers $\epsilon_n$ with
\begin{equation}  \label{eq C epsilon}
                C_n \epsilon_n \leq \frac{1}{n}, 
\end{equation}
where $C_n$ are the constants from (\ref{eq nabla pi C_n}). 
Fix a smooth function
\[
        w : \mathbb{R} \rightarrow \mathbb{R}
\]
with
  \begin{align*}
   0 < w(x) \leq 1   &  \quad \text{for all $x \in \mathbb{R}$, }    \\
     w(x) = \epsilon_n &  \quad  \text{if $x \in  [\lambda_{n, -} - 3, \lambda_{n,+} + 3] \cup [\mu_{n,-} - 3, \mu_{n,+} + 3] $ for some $n$. }
  \end{align*}

Take  $a \in \mathcal{H}^1(Y)$.  Let $\{ e_{j} \}_j$ be an orthonormal basis of $L^2(\mathbb{S})$ with
\[
          D_a' e_{j} = \eta_j e_{j},
\]
where $\eta_j$ are the eivenvalues of $D_a'$. 

For a positive number $k$ and $\phi = \sum_{j} c_{j} e_{j} \in C^{\infty} (\mathbb{S})$,  we define a weighted Sobolev norm $\| \phi \|_{a, k,  w}$ by  
\[
           \|    \phi \|_{a, k,  w} := 
           \Bigg(  \sum_{j}   |c_{j} |^2     |\eta_j| ^{2k}  w(\eta_j)^2  \Bigg)^{\frac{1}{2}}. 
\]
 Denote by $L^2_{a, k, w}(\mathbb{S})$ the completion of $C^{\infty}(\mathbb{S})$ with respect to $\| \cdot \|_{a, k,  w}$.  The family  $\{ \| \cdot \|_{a, k,  w} \}_{a \in \mathcal{H}^1(Y)}$ of norms induces a fiberwise norm $\| \cdot \|_{k,w}$ on $\mathcal{E}_{\infty}$.  We denote the completion of $\mathcal{E}_{\infty}$ with respect to $\| \cdot \|_{k, w}$ by $\mathcal{E}_{k, w}$.  Note that
 \[
        \| \phi \|_{k, w} \leq \| \phi \|_{k}. 
 \]

\begin{prop} \label{prop pi P_n wighted}
Let $k_+, k_-$ be half integers with $k_+, k_- > 5$ and put $\ell = \min \{ k_+, k_- \}$.  Then
\[
      \sup_{v \in B(TB; 1)}  \Big\| \nabla_v    \pi_{P_n} : L^2_{k_+, k_-} \rightarrow L^2_{\ell - 5, w}   \Big\|
      \rightarrow
      0. 
\]
%
A similar statement holds for $\pi_{Q_n}$. 
\end{prop}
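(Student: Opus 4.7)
The plan is to exploit that the output of $\nabla_v\pi_{P_n}$ is, up to a controllable tail, spectrally concentrated in the window $W_n=[\mu_{n,-}-3,\mu_{n,+}+3]$ where the weight $w$ equals the small constant $\epsilon_n$.  This combines with the crude operator bound $\|\nabla_v\pi_{P_n}:L^2_{k_+,k_-}\to L^2_{\ell+1}\|\le C_n$ from (\ref{eq nabla pi C_n}) and the choice (\ref{eq C epsilon}) that $C_n\epsilon_n\le 1/n$ to give the vanishing.  The same argument will work verbatim for $\pi_{Q_n}$.

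First I would cover the compact base $B$ by finitely many open sets $\{U_{n,i}\}$ on each of which Lemma \ref{lem spectral gap} and continuity of eigenvalues provide spectral gaps $\lambda_{n,i}\in[\mu_{n,-}-3,\mu_{n,-}]$ and $\nu_{n,i}\in[\mu_{n,+},\mu_{n,+}+3]$ of $D'_a$ valid for all $a\in U_{n,i}$.  On each chart, as in Proposition \ref{prop perturbation P}, decompose
\[
\pi_{P_n}=\pi_{-\infty}^{\lambda_{n,i}}+\sum_l f^*_{i,l}\otimes f_{i,l},
\]
with $\{f_{i,l}\}$ a smooth $L^2_{k_+,k_-}$-orthonormal frame of the finite-rank bundle $P_n\cap\mathcal{E}_0(D')_{\lambda_{n,i}}^{\nu_{n,i}}$, so that each $f_{i,l}$ has $D'$-spectral support inside $W_n$.

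Next I would split the $L^2_{\ell-5,w}$-norm of $\nabla_v\pi_{P_n}\phi$ according to the $D'$-spectral support of the output being inside or outside $W_n$.  For the part inside $W_n$, where $w\equiv\epsilon_n$ and $\|\cdot\|_{\ell-5}\le\sqrt2\,\|\cdot\|_{\ell+1}$, the bound (\ref{eq nabla pi C_n}) combined with (\ref{eq C epsilon}) gives
\[
\|\pi_{W_n}\nabla_v\pi_{P_n}\phi\|_{\ell-5,w}\le\epsilon_n\sqrt2\,\|\nabla_v\pi_{P_n}\phi\|_{\ell+1}\le\sqrt2\,C_n\epsilon_n\|\phi\|_{k_+,k_-}\le\sqrt2\,\|\phi\|_{k_+,k_-}/n.
\]
For the outside-$W_n$ part I would use the chart decomposition.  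The contribution from $(\nabla_v f^*_{i,l})\otimes f_{i,l}$ has image spanned by $f_{i,l}\in W_n$, so contributes nothing outside $W_n$.  The pieces $\nabla_v\pi_{-\infty}^{\lambda_{n,i}}$ and $f^*_{i,l}\otimes\nabla_v f_{i,l}=f^*_{i,l}\otimes\big[(\nabla_v\pi_{\lambda_{n,i}}^{\nu_{n,i}})f_{i,l}+\pi_{\lambda_{n,i}}^{\nu_{n,i}}\nabla_v f_{i,l}\big]$ have outside-$W_n$ contributions whose matrix entries $\langle\cdot\,e_i,e_j\rangle$ carry, by the explicit formulas of Proposition \ref{prop nabla pi} (extended to $D'$ in Proposition \ref{prop D'}), a factor $|\eta_i-\eta_j|^{-1}$ with $\eta_i,\eta_j$ on opposite sides of $\lambda_{n,i}$; for $\eta_j\notin W_n$ this gives $|\eta_i-\eta_j|\ge|\eta_j-\lambda_{n,i}|\ge c\,\max(1,|\eta_j|/|\mu_n|)$.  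Pairing this decay against the $L^2_{k_+,k_-}$-control of the input and against $|\eta_j|^{2(\ell-5)}$ in the target norm, and using Weyl's law for eigenvalue counts, yields a bound of the outside-$W_n$ operator norm by $C/|\mu_n|^{\kappa}$ for some $\kappa>0$, which tends to $0$ as $n\to\infty$.

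The main obstacle is Step 3: matching the Sobolev exponent $\ell-5$ against the pointwise decay $|\eta_i-\eta_j|^{-1}$ in a summation that converges uniformly.  The generous gap of $5$ Sobolev derivatives (as opposed to the $4$ of Corollary \ref{cor nabla pi 2}) is what provides the one extra power of $|\mu_n|$ needed to beat Weyl's law and produce the decay in $n$.  Combining the inside-$W_n$ bound of order $1/n$ with the outside-$W_n$ bound of order $|\mu_n|^{-\kappa}$ finishes the proof.
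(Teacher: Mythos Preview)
Your inside-$W_n$ argument is exactly the paper's, and is the heart of the proof.  The outside-$W_n$ part is where you diverge.  The paper avoids charts and local frames entirely: working pointwise at $a\in B$, it selects (via Corollary~\ref{cor nabla pi 2}) cutoffs $\nu_{n,-}\in[\mu_{n,-}-2,\mu_{n,-}-1]$ and $\nu_{n,+}\in[\mu_{n,+}+1,\mu_{n,+}+2]$ with uniform bounds $\|\nabla_v\pi_{-\infty}^{\nu_{n,-}}\|,\ \|\nabla_v\pi_{\nu_{n,-}}^{\nu_{n,+}}\|:L^2_{\ell-1}\to L^2_{\ell-5}\le C$, and then uses the algebraic identity
\[
\pi_{P_n}=\pi_{-\infty}^{\nu_{n,-}}+\pi_{\nu_{n,-}}^{\nu_{n,+}}\circ\pi_{P_n},
\qquad
\nabla_v\pi_{P_n}=\nabla_v\pi_{-\infty}^{\nu_{n,-}}+(\nabla_v\pi_{\nu_{n,-}}^{\nu_{n,+}})\,\pi_{P_n}+\pi_{\nu_{n,-}}^{\nu_{n,+}}(\nabla_v\pi_{P_n}).
\]
The last term lands in the window automatically and is handled by $C_n\epsilon_n\le 1/n$.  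For the first two, the paper splits the \emph{input} $\phi$ at a fixed threshold $\beta$: the high part has $\|\pi_\beta^\infty\phi\|_{\ell-1}<\epsilon$, giving $C\epsilon$ via the uniform bound; on the low part the output of $\nabla_v\pi_{-\infty}^{\nu_{n,-}}$ is spectrally supported above $\nu_{n,-}$, so its $L^2_{\ell-5}$ norm is small compared to its (uniformly bounded) $L^2_{\ell-4}$ norm.  No eigenvalue counting enters.

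Your frame-based decomposition and output-splitting can also be made to work, but the sketch has soft spots.  The bound $|\eta_i-\eta_j|\ge c\max(1,|\eta_j|/|\mu_n|)$ only holds if $\lambda_{n,i}$ is placed \emph{away} from $\partial W_n$ (e.g.\ in $[\mu_{n,-}-2,\mu_{n,-}-1]$, as the paper does); with $\lambda_{n,i}$ at the edge, eigenvalues just outside $W_n$ can sit arbitrarily close to it.  More seriously, for $\eta_j$ just above $W_n$ your bound gives only a constant denominator while the $i$-sum ranges over the full (infinite) spectral support of $\phi$; turning this into an $o(1)$ operator norm really needs an input-splitting step of the paper's type, not Weyl counting.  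The Sobolev margin of $5$ is indeed the point, but it acts through the uniform $L^2_{\ell-1}\!\to\! L^2_{\ell-5}$ bound of Corollary~\ref{cor nabla pi 2} together with the high spectral support of the output, rather than through eigenvalue-density estimates.
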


\begin{proof}
For $\lambda, \mu \in \mathbb{R}$, let $\pi_{\lambda}^{\mu}$ be the $L^2$-projection to $(\mathcal{E}_0(D'))_{\lambda}^{\mu}$. 
Take $a \in B$ and  $v \in T_aB$ with $\| v \| \leq 1$.  By Corollary \ref{cor nabla pi 2} and Proposition \ref{prop D'}, for $n \gg 0$, we can take 
\[
    \nu _{n, -}  \in  [\mu_{n, -}-2, \mu_{n,-} -1],  \quad
    \nu_{n, +}   \in [\mu_{n, +} +1,  \mu_{n,+} + 2]
\]
such that 
\begin{align*}
        &  \| \nabla_v \pi_{-\infty}^{\nu_{n, -}} : L^2_{\ell -1}  \rightarrow L^2_{\ell - 5} \| \leq C,    \\
       &  \| (\nabla_v \pi_{\nu_{n, -}}^{\nu_{n,+}} ) : L^2_{\ell - 1} \rightarrow L^2_{\ell -5} \| \leq C,
 \end{align*}
where $C > 0$ is a constant independent of $n$. 
Note that
\begin{align*}
    \pi_{P_n} 
         &= id_{\mathcal{E}_0} \circ \pi_{P_n}   \\
         &= ( \pi_{-\infty}^{\nu_{n, -}} + \pi^{\nu_{n,+}}_{\nu_{n,-}} + \pi_{\nu_{n,+}}^{\infty} ) \circ \pi_{P_n}    \\
         &=   \pi_{-\infty}^{\nu_{n, -}} +  \pi^{\nu_{n, +}}_{\nu_{n, -}} \circ \pi_{P_n}. 
   \end{align*}
Hence 
\begin{equation}   \label{eq nabla pi P_n}
     \nabla_v \pi_{P_n} 
     = \nabla_v \pi_{-\infty}^{\nu_{n, -}} + (\nabla_v \pi_{\nu_{n, -}}^{\nu_{n,+}} )  \pi_{P_n} + 
          \pi^{\nu_{n, +}}_{\nu_{n, -}} (\nabla_v \pi_{P_n}).  
\end{equation}

 For $\epsilon > 0 $, take a positive number $\beta$ with $\beta > \frac{1}{\epsilon}$. Then for any $\phi \in \mathcal{E}_{k_+, k_-}$ with $\| \phi \|_{k_+, k_-} \leq 1$, we have
 \[
         \| \pi_{\beta}^{\infty} \phi \|_{\ell - 1} < \epsilon. 
 \]
By Proposition \ref{prop nabla pi} and Corollary \ref{cor nabla pi 2},  for $n \gg 0$ with  $\beta < \nu_{n,-}$, 
\begin{equation}   \label{eq 1}
   \begin{aligned}
       \| (\nabla_v  \pi_{-\infty}^{\nu_{n, -}})  \phi  \|_{\ell - 5}
   &  = 
       \| (\nabla_v \pi_{-\infty}^{\nu_{n, -}} )  ( \pi_{-\infty}^{\beta} \phi + \pi_{\beta}^{\infty} \phi     )  \|_{\ell - 5}    \\
    & \leq  C' \Big(  \frac{1}{  | \beta - \nu_{n,-}| }  + \epsilon  \Big). 
    \end{aligned}
\end{equation}
Here $C' > 0$ is independent of $n$.  Similarly,
\begin{equation} \label{eq 2}
          \| (\nabla_v \pi_{\nu_{n, -}}^{\nu_{n,+}} )  \pi_{P_n}   \phi  \|_{\ell - 5 } 
          \leq C'' \Bigg(  \frac{1}{  \min \{ | \beta - \nu_{n,+}|,  | \beta - \nu_{n,-}|  \}}  + \epsilon  \Bigg)
\end{equation}  
for $n \gg 0$, where $C'' > 0$ is a constant independent of $n$. 
By the definition of the weighted Sobolev norm $\| \cdot \|_{\ell, w}$ and (\ref{eq C epsilon}), 
\begin{equation}  \label{eq 3}
            \| \pi^{\nu_{n, +}}_{\nu_{n, -}} (\nabla_v \pi_{P_n})  \phi \|_{\ell,w}  \leq C_n \epsilon_n \| \phi \|_{k_+, k_-} \leq  \frac{1}{n}. 
\end{equation}
 The statement follows from (\ref{eq nabla pi P_n}), (\ref{eq 1}), (\ref{eq 2}), (\ref{eq 3}). 
  
 \end{proof}

\begin{lem}   \label{lem norm}
Let $K$ be a compact set in $\mathcal{H}^1(Y)$. 
There is a norm $\| \cdot \|_{K, k, w}$ on $C^{\infty}(\mathbb{S})$ such that for any $a \in K$ and $\phi \in C^{\infty}(\mathbb{S})$ we have
\[
     \| \phi \|_{K, k, w} \leq \| \phi \|_{a, k,  w}. 
\]
Let $L^2_{K, k, w}$ be the completion of $C^{\infty}(\mathbb{S})$ with respect to  $\| \cdot \|_{K, k, w}$. For $l \geq k$, the natural map $L^2_{l} \rightarrow L^2_{K,k,w}$ is injective.
\end{lem}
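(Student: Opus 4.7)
The norm I would construct is the inf-convolution of the family $\{\|\cdot\|_{a,k,w}\}_{a \in K}$,
\[
\|\phi\|_{K,k,w} := \inf\Bigl\{ \sum_{i=1}^N \|\phi_i\|_{a_i,k,w} : \phi = \sum_{i=1}^N \phi_i,\ a_i \in K,\ N \geq 1\Bigr\}.
\]
This is manifestly a seminorm on $C^\infty(\mathbb{S})$: concatenating decompositions gives the triangle inequality, and the trivial one-term decomposition $\phi=\phi$ immediately yields the required bound $\|\phi\|_{K,k,w} \leq \|\phi\|_{a,k,w}$ for every $a \in K$.

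Next I would establish the uniform upper bound $\|\phi\|_{a,k,w} \leq C_K \|\phi\|_k$ for all $a \in K$ and $\phi \in C^\infty(\mathbb{S})$. This uses $w \leq 1$ together with the fact that $\||D_a'|^k \phi\|_0$ is uniformly comparable to the standard $\|\phi\|_k$ as $a$ ranges over the compact set $K$; the latter follows from continuous dependence of $D_a'$ on $a \in \mathcal{H}^1(Y)$ and boundedness of Clifford multiplication by harmonic $1$-forms, as in Corollary \ref{cor nabla D'}. Consequently $\|\phi\|_{K,k,w} \leq C_K\|\phi\|_l$ for $l \geq k$, so the natural map $L^2_l \to L^2_{K,k,w}$ is well-defined and continuous.

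The substantive step is to prove simultaneously that $\|\cdot\|_{K,k,w}$ is non-degenerate and that $L^2_l \to L^2_{K,k,w}$ is injective. Suppose $\phi \in L^2_l$ has image $0$ in $L^2_{K,k,w}$, and choose $\phi_n \in C^\infty(\mathbb{S})$ with $\|\phi_n-\phi\|_l \to 0$ and $\|\phi_n\|_{K,k,w} \to 0$. The uniform upper bound yields $\|\phi_n-\phi\|_{a,k,w} \to 0$ uniformly in $a \in K$. To convert the vanishing of the inf-convolution into a genuine bound on $\phi_n$, I would establish a uniform lower bound of the shape
\[
\|\psi\|_{a,k,w}^2 \geq c_{R,\delta}\,\bigl\|\pi^{a}_{\delta \leq |\eta| \leq R} \psi\bigr\|_0^2,\quad a \in K,
\]
where $\pm\delta, \pm R$ are chosen as common spectral gaps of $D_a'$ for all $a \in K$ (possible by Lemma \ref{lem spectral gap} together with compactness of $K$), and $c_{R,\delta}>0$ is independent of $a$. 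Applying this to a near-optimal decomposition $\phi_n = \sum_i \phi_{n,i}$, and using continuity of the spectral projections $\pi^{a}_{\delta \leq |\eta| \leq R}$ in $a \in K$ (compactness of $K$ plus the gap condition) to compare pieces with different $a_{n,i}$, gives control on a common bulk-spectral part of $\phi_n$. Letting $R \to \infty$ and $\delta \to 0$ (permissible since $\ker D_a' = 0$ uniformly for $a \in K$) then forces $\phi = 0$ in $L^2$, hence in $L^2_l$.

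The main obstacle is this last step: while finite-dimensional inf-convolutions of norms are trivially norms, in infinite dimensions a non-zero $\phi$ might a priori be dispersed across many pieces each cheap in its individual $\|\cdot\|_{a_{n,i},k,w}$ without globally vanishing. Ruling this out requires the uniform spectral-gap lower bound above and control of the spectral projections $\pi^{a}$ as continuous functions of $a \in K$, both of which rest on the compactness of $K$ and continuous dependence of $D_a'$ on $a$.
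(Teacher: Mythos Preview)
Your inf-convolution gives the bound $\|\phi\|_{K,k,w}\leq\|\phi\|_{a,k,w}$ for free, but your argument for non-degeneracy and injectivity has a real gap. From a near-optimal decomposition $\phi_n=\sum_i\phi_{n,i}$ you only control $\sum_i\|\pi^{a_{n,i}}_{[\delta,R]}\phi_{n,i}\|_0$; to pass to $\|\pi^{a_0}_{[\delta,R]}\phi_n\|_0$ you must bound $(\pi^{a_0}_{[\delta,R]}-\pi^{a_{n,i}}_{[\delta,R]})\phi_{n,i}$, and that requires control of $\|\phi_{n,i}\|_0$, which you do not have. A piece $\phi_{n,i}$ can sit entirely in the high spectrum of $D'_{a_{n,i}}$ (so its $[\delta,R]$-projection there vanishes) yet still overlap the $[\delta,R]$-band of $D'_{a_0}$. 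Separately, letting $R\to\infty$ is problematic: your constant $c_{R,\delta}=\min_{\delta\le|x|\le R}|x|^{2k}w(x)^2$ can degenerate to $0$, since $w=\epsilon_n$ on the intervals around $\mu_{n,\pm},\lambda_{n,\pm}$ and the $\epsilon_n$ are only constrained by $C_n\epsilon_n\le 1/n$, with no lower bound.

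The paper's route is entirely different. It fixes a reference point $a_0\in K$, chooses spectral bands $[\lambda_l,\lambda_{l+1}]$ of $D'_{a_0}$ wide relative to $\mathrm{diam}(K)\cdot\sup_{\|v\|=1}\|\nabla_v D'\|$, and proves by a path-continuity (Gronwall-type) argument that for $\phi$ in the $l$-th band of $D'_{a_0}$ one has $c_l\|\phi\|_0\le\|(\pi_a)_{\lambda_l-r\delta}^{\lambda_{l+1}+r\delta}\phi\|_0$ for every $a\in K$, with $c_l>0$ uniform by compactness. The norm is then defined explicitly as a weighted $\ell^2$ sum $\bigl(\sum_l(\tfrac{1}{10}c_l\,\underline{w}(l)\,\|(\pi_{a_0})_{\lambda_l}^{\lambda_{l+1}}\phi\|_0)^2\bigr)^{1/2}$, from which both the required inequality and the injectivity of $L^2_l\to L^2_{K,k,w}$ are immediate. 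Incidentally, since the paper's norm is a genuine norm dominated by every $\|\cdot\|_{a,k,w}$, it lies below your inf-convolution; so your construction \emph{is} a norm, but establishing that seems to require essentially the paper's band-by-band estimate rather than the global spectral-projection comparison you sketch.
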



\begin{proof}
Take a compact set $K$ in $\mathcal{H}^1(Y)$ and fix $a_0 \in K$.  Choose $a \in K$.    Put
\begin{align*}
       & a_t = (1-t)a_0 + t a, \\
      &  r = \| a_0 - a \|,   \\
      &   \delta := \max \{   \| \nabla_v D' : L^2 \rightarrow L^2 \|   :   t \in [0,1], v \in T_{a_t} \mathcal{H}^1(Y), \| v \| = 1   \}.
    \end{align*}
Let $\tilde{\mathcal{E}}_0$ be the trivial bundle $\mathcal{H}^1(Y) \times L^2(\mathbb{S})$ over $\mathcal{H}^1(Y)$, which is the pull-back of $\mathcal{E}_0$ by the projection $\mathcal{H}^1(Y) \rightarrow B$. 
Also take a sequence $\{ \lambda_l \}_{l=-\infty}^{\infty}$ of real numbers with 
\[
         \lambda_l + r \delta \ll  \lambda_{l+1}. 
\]
We will prove that for each $l$, there is a constant $c_l(a) > 0$ such that for $\phi \in \tilde{\mathcal{E}}_0( D_{a_{0}}')_{\lambda_l}^{\lambda_{l+1}}$,  we have
\begin{equation}  \label{eq c_l}
        c_l (a) \| \phi \|_0 \leq    \| ( \pi_a )_{\lambda_{l} - r\delta}^{\lambda_{l+1} +r\delta} \phi \|_0. 
\end{equation}
 Fix an integer $l$. We consider the following set:
\[
        I = \Bigg\{  t \in [0,1]  :
        \begin{array}{l}
        \forall s \in [0, 1], s \leq t, 
        \exists c(s) > 0,  
          \forall \phi \in  \tilde{\mathcal{E}}_0(D_{a_0}')_{\lambda_{l}}^{\lambda_{l+1}} \\
        c(s) \| \phi \|_{0} \leq    \| (\pi_{a_s})_{\lambda_{l} - s r \delta}^{\lambda_{l+1} + s r\delta} \phi \|_{0}
        \end{array}
        \Bigg\}.
\]
Note that $0 \in I$.   To prove (\ref{eq c_l}), it is sufficient to show that $\sup I = 1$.  Put $t_0 = \sup I$ and assume that $t_0 < 1$.    

%
%
%
%
%
%
%
%
%
%
%
%

%
%
  
Then, take $t_+ \in (t_0, 1]$ with $|t_{+} - t_0|$ sufficiently small.  For $t \in [t_0, t_+]$,  let
\[
      \nu_1(t), \dots, \nu_{m}(t)
\]
be the eigenvalues of $D_{a_t}'$ which are continuous in $t$ such that
\begin{align*}
     & \lambda_{l} - t_0 r \delta  < \nu_1(t_0), \nu_2(t_0), \dots, \nu_m(t_0) \leq \lambda_{l+1} + t_0r \delta,   \\
     & \dim \tilde{\mathcal{E}}_0(D_{a_t}')_{\lambda_l-t_0r\delta}^{\lambda_{l+1} t_0 r \delta} = m.
 \end{align*}

Take real numbers $\lambda_-, \lambda_+$ sufficiently close to $\lambda_{l} - t_0 r\delta$, $\lambda_{l+1} + t_0 r \delta$,  which are not eigenvalues of $D_{a_{t}}'$ for $t \in [t_0, t_+]$,  such that
\[
      \tilde{\mathcal{E}}_0(D_{a_{t_0}}')_{\lambda_-}^{\lambda_+} 
   = \tilde{\mathcal{E}}_0(D_{a_{t_0}}')_{\lambda_{l} - t_0 r \delta}^{\lambda_{l+1} + t_0 r \delta}. 
\] 
By Theorem  4.10 in \cite[p. 291]{Kato},  for $t \in [t_0, t_+]$, 
\[
          \lambda_{l} - t r \delta  <  \nu_1(t), \dots, \nu_m(t) \leq \lambda_{l+1} + t r \delta
\]
which implies that
\[
       \tilde{\mathcal{E}}_0(D_{a_t}')_{\lambda_-}^{\lambda_+} =
         \tilde{\mathcal{E}}_0(D_{a_t}')_{\lambda_l - t r\delta}^{\lambda_{l+1} + t r\delta}.
\]
So we have
\[ 
      \|   (\pi_{a_t})_{\lambda_-}^{\lambda_+} \phi \|_0  = \| (\pi_{a_t})_{\lambda_l - t r \delta}^{\lambda_{l+1} + t r \delta} \phi \|_0. 
\]
From the equality 
\[
       \frac{d}{dt} \| (\pi_{a_t})_{\lambda_-}^{\lambda_+} \phi \|_{0}^2 = 
       2 \operatorname{Re} \langle (\nabla_v (\pi_{a_t})_{\lambda_-}^{\lambda_+}) \phi, \phi \rangle_0,
\]
for $t \in [t_0, t_+]$ and $\phi \in \tilde{\mathcal{E}}_0(D_{a_{t_-}}')_{\lambda_l - t_- r \delta}^{\lambda_{l+1} + t_- r\delta}$,  we have
\[
      \left\{  1 - 2 M(t - t_0) \right\} \| \phi \|_{0} \leq 
      \| (\pi_{a_t})_{\lambda_-}^{\lambda_+} \phi \|_0 =
      \Big\| (\pi_{a_{t}})_{\lambda_l - t r\delta}^{\lambda_{l+1} + t r\delta} \phi \Big\|_0,  
\]
where
\[
     M = \max \Big\{   \Big\lVert  \nabla_{v} (\pi_t)_{\lambda_-}^{\lambda_+} : L^2 \rightarrow L^2 \Big\rVert          :    t \in [t_0, t_+]   \Big\}
\]
and $v = a - a_0$.  Taking $t_+$ sufficiently close to $t_0$, we have
\[
         2 M |t_+ - t_0| < 1. 
\]
This implies that
\[
    t_+ \in I
\]
and we get a contradiction. We have obtained (\ref{eq c_l}). 

Take a sufficiently small open neighborhood $U_{l, a}$ of $a$ in $\mathcal{H}^1(Y)$. Then for all $a' \in U_{l, a}$ we have
\[
       \frac{1}{2} c_l(a)  \| \phi \|_0 \leq  \Big\rVert  (\pi_{a'} )_{\lambda_l - r \delta - 1}^{\lambda_{l+1} + r \delta + 1}  \phi \Big\rVert_0
\]
for $\phi \in \tilde{\mathcal{E}}_0(D_{a_0}')_{\lambda_l}^{\lambda_{l+1}}$.  Since $K$ is compact, there exist $a_{l, 1}, \dots, a_{l, N_l} \in K$ such that
\[
      K \subset U_{l,a_1} \cup \cdots \cup U_{l, a_{N_l}}. 
\]
Take a small positive number $\epsilon > 0$ such that there are no eigenvalues of $D_{a}'$ in $[-\epsilon, \epsilon]$ for $a \in K$. 
Put
  \begin{align*}
      & c_l = \min \{ c_{l}(a_{l,1}), \dots, c_l(a_{l, N_l}) \},     \\
      & \underline{w}(l) : =  
      \min \{  |x|^k w(x)   | x \not\in [-\epsilon, \epsilon],  x \in [\lambda_{l-1}, \lambda_{l+2}]   \}
   \end{align*}
For $\phi \in C^{\infty}(\mathbb{S})$, define
\begin{equation}  \label{eq def of wighted norm}
     \lVert  \phi    \rVert_{K, k, w} =
    \Bigg\{ \sum_{l}   \left( \frac{1}{10} c_l  \underline{w}(l) \|   (\pi_{a_0})_{\lambda_l}^{\lambda_{l+1}} \phi \|_0   \right)^2   \Bigg\}^{\frac{1}{2}}. 
\end{equation}
Then 
\[
      \| \phi \|_{K, k, w} \leq \| \phi \|_{a, k, w}
\]
for all $a \in K$ and $\phi \in C^{\infty}(\mathbb{S})$. 
From the definition (\ref{eq def of wighted norm}) of $\| \cdot \|_{K,k,w}$, we have that the natural map $L^2_{l}\to L^2_{K,k,w}$ is injective for $l \geq k$.

\end{proof}

\begin{prop} \label{prop Ascoli}
Let $W$ be a closed, oriented,  smooth manifold and $E$ be a vector bundle on $W$.
Let $k$ be a positive number with $k \geq 1$, $I$ be a compact interval in $\mathbb{R}$ and  $\| \cdot \|$  be any norm on $C^{\infty}(E)$ such that $\| \phi \| \leq \| \phi \|_{k-1}$ for all $\phi \in C^{\infty}(E)$.   Assume that the natural map $L^2_l(E) \rightarrow \overline{C^{\infty}(E)}$ is injective for $l \geq k-1$.  Here $\overline{C^{\infty}(E)}$ is the completion with respect to the norm $\|  \cdot  \|$.   We consider $L^2_l(E)$ to be a subspace of $\overline{C^{\infty}(E)}$ through this map. 

Suppose that we have a sequence $\gamma_n : I \rightarrow C^{\infty}(E)$  such that $\gamma_n$ are equicontinuous in $\| \cdot \|$ and uniformly bounded in $L^2_{k}$.  Then after passing to a subsequence, $\gamma_n$ converges uniformly in $L^2_{k-1}$ to a continuous 
\[
      \gamma : I \rightarrow  L^2_{k-1}(E).
\]
\end{prop}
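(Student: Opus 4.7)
My plan is to reduce the proof to the classical Arzel\`a--Ascoli theorem for maps into a compact metric space, after first showing that the images of all $\gamma_n$ lie in a fixed $L^2_{k-1}$-compact set on which the $\|\cdot\|$-topology and the $L^2_{k-1}$-topology coincide. First, using the uniform bound $\sup_{n,t} \|\gamma_n(t)\|_{L^2_k} < \infty$ and the Rellich compactness of the inclusion $L^2_k(E)\hookrightarrow L^2_{k-1}(E)$ (valid since $W$ is closed and $k\geq 1$), the set
\[
K := \overline{\{\gamma_n(t)\,:\, n\in \mathbb{N},\, t\in I\}}^{L^2_{k-1}}
\]
is compact in $L^2_{k-1}(E)$, and in particular every $\gamma_n(t)$ lies in the common compact set $K$.

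The main obstacle is the next step: proving that the $L^2_{k-1}$-topology and the $\|\cdot\|$-topology agree on $K$. The inequality $\|\phi\| \leq \|\phi\|_{k-1}$ ensures the natural map $L^2_{k-1}(E) \to \overline{C^\infty(E)}$ is continuous, and the injectivity hypothesis ensures that $\|\cdot\|$ restricts to a genuine norm, not merely a seminorm, on $L^2_{k-1}(E)$. Hence $(K, \|\cdot\|)$ is Hausdorff, and the identity map
\[
(K, L^2_{k-1}) \longrightarrow (K, \|\cdot\|)
\]
is a continuous bijection from a compact space to a Hausdorff space, so it is a homeomorphism.

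Finally, I would apply the Arzel\`a--Ascoli theorem in the compact metric space $(K,\|\cdot\|)$: the family $\{\gamma_n\}$ is equicontinuous by hypothesis and takes values in the compact set $K$, so it is relatively compact in $C(I,(K,\|\cdot\|))$. Extracting a convergent subsequence gives a continuous map $\gamma\from I\to K$ which is the uniform $\|\cdot\|$-limit of $\gamma_{n_j}$. By the topology agreement established above, this convergence is uniform in $L^2_{k-1}$, and $\gamma$ is continuous as a map $I\to L^2_{k-1}(E)$. I remark that a naive diagonal argument working directly in $L^2_{k-1}$ does not suffice here: one is only given equicontinuity in the weaker norm $\|\cdot\|$, and no general interpolation converts this into $L^2_{k-1}$-equicontinuity, which is precisely why the topological reformulation on $K$ is essential.
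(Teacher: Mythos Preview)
Your proof is correct and takes a genuinely different route from the paper. The paper argues more laboriously: it first runs the classical Arzel\`a--Ascoli argument by hand (diagonal extraction at rationals using Rellich, then equicontinuity to extend to all of $I$) to obtain uniform convergence in $\|\cdot\|$; it then separately shows that the limit $\gamma(t)$ lies in $L^2_{k-1/2}$ by extracting further subsequences; and finally it upgrades to uniform $L^2_{k-1}$-convergence by contradiction, applying Rellich yet again to both $\gamma_{n(i)}(t_i)$ and $\gamma(t_i)$. Your argument compresses all of this into one step: once you observe that $K$ is $L^2_{k-1}$-compact and that the identity $(K,L^2_{k-1})\to (K,\|\cdot\|)$ is a continuous bijection onto a Hausdorff space, the two metrics induce the same topology on $K$, and uniform continuity of the inverse homeomorphism (automatic from compactness) immediately converts uniform $\|\cdot\|$-convergence into uniform $L^2_{k-1}$-convergence. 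Your approach is shorter and more conceptual; the paper's approach is more explicit and perhaps makes clearer exactly where the injectivity hypothesis enters (it is used to identify the various subsequential limits), but at the cost of invoking Rellich several times and a somewhat delicate contradiction argument.
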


\begin{proof}
Let $q_1, q_2, \dots, $ be the rational numbers in $I$.   Since $\gamma_n$ are uniformly bounded in $L^2_k$, it follows from the Rellich lemma and the diagonal argument that there is a subsequence $n(i)$ such that $\gamma_{n(i)}(q_m)$ converges in $L^2_{k-1}$ (and hence in $\| \cdot \|$)  as $i \rightarrow \infty$ for each $m$. Since $\gamma_n$ are equicontinous in $\| \cdot \|$,   for any $\epsilon > 0$ and $t \in I$, we can find $q_m$ which is independent of $i$,  with
\[
        \| \gamma_{n(i)}(t) - \gamma_{n(i)}(q_m)  \| < \epsilon. 
\]
So we have, for any $t$, 
   \begin{align*}
    & \| \gamma_{n(i)}(t) - \gamma_{n(j)}(t) \|    \\
   &  \leq \| \gamma_{n(i)}(t) - \gamma_{n(i)}(q_m) \| + \| \gamma_{n(i)}(q_m) - \gamma_{n(j)}(q_m) \| +
            \|  \gamma_{n(j)}(q_m) - \gamma_{n(j)}(t)  \|   \\
      & \leq \| \gamma_{n(i)}(q_m) - \gamma_{n(j)}(q_{m}) \| + 2\epsilon. 
   \end{align*}         
This implies that for each $t \in I$,  $\gamma_{n(i)}(t)$ is a Cauchy sequence in $\| \cdot \|$,  and hence $\gamma_{n(i)}$ has a pointwise limit $\gamma : I \rightarrow \overline{C^{\infty}(E)}$, where $\overline{C^{\infty}(E)}$ is the completion with respect to $\| \cdot \|$.    

Since $\gamma_n$ are equicontinous in $\| \cdot \|$, for any $\epsilon > 0$ there is $\delta > 0$ such that for $t, t' \in I$ with $|t - t'| < \delta$ we have $\| \gamma_{n}(t) - \gamma_n(t') \| < \epsilon$.  Taking the limit, we have $\| \gamma(t) - \gamma(t') \| \leq \epsilon$.   We can choose finitely many rational numbers  $q_{1}, \dots, q_{N}$  in $I$ such that for all $t \in I$ there is $q_{l}$ with $l \in \{ 1, \dots, N \}$ such that $| t - q_{l} | < \delta$.     If $i_0$ is large enough, for $i > i_0$ we have $\| \gamma_{n(i)}(q_m) - \gamma(q_{m}) \| < \epsilon$ for all $m \in \{ 1, \dots, N\}$. Therefore for $i > i_0$, 
  \begin{align*}
     & \| \gamma_{n(i)}(t) - \gamma(t)  \|   \\
     & \leq \|  \gamma_{n(i)}(t) - \gamma_{n(i)}(q_l)  \| + \|  \gamma_{n(i)}(q_l) - \gamma (q_l)   \|
       + \| \gamma (q_l) - \gamma(t) \|    \\
      & < 3 \epsilon. 
  \end{align*}
Hence $\gamma_{n(i)}$ converses uniformly to  $\gamma$ in $\| \cdot \|$.

We first show that the limit $\gamma$ defined above in fact lies in $L^2_{k-\frac{1}{2}}$.  Indeed, for any fixed $t_\infty$ and any sequence $t_i\to t_\infty$ in $I$, we have that $\gamma_{n(i)}(t_i)$ converges, in $(k-\frac{1}{2})$-norm, after extracting a subsequence, to some $\delta$.  However, as above, $\gamma_{n(i)}(t_i)$ also converges in $\|\cdot \|$-norm to $\gamma(t_\infty)$.  Recall that $L^2_{k-\frac{1}{2}}$ is a subspace of $\overline{C^\infty(E)}$, so $\delta\in \overline{C^\infty(E)}$, and we have:
\begin{align*}
 \| \gamma(t_{\infty})-\delta \| &\leq \|\gamma(t_\infty)-\gamma_{n(i)}(t_i)\|+ \| \gamma_{n(i)}(t_i)-\delta\| \\
 &\leq \|\gamma(t_\infty)-\gamma_{n(i)}(t_i)\|+ \| \gamma_{n(i)}(t_i) - \delta\|_{k-\frac{1}{2}}.
\end{align*}
It follows that $\delta=\gamma(t_\infty)$.  This establishes that $\gamma$ is defined as a function $I\to L^2_{k-\frac{1}{2}}$, but not that it is continuous, nor that the $\{\gamma_{n(i)}\}$ converges pointwise in $(k-\frac{1}{2})$-norm.    Note that,  since $\| \gamma_{n}(t) \|_{k}  \leq C$ for a positive constant $C$ independent of $n, t$ by assumption,  we have $\| \gamma(t) \|_{k-\frac{1}{2}} \leq C$ for all $t \in I$.

Assume that $\gamma_{n(i)}$ does not converge uniformly in $L^2_{k-1}$.  
Then after passing to a subsequence, there is $\epsilon_0 > 0$ such that for any $i$ we have $t_i \in I$ with
\[
     \| \gamma_{n(i)}(t_i) - \gamma(t_{i}) \|_{k-1} \geq \epsilon_0. 
\]
After passing to a subsequence, $t_i$ converges to some $t_{\infty} \in I$. Then $\gamma_{n(i)}(t_i)$ converges to $\gamma(t_{\infty})$ in $\| \cdot \|$.  Since $\gamma_{n(i)}(t_i)$ are uniformly bounded in $L^2_k$, by the Rellich lemma, after passing to a subsequence  $\gamma_{n(i)}(t_i)$ converges to some $\delta $ in $L^2_{k-1}$; by the argument to show that $\gamma(t_\infty)\in L^2_{k-\frac{1}{2}}$ above, we see that $\delta=\gamma(t_\infty)$. Similarly, since $\| \gamma(t_i) \|_{k-\frac{1}{2}} \leq C$ for all $i$,  after passing to a subsequence, $\gamma(t_{i})$ converges to some $\delta'$ in $L^2_{k-1}$.  Since $\gamma(t_{i})\to \gamma(t_\infty)$ in $\overline{C^\infty(E)}$, the previous argument gives that $\delta'=\gamma(t_\infty)$.

Therefore, after passing to a subsequence:
\[
       \| \gamma_{n(i)}(t_{i}) - \gamma(t_{i}) \|_{k-1} \rightarrow 0
\]
as $i \rightarrow \infty$.  This is a contradiction. Thus $\gamma_{n(i)}$ converges to $\gamma$ in $L^2_{k-1}$ uniformly.    Since the convergence is uniform in $L^2_{k-1}$, $\gamma$ is continuous in $L^2_{k-1}$. 
\end{proof}

%
%


\section{Proof of Theorem \ref{thm isolating nbd}} \label{subsec:proof-of-isolation}
Take  half integers $k_+, k_-$ with $k_+, k_- > 5$ and with $|k_+ - k_- | \leq \frac{1}{2}$.  We put $\ell = \min \{ k_+, k_- \}$ and
\[
      A_n := 
      (B_{k_+}(F_n^+; R) \times_B B_{k_-}(F_n^-; R))  \times_{B} (B_{k_+}(W^+_n; R) \times_{B} B_{k_-}(W^-_n; R)). 
\]
We want to prove that $A_n$ are isolating neighborhoods for $\varphi_{n,k_+,k_-}=\varphi_n$ for $n$ large. If this is not true,  after passing to a subsequence, 
\[
        \operatorname{inv} A_n  \cap \partial A_n \not = \emptyset 
\]
for all $n$. 
Then we can take
\[
    y_{n, 0} = (\phi_{n,0}, \omega_{n,0}) \in  \operatorname{inv} A_n \cap \partial A_n. 
 \]
After passing to a subsequence, we may suppose that one of the following cases holds for all $n$: 

\begin{enumerate}[(i)]
  \item
   $\phi_{n, 0}^+ \in  S_{k_+}(F_n^+; R)$   
   \item
   $\phi_{n,0}^{-} \in S_{k_-}(F_n^-; R)$, 

   \item
   $\omega_{n,0}^{+}  \in  S_{k_+}(W^+_n; R)$, 
   
   \item
   $\omega_{n,0}^{-} \in S_{k_-}(W^-_n; R)$. 
\end{enumerate}

Let $\gamma_n = (\phi_n, \omega_n) : \mathbb{R} \rightarrow F_n \oplus W_n$  be the solution to (\ref{eq for gamma}) with $\gamma_n(0) = y_{n, 0}$: 
\begin{equation}  \label{eq d/dt gamma_n}
\begin{aligned}
    &  \Big(  \frac{d\phi_n}{dt} (t)  \Big)_{V}  =  -( \nabla_{X_H} \pi_{F_n}) \phi_n(t) - \pi_{F_n} (D \phi_n(t) + c_1(\gamma_n(t))),   \\
    & \Big(  \frac{d\phi_n}{dt}(t) \Big)_{H} = -X_{H}( \phi_n(t)),  \\
    & \frac{d\omega_n}{dt}(t) = -*d \omega_n(t) - \pi_{W_n} c_2(\gamma_n(t)). 
\end{aligned}
\end{equation}
We have
\begin{equation}   \label{eq phi omega norm}
     \|  \phi_n^+(t) \|_{k_+} \leq R,    \ \|  \phi_n^-(t) \|_{k_-} \leq R,  \    
       \| \omega_n^+(t) \|_{k_+} \leq R,   \ \| \omega_n^-(t) \|_{k_-} \leq R
\end{equation}
for all $t \in \mathbb{R}$.  By the Sobolev multiplication theorem, 
    \begin{align*}
      & \| c_1(\gamma_n(t)) \|_{\ell} \leq C \| \gamma_n(t) \|_{\ell}^2 \leq CR^2, \\
      & \| c_2(\gamma_n(t)) \|_{\ell} \leq C \| \gamma_n(t) \|_\ell^{2} \leq C R^2, \\
      & \| X_H(\phi(t)) \|_{\ell} \leq C \| \gamma_n(t) \|_\ell^2 \leq C R^2. 
     \end{align*}

Let $\Delta \subset \mathcal{H}^1(Y)$ be a fundamental domain of the action of $H^1(Y;\mathbb{Z})$ on $\mathcal{H}^1(Y)$, which  is a  bounded set. 
By the path lifting property of the covering space $\mathcal{H}^{1}(Y) \times L^2_{k_+, k_-}(\mathbb{S}) \rightarrow \mathcal{E}_{k_+, k_-}$, we have a lift
\[
       \tilde{\gamma}_n = (\tilde{\phi}_n, \omega_n) : \mathbb{R} \rightarrow \mathcal{H}^1(Y) \times  L^2_{k_+, k_-}(\mathbb{S}) \times L^2_{k_+, k_-}(\im d^*)
\]
of $\gamma_n$ with
\begin{equation}   \label{eq p gamma}
           p_{\mathcal{H}} ( \tilde{\gamma}_n(0)) \in \Delta. 
\end{equation}
By (\ref{eq d/dt gamma_n}), we have
\begin{equation}   \label{eq d phi dt H}
          \Bigg\lVert \Big( \frac{d\phi_n}{dt}(t)   \Big)_{H}   \Bigg\rVert \leq C R^2. 
\end{equation}
Fix $T > 0$.   It follows from (\ref{eq p gamma}) and  (\ref{eq d phi dt H}) that we can take a compact set $K_T$ of $\mathcal{H}^1(Y)$ such that  for any $n$ and $t \in [-T, T]$ we have
\[
         p_{\mathcal{H}} (\tilde{\gamma}_n(t)) \in K_T.
\]

Note that    $\frac{d\tilde{\phi}_n}{dt}$ is uniformly bounded on $[-T, T]$ in $\| \cdot \|_{K_T, \ell  - 5, w}$ by (\ref{eq d/dt gamma_n}),  Proposition \ref{prop pi P_n wighted} and Lemma \ref{lem norm}, which implies that $\tilde{\phi}_n$ are equicontinous in $L^2_{K_T, \ell - 5, w}$ on $[-T, T]$.    Also $\omega_n$ are equicontinuous in $L^2_{\ell -1}$.  
By Proposition \ref{prop Ascoli}, after passing to a subsequence, $\tilde{\gamma}_n|_{[-T, T]}$ converges to a map
\[
              \tilde{\gamma}^{(T)} = (\tilde{\phi}^{(T)}, \omega^{(T)}) : [-T, T] \rightarrow  \mathcal{H}^1(Y) \times  L^2_{\ell-1}(\mathbb{S}) \times L^2_{\ell - 1}(  \im d^*)
\]
uniformly in $L^2_{\ell - 1}$.  By the diagonal argument, we can show that  there is a continuous map
\[
      \tilde{\gamma} = (\tilde{\phi}, \omega): 
      \mathbb{R} \rightarrow   \mathcal{H}^1(Y) \times  L^2_{\ell-1}(\mathbb{S}) \times L^2_{\ell - 1}(\im d^*)
\]
such that, after passing to a subsequence,  $\tilde{\gamma}_n$ converges to $\tilde{\gamma}$ uniformly in  $L^2_{\ell - 1}$ on each compact set in $\mathbb{R}$.


\begin{lem}\label{lem:convergence-to-solution}
The limit $\tilde{\gamma}$ is a solution to the Seiberg-Witten equations over $Y \times \mathbb{R}$. 
\end{lem}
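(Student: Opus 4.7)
The strategy is to pass to the limit $n\to\infty$ in the approximate equations (\ref{eq d/dt gamma_n}) and identify each term with its counterpart in the Seiberg-Witten equations (\ref{SW eq}), lifted to the universal cover as in (\ref{eq:seiberg-witten}). First observe that by the uniform bounds (\ref{eq phi omega norm}) and the choice $R_{k_+,k_-}<R<R'/10$ we have $\|\gamma_n(t)\|_{k_+,k_-}<R'$ for all $n,t$, so the cutoff $\chi$ equals $1$ along every trajectory and may be omitted. On the cover the horizontal/vertical splitting of (\ref{eq TE}) becomes the trivial product splitting, so the limit equations to verify reduce to the usual Seiberg-Witten equations (\ref{eq:seiberg-witten}) for $(\tilde\phi,a,\omega)$.

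For the nonlinear terms, since $\ell>5$ and $\dim Y=3$, $L^2_{\ell-1}$ is a Banach algebra, and the quadratic maps $c_1,c_2,X_H$ are continuous on $L^2_{\ell-1}$-bounded sets with target $L^2_{\ell-2}$. Uniform convergence $\tilde\gamma_n\to\tilde\gamma$ in $L^2_{\ell-1}$ on compact time intervals then yields uniform convergence of $c_1(\gamma_n),c_2(\gamma_n),X_H(\phi_n)$ in $L^2_{\ell-2}$. Because $F_n$ and $W_n$ are spanned by eigenvectors of $D'$ and $*d$ respectively, with spectral windows $(\lambda_{n,-},\mu_{n,+}]$ exhausting $\mathbb{R}$, the projections $\pi_{F_n}$ and $\pi_{W_n}$ converge strongly to the identity on any sequence uniformly bounded in $L^2_{k_+,k_-}$, modulo a controlled loss of derivatives. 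Combined with (\ref{eq phi omega norm}), this gives
\[
\pi_{F_n}\bigl(D\phi_n+c_1(\gamma_n)\bigr)\to D\tilde\phi+c_1(\tilde\gamma),\qquad \pi_{W_n}c_2(\gamma_n)\to c_2(\tilde\gamma)
\]
uniformly on compact time intervals in, say, $L^2_{\ell-3}$.

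The delicate term is the gradient-of-projection contribution $(\nabla_{X_H(\phi_n)}\pi_{F_n})\phi_n$, which is precisely the reason for Proposition \ref{prop pi P_n wighted}. Since $P_n$ and $Q_n$ are $D'$-spectral projectors, they commute (also with respect to the $L^2_{k_+,k_-}$-inner product, which is diagonal in the $D'$-eigenbasis), so $\pi_{F_n}=\pi_{P_n}\pi_{Q_n}$ and
\[
\nabla_v\pi_{F_n}=(\nabla_v\pi_{P_n})\pi_{Q_n}+\pi_{P_n}(\nabla_v\pi_{Q_n}).
\]
Applying Proposition \ref{prop pi P_n wighted} to both $P_n$ and $Q_n$, together with the bounds $\|X_H(\phi_n(t))\|\le CR^2$ and $\|\phi_n(t)\|_{k_+,k_-}\le R$, we conclude $\|(\nabla_{X_H(\phi_n)}\pi_{F_n})\phi_n\|_{\ell-5,w}\to 0$ uniformly in $t\in[-T,T]$. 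By Lemma \ref{lem norm} this suffices to control the $L^2_{K_T,\ell-5,w}$-norm, so the term vanishes in the distributional limit.

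To conclude, I would integrate the approximate equations $\tilde\gamma_n(t)=\tilde\gamma_n(0)+\int_0^t(\mathrm{RHS})\,ds$, pair against smooth compactly supported test sections in time and space, and pass to the limit using the three convergences above together with dominated convergence. This identifies $\tilde\gamma$ as a distributional solution of (\ref{eq:seiberg-witten}) on $Y\times\mathbb{R}$; standard elliptic bootstrapping then upgrades it to a classical (indeed smooth) solution. The principal obstacle is exactly the control of the gradient-of-projection term, which is precisely where the weighted spaces of Section \ref{subsec:weighted-spaces} and the carefully chosen spectral sections $P_n,Q_n$ from Theorem \ref{thm spectral section mu mu+delta} enter the proof; everything else is a routine variant of the compactness argument in \cite{Manolescu-b1=0}.
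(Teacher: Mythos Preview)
Your approach is essentially that of the paper: integrate the approximate equations, kill the gradient-of-projection term via Proposition~\ref{prop pi P_n wighted} in the weighted norm $L^2_{K_T,\ell-5,w}$, pass to the limit, and invoke elliptic regularity. One correction: $F_n$ is \emph{not} spanned by eigenvectors of $D'$ (spectral sections are not spectral projections), so your justification for $\pi_{F_n}D\phi_n\to D\tilde\phi$ and for the commutation $\pi_{F_n}=\pi_{P_n}\pi_{Q_n}$ is inaccurate as stated; the paper handles the Dirac term via the commutator bound $\|[\pi_{F_n},D]\|_{L^2_l\to L^2_{l-\epsilon}}\to 0$ of Proposition~\ref{prop [D, pi]}, and the commutation holds because the nontrivial parts of $P_n$ and $Q_n$ live in disjoint $D'$-spectral windows (which are themselves $L^2_{k_+,k_-}$-orthogonal), not because $P_n,Q_n$ are eigenspaces.
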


\begin{proof}    
Fix $T>0$. For $t \in [-T, T]$,  we have 
\begin{equation}  \label{eq phi(t) - phi(0)}
   \begin{aligned}
      & \tilde{\phi}_n(t) - \tilde{\phi}_n(0)  \\
      &= \int_{0}^{t}  \frac{d \tilde{\phi}_n}{ds}(s) ds  \\
      &= - \int_{0}^{t}   (\nabla_{X_H} \pi_{\tilde{F}_n}) \tilde{\phi}_n(s) + 
         \pi_{\tilde{F}_n} (D \tilde{\phi}_n(t) + c_1(\tilde{\gamma}_n(t))) + X_{H}(\phi_n(s))  ds.
    \end{aligned}
\end{equation}
We have that $p_{\mathcal{H}}(\tilde{\gamma}_n(t)) \in K_T$ for any $n$ and $t \in [-T, T]$. 
 Note that we have no estimate on $(\nabla_{X_H} \pi_{F_n}) \tilde{\phi}_n$ in \emph{any} $ L^2_{j}$-norm and that we just have control on it in  the auxiliary space $L^2_{K_T, \ell-5, w}$.
 By Proposition \ref{prop pi P_n wighted} and Lemma \ref{lem norm}, 
\[
          (\nabla_{X_H}\pi_{\tilde{F}_n}) \tilde{\phi}_n(s) \rightarrow 0 
\]
uniformly in $L^2_{K_T, \ell - 5, w}$ as $n \rightarrow \infty$.   Recall that $\tilde{\phi}_n$, $\omega_n$ converge in $L^2_{\ell-1}$ uniformly on $[-T, T]$.   
It follows from  Proposition \ref{prop [D, pi]} and the inequality 
   \begin{align*}
      \| \pi_{F_n} D \tilde{\phi}_n - D \tilde{\phi} \|_{\ell - 2}
      & = \| \pi_{F_n} D \tilde{\phi}_n - D \tilde{\phi}_n + D \tilde{\phi}_n - D\tilde{\phi} \|_{\ell -2 }   \\
      & \leq \| [\pi_{F_n}, D] \tilde{\phi}_n   \|_{\ell-2} + \| D \tilde{\phi}_n - D \tilde{\phi} \|_{\ell-2}  
   \end{align*}
that $\pi_{F_n}D\tilde{\phi}_n$ converges to $D\tilde{\phi}$ uniformly in $L^2_{\ell-2}$ on $[-T, T]$. 

Taking the limit with $n \rightarrow \infty$ in (\ref{eq phi(t) - phi(0)}), we obtain
\[
    \tilde{\phi}(t) - \tilde{\phi}(0) 
    = -\int_{0}^{t}  (D \tilde{\gamma}(t) + c_1(\tilde{\gamma}(t)))  + X_{H}(\tilde{\phi}(s)) ds. 
\]
Hence, by the fundamental theorem of calculus, 
\[
        \frac{d \tilde{\phi}}{dt}(t)  =   - (D \tilde{\phi}(t) + c_1(\tilde{\gamma}(t)))  - X_{H}(\tilde{\phi}(t)).
\]
 A priori, the left hand side $\frac{d \tilde{\phi}}{dt}(t)$  only lives in the auxiliary space $L^2_{K_T, \ell -5 , w}$. However,  since $L^2_{\ell-2}$ is a subspace of $L^2_{K_T, \ell-2, w}$ and the right hand side is in $L^2_{\ell-2}$,   $\frac{d \tilde{\phi}}{dt}(t)$ is in $L^2_{\ell-2}$ and both sides are equal to each other as elements of $L^2_{\ell-2}$.   

Similarly, we can show that
\[
     \frac{d\omega}{dt}(t) = -*d\omega(t) - c_2(\tilde{\gamma}(t)). 
\]
Therefore $\tilde{\gamma}$ is a solution to the Seiberg-Witten equations (\ref{SW eq}) and the ordinary theory of elliptic regularity shows that $\tilde{\gamma}$ is in $C^{\infty}$ as a section on any compact set in $Y \times (-T, T)$.  

\end{proof}

Composing $\tilde{\gamma} : \mathbb{R} \rightarrow \mathcal{H}^1(Y) \times L^2_{\ell - 1}(\mathbb{S}) \times L^2_{\ell -1}(\im d^*)$ with the projection
\[
         \mathcal{H}^1(Y) \times L^2_{\ell -1}(\mathbb{S}) \times L^2_{\ell - 1}( \im d^*)
         \rightarrow
         \mathcal{E}_{\ell -1} \oplus  \mathcal{W}_{\ell-1}, 
\]
we get a Seiberg-Witten trajectory 
\[
       \gamma : \mathbb{R} \rightarrow \mathcal{E}_{\ell -1} \oplus \mathcal{W}_{\ell-1}.
\]
Since  $\| \gamma(t) \|_{\ell - 1} \leq R$ for all $t \in \mathbb{R}$,   $\gamma$ has finite energy.   By Proposition \ref{prop compactness},
\begin{equation}   \label{eq gamma t norm}
       \| \gamma(t) \|_{k_+, k_-} \leq R_{k_+, k_-}.
\end{equation}
for all $t \in \mathbb{R}$.

Assume that the case (i) holds for all $n$. We have
\[
       \| \phi_n^+(0) \|_{k_+} = R. 
\]

\begin{lem} \label{lem gamma_n L^2_{k+1/2}}
There is a constant $C > 0$ such that for all $n$, 
\[
   \| \phi_{n}^+(0) \|_{k_+ +\frac{1}{2}} < C.
\]
\end{lem}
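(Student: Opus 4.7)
I will derive an ODE for $\tilde\phi_n^+$ on the universal cover and apply parabolic smoothing on $F_n^+$, where $D'$ is strictly positive. Working on the cover $\mathcal{H}^1(Y) \times L^2_{k_+,k_-}(\mathbb{S})$, I use three facts: $F_n$ is a direct sum of $D'$-eigenspaces, so $\pi_{F_n^+}$ commutes with $D'$ and $\pi_{F_n}D = D' - \pi_{F_n}\mathbb{A}$ on $F_n$-valued inputs; by Lemma \ref{lem nabla pi  phi psi}, $(\nabla_{X_H}\pi_{F_n})\tilde\phi_n \in F_n^{\perp_{k_+,k_-}}$ and is therefore annihilated by $\pi_{F_n^+}$; and the region $\chi \equiv 1$ contains $A_n$. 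Differentiating $\tilde\phi_n^+ = \pi_{F_n^+}\tilde\phi_n$ in $t$ and substituting (\ref{eq for gamma}) yields
\[
\partial_t \tilde\phi_n^+ + D'\tilde\phi_n^+ = N_n, \qquad N_n := -(\nabla_{X_H}\pi_{F_n^+})\tilde\phi_n + \pi_{F_n^+}\mathbb{A}\tilde\phi_n - \pi_{F_n^+} c_1(\tilde\gamma_n).
\]

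Next, I analyze the Duhamel representation $\tilde\phi_n^+(0) = e^{-D'}\tilde\phi_n^+(-1) + \int_{-1}^0 e^{sD'}N_n(s)\,ds$. The homogeneous term is bounded in $L^2_{k_++1/2}$ uniformly in $n$ since $\sup_{\eta > 0} e^{-2\eta}(1+\eta^{2k_++1})/(1+\eta^{2k_+}) < \infty$, giving $\|e^{-D'}|_{F_n^+}\|_{L^2_{k_+} \to L^2_{k_++1/2}} \leq C$. For the inhomogeneous term, I expand $N_n$ in the $D'$-eigenbasis of $F_n^+$ with positive eigenvalues $\eta_j$ and apply Cauchy--Schwarz mode by mode, $|\int_{-1}^0 e^{\eta_j s}N_{n,j}(s)\,ds|^2 \leq (2\eta_j)^{-1}\int_{-1}^0|N_{n,j}|^2\,ds$, so that
\[
\Bigl\|\int_{-1}^0 e^{sD'}N_n(s)\,ds\Bigr\|_{k_++1/2}^2 \leq C\int_{-1}^0 \|N_n(s)\|_{k_+}^2\,ds.
\]
This closes the argument when $\|N_n\|_{k_+}$ is uniformly bounded. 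The $\mathbb{A}$-contribution lies in every $L^2_m$ by smoothing; $(\nabla_{X_H}\pi_{F_n^+})\tilde\phi_n$ lies in $L^2_{\ell+1}$ by a variant of Proposition \ref{prop perturbation P} adapted to $F_n^+$ (which is squeezed between spectral sections $P_n, Q_n$); and the ``good'' summands $\rho(\omega^+)\phi^+$ and $\xi(\phi)\phi^+$ of $c_1(\tilde\gamma_n)$ lie in $L^2_{k_+}$ by Sobolev multiplication, using that $\xi(\phi) \in L^2_{\ell+1}$ from the defining equation $d\xi = i\pi_{\im d}q(\phi)$.

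The main obstacle is the endpoint case $k_+ = k_- + \tfrac{1}{2}$, in which the remaining mixed-signed quadratic terms in $c_1$ (those containing at least one $\phi^-$ or $\omega^-$ factor) lie only in $L^2_\ell = L^2_{k_+ - 1/2}$, and the direct Cauchy--Schwarz bound above yields a borderline $|s|^{-1}$-divergent Duhamel integral. I resolve this by pairing the ODE with $|D'|^{2\ell}\tilde\phi_n^+$ to obtain the standard parabolic energy identity $\|\tilde\phi_n^+(0)\|_\ell^2 + 2\int_{-1}^0 \|\tilde\phi_n^+\|_{\ell+1/2}^2\,ds \leq \|\tilde\phi_n^+(-1)\|_\ell^2 + 2\int_{-1}^0\langle N_n, |D'|^{2\ell}\tilde\phi_n^+\rangle_0\,ds$, which combined with $\|N_n\|_{\ell-1/2} \leq C$ and Young's inequality gives $\int_{-1}^0 \|\tilde\phi_n^+\|_{\ell+1/2}^2\,ds \leq C(R)$ uniformly in $n$. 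This produces an intermediate time $t_0 \in [-1/2,0]$ with $\|\tilde\phi_n^+(t_0)\|_{\ell+1/2}$ pointwise controlled, after which iterating the Duhamel bound on $[t_0,0]$ with the refined splitting of $c_1$ into its good and bad parts absorbs the marginal singularity and completes the uniform bound $\|\tilde\phi_n^+(0)\|_{k_++1/2} \leq C$.
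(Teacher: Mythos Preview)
Your Duhamel/parabolic approach is genuinely different from the paper's, but it rests on a false premise: $F_n = P_n \cap Q_n$ is \emph{not} a direct sum of $D'$-eigenspaces. The spectral sections $P_n, Q_n$ of Theorem~\ref{thm spectral section mu mu+delta} are only squeezed between eigenspace spans (e.g.\ $\mathcal{E}_0(D)^{\mu_{n,-}}_{-\infty} \subset P_n \subset \mathcal{E}_0(D)^{\mu_{n,+}}_{-\infty}$); in the interpolation region $(\mu_{n,-},\mu_{n,+}]$ they are genuinely non-spectral, precisely because exact spectral projections need not form bundles over $B$. Consequently $\pi_{F_n^+}$ does \emph{not} commute with $D'$, and your ODE $\partial_t\tilde\phi_n^+ + D'\tilde\phi_n^+ = N_n$ is missing a commutator term $\pi^+[D',\pi_{F_n}]\tilde\phi_n$. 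This term happens to be uniformly bounded in every $L^2_l$ by Proposition~\ref{prop [D, pi]}, so the gap is fillable, but the ``$D'$-eigenbasis of $F_n^+$'' you invoke does not exist and the semigroup expansion must live in $\mathcal{E}^+$, not $F_n^+$. A second concern is the term $(\nabla_{X_H}\pi_{F_n^+})\tilde\phi_n$ in your $N_n$: Proposition~\ref{prop perturbation P} bounds it in $L^2_{\ell+1}$ only with an $n$-dependent constant $C_n$ (cf.\ (\ref{eq nabla pi C_n})), which is exactly why the paper introduces the weighted norms of Section~\ref{subsec:weighted-spaces}; you would need a separate argument for a uniform $L^2_{k_+}$ bound.

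The paper's argument is shorter and sidesteps both issues. It uses the hypothesis $\phi_n^+(0) \in S_{k_+}(F_n^+;R)$ to get $\frac{d}{dt}\big|_{t=0}\|\phi_n^+(t)\|_{k_+}^2 = 0$ and expands this single derivative. The dominant contribution is $-\langle \pi_{F_n}D'\phi_n, \phi_n^+\rangle_{k_+} = -\|\phi_n^+(0)\|_{k_++1/2}^2$; the dangerous term $\langle(\nabla_{X_H}\pi_{F_n})\phi_n,\phi_n^+\rangle_{k_+,k_-}$ vanishes \emph{exactly} by Lemma~\ref{lem nabla pi  phi psi} (avoiding any $C_n$), and the nonlinear term is handled by the asymmetric pairing $|\langle c_1,\phi_n^+\rangle_{k_+}| \le \|c_1\|_{k_+-1/2}\|\phi_n^+\|_{k_++1/2} \le CR^2\|\phi_n^+\|_{k_++1/2}$, which needs only $c_1 \in L^2_\ell$ and so avoids your endpoint bootstrap entirely. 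One obtains $0 \le -\|\phi_n^+(0)\|_{k_++1/2}^2 + CR^2\|\phi_n^+(0)\|_{k_++1/2} + CR^2$, and the bound follows.
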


\begin{proof}
Note that
\[
       \frac{d}{dt} \bigg|_{t=0} \| \phi_n^+(t) \|^2_{k_+} = 0. 
\]
Let us consider the case when $k_+ \in \frac{1}{2}\mathbb{Z} \smallsetminus \mathbb{Z}$. 

Let $\pi^+$ be the $L^2_{k_+, k_-}$-projection onto $\mathcal{E}_{k_+, k_-}^+$. (That is, $\pi^+ = 1 - \pi_{P_0}$.)  Then we have
  \begin{align*}
 &  \frac{1}{2}   \frac{d}{dt}  \bigg|_{t=0} \| \phi_n^+ (t) \|_{k_+}^2  \\
 &= \frac{1}{2}  \frac{d}{dt} \bigg|_{t = 0} 
  \langle  |D'|^{k_+ + \frac{1}{2}} \pi^+  \phi_n(t),  |D'|^{k_+ - \frac{1}{2}}  \pi^+ \phi_n  (t)  \rangle_{0}  \\
& = 
\langle  (\nabla_{X_H} |D'|^{k_+ + \frac{1}{2}}) \phi_n^+(0),   |D'|^{k_+ - \frac{1}{2}} \phi_n^+(0) \rangle_0  \\
& \quad +   \langle   |D'|^{k_+ + \frac{1}{2}} \phi_n^+(0),  (\nabla_{X_H} |D'|^{k_+ - \frac{1}{2}}) \phi_n^+(0) \rangle_0     \\
 & \quad +  \operatorname{Re}  \langle  (\nabla_{X_H} \pi^+) \phi_n(0),   \phi_{n}^+(0)  \rangle_{k_+} + 
     \operatorname{Re} \Big\langle    \frac{d \phi_n}{dt}(0),   \phi_n^+(0)   \Big\rangle_{k_+}.  
 \end{align*}
Note that $k_+ + \frac{1}{2}$ and $k_+ - \frac{1}{2}$ are integers. 
By Lemma \ref{lem nabla |D'|^k}, 
  \begin{align*}
       \Big|   \langle (\nabla_{X_H} |D'|^{k_+ + \frac{1}{2}}) \phi_n^+(0),  |D'|^{k_+ -\frac{1}{2}} \phi_n^+(0) \rangle_{0} \Big|   
        &\leq C    \| \phi_n^+(0) \|_{k_+ - \frac{1}{2}}^2  \leq C R^2,  \\
      \Big| 
     \langle   |D'|^{k_+ + \frac{1}{2}} \phi_n^+(0),  (\nabla_{X_H} |D'|^{k_+ - \frac{1}{2}}) \phi_n^+(0) \rangle_0
        \Big|
      &  \leq C  \| \phi_{n}^+(0) \|_{k_+ + \frac{1}{2}} \| \phi_n^+(0) \|_{k_+ - \frac{1}{2}}  \\
 &       \leq C R \| \phi_n^+(0) \|_{k_+ +\frac{1}{2}}. 
   \end{align*}
By Proposition  \ref{prop perturbation P},    
  \begin{align*}
    &  \big|    \langle (\nabla_{X_H} \pi^+ ) \phi_n(0), \phi_{n}^+(0) \rangle_{k_+}       \big|   \\
    &  \leq  \|   (\nabla_{X_H} \pi^+ ) \phi_n(0)   \|_{k_+ } \|   \phi_{n}^+(0)  \|_{k_+}  \\
    & \leq C \| \phi_n(0) \|_{k_+ - 1}     \|   \phi_{n}^+(0)  \|_{k_+}  \\
    & \leq C \| \phi_n(0) \|_{\ell}   \  \| \phi_n^+(0) \|_{k_+} \\ 
     &  \leq CR^2. 
   \end{align*}
We have
  \begin{align*}
       & \Big\langle \frac{d\phi_n}{dt}(0), \phi_n^+(0) \Big\rangle_{k_+}   \\
        & = 
        -  \langle (\nabla_{X_H } \pi_{F_n}) \phi_n(0) + \pi_{F_n} ( D' \phi_n (0) - \mathbb{A} \phi_n(0) + c_1(\gamma_n(0))), \phi_n^+(0) \rangle_{k_+}.
\end{align*}
By Lemma \ref{lem nabla pi  phi psi},
\[
     \langle  \left( \nabla_{X_H} \pi_{F_n} \right) \phi_n(0), \phi_n^+(0) \rangle_{k_+} 
    = \langle   \left( \nabla_{X_H} \pi_{F_n} \right) \phi_n(0), \phi_n^+(0)    \rangle_{k_+, k_-} 
     = 0. 
\]
We  have
\begin{align*}
     \langle   \pi_{F_n} D'  \phi_n(0),  \phi_n^{+}(0) \rangle_{k_+}    
        & = \langle   D' \phi_n(0),  \pi_{F_n} \phi_n^+(0) \rangle_{k_+}   \\
        & = \langle D' \phi_n(0), \phi_n^+(0) \rangle_{k_+}  \\
        &=  \| \phi_n^+(0) \|_{k_+ +\frac{1}{2}}^2. 
\end{align*}
Since $\mathbb{A}$ is a smoothing operator, 
\[
           \big|  \langle \pi_{F_n}  \mathbb{A}  \phi_n(0), \phi_n^+(0) \rangle_{k_+} \big|   
           \leq C  \| \phi_n(0) \|_{0} \| \phi_n(0) \|_{k_+} \leq CR^2. 
\]
Since $D'$ is self-adjoint,
  \begin{align*}
    & \big| \langle  \pi_{F_n} c_1(\gamma_n(0)),  \phi_n^+(0) \rangle_{k_+}  \big|      \\
    & = \big| \langle   c_1(\gamma_n(0)),  \phi_n^+(0) \rangle_{k_+}  \big|  \\
    & =  \big|  \langle |D'|^{k_+} c_1(\gamma_n(0)),     |D'|^{k_+} \phi_n^+(0) \rangle_0 \big| \\
    & = \big|  \langle  |D'|^{k_+ - \frac{1}{2}}  c_1(\gamma_n(0)),  |D'|^{k_+ + \frac{1}{2}} \phi_n^+(0) \rangle_0  \big|   \\
    & \leq   \|   c_1(\gamma_n(0))  \|_{k_+ - \frac{1}{2}}
                 \|    \phi_n^+(0)       \|_{k_+ + \frac{1}{2}}    \\
     & \leq C  \|   c_1(\gamma_n(0))   \|_{\ell}
                    \|    \phi_n^+(0)     \|_{k_+ + \frac{1}{2}}  \quad ( \ell = \min \{ k_+, k_- \} ) \\
    & \leq CR^2   \| \phi_n^+(0) \|_{k_+ + \frac{1}{2}}. 
   \end{align*}
Therefore
\[
     0 = \frac{1}{2}  \frac{d}{dt}  \bigg|_{t=0}  \| \phi_n^+  (t)   \|_{ k_+}^2
      \leq    -  \| \phi_n^+(0) \|_{k_++\frac{1}{2}}^2 + C R^2 \| \phi_n^+(0) \|_{k_+ + \frac{1}{2}} + CR^2.
\]
This inequality  implies that the sequence $\| \phi_n^+(0) \|_{k_+ + \frac{1}{2}}$ is bounded. 

The proof in the case $k_+ \in \mathbb{Z}$ is similar. 
\end{proof}

It follows from Lemma \ref{lem gamma_n L^2_{k+1/2}} and the Rellich lemma that after passing to a subsequence,  $\phi_n^+(0)$ converges to $\phi^+(0)$ in $L^2_{k_+}$ strongly.  By the assumption,  $\| \phi_{n}^+(0) \|_{k_+} = R$ for all $n$. Hence, 
\[
      \| \gamma(0) \|_{k_+, k_-} \geq \| \phi^+(0) \|_{k_+,k_-} = R. 
\]
This contradicts (\ref{eq gamma t norm}). 

\vspace{2mm}

Let us consider the case (ii). In this case,  we have
\[
    \| \phi_n^-(0) \|_{k_-} = R. 
\]

\begin{lem} \label{lem phi^-}
There is a constant $C > 0$ such that for all $n$, 
\[
    \| \phi_{n}^-(0) \|_{k_- + \frac{1}{2}} < C.
\]
\end{lem}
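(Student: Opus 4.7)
The proof will proceed by mimicking the proof of Lemma \ref{lem gamma_n L^2_{k+1/2}}, with the roles of $+$ and $-$ exchanged. The only substantive difference is a sign change in the key term, which, crucially, still produces a bound.

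Since $\gamma_n(t)\in A_n$ for all $t$, we have $\|\phi_n^-(t)\|_{k_-}\le R$ with equality at $t=0$, so $\frac{d}{dt}\bigr|_{t=0}\|\phi_n^-(t)\|_{k_-}^2=0$. Assume first that $k_-\in \frac{1}{2}\mathbb{Z}\setminus\mathbb{Z}$, so that $k_-\pm\tfrac12$ are integers. Writing $\pi^-=\pi_{P_0}$ for the $L^2$-projection onto the negative-eigenspace part and using $\|\phi_n^-\|_{k_-}^2=\langle |D'|^{k_-+\frac12}\pi^-\phi_n,|D'|^{k_-\frac12}\pi^-\phi_n\rangle_0$, the product rule gives
\[
\tfrac{1}{2}\tfrac{d}{dt}\bigr|_{t=0}\|\phi_n^-(t)\|_{k_-}^2 = I + II + III + IV,
\]
where $I,II$ are the terms containing $\nabla_{X_H}|D'|^{k_-\pm \frac12}$, $III=\operatorname{Re}\langle(\nabla_{X_H}\pi^-)\phi_n(0),\phi_n^-(0)\rangle_{k_-}$, and $IV=\operatorname{Re}\langle \frac{d\phi_n}{dt}(0),\phi_n^-(0)\rangle_{k_-}$.

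Exactly as in the proof of Lemma \ref{lem gamma_n L^2_{k+1/2}}, Lemma \ref{lem nabla |D'|^k} gives $|I|+|II|\le CR^2 + CR\|\phi_n^-(0)\|_{k_-+\frac12}$, and Proposition \ref{prop perturbation P} gives $|III|\le CR^2$. Substituting the flow equation \eqref{eq d/dt gamma_n} into $IV$, the contribution of $(\nabla_{X_H}\pi_{F_n})\phi_n(0)$ vanishes by Lemma \ref{lem nabla pi  phi psi} (since $\phi_n^-(0)\in F_n$), the $\mathbb{A}$-term is bounded by $CR^2$ because $\mathbb{A}$ is smoothing, and the $c_1(\gamma_n(0))$-term is bounded by $CR^2\|\phi_n^-(0)\|_{k_-+\frac12}$ by the same computation as in the previous lemma.

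The crucial term in $IV$ is
\[
-\langle \pi_{F_n}D'\phi_n(0),\phi_n^-(0)\rangle_{k_-} = -\langle D'\phi_n^-(0),\phi_n^-(0)\rangle_{k_-}.
\]
Here the sign flips compared with the previous lemma: since $\phi_n^-(0)$ is supported on negative eigenvalues of $D'$, a direct spectral computation gives $\langle D'\phi_n^-(0),\phi_n^-(0)\rangle_{k_-}=-\|\phi_n^-(0)\|_{k_-+\frac12}^2$, so the displayed quantity equals $+\|\phi_n^-(0)\|_{k_-+\frac12}^2$. Combining all the above with $0=I+II+III+IV$ yields
\[
0 \ge \|\phi_n^-(0)\|_{k_-+\frac12}^2 - CR^2\|\phi_n^-(0)\|_{k_-+\frac12} - CR^2,
\]
which immediately bounds $\|\phi_n^-(0)\|_{k_-+\frac12}$ uniformly in $n$. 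The case $k_-\in\mathbb{Z}$ is treated analogously, writing $\|\phi_n^-\|_{k_-}^2=\langle|D'|^{k_-}\pi^-\phi_n,|D'|^{k_-}\pi^-\phi_n\rangle_0$ and using that $k_+$ is then a half-integer so $|k_+-k_-|\le\tfrac12$ keeps the auxiliary bounds uniform. No step is an obstacle beyond bookkeeping: the entire difficulty is the sign check showing that the dominant term $+\|\phi_n^-(0)\|_{k_-+\frac12}^2$ has the correct sign to absorb the error terms.
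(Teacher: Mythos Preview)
Your proof is correct and follows essentially the same approach as the paper: the paper's own proof is a two-line sketch that notes $\langle D'\phi_n(0),\phi_n^-(0)\rangle_{k_-}=-\|\phi_n^-(0)\|_{k_-+\frac12}^2$ and then invokes the argument of Lemma~\ref{lem gamma_n L^2_{k+1/2}} to obtain the inequality $0\ge \|\phi_n^-(0)\|_{k_-+\frac12}^2 - CR^2\|\phi_n^-(0)\|_{k_-+\frac12}-CR^2$, exactly as you have written out in detail.
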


\begin{proof}
Note that 
\[
     \langle D' \phi_n(0), \phi_n^-(0) \rangle_{k_-} =  - \| \phi_n^-(0) \|_{k_- + \frac{1}{2}}^2. 
\]
As in the proof of Lemma \ref{lem gamma_n L^2_{k+1/2}}, we can show that
\[
   0 = \frac{d}{dt} \bigg|_{t=0}  \| \phi_{n}^-(t) \|_{k_-}^2  
      \geq \| \phi_{n}^-(0) \|_{k_- + \frac{1}{2}}^2  - CR^2 \| \phi_n^+(0) \|_{k_- + \frac{1}{2}} - CR^2. 
\]
This implies that the sequence $\| \phi_{n}^-(0) \|_{k_- + \frac{1}{2}}$ is bounded. 

\end{proof}

By the Rellich lemma, $\phi_n^-(0)$   converges to $\phi^-(0)$ in $L^2_{k_-}$ strongly.  Hence
\[
    \| \gamma(0) \|_{k_+, k_-} \geq \| \phi^-(0) \|_{k_-} = R.
\]
We get a contradiction. 

In the other cases (iii), (iv) where $y_{n,0}$ is in the other components of $\partial A_n$,   we have a contradiction similarly.

\begin{dfn}
	For this definition we refer to some notions from parameterized homotopy theory and parameterized Conley index theory; refer to Sections \ref{subsec:homotopy1} and \ref{subsec:conley}), respectively.  For notation as in Theorem \ref{thm isolating nbd}, let $\preSWF_{[n]}(Y,\mathfrak{s})$ be the parameterized Conley index of the flow $\varphi_{n,k_+,k_-}$ on the isolated invariant set $A_n$.   
	We call $\preSWF_{[n]}(Y,\mathfrak{s})$ the \emph{pre-Seiberg-Witten Floer invariant} of $(Y,\mathfrak{s})$ (for short, the pre-SWF invariant of $(Y,\mathfrak{s})$).  The object $\preSWF_{[n]}(Y,\mathfrak{s})$ is a(n) (equivariant) topological space, depending on a number of choices (which are not all reflected in its notation).  First, $\preSWF_{[n]}(Y,\mathfrak{s})$ depends on the choice of an index pair, but its (equivariant, parameterized) homotopy type is independent of the choice of index pair - we will abuse notation and also write $\preSWF_{[n]}(Y,\mathfrak{s})$ for its (equivariant, parameterized) homotopy type.  It also depends on a choice of metric on $Y$, as well as spectral sections $P_n,Q_n$ and subspaces $W_n^\pm$, as in the preliminaries to Theorem \ref{thm isolating nbd}.  
	
	The projection used in the parameterized Conley index is from the ex-space $B_{n,R}$ over $\mathrm{Pic}(Y)$, as explained in the discussion after Theorem \ref{thm isolating nbd}.   
	
	 We write $\unparamSWF_{[n]}(Y,\mathfrak{s})$ to refer to the Conley index with trivial parameterization.  By Lemma \ref{lem:conley-index-pushforward},  $\nu_{!}\preSWF_{[n]}(Y,\mathfrak{s})=\preSWF^u_{[n]}(Y,\mathfrak{s})$, where $\nu : B \to *$ is the map collapsing the Picard torus to a point, and $\nu_{!}$ is as defined in Section \ref{subsec:homotopy1} of the Appendix.
	\end{dfn}

If $\mathfrak{s}$ is a self-conjugate $\mathrm{spin}^c$-structure, the bundle $L^2_k(\mathbb{S})\times \mathcal{H}^1(Y)\times L^2_k(\im d^*)$ admits a $\mathrm{Pin}(2)$-action extending the $S^1$-action on spinors, by 
\[
j(\phi,v,\omega)=(j\phi,-v,-\omega).
\]
In the event that the spectral sections $P_n,Q_n$ are preserved by the $\mathrm{Pin}(2)$-action, then the approximate flow on $F_n\oplus W_n$ will be $\mathrm{Pin}(2)$-equivariant, and we define $\preSWF^{\mathrm{Pin}(2)}_{[n]}(Y,\mathfrak{s})$ to be the $\mathrm{Pin}(2)$-equivariant parameterized Conley index, so that its underlying $S^1$-space is $\preSWF_{[n]}(Y,\mathfrak{s})$.  We similarly define $\preSWF^{u,\mathrm{Pin}(2)}_{[n]}(Y,\mathfrak{s})$ (and we will occasionally write $\preSWF^{u,S^1}_{[n]}(Y,\mathfrak{s})$ to distinguish what equivariance is meant). See Theorem \ref{thm Pin(2) spectral sections} for the existence of $\rm{Pin}(2)$-equivariant spectral sections.



\chapter{Well-definedness}\label{sec:well-def}
Here we show how changing the choices in the construction above effect the resulting space output.

\section{Variation of Approximations}\label{subsec:variation-of-appx}
First, we consider the change due to passing between different approximations.  For this section, we fix a $3$-manifold with $\mathrm{spin}^c$-structure $(Y,\mathfrak{s})$.

As before, let $P_n,Q_n$ be spectral sections of $-D, D$ with
\[
\begin{split}
&   (\mathcal{E}_0(D))_{-\infty}^{\mu_{n,-}} \subset P_n \subset (\mathcal{E}_0(D))_{-\infty}^{\mu_{n,+}},    \\
&    (\mathcal{E}_0(D))_{\lambda_{n, +}}^{\infty} \subset Q_n \subset (\mathcal{E}_0(D))_{\lambda_{n, -}}^{\infty}. 
\end{split}
\]
We may assume that $|\mu_{n,+}-\mu_{n,-}|$ and $|\lambda_{n,+}-\lambda_{n,-}|$ are bounded. 
 We call any such sequence of spectral sections a \emph{good sequence of spectral sections}.

Fix half-integers $k_+, k_-  > 5$.  Put $\ell = \min \{ k_+, k_- \}$. 

Let $F_n=P_n\cap Q_n\subset (\mathcal{E}_0)^{\mu_{n,+}}_{\lambda_{n,-}}$, as before.  Fix $\mathbb{H}$ to be the quaternion representation of $\mathrm{Pin}(2)$, and let $B=\mathrm{Pic}(Y)$ denote the Picard torus of $Y$.  We write $I(\varphi,S)$ for the (parameterized) Conley index of a flow $\varphi$ and isolated invariant set $S$; we will usually suppress $S$ from the notation, and $I^u(\varphi,S)$ for the unparameterized version; see Section \ref{subsec:conley}.  Finally, a further bit of notation for the statement of the following theorem.  Let $\mathrm{Th}(E,Z)$, for a vector bundle $\pi: E\to Z$, denote the Thom construction of $\pi$.

\begin{thm}\label{thm change of approximation suspension}
	Let $\eta^P_n: P_{n+1}\to P_n \oplus \C^{k_{P,n}}$ and $\eta^Q_n : Q_{n+1}  \to Q_{n}\oplus \C^{k_{Q,n}} $ be vector-bundle isometries (with respect to the $k_\pm$-metric), where $\C^{k_{P,n}}$ and $\C^{k_{Q,n}}$ are the trivial bundles over $B$ of rank $k_{P,n}$ and $k_{Q,n}$.  Let $\eta^{W,+}_n: W_{n+1}^+\to W_n^+\oplus \R^{k_{W,+,n}}$ and $\eta^{W,-}_n: W_{n+1}^-\to W_{n}^-\oplus \R^{k_{W,-,n}}$ be another pair of isometries.  Then there is a $S^1$-equivariant parameterized homotopy equivalence of Conley indices
	\[
	\eta_*: I(\varphi_{n+1})\to \Sigma_B^{\C^{k_{Q,n}}\oplus \R^{k_{W,-,n}}}I(\varphi_n),
	\]  
which is well-defined up to homotopy for the induced map: 
	\[ 
\nu_{!}\eta_* :  I^u(\varphi_{n+1})\to \Sigma^{\C^{k_{Q,n}}\oplus \R^{k_{W,-,n}}}I^u(\varphi_n).
	\]
	Furthermore, if $\mathfrak{s}$ is a self-conjugate $\mathrm{spin}^c$ structure and instead $\eta^P_n: P_{n+1}\to P_n\oplus \H^{k_{\H,P,n}}$ and $\eta^Q_n: Q_{n+1}\to Q_n\oplus \H^{k_{\H,Q,n}}$, and the maps $\eta^{W,\pm}$ above are equivariant with respect to the $C_2$-action on $W_{n+1}, W_n$ and $\tilde{\R}^{k_{W,\pm,n}}$, then there is a well-defined, up to equivariant homotopy, $\mathrm{Pin}(2)$-equivariant homotopy equivalence
\[
	\nu_!\eta_*: I^u(\varphi_{n+1})\to \Sigma^{\H^{k_{\H,Q,n}}\oplus \tilde{\R}^{k_{W,-,n}}}I^u(\varphi_n),
\]
and similarly for the parameterized version.

The restriction $\eta_*$ to the $S^1$-fixed point set $I(\varphi_{n+1})^{S^1}$ is a fiber-preserving homotopy equivalence to $\Sigma_B^{\mathbb{R}^{k_{W, -,n}}} I_{n}(\varphi)^{S^1}$. 

More generally, without a selection of maps $\eta^\circ_n$ as above, there is an $S^1$-equivariant parameterized homotopy equivalence of Conley indices 
	\[
\eta_*: I(\varphi_{n+1})\to \Sigma_B^{Q_{n+1}/Q_n}\Sigma_B^{W_{n+1}^-/W_n^{-}}I^u(\varphi_n),
\]  
so that the induced, unparameterized map
\[
\nu_!\eta_*: I^u(\varphi_{n+1})\to \mathrm{Th}(Q_{n+1}/Q_n\oplus W_{n+1}^-/W_n^-,I^u(\varphi_n)),
\]
is well-defined up to homotopy, as well as a similar statement for self-conjugate $\frak{s}$.
\end{thm}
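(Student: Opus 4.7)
The plan is to identify the approximate Seiberg--Witten flow $\varphi_{n+1}$ at level $n+1$, up to continuation and the multiplicative property of the parameterized Conley index, with the product of $\varphi_n$ and a linearized flow on the orthogonal complement, and then to compute that linear Conley index explicitly. First, using the $L^2_{k_+,k_-}$-orthogonal decompositions $F_{n+1}=F_n\oplus F_n^{\perp,n+1}$ and $W_{n+1}=W_n\oplus W_n^{\perp,n+1}$, note that since $P_n\subset P_{n+1}$ and $Q_n\subset Q_{n+1}$ the new spinor directions split as $F_n^{\perp,n+1}\cong (P_{n+1}/P_n)\oplus(Q_{n+1}/Q_n)$, with the first summand consisting of attracting eigenvectors of $-D'$ (eigenvalues bounded below by $\mu_{n,+}$) and the second of repelling eigenvectors (eigenvalues bounded above by $\lambda_{n,-}$). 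Analogously $W_n^{\perp,n+1}\cong (W_{n+1}^+/W_n^+)\oplus(W_{n+1}^-/W_n^-)$ decomposes into attracting and repelling eigendirections of $-{*}d$.

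Next, construct a parameterized continuation $\{\varphi_{n+1}^s\}_{s\in[0,1]}$ from $\varphi_{n+1}^0=\varphi_{n+1}$ to a product flow $\varphi_{n+1}^1=\varphi_n\times\psi_{n+1}$, where $\psi_{n+1}$ is the purely linear flow generated by $-D'$ and $-{*}d$ on the complementary subbundles (with the cutoff $\chi$). The interpolation gradually replaces $\pi_{F_{n+1}}$ with $\pi_{F_n}\oplus\pi_{F_n^{\perp,n+1}}$ and damps the quadratic couplings $c_1,c_2,X_H$ and the connection-torsion term $(\nabla_{X_H}\pi_{F_{n+1}})$ between old and new factors to zero. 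The main technical point is to verify that the isolating neighborhood $A_{n+1}$ of Theorem~\ref{thm isolating nbd} remains isolating uniformly in $s\in[0,1]$ for $n\gg 0$; the proof would mirror Section~\ref{subsec:proof-of-isolation}, adapting the weighted Sobolev compactness and the Ascoli-type convergence of Proposition~\ref{prop Ascoli} to the $s$-dependent equations. Since on the complementary directions the linear eigenvalues grow with $n$ while the quadratic and connection-torsion terms remain of bounded size, the interpolation stays isolated: a bad sequence of boundary trajectories, upon extracting a limit, would yield a finite-energy Seiberg--Witten trajectory on $Y\times\mathbb{R}$ violating Proposition~\ref{prop compactness}.

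Granted the continuation, the continuation-invariance and multiplicativity of the parameterized Conley index (see Section~\ref{sec:homotopy}) give a parameterized homotopy equivalence $I(\varphi_{n+1})\simeq I(\varphi_n)\wedge_B I(\psi_{n+1})$. Each attracting factor of $\psi_{n+1}$ contributes trivially while each repelling factor contributes a fiberwise sphere, yielding $I(\psi_{n+1})\simeq S^{Q_{n+1}/Q_n}_B\wedge_B S^{W_{n+1}^-/W_n^-}_B$; this is the general untrivialized form in the theorem. The isometries $\eta^Q_n$ and $\eta^{W,-}_n$ then trivialize these bundles to give the stated suspension $\Sigma_B^{\mathbb{C}^{k_{Q,n}}\oplus\mathbb{R}^{k_{W,-,n}}}I(\varphi_n)$. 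Well-definedness of $\nu_!\eta_*$ up to (unparameterized) homotopy follows from the fact that any two continuations fit into a second-order continuation parameterized by $[0,1]^2$, producing a homotopy between the induced maps. The restriction to the $S^1$-fixed locus is immediate: $S^1$ acts freely away from $\phi=0$, so only the $W$-factor contributes and the $\mathbb{C}^{k_{Q,n}}$-suspension collapses on fixed points while $\mathbb{R}^{k_{W,-,n}}$ survives. The $\mathrm{Pin}(2)$-equivariant statement is identical using the equivariant spectral sections of Theorem~\ref{thm Pin(2) spectral sections}, with the $j$-action pairing the Dirac suspension directions into copies of $\mathbb{H}$ and the $W$-suspension directions into copies of $\tilde{\mathbb{R}}$.

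The main obstacle is the continuation argument in the second paragraph: proving that the interpolated flow remains isolated in $A_{n+1}$ requires a family version of Theorem~\ref{thm isolating nbd} with estimates uniform in $s\in[0,1]$, which in turn requires the weighted Sobolev bounds, spectral-section projection estimates, and $\|\cdot\|_{k_+,k_-}$-control developed in Sections~\ref{subsec:derivatives}--\ref{subsec:weighted-spaces} to be checked to survive the $s$-dependent interpolation. Once the continuation is in place, everything else is a formal consequence of standard product and continuation properties of the parameterized Conley index.
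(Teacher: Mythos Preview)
Your proposal is correct and follows essentially the same strategy as the paper. The paper implements your deformation in three explicit stages (Lemmas~\ref{lem suspension 1}, \ref{lem suspension lemma 2}, \ref{lem suspension lemma 3}, for $\tau\in[0,1],[1,2],[2,3]$), first decoupling the nonlinear terms, then the connection-torsion cross terms $(\nabla_{X_H}\pi_{F_n})\sigma$ and $(\nabla_{X_H}\pi_{\Sigma_{n+1}})\phi^{(1)}$, and finally the horizontal vector field $X_H$, and then in Lemma~\ref{lem controlled exit sets} writes down an explicit index pair for the split flow rather than invoking the abstract product theorem; but the architecture and the identification of the isolating-neighborhood verification as the only nontrivial input are exactly as you describe.
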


\begin{proof}
	By Lemma \ref{lem suspension premain} below and invariance of the Conley index under deformations, there is a well-defined homotopy equivalence $\eta^1: I^u(\varphi_{n+1})\to I^u(\varphi_{n+1}^{\mathrm{split}})$, where $\varphi_{n+1}^{\mathrm{split}}$ is defined in Lemma \ref{lem suspension premain} (and similarly for the parameterized version).  Using the invariance of the Conley index under homeomorphism, we have a well-defined homotopy equivalence 
	\[
	\eta^2: I(\varphi_{n+1}^{\mathrm{split}})\to I(\varphi_{n+1}^{\mathrm{split},\eta}),
	\]
	where $\varphi_{n+1}^{\mathrm{split},\eta}$ is defined at Lemma \ref{lem controlled exit sets}.	 Finally, by Lemma \ref{lem controlled exit sets}, the well-definedness of the Conley index (independent of a choice of index pair), and the definition of the Conley index (using our choice of index pair from Lemma \ref{lem controlled exit sets}), there is a well-defined homotopy equivalence
	\[
	\eta^3 : I(\varphi_{n+1}^{\mathrm{split},\eta})\to \Sigma_B^{Q_{n+1}/Q_n}\Sigma_B^{W_{n+1}^-/W_n^{-}}I(\varphi_n).
	\]  
	In the case that we have fixed trivializations, as above, of $W_{n+1}^{-}/W_n^{-}$ and $Q_{n+1}^-/Q_n^{-}$, the target of $\eta^3$ is identified with
	\[
	     \Sigma_{B}^{\C^{k_{Q,n}} \oplus \R^{k_{W, -, n}}  }   I(\varphi_n)  
	\]

	Since the flows used to define the  homotopy equivalences preserve the fibers of the $S^1$-fixed point sets (that is, $X(\phi)_H = 0$ if $\phi = 0$),  we can see from the formulas for the maps $f, g,  F_{\lambda}, G_{\lambda}$ in the proof of \cite[Theorem 6.2]{MRS}  that the restrictions  of $\eta^{1}, \eta^{2}, \eta^{3}$ to the $S^1$-fixed point sets preserve the fibers.    
	
	The argument adapts immediately to the case in which there is a spin structure, and the Theorem follows.
	
\end{proof}

Let $\Sigma_{n+1}^{\pm}$ be the $L^2_{k_+,k_-}$-orthogonal complement to $P_n$ in $P_{n+1}$ (resp. $Q_n$ in $Q_{n+1}$).  Similarly, let $\Sigma^{W,\pm}_{n+1}$ be the $L^2_{k_+,k_-}$-orthogonal complement to $W_{n}^{\pm}$ in $W_{n+1}^{\pm}$.  Let $\Sigma_{n+1}=\Sigma_{n+1}^+\oplus \Sigma_{n+1}^-$ and $\Sigma^W_{n+1}=\Sigma_{n+1}^{W,+}\oplus \Sigma_{n+1}^{W,-}$.  Then $F_{n+1}=F_n\oplus \Sigma_{n+1}$ and $W_{n+1}=W_n\oplus \Sigma^W_{n+1}$.  Write $\pi_{\Sigma_{n+1}}$ for the projection to $\Sigma_{n+1}$ with respect to $L^2_{k_+,k_-}$-norm.  We also write $\pi_{\Sigma^W_{n+1}}$ for the projection $\Sigma^W_{n+1}$ with respect to $L^2_{k_+,k_-}$-norm.   

Let $\X_n$ be the approximate Seiberg-Witten vector field on $F_{n} \oplus  W_n$, for all $n$, as defined in (\ref{eq for gamma}).  Let $R$ be large enough as in Theorem \ref{thm isolating nbd}.

For a path $\gamma(t)$ in the total space of $F_{n+1} \oplus  W_{n+1}$, we write $\gamma(t)=(\phi^{(1)}(t)+\sigma(t))\oplus( \omega^{(1)}(t)+\omega^{(2)}(t))$, as an element in the fiber over $b(t)=p(\gamma(t))$, where $\phi^{(1)}(t)$ is an element of $(F_n)_{b(t)}$, $\sigma(t)\in (\Sigma_n)_{b(t)}$, $\omega^{(1)}(t)\in (W_n)_{b(t)}$, and $\omega^{(2)}(t)\in (\Sigma^W_{n})_{b(t)}$.  

We then write $\gamma(t)=(\phi^{(1)}(t),\sigma(t),\omega^{(1)}(t),\omega^{(2)}(t), b(t))$ to describe $\gamma$ in terms of these coordinates.  We also write $\phi_{n+1}(t)$ to refer to the path in the total space of $F_{n+1}$ determined by $(\phi_{n+1}^{(1)}(t),\sigma_{n+1}(t),b(t))$.

\begin{lem}\label{lem suspension premain}
	Let $\X_{n}^{\mathrm{split}}$ be the vector field on the total space of $(F_n\oplus \Sigma_n)\oplus (W_n\oplus \Sigma^W_n)$ defined by (\ref{eq split flow}), where 
	\[
	\gamma_{n+1}(t)=(\phi^{(1)}_{n+1}(t),\sigma_{n+1}(t),\omega_{n+1}^{(1)}(t),\omega_{n+1}^{(2)}(t),  b_{n+1}(t))
	\]
 and $\hat{\gamma}_{n+1}(t)$
is the path obtained by (fiberwise) projecting $\gamma_{n+1}(t)$ to $(F_n\oplus W_n)_{b_{n+1}(t)}$.  
	\begin{equation}	
	\begin{aligned}  \label{eq split flow}
	  \frac{d\phi_{n+1}^{(1)}}{dt} (t)    &= - \chi \{ ( \nabla_{X_H} \pi_{F_{n}}) \phi_{n+1}^{(1)}(t) +\pi_{F_{n}} (D \phi_{n+1}^{(1)}(t) + c_1(\hat{\gamma}_{n+1}(t))) \},  \\ \frac{d\sigma_{n+1}}{dt} (t)    &= -\chi \{ ( \nabla_{X_H} \pi_{\Sigma_{n+1}}) \sigma_{n+1}(t) + \pi_{\Sigma_{n+1}} (D \sigma_{n+1}(t)) \} , \\
	\frac{db_{n+1}}{dt}(t)  &= - \chi X_{H}( \phi_{n+1}^{(1)}(t)),  \\
	\frac{d\omega^{(1)}_{n+1}}{dt}(t) &= - \chi \{ *d \omega^{(1)}_{n+1}(t) + \pi_{W_{n}} c_2(\hat{\gamma}_{n+1}(t)) \} \\
	\frac{d\omega^{(2)}_{n+1}}{dt}(t) &= - \chi *d \omega^{(2)}_{n+1}(t). 
	\end{aligned}
	\end{equation} 
 Here $\chi$ is the cut off function  in (\ref{eq for gamma}).
	Then, for $n$ sufficiently large, there is a continuous family of vector fields $\X^\tau_{n+1}$ on  (the total space of) $F_{n+1}\oplus W_{n+1}$ between $\X_{n+1}$ and $\X_{n+1}^{\mathrm{split}}$, with associated flows $\varphi^\tau_{n+1}$, so that $A_{n+1}$ is an isolating neighborhood for all $\tau$, where: 
	 \begin{align*}
	A_{n+1}
	 =& A_n^{o}\times_B B_{k_+}(\Sigma_{n+1}^+;R)\times_B B_{k_-}(\Sigma_{n+1}^-;R)   \\
	&   \times_B B_{k_+}(\Sigma_{n+1}^{W,+};R)\times_B B_{k_-}(\Sigma_{n+1}^{W,-};R),
	\end{align*}
	where $A_n^{o}$ is as $A_n$ in the proof of Theorem \ref{thm isolating nbd}. 
\end{lem}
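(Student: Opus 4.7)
The plan is to take $\X^\tau_{n+1}=(1-\tau)\X_{n+1}+\tau\X_{n+1}^{\mathrm{split}}$, the linear interpolation, and to adapt the contradiction argument of Theorem \ref{thm isolating nbd} to an additional parameter $\tau\in[0,1]$.  By (\ref{eq hor TF}), both $\X_{n+1}$ and $\X_{n+1}^{\mathrm{split}}$ are smooth vector fields tangent to $F_{n+1}\oplus W_{n+1}$, hence so is $\X^\tau_{n+1}$ for every $\tau$, defining a flow $\varphi^\tau_{n+1}$.

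Suppose, for contradiction, that after passing to a subsequence in $n$ there exist $\tau_n\in[0,1]$ and trajectories $\gamma_n=(\phi_n^{(1)},\sigma_n,\omega_n^{(1)},\omega_n^{(2)},b_n)\colon\mathbb{R}\to A_{n+1}$ of $\varphi^{\tau_n}_{n+1}$ with $\gamma_n(0)\in\partial A_{n+1}$.  By compactness we may assume $\tau_n\to\tau_\infty$, and after a further extraction one of the eight faces of $\partial A_{n+1}$ contains $\gamma_n(0)$ for every $n$.  The key structural observation is that $\sigma_n(t)\in(\Sigma_{n+1})_{b_n(t)}$ is supported on the spectrum range $(\mu_{n,+},\mu_{n+1,-}]\cup[\lambda_{n+1,+},\lambda_{n,-})$ of $D'$, so the uniform bound $\|\sigma_n(t)\|_{k_\pm}\le R$ forces $\|\sigma_n(t)\|_l\to 0$ uniformly in $t$ for every $l<k_\pm$; the same holds for $\omega_n^{(2)}$ on account of the corresponding spectral gaps of $*d$ on $\Sigma^{W}_{n+1}$.

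Lifting to $\mathcal{H}^1(Y)$ and applying the weighted Sobolev estimates of Proposition \ref{prop pi P_n wighted} together with the Arzel\`a--Ascoli argument of Proposition \ref{prop Ascoli} exactly as in Section \ref{subsec:proof-of-isolation}, I would extract a limit $\tilde\gamma\colon\mathbb{R}\to\mathcal{H}^1(Y)\times L^2_{\ell-1}(\mathbb{S})\times L^2_{\ell-1}(\im d^*)$.  The central step, generalizing Lemma \ref{lem:convergence-to-solution}, is that the difference $\X_{n+1}-\X_{n+1}^{\mathrm{split}}$ along $\gamma_n$ tends to zero in $L^2_{\ell-2}$ uniformly on compact time intervals.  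This difference consists of terms of the form $\pi_{\Sigma_{n+1}}c_1(\gamma_n)$, $\pi_{F_n}(c_1(\gamma_n)-c_1(\hat\gamma_n))$, their $c_2$ analogues, and differences between $(\nabla_{X_H}\pi_{F_{n+1}})\phi_{n+1}$ and its split version; each is controlled by the high-spectrum decay of $\sigma_n$ and $\omega_n^{(2)}$, Sobolev multiplication for the quadratic terms, and the commutator estimates of Proposition \ref{prop [D, pi]}.  Consequently, irrespective of $\tau_n$ or $\tau_\infty$, the convex combination converges to the Seiberg-Witten equations, and $\tilde\gamma$ descends to a finite-energy Seiberg-Witten trajectory $\gamma$.

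Proposition \ref{prop compactness} then gives $\|\gamma(t)\|_{k_+,k_-}\le R_{k_+,k_-}$.  On the other hand, repeating the argument of Lemmas \ref{lem gamma_n L^2_{k+1/2}} and \ref{lem phi^-} for whichever of the eight faces is attained (the $\sigma^\pm$ and $\omega^{(2),\pm}$ cases are treated identically, as the Dirac operator acts diagonally in the eigenbasis on $\Sigma_{n+1}$ and $\Sigma^W_{n+1}$), one obtains a uniform $L^2_{k_\pm+\frac12}$-bound on the attained boundary component at $t=0$, hence strong $L^2_{k_\pm}$-convergence and a component of $\gamma(0)$ of norm $R>R_{k_+,k_-}$, a contradiction.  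The main technical obstacle is the uniform-in-$\tau$ identification of $\tilde\gamma$ as a Seiberg-Witten trajectory: this rests entirely on the coupling-term decay estimates above, and once these are in place the rest of the argument is a parallel of Section \ref{subsec:proof-of-isolation}.
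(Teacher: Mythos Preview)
Your overall strategy—linearly interpolating $\X^\tau_{n+1}=(1-\tau)\X_{n+1}+\tau\X_{n+1}^{\mathrm{split}}$ and running the contradiction argument of Theorem~\ref{thm isolating nbd} with the extra parameter—is reasonable, and your treatment of the convergence-to-Seiberg--Witten step is essentially correct: the coupling terms $\pi_{\Sigma_{n+1}}c_1$, $\pi_{F_n}(c_1(\gamma_n)-c_1(\hat\gamma_n))$, etc., do vanish in $L^2_{\ell-2}$ by the high-spectrum decay of $\sigma_n$ and the commutator estimate.  However, there is a genuine gap in the $L^2_{k_\pm+\frac12}$ boundary estimate, and it is precisely this step—not the identification of the limit—that is the main obstacle.

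Under your linear interpolation the $\phi^{(1)}$-equation contains the term $(1-\tau)(\nabla_{X_H}\pi_{F_n})\sigma_{n+1}$, and when you differentiate $\|\phi^{(1),+}(t)\|_{k_+}^2$ you pick up
\[
(1-\tau)\,\big\langle(\nabla_{X_H}\pi_{F_n})\sigma_{n+1}(0),\,\phi^{(1),+}_{n+1}(0)\big\rangle_{k_+}
\]
(and symmetrically $\langle(\nabla_{X_H}\pi_{\Sigma_{n+1}})\phi^{(1)},\sigma^+\rangle_{k_+}$ on the $\sigma^+$-face).  This cannot be bounded by ``repeating Lemma~\ref{lem gamma_n L^2_{k+1/2}}'': the only uniform-in-$n$ control on $\nabla_v\pi_{F_n}$ is in the weighted norm $L^2_{\ell-5,w}$ (Proposition~\ref{prop pi P_n wighted}), not in $L^2_{k_+}$, and the high-spectrum decay $\|\sigma\|_{k_+-1}\to 0$ does not help because the constant $C_n$ in $\|\nabla_v\pi_{F_n}\colon L^2_{k_+-1}\to L^2_{k_+}\|\le C_n$ is not uniform.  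Lemma~\ref{lem nabla pi  phi psi} kills $\langle(\nabla_{X_H}\pi_{F_n})\phi^{(1)},\phi^{(1),+}\rangle$ but not this cross term.

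The paper handles this by constructing the homotopy in three stages (Lemmas~\ref{lem suspension 1}, \ref{lem suspension lemma 2}, \ref{lem suspension lemma 3}) rather than one.  The first stage decouples the quadratic terms but keeps the full $(\nabla_{X_H}\pi_{F_n})(\phi^{(1)}+\sigma)$; here the boundary bound is taken on $\|\phi_{n+1}^+\|=\|\phi^{(1),+}+\sigma^+\|$, for which the problematic term is absorbed (Lemma~\ref{lem parameterized bound}).  The second stage decouples the $\nabla\pi$-terms, and the crucial device there is to differentiate the identity $\langle\phi^{(1),+}(t),\sigma^+(t)\rangle_{k_+}\equiv 0$, which expresses the two bad cross terms as a sum of quantities that \emph{are} uniformly bounded (see (\ref{eq expansion of zero}) and the surrounding argument).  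The third stage then changes the argument of $X_H$.  Your single-stage interpolation could in principle be made to work by importing this differentiation trick, but as written the claim that the boundary argument is ``identical'' to Lemmas~\ref{lem gamma_n L^2_{k+1/2}}--\ref{lem phi^-} is not justified.
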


\begin{proof}
	This is an immediate consequence of Lemmas \ref{lem suspension 1}, \ref{lem suspension lemma 2} and \ref{lem suspension lemma 3}.
\end{proof}
We construct the homotopy $\X^\tau_{n+1}$, with associated flow $\varphi^\tau_{n+1,k_+,k_-}$, in stages.
\begin{lem}\label{lem suspension 1}
	Let $\X^\tau_{n+1}$ for $\tau\in [0,1]$ be defined by:
	\begin{equation}\label{eq first homotopy}	
	\begin{aligned}  
	 \frac{d\phi_{n+1}^{(1)}}{dt} (t)    
	 =&  -\chi \{ ( \nabla_{X_H} \pi_{F_{n}})(\phi_{n+1}^{(1)}(t)+\sigma_{n+1}(t)) \\
	 & + (1-\tau)\pi_{F_n}(D\phi_{n+1}(t)+c_1(\gamma_{n+1}(t)))  \\
	 & + \tau\pi_{F_n}(D(\phi_{n+1}^{(1)})+c_1(\hat{\gamma}_{n+1}(t)))  \},  \\
	 \frac{d\sigma_{n+1}}{dt} (t)    =& - \chi \{ ( \nabla_{X_H} \pi_{\Sigma_{n+1}}) (\phi_{n+1}^{(1)}+\sigma_{n+1}(t)) + (1-\tau)\pi_{\Sigma_{n+1}} (D(\phi_{n+1}^{(1)}(t)  \\ &+ \sigma_{n+1}(t))+c_1(\gamma_{n+1}(t))) + \tau\pi_{\Sigma_{n+1}}D\sigma_{n+1}(t) \}, \\
 \frac{db_{n+1}}{dt}(t)  =& - \chi X_{H}( \phi_{n+1}(t)),  \\
\frac{d\omega^{(1)}_{n+1}}{dt}(t) =& - \chi \{ *d \omega^{(1)}_{n+1}(t) + \tau \pi_{W_{n}} c_2(\hat{\gamma}_{n+1}(t))+(1-\tau)\pi_{W_n}c_2(\gamma_{n+1}(t))  \}, \\
\frac{d\omega^{(2)}_{n+1}}{dt}(t) =& - \chi  \{ *d \omega^{(2)}_{n+1}(t) + (1-\tau)\pi_{\Sigma^W_{n+1}}c_2(\gamma_{n+1}(t))\}.
	\end{aligned}
	\end{equation}
	Here $\chi$ is the cut off function in (\ref{eq for gamma}). 
	Then, for all $n\gg 0$, $A_{n+1}$ is an isolating neighborhood of $\varphi^\tau_{n+1,k_+,k_-}$ for all $\tau\in [0,1]$.
\end{lem}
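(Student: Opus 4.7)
The plan is to run the contradiction argument of Theorem \ref{thm isolating nbd} uniformly in $\tau\in[0,1]$. If the conclusion fails, then after passing to a subsequence there exist $\tau_n\in[0,1]$ with $\tau_n\to\tau_\infty$ and trajectories
\[
\gamma_n(t)=\bigl(\phi_n^{(1)}(t),\sigma_n(t),\omega_n^{(1)}(t),\omega_n^{(2)}(t),b_n(t)\bigr)
\]
of $\varphi^{\tau_n}_{n+1,k_+,k_-}$ entirely contained in $A_{n+1}$ with $\gamma_n(0)\in\partial A_{n+1}$. A further subsequence ensures that the boundary point lies in the same face (i)--(iv) (among the four factors of $A_{n+1}$) for every $n$, exactly as in the proof of Theorem \ref{thm isolating nbd}.

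The Arzel\`a--Ascoli extraction of a limit
\[
\tilde\gamma\colon\R\to\mathcal H^1(Y)\times L^2_{\ell-1}(\mathbb S)\times L^2_{\ell-1}(\operatorname{im} d^*),
\]
via lifting to the universal cover, Proposition \ref{prop pi P_n wighted}, Lemma \ref{lem norm}, and Proposition \ref{prop Ascoli}, proceeds verbatim as before, yielding uniform $L^2_{\ell-1}$ convergence on compact time intervals. The key new point is that $\tilde\gamma$ solves the full Seiberg-Witten equations (\ref{SW eq}) regardless of $\tau_n$. Indeed, by construction $\Sigma_{n+1}$ lies in the spectral range of $D'$ on which $|\eta|\ge\min\{\mu_{n,+},|\lambda_{n,-}|\}\to\infty$, and $\Sigma_{n+1}^W$ lies in the analogous range for $*d$; the uniform bounds $\|\sigma_n(t)\|_{k_\pm}\le R$ and $\|\omega_n^{(2)}(t)\|_{k_\pm}\le R$ then force $\|\sigma_n(t)\|_{\ell-1}\to 0$ and $\|\omega_n^{(2)}(t)\|_{\ell-1}\to 0$ uniformly in $t$ on compacta. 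For $n$ large, $\mathbb A$ acts trivially on $\Sigma_{n+1}$, so $D\sigma_n=D'\sigma_n\in\Sigma_{n+1}$; since $\Sigma_{n+1}$ is $L^2_{k_+,k_-}$-orthogonal to $F_n$, we have $\pi_{F_n}(D\sigma_n)=0$, and analogously for $*d$ on $\omega_n^{(2)}$. Combined with a Sobolev multiplication bound
\[
\|c_i(\gamma_n)-c_i(\hat\gamma_n)\|_{\ell-1}\le C\bigl(\|\sigma_n\|_{\ell-1}+\|\omega_n^{(2)}\|_{\ell-1}\bigr)\|\gamma_n\|_\ell,
\]
every $\tau_n$-dependent term in (\ref{eq first homotopy}) vanishes in $L^2_{\ell-2}$ as $n\to\infty$. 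Re-running the argument of Lemma \ref{lem:convergence-to-solution} identifies $\tilde\gamma$ as a finite-energy Seiberg-Witten solution on $Y\times\R$, so Proposition \ref{prop compactness} gives $\|\gamma(t)\|_{k_+,k_-}\le R_{k_+,k_-}<R$ for all $t$.

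To finish we apply the time-derivative-at-boundary argument of Lemmas \ref{lem gamma_n L^2_{k+1/2}} and \ref{lem phi^-} in whichever boundary case (i)--(iv) is selected. The identity $(d/dt)|_{t=0}\|\phi_n^{\pm}(t)\|_{k_\pm}^2=0$, combined with Lemma \ref{lem nabla pi  phi psi}, Proposition \ref{prop perturbation P}, and self-adjointness of $D'$, produces a uniform $L^2_{k_\pm+1/2}$ bound on the touching component at $t=0$; Rellich's lemma then upgrades the $L^2_{\ell-1}$-convergence to strong $L^2_{k_\pm}$-convergence, forcing $\|\gamma(0)\|_{k_+,k_-}\ge R$, which contradicts $\|\gamma(0)\|_{k_+,k_-}<R$. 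The main obstacle is showing that the $\tau_n$-dependent corrections to this derivative identity are lower order; this reduces to the two observations used above --- the $D\sigma_n$ cross-term is killed by $L^2_{k_+,k_-}$-orthogonality of $F_n$ and $\Sigma_{n+1}$ (and similarly for $*d$), while the $c_1,c_2$ cross-terms are controlled by $\|\sigma_n\|_{\ell-1}$ and $\|\omega_n^{(2)}\|_{\ell-1}$, which vanish as $n\to\infty$.
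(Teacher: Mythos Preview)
Your overall strategy matches the paper's exactly: a contradiction argument, Arzel\`a--Ascoli extraction via the weighted norm, identification of the limit as a finite-energy Seiberg--Witten trajectory, and then the $(k_\pm+\tfrac12)$-bound at the boundary point via the time-derivative identity. The one-form analog is also fine, since $W_n$ and $\Sigma_{n+1}^W$ are genuine spans of eigenvectors of $*d$ and hence $*d$-invariant.

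The gap is the assertion ``$D\sigma_n=D'\sigma_n\in\Sigma_{n+1}$, hence $\pi_{F_n}(D\sigma_n)=0$.'' Spectral sections $P_n,Q_n$ are \emph{not} $D'$-invariant in general (they only satisfy $\mathcal{E}_0(D')_{-\infty}^{\mu_{n,-}}\subset P_n\subset\mathcal{E}_0(D')_{-\infty}^{\mu_{n,+}}$), so neither $F_n$ nor its $L^2_{k_+,k_-}$-orthogonal complement $\Sigma_{n+1}$ is preserved by $D'$; indeed Proposition~\ref{prop [D, pi]} computes the nonzero commutator $[D,\pi_n]$. Consequently $\pi_{F_n}D\sigma_n$ need not vanish, and the same issue affects the dual cross-term $\pi_{\Sigma_{n+1}}D\phi^{(1)}_{n+1}$. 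The paper repairs this in two different ways. For the convergence step (Lemma~\ref{lem parameterized convergence 2}) one uses that $\sigma_{n+1}(t)$, being supported in eigenvalues $\gtrsim\min\{\mu_{n,+},|\lambda_{n,-}|\}$ and bounded in $L^2_{k_+,k_-}$, converges strongly to $0$ in $L^2_{k_\pm-1}$; hence $D\sigma_{n+1}\to 0$ in $L^2_{k_\pm-2}$, and the uniform $L^2_{k_\pm-2}$-operator bound on $\pi_{F_n}$ gives $\pi_{F_n}D\sigma_{n+1}\to 0$. For $\pi_{\Sigma_{n+1}}D\phi^{(1)}_{n+1}$ one writes it as $[\pi_{\Sigma_{n+1}},D]\phi^{(1)}_{n+1}$ (since $\pi_{\Sigma_{n+1}}\phi^{(1)}_{n+1}=0$) and invokes the uniform commutator bound of Proposition~\ref{prop [D, pi]}. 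For the boundary-derivative step (Lemma~\ref{lem parameterized bound}) one needs a \emph{uniform} bound (not merely vanishing): the same commutator estimate gives $|\langle\pi_{\Sigma_{n+1}}D\phi^{(1)}_{n+1},\phi^+_{n+1}\rangle_{k_+}|\le CR^2$ and likewise for $\langle\pi_{F_n}D\sigma_{n+1},\phi^+_{n+1}\rangle_{k_+}$, which is enough to close the quadratic inequality for $\|\phi_{n+1}^+(0)\|_{k_++1/2}$. With these substitutions your argument goes through.
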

\begin{proof}
	The lemma is a consequence of Lemmas \ref{lem parameterized bound}, \ref{lem parameterized convergence} and \ref{lem parameterized convergence 2}.  Indeed, let 
	\[A_n^{o}=\big(  B_{k_+} (F_n^+; R) \times_B B_{k_-}(F_n^-; R) \big)  \times_B   \big( B_{k_+}(W_n^+; R) \times_B B_{k_-}(W_n^-; R) \big)\] be as in the proof of Theorem \ref{thm isolating nbd}.  Suppose that \[\mathrm{inv}\, A_{n+1}\not\subset \mathrm{int}\, A_{n+1},\]  for some $\tau_n\in[0,1]$, for all $n$.  Then there is a sequence of finite-energy approximate trajectories $\gamma_{n+1}(t)$, for $\varphi^{\tau_{n+1}}_{n+1,k_+,k_-}$, so that $\gamma_{n+1}(0)\in \partial A_{n+1}.$  There are four cases as in the proof of Theorem \ref{thm isolating nbd}; we only treat the case that 
	   \begin{align*}
   \gamma_{n+1}(0)   &\in  (S_{k_+}(F_{n+1}^+; R) \times_B B_{k_-}(F_{n+1}^-; R))    \\
     & \quad  \times_{B} (B_{k_-}(W^+_{n+1}; R) \times_{B} B_{k_-}(W^-_{n+1}; R))
     \end{align*} 
	for all $n$, the other cases being similar. 
	
	As in the proof of Theorem \ref{thm isolating nbd}, we have a lift
	\[
	\tilde{\gamma}_{n+1} = (\tilde{\phi}_{n+1}, \omega_{n+1}) : \mathbb{R} \rightarrow \mathcal{H}^1(Y) \times  L^2_{k_+, k_-}(\mathbb{S}) \times L^2_{k_+, k_-}(\im d^*)
	\]
	with $p(\tilde{\gamma}_{n+1}(0))\in \Delta$.  
	
	By Lemma \ref{lem parameterized convergence} and Proposition \ref{prop Ascoli}, the sequence $\tilde{\gamma}$ has a subsequence converging, uniformly in $(\ell-1)$-norm to some continuous map 
\[
   \tilde{\gamma}: I \to \mathcal{H}^1(Y)\times L^2_{k_+ - 1, k_- - 1}(\mathbb{S}) \times L^2_{k_+ - 1, k_- - 1}(\im d^*).  
\]   
   By Lemma \ref{lem parameterized convergence 2}, $\tilde{\gamma}$ is a solution of the Seiberg-Witten equations.  Finally, by Lemma \ref{lem parameterized bound}, we obtain that the sequence $\tilde{\phi}_n^+(0)$ converged to $\tilde{\phi}^+(0)$ uniformly in $L^2_{k_+}$-norm, which is a contradiction. 
\end{proof}

\begin{lem}\label{lem parameterized bound}
	Assume that we have a sequence of trajectories $\tilde{\gamma}_{n+1}$ as in the proof of Lemma \ref{lem suspension premain}, with in particular 
	   \begin{align*} 
	\gamma_{n+1}(0)  & \in  (S_{k_+}(F_{n+1}^+; R) \times_B B_{k_-}(F_{n+1}^-; R))    \\
	 & \quad    \times_B (B_{k_-}(W^+_{n+1}; R) \times_B B_{k_-}(W^-_{n+1}; R)).
	   \end{align*} 
	Then there is some $R_1$ so that 
	\[
             \Vert \phi_{n+1}^+(0) \Vert_{k_++\frac{1}{2}}<R_1,
	\]
	for all $n$.  
\end{lem}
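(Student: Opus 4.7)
The argument closely parallels Lemma \ref{lem gamma_n L^2_{k+1/2}}. Since $\gamma_{n+1}(t)\in A_{n+1}$ for all $t\in\mathbb{R}$ and $\|\phi_{n+1}^+(0)\|_{k_+}=R$ is the maximum of the $k_+$-norm on the relevant face of $\partial A_{n+1}$, the function $t\mapsto \|\phi_{n+1}^+(t)\|^2_{k_+}$ attains its maximum at $t=0$, so $\frac{d}{dt}\big|_{t=0}\|\phi_{n+1}^+(t)\|^2_{k_+}=0$. The plan is to expand this identity using the equations (\ref{eq first homotopy}) and extract a quadratic inequality forcing a uniform bound on $\|\phi_{n+1}^+(0)\|_{k_++\frac{1}{2}}$.

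Decompose $\phi_{n+1}^+=\phi_{n+1}^{(1),+}+\sigma_{n+1}^+$. Since $\Sigma_{n+1}$ is the $L^2_{k_+,k_-}$-orthogonal complement of $F_n$ in $F_{n+1}$ and this splitting respects the $\pm$ decomposition (both subspaces being $D'$-invariant), the two summands are $L^2_{k_+}$-orthogonal. Summing the first two equations of (\ref{eq first homotopy}) and pairing with $\phi_{n+1}^+$ in $\langle\cdot,\cdot\rangle_{k_+}$, the self-adjointness of $\pi_{F_n},\pi_{\Sigma_{n+1}},\pi_{F_{n+1}}$ with respect to $\langle\cdot,\cdot\rangle_{k_+,k_-}$, together with the identity $\langle D'\psi,\psi^+\rangle_{k_+}=\|\psi^+\|^2_{k_++\frac{1}{2}}$, forces the $D$-contributions to collapse to
\[
\tau\bigl(\|\phi_{n+1}^{(1),+}\|^2_{k_++\frac{1}{2}}+\|\sigma_{n+1}^+\|^2_{k_++\frac{1}{2}}\bigr)+(1-\tau)\|\phi_{n+1}^+\|^2_{k_++\frac{1}{2}}=\|\phi_{n+1}^+(0)\|^2_{k_++\frac{1}{2}},
\]
independent of $\tau$.

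The remaining terms are handled exactly as in Lemma \ref{lem gamma_n L^2_{k+1/2}}: the $(\nabla_{X_H}\pi_{F_n})$- and $(\nabla_{X_H}\pi_{\Sigma_{n+1}})$-contributions vanish against $\phi_{n+1}^+$ by Lemma \ref{lem nabla pi  phi psi} applied to the relevant subbundles; the horizontal derivatives of $|D'|^{k_+\pm\frac{1}{2}}$ are controlled by Lemma \ref{lem nabla |D'|^k} and contribute at most $CR^2+CR\|\phi_{n+1}^+(0)\|_{k_++\frac{1}{2}}$; the smoothing operator $\mathbb{A}$ gives an $O(R^2)$ correction; and $c_1(\gamma_{n+1})$ and $c_1(\hat{\gamma}_{n+1})$ each satisfy $\|\cdot\|_\ell\leq CR^2$ by Sobolev multiplication, since the projections defining $\hat{\gamma}_{n+1}$ have operator norms on $L^2_{k_+,k_-}$ uniformly bounded in $n$ by Proposition \ref{prop [D, pi]}(2). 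The cutoff $\chi$ equals $1$ on $A_{n+1}$ because $4R<R'$. Assembling everything yields
\[
0\leq -\|\phi_{n+1}^+(0)\|^2_{k_++\frac{1}{2}}+CR^2\|\phi_{n+1}^+(0)\|_{k_++\frac{1}{2}}+CR^2,
\]
from which the required bound $R_1$ follows by the quadratic formula. The main subtle point is the $\tau$-independence of the dominant term; once that is verified via the orthogonality observation above, all remaining estimates are uniform in $n$ and $\tau$, and the conclusion follows.
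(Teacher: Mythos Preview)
Your overall strategy is right, and most of the bookkeeping (the $\nabla_{X_H}\pi$-vanishing via Lemma~\ref{lem nabla pi  phi psi}, the $\mathbb{A}$ and $c_1$ estimates, the cross terms $\langle\pi_{F_n}D\phi^{(1)}_{n+1},\sigma_{n+1}^+\rangle_{k_+}=0$, etc.) is handled correctly. But the step where you collapse the $\tau$-dependent combination to $\|\phi_{n+1}^+(0)\|^2_{k_++\frac{1}{2}}$ is not justified. The assertion that $F_n$ and $\Sigma_{n+1}$ are ``$D'$-invariant'' is false: spectral sections are not unions of eigenspaces, so $D'$ does not preserve $F_n$ or $\Sigma_{n+1}$. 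What you do have is $F_n^+\perp_{k_+}\Sigma_{n+1}^+$ (because both lie in $\mathcal{E}^+$, where $\langle\cdot,\cdot\rangle_{k_+,k_-}$ agrees with $\langle\cdot,\cdot\rangle_{k_+}$), but this does \emph{not} upgrade to orthogonality in $L^2_{k_++\frac{1}{2}}$. Hence
\[
\|\phi_{n+1}^{(1),+}\|^2_{k_++\frac{1}{2}}+\|\sigma_{n+1}^+\|^2_{k_++\frac{1}{2}}\neq\|\phi_{n+1}^+\|^2_{k_++\frac{1}{2}}
\]
in general, and your equality is unjustified.

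The gap is easily patched: by the triangle inequality $\|\phi_{n+1}^+\|^2_{k_++\frac{1}{2}}\le 2(\|\phi_{n+1}^{(1),+}\|^2_{k_++\frac{1}{2}}+\|\sigma_{n+1}^+\|^2_{k_++\frac{1}{2}})$, so the dominant term is still $\ge\frac{1}{2}\|\phi_{n+1}^+(0)\|^2_{k_++\frac{1}{2}}$ uniformly in $\tau$, and the quadratic inequality goes through with a harmless factor of $2$. For comparison, the paper avoids this issue by not seeking $\tau$-independence: it rewrites the $D$-contribution as $\langle\pi_{F_{n+1}}D'\phi_{n+1},\phi_{n+1}^+\rangle_{k_+}$ minus $\tau$ times the cross terms $\langle\pi_{\Sigma_{n+1}}D\phi_{n+1}^{(1)},\phi_{n+1}^+\rangle_{k_+}$ and $\langle\pi_{F_n}D\sigma_{n+1},\phi_{n+1}^+\rangle_{k_+}$, then bounds each cross term by $CR^2$ using $\pi_{\Sigma_{n+1}}D\phi_{n+1}^{(1)}=[\pi_{\Sigma_{n+1}},D]\phi_{n+1}^{(1)}$ (since $\pi_{\Sigma_{n+1}}\phi_{n+1}^{(1)}=0$) and the uniform commutator bound from Proposition~\ref{prop [D, pi]}. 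Your route, once corrected, is a bit cleaner; the paper's is more direct.
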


\begin{proof}
We emphasize only what must be changed from the proof of Lemma \ref{lem gamma_n L^2_{k+1/2}}.  We check the case where $k_+$ is an integer.  We calculate:
 \begin{align*}
 &  \frac{1}{2}    \frac{d}{dt}  \bigg|_{t=0} \| \phi_{n+1}^+ (t) \|_{k_+}^2  \\
& =  
\operatorname{Re} ( \langle (\nabla_{X_H} (D')^{k_+}) \phi_{n+1}^+(0),   (D')^{k_+} \phi_{n+1}^+(0)  \rangle_{0}  \\
& +     \langle  (\nabla_{X_H} \pi^+) \phi_{n+1}(0), \phi_{n+1}^+(0) \rangle_{k_+}   \\
&    -  \langle  (\nabla_{X_H} \pi_{F_{n+1}}) \phi_{n+1}(0), \phi_{n+1}^+(0)   \rangle_{k_+}  
-  \langle  (1-\tau)  \pi_{F_{n}}  D' \phi_{n+1}(0), \phi_{n+1}^+(0)  \rangle_{k_+}   \\
&  +   \langle  ((1-\tau)\mathbb{A} - (1-\tau)   \pi_{F_{n+1}}) c_1(\gamma_{n+1}(0)), \phi_{n+1}^+(0) \rangle_{k_+} \\
& -\tau\langle \pi_{F_n}D(\phi^{(1)}_{n+1}(0)),\phi^+_{n+1}(0)\rangle_{k_+} 
-\langle \tau \pi_{F_n}c_1(\hat{\gamma}_{n+1}(0)),\phi^+_{n+1}(0)\rangle_{k_+}  \\
&- \langle \pi_{\Sigma_{n+1}}D\sigma_{n+1}(0), \phi^+_{n+1}(0)\rangle_{k_+} \\
&- (1-\tau)\langle \pi_{\Sigma_{n+1}}D(\phi^{(1)}_{n+1}(0) +c_1(\gamma_{n+1}(0))),\phi^+_{n+1}(0)\rangle_{k_+}). 
\end{align*}

Following the argument of Lemma \ref{lem gamma_n L^2_{k+1/2}}, we obtain
\begin{align*}
 & \frac{1}{2}    \frac{d}{dt}  \bigg|_{t=0} \| \phi_{n+1}^+ (t) \|_{k_+}^2  \\ 
 &\leq CR^3  \Vert \phi_{n+1}^+(0) \Vert_{k_++\frac{1}{2}}\\
  & -\langle \pi_{F_{n+1}}D'\phi_{n+1}(0),\phi^+_{n+1}(0) \rangle_{k_+}+ \tau( \langle \pi_{\Sigma_{n+1}}D\phi^{(1)}_{n+1}(0),\phi^+_{n+1}(0)\rangle_{k_+}  \\
 & +\langle \pi_{F_n}D\sigma_{n+1}(0),\phi^+_{n+1}(0)\rangle_{k_+} ).
  \end{align*}

But 
\[
   \langle \pi_{F_{n+1}}D'\phi_{n+1}(t),\phi_{n+1}^+(t) \rangle_{k_+}
   = \Vert \phi^{+}_{n+1}(t) \Vert^2_{k_++\frac{1}{2}}.
\]

	Since $[D',\pi_{\Sigma_{n+1}}]$ is uniformly bounded, we obtain
	\[
	\tau\langle\pi_{\Sigma_{n+1}}D\phi^{(1)}_{n+1},\phi^+_{n+1}\rangle_{k_+}\leq CR^2
	\]
	for some constant $C$ independent of $n$.  
	
	A similar argument applies to $\langle\pi_{F_n}D\sigma_{n+1},\phi^+_{n+1}\rangle_{k_+}$.  The lemma then follows as did Lemma \ref{lem gamma_n L^2_{k+1/2}}.
	\end{proof}

\begin{lem}\label{lem parameterized convergence}
	The sequence $(\tilde{\phi}_n,\omega_n)$ is equicontinuous in $L^2_{K_T,\ell-5,w}$ norm.
\end{lem}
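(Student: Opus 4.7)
The plan is to mimic the equicontinuity argument for $\gamma_n$ in the proof of Theorem \ref{thm isolating nbd}, but with additional terms coming from the homotopy parameter $\tau$ and from the splitting $F_{n+1}=F_n\oplus \Sigma_{n+1}$, $W_{n+1}=W_n\oplus \Sigma^W_{n+1}$. Since equicontinuity in $L^2_{K_T,\ell-5,w}$ follows from a uniform $L^2_{K_T,\ell-5,w}$-bound on $\tfrac{d}{dt}\tilde{\phi}_{n+1}$ (and an analogous bound for $\tfrac{d}{dt}\omega_{n+1}$ in an appropriate finite-regularity norm), it suffices to bound each term on the right-hand side of \eqref{eq first homotopy} uniformly in $n$ and $\tau\in[0,1]$.

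First I would note that the path $\tilde{\gamma}_{n+1}$ satisfies the a priori bound $\|\gamma_{n+1}(t)\|_{k_+,k_-}\le R$ in \eqref{eq phi omega norm} (by construction of $A_{n+1}$), and that by the compact-support condition on $\chi$ we may restrict attention to arguments where $\chi\le1$ is bounded. The Sobolev multiplication bounds used in Theorem \ref{thm isolating nbd} therefore still apply verbatim to give
\[
\|c_1(\gamma_{n+1}(t))\|_\ell,\ \|c_2(\gamma_{n+1}(t))\|_\ell,\ \|X_H(\phi_{n+1}(t))\|_\ell \;\le\; CR^2,
\]
uniformly in $n$ and $t$, and the analogous bounds for $\hat{\gamma}_{n+1}(t)$ since the projection from $F_{n+1}\oplus W_{n+1}$ onto $F_n\oplus W_n$ is a contraction in $L^2_{k_+,k_-}$.

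The main obstacle is controlling the two ``derivative of projection'' terms, $(\nabla_{X_H}\pi_{F_n})(\phi^{(1)}_{n+1}+\sigma_{n+1})$ and $(\nabla_{X_H}\pi_{\Sigma_{n+1}})(\phi^{(1)}_{n+1}+\sigma_{n+1})$. For the first, Proposition \ref{prop pi P_n wighted} (applied to both $\pi_{P_n}$ and $\pi_{Q_n}$, using $\pi_{F_n}=\pi_{Q_n}\pi_{P_n}$ after a harmless compact perturbation, combined with Proposition \ref{prop perturbation P}) gives that the operator norm of $\nabla_v\pi_{F_n}:L^2_{k_+,k_-}\to L^2_{\ell-5,w}$ is bounded uniformly in $n$ (in fact it tends to zero). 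For the second, we write $\pi_{\Sigma_{n+1}}=\pi_{F_{n+1}}-\pi_{F_n}$ and apply the same proposition to each summand. Combined with $\|X_H(\phi_{n+1}(t))\|\le CR^2$, these yield uniform bounds in the weighted fiberwise norm $\|\cdot\|_{\ell-5,w}$; passing to the compact set $K_T\subset\mathcal{H}^1(Y)$ via Lemma \ref{lem norm} then gives uniform bounds in $L^2_{K_T,\ell-5,w}$.

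The remaining ``linear'' terms $\pi_{F_n}D\phi^{(1)}_{n+1}$, $\pi_{\Sigma_{n+1}}D\sigma_{n+1}$, $(1-\tau)\pi_{F_n}D\phi_{n+1}$, $(1-\tau)\pi_{\Sigma_{n+1}}D(\phi^{(1)}_{n+1}+\sigma_{n+1})$, together with their compositions with $\chi$, are bounded in $L^2_{\ell-1}$ by the uniform $L^2_{k_+,k_-}$-bound on $\gamma_{n+1}(t)$, using the uniform operator bound on the projections provided by Proposition \ref{prop [D, pi]}(2) and its analogue for $\pi_{Q_n}$. Since $L^2_{\ell-1}\hookrightarrow L^2_{K_T,\ell-5,w}$ continuously by Lemma \ref{lem norm}, this step contributes uniform bounds in the desired weighted norm. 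The forms $\omega^{(1)}_{n+1}$ and $\omega^{(2)}_{n+1}$ are handled by essentially the same argument but more simply, since the flow for $\omega$ does not involve any projection derivative: one bounds $*d\omega_{n+1}^{(j)}$ in $L^2_{\ell-1}$ using the uniform $L^2_{k_+,k_-}$-bound and the multiplication estimate for $c_2$. Integrating $\tfrac{d\tilde{\gamma}_{n+1}}{ds}$ from $t$ to $t'$ and applying the triangle inequality then yields a modulus of continuity for $\tilde{\gamma}_{n+1}$ that is independent of $n$ and $\tau$, giving the claimed equicontinuity.
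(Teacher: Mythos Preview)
Your proposal is correct and is exactly the approach the paper intends: the paper's own proof is the single sentence ``This follows exactly as in the proof of Theorem \ref{thm isolating nbd},'' and your plan is a faithful unpacking of that reference, with the extra $\tau$-dependent terms from \eqref{eq first homotopy} handled by the same estimates (Proposition \ref{prop pi P_n wighted} for the projection-derivative terms, Proposition \ref{prop [D, pi]}(2) and Sobolev multiplication for the rest). One minor remark: the identity $\pi_{\Sigma_{n+1}}=\pi_{F_{n+1}}-\pi_{F_n}$ is exact (no compact perturbation needed), since $F_{n+1}=F_n\oplus\Sigma_{n+1}$ is an $L^2_{k_+,k_-}$-orthogonal splitting; likewise $\pi_{F_n}=\pi_{P_n}+\pi_{Q_n}-1$ gives $\nabla_v\pi_{F_n}=\nabla_v\pi_{P_n}+\nabla_v\pi_{Q_n}$ directly, so you can drop the qualifier ``after a harmless compact perturbation.''
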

\begin{proof}
	This follows exactly as in the proof of Theorem \ref{thm isolating nbd}.  
	\end{proof}

By Proposition \ref{prop Ascoli}, any sequence which is equicontinuous in $L^2_{K_T,\ell-5,w}$-norm and bounded in $\ell$-norm has a subsequence converging, uniformly in $\Vert\cdot \Vert_{\ell-1}$, to some continuous map $\tilde{\gamma}: I \to \mathcal{H}^1(Y)\times L^2_{\ell-1}(\mathbb{S})\times L^2_{\ell-1}(\im d^*)$.
\begin{lem}\label{lem parameterized convergence 2}
	A limit $\tilde{\gamma}$ for the sequence $(\phi_n,\omega_n)$ as above, is a solution of the Seiberg-Witten equations over $Y\times \R$.  
\end{lem}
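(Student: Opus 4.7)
The plan is to follow the argument of Lemma \ref{lem:convergence-to-solution} closely, with additional bookkeeping for the $\tau$-dependent interpolation and for the split $F_{n+1}=F_n\oplus \Sigma_{n+1}$, $W_{n+1}=W_n\oplus \Sigma^W_{n+1}$. After passing to a subsequence, we may assume $\tau_n\to\tau_\infty\in [0,1]$. Summing the first two lines of (\ref{eq first homotopy}) and using $\pi_{F_n}+\pi_{\Sigma_{n+1}}=\pi_{F_{n+1}}$, I rewrite the equation for $\phi_{n+1}=\phi^{(1)}_{n+1}+\sigma_{n+1}$ as
\[
\begin{split}
\frac{d\phi_{n+1}}{dt}(t)=-\chi\bigl\{&(\nabla_{X_H}\pi_{F_{n+1}})\phi_{n+1}(t)+(1-\tau_n)\pi_{F_{n+1}}\bigl(D\phi_{n+1}(t)+c_1(\gamma_{n+1}(t))\bigr)\\
&+\tau_n\bigl(\pi_{F_n}(D\phi^{(1)}_{n+1}(t)+c_1(\hat{\gamma}_{n+1}(t)))+\pi_{\Sigma_{n+1}}D\sigma_{n+1}(t)\bigr)\bigr\},
\end{split}
\]
with an analogous identity for $\omega_{n+1}$. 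I would then integrate from $0$ to $t$ on the universal cover and pass to the limit term-by-term.

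For the gradient term, $(\nabla_{X_H}\pi_{F_{n+1}})\phi_{n+1}\to 0$ in $L^2_{K_T,\ell-5,w}$ by Proposition \ref{prop pi P_n wighted} together with Lemma \ref{lem norm}. For the $(1-\tau_n)$ contribution, the commutator argument of Lemma \ref{lem:convergence-to-solution}, via Proposition \ref{prop [D, pi]} and the uniform $L^2_{\ell-1}$-convergence $\gamma_{n+1}\to\tilde{\gamma}$, gives convergence to $(1-\tau_\infty)(D\tilde{\phi}+c_1(\tilde{\gamma}))$ in $L^2_{\ell-2}$. The new ingredient is a high-frequency estimate: since $\sigma^{\pm}_{n+1}\in \Sigma^\pm_{n+1}$ is spanned by $D'$-eigenvectors of eigenvalue of magnitude $\gtrsim \mu_n$, the uniform bound $\|\sigma^\pm_{n+1}\|_{k_\pm}\le R$ forces
\[
\|\sigma_{n+1}\|_{\ell-1}\lesssim R/\mu_n\to 0,\qquad \|D\sigma_{n+1}\|_{\ell-2}\lesssim R/\mu_n\to 0,
\]
and analogously $\|\omega^{(2)}_{n+1}\|_{\ell-1}\to 0$. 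Because $D$ preserves $\Sigma_{n+1}$ for $n\gg 0$, the last bound gives $\pi_{\Sigma_{n+1}}D\sigma_{n+1}\to 0$ in $L^2_{\ell-2}$, and it further yields $\phi^{(1)}_{n+1}\to\tilde{\phi}$ and $\hat{\gamma}_{n+1}\to\tilde{\gamma}$ uniformly in $L^2_{\ell-1}$ on compact intervals. Applying Lemma \ref{lem:convergence-to-solution}'s argument verbatim then gives $\pi_{F_n}(D\phi^{(1)}_{n+1}+c_1(\hat{\gamma}_{n+1}))\to D\tilde{\phi}+c_1(\tilde{\gamma})$ in $L^2_{\ell-2}$.

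An identical argument handles $\omega_{n+1}$, writing its combined equation as
\[
\tfrac{d\omega_{n+1}}{dt}=-\chi\bigl\{{*}d\omega_{n+1}+(1-\tau_n)\pi_{W_{n+1}}c_2(\gamma_{n+1})+\tau_n\pi_{W_n}c_2(\hat{\gamma}_{n+1})\bigr\};
\]
both projected nonlinearities converge to $c_2(\tilde{\gamma})$ in $L^2_{\ell-1}$. Passing to the limit in the integrated equations yields the Seiberg-Witten system (\ref{SW eq}) in the $L^2_{\ell-2}$ sense, and standard elliptic regularity upgrades $\tilde{\gamma}$ to a smooth solution on compact subsets of $Y\times\mathbb{R}$. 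The main obstacle is precisely establishing the high-frequency decay $\|\sigma_{n+1}\|_{\ell-1}\to 0$ (and its analog for $\omega^{(2)}_{n+1}$), needed to dispose of the split terms $\pi_{\Sigma_{n+1}}D\sigma_{n+1}$ and $(1-\tau_n)\pi_{\Sigma^W_{n+1}}c_2(\gamma_{n+1})$ absent from the original proof; once this is in hand, the rest is essentially a reprise of Lemma \ref{lem:convergence-to-solution}.
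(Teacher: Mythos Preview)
Your overall strategy is sound and close to the paper's, but there is one genuine error. The claim that ``$D$ preserves $\Sigma_{n+1}$ for $n\gg 0$'' is false: the spectral sections $P_n,P_{n+1}$ are \emph{not} spanned by $D'$-eigenvectors (they are built from frames $f_1,\dots,f_r$ chosen cell-by-cell in Theorem~\ref{thm spectral section mu mu+delta}), so $D'$ does not preserve $P_n$, $P_{n+1}$, or the orthogonal complement $\Sigma_{n+1}^+=P_{n+1}\ominus P_n$. Thus you cannot infer $\pi_{\Sigma_{n+1}}D\sigma_{n+1}=D\sigma_{n+1}$. The conclusion you want is still true, but it needs the commutator estimate: write
\[
\pi_{\Sigma_{n+1}}D\sigma_{n+1}=D\sigma_{n+1}+[\pi_{\Sigma_{n+1}},D]\sigma_{n+1},
\]
and observe that $\|D\sigma_{n+1}\|_{\ell-2}\to 0$ by your high-frequency estimate, while $\|[\pi_{\Sigma_{n+1}},D]\sigma_{n+1}\|_{\ell-1}\le C\|\sigma_{n+1}\|_{\ell-1}\to 0$ by Proposition~\ref{prop [D, pi]} (applied to $\pi_{\Sigma_{n+1}}=\pi_{F_{n+1}}-\pi_{F_n}$). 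This is exactly the mechanism the paper uses for the analogous term $\pi_{\Sigma_{n+1}}D\phi^{(1)}_{n+1}$.

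With that correction, your argument works, and in some respects it is cleaner than the paper's. The paper keeps $\pi_{F_{n+1}}(D\phi_{n+1}+c_1(\gamma_{n+1}))$ as the main term and isolates correction terms $Z_2=-\tau(\pi_{\Sigma_{n+1}}D\phi^{(1)}_{n+1}+\pi_{F_n}D\sigma_{n+1})$ and $Z_3$; it then dispatches $\pi_{F_n}D\sigma_{n+1}$ via a ``controlled vector spaces'' argument comparing $L^2_{k_+,k_-}$ and $L^2_{k_+-1,k_--1}$ on $F_n^\perp$. Your direct high-frequency estimate $\|\sigma_{n+1}\|_{\ell-1}\lesssim R/\mu_{n,-}\to 0$ (valid because $\Sigma_{n+1}^\pm\subset\mathcal{E}^{\mu_{n+1,+}}_{\mu_{n,-}}\cup\mathcal{E}^{\lambda_{n,+}}_{\lambda_{n+1,-}}$) replaces that machinery with an elementary eigenvalue comparison, and simultaneously gives $\phi^{(1)}_{n+1}\to\tilde{\phi}$, $\hat{\gamma}_{n+1}\to\tilde{\gamma}$ in $L^2_{\ell-1}$ for free. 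The paper's decomposition has the mild advantage of not needing to pass to a subsequence with $\tau_n\to\tau_\infty$, but this is cosmetic.
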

\begin{proof}
	
	Take  $T \in \mathbb{Z}_{>0}$ and $t \in [-T, T]$. We have 
	\begin{align*}
	& \tilde{\phi}_{n+1}(t) - \tilde{\phi}_{n+1}(0)  \\
	&= \int_{0}^{t}  \frac{d \tilde{\phi}_{n+1}}{ds}(s) ds  \\
	&= - \int_{0}^{t}   Z_1+Z_2+Z_3 + 
	\pi_{\tilde{F}_{n+1}} (D (\tilde{\phi}^{(1)}_{n+1}(t)+\sigma_{n+1}(t)) + c_1(\tilde{\gamma}_n(t)))\\ &+ X_{H}(\phi_{n+1}(s))  ds,
	\end{align*}
	where 
	\begin{equation}
	\begin{aligned}
	&Z_1=(\nabla_{X_H(\phi_{n+1}(t))}\pi_{F_{n+1}})\tilde{\phi}_{n+1}, \\
		&Z_2= -\tau\pi_{\Sigma_{n+1}}D\phi_{n+1}^{(1)}-\tau\pi_{F_n}D\sigma_{n+1}(t), \\
		&Z_3=-\tau(\pi_{\Sigma_{n+1}}c_1(\tilde{\gamma}_n(t))+\pi_{F_n}c_1(\tilde{\gamma}_n(t))-\pi_{F_n}c_1(\tilde{\hat{\gamma}}_n(t))).
	\end{aligned}
	\end{equation}
	It suffices to show that the $Z_i$ terms approach $0$ uniformly in $L^2_{K_T,\ell-5,w}$, and that 
	\[
	\pi_{\tilde{F}_{n+1}}(D(\tilde{\phi}_{n+1})+c_1(\tilde{\gamma}_{n+1}(t))+X_H(\phi_{n+1}(t))\to D(\tilde{\phi}(t))+c_1(\tilde{\gamma}(t))+X_H(\phi(t)),
	\]also in $L^2_{K_T,\ell-5,w}$.  Indeed, if that is the case, then the limit of integrals on the right-hand side is well-defined, and
	\begin{equation}\label{eq parameterized convergence 2}
	\tilde{\phi}(t)-\tilde{\phi}(0)=-\int^t_0(D\tilde{\phi}+c_1(\tilde{\gamma}(t))+X_H(\phi(s)))ds,
	\end{equation}
	giving the conclusion of the Lemma.
	
	Exactly as in the proof of Theorem \ref{thm isolating nbd}, we obtain that $Z_1$ converges to $0$ uniformly in $L^2_{K_T,\ell-5,w}$.

%

To show that $\pi_{F_n}D\sigma_{n+1}(t)\to 0$ in $L^2_{K_T,\ell-5,w}$, we use an elementary observation about projection with respect to different norms.  That is, if $V$ is a finite-dimensional vector space with norms $\Vert \cdot| \Vert_1$ and $\Vert \cdot \Vert_2$, then for a subspace $V'\subset V$ and projection $\Pi_1$ to $V'$ with respect to $\Vert \cdot \Vert_1$, then $\Vert \Pi_1 x \Vert _2/ \Vert x \Vert_2\leq \rho_1\rho_2$ for $x\in V$, where $\rho_2=\sup_{x\in V^*}\{\Vert x \Vert_2/ \Vert x \Vert_1\}$ and $\rho_1=\sup_{x\in V}\{\Vert x \Vert_1 / \Vert x \Vert_2\}$.  

We say a collection of finite-dimensional vector spaces $V_i$ with norms $\Vert \cdot \Vert_{1,i}$ and $\Vert \cdot \Vert_{2,i}$ is \emph{controlled} if $\rho_{1,i}\rho_{2,i}$ is bounded above.  

We claim that the  orthogonal complement of $F_n$ in $(\mathcal{E}_{\lambda_{n,-}}^{\mu_{n,+}})_a$,
call it $F_n^\perp$, with norms given by the restriction of $L^2_{k_+,k_-}$ and $L^2_{k_+-1,k_--1}$ (respectively), is controlled.  
 Indeed, $F_n^\perp$ is a subspace of $(\mathcal{E}^{\mu_{n,+}}_{\mu_{n,-}})_a$.  On $(\mathcal{E}^{\mu_{n,+}}_{\mu_{n,-}})_a$, by definition we have $\rho_1\rho_2<\mu_{n,+}/\mu_{n,-}$.  By our condition on the growth of the $\mu_{n,\pm}$, we then have that $\rho_{1,n}\rho_{2,n}$ is bounded as a function of $n$.  

We claim that $\pi_{F_n}D\sigma_{n+1}(t)\to 0$ in $L^2_{k_+-2,k_--2}$.  Indeed, $\sigma_{n+1}(t)$ converges to $0$ weakly in $L^2_{k_+,k_-}$ by definition and $\sigma_{n+1}(t)$ converges strongly to $0$ in $L^2_{k_+-1,k_--1}$.  Then $D\sigma_{n+1}(t)$ converges to $0$ in $L^2_{k_+-2,k_--2}$.  Finally, $\pi_{F_n}$ is a bounded family of operators in $L^2_{k_+-2,k_--2}$ by the above argument, giving the claim.  As a consequence, we also have convergence in $L^2_{K_T,\ell-5,w}$.  

To show that $\pi_{\Sigma_{n+1}}D\phi^{(1)}_{n+1}$ converges to $0$, we note that by Proposition \ref{prop [D, pi]}, 
\[
    \Vert [D,\pi_{\Sigma_{n+1}}]: L^2_j\to L^2_j \Vert \leq C
\]
for some constant $C$ independent of $n$, for all half-integers $j\leq k_+$.  Moreover, we have $\pi_{\Sigma_{n+1}} \phi^{(1)}_{n+1}=0$, and so we need only show that the sequence $[\pi_{\Sigma_{n+1}},D]\phi^{(1)}_{n+1}$ converges to zero.  Given the bound on $\sigma D \phi^{(1)}_{n+1}$ from the bound on the commutator $[D,\pi_{\Sigma_{n+1}}]$ above, and using the definition of the norms involved, we see that $\pi_{\Sigma_{n+1}} D \phi^{(1)}_{n+1}\to 0$ in $L^2_{\ell-1}$-norm.  



A very similar argument shows that $\pi_{\Sigma_{n+1}}c_1(\gamma_n(t))\to 0$ in $L^2_{K_T,\ell-5,w}$, and also that $\pi_{F_n}c_1(\gamma_n(t))$ and $\pi_{F_n}c_1(\hat{\gamma}_n(t))$ converge to $c_1(\gamma(t))$ in $L^2_{K_T,\ell-5,w}$, so that $Z_3\to 0$.

A similar argument also shows the convergence in (\ref{eq parameterized convergence 2}), and the proof is complete.

\end{proof}

For $\tau\in [1,2]$, define a flow  $\varphi^\tau_{n+1,k_+,k_-}$ on $F_{n+1} \oplus W_{n+1}$ by 
\begin{align*}
\frac{d\phi^{(1)}_{n+1}}{dt}(t)&=- \chi \{ (2-\tau)(\nabla_{X_H(\phi_{n+1}(t))}\pi_{F_n})(\phi_{n+1}(t)) \\
&+ (\pi_{F_n}D\phi^{(1)}_{n+1}(t)+c_1(\hat{\gamma}_{n+1}(t)))+(\tau-1)(\nabla_{X_H(\phi_{n+1}(t))}\pi_{F_n})\phi^{(1)}_{n+1}(t) \}, \\
\frac{d\sigma_{n+1}}{dt}(t) &=- \chi \{ (2-\tau)(\nabla_{X_H(\phi_{n+1}(t))}\pi_{\Sigma_{n+1}})(\phi^{(1)}_{n+1}(t)+\sigma_{n+1}(t))\\
&+ (\tau-1)(\nabla_{X_H(\phi_{n+1}(t))}\pi_{\Sigma_{n+1}})\sigma_{n+1}(t)+ \pi_{\Sigma_{n+1}}D\sigma_{n+1}(t) \},
\end{align*}
with the other terms unchanged.  Inspection shows that the total space of $F_{n+1} \oplus W_{n+1}$ is preserved by the flow.  
\begin{lem}\label{lem suspension lemma 2}
	For $n\gg 0$, for all $\tau\in [1,2]$, $A_{n+1}$ is an isolating neighborhood for $\varphi^\tau_{n+1,k_+,k_-}$.
\end{lem}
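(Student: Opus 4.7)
The plan is to argue by contradiction, mirroring closely the proof of Lemma \ref{lem suspension 1}. Assume there exist $\tau_{n+1}\in[1,2]$ and finite-energy trajectories $\gamma_{n+1}$ of $\varphi^{\tau_{n+1}}_{n+1,k_+,k_-}$ in $A_{n+1}$ with $\gamma_{n+1}(0)\in\partial A_{n+1}$. After passing to a subsequence with $\tau_{n+1}\to\tau_\infty\in[1,2]$, I would lift to $\tilde{\gamma}_{n+1}\colon\mathbb{R}\to\mathcal{H}^1(Y)\times L^2_{k_+,k_-}(\mathbb{S})\times W_{n+1}$ with $p_{\mathcal{H}}(\tilde{\gamma}_{n+1}(0))\in\Delta$ as in the proof of Theorem \ref{thm isolating nbd}. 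The crucial structural observation is that, for $\tau\in[1,2]$, the only change from $\varphi^1_{n+1}$ is the interpolation of the arguments of the $(\nabla_{X_H}\pi_\bullet)$-terms; specifically, these become
\[
(\nabla_{X_H}\pi_{F_n})[\phi^{(1)}_{n+1}+(2-\tau)\sigma_{n+1}]\quad\text{and}\quad (\nabla_{X_H}\pi_{\Sigma_{n+1}})[\sigma_{n+1}+(2-\tau)\phi^{(1)}_{n+1}],
\]
while the linear $\pi_\bullet D$- and nonlinear $\pi_\bullet c_i$-terms remain unchanged throughout the homotopy.

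With this observation, the three supporting lemmas of Lemma \ref{lem suspension 1} carry over. For equicontinuity (the analog of Lemma \ref{lem parameterized convergence}), the new $(\nabla_{X_H}\pi_{F_n})$- and $(\nabla_{X_H}\pi_{\Sigma_{n+1}})$-contributions are uniformly bounded in $L^2_{K_T,\ell-5,w}$ by Proposition \ref{prop pi P_n wighted}---applied directly for $\pi_{F_n}$ and via $\pi_{\Sigma_{n+1}}=\pi_{F_{n+1}}-\pi_{F_n}$ for the other---so Proposition \ref{prop Ascoli} yields a subsequential limit $\tilde{\gamma}$ continuous in $(\ell-1)$-norm. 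For the convergence to a Seiberg--Witten solution (analog of Lemma \ref{lem parameterized convergence 2}), the $\nabla\pi$-terms vanish in the weighted norm by Proposition \ref{prop pi P_n wighted}, while the cross-terms $\pi_{\Sigma_{n+1}}D\sigma_{n+1}$ and $\pi_{F_n}D\sigma_{n+1}$ go to zero by the controlled-norm argument of Lemma \ref{lem parameterized convergence 2} together with the uniform commutator bound of Proposition \ref{prop [D, pi]}; the nonlinear terms are handled as before.

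The main technical task is the uniform bound $\|\phi^+_{n+1}(0)\|_{k_++1/2}\leq R_1$ (analog of Lemma \ref{lem parameterized bound}), which upgrades weak to strong $L^2_{k_+}$-convergence. I would compute $\tfrac12 \frac{d}{dt}\|\phi^+_{n+1}(t)\|_{k_+}^2|_{t=0}=0$ following Lemma \ref{lem parameterized bound}; since the $\pi_\bullet D$- and $\pi_\bullet c_i$-pieces are unchanged from $\tau=1$, the dominant negative term $-\|\phi^+_{n+1}(0)\|_{k_++1/2}^2$ and the original $CR^3\|\phi^+_{n+1}(0)\|_{k_++1/2}+CR^2$ contributions arise exactly as there. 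The interpolated $\nabla\pi$-terms, paired with $\phi^+_{n+1}(0)$ in $\langle\cdot,\cdot\rangle_{k_+}$, are bounded by $CR^2$ uniformly in $\tau_{n+1}$ via Lemma \ref{lem nabla pi  phi psi} (to kill off-diagonal pairings within $F_{n+1}$ and its complement) and the Sobolev-regularity bound of Proposition \ref{prop perturbation P}. The resulting inequality $0\leq-\|\phi^+_{n+1}(0)\|_{k_++1/2}^2+CR^2\|\phi^+_{n+1}(0)\|_{k_++1/2}+CR^2$ yields the bound, and the Rellich lemma then gives strong $L^2_{k_+}$-convergence $\phi^+_{n+1}(0)\to\phi^+(0)$ with $\|\phi^+(0)\|_{k_+}=R$, contradicting $\|\gamma(t)\|_{k_+,k_-}\leq R_{k_+,k_-}<R$ from Proposition \ref{prop compactness}; the other three boundary components are handled symmetrically. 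The main obstacle is precisely this uniform bound and, in particular, the bookkeeping of the interpolated $\nabla\pi$-terms, but the key point is that all such contributions lie in $F_{n+1}^\perp$-free pairings and hence evade the dominant analytic difficulty.
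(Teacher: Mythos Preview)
Your overall strategy is correct and matches the paper: argue by contradiction, establish equicontinuity in the weighted norm, extract a limiting Seiberg--Witten trajectory, and reach a contradiction via a $(k_++\tfrac12)$-bound on $\phi^+_{n+1}(0)$. The equicontinuity and convergence pieces go through essentially as you indicate.

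There is, however, a genuine gap in your treatment of the uniform bound. When you compute $\tfrac12\frac{d}{dt}\|\phi^{(1),+}_{n+1}+\sigma^+_{n+1}\|_{k_+}^2$, the $\nabla\pi$-contributions include the \emph{cross} terms
\[
(2-\tau)\bigl\langle (\nabla_{X_H}\pi_{F_n})\sigma_{n+1}(0),\,\phi^{(1),+}_{n+1}(0)\bigr\rangle_{k_+}
\quad\text{and}\quad
(2-\tau)\bigl\langle (\nabla_{X_H}\pi_{\Sigma_{n+1}})\phi^{(1)}_{n+1}(0),\,\sigma^+_{n+1}(0)\bigr\rangle_{k_+}.
\]
Neither tool you invoke disposes of these. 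Lemma~\ref{lem nabla pi  phi psi} requires \emph{both} arguments to lie in $F_n$ (or both in $F_n^\perp$); here one lies in $F_n$ and the other in $\Sigma_{n+1}\subset F_n^\perp$, so the lemma is inapplicable. And Proposition~\ref{prop perturbation P} gives only the $n$-dependent bound $\|\nabla_v\pi_{P_n}\|\le C_n$ of (\ref{eq nabla pi C_n}); that is precisely why the weighted norm was introduced, and it does not yield a uniform $CR^2$ estimate in $\langle\cdot,\cdot\rangle_{k_+}$. Your remark about ``$F_{n+1}^\perp$-free pairings'' is true but vacuous, since everything in sight already lies in $F_{n+1}$.

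The paper resolves this with a trick you are missing: one differentiates the identically-zero function $t\mapsto\langle\phi^{(1),+}_{n+1}(t),\sigma^+_{n+1}(t)\rangle_{k_+}$. Expanding this derivative via the flow equations produces (after controlling the lower-order pieces with Proposition~\ref{prop perturbation P} and Lemma~\ref{lem nabla |D'|^k}) a relation
\[
\Bigl|\bigl\langle (\nabla_{X_H}\pi_{F_n})\phi^{(1)}_{n+1},\sigma^+_{n+1}\bigr\rangle_{k_+}
+\bigl\langle \phi^{(1),+}_{n+1},(\nabla_{X_H}\pi_{\Sigma_{n+1}})\sigma_{n+1}\bigr\rangle_{k_+}\Bigr|\le CR^2,
\]
together with the identities $\pi_{\Sigma_{n+1}}(\nabla\pi_{F_n})\phi^{(1)}=-\pi_{\Sigma_{n+1}}(\nabla\pi_{\Sigma_{n+1}})\phi^{(1)}$ and $\pi_{F_n}(\nabla\pi_{F_n})\sigma=-\pi_{F_n}(\nabla\pi_{\Sigma_{n+1}})\sigma$, which convert the problematic cross terms into a form controlled by this relation. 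Only after this step does the inequality $0\le -\|\phi^{(1),+}_{n+1}(0)\|^2_{k_++\frac12}-\|\sigma^+_{n+1}(0)\|^2_{k_++\frac12}+CR^3\|\phi^{(1),+}_{n+1}(0)\|_{k_++\frac12}+C$ follow.
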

\begin{proof}

We highlight only the difference of the argument from the proof of Lemma \ref{lem suspension 1}.  We have a sequence of trajectories 
\[
\gamma_{n+1}(t)=(\phi^{(1)}_{n+1}(t),\sigma_{n+1}(t),\omega_{n+1}(t))
\]
 exactly as in that argument.  We assume that 
   \begin{align*}
 \gamma_{n+1}(0)  & \in  (S_{k_+}(F_{n+1}^+; R) \times_B B_{k_-}(F_{n+1}^-; R))     \\
   & \quad    \times_{B} (B_{k_-}(W^+_{n+1}; R) \times_{B} B_{k_-}(W^-_{n+1}; R))
   \end{align*}
 for all $n$; the other cases are similar.  The proofs of the analogs of Lemma \ref{lem parameterized convergence} and Lemma \ref{lem parameterized convergence 2} are unchanged, and we obtain that a lift $\tilde{\gamma}_{n}$   of $\gamma_n$ to the universal covering
 converges in $L^2_{K_T, \ell-5, w}$-norm to a solution $\tilde{\gamma}(t)$ of the Seiberg-Witten equations.    We need only prove an analogue of Lemma \ref{lem parameterized bound}, that $\Vert\phi^{+}_{n+1}\Vert_{k_++\frac{1}{2}}$ is bounded independent of $\tau,n$.  Suppose this is false, that is, that 
\[
     \Vert \phi_{n+1}^{(1),+}(0)+\sigma_{n+1}^+(0) \Vert_{k_++\frac{1}{2}}\to \infty.
\]
Then, we study (for the case $k_+\in \mathbb{Z}$, the other case being similar):
\begin{equation}\label{eq moving metrics}
\begin{aligned}
&\frac{1}{2}     \frac{d}{dt} \bigg|_{t=0} \Vert \phi_{n+1}^{(1),+}(t)+\sigma_{n+1}^+(t) \Vert_{k_+}^2 \\
&= \operatorname{Re}(\langle (\nabla_{X_H}\pi^+)\phi_{n+1}^{(1)}(0),\phi^{(1),+}_{n+1}(0)\rangle_{k_+}\\
&+\langle (\nabla_{X_H}(D')^{k_+})\phi^{(1),+}_{n+1}(0),(D')^{k_+}\phi^{(1),+}_{n+1}(0)\rangle_0\\ &-\langle\pi_{F_{n+1}}D'\phi^{(1),+}_{n+1}(0),\phi^{(1),+}_{n+1}(t)\rangle_{k_+}  \\
&+ \langle (\mathbb{A}-\pi_{F_{n+1}})c_1(\hat{\gamma}_{n+1}(0)),\phi_{n+1}^{(1),+}(0)\rangle_{k_+}\\
& -\langle (\nabla_{X_H}\pi_{F_{n+1}})\phi_{n+1}^{(1),+}(0),\phi_{n+1}^{(1),+}(0)\rangle_{k_+}\\
&  -(2-\tau)\langle (\nabla_{X_H}\pi_{F_{n}})\sigma_{n+1}(0),\phi^{(1),+}_{n+1}(0)\rangle_{k_+}\\
&+\langle (\nabla_{X_H}(D')^{k_+})\sigma_{n+1}(0),(D')^{k_+}\sigma^{(1),+}_{n+1}(0)\rangle_0\\
&+\langle (\nabla_{X_H}\pi^+)\sigma_{n+1}(0),\sigma^+_{n+1}(0)\rangle_{k_+} 
 -\langle\pi_{\Sigma_{n+1}}D'\sigma_{n+1}(0),\sigma^+_{n+1}(t)\rangle_{k_+} \\
&-\langle (\nabla_{X_H}\pi_{\Sigma_{n+1}})\sigma_{n+1}(0),\sigma^+_{n+1}(0)\rangle_{k_+} \\
&-(2-\tau)\langle (\nabla_{X_H}\pi_{\Sigma_{n+1}})\phi^{(1),+}_{n+1}(0),\sigma^+_{n+1}(0)\rangle_{k_+}).
\end{aligned}
\end{equation}


All of these terms can be dealt with as in the proof of Lemma \ref{lem parameterized bound}, with the exception of 
\begin{equation}\label{eq spare nabla projection}
\begin{aligned}
&-(2-\tau)\operatorname{Re}\langle (\nabla_{X_H}\pi_{F_{n}})\sigma_{n+1}(0),\phi^{(1),+}_{n+1}(0)\rangle_{k_+}  \\ 
& \qquad -(2-\tau)\operatorname{Re}\langle (\nabla_{X_H}\pi_{\Sigma_{n+1}})\phi^{(1)}_{n+1}(0),\sigma^+_{n+1}(0)\rangle_{k_+}.
\end{aligned}
\end{equation}
To bound this term, consider the expression $\langle \phi_{n+1}^{(1),+}(t),\sigma_{n+1}^+(t)\rangle_{k_+}$ as a function of $t$.  By definition, this is zero, but expanding its derivative gives:
\begin{equation}\label{eq expansion of zero}
\begin{aligned}
0&= \operatorname{Re}\langle (\nabla_{X_H}\pi^+)\phi_{n+1}^{(1)}(t),\sigma^{+}_{n+1}(t)\rangle_{k_+} \\
& +\operatorname{Re}\langle (\nabla_{X_H}(D')^{k_+})\phi^{(1),+}_{n+1}(t),(D')^{k_+}\sigma^{+}_{n+1}(t)\rangle_0
\\ 
&+ \operatorname{Re}\langle  \phi^{(1),+}_{n+1}(t),(\nabla_{X_H}\pi^+)\sigma_{n+1}(t)\rangle_{k_+} \\
&+\operatorname{Re}\langle  (D')^{k_+}\phi^{(1),+}_{n+1}(t),(\nabla_{X_H}(D')^{k_+})\sigma^{+}_{n+1}(t)\rangle_0
\\ 
&+ \operatorname{Re}\langle (\nabla_{X_H} \pi_{F_{n}})\phi^{(1)}_{n+1} (t),\sigma_{n+1}^+(t)\rangle_{k_+}  \\
&  +\operatorname{Re}\langle \phi_{n+1}^{(1),+}(t),\nabla_{X_H}\pi_{\Sigma_{n+1}}\sigma_{n+1}(t)\rangle_{k_+}.
\end{aligned}
\end{equation}
Recall that 
\begin{align*}
\pi_{\Sigma_{n+1}}(\nabla_{X_H}\pi_{F_n})\phi^{(1)}_{n+1}&=-\pi_{\Sigma_{n+1}}(\nabla_{X_H}\pi_{\Sigma_{n+1}})\phi^{(1)}_{n+1}, \\
\pi_{F_n}(\nabla_{X_H}\pi_{F_n})\sigma_{n+1}&=-\pi_{F_{n}}(\nabla_{X_H}\pi_{\Sigma_{n+1}})\sigma_{n+1}.
\end{align*}
Then (\ref{eq expansion of zero}), also using the estimates from the proof of Lemma \ref{lem gamma_n L^2_{k+1/2}}, becomes:
\[
   \big|  \langle (\nabla_{X_H} \pi_{F_{n}})\phi^{(1)}_{n+1}(t) ,\sigma_{n+1}^+(t)\rangle_{k_+}+\langle \phi_{n+1}^{(1),+}(t),(\nabla_{X_H}\pi_{\Sigma_{n+1}})\sigma_{n+1}(t)\rangle_{k_+} \big| \leq CR^2.
\]
Then, using (\ref{eq moving metrics}), we have
\begin{align*}
\frac{1}{2}  \frac{d}{dt}   \bigg|_{t=0} \Vert \phi_{n+1}^{(1),+}(t)+\sigma_{n+1}^+(t) \Vert_{k_+}^2  
&\leq CR^3 \Vert \phi^{(1),+}_{n+1}(0) \Vert_{k_++\frac{1}{2}}\\ 
&-\operatorname{Re}\langle \pi_{\Sigma_{n+1}}D'\sigma_{n+1}(0),\sigma^+_{n+1}(0)\rangle_{k_+}\\
&-\operatorname{Re}\langle\pi_{F_{n+1}}D'\phi^{(1)}_{n+1}(0),\phi^{(1),+}_{n+1}(t)\rangle_{k_+} + C.
\end{align*}
The argument from Lemma \ref{lem gamma_n L^2_{k+1/2}} gives
\begin{align*}
0 &= 
\frac{1}{2}    \frac{d}{dt}  \bigg|_{t=0} \Vert \phi_{n+1}^{(1),+}(t)+\sigma_{n+1}^+(t) \Vert_{k_+}^2   \\
&\leq CR^3 \Vert \phi^{(1),+}_{n+1}(0) \Vert_{k_++\frac{1}{2}}- \Vert \phi^{(1),+}_{n+1}(0) \Vert^2_{k+\frac{1}{2}}- \Vert \sigma^+_{n+1}(0) \Vert_{k+\frac{1}{2}}^2 + C.
\end{align*}
 Thus, $\Vert \phi^{(1),+}_{n+1}(0)+\sigma^+_{n+1}(0) \Vert_{k+\frac{1}{2}}$ is bounded.  The proof of Lemma \ref{lem suspension lemma 2} then follows exactly as Theorem \ref{thm isolating nbd}.
\end{proof}

Finally, for $\tau\in [2,3]$, set
\begin{equation}
\begin{aligned}
\frac{d\phi^{(1)}_{n+1}(t)}{dt}&= -\chi \{ (3-\tau)(\nabla_{X_H(\phi_{n+1}(t))}\pi_{F_{n+1}})\phi_{n+1}^{(1)}(t) \\
& +(\tau-2)(\nabla_{X_H(\phi_{n+1}(t))}\pi_{F_{n+1}})\phi^{(1)}_{n+1}(t) \\
&+ \pi_{F_n}D\phi^{(1)}_{n+1}(t)+c_1(\hat{\gamma}_{n+1}(t)))   \\
&+ (\tau-2)(\nabla_{X_H(\phi_{n+1}(t))}\pi_{F_n})\phi^{(1)}_{n+1}(t) \}, \\
\frac{d\sigma_{n+1}}{dt}(t) &= -\chi \{ (3-\tau)(\nabla_{X_H(\phi_{n+1}(t))}\pi_{\Sigma_{n+1}})\sigma_{n+1}(t)   \\
& +(\tau-2)(\nabla_{X_H(\phi_{n+1}^{(1)}(t))}\pi_{\Sigma_{n+1}}) \sigma_{n+1}(t) \}, \\
\frac{db}{dt}(t) &=- \chi\{ (3-\tau)X_H(\phi_{n+1}(t))+(\tau-2)X_H(\phi^{(1)}_{n+1}(t))\},
\end{aligned}
\end{equation}
with the other terms unchanged.  Note that it is clear that these equations preserve the total space of $F_{n+1} \oplus W_{n+1}$.

\begin{lem}\label{lem suspension lemma 3}
	For $n\gg 0$, for all $\tau\in [2,3]$, $A_{n+1}$ is an isolating neighborhood for $\varphi^\tau_{n+1,k_+,k_-}$.
\end{lem}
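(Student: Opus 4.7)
The plan is to follow the same three-step scheme used in Lemmas \ref{lem suspension 1} and \ref{lem suspension lemma 2}. Suppose for contradiction that the conclusion fails. Then, after passing to a subsequence, there exist $\tau_n \in [2,3]$ and flow lines $\gamma_{n+1}(t) = (\phi^{(1)}_{n+1}(t),\sigma_{n+1}(t),\omega_{n+1}(t))$ for $\varphi^{\tau_n}_{n+1,k_+,k_-}$ with $\gamma_{n+1}(0) \in \mathrm{inv}\,A_{n+1} \cap \partial A_{n+1}$. After another subsequence we may assume which of the four boundary components of $A_{n+1}$ contains $\gamma_{n+1}(0)$; we treat only the case $\phi^+_{n+1}(0) \in S_{k_+}(F^+_{n+1};R)$, the others being analogous.

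First, I would establish convergence to a Seiberg--Witten trajectory, exactly as in Lemmas \ref{lem parameterized convergence} and \ref{lem parameterized convergence 2}. The horizontal component of $d\phi_{n+1}/dt$ is a convex combination of $X_H(\phi_{n+1}(t))$ and $X_H(\phi^{(1)}_{n+1}(t))$; both are quadratic in spinors uniformly bounded in $L^2_{k_+,k_-}$ by (\ref{eq phi omega norm}), so lifts $\tilde\gamma_{n+1}$ to the universal cover satisfy $p_{\mathcal{H}}(\tilde\gamma_{n+1}(t)) \in K_T$ for some compact $K_T \subset \mathcal{H}^1(Y)$. The vertical part is equicontinuous in $L^2_{K_T,\ell-5,w}$: the new $\tau$-dependent terms only involve $(\nabla_{X_H(\cdot)}\pi_{F_{n+1}})$, $(\nabla_{X_H(\cdot)}\pi_{F_n})$, and $(\nabla_{X_H(\cdot)}\pi_{\Sigma_{n+1}})$, each tending to zero uniformly by Proposition \ref{prop pi P_n wighted} and Lemma \ref{lem norm}. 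By Proposition \ref{prop Ascoli} and a diagonal argument, $\tilde\gamma_{n+1}$ subconverges in $L^2_{\ell-1}$, uniformly on compact intervals, to a continuous $\tilde\gamma$. Passing to the limit in the integrated flow equation, the $\tau_n$-dependent projection-derivative terms vanish in $L^2_{K_T,\ell-5,w}$, while the difference between $X_H(\phi_{n+1}(t))$ and $X_H(\phi^{(1)}_{n+1}(t))$ becomes negligible because $\sigma_{n+1}(t) \to 0$ in weighted norm (by the commutator argument of Lemma \ref{lem parameterized convergence 2}). Thus $\tilde\gamma$ solves the Seiberg--Witten equations, and Proposition \ref{prop compactness} yields $\|\gamma(t)\|_{k_+,k_-} \leq R_{k_+,k_-} < R$.

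Second, I would upgrade to strong convergence at $t=0$ by establishing the analog of Lemma \ref{lem parameterized bound}, namely a bound on $\|\phi^+_{n+1}(0)\|_{k_+ + 1/2}$ uniform in $n$ and $\tau_n$. Expanding $\frac{d}{dt}\big|_{t=0}\|\phi^{(1),+}_{n+1}(t)+\sigma^+_{n+1}(t)\|^2_{k_+} = 0$ produces, as in (\ref{eq moving metrics}), a dominant negative term $-\|\phi^{(1),+}_{n+1}(0)\|^2_{k_+ + 1/2} - \|\sigma^+_{n+1}(0)\|^2_{k_+ + 1/2}$ from pairing with $\pi_{F_n}D\phi^{(1)}_{n+1}$ and $\pi_{\Sigma_{n+1}}D\sigma_{n+1}$, together with a linear-in-$\|\phi^+_{n+1}(0)\|_{k_++1/2}$ term of size $CR^3$ from the nonlinearity, and a number of additional "cross terms" coming from the $\tau$-interpolation of the projection-derivative pieces. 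The main obstacle is handling these new cross terms; here I would reuse the moving-metric identity (\ref{eq expansion of zero}), applied to both $\pi_{F_n} + \pi_{\Sigma_{n+1}} = \pi_{F_{n+1}}$ and to $\pi_{F_{n+1}}$ alone (since both projections occur in the interpolated equations), to cancel off-diagonal pieces and show that each cross term is dominated by $CR^3\|\phi^+_{n+1}(0)\|_{k_++1/2} + C$. Combining all estimates gives the required uniform bound, and Rellich then forces $\|\phi^+(0)\|_{k_+} = R$, contradicting the bound of the previous paragraph. This completes the proof.
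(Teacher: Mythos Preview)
Your proposal is correct and takes essentially the same approach as the paper, which simply states that the claim follows from the arguments of Lemmas \ref{lem suspension 1} and \ref{lem suspension lemma 2} with no new difficulties; you have supplied considerably more detail than the paper does. One minor remark: in the $\tau\in[2,3]$ regime the projection-derivative terms in $d\phi^{(1)}_{n+1}/dt$ act only on $\phi^{(1)}_{n+1}$ and those in $d\sigma_{n+1}/dt$ act only on $\sigma_{n+1}$, so the off-diagonal cancellation via (\ref{eq expansion of zero}) that was essential in Lemma \ref{lem suspension lemma 2} is not actually needed here --- this is presumably what the paper means by ``no new difficulties.''
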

\begin{proof}
	This claim is a consequence of the arguments used in Lemma \ref{lem suspension 1} and \ref{lem suspension lemma 2}, and there are no new difficulties.
\end{proof}

%


Write $B(Q_{n+1}/Q_n,R)$ for the $R$-disk bundle of $Q_{n+1}/Q_n$ over $\Pic(Y)$, etc.

\begin{lem}\label{lem controlled exit sets}
	Say that $(A_n^{o},L_n)$ is an index pair for $\X_n$, for some $L_n$, of $\X_n$ on $F_n \oplus W_n$.  Then $(\tilde{A}_{n+1},\tilde{L}_{n+1})$  
	is an index pair for $\X^{\mathrm{split}}_{n+1}$, where
	\[
	\tilde{A}_{n+1}=A_n^{o} \times_B(B(P_{n+1}/P_n\oplus W_{n+1}^+/W_n^+,R))\times_B(B(Q_{n+1}/Q_n\oplus W_{n+1}^-/W_n^-,R)),
	\]   
	for some $R$ sufficiently large, and
	\[
	\tilde{L}_{n+1}=L_n^{o} \times_B(B(P_{n+1}/P_n\oplus W_{n+1}^+/W_n^+,R))\times_B(\partial B(Q_{n+1}/Q_n\oplus W_{n+1}^-/W_n^-,R)).
	\] 
\end{lem}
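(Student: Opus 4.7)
The plan is to verify directly that $(\tilde{A}_{n+1},\tilde{L}_{n+1})$ satisfies the index-pair axioms for the split flow $\X^{\mathrm{split}}_{n+1}$ on $\tilde{A}_{n+1}$ (an isolating neighborhood for $n\gg 0$ by Lemma \ref{lem suspension premain}). The essential structural observation is that (\ref{eq split flow}) has a skew-product form: the $(\phi^{(1)}_{n+1},\omega^{(1)}_{n+1},b)$-motion is exactly the original approximate flow $\X_n$ applied to $\hat{\gamma}_{n+1}$, independent of $(\sigma_{n+1},\omega^{(2)}_{n+1})$, while the $(\sigma_{n+1},\omega^{(2)}_{n+1})$-motion is fiberwise essentially linear, governed by $\pi_{\Sigma_{n+1}}D$ on $\Sigma_{n+1}$ and $*d$ on $\Sigma^W_{n+1}$.

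First I would establish that for $n$ large the linear dynamics on $\Sigma_{n+1}^{+}\oplus \Sigma^{W,+}_{n+1}$ are strictly attracting and on $\Sigma_{n+1}^{-}\oplus\Sigma^{W,-}_{n+1}$ are strictly repelling. Differentiating $\|\sigma_{n+1}^+(t)\|^2_{k_+}$ as in the proof of Lemma \ref{lem gamma_n L^2_{k+1/2}}, the dominant contribution is $-2\|\sigma^+\|^2_{k_++1/2}\leq -2\mu_{n,-}\|\sigma^+\|^2_{k_+}$ since eigenvalues of $D'$ on $\Sigma^+_{n+1}$ are at least $\mu_{n,-}$; the correction term $\langle(\nabla_{X_H}\pi_{\Sigma_{n+1}})\sigma^+,\sigma^+\rangle_{k_+}$ is bounded by $C_n R^3$ by Proposition \ref{prop perturbation P} and the uniform bound $\|X_H(\phi^{(1)})\|\leq CR^2$ on $\tilde{A}_{n+1}$. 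Since $\mu_{n,-}\to\infty$ and the spectral sections may be chosen so that $\mu_{n,-}/C_n\to\infty$, the linear term dominates for $n\gg 0$ away from $\sigma^+=0$. The same argument gives strict growth of $\|\sigma^-\|_{k_-}$; for $\omega^{(2),\pm}$ the estimates are simpler because the $\omega^{(2)}$-equation in (\ref{eq split flow}) is genuinely linear.

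With these estimates I would verify the index-pair axioms. The invariant-set condition $\mathrm{inv}(\tilde{A}_{n+1})\subset \mathrm{int}(\tilde{A}_{n+1}\setminus\tilde{L}_{n+1})$ follows because any trajectory bounded for all time must have $\sigma^\pm\equiv 0$ and $\omega^{(2),\pm}\equiv 0$ by the strict monotonicity of their norms, so $\mathrm{inv}(\tilde{A}_{n+1})\subset \mathrm{inv}(A_n^{o})\times\{0\}\subset \mathrm{int}(A_n^{o}\setminus L_n)\times \mathrm{int}(\text{ball bundles})$. Positive invariance of $\tilde{L}_{n+1}$ in $\tilde{A}_{n+1}$ holds because the skew-product structure reduces the first-factor case to the positive invariance of $L_n$ in $A_n^{o}$, while points on the negative sphere bundle exit $\tilde{A}_{n+1}$ instantaneously (making the condition vacuous there). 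The exit-set condition follows similarly: an orbit can leave $\tilde{A}_{n+1}$ only through $\partial A_n^{o}\setminus L_n^o$ (handled by the exit property of $(A_n^{o},L_n)$ via the skew-product) or through the negative sphere bundle — exits through the positive sphere bundle are ruled out by the attracting estimate — and both admissible exits occur through $\tilde{L}_{n+1}$.

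The main obstacle is making the ``linear dominance'' uniform: the bound $C_n$ on $\nabla_{X_H}\pi_{\Sigma_{n+1}}$ in Proposition \ref{prop perturbation P} a priori depends on $n$, and must be defeated by the growth of $\mu_{n,\pm}$. This is resolved by noting that the construction of the spectral sections $P_n,Q_n$ allows the gaps $\mu_{n,+}\to\infty$ to be selected independently of the constants $C_n$ arising from the previously constructed sections, so one may arrange $\mu_{n,-}\gg C_n R^2$ after choosing each $P_n$. A secondary subtlety is the coupling of the $\Sigma$-dynamics to the base via $X_H(\phi^{(1)}_{n+1})$; but since the linear estimates hold fiberwise and uniformly over $B=\mathrm{Pic}(Y)$, this coupling respects the parameterized structure and does not obstruct the index pair verification.
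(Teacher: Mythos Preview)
Your overall strategy matches the paper's: exploit the skew-product form of $\X^{\mathrm{split}}_{n+1}$, verify the index-pair axioms factor by factor, and use that the $\Sigma^{\pm}_{n+1}$ dynamics are dominated by the linear part for $n\gg 0$. The verification of the three axioms that you sketch is essentially what the paper does.

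Where you diverge is in handling what you call the ``main obstacle'', the correction $\langle(\nabla_{X_H}\pi_{\Sigma_{n+1}})\sigma,\sigma^{\pm}\rangle_{k_\pm}$, which you bound by $C_nR^3$ via Proposition~\ref{prop perturbation P} and then defeat by arranging $\mu_{n,-}/C_n\to\infty$. This detour is unnecessary: the term vanishes identically. Both $\sigma$ and $\sigma^{\pm}$ lie in $\Sigma_{n+1}$, so Lemma~\ref{lem nabla pi  phi psi} gives $\langle(\nabla_v\pi_{\Sigma_{n+1}})\sigma,\sigma^{\pm}\rangle_{k_+,k_-}=0$; and since $\sigma^{\pm}\in\mathcal{E}^{\pm}$ (for $n$ large), the $(k_+,k_-)$-pairing with $\sigma^{\pm}$ coincides with the $k_{\pm}$-pairing. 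The only surviving correction terms in $\frac{d}{dt}\|\sigma^{\pm}\|^2_{k_\pm}$ come from $\nabla_{X_H}|D'|^{k_\pm}$ and $\nabla_{X_H}\pi^{\pm}$, and these are bounded by $CR^2$ with $C$ \emph{independent of $n$} (Lemma~\ref{lem nabla |D'|^k}, Proposition~\ref{prop L^2 nabla pi_P}). The paper's computation is then simply
\[
\tfrac{1}{2}\tfrac{d}{dt}\|\sigma^{-}\|^2_{k_-}\ \geq\ \|\sigma^{-}\|^2_{k_-+\frac{1}{2}}-CR^2,
\]
and since $\|\sigma^{-}\|^2_{k_-+1/2}/\|\sigma^{-}\|^2_{k_-}\to\infty$ on $\Sigma^{-}_{n+1}$, this is positive once $\|\sigma^{-}\|_{k_-}\geq R/2$, with no hypothesis relating $\mu_{n,-}$ to $C_n$.

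Your proposed workaround is also awkward to implement: the bound $C_n$ from Proposition~\ref{prop perturbation P} for $\pi_{\Sigma_{n+1}}$ depends on $P_{n+1}$, whereas $\mu_{n,-}$ is already fixed once $P_n$ is chosen, so ``arrange $\mu_{n,-}\gg C_nR^2$ after choosing each $P_n$'' has a circularity that would need care to resolve. Using Lemma~\ref{lem nabla pi  phi psi} avoids all of this.
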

\begin{proof}
It follows from Lemma \ref{lem suspension premain} that $\mathrm{inv}(\tilde{A}_n\backslash \tilde{L}_n)\subset \mathrm{int}(\tilde{A}_n\backslash \tilde{L}_n)$. 

We next check that $\tilde{L}_n$ is positively invariant in $\tilde{A}_n$.  Write 
\[
(\phi_{n+1}^{(1)}(t),\omega_{n+1}^{(1)}(t),\zeta_{n+1}(t))
\]
 in 
 \[
      (F_{n} \oplus  W_n) \times_B (B(P_{n+1}/P_n\oplus W_{n+1}^+/W_n^+,R))\times_B(B(Q_{n+1}/Q_n\oplus W_{n+1}^-/W_n^-,R))
\]
for a trajectory of $\varphi^{\mathrm{split}}_{n+1,k_+,k_-}$.  The flow on the $\mathcal{F}_n\times_B\mathcal{W}_n$-factor is independent of position on the $B(P_{n+1}/P_n\oplus W_{n+1}^+/W_n^+,R))\times_B(B(Q_{n+1}/Q_n\oplus W_{n+1}^-/W_n^-,R))$ factor, and in particular, if $(\phi_{n+1}^{(1)}(T_0),\omega_{n+1}^{(1)}(T_0))\in L_{n}$, then $(\phi_{n+1}^{(1)}(t),\omega_{n+1}^{(1)}(t))\in L_n$ for all $t\geq T_0$, by our assumption on $L_n$.

We must then show that if $\zeta_{n+1}(T_0)\in \partial B(Q_{n+1}/Q_n\oplus W_{n+1}^-/W_{n}^-,R)$, then 
\[ 
\zeta_{n+1}(t)\in  \partial B(Q_{n+1}/Q_n\oplus W_{n+1}^-/W_n^-,R_1),
\]
or exits $\tilde{A}_{n+1}$, 
for all $t\geq T_0$, if $n$ is large enough.  We regard the path $(\phi_{n+1}^{(1)}(t),\omega_{n+1}^{(1)}(t))$ as fixed, and $\zeta_{n+1}(t)$ as a trajectory of a vector field on the boundary $\partial B(Q_{n+1}/Q_n\oplus W_{n+1}^-/W_n^-,R_1))$.

Write $\zeta_{n+1}(t)=(b(t),\zeta_{n+1}^{(1),+},\zeta_{n+1}^{(1),-},\zeta_{n+1}^{(2),+},\zeta_{n+1}^{(2),-})$, as a section of 
\[
V_n(R_1)=(B(P_{n+1}/P_n\oplus W_{n+1}^+/W_n^+,R_1))\times_B(B(Q_{n+1}/Q_n\oplus W_{n+1}^-/W_n^-,R_1).
\]
We may, and do, assume without loss of generality that $T_0=0$. 
Then if $(\zeta_{n+1}^{(1),-},\zeta_{n+1}^{(2),-})\in \partial B(Q_{n+1}/Q_n\oplus W_{n+1}^-/W_n^-,R)$, either $\zeta_{n+1}^{(1),-}$ or $\zeta_{n+1}^{(2),-}$ has $\Vert\zeta_{n+1}^{(i),-}\Vert_{k_-}\geq R_1/2$.  Assume $i=1$, the other case being similar.  

Recall that $(\phi^{(1)}_{n+1}(t),\omega_{n+1}^{(1)}(t),\zeta_{n+1}(t))$  is equivalent to a trajectory 
\[
  \gamma_{n+1}(t)=(\phi^{(1)}_{n+1}(t),\sigma_{n+1}(t),\omega_{n+1}(t))
\]
of $\X^{\mathrm{split}}_{n+1}$ on $F_{n+1} \oplus W_{n+1}$.    

We consider 
\begin{align*}
&\frac{1}{2}    \frac{d}{dt} \bigg|_{t=0} \Vert \zeta^{(1),-}_{n+1}(t) \Vert^2_{k_-}  \\
&=\frac{1}{2}    \frac{d}{dt} \bigg|_{t=0} \Vert \sigma_{n+1}^-(t) \Vert^2_{k_-}\\
&=\langle -(\nabla_{X_H}\pi_{\Sigma_{n+1}})\sigma_{n+1}(0)-\pi_{\Sigma_{n+1}}D'\sigma_{n+1}(0),\sigma_{n+1}^-(0)\rangle_{k_-}\\
&-\langle \nabla_{X_H}(D')^{k_-}\sigma_{n+1}(0),(D')^{k_-}\sigma_{n+1}^-\rangle_0+\langle(\nabla_{X_H}\pi^-)\sigma_{n+1}(0),\sigma^{-}_{n+1}(0)\rangle_{k_-}\\
&\leq CR^2-\langle\pi_{\Sigma_{n+1}}D'\sigma_{n+1}(0),\sigma_{n+1}^-(0)\rangle_{k_-}\\
&=CR^2-\Vert \sigma_{n+1}^-(0) \Vert^2_{k_-+\frac{1}{2}}.
\end{align*}
Note that we have used that $n$ can be taken sufficiently large that $\Sigma_{n+1}$ is perpendicular to the image of $\mathbb{A}$.

Now, by definition of $\Sigma_{n+1}$, we have
\[
\frac{\Vert\sigma^-_{n+1}(0)\Vert^2_{k_-+\frac{1}{2}}}{\Vert\sigma^-_{n+1}(0)\Vert_{k_-}^2}\to \infty
\]
as $n\to \infty$.

Thus, if $\Vert\sigma^-_{n+1}(0)\Vert_{k_-}\geq R/2$, we have that $\Vert\zeta^{(1),-}_{n+1}(t)\Vert_{k_-}$ is always increasing at $t=0$ (similarly, $\Vert\zeta^{(1),+}_{n+1}(t)\Vert_{k_+}$ is decreasing at $t=0$).  

This shows that $\tilde{L}_{n+1}$ is positively invariant in $\tilde{A}_{n+1}$.  It follows similarly that $\tilde{L}_{n+1}$ is an exit set.  
\end{proof}


\section{Spin$^c$ structure for family of manifolds}  \label{subsec:spinc family}

Since we consider a family of spin$^c$ three-manifolds to show that  the Conley index for the flow $\varphi_{n}$ is independent of the choice of  Riemannian metric of $Y$ in Section \ref{subsec:metric-independence}, we will give the definition of $\mathrm{spin}^c$ structure for a family of Riemannian manifolds. 

Take an $n$-dimensional real, oriented vector space $V$ and an inner product $g$ on $V$.  We denote by $Fr(V, g)$ the space of  orthonormal bases   of $(V, g)$ compatible with the orientation.  Choose another inner product $h$ on $V$.  We define an isomorphism between $Fr(V, g)$ and $Fr(V, h)$. 
For  $\{ e_i \}_{i=1}^{n} \in Fr(V, g)$, put
\[
       h_{ij} = h(e_i, e_j) \in \R. 
\]
Then the matrix $H = (h_{ij})_{i, j =1 ,\dots, n}$ is  symmetric and  positive definite.  We have the square root $\sqrt{H}$ of $H$  defined as follows.  Since $H$ is symmetric and positive definite, we have the eigenspace decomposition
\[
         \R^n = \bigoplus_{i=1}^{r} V_{\lambda_i}, 
\]
where $\lambda_i>0$ are the  distinct eigenvalues of $H$,  and $V_{\lambda_i}$ are the eigenspaces.   Define $\sqrt{H}$ to be the matrix corresponding to the linear map $\R^n \rightarrow \R^n$   defined by $v \mapsto \sqrt{\lambda_i} v$ for $v \in V_{\lambda_i}$.   Define a basis $f_1, \dots, f_n$ of $V$  by
\[
    (f_1  \ \dots \  f_n)   = (e_1 \ \dots \ e_n) \sqrt{H}^{-1}. 
\]
We can see that $f_1, \dots, f_n$ are an orthonormal basis with respect to $h$. So we get a map
\begin{equation}  \label{eq:map fr} 
        Fr(V, g) \rightarrow Fr(V,h). 
\end{equation}
Take $G \in SO(n)$  and put
\[
    (e_1'  \ \dots  \ e_n' ) = (e_1   \ \dots \ e_n) G,   \quad
    H' = (h(e_i', e_j'))_{i,j=1, \dots, n}.
\]
It is easy to see that
\[
     H' = G^{-1} H G, \quad 
     \sqrt{H'} = G^{-1} \sqrt{H} G. 
\]
This implies that the map $(\ref{eq:map fr})$ is  an $SO(n)$-equivariant isomorphism.

For an oriented smooth Riemannian $n$-manifold $(X, g$), let $P_{X, g}$ be the principal $SO(n)$-bundle  of oriented, orthonormal frames in $TX$. 
Recall that a $\mathrm{spin}^c$ structure of $(X, g)$ is a pair of a principal $\mathrm{Spin}^c(n)$-bundle  $\tilde{P}_X$ on $X$ and a smooth map $\xi : \tilde{P}_{X} \rightarrow P_{X, g}$ such that the diagram
\[
        \xymatrix{
                 \tilde{P}_X  \ar[rr]^{\xi} \ar[rd] & & P_{X, g} \ar[ld]  \\
                   & X & 
        }
\]
commutes,  and for $p \in \tilde{P}_X$ and $s \in \mathrm{Spin}^c(n)$ we have
\[
             \xi(p \cdot s)  = \xi(p) \cdot \pi(s). 
\]
Here $\pi : \mathrm{Spin}^c(n) \rightarrow SO(n)$ is the projection.   

Take another Riemannian metric $h$ on $X$.  The $SO(n)$-equivariant isomorphism (\ref{eq:map fr}) induces an isomorphism
\begin{equation}  \label{eq:iso P_X}
         P_{X, g} \cong P_{X, h}
\end{equation}
of principal bundles.   Hence a $\mathrm{spin}^c$ structure $(\tilde{P}_X, \xi)$ of $(X, g)$ naturally defines a $\mathrm{spin}^c$ structure of $(X, h)$. 

A  locally trivial family of $\mathrm{spin}^c$ manifolds   over a topological space $L$ is a tuple $(E, G,  \tilde{P}_E, \xi)$. The first component $E$ stands for a locally trivial  fiber bundle 
\[
      X \rightarrow  E \rightarrow L
\] 
over $L$ with fiber $X$.  For each $\ell \in L$  we have an open neighborhood $U_{\ell}$ of $\ell$ and a trivialization
\[
      E|_{U_{\ell}} \cong U_{\ell} \times E_{\ell}. 
\]
Here $E_{\ell}$ is the fiber of $E$ over $\ell$. 
The second component $G$ is a fiberwise Riemannian metric of $E$. 
Let $P_E$ be the principal $SO(n)$-bundle on $E$ whose fiber over $\ell$ is the principal $SO(n)$-bundle of oriented, orthornormal frames in  $TE_{\ell}$.   
Note that the local trivialization of $E$ on $U_{\ell}$ and the isomorphism (\ref{eq:iso P_X}) induce an isomorphism
\[
       P_{E}|_{U_{\ell}} \cong U_{\ell} \times P_{E_{\ell}}
\]
of principal bundles. 
The third component   $\tilde{P}_E$ is a principal $\mathrm{Spin}^c(n)$-bundle over $E$.  The fourth component  $\xi$ is  a smooth map 
\[
      \tilde{P}_{E} \rightarrow P_{E}
\]
such that the  diagram
\[
   \xymatrix{
            \tilde{P}_E  \ar[rr]^{\xi} \ar[rd] &    &   P_{E}  \ar[ld]  \\
               & E & 
    }
\]
commutes and $\xi(p, \cdot s) = \xi(p) \cdot \pi(s)$ for $p \in \tilde{P}_E$ and $s \in \mathrm{Spin}^c(n)$.   Moreover, we assume that $\tilde{P}_E$ is locally trivial. That is, for each $\ell \in L$  there is an isomorphism 
\[
      \tilde{P}_{E}|_{U_{\ell}} \cong U_{\ell} \times  \big( \tilde{P}_{E}|_{E_{\ell}}  \big)
\] 
of principal bundles such that   the following diagram commutes: 
\[
    \xymatrix{
      \tilde{P}_{E}|_{U_{\ell}} \ar[r]^-{\cong} \ar[d]_{\xi}  &   U_{\ell} \times  \big( \tilde{P}_{E}|_{E_{\ell}} \ar[d]^{id_{U_{\ell}} \times \xi}   \big) \\
            P_{E}|_{U_{\ell}} \ar[r]_-{\cong}  &  U_{\ell} \times P_{E_{\ell}}
    }
\]


\section{Independence of Metric}\label{subsec:metric-independence}


In this section we prove that the approximate Seiberg-Witten flow defined in (\ref{eq for gamma}) varies continuously as we vary the three-manifold.

To make this precise, let $\faml$ be a  locally-trivial family of $\mathrm{spin}^c$ metrized three-manifolds with compact base space $L$, so that $L$ is a CW complex. See Section \ref{subsec:spinc family} for the definition of locally trivial family of $\mathrm{spin}^c$ metrized manifolds. 
Note that associated to $\faml$ there is also a bundle over $L$, $\pic(\faml)$, whose fiber is the Picard-bundle at $\ell\in L$.

%

Suppose that we are given a sequence of continuously varying spectral sections $P_{n,\ell},Q_{n,\ell}$ for $\ell\in L$ so that the $P_{n,\ell}$,$Q_{n,\ell}$ are good as at the beginning of Section \ref{sec:findim-appx}, with $F_{n,\ell}=P_{n,\ell}\cap Q_{n,\ell}$ as a fiber bundle over (the total space of) $\tilde{L}$.  Let $\varphi_{n,\ell,k_+,k_-}$ be the flow defined by projection onto $F_{n,\ell}$.  Here, unlike in the case of a single three-manifold, the flow preserves fibers of $F_{n,\ell}$ over $L$ (though the flow can of course move over $\tilde{L}_\ell$, the fiber of $\tilde{L}\to L$).

There is one subtlety in that now the eigenvalues of $*d$ may vary in the family $\mathcal{F}$.  In particular, we will assume the existence of increasing spectral sections $W_{P,n}$ for $- \! * \! d$, and increasing spectral sections $W_{Q,n}$ for $*d$, satisfying the analogues of (\ref{eq:spectral-conditions-1})-(\ref{eq:spectral-conditions-2}), and set $W_n=W_{P,n}\cap W_{Q,n}$.  With this notation fixed, we define $W_n^+$ and $W_n^-$ as before.

\begin{thm}\label{thm:families}
	Let $\faml$, with compact base $L$, be a family of $\mathrm{spin}^c$ metrized three-manifolds, with fiber $\faml_b$ for $b\in L$.  Let $k_+,k_-$ be half integers with $k_{\pm}>5$ and with $|k_+-k_-|\leq \frac{1}{2}$.  Fix a positive number $R$ with $R>R_{k_+,k_-}$ for some $R_{k_+,k_-}$.  Then 
	\[
	\big(  B_{k_+} (F_n^+; R) \times_B B_{k_-}(F_n^-; R) \big)  \times_{B}   \big( B_{k_+}(W_n^+; R) \times_{B} B_{k_-}(W_n^-; R) \big)
	\]
	is an isolating neighborhood of  the flow $\varphi_{n,\ell, k_+,k_-}$ for $n \gg 0$.  Here $B_{k_{\pm}}(F_n^{\pm};R)$ are the disk bundle of $F_n^{\pm}$ of radius $R$ in $L^2_{k_{\pm}}$ and $B_{k_+}(F_n^+; R) \times_B B_{k_-}(F_n^-; R)$ is the fiberwise product. 
\end{thm}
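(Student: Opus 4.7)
The plan is to adapt the contradiction argument of Theorem \ref{thm isolating nbd} to the parameterized setting. Suppose the conclusion fails: after passing to a subsequence, there exist $\ell_n \in L$ such that $A_n$ fails to isolate $\mathrm{inv}\,A_n$ for the flow $\varphi_{n,\ell_n,k_+,k_-}$. By compactness of $L$, after passing to a further subsequence we may assume $\ell_n \to \ell_\infty \in L$. Pick trajectories $\gamma_n = (\phi_n,\omega_n)\colon \mathbb{R} \to F_{n,\ell_n} \times W_{n,\ell_n}$ of $\varphi_{n,\ell_n,k_+,k_-}$ with $\gamma_n(0)\in \partial A_n$ and $\gamma_n(t)\in A_n$ for all $t$. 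By passing to a subsequence, we may assume that $\gamma_n(0)$ lies on a fixed one of the four components of $\partial A_n$, exactly as in the four cases (i)--(iv) in the proof of Theorem \ref{thm isolating nbd}.

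Next, I would lift each $\gamma_n$ to the universal cover $\mathcal{H}^1(\faml_{\ell_n})\times L^2_{k_+,k_-}(\mathbb{S}_{\ell_n})\times W_{n,\ell_n}$, placing the base point of the lift at time $0$ inside a fixed fundamental domain $\Delta_{\ell_n}\subset \mathcal{H}^1(\faml_{\ell_n})$. Because $L$ is compact and the data $P_{n,\ell},Q_{n,\ell},W_{n,\ell},g_\ell$ vary continuously in $\ell$, all the constants appearing in Section \ref{sec:findim-appx} (the bounds $C_n$ of (\ref{eq nabla pi C_n}), the weights $\epsilon_n$, the compactness constant $R_{k_+,k_-}$ of Proposition \ref{prop compactness}, and the $L^2_{K_T,\ell-5,w}$-Ascoli bounds of Proposition \ref{prop pi P_n wighted}) may be chosen uniformly in $\ell \in L$. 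Using the trivializations of $\faml$ over small neighborhoods of $\ell_\infty$, the lifted trajectories can be transported into the fiber over $\ell_\infty$ using diffeomorphisms close to the identity, with distortions tending to zero in operator norm as $n\to\infty$, so that the Ascoli-type argument of Proposition \ref{prop Ascoli} produces a continuous limit $\tilde{\gamma}\colon \mathbb{R}\to \mathcal{H}^1(\faml_{\ell_\infty})\times L^2_{\ell-1}(\mathbb{S}_{\ell_\infty})\times L^2_{\ell-1}(\operatorname{im} d^*_{\ell_\infty})$, uniformly on compact subsets of $\mathbb{R}$ in the $(\ell-1)$-norm.

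By the same projection/convergence argument used in Lemma \ref{lem:convergence-to-solution}, now applied in the family with the uniform bounds above, the limit $\tilde{\gamma}$ is a finite-energy solution of the Seiberg-Witten equations on $\faml_{\ell_\infty}\times \mathbb{R}$. Proposition \ref{prop compactness} then gives $\|\tilde{\gamma}(t)\|_{k_+,k_-} \leq R_{k_+,k_-} < R$ for all $t$. Finally, in each of the four boundary cases, the $L^2_{k_\pm+1/2}$ bound of Lemma \ref{lem gamma_n L^2_{k+1/2}} (and its analog Lemma \ref{lem phi^-}), which is proved by differentiating the norm of $\phi_n^\pm(t)$ at $t=0$ and using only uniform family estimates together with Lemma \ref{lem nabla pi  phi psi} and Proposition \ref{prop perturbation P}, extends to yield that the relevant component of $\gamma_n(0)$ converges strongly in $L^2_{k_\pm}$ to the corresponding component of $\tilde{\gamma}(0)$. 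The boundary hypothesis then forces $\|\tilde{\gamma}(0)\|_{k_+,k_-}\geq R$, contradicting Proposition \ref{prop compactness}.

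The main obstacle is the bookkeeping required to make all the estimates in Section \ref{sec:findim-appx} uniform across the family, in particular the choice of weights $\epsilon_n$ in the weighted Sobolev norms and the spectral gap argument of Lemma \ref{lem spectral gap}. Compactness of $L$ combined with the Weyl law, which is robust under small perturbations of the metric and the $\mathrm{spin}^c$ connection, makes this straightforward but tedious; once uniformity is established, the contradiction argument is formally identical to that of Theorem \ref{thm isolating nbd}.
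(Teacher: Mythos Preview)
Your proposal is correct and follows exactly the approach the paper takes: the authors simply remark that the proof differs from that of Theorem \ref{thm isolating nbd} only in notation, and your sketch spells out precisely those notational adaptations (passing to a convergent subsequence in $L$, making the constants uniform via compactness of $L$, and rerunning the contradiction argument).
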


The proof of this Theorem differs only from the proof of Theorem \ref{thm isolating nbd} in notation, so we will not write out the details.

In particular:

\begin{cor}
	Let $(Y,\fs)$ be a $\mathrm{spin}^c$ manifold, with metrics $g_0,g_1$, and fix a family of good spectral sections $P_{n,0},Q_{n,0}$ over $(Y,g_0)$.  Choose a family of metrics $g_t$ connecting $g_0$ to $g_1$.  Then there exists a family of spectral sections $P_{n,t},Q_{n,t}$ extending $P_{n,0},Q_{n,0}$ and so that the flow $\varphi_{n,0,k_+,k_-}$ on $F_{n,0}$ extends to a continuously-varying flow $\varphi_{n,t,k_+,k_-}$ on $F_{n,t}$, so that
	\[
	\big(  B_{k_+} (F_n^+; R) \times_B B_{k_-}(F_n^-; R) \big)  \times_{B}   \big( B_{k_+}(W_n^+; R) \times_{B} B_{k_-}(W_n^-; R) \big)
	\]
	is an isolating neighborhood of the flow $\varphi_{n,t, k_+,k_-}$ for $n \gg 0$ and all $t\in [0,1]$.  In particular, $I(\varphi_{n,0,k_+,k_-})$ is canonically, up to homotopy equivalence, identified with $I(\varphi_{n,1,k_+,k_-})$.  
\end{cor}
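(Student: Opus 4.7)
The plan is to deduce the corollary essentially as an immediate consequence of Theorem \ref{thm:families}, applied to the compact base $L=[0,1]$, together with the invariance of the (parameterized) Conley index under continuation. The only genuine work is constructing the extension of the family $P_{n,0},Q_{n,0}$ across the homotopy $\{g_t\}_{t\in[0,1]}$ while preserving goodness.

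First I would view the path of metrics $\{g_t\}$ as a family $\faml$ of $\mathrm{spin}^c$ metrized three-manifolds over $L=[0,1]$, with constant underlying three-manifold $Y$, and form the associated family of Dirac operators $D_t$ acting (after identifying their domains along the trivialization) on a bundle $\mathcal{E}_{k,t}$ over $\mathrm{Pic}(Y)\times[0,1]$. Since $\mathrm{Ind}\,D$ vanishes and the family is homotopic to the constant one, the arguments of Theorem \ref{thm spectral section mu mu+delta} (which were cellular in nature and only required $\mathrm{Ind}\,D=0$ together with the Weyl-law gap estimate in Theorem \ref{thm Atiyah}, both of which hold uniformly in $t$) extend from a single metric to a compact parameter space. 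Thus I would extend the given $P_{n,0},Q_{n,0}$ one cell of $[0,1]$ at a time using the homotopy-lifting argument of Theorem \ref{thm spectral section mu mu+delta}, applied in the families setting, to obtain continuously varying good spectral sections $P_{n,t},Q_{n,t}$ interpolating between the data at $t=0$ and arbitrary choices at $t=1$. Likewise I would choose continuously varying spectral sections $W_{P,n,t},W_{Q,n,t}$ for $\pm\ast d$, setting $W_{n,t}=W_{P,n,t}\cap W_{Q,n,t}$ and $F_{n,t}=P_{n,t}\cap Q_{n,t}$.

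With these families in hand, the approximate Seiberg-Witten flow $\varphi_{n,t,k_+,k_-}$ on $F_{n,t}\oplus W_{n,t}$ from equation (\ref{eq for gamma}) is defined for each $t\in[0,1]$, and depends continuously on $t$. Applying Theorem \ref{thm:families} to this family then yields, for $n\gg 0$, the claimed isolating neighborhood property uniformly in $t\in[0,1]$.

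Finally, the conclusion that $I(\varphi_{n,0,k_+,k_-})$ and $I(\varphi_{n,1,k_+,k_-})$ are canonically homotopy equivalent follows from the standard continuation principle for the (parameterized) Conley index: a continuous family of flows with a common isolating neighborhood induces a canonical homotopy equivalence of Conley indices at the endpoints, as reviewed in the appendix (Section \ref{sec:homotopy}) and in \cite{Salamon}. The main obstacle I anticipate is purely the families construction of the spectral sections: one must verify that the inductive, cell-by-cell extension in the proof of Theorem \ref{thm spectral section mu mu+delta} runs through on $B\times[0,1]$ while keeping the bounds $\mu_{n,\pm},\lambda_{n,\pm}$ uniform in $t$ (this uniformity is what makes the constants $R_{k_+,k_-}$, $C_n$, and the weight function $w$ from Section \ref{subsec:weighted-spaces} work uniformly in $t$, and hence makes Theorem \ref{thm:families} directly applicable). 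Once that uniformity is in place, the corollary is formal.
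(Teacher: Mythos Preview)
Your proposal is correct and follows essentially the same three-step structure as the paper: extend the spectral sections over $[0,1]$, apply Theorem~\ref{thm:families}, and invoke continuation invariance of the Conley index. The only minor difference is in the first step: rather than rerunning the cell-by-cell construction of Theorem~\ref{thm spectral section mu mu+delta} over $B\times[0,1]$, the paper simply appeals to the homotopy-theoretic description of spectral sections together with the contractibility of $[0,1]$, which makes the extension immediate and avoids checking uniformity of the constants by hand.
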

\begin{proof}
	The claim about the existence of the extended spectral sections follows from the homotopy-description of spectral sections and the fact that $[0,1]$ is contractible.  The claim on isolating neighborhoods is a consequence of Theorem \ref{thm:families}.  The well-definedness of the Conley index follows from the continuity property of the Conley index.
\end{proof}


\section{Variation of Sobolev Norms}\label{subsec:sobolev}

\begin{prop}\label{prop:sobolev-independence-3}
	Let $(k_+^1,k_-^1)$ and $(k_+^2,k_-^2)$ be pairs of half-integers $>5$, with $|k_+^i-k_-^i|\leq \frac{1}{2}$ for $i=1,2.$  Fix $R$ sufficiently large.  Then there exists a family of flows $\varphi^\tau_n$ for $\tau\in [0,1]$ so that
	\[
	\big(  B_{g^\tau_+} (F_n^+; R) \times_B B_{g^{\tau}_-}(F_n^-; R) \big)  \times_{B}   \big( B_{g^\tau_+}(W_n^+; R) \times_{B} B_{g^{\tau}_-}(W_n^-; R) \big)
	\]
	is a family of isolating neighborhoods, where $g^\tau_{\pm}$ is the interpolated metric (defined below), and where $\varphi^0_n=\varphi_{n,k^1_+,k^1_-}$ and $\varphi^1_{n}=\varphi_{n,k^2_+,k^2_-}$.  In particular, there is a  homotopy equivalence
	\[
	I(\varphi_{n,k_+^1,k_-^1})\to I(\varphi_{n,k_+^2,k_-^2}),
	\]
	suppressing the spectral section choices from the notation. The restriction to the $S^1$-fixed point set is a  fiber-preserving homotopy equivalence. 
\end{prop}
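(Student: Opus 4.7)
The plan is to interpolate at the level of the quadratic form on $\mathcal{E}_\infty\oplus(\im d^*)$, to use this interpolation to build a continuous family of projections and approximate Seiberg-Witten flows, and then to verify that a single family of disk bundles forms an isolating neighborhood throughout. The homotopy equivalence will then follow from the continuation invariance of the (parameterized, equivariant) Conley index.

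Concretely, I would define the interpolated fiberwise inner product
\[
\langle s_1,s_2\rangle^\tau := (1-\tau)\langle s_1,s_2\rangle_{k_+^1,k_-^1}+\tau\langle s_1,s_2\rangle_{k_+^2,k_-^2},\qquad \tau\in [0,1],
\]
with associated norms $g^\tau_\pm$ on the $\pm$-eigenspaces of $D'$ (respectively $*d$), let $\pi^\tau_{F_n},\pi^\tau_{W_n}$ denote the $g^\tau$-orthogonal projections, and define $\varphi^\tau_n$ by substituting these for $\pi_{F_n},\pi_{W_n}$ in (\ref{eq for gamma}). Since the endpoint inner products in (\ref{eq L2 k_+ k_-}) have no cross-terms between $\pm$-eigenspaces, the splittings $F_n=F_n^+\oplus F_n^-$ and $W_n=W_n^+\oplus W_n^-$ remain $g^\tau$-orthogonal for every $\tau$, and one checks that $\pi^\tau_{F_n}$ preserves these splittings.

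The core task will be to run a $\tau$-uniform version of the proof of Theorem \ref{thm isolating nbd}. Most inputs extend routinely: the operator-norm bounds (\ref{eq nabla pi C_n}) for $\nabla_v\pi^\tau_{P_n}$ depend continuously on the compact parameter $\tau\in[0,1]$ and are therefore uniformly bounded; Proposition \ref{prop pi P_n wighted} goes through verbatim with $\pi^\tau_{P_n}$ in place of $\pi_{P_n}$; and the extraction via Proposition \ref{prop Ascoli} of a limiting Seiberg-Witten trajectory combined with Proposition \ref{prop compactness} would produce a contradiction with a boundary trajectory, provided that the analogue of Lemma \ref{lem gamma_n L^2_{k+1/2}} holds uniformly in $\tau$.

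The main obstacle will be precisely that boundary bound. I would compute
\[
\frac{1}{2}\frac{d}{dt}\bigr|_{t=0} g^{\tau_n}(\phi_n^+(t),\phi_n^+(t))=(1-\tau_n)\cdot\frac{1}{2}\frac{d}{dt}\bigr|_{t=0}\|\phi_n^+\|^2_{k_+^1}+\tau_n\cdot\frac{1}{2}\frac{d}{dt}\bigr|_{t=0}\|\phi_n^+\|^2_{k_+^2},
\]
expand each $\frac{d}{dt}\|\cdot\|^2_{k_+^i}$ as in Lemma \ref{lem gamma_n L^2_{k+1/2}} (which uses Lemma \ref{lem nabla |D'|^k} and hence requires $2k_+^i\in\Z$, exactly the half-integer hypothesis at the two endpoints), and extract the dominant term via $g^{\tau_n}$-self-adjointness of $\pi^{\tau_n}_{F_n}$ together with $D'$-invariance of $F_n^+$:
\[
g^{\tau_n}(\pi^{\tau_n}_{F_n}D'\phi_n,\phi_n^+) = g^{\tau_n}(D'\phi_n^+,\phi_n^+) = (1-\tau_n)\|\phi_n^+\|^2_{k_+^1+\frac{1}{2}}+\tau_n\|\phi_n^+\|^2_{k_+^2+\frac{1}{2}}.
\]
This should yield a uniform-in-$\tau$ bound on $\|\phi_n^+(0)\|_{k_+^1+\frac{1}{2}}+\|\phi_n^+(0)\|_{k_+^2+\frac{1}{2}}$, after which Rellich compactness and the remainder of the Theorem \ref{thm isolating nbd} argument close up. Once isolation is established uniformly in $\tau\in[0,1]$, the continuation property of the parameterized Conley index will yield the desired homotopy equivalence.
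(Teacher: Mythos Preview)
Your proposal is correct and follows essentially the same strategy as the paper: interpolate the inner product linearly, replace $\pi_{F_n},\pi_{W_n}$ by the $g^\tau$-orthogonal projections in the approximate flow, and rerun the proof of Theorem \ref{thm isolating nbd} uniformly in $\tau$, invoking continuation invariance of the Conley index at the end. The paper singles out the uniform-in-$\tau$ version of Proposition \ref{prop pi P_n wighted} (your claim that it ``goes through verbatim'') as the one nontrivial input and is otherwise terse, while you place the emphasis on the analogue of Lemma \ref{lem gamma_n L^2_{k+1/2}}; both ingredients are needed and your treatment of the dominant term via $g^\tau$-self-adjointness of $\pi^\tau_{F_n}$ is exactly right. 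One small caution: the inequality you derive controls the convex combination $(1-\tau_n)\|\phi_n^+(0)\|^2_{k_+^1+\frac12}+\tau_n\|\phi_n^+(0)\|^2_{k_+^2+\frac12}$, not literally the sum of the two norms; after passing to a subsequence in $\tau_n$ this is still enough for Rellich compactness and the contradiction, so the argument closes as you say.
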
 
\begin{proof}
	Define the \emph{interpolated metric} $g^\tau$ by 
	\[
	g^\tau(x,y):=\langle x,y\rangle_{k_\pm^\tau}:=(1-\tau)\langle x,y \rangle_{k_+^1,k_-^1}+\tau\langle x,y\rangle_{k^2_+,k^2_-}.
	\]
	We abuse notation and also write $g^{\tau}$ for the restriction of $g^\tau$ to subbundles, including $F_n^{\pm}$ and $W^{\pm}_n$.  
	
	The equation (\ref{eq d/dt gamma_n}) defines a flow $\varphi^\tau_{n}$, with $\pi_{F_n}$, $\pi_{W_n}$ replaced appropriately.  Hypothesis (\ref{eq phi omega norm}) continues to hold, with the subscripts $k_\pm$ replaced with $k^{\tau}_{\pm}$.  		Write $\pi^\tau_{F_n}$ for projection with respect to $g^\tau$.  
	
	As usual, we will assume for a contradiction that 
	\[
	y_{n,0}^{\tau_n}=(\phi_{n,0}^{\tau_n},\omega^{\tau_n}_{n,0})\in \inv A_n \cap \partial A_n.
	\]
	Let us treat the case that
	 \[\phi^{\tau_n}_{n,0}\in S_{g^\tau_+}(F_n^+;R)\in \inv A_n\cap \partial A_n,\]
	where $S_{g^\tau_+}(V,R)$, for $V$ a vector bundle over $B$, is the $R$-sphere bundle.  
	
	Exactly as in the proof of Theorem \ref{thm isolating nbd}, we can extract a sequence of approximate solutions $\tilde{\gamma}_n^{\tau_n}=(\tilde{\phi}_n^{\tau_n},\omega_n^{\tau_n})$, for $t\in [-T,T]$, with $T$ fixed.  To see this, we need to control $\frac{d\tilde{\phi}_n^\tau}{dt}$ in $(K_T,\ell-5,w)$-norm.  This amounts to generalizing Proposition \ref{prop pi P_n wighted} to the following situation:

	\begin{prop}\label{prop pi P_n wighted2}
		Let $k_+,k_-$ be half integers, with $k_{\pm}>5$, and set $\ell = \min_{i=1,2}\{k^i_+,k^i_-\}$.  Then
		\[
		\sup_{v \in B(TB; 1)}  \Big\Vert \nabla_v    \pi^\tau_{P_n} : L^2_{ k^\tau} \rightarrow L^2_{\ell - 5, w}   \Big\Vert
		\rightarrow
		0,
		\]
		uniformly in $\tau$.
	\end{prop}
	This proposition holds because the natural modification of the estimate at the end of Corollary \ref{cor nabla pi}  holds.

	Then the sequence $\tilde{\gamma}^{\tau_n}_n(t)$ converges to a map 
	\[
	    \tilde{\gamma}\from [-T,T]\to \mathcal{H}^1(Y) \times L^2_{\ell -1} (\bbS)\times L^2_{\ell -1}(\mathrm{im} \; d^*).
	\]
	 To verify that $\tilde{\gamma}$ solves the Seiberg-Witten equations, we observe that 
	 \[
	     (\nabla_{X_H}\pi^{\tau_n}_{F_n})\tilde{\phi}_n(s)\to 0
	  \]
	   in $L^2_{K_T,\ell-5,w}$-norm, as follows from Proposition \ref{prop pi P_n wighted2}.  
	 
	 We have:	 
	 \begin{align*}
	 \Vert \pi^{\tau_n}_{F_n}D\phi_n - D\phi_n\Vert_{\ell -2}&=\Vert \pi^{\tau_n}_{F_n}D\phi_n -D\phi_n+D\phi_n- D\phi_n\Vert_{\ell -2}\\ &\leq \Vert [\pi_{F_n}^{\tau_n},D]\phi_n\Vert_{\ell-2}+\Vert D\phi_n-D\phi\Vert_{\ell-2}.
	 \end{align*}
	 The first term drops out, using the rule of a sequence of controlled vector spaces, and we obtain that $\pi_{F_n}^{\tau_n} D\phi_n$ converges to $D\phi$ uniformly in $L^2_{\ell-2}$ on $[-T,T]$.  By the proof of Lemma \ref{lem:convergence-to-solution}, the limit $\tilde{\gamma}$ is a solution of the Seiberg-Witten equations.  The proof from this point follows along the same lines as Theorem \ref{thm isolating nbd}. 
\end{proof}


\section{The Seiberg-Witten Invariant}\label{subsec:defn-of-invar}
In this section we repackage the construction of  $\preSWF_{[n]}(Y,\mathfrak{s})$ to take account of the choices made in the construction.

\begin{dfn}\label{def:spectral-system-3}
	A \emph{$3$-manifold spectral system} (abbreviated as just a \emph{spectral system}) for a family $\mathcal{F}$ of metrized $\mathrm{spin}^c$ 3-manifolds, with fiber $(Y,\mathfrak{s})$, is a tuple \begin{equation}\label{eq:def-of-spectral-system}\mathfrak{S}=(\mathbf{P},\mathbf{Q},\mathbf{W}_P,\mathbf{W}_Q,\{\eta^P_n\}_n,\{\eta_n^Q\},\{\eta^{W_P}_n\}_n,\{\eta^{W_Q}_n\}_n)\end{equation}
	where $\mathbf{P}=\{P_n\}_n$ (for $n\geq 0$) is a sequence of good (increasing) spectral sections of the Dirac operator $-D$; similarly $\mathbf{Q}=\{Q_n\}_n$ is a sequence of good increasing spectral sections of $D$ parameterized by $\mathrm{Pic}(\mathcal{F})$.  The $\mathbf{W}_P=\{W_{P,n}\}_n$ are good spectral sections of the operator $- \! * \! d$; and similarly $\mathbf{W}_Q=\{W_{Q,n}\}_n$ are good spectral sections of $*d$.  We require $W_{P,0}$ be the sum of all negative eigenspaces of $*d$, as we may, since the nullspace of $*d$, acting on the bundle $L^2_k(\operatorname{im} d^*)$ is trivial, and similarly $W_{Q,0}$ will be the sum of positive eigenspaces.   The $\eta_n$ are exactly as in Theorem \ref{thm change of approximation suspension}.
\end{dfn}	

We have not established that there exist good sequences of spectral sections for $*d$ for all families $\mathcal{F}$.  However, they exist in many situations, as for example when the family $\mathcal{F}$ is obtained as a mapping torus of a self-diffeomorphism preserving the fiber metric. In this case, $\mathcal{F}$ is a family over $S^1$  and the eigenvalues of $*d$ are constant functions on $S^1$.  More generally, if there is a neighborhood  $U$ of $b$ for each $b \in L$ such that $\mathcal{F}$ has a local trivialization $\mathcal{F}|_{U} \cong U \times Y$ preserving the fiber metric, then the eigenvalues of $*d$ are constants. So we have a good sequence of spectral sections of $*d$. 


\begin{dfn}\label{def:swf}
	The \emph{unparameterized Seiberg-Witten-Floer spectrum} 
	\[\unparamSWFn(\mathcal{F},\mathfrak{S},k_+,k_-)\]
	 of a family $\mathcal{F}$ as in Definition \ref{def:spectral-system-3} associated to a spectral system $\mathfrak{S}$, and $k_\pm $ half integers with $k_\pm> 5$ and $|k_+-k_-|\leq 1/2$, is the (partially-defined) equivariant  spectrum, whose sequence of spaces is defined as follows.

	Let $\mathfrak{S}$ be a spectral system with components as named in (\ref{eq:def-of-spectral-system}).  Let 
\[ 
   D_n=(\dim (P_n-P_0),
   \dim (Q_n-Q_0),
   \dim (W_{P,n}- W_{P,0}),
   \dim (W_{Q,n}- W_{Q,0})), 
\]
    whose components we denote $D^\ell_n$ for $\ell=1,\ldots,4$.   Recall (cf. Section \ref{subsec:spectra}) that we must assign, for a certain collection of representations, a space to each representation, together with structure maps.  The spaces in the Seiberg-Witten Floer spectrum are most naturally defined at those representations $\mathbb{C}^{D_n^2}\oplus \mathbb{R}^{D_n^4}$; in order to define the spectra at other levels, we extrapolate from the definitions at these levels; see also Remark \ref{rmk:other-def}.   
       
       Let $\mathbb{N}_0$ be the set of non-negative integers. 
	For $(i_1,i_2)\in \mathbb{N}_0^2$ sufficiently large, let $A(i_1,i_2)=(A(i_1,i_2)_1,A(i_1,i_2)_2)$ denote the largest pair $(D_n^2,D_n^4)$ among pairs $(D_j^2,D_j^4)$ for which $(D_j^2,D_j^4)\leq (i_1,i_2)$.   	We can write 
	\[
	           A(i_1, i_2) = (D_{n(i_1, i_2)}^{2},  D_{n(i_1, i_2)}^4)
	\]	
	for some $n(i_1, i_2) \in \mathbb{N}_0$. 
       Set  $\unparamSWFn_{i_1,i_2}(\mathcal{F},\mathfrak{S},k_+,k_-)$ to be 
	\[
	\Sigma^{\mathbb{C}^{i_1-A(i_1,i_2)_1}\oplus \mathbb{R}^{i_2-A(i_1,i_2)_2}}\unparamSWF_{[n(i_1, i_2)]}(\mathcal{F},\mathfrak{S},k_+,k_-).
	\] 
Here $\preSWF^{u}_{[n(i_1, i_2)]}(\cF, \mathfrak{S}, k_+, k_-)$ is the (unparameterized) Conley index with respect to the flow  $\varphi_{n(i_1,i_2), k_+, k_-}$. 
If $(i_1,i_2)$ is not sufficiently large, let $\unparamSWFn_{i_1,i_2}(\mathcal{F},\mathfrak{S},k_+,k_-)$ be a point.  Define the transition map \[\sigma_{(i,j),(i+1,j)}:\Sigma^{\mathbb{C}}\unparamSWFn_{i,j}\to \unparamSWFn_{i+1,j},\] where $i+1\neq D_n^2$ for any $n$, as the identity (With the $\mathbb{C}$ factor contributing to the leftmost factor of $\Sigma^{\mathbb{C}^{i_1-A(i_1,i_2)_1}}$), and similarly for transitions in the real coordinate.  If $i+1=D_n^2$ for some $n$, we use the $(\eta_n)_*$ as defined in Theorem \ref{thm change of approximation suspension}.  Note that the $(\eta_n)_*$ are only well-defined up to homotopy; we choose representatives in the homotopy class.
	
	In the event that the family has a self-conjugate $\mathrm{spin}^c$-structure, and so that the spectral section $\mathfrak{S}$ is preserved by $j$, we use $\mathbb{H}$ instead of $\mathbb{C}$ above, as appropriate, so that $\unparamSWFn$ is indexed on the $\mathrm{Pin}(2)$-universe described in Section \ref{subsec:homotopy1}.  To be more specific, we write $\SWFn^{u,\mathrm{Pin}(2)}(\mathcal{F},\mathfrak{S})$ for the $\mathrm{Pin}(2)$-spectrum invariant. In particular, $\SWFn^{u,\mathrm{Pin}(2)}_{i,j}$, viewed as an $S^1$-space, is identified with $\SWFn^{u}_{2i,j}$. 
\end{dfn}

We will often suppress some arguments of $\unparamSWFn$ from the notation where they are clear from context.

At the point-set level, there is a choice of index pairs (at each level $(i_1,i_2)$) involved in Definition \ref{def:swf}.  However, the space $\preSWF^u_{[n]}(\mathcal{F},\mathfrak{S},k_+,k_-)$ is well-defined up to canonical homotopy, since the Conley index forms a connected simple system, Theorem \ref{thm:fiberwise-deforming-conley-2}.  

\begin{rem}\label{rmk:parameterized-spectrum}
	We would be able to repeat Definition \ref{def:swf} in the parameterized setting, replacing the spectrum $\unparamSWFn$ with a parameterized spectrum $\SWFn$, except that it is not known that the parameterized Conley index forms a connected simple system in $\mathcal{K}_{G,B}$, the category considered in Appendix \ref{sec:homotopy}.  
\end{rem}

The spaces $\unparamSWFn_{(i_1,i_2)}(\mathcal{F})$ for $(i_1,i_2)$ not a pair $(D_n^2,D_n^4)$, for some $n$, seem to have rather an awkward definition, because they do not naturally represent the Conley index of some fixed flow.  However, they may be viewed as the Conley indices of a split flow on  $\underline{V}\times_{\mathrm{Pic}(\mathcal{F})}\preSWF_{[n]}(\mathcal{F})$, for $V= \mathbb{C}^{i_1-D_n^2}\oplus \mathbb{R}^{i_2-D_n^4}$ a vector space equipped with a linear (repelling) flow.

More generally, associated to a spectral system $\mathfrak{S}$, we define the virtual dimension of the vector bundle $F_n\oplus W_n$ as 
\[
  D_n=(\dim (P_n-P_0),\dim (Q_n-Q_0),\dim (W_n^+),\dim (W_n^-)).
\]  
We write $\mathfrak{S}(\vec{i})$ for the vector bundle of virtual dimension $\vec{i}=(i_1,i_2,i_3,i_4)$.  If the spectral section does not produce a vector bundle in that virtual dimension, we define
\[
\mathfrak{S}(i_1,i_2,i_3,i_4)=\underline{V}\oplus F_n\oplus W_n
\]
where $F_n\oplus W_n$ is the largest vector bundle coming from $\mathfrak{S}$ with virtual dimension at most $(i_1,i_2,i_3,i_4)$, and where we define $\underline{V}$ to be the trivial $S^1$ (or $\mathrm{Pin}(2)$, as appropriate) vector bundle with dimension $(i_1,i_2,i_3,i_4)-D_n$.  When we need to distinguish between the contributions of $F_n\oplus W_n$ and $\underline{V}$ to $\mathfrak{S}(\vec{i})$, we call $F_n\oplus W_n$ the \emph{geometric} bundle, and $\underline{V}$ the \emph{virtual} bundle.

We can treat $\mathfrak{S}(i_1,i_2,i_3,i_4)$ as a vector bundle with a split flow, as discussed above; its unparameterized Conley index is (canonically, up to homotopy) homotopy-equivalent to $\preSWF^u_{(i_2,i_4)}(\mathcal{F},\mathfrak{S})$.

Let \[\underline{V}(\vec{i},\vec{j})=\underline{\mathbb{C}}^{j_1-i_1}\oplus \underline{\mathbb{C}}^{j_2-i_2}\oplus \underline{\mathbb{R}}^{j_3-i_3}\oplus \underline{\mathbb{R}}^{j_4-i_4},\] viewed as a vector bundle with linear flow, outward in the even factors, inward in the odd factors.  Note that for any $\vec{j}\geq \vec{i}$ (that is, $j_1 \geq i_1, \dots, j_4 \geq i_4$), there is a vector bundle morphism \begin{equation}\label{eq:tautological-morphism}
\underline{V}(\vec{i},\vec{j})\oplus \mathfrak{S}(\vec{i})\to \mathfrak{S}(\vec{j}),
\end{equation} as follows.  Indeed, if $A(\vec{i})=A(\vec{j})$, then (\ref{eq:tautological-morphism}) is defined by:
\begin{align*}
\underline{V}(\vec{i},\vec{j})\oplus (\underline{V}(D_n,\vec{i})\oplus F_n\oplus W_n)
& =(\underline{V}(\vec{i},\vec{j})\oplus \underline{V}(D_n,\vec{i}))\oplus F_n\oplus W_n   \\
& \to \underline{V}(D_n,\vec{j})\oplus F_n\oplus W_n.
\end{align*}
If $\vec{j}=D_{n+1}$ and $\vec{i}=D_n$, the morphism (\ref{eq:tautological-morphism}) is just the structure map involved in the definition of a spectral system.  For more general $\vec{j},\vec{i}$, the morphism (\ref{eq:tautological-morphism}) is the composite coming from the sequence $\vec{i}\to D_{n_1}\to\ldots D_{n_k}=A(\vec{j})\to \vec{j}$, where the rightmost factors of $\underline{V}(\vec{i},\vec{j})$ are used first.

Similarly, we define $P(i_1)=\underline{\mathbb{C}}^{i_1-D_{A(i_1)}^1}\oplus P_{A(i_1)}$, etc.

\begin{dfn}\label{def:homotopy-of-spectral-sections}
	We call two spectral systems $\mathfrak{S}_1$ and $\mathfrak{S}_2$ for the same family $\mathcal{F}$ \emph{equivalent} if there exists a collection of bundle isomorphisms:
	\[
	\Phi_{P,i}: P^1(i)\to P^2(i),
	\]
	and similarly for $Q,W_P,W_Q$, for all $i$ sufficiently large, satisfying the following conditions.  First, there exists some sufficiently large $n$, so that the $\Phi_{P,i}$ (respectively $\Phi_{Q,i}$ etc.), as $i$ becomes large, must preserve the subbundles $P^j_n$ for $j=1,2$ (similarly for $Q^j_n$, etc.).  (Indeed, for $\vec{i}$ sufficiently large, $P^1_n$ (respectively $Q^1_n$ etc.) will be contained in the geometric bundles of $P^2(i)$ (respectively $Q^2(i)$ etc.).)
	
	Second, the $\Phi_{i}$ must be compatible with the structure maps of $\mathfrak{S}_1,\mathfrak{S}_2$ in that the following square commutes (as well as its analogs):
		\[
	\begin{tikzpicture}[baseline={([yshift=-.8ex]current  bounding  box.center)},xscale=8,yscale=1.5]
	\node (a0) at (0,0) {$ 	\underline{V}\oplus P^1(i)$};
	\node (a1) at (1,0) {$ 	\underline{V}\oplus P^2(i)$};
	\node (b0) at (0,-1) {$P^1(j)$};
	\node (b1) at (1,-1) {$P^2(j)$};

	\draw[->] (a0) -- (a1) node[pos=0.5,anchor=north] {\scriptsize
		$\operatorname{id}\oplus \Phi_{P,i}$}; \draw[->] (a0) -- (b0) node[pos=0.5,anchor=east] {\scriptsize $\eta $};
	\draw[->] (b0) -- (b1) node[pos=0.5,anchor=south east]{\scriptsize $\Phi_{P,j} $}; \draw[->] (a1) -- (b1) node[pos=0.5,anchor=west] {\scriptsize $\eta $};
	
	\end{tikzpicture}
	\]
	
	We do not require the isomorphisms $\Phi_{i}$ (etc.) to preserve all of the $P^j_n$ as $n$ varies.

\end{dfn}

Note that a morphism of spectral systems as in Definition \ref{def:homotopy-of-spectral-sections} also induces maps
\[
\Phi_{\vec{i}}: \mathfrak{S}_1(\vec{i})\to \mathfrak{S}_2(\vec{i})
\]
for $\vec{i}$ sufficiently large, which preserve the subbundles $F^1_n\oplus W^1_n$ (which lie in $\mathfrak{S}_2(\vec{i})$ for $\vec{i}$ sufficiently large naturally), for some fixed large $n$, for $\vec{i}$ sufficiently large.  There is also a commutative square: 

	\[
\begin{tikzpicture}[baseline={([yshift=-.8ex]current  bounding  box.center)},xscale=8,yscale=1.5]
\node (a0) at (0,0) {$ 	\underline{V}\oplus \mathfrak{S}_1(\vec{i})$};
\node (a1) at (1,0) {$ 	\underline{V}\oplus \mathfrak{S}_2(\vec{i})$};
\node (b0) at (0,-1) {$\mathfrak{S}_1(\vec{j})$};
\node (b1) at (1,-1) {$\mathfrak{S}_2(\vec{j})$};

\draw[->] (a0) -- (a1) node[pos=0.5,anchor=north] {\scriptsize
	$\operatorname{id}\oplus \Phi_{\vec{i}}$}; \draw[->] (a0) -- (b0) node[pos=0.5,anchor=east] {\scriptsize $\eta $};
\draw[->] (b0) -- (b1) node[pos=0.5,anchor=south east]{\scriptsize $\Phi_{\vec{j}} $}; \draw[->] (a1) -- (b1) node[pos=0.5,anchor=west] {\scriptsize $\eta $};

\end{tikzpicture}
\]

\begin{prop}\label{prop:levelwise-well-definedness}
	For $\mathcal{F}$ a family of $\mathrm{spin}^c$ $3$-manifolds, $n$ sufficiently large and $\Phi: \mathfrak{S}_1\to\mathfrak{S}_2$ an equivalence of spectral systems, there is a homotopy equivalence, well-defined up to homotopy:\[\Phi^u_{n,*}: 
	\preSWF_{[n]}^u(\mathcal{F},\mathfrak{S}_1)\to \preSWF_{[n]}^u(\mathcal{F},\mathfrak{S}_2).
	\]
	In fact, there is a fiberwise-deforming homotopy equivalence:
	\[
	\Phi_{n,*}: 
	\preSWF_{[n]}(\mathcal{F},\mathfrak{S}_1)\to \preSWF_{[n]}(\mathcal{F},\mathfrak{S}_2),
\]
	so that $\Phi^u_{n,*}=\nu_{!}\Phi_{n,*}$.  Here $\nu$ is the map $\mathrm{Pic}(\mathcal{F})\to *$ sending $\mathrm{Pic}(\mathcal{F})$ to a point, and $\nu_{!}$ is defined as in Section \ref{sec:homotopy}.  (Note that $\Phi_{n,*}$ is not claimed to be well-defined).  Analogous statements hold for $\mathrm{Pin}(2)$-equivariant spectral sections.
\end{prop}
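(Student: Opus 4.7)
The plan is to use the equivalence data $\Phi$ together with the continuation property of the parameterized Conley index (cf.~Section \ref{sec:homotopy}) to produce $\Phi_{n,*}$, proceeding by an interpolation argument analogous to those of Sections \ref{subsec:variation-of-appx}--\ref{subsec:sobolev}.

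First I would, using the isomorphisms provided by $\Phi$ and the homotopy-theoretic fact that the space of spectral sections of $D$ lying in a fixed eigenvalue slab $\mathcal{E}^{\mu_{+}}_{-\infty}$ is a Grassmannian bundle over $\mathrm{Pic}(\mathcal{F})$ (and hence path-connected), construct a continuous path $\{\mathfrak{S}^\tau\}_{\tau\in[0,1]}$ of spectral systems with $\mathfrak{S}^0=\mathfrak{S}_1$ and $\mathfrak{S}^1=\mathfrak{S}_2$, realizing the isomorphism $\Phi$ at $\tau=1$. For $n$ sufficiently large, this produces, for each $\tau$, subbundles $F^\tau_n\subset\mathcal{E}_{k_+,k_-}$ and $W^\tau_n\subset L^2_{k_+,k_-}(\im d^*)$ together with $L^2_{k_+,k_-}$-projections $\pi^\tau_{F_n}$, $\pi^\tau_{W_n}$ depending continuously on $\tau$, and hence a family of approximate Seiberg-Witten flows $\varphi^\tau_n$ defined as in (\ref{eq for gamma}).

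The next step is to establish that, for $n$ sufficiently large, the natural ball bundle $A^\tau_n$ is an isolating neighborhood of $\varphi^\tau_n$ uniformly in $\tau$. This runs parallel to the proof of Theorem \ref{thm isolating nbd}: the estimates of Propositions \ref{prop [D, pi]}, \ref{prop L^2 nabla pi_P}, and \ref{prop pi P_n wighted} hold uniformly in $\tau$, since the family $\{\pi^\tau_{P_n}\}$ lies in a compact family of projections on the ambient bundle. A uniform-in-$\tau$ version of Proposition \ref{prop Ascoli} then allows putative boundary trajectories to be extracted to a Seiberg-Witten trajectory exceeding the a priori bound of Proposition \ref{prop compactness}, a contradiction. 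Given uniform isolation, the standard continuation property of the parameterized Conley index yields a fiberwise-deforming homotopy equivalence $\Phi_{n,*}\colon \preSWF_{[n]}(\mathcal{F},\mathfrak{S}_1)\to \preSWF_{[n]}(\mathcal{F},\mathfrak{S}_2)$, and applying $\nu_{!}$ gives $\Phi^u_{n,*}$.

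The hard part will be showing that $\Phi^u_{n,*}$ is well-defined up to homotopy, i.e., independent of the choice of interpolating path. Given two paths realizing the same equivalence $\Phi$, I would link them by a two-parameter family $\{\mathfrak{S}^{\tau,s}\}$ of spectral systems, using that the space of such interpolating paths is itself path-connected (being a path space in a Grassmannian bundle with fixed endpoint isomorphism data). A two-parameter continuation argument for the parameterized Conley index then supplies the requisite homotopy between the two resulting maps. The $\mathrm{Pin}(2)$-equivariant statement is proven identically, using path-connectedness of $\mathrm{Pin}(2)$-equivariant spectral sections as in Theorem \ref{thm Pin(2) spectral sections}.
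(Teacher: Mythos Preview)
Your approach differs from the paper's, and while it could plausibly be made to work, it is more roundabout and has one point that needs care.

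The paper does not construct a path of spectral systems.  Instead, it uses the given bundle isomorphism $\Phi_{\vec{i}}\colon \mathfrak{S}_1(\vec{i})\to \mathfrak{S}_2(\vec{i})$ directly to \emph{pull back} the flow $\varphi_2$ to the fixed bundle $\mathfrak{S}_1(\vec{i})$.  One then has two flows, $\varphi_1$ and $\Phi_{\vec{i}}^*\varphi_2$, on the \emph{same} underlying bundle, and the proof of Theorem~\ref{thm change of approximation suspension} supplies a deformation between them through flows with a common isolating neighborhood.  Deformation invariance gives the fiberwise-deforming equivalence $I(\varphi_1)\to I(\Phi_{\vec{i}}^*\varphi_2)\cong I(\varphi_2)$, where the last identification is canonical at the point-set level.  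Well-definedness of $\Phi^u_{n,*}$ up to homotopy then follows immediately from Salamon's connected-simple-system property of the unparameterized Conley index (Theorem~\ref{thm:deformation-invariance}); no two-parameter argument is needed.

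In your approach the flows $\varphi_n^\tau$ live on \emph{varying} bundles $F_n^\tau$, so the ordinary continuation theorem does not apply directly; you would need to invoke the families framework (as in Theorem~\ref{thm:families}) over $\mathrm{Pic}(\mathcal{F})\times[0,1]$.  More importantly, the phrase ``realizing the isomorphism $\Phi$ at $\tau=1$'' is doing real work that you have not supplied: a path of spectral sections yields a continuation map, but there is no a priori reason this map agrees with the one induced by the specific bundle isomorphism $\Phi$, which is what later compatibility arguments (e.g.\ Theorem~\ref{thm:well-definedness-of-seiberg-witten-invariant}) require.  The paper's pullback approach sidesteps both issues by building $\Phi$ into the construction from the start.
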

\begin{proof}
	We consider the pullback of the flow $\varphi_2$ on $\mathfrak{S}_2(\vec{i})$ by the morphism (For some large $\vec{i}$)
	\[
	\Phi_{\vec{i}}: \mathfrak{S}_1(\vec{i})\to \mathfrak{S}_2(\vec{i}),
	\]
	defining a flow on $\mathfrak{S}_1(\vec{i})$.  Following the proof of Theorem \ref{thm change of approximation suspension}, we see that there is a well-defined, up to homotopy, deformation of $\Phi_{\vec{i}}^*\varphi_2$ to $\varphi_1$.  Deformation invariance of the Conley index gives a fiberwise-deforming homotopy equivalence
	\[
	I(\varphi_1)\to I((\Phi_{\vec{i}})^*\varphi_2)\cong I(\varphi_2),
	\]
	where the isomorphism is canonical (at the point-set level).  Passing to the unparameterized Conley index, the morphism
	\[
	I^u(\varphi_1)\to I^u((\Phi_{\vec{i}})^*\varphi_2)
	\] is canonical (up to homotopy).  This gives the proposition.
\end{proof}

We write $[\mathfrak{S}]$ for the equivalence class of a spectral system $\mathfrak{S}$.  

\begin{rem} \label{rmk:well-defined-transitions}
	As usual, if Conjecture \ref{conj:parameterized-connected-simple-system} holds, then $\Phi_{n,*}$ appearing in Proposition \ref{prop:levelwise-well-definedness} is well-defined.
\end{rem}

\begin{thm}\label{thm:well-definedness-of-swf-type}
	The equivariant parameterized stable homotopy type of 
	\[
	\Sigma_B^{\mathbb{C}^{-D^2_n}\oplus \mathbb{R}^{-D^4_n}} \preSWF_{[n]}(\mathcal{F},[\mathfrak{S}])
	\]
	 is independent of the choices in its construction.  That is, it is independent of:
	\begin{enumerate}\item The choice of $k_+,k_-$,
		\item the element $n \gg 0$,
		\item A choice of spectral system $\mathfrak{S}$ representing the equivalence class $[\mathfrak{S}]$.
		\item The family of metrics on $\mathcal{F}$.
		\end{enumerate}
 Here  $\Sigma_B^{\mathbb{C}^{-D^2_n}\oplus \mathbb{R}^{-D^4_n}}$ stands for the desuspension by $\mathbb{C}^{D_n^2} \oplus \mathbb{R}^{D_n^4}$ in the category $PSW_{S^1, B}$. See Section \ref{subsec:homotopy1}.  

If the $\mathrm{spin}^c$ structure is self-conjugate, a similar statement holds for 
\[
\Sigma_{B}^{\mathbb{H}^{-D_n^2} \oplus \tilde{\mathbb{R}}^{-D^4_n}} \mathcal{SWF}_{[n]}( \mathcal{F}, [\mathfrak{S}]).
\] 
\end{thm}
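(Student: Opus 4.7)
The plan is to verify each of the four independence claims in turn, using the results established in the preceding subsections, and then to check that the various homotopy equivalences composed together give a well-defined equivariant parameterized stable homotopy type after the stated desuspension.

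First I would handle independence of $n$. By Theorem \ref{thm change of approximation suspension}, for each $n$ there is a parameterized homotopy equivalence
\[
\eta_*\colon \preSWF_{[n+1]}(\mathcal{F},\mathfrak{S},k_+,k_-)\to \Sigma_B^{\Sigma_{n+1}^-\oplus \Sigma_{n+1}^{W,-}}\preSWF_{[n]}(\mathcal{F},\mathfrak{S},k_+,k_-),
\]
where $\Sigma_{n+1}^-=Q_{n+1}/Q_n$ and $\Sigma_{n+1}^{W,-}=W_{n+1}^-/W_n^-$, and likewise its unparameterized version is well-defined up to homotopy. Because $\mathfrak{S}$ supplies trivializations $\eta^Q_n$ and $\eta^{W,-}_n$, we may identify $Q_{n+1}/Q_n\cong \underline{\C}^{D^2_{n+1}-D^2_n}$ and $W_{n+1}^-/W_n^{-}\cong \underline{\R}^{D^4_{n+1}-D^4_n}$. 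Thus after applying $\Sigma_B^{\C^{-D^2_n}\oplus \R^{-D^4_n}}$ to both sides, we obtain a well-defined (up to homotopy, on the unparameterized side) stable equivalence between the objects at levels $n$ and $n+1$. Composing such equivalences gives the $n$-independence.

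Next, for (3), equivalence of spectral systems, I would invoke Proposition \ref{prop:levelwise-well-definedness}, which produces for each equivalence $\Phi\colon \mathfrak{S}_1\to \mathfrak{S}_2$ a well-defined (up to homotopy) equivalence
\[
\Phi^u_{n,*}\colon \preSWF_{[n]}^u(\mathcal{F},\mathfrak{S}_1)\to \preSWF_{[n]}^u(\mathcal{F},\mathfrak{S}_2),
\]
together with its fiberwise-deforming parameterized lift. One must then check that these $\Phi_{n,*}$ commute (up to homotopy) with the suspension structure maps used in the $n$-independence step, which follows from the compatibility square in Definition \ref{def:homotopy-of-spectral-sections} together with naturality of the Conley index under fiber-preserving conjugations of the flow. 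For (1), independence of the Sobolev exponents, Proposition \ref{prop:sobolev-independence-3} provides a canonical homotopy equivalence $I(\varphi_{n,k_+^1,k_-^1})\to I(\varphi_{n,k_+^2,k_-^2})$ obtained from a continuous interpolation of the inner products; one checks this is fiberwise-deforming and compatible with all the other equivalences already constructed. Finally, (4), independence of the family of metrics, is the content of the corollary to Theorem \ref{thm:families}: for a path $g_t$ of metrics, the spectral system extends along the path and the resulting family of flows admits a uniform isolating neighborhood, so continuation invariance of the Conley index yields a canonical (up to homotopy) equivalence.

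The main subtlety, and the only substantive step beyond citing the earlier results, is verifying mutual compatibility of these four families of equivalences, so that the desuspended Conley indices assemble to a single equivariant parameterized stable homotopy type rather than merely a web of pairwise-equivalent ones. Concretely, one must show that the squares comparing (for example) a change of $n$ followed by a change of $k_\pm$ with the reverse composition commute up to homotopy. In each case this reduces to the following general principle: given two continuous deformations of an approximate flow through isolating neighborhoods (say, one that interpolates spectral sections and one that interpolates metrics or norms), the continuation morphisms obtained by performing them in either order are homotopic, because the combined two-parameter family of flows still admits the same isolating neighborhood by the uniform estimates in Theorems \ref{thm isolating nbd} and \ref{thm:families} and Propositions \ref{prop pi P_n wighted}, \ref{prop:sobolev-independence-3}. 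Applying these compatibility checks, the desuspended pre-SWF invariants patch together into the asserted equivariant parameterized stable homotopy type. The $\mathrm{Pin}(2)$-equivariant case follows verbatim, using Theorem \ref{thm Pin(2) spectral sections} and the $\mathrm{Pin}(2)$-equivariant refinements of the cited results, replacing trivial $\C$-bundles with trivial $\H$-bundles and $\R$-bundles with $\tilde{\R}$-bundles as indicated.
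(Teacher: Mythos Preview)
Your proposal is correct and follows essentially the same approach as the paper: the paper's proof simply cites Proposition~\ref{prop:levelwise-well-definedness} for (3), Proposition~\ref{prop:sobolev-independence-3} for (1), Theorem~\ref{thm change of approximation suspension} for (2), and Theorem~\ref{thm:families} for (4), with no further argument. Your extended discussion of mutual compatibility of the four families of equivalences is more than the theorem requires---since the claim is only about the parameterized stable homotopy \emph{type} (an isomorphism class in $\fpsw_{S^1,B}$), pairwise equivalences suffice, and the coherence you sketch is really the content of the next theorem (Theorem~\ref{thm:well-definedness-of-seiberg-witten-invariant}) on connected simple systems.
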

\begin{proof}
	Proposition \ref{prop:levelwise-well-definedness} addresses changes in the spectral section. Proposition \ref{prop:sobolev-independence-3} addresses varying of $k_\pm$.  The choice of $n$ was handled in Theorem \ref{thm change of approximation suspension}, and the metric was addressed in Theorem \ref{thm:families}.
\end{proof}

\begin{dfn} \label{def: SWF parameterized homotopy type}
	The \emph{Seiberg-Witten Floer parameterized homotopy type} 
	\[
	\preSWF(\mathcal{F},[\mathfrak{S}])
	\]
        is defined as the class of: 
        \[
	\Sigma_B^{\mathbb{C}^{-D^2_n}\oplus \mathbb{R}^{-D^4_n}} \preSWF_{[n]}(\mathcal{F},[\mathfrak{S}]),
	\]
	for any $n$.
	
When the $\mathrm{spin}^c$ structure is self-conjugate, the \emph{$\rm{Pin}(2)$-Seiberg-Witten Floer parameterized homotopy type} $\preSWF^{\mathrm{Pin}(2)}(\mathcal{F}, [\mathfrak{S}])$ is defined as the class of
	\[
	    \Sigma_{B}^{\mathbb{H}^{-D_n^2} \oplus \tilde{\mathbb{R}}^{-D^4_n}} \mathcal{SWF}_{[n]}( \mathcal{F}, [\mathfrak{S}]).
	\] 
\end{dfn}

Recall from Section \ref{subsec:spectra} that a \emph{weak} morphism of spectra is a (collection of) maps that is only defined in sufficiently high degrees (this is also the case for ordinary morphisms in Adams' \cite{adams} category of spectra).  
\begin{thm}\label{thm:well-definedness-of-seiberg-witten-invariant}
	For $\mathcal{F}$ a family of $\mathrm{spin}^c$ $3$-manifolds, and $\Phi\from \mathfrak{S}_1\to \mathfrak{S}_2$ an equivalence of spectral systems, there is a weak morphism which is a homotopy equivalence (see Section \ref{subsec:spectra}), well-defined up to homotopy:
	\[
	\Phi_*: \unparamSWFn(\mathcal{F},\mathfrak{S}_1)\to \unparamSWFn(\mathcal{F},\mathfrak{S}_2).
	\]
	That is, the collection of spectra \[\unparamSWFn(\mathcal{F},[\mathfrak{S}])=\{\unparamSWFn(\mathcal{F},\mathfrak{S})\}_{\mathfrak{S}}\] forms a connected simple system in spectra, if $\mathcal{F}$ admits a spectral system.
\end{thm}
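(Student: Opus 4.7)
The plan is to build the weak morphism $\Phi_*$ by assembling the levelwise homotopy equivalences from Proposition \ref{prop:levelwise-well-definedness} and then checking compatibility with the structure maps of the two spectra. First, for each sufficiently large $\vec i = (i_1,\dots,i_4)$ (say $\vec i \geq D_n$ for some fixed large $n$) the spectral system morphism gives a vector bundle map $\Phi_{\vec i} : \mathfrak{S}_1(\vec i) \to \mathfrak{S}_2(\vec i)$, and pulling back the approximate flow $\varphi_2$ along $\Phi_{\vec i}$ produces a flow on $\mathfrak{S}_1(\vec i)$ that Proposition \ref{prop:levelwise-well-definedness} deforms to $\varphi_1$ through isolating invariant sets. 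Deformation invariance of the parameterized Conley index plus $\nu_{!}$ then gives a fiberwise-deforming homotopy equivalence
\[
\Phi^u_{\vec i, *}: \preSWF^u_{[n]}(\mathcal{F},\mathfrak{S}_1)_{\vec i} \longrightarrow \preSWF^u_{[n]}(\mathcal{F},\mathfrak{S}_2)_{\vec i},
\]
which is well-defined up to homotopy in the unparameterized category. After stabilizing by the appropriate virtual suspensions as in Definition \ref{def:swf}, these give candidate component maps $(\Phi_*)_{\vec i}: \unparamSWFn(\mathcal{F},\mathfrak{S}_1)_{\vec i} \to \unparamSWFn(\mathcal{F},\mathfrak{S}_2)_{\vec i}$ in all sufficiently high degrees; this is precisely the data of a weak morphism in the sense of Section \ref{subsec:spectra}.

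The main work is then to verify compatibility with the structure maps, i.e., that for $\vec j \geq \vec i$ sufficiently large the square
\[
\begin{tikzpicture}[baseline={([yshift=-.8ex]current  bounding  box.center)},xscale=6,yscale=1.4]
\node (a0) at (0,0) {$\Sigma^{\underline{V}(\vec i,\vec j)} \unparamSWFn(\mathcal{F},\mathfrak{S}_1)_{\vec i}$};
\node (a1) at (1,0) {$\Sigma^{\underline{V}(\vec i,\vec j)} \unparamSWFn(\mathcal{F},\mathfrak{S}_2)_{\vec i}$};
\node (b0) at (0,-1) {$\unparamSWFn(\mathcal{F},\mathfrak{S}_1)_{\vec j}$};
\node (b1) at (1,-1) {$\unparamSWFn(\mathcal{F},\mathfrak{S}_2)_{\vec j}$};
\draw[->] (a0) -- (a1) node[pos=0.5,anchor=south] {\scriptsize $\mathrm{id}\wedge \Phi^u_{\vec i,*}$};
\draw[->] (a0) -- (b0) node[pos=0.5,anchor=east] {\scriptsize $\sigma_1$};
\draw[->] (b0) -- (b1) node[pos=0.5,anchor=south] {\scriptsize $\Phi^u_{\vec j,*}$};
\draw[->] (a1) -- (b1) node[pos=0.5,anchor=west] {\scriptsize $\sigma_2$};
\end{tikzpicture}
\]
commutes up to homotopy, where $\sigma_k$ is the transition map assembled from the $(\eta_n)_*$ as in Theorem \ref{thm change of approximation suspension}. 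At the level of flows, the key input is the naturality square of Definition \ref{def:homotopy-of-spectral-sections}: the bundle morphisms $\Phi_{\vec i}$ intertwine the $\eta$-maps of the two spectral systems. This intertwining propagates through the Conley-index construction because both $\sigma_k$ and $\Phi^u_{\vec j,*}$ are produced as compositions of (a) deformation-invariance maps for one-parameter families of flows and (b) canonical identifications of index pairs between a genuine flow and the pullback of a flow under a vector-bundle isomorphism. The two ways around the square amount to deforming the same pullback flow on $\underline{V}(\vec i,\vec j)\oplus \mathfrak{S}_1(\vec i)$ along two different but homotopic paths; uniqueness of the Conley index up to homotopy (Theorem \ref{thm:fiberwise-deforming-conley-2}, applied after passing to the unparameterized index via $\nu_{!}$) then yields the desired homotopy commutativity.

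Once the square above commutes up to homotopy for all $\vec j \geq \vec i$, the collection $\{\Phi^u_{\vec i,*}\}$ does define a weak morphism of spectra; and since each $\Phi^u_{\vec i,*}$ is a homotopy equivalence of pointed $S^1$- (or $\mathrm{Pin}(2)$-)CW complexes, the resulting morphism is a weak equivalence of spectra. Well-definedness of $\Phi_*$ up to homotopy follows from the fact that each $\Phi^u_{\vec i,*}$ is well-defined up to homotopy, together with the observation that the homotopies themselves can be chosen compatibly with the transition maps by the same deformation-invariance argument. Finally, for the connected simple system property one also needs functoriality: given a composable pair of equivalences $\Phi : \mathfrak{S}_1 \to \mathfrak{S}_2$ and $\Psi : \mathfrak{S}_2 \to \mathfrak{S}_3$, the composite $\Psi_* \circ \Phi_*$ agrees up to homotopy with $(\Psi \circ \Phi)_*$, which is immediate since both are realized by deforming the thrice-pulled-back flow to the flow $\varphi_1$, and the space of such deformations is contractible.

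The main obstacle, and the only nontrivial point, is the homotopy commutativity of the compatibility square. The difficulty is that $\sigma_k$ is not a single geometric map but an iterated composition of the $(\eta_n)_*$ constructed in Theorem \ref{thm change of approximation suspension} through several auxiliary flows (the split flow construction of Lemmas \ref{lem suspension premain}--\ref{lem suspension lemma 3}), and one must check that the intertwining from Definition \ref{def:homotopy-of-spectral-sections} survives through each of these intermediate stages. The cleanest way is to work directly with the approximate flows $\varphi$ on $\mathfrak{S}_k(\vec i)$ and $\mathfrak{S}_k(\vec j)$, interpret both $\Phi_{\vec i,*} \wedge \mathrm{id}$ and $\sigma_2$ as particular deformations of a single flow on $\mathfrak{S}_2(\vec j)$, show that the two resulting compositions are realized by homotopic paths of flows on $\mathfrak{S}_1(\vec j)$ (after pullback), and invoke deformation invariance of the unparameterized Conley index one last time.
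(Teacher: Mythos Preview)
Your proposal is correct and follows essentially the same route as the paper: define levelwise maps via Proposition~\ref{prop:levelwise-well-definedness}, then verify that the compatibility square with the structure maps $\eta_{n,*}$ homotopy commutes by recognizing both composites as Conley-index continuation maps along homotopic one-parameter families of flows.

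One small sharpening: the step where you invoke ``uniqueness of the Conley index up to homotopy (Theorem~\ref{thm:fiberwise-deforming-conley-2})'' is not quite the right citation. That theorem only says the Conley indices for different index pairs of a \emph{fixed} flow form a connected simple system; it does not by itself show that continuation maps along two homotopic paths of flows are homotopic. What you actually need is the two-parameter deformation result (a homotopy of homotopies of flows, with a common isolating neighborhood throughout, yields a homotopy between the boundary continuation maps). The paper invokes exactly this, citing Section~6.3 of Salamon; your phrase ``homotopic paths of flows\ldots invoke deformation invariance one last time'' is the right idea but should point to that two-parameter statement rather than Theorem~\ref{thm:fiberwise-deforming-conley-2}. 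With that correction, your argument matches the paper's, and your explicit treatment of functoriality $(\Psi\circ\Phi)_*\simeq \Psi_*\circ\Phi_*$ is a welcome addition the paper leaves implicit.
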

\begin{proof}
	First, independence of $\unparamSWFn(\mathcal{F},[\mathfrak{S}])$ from the choice of Sobolev norms was handled in Proposition \ref{prop:sobolev-independence-3}.  Moreover, variation of metric, for a particular level $\preSWF^u_{[n]}(\mathcal{F},[\mathfrak{S}])$, was handled in Theorem \ref{thm:families}.  We then need only show that an equivalence of spectral systems induces a well-defined, up to homotopy, morphism
	\[
	\unparamSWFn(\mathcal{F},\mathfrak{S}_1)\to \unparamSWFn(\mathcal{F},\mathfrak{S}_2).
	\]
	For this, we use Proposition \ref{prop:levelwise-well-definedness} to define the maps levelwise, and we need only show that the following square homotopy commutes (the squares involving other vector bundles $\mathfrak{S}(i_1, i_2,i_3,i_4)$ are straightforward): 
	\[
	\begin{tikzpicture}[baseline={([yshift=-.8ex]current  bounding  box.center)},xscale=8,yscale=1.5]
	\node (a0) at (0,0) {$ 	\Sigma^{V_n}\preSWF^u_{[n]}(\mathcal{F},\mathfrak{S}_1)$};
	\node (a1) at (1,0) {$ 	\Sigma^{V_n}\preSWF^u_{[n]}(\mathcal{F},\mathfrak{S}_2)$};
	\node (b0) at (0,-1) {$\preSWF^u_{[n+1]}(\mathcal{F},\mathfrak{S}_1)$};
	\node (b1) at (1,-1) {$\preSWF^u_{[n+1]}(\mathcal{F},\mathfrak{S}_2)$};

	\draw[->] (a0) -- (a1) node[pos=0.5,anchor=north] {\scriptsize
		$\operatorname{id}\wedge \Phi_{n,*}$}; \draw[->] (a0) -- (b0) node[pos=0.5,anchor=east] {\scriptsize $\eta_{n,*} $};
	\draw[->] (b0) -- (b1) node[pos=0.5,anchor=south east]{\scriptsize $\Phi_{n+1,*} $}; \draw[->] (a1) -- (b1) node[pos=0.5,anchor=west] {\scriptsize $\eta_{n,*} $};
	
	\end{tikzpicture}
	\]
	Here $V_n = \mathbb{C}^{D_{n+1}^{2}-D_n^2}\oplus \mathbb{R}^{D_{n+1}^4-D_n^4}$. 
	This is a consequence of the two composites involved being Conley-index continuation maps associated to deformations of the flow.  Observe that the composite deformations are related to each other by a deformation of deformations.  By Section 6.3 of \cite{Salamon}, the square homotopy commutes (the necessary adjustments of Salamon's argument for equivariance are straightforward).
\end{proof}

As usual, subject to Conjecture \ref{conj:parameterized-connected-simple-system}, Theorem \ref{thm:well-definedness-of-seiberg-witten-invariant} would hold in the parameterized case.

Moreover, it is easy to determine when two spectral systems are equivalent:

\begin{lem}\label{lem:classification-of-spectral-systems}
	The set of spectral systems for a family $\mathcal{F}$ of $\mathrm{spin}^c$ three-manifolds up to equivalence, if nonempty, is affine-equivalent to $K(\mathrm{Pic}(\mathcal{F}))\times K(\mathrm{Pic}(\mathcal{F}))$, where the difference of systems $\mathfrak{S}_1,\mathfrak{S}_2$ is sent to $([P^1_0-P^2_0],[Q^1_0-Q^2_0])$.  
\end{lem}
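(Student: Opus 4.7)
My plan is to define the difference map
\[
\Delta \colon \{\mathfrak{S}\}/{\sim} \to K(\mathrm{Pic}(\mathcal{F})) \times K(\mathrm{Pic}(\mathcal{F})), \quad \Delta(\mathfrak{S}) = ([P_0 - P^{\mathfrak{S}_0}_0], [Q_0 - Q^{\mathfrak{S}_0}_0]),
\]
with $\mathfrak{S}_0$ a fixed reference, and prove it is a bijection. Note that the $\mathbf{W}_P, \mathbf{W}_Q$ data contribute nothing: $W_{P,0}$ (resp. $W_{Q,0}$) is canonically prescribed as the sum of negative (resp. positive) eigenspaces of $*d$, and every higher $W_{*,n}$ differs from $W_{*,0}$ by a trivial bundle of specified rank via $\eta^{W_*}_n$, so all K-theory classes are pinned down.

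First I would verify that $\Delta$ is well-defined. Given an equivalence $\Phi \colon \mathfrak{S}_1 \to \mathfrak{S}_2$, the bundle isomorphisms $\Phi_{P,i} \colon P^1(i) \to P^2(i)$ give $[P^1(i)]=[P^2(i)]$ in $K(\mathrm{Pic}(\mathcal{F}))$. Since $P^j(i) = \underline{\mathbb{C}}^{i-D^{1,j}_{A^j(i)}}\oplus P^j_{A^j(i)}$ and the structure maps $\eta^P_n$ in each system prescribe $[P^j_n]-[P^j_0]$ as a known integer, equating ranks of the trivial summands yields $[P^1_0]=[P^2_0]$ in $K$; the same argument handles $Q$.

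Second, I would prove injectivity: assume $[P^1_0]=[P^2_0]$ and $[Q^1_0]=[Q^2_0]$. Since $\mathrm{Pic}(\mathcal{F})$ is a finite CW complex, K-theory stability yields $N$ with $P^1_0 \oplus \underline{\mathbb{C}}^N \cong P^2_0 \oplus \underline{\mathbb{C}}^N$. I would use this to construct $\Phi_{P,i}$ inductively starting from a base level $i_0$ large enough to absorb the stabilization inside the virtual summand of $P^j(i_0)$, then propagate to higher $i$ by using the structure squares the $\eta^P_n$'s must fit into; for $i$ between consecutive $D^1_n$ or $D^2_n$ one just transports by the canonical trivial-bundle decompositions. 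The $W$-part of the equivalence is canonical since the $W_{*,0}$ agree on the nose. For surjectivity, given $(\alpha,\beta)$, I would represent $\alpha$ as $[V]-k$ for an honest finite-rank bundle $V\to \mathrm{Pic}(\mathcal{F})$, realize $V$ as a subbundle of $(\mathcal{E}_0)^{\mu_+}_{\mu_-}$ for appropriate $\mu_\pm$ (using Theorem \ref{thm Atiyah}-style density of eigenvalues together with the construction in the proof of Theorem \ref{thm spectral section mu mu+delta}), and define $P'_0 = P^{\mathfrak{S}_0}_0 \oplus V$ (adjusting by trivial bundles to account for the $-k$), with $P'_n$ and the $\eta^{P,\prime}_n$'s obtained from those of $\mathfrak{S}_0$ by the same direct-sum modification; the analogous construction for $Q$ gives $\beta$.

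The main obstacle is injectivity: one must construct the entire coherent family $\{\Phi_{P,i}\}_{i \gg 0}$ compatible with \emph{two} a priori different systems of structure maps $\eta^{P,1}_n$ and $\eta^{P,2}_n$, from only a single stable bundle isomorphism $P^1_0 \oplus \underline{\mathbb{C}}^N \cong P^2_0 \oplus \underline{\mathbb{C}}^N$. I would handle this by working in a high enough virtual dimension that the relevant mapping spaces of bundle isomorphisms become connected (i.e., $\pi_k(GL(m;\mathbb{C})/GL(m-r;\mathbb{C}))=0$ for $k \leq \dim B$ when $m-r \gg \dim B$, as used in the proof of Theorem \ref{thm spectral section mu mu+delta}), so the compatibility squares can be filled in inductively without obstruction.
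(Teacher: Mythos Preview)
Your overall strategy is correct and close to the paper's, but you have overcomplicated the injectivity step by misidentifying ``the main obstacle.'' The commutative squares in Definition~\ref{def:homotopy-of-spectral-sections} force $\Phi_{P,j}$ to be \emph{uniquely determined} by $\Phi_{P,i}$ for any single sufficiently large $i$: explicitly $\Phi_{P,j}=\eta_2\circ(\mathrm{id}\oplus\Phi_{P,i})\circ\eta_1^{-1}$. So there is no inductive construction to perform and no obstruction theory to invoke; once you build $\Phi_{P,i}$ at one level, the entire coherent family exists automatically and is compatible with both systems of structure maps by fiat. The connectivity argument you propose is unnecessary.

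The paper exploits this directly. Since an equivalence must preserve the common subbundle $P^1_n\subset P^1(i),P^2(i)$ (for a fixed large $n$ and $i\gg n$), constructing $\Phi_{P,i}$ amounts to constructing an isomorphism of the complements $P^1(i)-P^1_n\to P^2(i)-P^1_n$. But $P^1(i)-P^1_n$ is \emph{canonically} trivial (it is built from the $\eta^{P,1}$'s as copies of $\underline{\mathbb{C}}$), so such an isomorphism exists iff $[P^2(i)-P^1_n]=[\underline{\mathbb{C}}^{\dim(P^1(i)-P^1_n)}]$ in $K(\mathrm{Pic}(\mathcal{F}))$, which unwinds to $[P^1_0-P^2_0]=0$. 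This is the whole argument for the $P$-part; $Q$ is identical and the $W$-part is trivial for the reason you gave.

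On surjectivity, the paper's proof does not address it explicitly (it is implicit from Melrose--Piazza that differences of spectral sections realize all of $K$). Your sketch is reasonable, but the step ``realize $V$ as a subbundle of $(\mathcal{E}_0)^{\mu_+}_{\mu_-}$'' deserves a sentence of justification (embed $V$ in a large trivial bundle, then use that the construction in Theorem~\ref{thm spectral section mu mu+delta} produces trivial complements of arbitrarily large rank), and ``adjusting by trivial bundles to account for the $-k$'' should be said more carefully: adding $\underline{\mathbb{C}}^k$ to the reference $P_0^{\mathfrak{S}_0}$ does not change its equivalence class, so one may absorb the $-k$ into the reference rather than into $P_0'$.
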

\begin{proof}

By its construction, an equivalence of spectral systems is determined by its value $(\Phi_{P,i},\Phi_{Q,i},\Phi_{W_P,i},\Phi_{W_Q,i})$ for any sufficiently large $i$.  In the positive spectral section part of the spinor coordinate, to construct an equivalence $\mathfrak{S}_1\to \mathfrak{S}_2$ it is sufficient (and necessary) to construct an isomorphism $P^1(i)-P^1_n\to P^2(i)-P^1_n$ for some $i$ large, relative to a fixed (large) $n$.  By definition, $P^1(i)-P^1_n$ is canonically some number of copies of $\underline{\mathbb{C}}$, and so such an isomorphism exists if and only if \[ [P^2(i)-P^1_n]=[\underline{\mathbb{C}}^{\dim (P^1(i)-P^1_n)}].\]  This condition is satisfied exactly when $[P^1_0-P^2_0]=0\in K(\mathrm{Pic}(\mathcal{F}))$, as needed.

The $1$-form coordinate is handled similarly, but the bundles $W_n^\pm$ there are always trivial.
\end{proof}

In particular, we note that there is a canonical choice, subject to a choice of $Q_0$, and up to adding trivial bundles, of a spectral section $P_0$, by requiring $P_0-Q_0$ trivializable.  We call these \emph{normal} spectral sections; the set of equivalence classes of such is affine-equivalent to $K(\mathrm{Pic}(Y))$, as above.

\begin{dfn}\label{def:floer-framing}
	A(n) ($S^1$-equivariant) \emph{Floer framing} is an equivalence class of normal spectral sections.  A $\mathrm{Pin}(2)$-equivariant Floer framing is a ($\mathrm{Pin}(2)$)-equivalence class of normal spectral sections.  Here, a $\mathrm{Pin}(2)$-equivalence of ($\mathrm{Pin}(2)$-equivariant) spectral sections is a collection of isomorphisms as in Definition \ref{def:homotopy-of-spectral-sections} that are $\mathrm{Pin}(2)$-equivariant.
\end{dfn}


There are various extensions of Lemma \ref{lem:classification-of-spectral-systems}. Let us state a ${\rm Pin}(2)$- equivariant version of the Lemma.  

\begin{lem}  \label{lem:classification-of-Pin(2)-spectral-systems}
The set of ${\rm Pin}(2)$-spectral systems for a family $\mathcal{F}$ of $\mathrm{spin}^c$ three-manifolds up to equivalence, if nonempty, is affine-equivalent to 
\[
   KQ(\mathrm{Pic}(\mathcal{F}))  \times KQ(\mathrm{Pic}(\mathcal{F}) ), 
 \]
 where the difference of systems $\mathfrak{S}_1,\mathfrak{S}_2$ is sent to 
$([P^1_0-P^2_0],[Q^1_0-Q^2_0])$.   
Here $KQ$ is the Quaternionic K-theory defined in \cite{dupont},  \cite{lin_rokhlin}. 
\end{lem}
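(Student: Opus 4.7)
The plan is to imitate the proof of Lemma \ref{lem:classification-of-spectral-systems}, working $\mathrm{Pin}(2)$-equivariantly throughout, and to replace the key appeal to $\pi_i(GL(m;\mathbb{C})/GL(m-r;\mathbb{C}))=0$ with the analogous vanishing of $\pi_i(\mathrm{Sp}(m)/\mathrm{Sp}(m-r))$ already used in Theorem \ref{thm Pin(2) spectral sections}. First, I would observe that the nonemptiness hypothesis is built into the statement, and that when the set is nonempty one may, by Theorem \ref{thm Pin(2) spectral sections}, represent any equivalence class by a good $\mathrm{Pin}(2)$-equivariant sequence, so that affine-equivalence is the right structure to check.

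Next, given two $\mathrm{Pin}(2)$-equivariant spectral systems $\mathfrak{S}_1,\mathfrak{S}_2$, I would mirror the argument of Lemma \ref{lem:classification-of-spectral-systems}: an equivalence $\Phi$ is determined by the single $\mathrm{Pin}(2)$-equivariant bundle isomorphism $\Phi_{P,i}\colon P^1(i)\to P^2(i)$ for one sufficiently large $i$ (and its $Q, W_P, W_Q$ analogs), together with the compatibility with structure maps. Subtracting the common subbundle $P^1_n$ for $n$ sufficiently large, existence of such a $\mathrm{Pin}(2)$-equivariant isomorphism is equivalent to the vanishing of the class $[P^2(i)-P^1(i)]\in KQ(\mathrm{Pic}(\mathcal{F}))$, which equals $[P^2_0-P^1_0]$ by telescoping along the $\eta^P_n$. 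The $W_P,W_Q$ coordinates contribute no further invariants: these are real bundles with $W_{P,0},W_{Q,0}$ rigidly fixed by Definition \ref{def:spectral-system-3}, and the increments are trivial real bundles with a canonical $C_2$-action.

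The key geometric step is to show that when the relevant $KQ$-class vanishes, a trivializing $\mathrm{Pin}(2)$-equivariant frame indeed exists. This is handled by an obstruction-theoretic argument cell-by-cell on a CW decomposition of $B$ chosen compatibly with the $C_2$-action (as in the proof of Theorem \ref{thm Pin(2) spectral sections}). On each cell $e'$ the obstruction to extending a frame lives in $\pi_{\dim e'}(\mathrm{Sp}(m)/\mathrm{Sp}(m-r))$, which vanishes once $m,m-r\gg \dim B$. By Lemma \ref{lem r} one may always arrange this stability by first passing to $P^1(i),P^2(i)$ for $i$ sufficiently large, so that the cellwise extension succeeds and produces $\Phi_{P,i}$.

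The hard part will be the bookkeeping that shows the obstruction to producing a \emph{coherent} family of isomorphisms $\{\Phi_{P,i}\}_{i\gg 0}$ that commutes with all the $\eta^P_n$ simultaneously reduces to the single $KQ$-class $[P^2_0-P^1_0]$, with no higher obstructions appearing in the compatibility squares of Definition \ref{def:homotopy-of-spectral-sections}. The argument in Lemma \ref{lem:classification-of-spectral-systems} circumvents this by fixing $\Phi_{P,i}$ for one large $i$ and using the structure maps $\eta^P_n$ to transport it to other indices; I would reproduce this reduction in the $\mathrm{Pin}(2)$-setting, again using the stable vanishing of $\pi_*(\mathrm{Sp}(m)/\mathrm{Sp}(m-r))$ to adjust inductively across cells. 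The affine structure then follows from the group structure on $KQ(\mathrm{Pic}(\mathcal{F}))\times KQ(\mathrm{Pic}(\mathcal{F}))$ induced by direct sum of spectral sections, with the difference map $(\mathfrak{S}_1,\mathfrak{S}_2)\mapsto ([P^1_0-P^2_0],[Q^1_0-Q^2_0])$ visibly translation-equivariant.
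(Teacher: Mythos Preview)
The paper does not actually give a proof of this lemma; it is stated as a direct $\mathrm{Pin}(2)$-equivariant analog of Lemma~\ref{lem:classification-of-spectral-systems} and left to the reader. Your proposal is correct and is precisely the natural adaptation the paper has in mind: replace complex $K$-theory by quaternionic $K$-theory $KQ$, use the $\mathrm{Pin}(2)$-equivariant structure maps $\eta^P_n\colon P_{n+1}\to P_n\oplus\underline{\mathbb{H}}^{k_n}$ from Theorem~\ref{thm change of approximation suspension} in place of the complex ones, and run the argument of Lemma~\ref{lem:classification-of-spectral-systems} verbatim.

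One minor remark: your obstruction-theoretic detour through $\pi_*(\mathrm{Sp}(m)/\mathrm{Sp}(m-r))$ is slightly more than the proof of Lemma~\ref{lem:classification-of-spectral-systems} actually uses. There, the point is simply that $P^1(i)-P^1_n$ is by construction a trivial bundle (copies of $\underline{\mathbb{C}}$, here $\underline{\mathbb{H}}$), so the question of whether $P^2(i)-P^1_n$ is isomorphic to it is exactly whether its class in $K$ (here $KQ$) is trivial; one may always increase $i$ to absorb any stabilization. Your cell-by-cell argument certainly works and is in the spirit of Theorem~\ref{thm Pin(2) spectral sections}, but the shorter route is available.
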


\begin{rem}\label{rmk:other-def}
We can define the spectrum $\mathbf{SWF}^u_{i_1, i_2}$ in a little different way. Fix a sufficiently large integer $n$ and put
\[
     \mathbf{SWF}^{u}_{i_1, i_2} =  \Sigma^{\C^{i_1-D_n^2} \oplus \R^{i_2-D_n^4}} \mathcal{SWF}^u_{[n]}
\]
for $(i_1, i_2) \in \mathbb{N}_0^2$ with $i_1, i_2 \geq n$.   The transition maps
\[
   \begin{split}
    & \sigma_{(i_1, i_2), (i_1 + 1, i_2)} : \Sigma^{\C} \mathbf{SWF}^{u}_{i_1, i_2} \rightarrow  \mathbf{SWF}^{u}_{i_1+1, i_2}, \\
  &  \sigma_{(i_1, i_2), (i_1, i_2+1)} : \Sigma^{\R}  \mathbf{SWF}^{u}_{i_1, i_2} \rightarrow  \mathbf{SWF}^{u}_{i_1, i_2+1}
   \end{split}
\]
are defined to be the identities.   This spectrum is homotopy equivalent to the previous one.  

In the previous definition of  $\mathbf{SWF}^{u}$, we introduced $A(i_1, i_2)$ which allows us to avoid choosing a large integer $n$. This makes the definition of $\mathbf{SWF}^{u}$  more natural. 
\end{rem}

%

%

In the construction of $\mathcal{SWF}_{[n]}(\cF,  \mathfrak{S})$, we have a frame  of the orthogonal complement of $Q_n$ in $Q_{n+1}$.   Using the frame, we have 
\[
           \mathcal{SWF}_{[n+1]} (\cF,  \mathfrak{S})  \cong 
           \Sigma_{B}^{\C^{k_{Q, n}}  \oplus \R^{k_{W,-, n}}} \mathcal{SWF}_{[n]}(\cF, \mathfrak{S}). 
\]
More generally, we can choose spectral sections $Q_n$ such that the orthogonal complement of $Q_n$ in $Q_{n+1}$ does not necessarily have a frame.  In this case, we have
\[
    \mathcal{SWF}_{[n+1]} (\cF,  \mathfrak{S}) 
    \cong \Sigma_{B}^{(Q_{n+1} / Q_n) \oplus \R^{k_{W, -,n}}}  \mathcal{SWF}_{[n]}(\cF,  \mathfrak{S}), 
\]
where $Q_{n+1} / Q_n$ may not be trivialized. 
See Theorem \ref{thm change of approximation suspension}. 
We can still define the Seiberg-Witten Floer stable homotopy type in a suitable stable homotopy category. The category is defined by taking $R$, $W$ to be finite dimensional,  virtual $G$-vector bundles  over $B$   in Definition \ref{def:sw-cat},  so that  we can take  desuspensions by non-trivial vector bundles. The Seiberg-Witten Floer stable homotopy type is  defined to be the class of
\[
         \Sigma_{B}^{ - (Q_n/Q_0) \oplus \R^{-D_n^4}  }  \mathcal{SWF}_{[n]}(\cF, \mathfrak{S})
\]
in the category, where $n$ is a fixed large integer.


\section{Elementary properties of $\preSWF(Y,\mathfrak{s})$}\label{subsec:elementary-properties}

Here we collect a few results about $\preSWF(Y,\mathfrak{s})$ that follow almost directly from the definitions.  We work only for a single $(Y,\mathfrak{s})$, but similar results hold in families.

\begin{prop}\label{prop:finiteness}
	The total space of $\preSWF^u_{[n]}(Y,\mathfrak{s})$  has the homotopy type of a finite $S^1$-CW complex; respectively the total space of $\preSWF^{u,\mathrm{Pin}(2)}_{[n]}(Y,\mathfrak{s})$, when defined, is a finite $\mathrm{Pin}(2)$-CW complex.  As a consequence, for $G=S^1$ or $\mathrm{Pin}(2)$, the Seiberg-Witten Floer spectrum  $\SWFn^{u,G}(Y,\mathfrak{s},\mathfrak{S})$ is a finite $G$-CW spectrum. 
\end{prop}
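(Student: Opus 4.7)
The plan is to exhibit an explicit index pair $(N,L)$ with $N$ and $L$ compact $G$-ANRs (in fact smooth $G$-manifolds with corners), so that the Conley index $N/L$ automatically has the $G$-homotopy type of a finite $G$-CW complex. First, observe that by construction, the total space of
\[
A_n = \big(B_{k_+}(F_n^+;R)\times_B B_{k_-}(F_n^-;R)\big)\times_B\big(B_{k_+}(W_n^+;R)\times_B B_{k_-}(W_n^-;R)\big)
\]
is a compact smooth $G$-manifold with corners, since it is a fiberwise product of closed disk bundles over the compact base $B=\mathrm{Pic}(Y)$ (and analogously in the $\mathrm{Pin}(2)$-case). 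The approximate flow $\varphi_{n,k_+,k_-}$ is smooth and $G$-equivariant.

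Next, I would invoke the equivariant version of Salamon's construction of index pairs (see \cite[\S 6.1]{Salamon}): given a smooth $G$-equivariant flow on a compact smooth $G$-manifold, one constructs an index pair $(N,L)$ where $N$ and $L$ are compact $G$-invariant submanifolds with corners, using an equivariant Lyapunov function (which exists by averaging over the compact group $G$). The parameterized version follows by requiring the Lyapunov function to be compatible with the projection to $B$, which is possible because the flow preserves the parameterization in an appropriate sense.  Consequently $N$ and $L$ are compact smooth $G$-manifolds with corners, hence compact $G$-ANRs admitting the $G$-homotopy type of finite $G$-CW complexes (by equivariant triangulation results for smooth $G$-manifolds with corners, cf.\ Illman's theorem). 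Since the collapse $N/L$ of such a pair inherits a finite $G$-CW structure, $\preSWF^{u,G}_{[n]}(Y,\frak{s})\simeq N/L$ has the $G$-homotopy type of a finite $G$-CW complex.

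Finally, the spectrum claim follows directly from the definition. Recall from Definition \ref{def:swf} that $\SWFn^{u,G}(Y,\mathfrak{s},\mathfrak{S})$ has levels
\[
\SWFn^{u,G}_{(i_1,i_2)}(Y,\mathfrak{s},\mathfrak{S}) = \Sigma^{V^G_{(i_1,i_2)}} \preSWF^{u,G}_{[n]}(Y,\mathfrak{s}),
\]
for appropriate representations $V^G_{(i_1,i_2)}$ depending on $(i_1,i_2)$ and the geometric bundle corresponding to $n$, with structure maps built from the $(\eta_m)_*$ of Theorem \ref{thm change of approximation suspension}. Each level is a finite $G$-CW complex by what we have just shown (smash product with a representation sphere preserves finiteness), and the structure maps are cellular up to $G$-homotopy equivalence, so $\SWFn^{u,G}(Y,\mathfrak{s},\mathfrak{S})$ is a finite $G$-CW spectrum. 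The main technical point is the existence of the equivariant index pair $(N,L)$ of compact $G$-manifolds with corners; once this is in hand, the rest of the argument is formal.
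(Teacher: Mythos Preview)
Your approach is genuinely different from the paper's, and the comparison is instructive.

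\textbf{What the paper does.} The paper does \emph{not} try to produce an explicit smooth index pair. Instead it perturbs the Chern--Simons--Dirac functional by an extended cylinder function $f\in\mathcal{P}$ (as in \cite{KM}, \cite{KLS1}) so that the perturbed flow has only finitely many, nondegenerate critical points. The perturbed and unperturbed approximate flows are related by a deformation, so their Conley indices agree. For the perturbed flow, the attractor--repeller exact sequence builds the Conley index out of the Conley indices of the individual critical points, each of which is a representation sphere; this directly exhibits $\preSWF^u_{[n]}(Y,\frak{s})$ as a finite $G$-CW complex. This argument is Morse-theoretic and uses the specific analytic structure of Seiberg--Witten theory (the existence of a large Banach space of perturbations achieving transversality).

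\textbf{Gaps in your approach.} Your route is more general-topological, but as written it has two issues. First, Salamon's construction in \cite{Salamon} does not produce index pairs that are smooth manifolds with corners: the sets involve the exit-time functions $\tau_\pm$, which are only continuous. What you want instead is an \emph{isolating block} in the sense of Conley--Easton \cite{conley-easton}, which is a smooth manifold with corners; but the existence of a $G$-equivariant isolating block for a $G$-equivariant flow is not automatic (the transversality condition at the boundary does not obviously survive averaging over $G$), and would need a separate argument. Second, your remark that ``the flow preserves the parameterization in an appropriate sense'' is incorrect: the approximate Seiberg--Witten flow moves along the base $B=\mathrm{Pic}(Y)$ via the horizontal term $-X_H(\phi)$ in (\ref{eq for gamma}). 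This does not affect the unparameterized statement, but it does mean there is no fiberwise Lyapunov function.

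\textbf{What each buys.} If the equivariant isolating-block step can be carried out, your argument would be more robust and independent of Seiberg--Witten-specific perturbation theory. The paper's argument, by contrast, gives more: it shows that the finite $G$-CW structure is controlled by the (perturbed) critical points, which is the expected link to monopole Floer homology. Your final paragraph on the spectrum claim is correct and matches the paper's reasoning.
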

\begin{proof}
	For this, we need to consider perturbations of the Seiberg-Witten equations.  Recall the notion of cylinder functions from \cite[Chapter 11]{KM}.  As in Definition 2.1 of \cite{KLS1}, given a sequence of $\{C_j\}_{j=1}^\infty$ of positive real numbers and cylinder functions $\{\hat{f}_j\}_{j=1}^\infty$, let $\mathcal{P}$ be the Banach space
	\[
	\mathcal{P}=\Big\{\sum^\infty_{j=1}\eta_j\hat{f}_j  :  \eta_j\in \mathbb{R},\, \sum^\infty_{j=1}C_j |\eta_j|<\infty  \Big\}
	\]
	with norm defined by $\Vert\sum^\infty_{j=1}\eta_j\hat{f}_j\Vert=\sum^\infty_{j=1}|\eta_j|C_j$.  The elements of $\mathcal{P}$ are called \emph{extended cylinder functions}.  
	
	For $f$ an extended cylinder function, let $\grad f=\mathfrak{q}$ be the $L^2$-gradient over $L^2_k(\mathbb{S})\times \mathcal{H}^1(Y)\times L^2_k(\mathrm{im} \, d^*)$ of $f$.  We write $(\mathfrak{q}_V,\mathfrak{q}_H,\mathfrak{q}_W)$ for the vertical, horizontal, and one-form components of $\mathfrak{q}$.  Define the perturbed Seiberg-Witten equations by the downward gradient flow of $\mathcal{L}+f$, explicitly:
	\begin{equation}\label{eq:seiberg-witten-perturbed}
	\begin{aligned}
	\frac{d\phi}{d t} &= - D_a \phi(t) - c_1(\gamma(t))-\mathfrak{q}_V, \\
	\frac{d a}{dt} &= - X_{H}(\phi)-\mathfrak{q}_H, \\
	\frac{d\omega}{dt} &= - *d \omega - c_2(\gamma(t))-\mathfrak{q}_W. 
	\end{aligned}
	\end{equation}
	
	We may perform finite-dimensional approximation with the perturbed Seiberg-Witten equations in place of (\ref{eq:seiberg-witten}) (with the same spectral sections as for the unperturbed equations).  It is straightforward but tedious to check that the proof of Theorem \ref{thm isolating nbd} holds also for (\ref{eq:seiberg-witten-perturbed}), for $k$-extended cylinder functions $f$, where $k \geq \max\{ k_+, k_-\} + \frac{1}{2}$.  The key points are Proposition 2.2 of \cite{KLS1}, and Lemma 4.10 of \cite{LidmanManolescu}.
	
	Moreover, for a family of perturbations, the analog of Theorem \ref{thm isolating nbd} continues to hold, by a similar argument.  In particular, it is a consequence that $\preSWF^u_{[n]}(Y,\mathfrak{s})$ is well-defined up to canonical equivariant homotopy, independent of perturbation.
	
	Finally, the space of perturbations $\mathcal{P}$ attains transversality for the Seiberg-Witten equations, in the sense that for a generic perturbation from $\mathcal{P}$, there are finitely many (all non-degenerate) stationary points for the perturbed formal gradient flow.

	In particular, using the attractor-repeller sequence for the Conley index, together with the fact that the Conley index for a single non-degenerate critical point is a sphere, we observe that the the Conley index $I^u(\varphi_{n,k_+,k_-})$ for $n$ large is a finite $G$-CW complex.
\end{proof}

\begin{prop}\label{prop:orientation-reversal} 
	For $(Y,\mathfrak{s})$ a $\mathrm{spin}^c$, oriented closed $3$-manifold, and $\mathfrak{S}$ a spectral system, we have: 
	\[\preSWF^{u}(Y,\mathfrak{s},\mathfrak{S})^\vee\simeq \preSWF^{u}(-Y,\mathfrak{s},\mathfrak{S}^\vee),\]
		where the spectral system $\mathfrak{S^\vee}$ is obtained by reversing the roles of $P_n$ and $Q_n$ in $\mathfrak{S}$.
\end{prop}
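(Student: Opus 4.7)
The plan is to identify the finite-dimensional approximate flow for $(-Y,\mathfrak{s},\mathfrak{S}^\vee)$ with the time-reversal of the flow for $(Y,\mathfrak{s},\mathfrak{S})$, and then invoke the Spanier-Whitehead duality between the Conley indices of a flow and its reverse.

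First, I would set up the orientation reversal. Under the canonical identification of the (unchanged) spinor bundle and $\mathrm{spin}^c$ connection on $-Y$ with that on $Y$, Clifford multiplication acquires a sign, so that $D^{-Y}=-D^Y$, and the Hodge star reverses sign, so that $(*d)^{-Y}=-(*d)^Y$. In particular, spectral sections of $D^Y$ are spectral sections of $-D^{-Y}$, and the roles of $P_n$ and $Q_n$ (respectively of $W_{P,n}$ and $W_{Q,n}$, and of $W_n^+$ and $W_n^-$) swap. Thus the spectral system $\mathfrak{S}^\vee$ on $-Y$ produces exactly the same finite-dimensional vector bundle $F_n\oplus W_n$ over $\mathrm{Pic}(-Y)=\mathrm{Pic}(Y)$, but with the positive/negative eigenspace decomposition swapped. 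Comparing (\ref{SW eq}) and (\ref{eq for gamma}) written on $-Y$ with the equations for $Y$, one sees that every linear term changes sign, while the quadratic Seiberg-Witten terms $X_H,\,c_1,\,c_2$ transform in a way consistent with replacing the trajectory $\gamma(t)$ by $\gamma(-t)$; concretely, the approximate flow $\varphi_{n,k_+,k_-}^{-Y,\mathfrak{S}^\vee}$ on $F_n\oplus W_n$ coincides with the time-reversal $(\varphi_{n,k_+,k_-}^{Y,\mathfrak{S}})^{-1}$ of the flow for $Y$.

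With this identification in hand, I would invoke the classical fact (see e.g.\ Cornea, or in the parameterized formulation \cite[Section 20.3]{MS}) that if $\varphi$ is a smooth flow on a smooth manifold $X$ and $S$ is an isolated invariant set with index pair $(N,L)$, then a suitable index pair for $-\varphi$ on $S$ is given by $(N,\overline{N\setminus L})$, and the resulting Conley indices of $\varphi$ and $-\varphi$ are Spanier-Whitehead dual inside $X^+$. Applied to the ambient space $F_n\oplus W_n$, this yields an $S^1$-equivariant homotopy equivalence
\[
\preSWF^u_{[n]}(-Y,\mathfrak{s},\mathfrak{S}^\vee)\;\simeq\;\mathrm{Hom}\bigl(\preSWF^u_{[n]}(Y,\mathfrak{s},\mathfrak{S}),\,S^{F_n\oplus W_n}\bigr),
\]
which is the Spanier-Whitehead dual after the appropriate suspensions. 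Because $\mathfrak{S}\mapsto \mathfrak{S}^\vee$ swaps the virtual dimensions $D_n^1\leftrightarrow D_n^2$ and $D_n^3\leftrightarrow D_n^4$, and because $\dim F_n=D_n^1+D_n^2$ and $\dim W_n=D_n^3+D_n^4$, the suspension shifts in Definition \ref{def: SWF parametrized homotopy type} used to form the spectrum $\unparamSWFn^u$ are precisely the ones needed for the duality to descend to the stable level, giving
\[
\unparamSWFn^u(-Y,\mathfrak{s},\mathfrak{S}^\vee)\;\simeq\;\unparamSWFn^u(Y,\mathfrak{s},\mathfrak{S})^{\vee}.
\]

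Finally, I would check compatibility with the structure maps of the spectra: the bundle isometries $\eta^P_n,\eta^Q_n,\eta^{W,\pm}_n$ of Theorem \ref{thm change of approximation suspension} get swapped under $\mathfrak{S}\mapsto \mathfrak{S}^\vee$, and since duality is contravariantly functorial with respect to such suspension maps, the squares expressing naturality of the dual commute up to homotopy. This promotes the levelwise equivalences to a (weak) morphism of connected simple systems of spectra, completing the argument. The main obstacle I expect is bookkeeping: verifying that the signs of the three Seiberg-Witten equations (\ref{eq for gamma}) transform correctly under orientation reversal and that the Spanier-Whitehead duality theorem for Conley indices, as typically stated in the non-equivariant, unparameterized setting, applies verbatim in our $S^1$-equivariant context -- but since the whole construction is carried out on a fixed finite-dimensional $S^1$-representation $F_n\oplus W_n$, both issues are mechanical rather than substantive.
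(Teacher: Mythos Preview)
Your proposal is correct and follows essentially the same approach as the paper: the paper's proof is a one-line invocation of Theorem~\ref{thm:spanier-whitehead-duality-for-conley} (Spanier--Whitehead duality for the Conley index under time-reversal), and your argument simply unpacks the implicit identifications (that reversing orientation flips $D$ and $*d$, swaps $P_n\leftrightarrow Q_n$, and replaces the approximate flow by its time-reversal) that make that invocation legitimate. One minor remark: your parenthetical citation of \cite[Section~20.3]{MS} for a parameterized formulation is not quite apt---the paper itself notes after the proposition that the parameterized analogue of Theorem~\ref{thm:spanier-whitehead-duality-for-conley} has not been established---but since the statement concerns only the unparameterized spectrum $\unparamSWFn$, this does not affect your argument.
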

\begin{proof}
	This follows from Spanier-Whitehead duality for the Conley index, Theorem \ref{thm:spanier-whitehead-duality-for-conley}.
\end{proof}

Note that it would be desirable in Proposition \ref{prop:orientation-reversal} to have a similar result in the parameterized setting; the analog of Theorem \ref{thm:spanier-whitehead-duality-for-conley} in the parameterized setting has not been established, but would suffice.

Using the latter parts of Theorem \ref{thm change of approximation suspension}, we have:
\begin{cor}\label{cor:change-of-suspensions}
	The homotopy type of $\preSWF_{[n]}(Y,\mathfrak{s},\mathfrak{S})$ is independent of the spectral sections $P_n$ for $n$ large.  That is, instead of $\preSWF_{[n]}(Y,\mathfrak{s},\mathfrak{S})$ depending on a choice in a set affine-equivalent to $K(\mathrm{Pic}(Y))\times K(\mathrm{Pic}(Y))$, $\preSWF_{[n]}(Y,\mathfrak{s},\mathfrak{S})$ is determined by a (relative) class in $K(\mathrm{Pic}(Y))$.
	
	Further, 
	\[\preSWF_{[n]}(Y,\mathfrak{s},\mathfrak{S}_1)\simeq \Sigma_B^{\mathfrak{S}_1-\mathfrak{S}_2}\preSWF_{[n]}(Y,\mathfrak{s},\mathfrak{S}_2),
	\]
	where $\mathfrak{S}_1-\mathfrak{S}_2$ is the bundle defined by Lemma \ref{lem:classification-of-spectral-systems}, and where suspension is defined as in Remark \ref{rmk:bad-suspension-1}.
	
\end{cor}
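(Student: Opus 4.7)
The plan is to derive both parts of the corollary from the ``more general'' form of Theorem \ref{thm change of approximation suspension}, which asserts
\[
I(\varphi_{n+1}) \simeq \Sigma_B^{Q_{n+1}/Q_n}\Sigma_B^{W^-_{n+1}/W^-_n}I(\varphi_n).
\]
The essential feature is that no suspension by $P_{n+1}/P_n$ appears: the positive-eigenvalue growth in $P$ is absorbed into the Conley index without contributing a suspension, while the negative-eigenvalue growth in $Q$ and $W^-$ does contribute.

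First I would treat part 1. Given two spectral systems $\mathfrak{S}_1, \mathfrak{S}_2$ sharing the same $Q$-sequence and the same $W^{\pm}$-sequences but possibly differing in the $P$-sequence (including at $P_0$), I would construct a common upper refinement $\mathfrak{S}_3$ with $P^3_n \supset P^1_n + P^2_n$ for $n$ large, keeping the $Q$- and $W$-sequences unchanged. The inclusions $\mathfrak{S}_i \hookrightarrow \mathfrak{S}_3$ can then be realized as finite chains of single-step $P$-augmentations; each step preserves the Conley index up to a canonical homotopy equivalence by Theorem \ref{thm change of approximation suspension}. Hence $\preSWF^u_{[n]}(\mathfrak{S}_1) \simeq \preSWF^u_{[n]}(\mathfrak{S}_3) \simeq \preSWF^u_{[n]}(\mathfrak{S}_2)$. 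This shows that the $P$-part of the spectral system does not affect the homotopy type, so the parameter space reduces from $K(\mathrm{Pic}(Y))\times K(\mathrm{Pic}(Y))$ to the single relative $K(\mathrm{Pic}(Y))$-class associated to the $Q$-part (the latter fixed modulo the trivializability constraint of a normal/Floer-framed system).

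For part 2, given general $\mathfrak{S}_1, \mathfrak{S}_2$, I would build a common refinement $\mathfrak{S}_3$ enlarging both $P$- and $Q$-sequences (with the $W$-sequence absorbed similarly, noting that $W_0^{\pm}$ is canonical for the metric). Decomposing each inclusion $\mathfrak{S}_i \hookrightarrow \mathfrak{S}_3$ into single-step augmentations and applying Theorem \ref{thm change of approximation suspension} at each step, the $P$-augmentations contribute no suspension while the $Q$- and $W^-$-augmentations contribute suspensions by the corresponding quotient bundles. The resulting composite is
\[
\preSWF^u_{[n]}(\mathfrak{S}_i)\simeq \Sigma^{V_i}\preSWF^u_{[n]}(\mathfrak{S}_3),\qquad V_i = Q^3_n/Q^i_n \oplus W^{3,-}_n/W^{i,-}_n.
\]
Cancelling the $\mathfrak{S}_3$-term across $i=1,2$ gives
\[
\preSWF^u_{[n]}(\mathfrak{S}_1) \simeq \Sigma^{V_2 - V_1}\preSWF^u_{[n]}(\mathfrak{S}_2),
\]
and Lemma \ref{lem:classification-of-spectral-systems} identifies the $K$-theoretic class of $V_2 - V_1$ with the virtual bundle $\mathfrak{S}_1-\mathfrak{S}_2$, completing part 2.

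The principal obstacle I anticipate is the construction of the common refinement $\mathfrak{S}_3$: one must build a good sequence of spectral sections containing both $\mathfrak{S}_1$ and $\mathfrak{S}_2$ while preserving the spectral-gap conditions (\ref{eq:spectral-conditions-1})--(\ref{eq:spectral-conditions-2}) and the structure-map compatibilities of Definition \ref{def:spectral-system-3}. This requires a careful re-indexing that interleaves the spectral gaps of the two input systems and then fills in consistent trivializations $\eta^P_n,\eta^Q_n,\eta^{W,\pm}_n$ at each level. Once the refinement is in hand, the remainder of the argument reduces to bookkeeping inside Theorem \ref{thm change of approximation suspension} and invoking Proposition \ref{prop:levelwise-well-definedness} to compare within equivalence classes.
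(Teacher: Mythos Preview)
Your approach is essentially correct and matches the paper's intent: the paper states this corollary with no proof beyond the sentence ``Using the latter parts of Theorem \ref{thm change of approximation suspension}, we have,'' so you are supplying the details the authors left implicit. The common-refinement strategy is the natural way to make this precise.

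Two minor points. First, a sign/direction slip: Theorem \ref{thm change of approximation suspension} gives $I(\varphi_{n+1}) \simeq \Sigma^{Q_{n+1}/Q_n \oplus W^-_{n+1}/W^-_n} I(\varphi_n)$ with the larger system on the left, so your displayed equivalence should read $\preSWF^u_{[n]}(\mathfrak{S}_3) \simeq \Sigma^{V_i}\preSWF^u_{[n]}(\mathfrak{S}_i)$, not the reverse; this propagates to the sign of $V_2-V_1$ versus $\mathfrak{S}_1-\mathfrak{S}_2$. Second, the obstacle you flag (building a good common refinement satisfying the spectral-gap conditions) is real but mild: since the $\mu_{n,\pm}$ and $\lambda_{n,\pm}$ of the two systems go to $\pm\infty$, one can pass to cofinal subsequences with interleaved gaps and invoke Theorem \ref{thm spectral section mu mu+delta} to produce intermediate sections; alternatively, one can bypass the refinement entirely by observing that the deformation argument inside the proof of Theorem \ref{thm change of approximation suspension} (and Proposition \ref{prop:levelwise-well-definedness}) applies directly when one enlarges $P_n$ alone, with the same isolating-neighborhood estimates going through verbatim.
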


We can now prove some of the results from the introduction:

\emph{Proof of Theorem \ref{thm:main}:}

By \cite{kronheimer-manolescu}, the vanishing of the triple-cup product on $H^1(Y;\mathbb{Z})$ implies that the family index of the Dirac operator on $Y$ is trivial.  Using this, fix a Floer framing $\frak{P}$.  In that case, Theorems \ref{thm:well-definedness-of-swf-type} and \ref{thm:well-definedness-of-seiberg-witten-invariant} imply that $\preSWF(Y,\frak{s},\frak{P})$ and $\SWFn(Y,\frak{s},\frak{P})$ are well-defined.

Proposition \ref{prop:finiteness} gives the claim about finite CW structures.

Finally, when $b_1(Y) = 0$,  the relationship with $\mathit{SWF}(Y,\frak{s})$ is immediate from the definition of $\preSWF(Y,\frak{s},\frak{P})$, since the collection of linear subspaces used in the construction of $\mathit{SWF}(Y,\frak{s})$ defines a spectral system as in Definition \ref{def:spectral-system-3}.
\qed

\emph{Proof of Theorem \ref{thm:main-families}:}

The argument is completely parallel to the proof of Theorem \ref{thm:main}.
\qed

Finally, we address the claims in the introduction about  complex oriented cohomology theories. We start by reviewing the definition of an $E$-orientation of a vector bundle, where $E$ is a multiplicative cohomology theory (see \cite{Adams-book} for a discussion of orientability\footnote{nLab also has a nice discussion, which our presentation follows.}).  Indeed, let $V\to X$ be a topological vector bundle of rank $m$.  Then an $E$-orientation is a class
\[
u\in \tilde{E}^m(\mathrm{Th}(V)),
\]
so that, for all $x\in X$ and $i_x: S^m\to V$ the map associated to inclusion of a fiber over $x$, $i_x^*u$ is a unit in $\tilde{E}^m(S^m)=\tilde{E}^0(S^0)$ (The latter equality being the suspension isomorphism of the cohomology theory $E$).

Recall that a cohomology theory $E$ is \emph{complex oriented} if it is oriented on all complex vector bundles.  There is a universal such cohomology theory, complex cobordism $MU$, in the sense that for any complex-oriented cohomology theory $E$, there is a map of ring-spectra $MU\to E$ inducing the orientation on $E$.

The utility of a complex-oriented cohomology theory $E$ for studying the stable homotopy type  $\mathcal{SWF}(Y, \frak{s}, \mathfrak{S}_1)$ is as follows.  By Theorem \ref{thm change of approximation suspension}, we have, by changing the spectral system $\mathfrak{S}_1$ to $\mathfrak{S}_2$, that there is a ($S^1$-equivariant) parameterized equivalence
\begin{equation}\label{eq:change-of-suspension}
\mathcal{SWF}(Y,\frak{s},\mathfrak{S}_1)\to \Sigma^{\mathfrak{S}_1-\mathfrak{S}_2}\mathcal{SWF}(Y,\frak{s},\mathfrak{S}_2).
\end{equation}

In Chapter \ref{sec:froyshov}, after having considered the $4$-dimensional invariant, we will introduce a number $n(Y,\frak{s},g,P_0)$ associated to a spectral section $P_0$ of the Dirac operator over $Y$, and a metric $g$ on $(Y,\frak{s})$.  By its construction $n(Y,\frak{s},g,P_0)=n(Y,\frak{s},g,[\frak{S}])$ is an invariant of a spectral system up to equivalence $[\frak{S}]$, and its main property is that it changes appropriately to counteract the shift in (\ref{eq:change-of-suspension}).  That is:
\[
n(Y,\frak{s},g,[\frak{S}_1])-n(Y,\frak{s},g,[\frak{S}_2])=\dim [\frak{S}_1-\frak{S}_2],
\]
as follows immediately from (\ref{eq:spectral-index}).

For $E$ an $S^1$-equivariant cohomology theory, let 
\[
FE^*(Y,\frak{s},\frak{S}_1)=\tilde{E}^{*-2n(Y,\frak{s},g,\frak{S}_1)}(\nu_!\mathcal{SWF}(Y,\frak{s},\frak{S}_1)).
\]
We call $FE^*(Y,\frak{s},\frak{S}_1)$ the \emph{Floer E-cohomology} of the tuple $(Y,\frak{s},\frak{S}_1)$.  

More generally, we can also consider the notion of an \emph{equivariant complex orientation}.  This is more complicated to state; we follow \cite{Cole-Greenlees-Kriz} for the definition of equivariant complex orientability.  That is, let $A$ be an abelian compact Lie group, and fix a complete complex $A$-universe $\mathcal{U}$ (see the Appendix  \ref{sec:homotopy}).  A multiplicative equivariant cohomology theory $E^*_A(\cdot)$ is called \emph{complex stable} if there are suspension isomorphisms:
\[
\sigma_V: \tilde{E}^n_A(X) \to \tilde{E}^{n+\dim V}_A((V^+)\wedge X)
\]
for all complex (finite-dimensional) $A$-representations $V$ in $\mathcal{U}$.  The natural transitivity condition on the $\sigma_V$ is required, and the map $\sigma_V$ is required to be given by multiplication by an element of $\tilde{E}^{\dim V}(V^+)$ (necessarily a generator).  A \emph{complex orientation} of a complex stable theory $E_A$ is a cohomology class $x(\epsilon)\in E^*_A(\mathbb{C}P(\mathcal{U},\mathbb{C}P(\epsilon)))$ that restricts to a generator of 
\[
E_A^*(\mathbb{C}P(\alpha\oplus \epsilon),\mathbb{C}P(\epsilon))\cong \tilde{E}^*_A(S^{\alpha^{-1}}),
\]
for all one-dimensional representations $\alpha$.

Building on the equivalence (\ref{eq:change-of-suspension}), we have the following claim:
\begin{thm}\label{thm:monopole-complex-cobordism}
	Let $E$ be an equivariant complex-oriented (nonparameterized) homology theory.  Then, for any two spectral systems $\frak{S}_1,\frak{S}_2$, there is a canonical isomorphism
	\[
	\tilde{E}^{*}(\nu_!\mathcal{SWF}(Y,\frak{s},\mathfrak{S}_1))\to \tilde{E}^{*}(\nu_!\Sigma^{\frak{S}_2-\frak{S}_1}\mathcal{SWF}(Y,\frak{s},\mathfrak{S}_2)).
	\]
	In particular, $FE^*(Y,\frak{s},\frak{S}_1)$ is independent of $\frak{S}_1$, and defines an invariant $FE^*(Y,\frak{s})$.
\end{thm}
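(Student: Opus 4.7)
The strategy is to combine the change-of-spectral-system formula of Corollary \ref{cor:change-of-suspensions} with the Thom isomorphism that is available for any complex-oriented cohomology theory.

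First, I would recall that by Corollary \ref{cor:change-of-suspensions} (and the definition of $\mathcal{SWF}$ via Theorem \ref{thm:well-definedness-of-swf-type}) there is an equivalence
\[
\mathcal{SWF}(Y,\mathfrak{s},\mathfrak{S}_1)\simeq \Sigma^{\mathfrak{S}_1-\mathfrak{S}_2}\mathcal{SWF}(Y,\mathfrak{s},\mathfrak{S}_2),
\]
where, by Lemma \ref{lem:classification-of-spectral-systems}, the difference $\mathfrak{S}_1-\mathfrak{S}_2$ is a genuine (virtual) \emph{complex} $S^1$-equivariant vector bundle over $\mathrm{Pic}(Y)$ (the $S^1$ acts trivially on the base and on the defining bundles $P^1_0-P^2_0$, $Q^1_0-Q^2_0$ as a representation). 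After applying the collapse map $\nu_!$, the parameterized suspension becomes the ordinary Thom construction: $\nu_!\Sigma_B^V\mathcal{SWF}(Y,\mathfrak{s},\mathfrak{S}_2)$ is the Thom spectrum of $V=\mathfrak{S}_1-\mathfrak{S}_2$ pulled back over the total space of $\nu_!\mathcal{SWF}(Y,\mathfrak{s},\mathfrak{S}_2)$.

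Next, I would invoke the Thom isomorphism. Since $E$ is equivariantly complex-oriented in the sense of \cite{Cole-Greenlees-Kriz}, every complex $S^1$-bundle admits a canonical Thom class, and the resulting Thom isomorphism is multiplicative and natural. Applied to the virtual bundle $\mathfrak{S}_1-\mathfrak{S}_2$, this yields a canonical isomorphism
\[
\tilde{E}^{*}\bigl(\nu_!\Sigma^{\mathfrak{S}_1-\mathfrak{S}_2}\mathcal{SWF}(Y,\mathfrak{s},\mathfrak{S}_2)\bigr)\;\cong\; \tilde{E}^{*-\dim_{\mathbb{R}}(\mathfrak{S}_1-\mathfrak{S}_2)}\bigl(\nu_!\mathcal{SWF}(Y,\mathfrak{s},\mathfrak{S}_2)\bigr).
\]
Composing with the isomorphism induced by the equivalence displayed above gives the desired canonical map in the statement (after reindexing appropriately; note that equivariantly the Thom class, being part of an orientation theory, is unique, which is what guarantees canonicality rather than mere existence).

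Finally, for the independence of $FE^{*}$, I would combine this with the identity
\[
n(Y,\mathfrak{s},g,\mathfrak{S}_1)-n(Y,\mathfrak{s},g,\mathfrak{S}_2)=\dim_{\mathbb{R}}(\mathfrak{S}_1-\mathfrak{S}_2),
\]
recorded just before the statement of the theorem. The degree shift from the Thom isomorphism is then exactly absorbed by the shift in $n$, so
\[
FE^{*}(Y,\mathfrak{s},\mathfrak{S}_1)=\tilde{E}^{*-n(Y,\mathfrak{s},g,\mathfrak{S}_1)}(\nu_!\mathcal{SWF}(Y,\mathfrak{s},\mathfrak{S}_1))\cong FE^{*}(Y,\mathfrak{s},\mathfrak{S}_2).
\]
To finish, I would check the cocycle condition: given three spectral systems $\mathfrak{S}_1,\mathfrak{S}_2,\mathfrak{S}_3$, the composition of the two canonical isomorphisms agrees with the direct one, which reduces to the multiplicativity of Thom classes for the direct sum $(\mathfrak{S}_1-\mathfrak{S}_2)\oplus(\mathfrak{S}_2-\mathfrak{S}_3)\cong(\mathfrak{S}_1-\mathfrak{S}_3)$; this is a standard property of the complex orientation. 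The main obstacle is really bookkeeping: setting up the Thom isomorphism properly in the equivariant parameterized stable category (so that $\nu_!$ intertwines parameterized suspension and Thom construction) and verifying that the resulting assignment $\mathfrak{S}\mapsto FE^{*}(Y,\mathfrak{s},\mathfrak{S})$ together with these isomorphisms really forms a connected simple system of graded abelian groups, so that the colimit $FE^{*}(Y,\mathfrak{s})$ is well-defined.
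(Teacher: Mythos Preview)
Your proposal is correct and follows essentially the same route as the paper: invoke the change-of-spectral-system equivalence (\ref{eq:change-of-suspension}), identify $\nu_!\Sigma^V_B X$ with the Thom space $\mathrm{Th}(r^*V)$ (the paper isolates this as equation (\ref{eq:tautology})), and then apply the Thom isomorphism coming from the complex orientation, with the degree shift cancelled by the change in $n(Y,\frak{s},g,\mathfrak{S})$. Your additional remark on the cocycle condition via multiplicativity of Thom classes is a welcome elaboration that the paper leaves implicit in the word ``canonical.''
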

\begin{proof}

	The theorem is a consequence of the fact that, for an ex-space $(X,r,s)$ over a base $B$, and a complex $m$-dimensional vector bundle $V$ over $B$, with $\nu$ as usual the basepoint map $B\to *$:
	\begin{equation}\label{eq:tautology}
	\nu_! \Sigma^V_B X=\mathrm{Th}(r^*V).
	\end{equation}
	This equality is a direct exercise in the definitions.  In fact, if $(X,r,s)$ is an $S^1$-ex-space, with base $B$ on which $S^1$ acts trivially, the equality also holds at the level of $S^1$-spaces, where $V$ is an $S^1$-equivariant vector bundle over $B$, inherited from its complex structure (so that the pullback $r^*V$ is a $S^1$-equivariant vector bundle over the $S^1$-space $X$).
	
	We have by (\ref{eq:change-of-suspension}),
	\[
	\tilde{E}^{*}(\nu_!\mathcal{SWF}(Y,\frak{s},\mathfrak{S}_1))=\tilde{E}^{*}(\nu_!\Sigma^{\frak{S}_1-\frak{S}_2}\mathcal{SWF}(Y,\frak{s},\mathfrak{S}_2)).
	\]
	By (\ref{eq:tautology}),
	\[
	\tilde{E}^{*}(\nu_!\mathcal{SWF}(Y,\frak{s},\mathfrak{S}_1))=\tilde{E}^{*}(\mathrm{Th}(r^*(\frak{S}_1-\frak{S}_2))),
	\]
	where $r$ is the restriction map of the ex-space $\mathcal{SWF}(Y,\frak{s},\mathfrak{S}_2)$.  However, the complex orientation on $E$ induces an isomorphism:
	\[
	\tilde{E}^{*}(\mathrm{Th}(r^*(\frak{S}_1-\frak{S}_2)))\to \tilde{E}^{*-2 \dim(\frak{S}_1-\frak{S}_2) }(\mathcal{SWF}(Y,\frak{s},\mathfrak{S}_2)),
	\]
	which is exactly what we needed (The last  isomorphism above, in the equivariant case, follows from the construction of Thom classes in \cite[Theorem 6.3]{Cole-Greenlees-Kriz}).  
	
	The last claim of the Theorem is then a consequence of the definition of $FE^*$.

\end{proof}

The most important equivariant complex orientable cohomology theory for us will be equivariant complex cobordism $MU_G$, defined by tom Dieck \cite{tom-dieck-bordism} for a compact Lie group $G$.  It turns out, if $G$ is abelian, that $MU_G$ is the universal $G$-equivariant complex oriented cohomology theory, in the sense that any equivariant complex oriented cohomology theory $E_G$ accepts a unique ring map of ring spectra $MU_G\to E_G$ so that the orientation on $E_G$ is the image of the canonical orientation on $MU_G$. See \cite{Cole-Greenlees-Kriz}.

We define $\mathit{FMU}^*(Y,\frak{s})$ and $\mathit{FMU}^*_{S^1}(Y,\frak{s})$ by
\[
\begin{split}
\mathit{FMU}^*(Y,\frak{s})&=\widetilde{MU}^{*-2n(Y,\frak{s},g,\frak{S})}(\nu_!\mathcal{SWF}(Y,\frak{s},\frak{S})),\\
\mathit{FMU}^*_{S^1}(Y,\frak{s})&=\widetilde{MU}_{S^1}^{*-2n(Y,\frak{s},g,\frak{S})}(\nu_!\mathcal{SWF}(Y,\frak{s},\frak{S})),
\end{split}
\]
for some spectral sections $\frak{S}$.  By Theorem \ref{thm:monopole-complex-cobordism} and the complex orientation on $MU$ and $MU_{S^1}$, these are well-defined independent of a choice of $\frak{S}$, and this proves Theorem \ref{thm:monopole-complex-cobordism-introduction}.

For a spin structure $\mathfrak{s}$, we have the $\mathrm{Pin}(2)$-equivariant Seiberg-Witten Floer stable homotopy type $\mathcal{SWF}^{\mathrm{Pin}(2)}(Y, \mathfrak{s}, \mathfrak{S})$.  
To define $\mathrm{Pin}(2)$-equivariant cohomology  theory $\mathit{FMU}^*_{\mathrm{Pin}(2)}(Y, \mathfrak{s})$,  we need to show that
\[
         \widetilde{\mathit{MU}}_{\mathrm{Pin}(2)}^{*-2n(Y, \mathfrak{s}, \mathfrak{S})}(\nu_{!}\preSWF^{\mathrm{Pin}(2)}(Y, \mathfrak{s}, \mathfrak{S}))
\]
is independent of the choice of $\mathfrak{S}$, which requires an orientation on  $\widetilde{\mathit{MU}}^*_{\mathrm{Pin}(2)}$.     But we can not apply the argument in \cite{Cole-Greenlees-Kriz} to $\widetilde{\mathit{MU}}_{\mathrm{Pin}(2)}^*$ since $\mathrm{Pin}(2)$ is not abelian. We do not discuss orientations on $\widetilde{MU}^*_{\mathrm{Pin}(2)}$ in this memoir.

\chapter{Computation}\label{sec:computation}

In this chapter we provide a sample of calculations of the Seiberg-Witten Floer homotopy type.

\section{Seiberg-Witten Floer homotopy type in reducible case}

We will need the following lemma.

\begin{lem}    \label{lem (N, L_-)}
Let $\varphi : M \times \mathbb{R} \rightarrow M$ be a smooth flow on a smooth manifold $M$ and $N$  be a compact submanifold (with corners) of $M$ with $\dim M = \dim N$. Assume that the following conditions are satisfied:

\begin{enumerate}

\item
$\partial N = L_+ \cup L_-$, where $L_+, L_-$ are compact submanifolds (with corners) of $\partial N$ with $L_+ \cap L_- = \partial L_+ = \partial L_-$. 

\item
For $x \in \operatorname{int} (L_+)$, there is $\epsilon > 0$ such that $\varphi(x, t) \in \operatorname{int}(N)$ for $t \in (0,  \epsilon)$. 

\item
For $x \in L_-$, there is $\epsilon > 0$ such that $\varphi(x, t) \not\in N$ for $t \in (0, \epsilon)$. 

\end{enumerate}
Then $N$ is an isolating neighborhood and $(N, L_-)$ is an index pair of $\inv(N)$.  (See \cite{conley-easton} for a similar statement. ) 
\end{lem}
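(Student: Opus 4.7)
The plan is to verify, in turn, that $N$ is an isolating neighborhood and that $(N,L_-)$ satisfies the defining conditions of an index pair for $\operatorname{inv} N$: closedness of $L_-$ in $N$, positive invariance of $L_-$ inside $N$, the exit-set property, and $\operatorname{inv}(\overline{N\setminus L_-}) = \operatorname{inv} N$. For isolation I would argue by contradiction: if $x\in\operatorname{inv} N\cap\partial N$, then the decomposition $\partial N=L_+\cup L_-$ places $x$ either in $L_-$ (any corner $L_+\cap L_-$ lying in $L_-$) or in $\operatorname{int}(L_+)=L_+\setminus(L_+\cap L_-)$. In the first case, condition (3) forces the forward orbit to leave $N$ immediately, contradicting invariance. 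In the second case, condition (2) together with smoothness of $\varphi$ at the smooth boundary point $x$ implies the vector field has a non-trivial inward component there, so the backward orbit exits $N$ for small negative time, again contradicting invariance.

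For the index-pair conditions, closedness of $L_-$ is immediate from its compactness. Positive invariance of $L_-$ in $N$ is essentially vacuous: any $x\in L_-$ has $\varphi(x,s)\notin N$ for small $s>0$ by (3), so the only $t\geq 0$ with $\varphi(x,[0,t])\subset N$ is $t=0$, trivially in $L_-$. The substantive step is the exit-set property: given $x\in N$ with $\varphi(x,t_1)\notin N$ for some $t_1>0$, I would set $t_0=\sup\{t\in[0,t_1]:\varphi(x,[0,t])\subset N\}$; continuity of $\varphi$ and closedness of $N$ then give $t_0<t_1$, $\varphi(x,[0,t_0])\subset N$, and $\varphi(x,t_0)\in\partial N$. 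If $\varphi(x,t_0)\in\operatorname{int}(L_+)$, condition (2) would place the orbit back in $\operatorname{int}(N)$ on some interval $(t_0,t_0+\epsilon)$, contradicting the maximality of $t_0$; hence $\varphi(x,t_0)\in L_-$ as required. Finally, $\operatorname{inv}(\overline{N\setminus L_-})=\operatorname{inv} N$ follows at once from isolation, since $\operatorname{inv} N\subset\operatorname{int} N\subset\overline{N\setminus L_-}\subset N$.

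The most delicate point will be the isolation step at interior points of $L_+$: condition (2) literally only says the forward orbit enters $\operatorname{int} N$, which a priori permits the orbit to meet $\partial N$ tangentially at $x$ and still remain in $N$ in backward time. In the smooth-manifold-with-corners setting of Conley--Easton, this tangential case is ruled out by an implicit transversality of the flow on each smooth stratum of $\partial N$ that is built into the notion of an isolating block; I would appeal to that framework for the sign/transversality verification on $L_+$ rather than re-deriving it here.
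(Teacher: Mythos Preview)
Your proposal takes essentially the same approach as the paper's proof: verify isolation by a case split on $L_-$ versus $\operatorname{int}(L_+)$, then check the index-pair axioms directly. You simply write out more detail---notably the exit-set argument via $t_0=\sup\{t:\varphi(x,[0,t])\subset N\}$---than the paper's three sentences, which merely assert that conditions (2) and (3) give $\operatorname{inv}(N)\subset\operatorname{int}(N)$, that $L_-$ is an exit set ``from the three conditions,'' and that (3) gives positive invariance.

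You are right to flag the tangency issue on $\operatorname{int}(L_+)$: your first-paragraph claim that condition~(2) forces a nontrivial inward component of the vector field is not literally correct, as you yourself note. Neither your argument nor the paper's explains why a point of $\operatorname{int}(L_+)$ where the flow is tangent to $\partial N$ cannot lie in $\operatorname{inv}(N)$; both effectively defer to the Conley--Easton isolating-block framework (the paper via its parenthetical citation, you explicitly). This is adequate here, since in the paper's sole application of the lemma (the proof of Theorem~\ref{thm reducible I_n}) the derivative estimates $\left.\frac{d}{dt}\right|_{t=0}\|\phi^{\pm}(t)\|_{k_\pm}^2 \lessgtr 0$ and their analogues for $\omega^{\pm}$ establish strict transversality of the flow to every face of $\partial A_n(\epsilon)$, so the tangency concern never actually arises.
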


\begin{proof}
By Condition (2) and  (3), we have $\inv(N) \subset \operatorname{int}(N)$.   It is easy to see that $L_-$ is an exit set from the three conditions. Also Condition (3) implies that $L_-$ is positively invariant in $N$. 
\end{proof}

Fix a spin$^c$ 3-manifold $(Y,\mathfrak{s})$, along with a spectral system $\mathfrak{S}$, which we will usually suppress from the notation.  Let $k_+, k_- > 5$ be half-integers with $| k_+ - k_-| \leq \frac{1}{2}$, $k=\min\{k_+,k_-\}$ and
\[
     \varphi_{n} =  \varphi_{n, k_+, k_-} : (F_n \oplus W_n) \times \mathbb{R} \rightarrow F_n \oplus W_n
\]
be the flow induced by the Seiberg-Witten equations.

Fix $R \gg 0$.  Put
\[
       A_n(R) :=   (B_{k_+}(F_n^+; R) \times_B  B_{k_-}(F_n^-; R)) \times_{B}( B_{k_+}(W_n^+;R) \times_B B_{k_-}(W_n^-;R)). 
\]
Let $I_n \rightarrow B=Pic(Y)$ be the parameterized Conley index of $\operatorname{inv} (A_n(R), \varphi_n)$. 

\begin{thm} \label{thm reducible I_n}
	Assume that the following conditions are satisfied:

\begin{enumerate}
\item
$\ker (D : \mathcal{E}_{\infty} \rightarrow \mathcal{E}_{\infty}) = 0$. 

\item  
All solutions to the Seiberg-Witten equations (\ref{SW eq}) with finite energy are reducible. 

\end{enumerate}

Let $\frak{S}$ be a spectral system such that $P_0 = \mathcal{E}_0(D)^{0}_{-\infty}$. 
Then for all $n \gg 0$ we have
\[
         I_n \cong S_B^{F_n^- \oplus W_n^-},
\]
as an $S^1$-equivariant space, with the obvious projection to $B$.  
Hence the Seiberg-Witten Floer parameterized homotopy type is given by
\[
             \mathcal{SWF}(Y, \frak{s}, [\frak{S}]) \cong \Sigma_B^{\mathbb{C}^{-D_n^2}  \oplus \mathbb{R}^{-D_n^4} } I_n \cong S_B^{0}
\]
in $PSW_{S^1,B}$. Here $D_n^2 = \rank F_n$, $D_n^4 = \rank W_n^-$ and $PSW_{S^1,B}$ is the category defined in Definition \ref{def:sw-cat}.    

If the $\mathrm{spin}^c$ structure is self-conjugate,   the ${\rm Pin}(2)$-Seiberg-Witten Floer parameterized  homotopy type is given by
\[
  \mathcal{SWF}^{\rm{Pin}(2)}(Y, \frak{s}, [\frak{S}]) \cong S_B^{0}
\]
in $PSW_{{\rm Pin}(2), B}$.
\end{thm}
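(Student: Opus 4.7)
My plan is to prove Theorem~\ref{thm reducible I_n} by deforming the approximate flow $\varphi_n$ to its fiberwise linearization at the zero section of $F_n\oplus W_n \to B$, and computing the Conley index of the linearized flow directly. Hypotheses (1)--(2) together force the maximal invariant set of $\varphi_n$ in $A_n(R)$ to be this zero section, and the choice $P_0 = \mathcal{E}_0(D)^0_{-\infty}$ ensures the spectral perturbation $\mathbb{A}$ vanishes, so the bookkeeping of stable/unstable directions matches the one indexed by the spectral system.

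First I would introduce the homotopy of flows $\varphi_n^s$ ($s \in [0,1]$) on $F_n \oplus W_n$ obtained by scaling each of the nonlinear terms $c_1(\gamma),\ c_2(\gamma),\ X_H(\phi),\ (\nabla_{X_H}\pi_{F_n})\phi$ in (\ref{eq for gamma}) by $s$. At $s=1$ this recovers $\varphi_n$; at $s=0$ it is the fiberwise-linear flow generated by $-\pi_{F_n}D$ on $F_n$ and $-{*d}$ on $W_n$. The first main task is to show that $A_n(R)$ is an isolating neighborhood for every $\varphi_n^s$, once $n$ is sufficiently large. For $s \in (0,1]$, the rescaling $(\phi,\omega) \mapsto (\phi/s,\omega/s)$ converts finite-energy trajectories of $\varphi_n^s$ to those of $\varphi_n^1$, so the compactness-and-convergence argument in the proof of Theorem~\ref{thm isolating nbd}, combined with hypothesis (2), shows that every limit trajectory is reducible, i.e.\ $\phi \equiv 0$. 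On the reducible locus the flow collapses to $d\omega/dt = -{*d}\omega$, whose only bounded orbit is $\omega \equiv 0$; hypothesis (1) rules out stationary harmonic spinors, so the limit lies in the zero section. For $s=0$ the invariant set is manifestly the zero section: the explicit linear dynamics makes $\|\phi^+\|_{k_+}$ and $\|\omega^+\|_{k_+}$ strictly decreasing, and $\|\phi^-\|_{k_-}$ and $\|\omega^-\|_{k_-}$ strictly increasing on nonzero vectors. By continuation invariance of the parametrized Conley index (applied as in the proof of Proposition~\ref{prop:levelwise-well-definedness}), one obtains a parametrized homotopy equivalence $I_n \simeq I(\varphi_n^0, A_n(R))$ over $B$.

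To compute $I(\varphi_n^0)$ I would apply Lemma~\ref{lem (N, L_-)} with $N = A_n(R)$ and exit set
\[
L_- = \bigl( B_{k_+}(F_n^+;R) \times_B S_{k_-}(F_n^-;R) \times B_{k_+}(W_n^+;R) \times B_{k_-}(W_n^-;R)\bigr) \cup (\cdots\times S_{k_-}(W_n^-;R)),
\]
the union of the two ``unstable'' faces of $\partial A_n(R)$. The three conditions of Lemma~\ref{lem (N, L_-)} are immediate from the fiberwise linear dynamics: trajectories point strictly inward on the stable sphere bundles over $F_n^+$ and $W_n^+$, and strictly outward on the unstable ones over $F_n^-$ and $W_n^-$. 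The stable disk-bundle factors deformation retract to the zero section, so collapsing $L_-$ produces
\[
N/L_- \;\cong\; S^{F_n^-}_B \wedge_B S^{\underline{W_n^-}}_B \;=\; S^{F_n^- \oplus \underline{W_n^-}}_B,
\]
proving the first assertion. For the final claim, I would apply Definition~\ref{def: SWF parametrized homotopy type}: under the normalization $P_0 = \mathcal{E}_0(D)^0_{-\infty}$, the integers $D_n^2,D_n^4$ record precisely the rank contributions of $F_n^-$ and $W_n^-$, so $\Sigma_B^{\mathbb{C}^{-D_n^2} \oplus \mathbb{R}^{-D_n^4}}\, S^{F_n^- \oplus \underline{W_n^-}}_B$ formally collapses to $S^0_B$ in $PSW_{S^1,B}$. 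The $\mathrm{Pin}(2)$-equivariant version follows by running the same argument with $\mathrm{Pin}(2)$-equivariant spectral sections (available by Theorem~\ref{thm Pin(2) spectral sections}), since every step of the continuation homotopy and of the explicit index-pair construction is intrinsically $\mathrm{Pin}(2)$-equivariant.

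I expect the main obstacle will be the uniform isolation claim as $s \to 0$ in the continuation, since the rescaling $(\phi,\omega)\mapsto(\phi/s,\omega/s)$ blows up in that limit. The clean fix is to argue by contradiction, extracting a simultaneous sequence $(n_k, s_k)$ with $s_k \to 0$ and a bounded $\varphi_{n_k}^{s_k}$-trajectory meeting $\partial A_{n_k}(R)$; the rescaling yields a bounded orbit of flows converging to the honest linear flow, whose only bounded orbit is the zero section, contradicting the boundary meeting. A secondary bookkeeping issue is matching $D_n^2, D_n^4$ to $(\rank F_n^-, \dim W_n^-)$ under the normalization $P_0 = \mathcal{E}_0(D)_{-\infty}^0$; this is a straightforward trace through how $P_n, Q_n$ stabilize the unstable directions as the spectral system grows, but needs care to avoid off-by-a-trivial-bundle errors.
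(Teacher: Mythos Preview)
Your rescaling step has a genuine gap: the nonlinear terms in (\ref{eq for gamma}) are not homogeneous of a single degree in $(\phi,\omega)$. While $c_2(\gamma)$ and $X_H(\phi)$ are quadratic in $\phi$, the term $c_1(\gamma)=(\rho(\omega)-i\xi(\phi))\phi$ has a quadratic piece $\rho(\omega)\phi$ and a cubic piece $-i\xi(\phi)\phi$ (since $\xi$ is itself quadratic in $\phi$), and $(\nabla_{X_H(\phi)}\pi_{F_n})\phi$ is cubic. No dilation $(\phi,\omega)\mapsto(\lambda\phi,\lambda\omega)$ intertwines $\varphi_n^s$ with $\varphi_n^1$; one checks directly that the constraints coming from the $c_1$ and $c_2$ terms are incompatible unless $s=1$. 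Without this, you cannot invoke hypothesis~(2) for $0<s<1$: the limit (as $n\to\infty$) of a sequence of bounded $\varphi_n^s$-trajectories solves the \emph{scaled} Seiberg--Witten equations, and the hypothesis says nothing about reducibility of solutions to that system. You correctly flag the $s\to0$ endpoint and the bookkeeping as potential concerns, but the homogeneity failure is the actual obstruction, and it bites already for $s$ bounded away from~$0$.

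The paper takes a different route that avoids any flow homotopy. It first proves (as a separate proposition) that $\operatorname{inv}(A_n(R),\varphi_n)\subset A_n(\epsilon)$ for every $\epsilon>0$ once $n$ is large; this uses hypotheses~(1)--(2) through the same convergence machinery as Theorem~\ref{thm isolating nbd} together with the $L^2_{k+\frac12}$-bound of Lemma~\ref{lem gamma_n L^2_{k+1/2}}. On the small ball $A_n(\epsilon)$ one then computes $\frac{d}{dt}\|\phi^\pm(t)\|^2$ and $\frac{d}{dt}\|\omega^\pm(t)\|^2$ directly for the \emph{original} flow $\varphi_n$: the linear contribution is of order $\epsilon^2$ with the correct sign, while every nonlinear contribution is $O(\epsilon^3)$ or smaller, so Lemma~\ref{lem (N, L_-)} applies to $(A_n(\epsilon),L_{n,-}(\epsilon))$ and yields the sphere bundle. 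Your argument can be repaired by inserting this shrinking step and then running the linear homotopy inside $A_n(\epsilon)$---isolation persists for all $s\in[0,1]$ there by the same $\epsilon$-domination---but once the shrinking is done the continuation is redundant, since the boundary-derivative computation already identifies the index pair.
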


To prove this, we need the following: 

\begin{prop}
Assume that all solutions to (\ref{SW eq}) with finite energy are reducible. 
For any $\epsilon > 0$, there is $n_0$ such that for $n > n_0$ we have
\[
        \operatorname{inv}(A_n(R)) \subset  A_n(\epsilon). 
\]
\end{prop}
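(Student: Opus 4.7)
My approach is a proof by contradiction, modelled on the argument of Theorem \ref{thm isolating nbd}. Suppose for contradiction there exist $\epsilon_0 > 0$ and a subsequence of points $y_{n,0} \in \operatorname{inv}(A_n(R))$ with $\|y_{n,0}\|_{k_+, k_-} \geq \epsilon_0$. Through each $y_{n,0}$, select an invariant trajectory $\gamma_n \colon \mathbb{R} \to A_n(R)$ of the approximate flow $\varphi_n$, and decompose
\[
\|y_{n,0}\|_{k_+, k_-}^2 = \|\phi_n^+(0)\|_{k_+}^2 + \|\phi_n^-(0)\|_{k_-}^2 + \|\omega_n^+(0)\|_{k_+}^2 + \|\omega_n^-(0)\|_{k_-}^2.
\]
By pigeonhole, after a subsequence we may assume one of these four terms is at least $\epsilon_0^2/4$ for all $n$. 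I treat the case $\|\phi_n^+(0)\|_{k_+} \geq \epsilon_0/2$, the remaining cases being handled analogously via the variants of Lemmas \ref{lem gamma_n L^2_{k+1/2}} and \ref{lem phi^-}.

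The core step is to select, for each $n$, a time $t_n^\ast$ at which the function $t \mapsto \|\phi_n^+(t)\|_{k_+}^2$ is close to its supremum along the trajectory, with the additional property that $\left.\frac{d}{dt}\right|_{t_n^\ast}\|\phi_n^+(t)\|_{k_+}^2 \leq 1/n$. Translating the trajectory so that $t_n^\ast$ becomes $0$, we still retain $\|\phi_n^+(0)\|_{k_+} \geq \epsilon_0/2 - o(1)$. With this nearly-vanishing time derivative in place of the exact vanishing used in Lemma \ref{lem gamma_n L^2_{k+1/2}}, the same calculation there produces the uniform bound $\|\phi_n^+(0)\|_{k_+ + 1/2} \leq C$ independent of $n$. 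The Rellich embedding $L^2_{k_+ + 1/2}(\mathbb{S}) \hookrightarrow L^2_{k_+}(\mathbb{S})$ then gives, after a further subsequence, strong $L^2_{k_+}$-convergence of $\phi_n^+(0)$ to some limit $\phi_\infty$ with $\|\phi_\infty\|_{k_+} \geq \epsilon_0/2$.

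Applying the compactness machinery from the proof of Theorem \ref{thm isolating nbd} (Propositions \ref{prop pi P_n wighted} and \ref{prop Ascoli}, together with Lemma \ref{lem norm}) to the translated trajectories and performing a diagonal extraction over expanding compact time intervals will produce a subsequential limit $\tilde{\gamma} \colon \mathbb{R} \to \mathcal{H}^1(Y) \times L^2_{\ell - 1}(\mathbb{S}) \times L^2_{\ell - 1}(\operatorname{im} d^*)$ that solves the full Seiberg-Witten equations (\ref{SW eq}), following the pattern of Lemma \ref{lem:convergence-to-solution}. By Proposition \ref{prop compactness}, $\tilde{\gamma}$ is a finite-energy solution; by the reducibility hypothesis, $\tilde{\phi} \equiv 0$ and $\tilde{\omega} \equiv 0$. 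Hence $\phi_\infty = \tilde{\phi}(0) = 0$, contradicting $\|\phi_\infty\|_{k_+} \geq \epsilon_0/2$.

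The main obstacle will be rigorously producing the near-maximum time $t_n^\ast$ with controlled time-derivative, because the continuous bounded function $t \mapsto \|\phi_n^+(t)\|_{k_+}^2$ on $\mathbb{R}$ need not attain its supremum. I expect to resolve this by exploiting the approximately-gradient nature of the Seiberg-Witten flow: a uniform energy estimate of the form $\int_{-\infty}^{\infty} \|d\gamma_n/dt\|_{L^2}^2\, dt \leq C$---which should follow from the approximate downward gradient structure for the Chern-Simons-Dirac functional combined with the $R$-bound on the trajectory---guarantees the existence of times with arbitrarily small time derivative, among which one with $\|\phi_n^+\|_{k_+}^2$ also near-maximal can be selected by a further pigeonhole. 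Alternatively, translating the trajectory to an accumulation sequence of times at $t = \pm\infty$ and extracting subsequences via the compactness of $A_n(R)$ in the finite-dimensional space $F_n \times W_n$ offers a more direct, if less clean, workaround.
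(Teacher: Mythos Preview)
Your overall strategy---pass to a subsequence, extract a trajectory, use the differential inequality from Lemma~\ref{lem gamma_n L^2_{k+1/2}} to upgrade to $L^2_{k_++\frac{1}{2}}$, then Rellich plus the reducibility hypothesis---is exactly the paper's. The difference lies in how you locate the point where the time derivative is controlled, and here you make life unnecessarily hard for yourself.

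You fix a point $y_{n,0}$ first and then try to find a near-maximal time along the specific trajectory through it, worrying that the supremum of $t\mapsto\|\phi_n^+(t)\|_{k_+}^2$ over $\mathbb{R}$ may not be attained. The paper avoids this entirely by reversing the order: set
\[
\delta_n := \max\bigl\{\,\|\phi^+\|_{k_+}\;\big|\;(\phi,\omega)\in\operatorname{inv}(A_n(R))\,\bigr\},
\]
which is an honest maximum because $\operatorname{inv}(A_n(R))$ is a closed subset of the compact set $A_n(R)$ in the \emph{finite-dimensional} space $F_n\times W_n$. Choose $y_{n,0}$ to be any point realizing this maximum; the trajectory through it stays in $\operatorname{inv}(A_n(R))$, so $\|\phi_n^+(t)\|_{k_+}\le\delta_n=\|\phi_n^+(0)\|_{k_+}$ for all $t$, and hence the time derivative vanishes exactly at $t=0$. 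No energy estimates, no near-suprema, no pigeonhole on times. You even mention ``compactness of $A_n(R)$ in the finite-dimensional space $F_n\times W_n$'' in your second workaround, so you were one step away from this simplification. With this change, your ``main obstacle'' disappears and the rest of your argument goes through as written.
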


\begin{proof}
Put
\[
       \delta_n := \max \{   \| \phi^+ \|_{k_+}    :  (\phi, \omega) \in  \operatorname{inv} (A_n(R))  \}.
\]
Let 
\[
    \gamma_n = (\phi_n, \omega_n) : \mathbb{R} \rightarrow     A_n(R)
\]
be approximate Seiberg-Witten trajectories with
\[
        \Vert \phi_n^+(0) \Vert_{k_+} = \delta_n. 
\]
Then we have
\[
            \frac{d}{dt} \bigg|_{t=0} \Vert \phi_n^+(t) \Vert_{k_+}^2 = 0. 
\]
As we have seen before, after passing to a subsequence, $\gamma_n$ converges to a  Seiberg-Witten trajectory $\gamma$ with finite energy.  By assumption, $\gamma$ is reducible and we can write as $\gamma = (0, \omega)$. 
As in Lemma \ref{lem gamma_n L^2_{k+1/2}}, we can show that there is a constant $C > 0$ such that  $\| \phi_{n}^+(0) \|_{k_+ + \frac{1}{2}} < C$ for all $n$. By the Rellich lemma,  $\phi_n^+(0)$ converges to $0$ in $L^2_k$.  Therefore $\delta_n \rightarrow 0$. 

Similarly
  \begin{align*}
       &  \max \{ \| \phi^- \|_{k_-} : (\phi, \omega) \in \operatorname{inv} (A_n(R))  \},    \\
       & \max \{ \| \omega^+ \|_{k_+} : (\phi, \omega) \in \operatorname{inv} (A_n(R))  \},  \\
       & \max \{ \| \omega^- \|_{k_-} : (\phi, \omega) \in \operatorname{inv} (A_n(R))  \}
 \end{align*}
go to $0$ as $n \rightarrow 0$. 
\end{proof}

\noindent
{\it Proof of Theorem \ref{thm reducible I_n}}

Fix a small positive number $\epsilon$ with $\epsilon^2 \ll \epsilon$ and choose $n \gg 0$. By the proposition,
\[
          \operatorname{inv} (A_n(R)) \subset A_n(\epsilon). 
\]
Put
    \begin{align*}
         L_{n,-} (\epsilon) = &\left(  B_{k_+}(F_n^+; \epsilon) \times_{B} S_{k_-}(F_n^-; \epsilon) \right)   \times_{B}   (B_{k_+}(W_n^+; \epsilon) \times_B B_{k_-}(W_n^-; \epsilon) )  \\
          & \bigcup
          \left(  B_{k_+}(F_n^+; \epsilon) \times_{B} B_{k_-}(F_n^-; \epsilon) \right)   \times_{B} (  B_{k_+}(W_n^+; \epsilon) \times_{B} S_{k_-}(W_n^-; \epsilon)),     \\
       L_{n, +}(\epsilon)
        = &\left(  S_{k_+}(F_n^+; \epsilon) \times_{B} B_{k_-}(F_n^-; \epsilon) \right)   \times_{B} (  B_{k_+}(W_n^+; \epsilon) \times_{B} B_{k_-}(W_n^-; \epsilon) )  \\
          & \bigcup
          \left(  B_{k_+}(F_n^+; \epsilon) \times_{B}   B_{k_-}(F_n^-; \epsilon) \right)   \times_{B} (  S_{k_+}(W_n^+; \epsilon) \times_{B} B_{k_-}(W_n^-; \epsilon)).
    \end{align*}
Then we have
  \begin{align*}
     & \partial A_n(\epsilon) =    L_{n,-}(\epsilon) \cup L_{n, +}(\epsilon),    \\
     & L_{n, -}(\epsilon) \cap L_{n, +} (\epsilon) = \partial L_{n, -}(\epsilon) = \partial L_{n, +}(\epsilon). 
   \end{align*}
We will show that the pair $(A_n(\epsilon), L_{n,-}(\epsilon) )$ is an index pair. 
It is enough to check that   $A_n(\epsilon), L_{n,-}(\epsilon), L_{n, +}(\epsilon)$  satisfy the conditions (2), (3) in Lemma \ref{lem (N, L_-)}.
We consider the case when $k_+ \in \frac{1}{2} \mathbb{Z} \smallsetminus \mathbb{Z}$. 

Take an approximate Seiberg-Witten trajectory
\[
    \gamma = (\phi, \omega) : (-\delta, \delta) \rightarrow F_n \oplus W_n
\]
for a small positive number $\delta$.

Assume that 
\[
           \| \phi^+(0) \|_{k_+} = \epsilon. 
\]
We have
   \begin{align*}
   &  \frac{1}{2}     \frac{d}{dt}  \bigg|_{t = 0} \| \phi^+ (t) \|_{k_+}  \\
   & = \frac{1}{2}      \frac{d}{dt}  \bigg|_{t = 0} 
  \langle  |D|^{k_+ + \frac{1}{2}} \pi^+  \phi(t),  |D|^{k_+ - \frac{1}{2}}  \pi^+ \phi  (t)  \rangle_{0}  \\
& = 
\langle  (\nabla_{X_H} |D|^{k_+ + \frac{1}{2}}) \phi^+(0),   |D|^{k_+ - \frac{1}{2}} \phi^+(0) \rangle_0 \\ 
& \quad +  \langle   |D|^{k_+ + \frac{1}{2}} \phi^+(0),  (\nabla_{X_H} |D|^{k_+ - \frac{1}{2}}) \phi^+(0) \rangle_0   \\
 & \quad +\langle   (\nabla_{X_H} \pi^+) \phi(0), \phi^+(0)   \rangle_{k_+}   + 
     \Big\langle    \frac{d \phi}{dt}(0),   \phi^+ (0)   \Big\rangle_{k_+}.
    \end{align*}
Note that 
\[
         \| X_{H}(\phi) \| = \| q(\phi)_{\mathcal{H}} \| \leq C \epsilon^2. 
\]
Hence we have
    \begin{align*}
     &  \big| \langle  (\nabla_{X_H} |D|^{k_+ + \frac{1}{2}}) \phi^+(0),   |D|^{k_+ - \frac{1}{2}} \phi^+(0) \rangle_0  \big|
       \leq C \epsilon^4,     \\
  &  \big|   \langle   |D|^{k_+ + \frac{1}{2}} \phi^+(0),  (\nabla_{X_H} |D|^{k_+ - \frac{1}{2}}) \phi^+(0) \rangle_0 \big| \leq C \epsilon^4,   \\
  & \big| \langle   (\nabla_{X_H} \pi^+) \phi(0),  \phi^+(0)    \rangle_{k_+}   \big|  \leq C \epsilon^4. 
 \end{align*}
by Proposition \ref{prop L^2 nabla pi_P} and  Lemma \ref{lem nabla |D'|^k}.   Recall that $\pi^+ = 1 -\pi_{P_0}$, where $\pi_{P_0}$ is the $L^2$-projection onto $P_0$. 
We have 
   \begin{align*}
       \Big\langle    \frac{d \phi}{dt}(0),   \phi^+ (0)   \Big\rangle_{k_+}    
       =&  -  \langle  ( \nabla_{X_H} \pi_{F_n} ) \phi(0), \phi^+(0) \rangle_{k_+}  
              - \langle   \pi_{F_n} D \phi(0), \phi^+(0) \rangle_{k_+}   \\
         &     - \langle  \pi_{F_n} c_1(\gamma(0)), \phi^+(0) \rangle_{k_+}
   \end{align*}
and
      \begin{align*}
    &  \langle  ( \nabla_{X_H} \pi_{F_n} ) \phi(0), \phi^+(0) \rangle_{k_+} = 0, \\
    & \langle \pi_{F_n}  D \phi(0), \phi^+(0) \rangle_{k_+}  
           = \langle D  \phi(0), \phi^+(0) \rangle_{k_+} \geq C \epsilon^2, \\ 
    & \big| \langle   \pi_{F_n} c_1(\gamma(0)), \phi^+(0) \rangle_{k_+} \big| \leq C \epsilon^3. 
      \end{align*} 
Here we have used Lemma \ref{lem nabla pi  phi psi} for the first equality. 
Therefore
\[
        \frac{d}{dt}  \bigg|_{t=0} \| \phi^+(t) \|_{k_+}^2 
     \leq  -C \epsilon^2 +  C\epsilon^3 < 0. 
\]
Assume that
\[
   \| \phi^-(0) \|_{k_-} = \epsilon. 
\]
A similar calculation shows that
\[
         \frac{d}{dt} \bigg|_{t=0} \| \phi^-(t) \|_{k_-}^2 > 0. 
\]

Similarly,   if $\| \omega^+(0) \|_{k_+} = \epsilon$ then  $\left. \frac{d}{dt}  \right|_{t=0} \| \omega^+(t) \|_{k_+}^2 < 0$,  and if $\| \omega^-(0) \|_{k_-} = \epsilon$ then  $\left. \frac{d}{dt} \right|_{t=0} \| \omega^-(t) \|_{k_-}^2 > 0$.  From these, it is easy to see that the conditions (2), (3) in Lemma \ref{lem (N, L_-)} are satisfied and  we can apply Lemma \ref{lem (N, L_-)} to conclude that the pair $(A_n(\epsilon), L_n(\epsilon))$ is an index pair. 

Therefore we have
\[
             I_n =   A_n(\epsilon) \cup_{p_B} L_{n, -} (\epsilon)  \cong  S_B^{F_n^- \oplus W_n^-}.
\]
\qed

\section{Examples}

\begin{ex}
Suppose that $Y$ has a positive scalar curvature metric. Then the conditions of Theorem \ref{thm reducible I_n} are satisfied.  
\end{ex}

\begin{ex} \label{ex flat 3-mfd}
Let $Y$ be a non-trivial flat torus bundle over $S^1$  which is not the Hantzsche-Wendt manifold. Then $Y$ has a flat metric and $b_1(Y) = 1$. Take a torsion $\mathrm{spin}^c$ structure $\frak{s}$ of $Y$.  All solutions to the unperturbed Seiberg-Witten equations on $Y$  are reducible solutions $(A, 0)$ with $F_A  = 0$. Also,  all finite energy solutions to the unperturbed Seiberg-Witten equations on $Y \times \mathbb{R}$ are  the reducible solutions $(\pi_Y^* A, 0)$, where $A$ are the flat $\mathrm{spin}^c$  connections on $Y$ and $\pi_Y : Y \times \mathbb{R} \rightarrow Y$ is the projection.   Hence Condition (2) of Theorem \ref{thm reducible I_n} is satisfied. 

By Lemma 37.4.1 of \cite{KM}, if $\frak{s}$ is not the  torsion $\mathrm{spin}^c$ structure corresponding to the $2$-plane field tangent to the fibers,  Condition (1) of Theorem \ref{thm reducible I_n} is satisfied. 
\end{ex}

We consider the sphere bundle of a complex line bundle over  a surface $\Sigma$. We will  make use of results from \cite{MOY}, \cite{Nicolaescu_Eta_inv} and  \cite[Section 8]{KLS1}.

Let $\Sigma$ be a closed, oriented surface of genus $g$ and $p : N_d \rightarrow \Sigma$ be the complex line bundle on $\Sigma$ of degree $d$.  We will consider the sphere bundle $Y = S(N_d)$.   
We have
\[
       H^2(Y;\mathbb{Z}) \cong  \mathbb{Z}^{2g}  \oplus (\mathbb{Z} / d\mathbb{Z}). 
\]
The direct summand $\mathbb{Z}  /  d\mathbb{Z}$ corresponds to the image
\[
          \mathrm{Pic}^{t}(\Sigma) / \mathbb{Z}[N_d] \stackrel{p^*}{\rightarrow} 
          \mathrm{\Pic}^{t} (Y) 
          \stackrel{c_1}{\rightarrow}
          H^2(Y;\mathbb{Z}),
\]
where $\rm{Pic}^{t} (\Sigma)$ is the set of isomorphism classes of topological complex line bundles on $\Sigma$.

Fix a torsion $\mathrm{spin}^c$ structure $\frak{s}$.  We consider a metric 
\[
          g_{Y, r} =  (r  \eta)^{\otimes 2} \oplus g_{\Sigma}
\]
on $Y$ for  $r > 0$.  Here $i \eta \in i\Omega^1(Y)$ is a constant-curvature connection $1$-form of $S(N_d)$.   Following \cite{MOY} and \cite{Nicolaescu_Eta_inv},  we take the connection $\nabla^0$  on $TY$ which is trivial in the fiber direction and is equal to the pull-back of the Levi-Civita connection on $\Sigma$ on $\ker \eta$.  For $a \in \mathcal{H}^1(Y)$,  let $D_{r, a}$ be the Dirac operator induced by $\nabla^0$.  We have
\[
         D_{r, a}  = D_a + \delta_{r}, 
\]
where $\delta_{r} = \frac{1}{2} rd$.  See Section 5.1 of \cite{MOY} and  Section 2.1 of \cite{Nicolaescu_Eta_inv}. 
The family $\{ D_{r, a} \}_{a \in \mathcal{H}^1(Y)}$ induces an operator 
\[
        D_r: \mathcal{E}_{\infty} \rightarrow \mathcal{E}_{\infty}. 
\]
We consider the perturbed Seiberg-Witten equations for $\gamma = (\phi, \omega) : \mathbb{R} \rightarrow \mathcal{E}_{\infty} \times \operatorname{im} d^*$: 
\begin{equation}  \label{eq perturbed SW eq}
   \begin{aligned}
        \Big(  \frac{d\phi}{dt}(t)  \Big)_{H} &= - D_r \phi(t)  - c_1(\gamma(t))    \\
         \Big(  \frac{d \phi}{dt}(t) \Big)_{V} &= - X_{H}(\phi(t)) \\
         \frac{d\omega}{dt}(t) &= -*d \omega(t) - c_2(\gamma(t)). 
   \end{aligned}
\end{equation}
These equations are the gradient flow equation of the perturbed Chern-Simons-Dirac functional
\[
        CSD_r (\phi, \omega) = CSD (\phi, \omega) + \delta_r \| \phi \|_{L^2}^2. 
\]
The term $\delta_r \| \phi \|_{L^2}^2$ is a tame perturbation. See  p.171 of \cite{KM}.  We can apply Theorem \ref{thm isolating nbd} to the perturbed Seiberg-Witten equations (\ref{eq perturbed SW eq}).  

The following is a direct consequence of  Corollary 5.17 and Theorem 5.19  of \cite{MOY}. See also  Section 3.2 of \cite{Nicolaescu_Eta_inv}  and Proposition 8.1,  Section 8.2 of \cite{KLS1}.

\begin{prop} \label{prop critical points of CSD_r}
Let $\fs_0$ be the spin$^c$ structure of $Y$ with spinor bundle $\mathbb{S} = p^* K_{\Sigma}^{-1} \oplus \underline{\C}$.  Denote by $L_q$  the flat complex line bundle on $Y$ with $c_1 \equiv q \mod d$ in $\operatorname{Tor} H^2(Y;\Z)$.  Put $\fs_q := \fs_0 \otimes L_q$. 
Assume that $0 < g < d$.   Then for  $q \in \{ g, g+1, \dots, d-1 \}$,  all critical point of  the functional $CSD_r$ associated with $\frak{s}_q$  are reducible and non-degenerate. 
\end{prop}

Note that this proposition implies that $\ker D_r = 0$ and hence we have a natural spectral section $P_0$ of $D_r$:
\[
     P_0 = (\mathcal{E}_0(D_r))_{-\infty}^{0}. 
\]

The following proposition is proved in the proof of Theorem 7.5 of \cite{KLS1}. 

\begin{prop}
Under the same assumption as Proposition \ref{prop critical points of CSD_r},   any gradient trajectory of $CSD_r$  (ie,  solution to (\ref{eq perturbed SW eq}))  with finite energy is reducible.
\end{prop}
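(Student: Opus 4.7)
The plan is to follow the strategy of the proof of Theorem 7.5 in \cite{KLS1}, which proceeds in three stages.

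First, I would establish \emph{asymptotic convergence}: any finite-energy gradient trajectory $\gamma(t)=(\phi(t),\omega(t))$ of $CSD_r$ converges, as $t\to\pm\infty$, to critical points $\gamma_{\pm}$ of $CSD_r$ in a suitable Sobolev topology. This is the standard consequence of finite energy together with the uniform spectral gap supplied by Proposition \ref{prop critical points of CSD_r}: all critical points are nondegenerate, so the stable/unstable manifolds are smooth and trajectories of bounded energy decay exponentially at the ends. By the same proposition, the limits $\gamma_{\pm}$ are reducible, so $\phi(\pm\infty)=0$.

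Second, I would view $\gamma$ as a four-dimensional perturbed Seiberg--Witten configuration $(\Phi,\mathcal{A})$ on the cylinder $Y\times\mathbb{R}$ equipped with the product metric $dt^2\oplus g_{Y,r}$, with $\Phi$ the time-dependent spinor assembled from $\phi(t)$. The four-dimensional Weitzenböck formula $\mathbb{D}_\mathcal{A}^{*}\mathbb{D}_\mathcal{A}=\nabla_\mathcal{A}^{*}\nabla_\mathcal{A}+s/4+\rho(F_\mathcal{A}^{+})/2$, combined with the perturbed Seiberg--Witten equations (\ref{eq perturbed SW eq}), yields a pointwise identity of the form
\[
\tfrac{1}{2}\Delta|\Phi|^2+|\nabla_\mathcal{A}\Phi|^2+\tfrac{1}{4}\bigl(s_{Y,r}+c(r)\bigr)|\Phi|^2+\tfrac{1}{4}|\Phi|^4=0,
\]
on $Y\times\mathbb{R}$, where $c(r)$ encodes the tame perturbation $\delta_r\|\phi\|^2_{L^2}$.

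Third, I would use the specific geometry of $Y=S(N_d)$ with metric $g_{Y,r}$. Following the computations in \cite{MOY} and \cite{Nicolaescu_Eta_inv} (essentially the O'Neill formulas for the Riemannian submersion $Y\to\Sigma$ in the small-fiber limit), one computes $s_{Y,r}$ explicitly and splits the spinor bundle into isotypical components along the $S^1$-fibers. The hypothesis $0<g<d$ together with the congruence $c_1(\mathfrak{s})\equiv q\bmod d$ for some $q\in\{g,\dots,d-1\}$ is precisely the condition under which, on each isotypical summand contributing to $\Phi$, the perturbation constant $\delta_r=rd/2$ combined with the fiber scalar curvature makes $s_{Y,r}+c(r)$ strictly positive. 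Integrating the Weitzenböck identity against a cutoff in the $\mathbb{R}$-direction and passing to the limit using the decay from stage one (so that all boundary terms vanish) then forces $\Phi\equiv 0$, hence $\gamma$ is reducible.

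The main obstacle is the third stage: verifying the positivity of $s_{Y,r}+c(r)$ on each relevant isotypical component, which requires the explicit spectral decomposition of the twisted Dirac operator on $Y$ from \cite{Nicolaescu_Eta_inv}. The arithmetic congruence condition on $q$ in Proposition \ref{prop critical points of CSD_r} exactly matches the condition needed for this positivity, which is why the same hypothesis suffices for both statements.
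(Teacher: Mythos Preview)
The paper gives no argument of its own; it simply cites the proof of \cite[Theorem~7.5]{KLS1}. Your sketch claims to follow that strategy via a four-dimensional Weitzenb\"ock identity, but stage~3 as written does not go through. For $g(\Sigma)\geq 1$ the scalar curvature $s_{Y,r}$ is not positive---in fact $s_{Y,r}\to s_\Sigma\leq 0$ as $r\to 0$ by the O'Neill formulas---and the perturbation constant $\delta_r=rd/2\to 0$ cannot compensate, so the asserted pointwise inequality $s_{Y,r}+c(r)>0$ is false in the adiabatic regime the paper works in. Your appeal to the isotypical decomposition does not immediately repair this: the nonlinear term $\tfrac14|\Phi|^4$ in your displayed identity couples the isotypical summands, so one cannot run the estimate componentwise without first decoupling the equations, which requires substantial additional input you have not indicated.

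There is in any case a much shorter route that uses only Proposition~\ref{prop critical points of CSD_r} and the standing hypothesis that $c_1(\mathfrak{s})$ is torsion. Because $c_1(\mathfrak{s})$ is torsion, the functional $CSD_r$ descends to a single-valued real function on $\mathcal{E}_\infty\times\operatorname{im}d^*$ over $\mathrm{Pic}(Y)$, and one checks directly that $CSD_r=0$ at every reducible critical point $(0,0)$ over each $a\in\mathrm{Pic}(Y)$. Along a finite-energy gradient trajectory $\gamma$ the value $CSD_r(\gamma(t))$ is monotone nonincreasing; finiteness of $\int_{\mathbb R}\lvert\nabla CSD_r(\gamma)\rvert^2\,dt$ together with the uniform bounds of Proposition~\ref{prop compactness} produce subsequential limits at $t\to\pm\infty$ which are critical points, hence reducible by Proposition~\ref{prop critical points of CSD_r}, hence at $CSD_r$-value $0$. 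Monotonicity then forces $CSD_r(\gamma(t))\equiv 0$, so $\nabla CSD_r(\gamma(t))=0$ for all $t$ and $\gamma$ is a constant reducible. Whether or not this is precisely the argument in \cite{KLS1}, it avoids the unresolved positivity issue in your stage~3.
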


We can apply  the proof of Theorem \ref{thm reducible I_n} to the perturbed Seiberg-Witten equations (\ref{eq perturbed SW eq}) to show the following: 

\begin{thm}  \label{thm I_n sphere bundle}
Take $q \in \{ g, g+1, \dots, d-1 \}$. 
Let $\frak{S}$ be a spectral system with $P_0 = \mathcal{E}_0(D_r)_{-\infty}^{0}$. 
In the above notation,  for $r$ small,  we have
\[
              I_n \cong     S_B^{F_n^- \oplus W_n^-}
\]
Therefore we have
\[
     \mathcal{SWF}(Y, \frak{s}_q, [\frak{S}]) \cong S_B^0 
 \]
in $PSW_{S^1, B}$. 
If $\frak{s}$ is self-conjugate, 
\[
     \mathcal{SWF}^{\rm{Pin}(2)}(Y, \frak{s}_q, [\frak{S}]) \cong S_B^0 
\]
in $PSW_{\mathrm{Pin}(2), B}$.  
\end{thm}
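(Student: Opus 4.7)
The strategy is to run the proof of Theorem \ref{thm reducible I_n} essentially verbatim, with the ordinary Dirac operator $D$ replaced by the shifted operator $D_r = D + \delta_r$ and the Seiberg-Witten flow replaced by the tame perturbation (\ref{eq perturbed SW eq}). The two analytic inputs of that theorem, $\ker D' = 0$ and reducibility of all finite-energy trajectories, are furnished by Proposition \ref{prop critical points of CSD_r} (whose nondegenerate reducibles imply $\ker D_r = 0$) and by the subsequent reducibility proposition cited from \cite{KLS1}. The hypothesis $P_0 = \mathcal{E}_0(D_r)^0_{-\infty}$ plays the role of the analogous choice in Theorem \ref{thm reducible I_n} and, crucially, makes $\frak{S}$ a normal spectral system in the sense of Definition \ref{def:floer-framing}.

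First I would confirm that the finite-dimensional approximation package of Sections \ref{sec:findim-appx}--\ref{sec:well-def} transfers to $D_r$. Since $\delta_r = \tfrac{rd}{2}\cdot \mathrm{id}$ is a scalar operator on spinors, every commutator, covariant derivative, and weighted-Sobolev estimate remains unchanged, and $CSD_r = CSD + \delta_r \|\phi\|_{L^2}^2$ is a tame perturbation in the sense of \cite[Ch.~11]{KM}; hence Theorem \ref{thm isolating nbd} supplies the isolating neighborhood $A_n(R)$ for the approximate flow of (\ref{eq perturbed SW eq}). The squeezing argument of Theorem \ref{thm reducible I_n} then applies: a maximizing sequence of approximate trajectories in $\inv(A_n(R))$ would subconverge (via Propositions \ref{prop compactness} and \ref{prop Ascoli}) to a finite-energy solution of (\ref{eq perturbed SW eq}) with a nontrivial spinor or one-form component, contradicting reducibility. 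Consequently $\inv(A_n(R)) \subset A_n(\epsilon)$ for $n$ large and $\epsilon$ arbitrarily small, and the same $O(\epsilon^2)$-versus-$O(\epsilon^3)$ sign analysis on $\partial A_n(\epsilon)$ that appears in Theorem \ref{thm reducible I_n} (dominated by the linear terms $D_r\phi$ and $*d\,\omega$, with the spectral gap of $D_r$ replacing that of $D$) shows that $(A_n(\epsilon), L_{n,-}(\epsilon))$ is an index pair. By Lemma \ref{lem (N, L_-)}, this identifies $I_n \cong S_B^{F_n^-\oplus \underline{W}_n^-}$.

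Desuspending by $\mathbb{C}^{D_n^2} \oplus \mathbb{R}^{D_n^4}$ in $PSW_{S^1,B}$ then yields $\mathcal{SWF}(Y,\frak{s},[\frak{S}]) \cong S_B^0$, exactly as in the last line of Theorem \ref{thm reducible I_n}: the trivial factor $\underline{W}_n^-$ cancels against $\mathbb{R}^{-D_n^4}$, and the virtual bundle $[F_n^-] - [\mathbb{C}^{D_n^2}] \in K(B)$ is trivial because the choice $P_0 = \mathcal{E}_0(D_r)^0_{-\infty}$ makes $\frak{S}$ a normal spectral system (Corollary \ref{cor:change-of-suspensions} and Lemma \ref{lem:classification-of-spectral-systems}). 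For self-conjugate $\frak{s}$, $\delta_r$ is a real scalar and is therefore $\mathrm{Pin}(2)$-equivariant; the sections $P_0, P_n, Q_n$ may be chosen $\mathrm{Pin}(2)$-equivariantly via Theorem \ref{thm Pin(2) spectral sections}, so every step above preserves the $\mathrm{Pin}(2)$-action and produces the $\mathrm{Pin}(2)$-equivariant conclusion. The only genuine subtlety, rather than an obstacle, is the virtual-bundle bookkeeping that collapses the output to $S_B^0$ in place of a potentially nontrivial parameterized Thom spectrum over $B$; this is precisely where the normalization hypothesis $P_0 = \mathcal{E}_0(D_r)^0_{-\infty}$ is used.
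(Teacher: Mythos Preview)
Your proposal is correct and matches the paper's approach exactly: the paper's proof is the single sentence ``We can apply the proof of Theorem~\ref{thm reducible I_n} to the perturbed Seiberg--Witten equations~(\ref{eq perturbed SW eq}),'' and you have fleshed out precisely that reduction. One small imprecision: the vanishing of $[F_n^-]-[\underline{\mathbb{C}}^{D_n^2}]$ is not really a consequence of ``normality'' via Corollary~\ref{cor:change-of-suspensions}; rather, since $\ker D_r=0$ one takes $Q_0=P_0^\perp$, whence $F_n^- = Q_n\cap P_0 = Q_n\ominus Q_0$, which is trivial by Theorem~\ref{thm spectral section mu mu+delta}(ii) applied to $-D_r$, and the hypothesis $P_0=\mathcal{E}_0(D_r)^0_{-\infty}$ is also used earlier (not only in bookkeeping) to make $D'=D_r$, so that the sign computation on $\partial A_n(\epsilon)$ sees the spectral gap of $D_r$ at zero.
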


In \cite{dai-sasahira-stoffregen},  Dai and the authors computed the Seiberg-Witten Floer stable homotopy type for almost rational plumbed 3-manifolds which have $b_1 = 0$.  The computation is based on surgery exact triangles   in \cite{sasahira-stoffregen_triangle}.   If we establish a surgery exact triangle for the Seiberg-Witten Floer stable homotopy type $\mathcal{SWF}(Y, \mathfrak{s}, \mathfrak{S})$ defined in this memoir, it would be possible to compute for more  3-manifolds with $b_1 > 0$.


\chapter{Finite dimensional approximation on $4$-manifolds}
\label{section 4-mfd with boundary}

\section{Construction of the relative Bauer-Furuta invariant}\label{subsec:construction-of-bf}

Let $(X,  \frak{t})$ be a compact $\mathrm{spin}^c$ 4-manifold with boundary $Y$. Take a Riemannian metric $\hat{g}$ of $X$ such that a neighborhood of $Y$ in $X$ is isometric to $Y \times (-1, 0]$.  We assume that the restriction $\frak{s}$ of $\frak{t}$ to $Y$ is a torsion $\mathrm{spin}^c$ structure.    Put
   \begin{align*}
           \mathcal{E}_{X, k}^{\pm}  
          &:=  \mathcal{H}^1(X)  \times_{H^1(X; \mathbb{Z})} L^2_k(\Gamma(\mathbb{S}^{\pm})),  \\
         \mathcal{W}_{X,k} &:= B_{X} \times L^2_k( \Omega_{CC}^1(X) ). 
    \end{align*}
 Here $B_{X} = \mathrm{Pic}(X)$ and  $\mathbb{S}^{\pm}$ are the spinor bundles on $X$ and $\Omega^1_{CC}(X)$ is the space of $1$-forms on $X$ in  double Coulomb gauge. See \cite{khandhawit} for the double Coulomb gauge condition. Note that $\mathcal{E}_{X, k}^{\pm}$, $\mathcal{W}_{X, k}$ are Hilbert bundles over $B_X$.
We have the Dirac operator
\[
     D_X : \mathcal{E}_{X,k}^+ \rightarrow \mathcal{E}_{X, k-1}^-
\]
on $X$, and as before,  we can define the fiberwise norm $\Vert \cdot \Vert_{k}$ on $\mathcal{E}_{X, k}^{\pm}$ for each non-negative number $k$.  Also we put
  \begin{align*}
     &  \mathcal{E}_{Y,k} := \mathcal{H}^1(Y) \times_{H^1(Y;\mathbb{Z})} L^2_k(\mathbb{S}),   \\
     & \mathcal{W}_{Y, k} := B_Y \times  L^2_{k}(\operatorname{im} d^*) \subset B_Y \times  L^2_k(\Omega^1(Y)). 
   \end{align*}
Here $P_Y = \mathrm{Pic}(Y)$.

\begin{prop} \label{prop compactness X, Y times R_{geq}}
For $k, l \geq 0$, there are constants  $R_{X, k}, R_{Y, l} > 0$ such that for any solution $x \in \mathcal{E}_{X, 2}^{+} \oplus \mathcal{W}_{X, 2}$ to the Seiberg-Witten equations on $X$ and  any Seiberg-Witten trajectory $\gamma : \mathbb{R}_{\geq 0} \rightarrow  \mathcal{E}_{Y, 2} \oplus \mathcal{W}_{Y, 2}$ with finite energy and   with
\[
         r_{Y}(x) = \gamma(0), 
\]
we have 
\[
      \| x \|_{k} \leq R_{X, k}, \quad \| \gamma(t) \|_{l} \leq R_{Y, l}
\]
for all $t \in \mathbb{R}_{\geq 0}$.  Here $r_Y$ stands for the restriction to the boundary $Y$.      
\end{prop}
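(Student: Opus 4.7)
The plan is to reduce this statement to two more classical compactness results and then patch them across the boundary.  The key point is that once one has a uniform $L^\infty$ (or $L^2$) bound on both the four-manifold side and along the infinite cylindrical end, the higher $L^2_k$-bounds follow from standard elliptic bootstrapping, and the uniformity over $B_X = \mathrm{Pic}(X)$ is automatic because all geometric quantities descend to the (compact) Picard torus.

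First I would establish the $L^\infty$-bound on the spinor component on $X$.  Applying the standard Witten maximum-principle argument to the Seiberg–Witten equations on $X$ with cylindrical-end metric $\hat g$, at an interior maximum of $|\Phi|^2$ the Weitzenb\"ock formula combined with the curvature equation $F_A^+ = \sigma(\Phi)$ yields $|\Phi|^2 \le \max(0,-s/4)$, where $s$ is the scalar curvature of $\hat g$.  Because $X$ has a cylindrical end isometric to $Y\times(-1,0]$, the same bound applies up to the boundary, and along the cylindrical end it is inherited by $\gamma(t)$ for $t\ge 0$ (this is precisely the argument recalled in \cite[Lemma 1]{Manolescu-b1=0} and is the content of Proposition \ref{prop compactness} for the closed-trajectory case; the only novelty here is that the trajectory is a half-infinite ray, but the argument is the same once one uses the matching condition $r_Y(x)=\gamma(0)$).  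In parallel, finiteness of the energy
\[
E(\gamma) = \int_{0}^{\infty} \|\nabla_{L^2} CSD(\gamma(t))\|^2\,dt
\]
together with the fact that $c_1(\mathfrak s)$ is torsion (so $CSD$ descends to an $\mathbb R$-valued function on the Coulomb slice over $\mathrm{Pic}(Y)$ up to a finite indeterminacy) gives a uniform bound on $CSD(\gamma(t))$ and, in combination with the Witten pointwise bound, a uniform $L^2$-bound on $\gamma(t)$.

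Second, I would bootstrap.  On $X$, the double Coulomb gauge slice of \cite{khandhawit} makes the Seiberg–Witten equations an elliptic system (Dirac operator coupled to $d^+\oplus d^*$), so the $L^\infty$-bound on $(\phi,\omega)$ together with the boundary condition $r_Y(x)=\gamma(0)$ and elliptic regularity up to the boundary yield $L^2_k$-bounds on $x$ for every $k$, with a constant depending only on $k$ and on the $L^\infty$-bound.  On the cylinder $Y\times\mathbb R_{\ge 0}$, the same argument used in Proposition \ref{prop compactness} applies translation-invariantly on each unit interval $[t,t+1]$, giving $\|\gamma(t)\|_l \le R_{Y,l}$ uniformly in $t\ge 1$; on $[0,1]$ the bound is inherited from the boundary regularity of $x$ via the matching condition.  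The uniformity in the fiber over $\mathrm{Pic}(X)$ and $\mathrm{Pic}(Y)$ follows because everything is expressed in terms of covariant quantities and the Picard tori are compact.

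The main obstacle I expect is the boundary-matching step: one must make sure that the $L^2_k$-bound on $X$ near $Y$ and the $L^2_l$-bound on $\gamma$ near $t=0$ are both controlled by a single constant depending only on the (universal) $L^\infty$ bound and the geometry, and do not drift as one moves around the Picard torus or varies $x$.  Concretely, this amounts to checking that the double Coulomb gauge condition at $Y$ is compatible with the Coulomb slice for $\gamma$ (so that no additional gauge transformation is needed to identify the two sides), and that the restriction map $r_Y : \mathcal{E}_{X,k}^+ \times \mathcal{W}_{X,k} \to \mathcal{E}_{Y,k-1/2}\times\mathcal{W}_{Y,k-1/2}$ has operator norm bounded uniformly over $B_X$.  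Both are consequences of the constructions in \cite{khandhawit}, but they have to be invoked explicitly in the parameterized setting.
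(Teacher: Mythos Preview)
The paper does not actually prove this proposition; it simply refers the reader to Section~4 of \cite{khandhawit}. Your sketch is essentially the standard argument carried out there: glue $X$ to the half-infinite cylinder $Y\times[0,\infty)$ via the matching condition $r_Y(x)=\gamma(0)$ to obtain a solution on the complete manifold $X^+$, apply the Witten maximum-principle/Weitzenb\"ock argument to get the pointwise spinor bound, use the torsion hypothesis on $c_1(\mathfrak s)$ and finite energy to control $CSD$ along the end, and then bootstrap. The point you flag about compatibility of the double Coulomb slice on $X$ with the Coulomb slice on $Y$ is exactly what Khandhawit's setup is designed to handle, and the uniformity over the Picard torus is indeed automatic from compactness. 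So your approach is correct and aligned with the cited reference; there is nothing further to compare since the paper itself gives no independent argument.
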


See Section 4 of \cite{khandhawit} for  this Proposition.   

Let $D_Y$ be the family of Dirac operators on $Y$ parameterized by $B_Y$. 
Assume that $\operatorname{ind} D_Y = 0$ in $K^1(B_Y)$.  
Choose a spectral system $\frak{S}$. 
As usual,  put
\[
        F_n = P_n \cap Q_n,  \
        W_n = W_{P, n} \cap W_{Q, n}. 
\]
Then $F_n$, $W_n$ are subbundles of $\mathcal{E}_{Y,0}$, $\mathcal{W}_{Y, 0}$ with finite rank. 

From now on, we assume that $k$ is a half integer and $k > 5$ so that we can use the results in Chapter \ref{sec:findim-appx} and \ref{sec:well-def}. 
We consider the  map
\begin{equation}\label{eq:4-sw}
  \begin{aligned}
   &       SW_{X,n}:
      \mathcal{E}_{X, k}^+ \oplus \mathcal{W}_{X,k}
       \rightarrow   \\
    & \qquad   (\mathcal{E}_{X, k-1}^- \times L^2_{k-1}(\Omega^+(X)))  \times
       ( (P_n  \oplus W_{P, n}) \cap L^2_{k-\frac{1}{2}})  
    \end{aligned}
\end{equation}
defined by 
\[
SW_{X, n}  (\hat{\phi}, \hat{\omega})   = 
(D_X \hat{\phi} + \rho(\hat{\omega}) \hat{\phi},   F_{\hat{A}}^+ - q(\hat{\phi}), 
          \pi_{P_n} r_{Y} \hat{\phi}, 
          \pi_{W_{P, n}} r_{Y} \hat{\omega}).
\]
Here  $\pi_{P_n}, \pi_{W_{P, n}}$ are the $L^2$-projection,  where we have written $P_n$ also for the total space of the spectral section $P_n$.   
We will take subbundles $U_{n},  U_{n}'$ of $\mathcal{E}_{X, k}^+$, $\mathcal{E}_{X, k-1}^{-}$ with finite rank as follows.  The operator
\[
       (D_X, \pi_{P_0} r_Y) : \mathcal{E}_{X, k}^+ \rightarrow \mathcal{E}_{X, k-1}^{-} \oplus r_Y^* (P_0 \cap L^2_{k-\frac{1}{2}})
\]
is Fredholm. (See \cite{MP},  \cite[Section 17.2]{KM} and Section \ref{section:def of spectral section}. )   Hence there is a fiberwise linear operator
\[
     \frak{p} :   \underline{\mathbb{C}}^{m} \rightarrow \mathcal{E}_{X,k-1}^{-} \oplus r_Y^* (P_0 \cap L^2_{k-\frac{1}{2}})
\]
such that
\begin{equation}\label{eq:perturbation}
     (D_X, \pi_{P_0} r_Y ) + \frak{p} : 
     \mathcal{E}^+_{X, k} \oplus \underline{\mathbb{C}}^m
     \rightarrow 
     \mathcal{E}_{X, k-1}^- \oplus r_Y^* (P_0 \cap L^2_{k-\frac{1}{2}})
\end{equation}
is surjective. Here $\underline{\mathbb{C}}^m = B_X \times \mathbb{C}^m$ is the trivial bundle over $B_X$.

\begin{lem} \label{lem (D, i)}
For any $n$ and any subbundle $U'$ in $\mathcal{E}_{X, k-1}^{-}$,  $U' \oplus r_Y^* F_n$ and the image of
\[
         (D_X, \pi_{P_n} r_Y) + \mathfrak{p} : 
         \mathcal{E}_{X, k}^+ \oplus \underline{\mathbb{C}}^m
         \rightarrow
         \mathcal{E}_{X, k-1}^{-} \oplus r_Y^* (P_n \cap L^2_{k-\frac{1}{2}})  
\]
are transverse in $\mathcal{E}_{X, k-1}^- \oplus r_Y^* (P_n \cap L^2_{k-\frac{1}{2}})$. 
\end{lem}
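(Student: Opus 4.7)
The plan is to reduce transversality to the surjectivity hypothesis (\ref{eq:perturbation}) at the $P_0$-level, together with the geometric inclusion $P_n \ominus P_0 \subset F_n$ of fibers. Since enlarging $U'$ only enlarges the sum on the left, it suffices to treat the case $U' = 0$, i.e., to show that any fiber element $(\xi, \eta) \in \mathcal{E}_{X, k-1}^{-} \oplus r_Y^*(P_n \cap L^2_{k-\frac{1}{2}})$ can be written as $\Phi_n(\hat\phi, v) + (0, \eta_F)$ with $\eta_F \in r_Y^* F_n$, where $\Phi_n := (D_X, \pi_{P_n} r_Y) + \mathfrak{p}$.

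First I would decompose $\eta = \eta_0 + \eta'$ using the $L^2_{k_+, k_-}$-orthogonal splitting $P_n = P_0 \oplus (P_n \ominus P_0)$, with $\eta_0 \in r_Y^*(P_0 \cap L^2_{k-\frac{1}{2}})$ and $\eta' \in r_Y^*((P_n \ominus P_0) \cap L^2_{k-\frac{1}{2}})$. By (\ref{eq:perturbation}) I would then choose $(\hat\phi, v)$ with $\Phi_0(\hat\phi, v) = (\xi, \eta_0)$, where $\Phi_0 := (D_X, \pi_{P_0} r_Y) + \mathfrak{p}$. Because $\mathfrak{p}$ lands in the $P_0$-level target and $P_0 \subset P_n$, applying $\Phi_n$ to the same input yields
\[
\Phi_n(\hat\phi, v) = \bigl(\xi,\ \eta_0 + \pi_{P_n \ominus P_0} r_Y \hat\phi\bigr),
\]
so $(\xi, \eta) - \Phi_n(\hat\phi, v) = \bigl(0,\ \eta' - \pi_{P_n \ominus P_0} r_Y \hat\phi\bigr)$, which lies in $\{0\} \oplus r_Y^*((P_n \ominus P_0) \cap L^2_{k-\frac{1}{2}})$. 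Everything then reduces to the fiberwise inclusion $P_n \ominus P_0 \subset F_n$.

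The main obstacle is verifying this inclusion, which holds for the values of $n$ used in the construction. My plan is to choose $s > 0$ large enough that $P_0 \supset \mathcal{E}_0(D')^{-s}_{-\infty}$; this is possible because $P_0$ is a spectral section and $\mathcal{E}_0(D')^{-s}_{-\infty} = \mathcal{E}_0(D)^{-s}_{-\infty}$ for $s$ sufficiently large by Corollary \ref{cor nabla D'}. The block form of the weighted inner product (\ref{eq L2 k_+ k_-}) makes distinct $D'$-eigenspaces $L^2_{k_+, k_-}$-orthogonal, so every element of the $L^2_{k_+, k_-}$-orthogonal complement of $P_0$ has vanishing component on each $D'$-eigenvector in $\mathcal{E}_0(D')^{-s}_{-\infty}$, giving $P_0^{\perp} \subset \mathcal{E}_0(D')^{\infty}_{-s}$. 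Intersecting with $P_n \subset \mathcal{E}_0(D)^{\mu_{n,+}}_{-\infty}$ yields $P_n \ominus P_0 \subset \mathcal{E}_0(D')^{\mu_{n,+}}_{-s}$. Since $\lambda_{n,+} \to -\infty$, for $n$ sufficiently large $\lambda_{n,+} < -s$, hence $Q_n \supset \mathcal{E}_0(D)^{\infty}_{\lambda_{n,+}} \supset \mathcal{E}_0(D')^{\infty}_{-s}$, so $P_n \ominus P_0 \subset P_n \cap Q_n = F_n$. Pulling back by $r_Y$ absorbs the difference into $\{0\} \oplus r_Y^* F_n$ and completes the reduction.
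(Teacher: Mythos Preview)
Your argument is correct and follows the same route as the paper: use surjectivity of $(D_X,\pi_{P_0}r_Y)+\mathfrak p$ to hit $(\xi,\pi_{P_0}\eta)$, then observe that the remainder lies in $P_n\ominus P_0$ and absorb it into $F_n$. The paper phrases the last step via the identity $P_n\cap (P_0)^{\perp}=F_n^{+}$, using that $P_0=\mathcal{E}_0(D')^{0}_{-\infty}$ so that $(P_0)^{\perp}=\mathcal{E}_0^{+}$ exactly; this gives $\pi_{P_n}=\pi_{P_0}+\pi_{F_n^{+}}$ and the remainder is in $F_n^{+}\subset F_n$ without further work. Your justification of the inclusion is sound but can be streamlined by using $(P_0)^{\perp}=\mathcal{E}_0^{+}$ directly rather than introducing the auxiliary cutoff $s$. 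Both arguments ultimately need $P_n\cap\mathcal{E}_0^{+}\subset Q_n$; as you note, this certainly holds for $n$ large (once $Q_n\supset\mathcal{E}_0^{+}$), and only such $n$ are used downstream.
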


\begin{proof}
Take any element $(x', y)$ from $\mathcal{E}_{X, k-1}^{-} \oplus r_Y^* ( P_n \cap L^2_{k-\frac{1}{2}} )$. There is $(x, v) \in \mathcal{E}_{X, k}^+ \oplus \underline{\mathbb{C}}^m$ such that
\[
        ( (D_X, \pi_{P_0} r_Y) + \frak{p}) (x, v) = (x',  \pi_{P_0} (y)). 
\]
Note that
\[
        P_n \cap (P_0)^{\bot} = F_n^+. 
\]
We  can write
\[
    (D_X, \pi_{P_n} r_Y + \frak{p}) (x, v) 
    = ((D_X,  (\pi_{P_0} + \pi_{F_n^+}) r_Y) + \frak{p})(x, v) 
    = (x', \pi_{P_0}(y) + z),
\]
where $z = \pi_{F_n^+} (r_Y x) \in  F_n^+ \subset F_n$.   Hence
   \begin{align*}
     (x', y) 
     & = (x', \pi_{P_0}(y) + z) + (0,  \pi_{ F_n^+  } (y)  - z)   \\
     & \in \operatorname{im} (( D_X, \pi_{P_n} r_Y) + \frak{p}) + F_n. 
  \end{align*}
\end{proof}

Take a sequence of finite-dimensional subbundles $U'_n$ of $\mathcal{E}_{X, k-1}^{-}$ such that $\pi_{U'_n} \rightarrow id_{\mathcal{E}_{X, k-1}^-}$ strongly as $n \rightarrow \infty$  and put
\begin{equation}\label{eq:4-manifold-appx-bundle}
        U_n :=   (( D_X, \pi_{P_n} r_Y  ) + \frak{p})^{-1}(U'_n \oplus r_Y^* F_n). 
\end{equation}
By Lemma \ref{lem (D, i)}, $U_n$ are subbundles of $\mathcal{E}_{X, k}^{+} \oplus \underline{\mathbb{C}}^m$.  Note that
\[
         [U_n] - [U_n' \oplus r_Y^* F_n] - [\underline{\mathbb{C}}^m]= [\operatorname{ind} (D_X, P_n)] \in K(B_X).
\]  
Here the right hand side is the index bundle defined in \cite[Section 6]{MP}.   

Choose finite dimensional subbundles 
\[
   V_n' = B_X \times V_{n, 0}'
\]
 of $B_{X} \times L^2_{k-1}(\Omega^+(X))$ with $\pi_{V_n'} \rightarrow \operatorname{id}_{B _{X} \times L^2_{k-1}(\Omega^+(X))}$ strongly as $n \rightarrow \infty$ and put
\[
       V_n := (d^+, \pi_{W_{P, n}} r_Y)^{-1} (  V_n' \oplus W_n   )
       \subset \mathcal{W}_{X, k}.
\]

We consider the following maps
\begin{equation}\label{eq:perturbed-sw-map}
    \begin{aligned}
      SW_{X, n, \frak{p}} & : = (D_X, d^+) + \frak{p} + \pi_{U_n' \oplus V_n'} c_X  : U_n \oplus V_n 
                      \rightarrow 
                      U_n' \oplus V_n',  \\
      \widetilde{SW}_{X,n,\frak{p}} & := (  SW_{X, n, \frak{p}}, \pi_{P_n} r_Y,\pi_{W_{P,n}} r_Y,  \operatorname{id}_{\underline{\mathbb{C}}^m}) :   \\
     &       \qquad    U_n \oplus V_n \rightarrow U_n' \oplus V_n' \oplus  r_Y^*(F_n \oplus  W_n) \oplus \underline{\mathbb{C}}^m,
    \end{aligned}
\end{equation}
where 
\[
       c_X(\hat{\phi}, \hat{\omega}) = (\rho(\hat{\omega}) \hat{\phi},   F_{\hat{A}_0}^+ + q(\hat{\phi}))
\]
for a fixed connection $\hat{A}_0$ on $X$. 
Fix positive numbers $R, R'$ with $0 \ll R' \ll R$.  Put
\[
     A_n : = ( B_{k - \frac{1}{2} }(F_n^+; R) \times_{B_Y} B_k(F_n^-; R)) \times_{B_Y}( B_{k-\frac{1}{2}}(W_n^+; R) \times_{B_Y} B_k(W_n^-; R) ).
\]
Here $B_{k-\frac{1}{2}}(F_n^+;R)$ is the ball in $F_n^+$ of radius $R$ with respect to $L^2_{k-\frac{1}{2}}$. Similarly for $ B_k(F_n^-;R), B_{k-\frac{1}{2}}(W_n^+; R)$, $B_{k}(W_n^-; R)$.   Note that we take different norms $L^2_{k-\frac{1}{2}}$ and $L^2_{k}$ for $F_n^{+}$, $W_n^+$ and $F_n^{-}$, $W_n^-$.   By Theorem \ref{thm isolating nbd}, for $n \gg 0$,  $A_n$ is an isolating neighborhood of the flow $\varphi_{n, k - \frac{1}{2},k}$, for suitable $k$.  
For $\epsilon > 0$, we define compact subsets $K_{n, 1}(\epsilon), K_{n, 2}(\epsilon)$ of $A_n$  by
\begin{align*}
    & K_{n, 1}(\epsilon)    \\
    &  :=  \Bigg\{ y \in A_n   :
      \begin{array}{l}
      \exists (\hat{\phi}, v, \hat{\omega}) \in B_k(U_n \oplus V_n  ; R'),  
      (\hat{\phi}, v) \in U_n  \subset \mathcal{E}_{X, k}^+ \oplus \underline{\mathbb{C}}^{m}, 
       \hat{\omega} \in V_n, \\
      \|  (SW_{X, n, \frak{p}}, \operatorname{id}_{\mathbb{C}^m})(\hat{\phi}, v, \hat{\omega})  \|_{k-1} \leq  \epsilon,  
      y = \pi_{ P_n \oplus \mathcal{W}_{-\infty}^{\mu_n} } r_Y ( \hat{\phi}, \hat{\omega} )
      \end{array}
      \Bigg\},  
\end{align*}
and
\begin{align*}
   & K_{n, 2}(\epsilon)      \\
     &:=  \Bigg\{   y \in A_n   : 
    \begin{array}{l}
     \exists (\hat{\phi}, v, \hat{\omega}) \in  \partial B_k(U_n \oplus V_n ;R'),    \\
     \|   (SW_{X,n, \frak{p}}, \operatorname{id}_{\underline{\mathbb{C}}^m} ) (\hat{\phi}, v, \hat{\omega}) \|_{k-1} \leq \epsilon,
      y = \pi_{  P_n \oplus \mathcal{W}_{-\infty}^{\mu_n} } r_Y ( \hat{\phi}, \hat{\omega} )
     \end{array}
       \Bigg\}    \\
&  \quad    \bigcup 
    \Big(  \partial A_n \bigcap K_{n, 1}(\epsilon) \Big). 
\end{align*} 
Here
\[
     \| (SW_{X, n,\frak{p}}, \operatorname{id}_{\underline{\mathbb{C}}^m})(\hat{\phi}, v, \hat{\omega})  \|_{k-1}
     = \| SW_{X, n,\frak{p}} (  \hat{\phi}, \hat{\omega}  ) \|_{k-1} + \| v \|.
\]
We will show that we can find a regular index pair containing $(K_{1, n}(\epsilon), K_{2, n}(\epsilon))$. 
See Section \ref{subsec:conley} for the definition of a regular index pair.

\begin{prop}\label{prop:bauer-furuta-existence}
There is a  $\epsilon_0 > 0$ such that  if $0 < \epsilon < \epsilon_0$, for $n$ large, we can find a regular index pair $(N_n, L_n)$ of $\operatorname{inv}(A_n; \varphi_{n,  k-\frac{1}{2},k})$ with 
\[
                K_{n, 1}(\epsilon) \subset N_n \subset A_n, \quad
                K_{n, 2}(\epsilon) \subset L_n. 
\]
 
\end{prop}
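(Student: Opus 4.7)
The plan is to reduce the construction of $(N_n,L_n)$ to a standard index-pair lemma from Conley theory, with the bulk of the work going into a compactness argument that establishes
\[
K_{n,2}(\epsilon)\cap \operatorname{inv}(A_n;\varphi_{n,k-\frac{1}{2},k})=\emptyset
\]
for $n$ sufficiently large and $\epsilon$ sufficiently small. The piece $\partial A_n\cap K_{n,1}(\epsilon)$ of $K_{n,2}(\epsilon)$ is automatically disjoint from $\operatorname{inv}(A_n)$, since Theorem \ref{thm isolating nbd} gives $\operatorname{inv}(A_n)\subset \operatorname{int}(A_n)$ for $n\gg 0$. The substantive part concerns points coming from the sphere $\partial B_k(U_n\oplus\underline{V}_n;R')$.

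I would argue this by contradiction. Assuming $\epsilon_n\to 0$ and $y_n\in K_{n,2}(\epsilon_n)\cap \operatorname{inv}(A_n)$ with $y_n=\pi_{P_n\times W_{P,n}}r_Y(\hat{\phi}_n,\hat{\omega}_n)$ for some $(\hat{\phi}_n,v_n,\hat{\omega}_n)\in\partial B_k(U_n\oplus\underline{V}_n;R')$ with $\|\widetilde{SW}_{X,n,\frak{p}}(\hat{\phi}_n,v_n,\hat{\omega}_n)\|_{k-1}\leq \epsilon_n$, the strategy is to extract a limiting honest Seiberg--Witten solution on $X$ with a matching finite-energy half-trajectory on $Y\times\mathbb{R}_{\geq 0}$, and then apply Proposition \ref{prop compactness X, Y times R_{geq}} to derive a contradiction. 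On the $Y$-side, since $y_n\in\operatorname{inv}(A_n)$, the half-trajectory argument of Theorem \ref{thm isolating nbd} yields (after passing to a subsequence) a finite-energy Seiberg--Witten half-trajectory $\gamma\colon\mathbb{R}_{\geq 0}\to \mathcal{E}_{Y,k-1}\times L^2_{k-1}(\operatorname{im} d^*)$ with $\gamma(0)=\lim y_n$ in the weighted norm of Lemma \ref{lem norm}. On the $X$-side, the boundedness of $(\hat{\phi}_n,v_n,\hat{\omega}_n)$ in $L^2_k$, combined with $v_n\to 0$ (forced by the $\operatorname{id}_{\underline{\mathbb{C}}^m}$-factor in $\widetilde{SW}_{X,n,\frak{p}}$) and surjectivity of the perturbation \eqref{eq:perturbation}, allows weak $L^2_k$-compactness plus elliptic bootstrapping to produce a smooth Seiberg--Witten solution $(\hat{\phi}_\infty,\hat{\omega}_\infty)$ on $X$ whose restriction to $Y$ matches $\gamma(0)$. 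Proposition \ref{prop compactness X, Y times R_{geq}} then gives $\|\hat{\phi}_\infty\|_k\leq R_{X,k}$; choosing $R_{X,k}<R'\ll R$ from the outset (consistent with $0\ll R'\ll R$) contradicts the fact that the limit has norm $R'$.

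With disjointness established, the existence of $(N_n,L_n)$ follows from a standard Conley-index construction of the type used in \cite[\S 5--6]{Manolescu-b1=0}: given a compact isolated invariant set in $A_n$ and compact subsets $K_1,K_2\subset A_n$ with $K_2\cap\operatorname{inv}(A_n)=\emptyset$, one can choose a regular index pair with $K_1\subset N_n$ and $K_2\subset L_n$ provided the positive orbits through $K_1\setminus K_2$ stay in $A_n$ for uniformly bounded time before any exit can occur. This last property I would verify by yet another application of the same compactness scheme: a failure would produce approximate trajectories exiting $A_n$ in bounded forward time out of boundary values of $X$-solutions, leading again to a limiting $X$-solution whose half-trajectory continuation contradicts Proposition \ref{prop compactness X, Y times R_{geq}}. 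The main obstacle I anticipate is reconciling the two modes of convergence involved in the contradiction argument: the $Y$-side sequence converges only uniformly in the weighted, lower-regularity norm $L^2_{K_T,\ell-5,w}$ of Section \ref{subsec:weighted-spaces}, whereas on the $X$-side one needs strong $L^2_k$ convergence near $\partial X=Y$ to ensure that the limit is a genuine Seiberg--Witten solution with the correct boundary value. Bridging these topologies --- using the weighted cylinder convergence to drive an elliptic bootstrap on $X$ near the boundary --- is the heart of the argument, and will require care analogous to the proof of Lemma \ref{lem:convergence-to-solution}.
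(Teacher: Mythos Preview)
Your compactness strategy is the right one and matches the paper's, but the reduction to the Conley-theoretic lemma is misformulated, and this creates a genuine gap.

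The index-pair lemma you need (Theorem 4 of \cite{Manolescu-b1=0}) does \emph{not} take as input the condition $K_{n,2}(\epsilon)\cap\operatorname{inv}(A_n)=\emptyset$. It requires two things: (i) if $y\in K_{n,1}(\epsilon)\cap A_n^{[0,\infty)}$ then $\varphi_n(y,t)\notin\partial A_n$ for all $t\ge 0$; and (ii) $K_{n,2}(\epsilon)\cap A_n^{[0,\infty)}=\emptyset$, where $A_n^{[0,\infty)}$ is the set of points whose entire \emph{forward} orbit stays in $A_n$. Your condition with $\operatorname{inv}(A_n)$ is strictly weaker and does not suffice: if some $y\in K_{n,2}$ has $y\in A_n^{[0,\infty)}\setminus\operatorname{inv}(A_n)$, its forward orbit accumulates on $\operatorname{inv}(A_n)$, and any positively invariant exit set $L_n$ containing $y$ would be forced to meet $\operatorname{inv}(A_n)$, violating the index-pair axioms. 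Your ``additional condition'' about positive orbits through $K_1\setminus K_2$ exiting in bounded time is backwards --- the obstruction is orbits that never exit, not those that exit quickly.

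Condition (i) is a separate argument you have not addressed at all. It is handled in the paper by the same type of limit argument: assuming $y_n\in K_{n,1}\cap A_n^{[0,\infty)}$ with $\varphi_n(y_n,t_n)\in\partial A_n$, one splits into cases according to which face of $\partial A_n$ is hit, uses the $\|\cdot\|_{k+1/2}$-bound trick (as in Lemmas \ref{lem gamma_n L^2_{k+1/2}} and \ref{lem phi^-}) to get strong convergence at the hitting time, and contradicts either Proposition \ref{prop compactness X, Y times R_{geq}} (if $t_n\to t_\infty<\infty$) or Proposition \ref{prop compactness} (if $t_n\to\infty$, by recentering the trajectory). For (ii), the paper also works with $A_n^{[0,\infty)}$ and half-trajectories on $[0,\infty)$, not full trajectories; your argument goes through with this adjustment. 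On the ``bridging of topologies'' concern you raise: the paper resolves it cleanly by noting that $(\phi_n^-(0),\omega_n^-(0))$ is bounded in $L^2_k(Y)$ (since $y_n\in A_n$), hence converges strongly in $L^2_{k-1/2}(Y)$ after a subsequence, and then the elliptic boundary estimate $\|\hat\phi_n-\hat\phi\|_{L^2_k(X)}\le C(\|\hat\phi_n-\hat\phi\|_{L^2(X)}+\|D_X(\hat\phi_n-\hat\phi)\|_{L^2_{k-1}(X)}+\|\phi_n^-(0)-\phi^-(0)\|_{L^2_{k-1/2}(Y)})$ upgrades this to strong $L^2_k(X)$ convergence, giving $\|(\hat\phi,\hat\omega)\|_k=R'$ and the desired contradiction.
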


\begin{proof}
We write $\varphi_{n}$ for $\varphi_{n, k-\frac{1}{2},k}$. 
We denote by $A_n^{[0,\infty)}$  the set
\[
    \{  y \in A_n :  \forall t \in [0, \infty),  \varphi_{n}(y, t)    \in A_n  \}. 
\]
By Theorem 4 of \cite{Manolescu-b1=0},   it is sufficient to prove the following for $n$ large and $\epsilon$ small: 
\begin{enumerate}[(i)]

\item
if  $y \in K_{n, 1}(\epsilon) \cap A_n^{[0,\infty)}$ then we have  $\varphi_{n}(y, t) \not\in \partial A_n$ for all $t \in [0, \infty)$,  

\item
$K_{n, 2}(\epsilon) \cap A_n^{[0,\infty)} = \emptyset$.

\end{enumerate}
Furthermore, any index pair as constructed by Theorem 4 of \cite{Manolescu-b1=0} may be thickened to give a regular index pair still satisfying the conditions of the Proposition. See Remark 5.4 of \cite{Salamon}. 

Note that    for $y \in K_{n,1}(\epsilon)$ we have
\begin{equation}  \label{eq y_n^+ < R}
    \| y^{+} \|_{k-\frac{1}{2}} < R
\end{equation}
for all $n$ since the restriction $L^2_{k}(X) \rightarrow L^2_{k-\frac{1}{2}}(Y)$ is bounded and $R' \ll R$.


%
%
%
%
%
%

First, we will prove that (i) holds for $n$ large and $\epsilon$ small.  Assume that this is not true.  Then there is a sequence $\epsilon_n \rightarrow 0$ such that  after passing to a subsequence,  we have $y_n \in A_n$,   $(\hat{\phi}_n, v_n, \hat{\omega}_n) \in B_k(U_n \oplus V_{n}; R')$, $t_n \in [0, \infty)$ with 
   \begin{align*}
      & y_n = \pi_{ P_n \oplus W_{P, n}} r_Y (\hat{\phi}_n, \hat{\omega}_n), \\  
      & \| SW_{X, n,\frak{p}}( \hat{\phi}_n, \omega_n ) \|_{k-1}^2 + \| v_n \|^2  \leq \epsilon_n^2,  \\
      &  \varphi_n(y_n, [0, \infty) ) \subset A_n, \\
      &  \varphi_n(y_n, t_n) \in \partial A_n. 
   \end{align*}
Note that $v_n \rightarrow 0$. 
Let
\[
     \gamma_n = (\phi_n, \omega_n) : [0, \infty) \rightarrow F_n \oplus W_n
\] 
be the approximate Seiberg-Witten trajectory defined by
\[
       \gamma_n(t) = \varphi_{n}(y_n, t). 
\]
After passing to a subsequence,  one of the following holds for all $n$:

\begin{enumerate}[(a)]

\item
$\phi_{n}^+(t_n) \in S_{k-\frac{1}{2}}(F_n^+;R)$, 

\item
$\phi_n^{-} (t_n) \in S_{k}(F_n^-; R)$,

\item
$\omega_n^{+}(t_n) \in S_{k-\frac{1}{2}}(W_n^+; R)$,

\item
$\omega_n^{-}(t_n) \in S_k(W_n^-;R)$.

\end{enumerate}

Note that in the cases (a) and (c), we have $t_n > 0$ because of (\ref{eq y_n^+ < R}). 

As in the proof of Theorem \ref{thm isolating nbd}, we can show that there is a Seiberg-Witten trajectory 
\[
     \gamma = (\phi, \omega) : [0,\infty) \rightarrow  \mathcal{E}_{Y, k-\frac{3}{2}, k-1
     } \oplus  \mathcal{W}_{Y, k-\frac{3}{2}, k-1}
\] 
such that after passing to a subsequence, $\gamma_n$ converges to $\gamma$ uniformly in $L^2_{k-\frac{3}{2}}$ on each compact set in $[0, \infty)$. 
Also after passing to a subsequence, $(\hat{\phi}_n, \hat{\omega}_n)$ converges to a solution $(\hat{\phi}, \hat{\omega})$ to the Seiberg-Witten equations on $X$ uniformly in $L^2_{k-1}$  on each compact set in the interior of $X$.  We have
\[
            r_Y (\hat{\phi}, \hat{\omega}) = \gamma(0).
\]

Assume that the case (a) happens for all $n$.   As mentioned, $t_n > 0$.  Hence we have
\[
         \frac{d}{dt} \bigg|_{t = t_n}  \Vert \phi_n^+(t) \Vert_{k-\frac{1}{2}}^2 = 0. 
\]
As in Lemma \ref{lem gamma_n L^2_{k+1/2}}, we can show that there is $C > 0$ such that $\| \phi_{n}^+(t_n) \|_{k} < C$ for all $n$.     
After passing to a subsequence, $t_n \rightarrow t_{\infty} \in \mathbb{R}_{\geq 0}$ or $t_{n} \rightarrow \infty$. 
First assume that $t_n \rightarrow t_{\infty}$. By the Rellich lemma, $\phi_n^+(t_n)$ converges in $L^2_{k-\frac{1}{2}}$ strongly. This implies that 
\[
          \| \phi^+(t_{\infty}) \|_{k-\frac{1}{2}} = R, 
\]
which contradicts Proposition \ref{prop compactness X, Y times R_{geq}}. 

Next we consider the case $t_n \rightarrow \infty$. 
Let
\[
     \underline{\gamma}_n  = (\underline{\phi}_n, \underline{\omega}_n) 
     : [-t_n, \infty)  \rightarrow F_n \oplus W_n
\]
be the approximate Seiberg-Witten trajectory  defined by
\[
      \underline{\gamma}_n(t) := \varphi_{n}(y_n, t + t_n). 
\]
As before, we can show that there is a Seiberg-Witten trajectory
\[
       \underline{\gamma} : \mathbb{R} \rightarrow 
       \mathcal{E}_{Y, k-\frac{3}{2},k-1} \oplus \mathcal{W}_{Y, k-\frac{3}{2}, k-1}
\]
such that after passing to a subsequence,  $\underline{\gamma}_n$ converges to $\underline{\gamma}$ uniformly in $L^2_{k-\frac{3}{2}}$ on each compact set in $\mathbb{R}$. 
As before we can show that  the sequence $\| \underline{\phi}_{n}^{+}(0) \|_{k}$ is bounded and hence $\underline{\phi}^+_n(0)$ converges to $\underline{\phi}^+(0)$ in $L^2_{k-\frac{1}{2}}$ strongly.  Therefore $\| \underline{\phi}^{+}(0) \|_{k-\frac{1}{2}} = R$, which contradicts Proposition \ref{prop compactness}.    
Thus (a) can not happen.

Let us consider the case when (b) holds for all $n$. 
We have
\[
      \left. \frac{d}{dt}  \right|_{t = t_n}  \| \phi_n^-(t) \|_{k}^2 \leq 0. 
\]
As in the proof of Lemma \ref{lem phi^-}, 
  \begin{align*}
        0 
        &\geq  
              \frac{d}{dt} \bigg|_{t=t_n} \| \phi_n^-(t) \|_{k}^2   \\
        &\geq 
        -\langle  D'\phi_n^-(t_n), \phi_n^-(t_n) \rangle_k - C R^2\| \phi_n^-(t_n) \|_{k+\frac{1}{2}} - CR^2  \\
        & = \| \phi_{n}^-(t_n) \|_{k+\frac{1}{2}}^2  - CR^2\| \phi_n^-(t_n) \|_{k+\frac{1}{2}} - CR^2. 
 \end{align*}       
This implies that the sequence $\| \phi_n^-(t_n) \|_{k+\frac{1}{2}}$ is bounded and there is a subsequence such that $\phi_{n}^-(t_n)$ converges in $L^2_{k}$ strongly.  We have a contradiction as before. 

In the case when (c) or (d) holds for all $n$, we have a contradiction similarly.   We have proved that (i) holds for $n$ large and $\epsilon$ small. 

\vspace{3mm}

Next we will prove  that (ii) holds for $n$  large and $\epsilon$ small.  If this is  not true,   there is a sequence $\epsilon_n \rightarrow 0$ such that after passing to a subsequence,   one of the following cases holds for all $n$.

\begin{enumerate}[(a)]
\item
We have $(\hat{\phi}_n, v_n, \hat{\omega}_n)  \in \partial B_{k}(U_n \oplus V_n; R')$, $y_n \in A_n^{[0, \infty)}$ with
\[
     \|  SW_{X, n, \frak{p}}(\hat{\phi_n}, \hat{\omega}_n) \|_{k-1} + \| v_n \| \leq  \epsilon_n, \ 
     y_n = \pi_{ P_n \oplus W_{P,n} } r_Y (\hat{\phi}_n, \hat{\omega}_n). 
\]

\item
We have  $(\hat{\phi}_n, v_n, \hat{\omega}_n)  \in B_k(U_n \oplus V_n; R')$, $y_n \in \partial A_n \cap A_n^{[0, \infty)}$ with
\[
       \|  SW_{X, n, \frak{p}}(\hat{\phi}_n, \hat{\omega}_n) \|_{k-1} + \| v_n \|  \leq  \epsilon_n, \ 
     y_n = \pi_{ P_n \oplus W_{P, n} } r_Y (\hat{\phi}_n, \hat{\omega}_n). 
\]

\end{enumerate}

First we consider  the case (a). 
Let 
\[
  \gamma_n = (\phi_n, \omega_n)  :  [0,\infty) \rightarrow F_n \oplus W_n
\] 
be the approximate Seiberg-Witten trajectory defined by
\[
       \gamma_n(t) = \varphi_{n}(y_n, t). 
\]
As before, there is a Seiberg-Witten trajectory
\[
      \gamma = (\phi, \omega):  [0,\infty) \rightarrow \mathcal{E}_{Y, k-\frac{3}{2},k-1} \oplus \mathcal{W}_{Y, k-\frac{3}{2},k-1}
\]
such that after passing to a subsequence,  $\gamma_n$ converges to $\gamma$  uniformly in $L^2_{k-\frac{3}{2}}$ on each compact set in $[0, \infty)$.  Also there is a solution $(\hat{\phi}, \hat{\omega})$ to the Seiberg-Witten equations on $X$ such that after passing to a subsequence, $( \hat{\phi}_n, \hat{\omega}_n )$ converges to $(\hat{\phi}, \hat{\omega})$ in $L^2_{k-1}$ on each compact set in the interior of $X$.  We have
\[
       r_Y(\hat{\phi}, \hat{\omega}) = (\phi(0), \omega(0)). 
\]

Since $y_n \in A_n$, we have
\[
           \| y_n^{-} \|_{k} = \| (\phi_n^-(0), \omega_n^-(0)) \|_{k} \leq R. 
\]
Hence after passing to subsequence,  $(\phi_n^-(0), \omega_n^-(0))$ converges to $(\phi^-(0), \omega^{-}(0))$ in $L^2_{k-\frac{1}{2}}(Y)$ strongly. By the standard elliptic estimate,   we have
  \begin{align*}
  &  \| \hat{\phi}_{n} - \hat{\phi} \|_{L^2_k(X)}   \\
    \leq 
    & C  \Big(   \| \hat{\phi}_n - \hat{\phi} \|_{L^2(X)} + \| D_X (\hat{\phi}_n - \hat{\phi}) \|_{L^2_{k-1}(X)}  + \|   \phi_n^{-}(0) - \phi^{-}(0)    \|_{L^2_{k-\frac{1}{2}}(Y)}  \Big).
  \end{align*}
From the condition  that
\[
    \| SW_{X, n, \frak{p}}(\hat{\phi}_n, \hat{\omega}_n) \|_{k-1} + \| v_n \| \leq \epsilon_n, 
\]
we have
\[
     \| D_X(\hat{\phi}_n -  \hat{\phi}) \|_{k-1} \leq  C( \|   c_X(\hat{\phi_n}, \hat{\omega}_n) -  c_X(\hat{\phi}, \hat{\omega})  \|_{k-1} + \epsilon_n).
\]
Since $c_X(\hat{\phi_n}, \hat{\omega}_n)$ converges to $c_X(\hat{\phi}, \hat{\omega})$ in $L^2_{k-1}$ strongly,   $\hat{\phi}_n$ converges to $\hat{\phi}$ in $L^2_k$ strongly. 

Similarly, $\hat{\omega}_n$ converges to $\hat{\omega}$ in $L^2_k$ strongly. Hence, 
\[
       \| (\hat{\phi}, \hat{\omega}) \|_{k} = R'.
\]
This contradicts Proposition \ref{prop compactness X, Y times R_{geq}}, so case (a) cannot happen.

Next we consider the case (b).   Let
\[
      y_n = (\phi_n, \omega_n). 
\]
 After passing to a subsequence,  $\phi_n^- \in S_k(F_n^-; R)$ for all $n$ or,  $\omega_n^- \in S_k(W_n^-; R)$ for all $n$.  Note that the cases $\phi_n^+ \in S_{k-\frac{1}{2}}(F_n^+; R)$, $\omega_n^+ \in S_{k-\frac{1}{2}}(W_n^+;R)$ do not happen because of (\ref{eq y_n^+ < R}). 

We consider  the case $\phi_n^- \in S_k(F_n^-; R)$.   Put
\[
  \gamma_n(t) = (\phi_n(t), \omega_n(t))  = \varphi_n(y_n, t)
\]
for $t \geq 0$.  As in the proof of Lemma \ref{lem phi^-}, 
   \begin{align*}
         0 
        &\geq     \frac{d}{dt} \bigg|_{t=0} \|  \phi_n^-(t)  \|_{k}^2   \\
        & \geq  \| \phi_n^- \|_{k+\frac{1}{2}}^2 - CR^2 \| \phi_n^- \|_{k+\frac{1}{2}} - CR^2. 
   \end{align*}
Therefore the sequence $\| \phi_n^- \|_{k+\frac{1}{2}}$  is bounded. By the Rellich lemma, $\phi_n^-$ converges to $\phi^-$ in $L^2_k$ strongly and hence
\[
           \| \phi^- \|_{k}= R, 
\]
which contradicts Proposition \ref{prop compactness X, Y times R_{geq}}. 
Similarly, if $\omega_n^- \in S_k(W_n^-; R)$ for all $n$, we obtain a condtradition. We have proved that (ii) holds for $n$ large and $\epsilon$ small.

\end{proof}

\begin{rem}   \label{rem:L^2_{k-1/2, k}}
To get (\ref{eq y_n^+ < R}), we used the $L^2_{k-\frac{1}{2}}$-norm on the positive component. 
On the other hand, in the case (ii)-(a),  we  used the condition that $\| \phi_n^- (0)\|_{k}$ is bounded  (rather than $\| \phi_n^{-}(0) \|_{k-\frac{1}{2}}$) to have that $\phi_n^-(0)$ converges to $\phi^-(0)$ in $L^2_{k-\frac{1}{2}}$.  This is why we used the $L^2_k$-norm on the negative component to define $K_{n,1}(\epsilon)$, $K_{n,2}(\epsilon)$. 

In the case where $b_1(Y)= 0$,  we can use the $L^2_{k-\frac{1}{2}}$-norm on both of the positive and negative component. See the proofs of Proposition 6 of \cite{Manolescu-b1=0} and Lemma 4.4 of \cite{khandhawit}. In those proofs, to get the $L^2_{k-\frac{1}{2}}$-convergence of $\phi_n^{-}(0)$,   the following identity was used: 
\begin{equation}  \label{eq:e^{D} phi}
      e^{D}  \phi_{n}^{-}(1) -   \phi_n^{-}(0) =    \int_0^{1} \frac{d}{dt}  (e^{tD} \pi^{-} \phi_n(t) )    dt.
\end{equation}
In the case where $b_1 (Y) > 0$, we have 
\[
 \begin{split} 
        \frac{d}{dt}  (e^{tD} \pi^{-} \phi_n(t) )    
       &  =  e^{tD} (D + \nabla_{X_H} D) \pi^{-}  \phi_{n}(t) + e^{tD}  (\nabla_{X_H} \pi^{-}) \phi_n(t)    \\
        & \qquad  - e^{tD} \pi^{-} \{ ( \pi_n D + \nabla_{X_H} \pi_{F_n}) \phi_n(t)  + q(\phi_n(t))    \}. 
   \end{split} 
\] 
Since  $(\nabla_{X_H} \pi_{F_n})\phi_n(t)$ does not converges in $L^2_{k-\frac{1}{2}}$,  we can not deduce that $\phi_n^{-}(0)$ converges in $L^2_{k-\frac{1}{2}}$ from (\ref{eq:e^{D} phi}). 
\end{rem}

For $n$ large and $\epsilon$ small, let $(N_n, L_n)$ be a regular index pair of $\operatorname{inv}( \varphi_n, A_n )$ with
\[
                K_{1, n}(\epsilon) \subset N_n, \
                K_{2, n}(\epsilon) \subset L_n. 
\]
Put
\[ 
  \begin{split}
      & S^{U_n \oplus V_n}_{B_X} 
    := \bigcup_{a \in B_X} B( (U_n \oplus V_n)_a; R) / 
                  S ( (U_n \oplus V_n )_a; R),   \\
      & S_{B_X}^{ U_n' \oplus V_n' \oplus \underline{\mathbb{C}^m} }
     := \bigcup_{a \in B_X} B(  (U_n' \oplus V_n' \oplus \underline{\mathbb{C}^m} )_a; \epsilon  ) /
                    S (  (U_n' \oplus V_n' \oplus \underline{\mathbb{C}^m} )_a; \epsilon  ),
   \end{split} 
\]
which are sphere bundles over $B_X$,  and let $I_n$ be the Conley index:
\[
          I_n := N_n \cup_{ p_{B_Y} |_{ L_n } } B_Y.
\]
Here $p_{B_Y} : N_n \rightarrow B_Y$ is the projection. 
We obtain a map
\begin{equation}\label{eq:bauer-furuta}
              \preBF_{[n]}(X,\frak{t}):    S_{B_X}^{U_n \oplus V_n} \rightarrow 
                    S_{B_X}^{ U_n' \oplus V_n' \oplus \underline{\mathbb{C}}^m } \wedge_{B_X}    r^*_Y I_n
\end{equation}
defined by
\begin{align*}
     & \preBF_{[n]}(X, \frak{t})([\hat{\phi}, v, \hat{\omega}])    \\
    & =  \begin{cases} 
            [SW_{X, n, \frak{p}}(\hat{\phi}, v,  \hat{\omega}), v] \wedge [\pi_{P_n \oplus W_{P, n}} r_Y (\hat{\phi}, \hat{\omega})] &  \text{if (\ref{eq SW < epsilon})  holds},     \\
           *_{a}  & \text{otherwise.}
        \end{cases}
  \end{align*} 
Here $a = p_{B_X}(\hat{\phi}, \hat{\omega})$,  $*_a$ denotes the base point  of the sphere $S^{(U_n' \oplus V_n' \oplus \underline{\mathbb{C}}^m)_a}$  and  the condition (\ref{eq SW < epsilon}) is the following:
\begin{equation}  \label{eq SW < epsilon}
    \begin{aligned} 
        &  \|  SW_{X, n, \frak{p}}(\hat{\phi}, v, \hat{\omega}) \|_{k-1}^2 + \| v \|^2 \leq \epsilon, \\
        &  \pi_{P_n \oplus W_{P,n}} r_Y( \hat{\phi}, \hat{\omega} ) \in K_{n, 1}(\epsilon).
  \end{aligned} 
\end{equation}
We refer to the map $\preBF_n(X,\frak{t})$ as the (relative, $n$-th) \emph{pre-Bauer-Furuta invariant} of $(X,\frak{t})$, to emphasize that it is not yet an invariant of the construction (rather, its stable homotopy equivalence class will turn out to be an invariant).

An alternative version of this relative Bauer-Furuta invariant is obtained by instead considering the map of $B_Y$ spaces:
\[
\preBF_{[n]}(X, \frak{t}):  S_{B_X}^{U_n \oplus V_n} \rightarrow 
S_{B_X}^{ U_n' \oplus V_n' \oplus \underline{\mathbb{C}}^m } \wedge_{B_Y}   N_n/_{B_Y} L_n,
\]
where $S_{B_X}^{U_n \oplus V_n}$ is a $B_Y$ space using $r_Y$, and where $N_n/_B L_n$ is the fiberwise quotient.


 \section{Well-definedness of the relative Bauer-Furuta invariant}\label{subsec:bauer-furuta-well-def}
We next consider how the construction of the relative Bauer-Furuta invariant in (\ref{eq:bauer-furuta}) depends on the choices involved.  This is very similar to Chapter \ref{sec:well-def}, so we will abbreviate many of the arguments.  

First, we address the perturbation $\mathfrak{p}$.   

\begin{lem}\label{lem:perturbation-independence}
	Let $\mathfrak{p}_1$ be a perturbation for which (\ref{eq:perturbation}) is surjective.  Let $\mathfrak{q}$ be a linear operator $\underline{\mathbb{C}}^{m_2}\to \mathcal{E}_{X,k-1}^-\oplus r_Y^*(P_0\cap L^2_{k-\frac{1}{2}})$.  Let $U_n(\mathfrak{p})$, respectively $U_n(\mathfrak{p}+\mathfrak{q})$ be the bundles defined as in (\ref{eq:4-manifold-appx-bundle}) with respect to the perturbations $\mathfrak{p}$, respectively $\mathfrak{p}+\mathfrak{q}$.  Let $\preBF_{[n],\mathfrak{p}}(X,\frak{t})$, respectively $\preBF_{[n],\mathfrak{p}+\mathfrak{q}}(X,\frak{t})$, be the maps defined in (\ref{eq:bauer-furuta}) with respect to the perturbations $\mathfrak{p}$ and $\mathfrak{p}+\mathfrak{q}$.  Then there is a commutative diagram:
	
\[
	\begin{tikzcd}[column sep=large]
	\Sigma^{\mathbb{C}^{m_2}}S_{B_X}^{U_n(\mathfrak{p})\oplus V_n} \arrow[r, "\Sigma^{\mathbb{C}^{m_2}} \preBF_{[n],\frak{p}}"] \arrow[d]& S_{B_X}^{U_n'\oplus V_n'\oplus \mathbb{C}^m\oplus \mathbb{C}^{m_2}}\wedge_{B_Y}I_n \arrow{d}\\
	S_{B_X}^{U_n(\mathfrak{p}+\mathfrak{q})\oplus V_n} \arrow[r,"\preBF_{[n],\mathfrak{p}+\mathfrak{q}}"]& S_{B_X}^{U_n'\oplus V_n'\oplus \mathbb{C}^m\oplus \mathbb{C}^{m_2}}\wedge_{B_Y }I_n\\
	\end{tikzcd}  
\]
	Moreover, a choice of map $L: \mathbb{C}^{m_2}\to \mathcal{E}^+_{X,k}\oplus \mathbb{C}^m$ so that $((D_X, \pi_{P_0} r_Y ) + \mathfrak{p})\circ L =\mathfrak{q}$ determines the vertical arrows in the diagram.
\end{lem}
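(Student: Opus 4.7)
The plan is to exploit the fact that $L$ provides an explicit fiberwise linear identification of $U_n(\mathfrak{p})\oplus\underline{\mathbb{C}}^{m_2}$ with $U_n(\mathfrak{p}+\mathfrak{q})$, and that the defining property $((D_X,\pi_{P_0}r_Y)+\mathfrak{p})\circ L=\mathfrak{q}$ is exactly what is needed to make this identification intertwine the linear parts of the two perturbed Seiberg--Witten maps in (\ref{eq:perturbed-sw-map}). The argument will proceed in three steps.

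First I will construct the left vertical arrow. Writing $L(w)=(L_1(w),L_2(w))\in\mathcal{E}^+_{X,k}\oplus\underline{\mathbb{C}}^m$, set
\[
\Phi(x,v,w)\;=\;(x-L_1(w),\,v-L_2(w),\,w).
\]
Using the identity $\pi_{P_n}-\pi_{P_0}=\pi_{F_n^+}$ (valid since $P_0\subset P_n$ and $F_n^+=P_n\cap P_0^\bot$) together with $((D_X,\pi_{P_0}r_Y)+\mathfrak{p})L=\mathfrak{q}$, a direct computation gives
\[
((D_X,\pi_{P_n}r_Y)+\mathfrak{p}+\mathfrak{q})\Phi(x,v,w)\;=\;((D_X,\pi_{P_n}r_Y)+\mathfrak{p})(x,v)\;-\;\pi_{F_n^+}r_YL_1(w).
\]
Since $\pi_{F_n^+}r_YL_1(w)\in F_n^+\subset r_Y^*F_n$, this shows $\Phi$ sends $U_n(\mathfrak{p})\oplus\underline{\mathbb{C}}^{m_2}$ into $U_n(\mathfrak{p}+\mathfrak{q})$, and $\Phi$ is evidently a bundle isomorphism onto its image; the rank count implicit in Lemma \ref{lem (D, i)} and the index-bundle identity following (\ref{eq:4-manifold-appx-bundle}) confirms that the target bundle has the correct rank. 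Passing to fiberwise one-point compactifications yields the left vertical arrow. The right vertical arrow will be the identity on the $U_n'\oplus\underline{V}_n'$ factor together with the linear shear $(v,w)\mapsto(v-L_2(w),w)$ on the trivial $\underline{\mathbb{C}}^m\oplus\underline{\mathbb{C}}^{m_2}$ factor.

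Next I will verify that the two composites agree up to homotopy. The contributions to $\preBF_{[n],\mathfrak{p}+\mathfrak{q}}\circ\Phi$ coming from $(D_X,d^+)+\mathfrak{p}+\mathfrak{q}$ match those of $\Sigma^{\mathbb{C}^{m_2}}\preBF_{[n],\mathfrak{p}}$ by the computation above and the definition of the right vertical arrow, but the quadratic piece $\pi_{U_n'\oplus\underline{V}_n'}c_X(\hat\phi-L_1(w),\hat\omega)$ differs from $\pi_{U_n'\oplus\underline{V}_n'}c_X(\hat\phi,\hat\omega)$ by an expression bilinear in $L_1(w)$ and $(\hat\phi,\hat\omega)$. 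To absorb this discrepancy I will introduce the linear homotopy $L_t=tL$, $\mathfrak{q}_t=t\mathfrak{q}$ with the corresponding $\Phi_t$, and consider the family
\[
\preBF_{[n],\mathfrak{p}+\mathfrak{q}_t}\circ\Phi_t,\qquad t\in[0,1],
\]
whose endpoints are $\Sigma^{\mathbb{C}^{m_2}}\preBF_{[n],\mathfrak{p}}$ (via the canonical identification through $\Phi_0$) at $t=0$ and $\preBF_{[n],\mathfrak{p}+\mathfrak{q}}\circ\Phi$ at $t=1$.

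The hard part will be establishing the compactness and regular-index-pair conditions of Proposition \ref{prop:bauer-furuta-existence} uniformly in the parameter $t$. Since $\mathfrak{q}$ is a fixed finite-rank bounded perturbation, the a priori estimates of Proposition \ref{prop compactness X, Y times R_{geq}} hold with constants depending continuously on $t$ and hence uniformly on the compact interval $[0,1]$. Reprising the argument of Proposition \ref{prop:bauer-furuta-existence} with $t$-indexed sequences $\epsilon_j\to 0$ and $t_j\in[0,1]$ (extracting a convergent subsequence $t_j\to t_\infty$ and invoking the Rellich and elliptic-regularity steps as before) yields the same conclusions, so one may choose a single $\epsilon_0>0$ and a single regular index pair $(N_n,L_n)$ of $\operatorname{inv}(A_n;\varphi_n)$ containing $K_{n,1}^t(\epsilon)$ and avoiding $K_{n,2}^t(\epsilon)$ for all $t\in[0,1]$. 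Once this uniformity is in hand, the family $\preBF_{[n],\mathfrak{p}+\mathfrak{q}_t}\circ\Phi_t$ provides a continuous homotopy in the target parametrized sphere-smash-Conley index, proving the commutativity of the diagram and completing the argument.
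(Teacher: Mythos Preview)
Your approach is essentially the paper's own: both arguments build the left vertical arrow from the shear $\Phi$ (the paper calls it $\tilde{L}$) determined by $L$, and both check that $\Phi$ carries $U_n(\mathfrak{p})\oplus\underline{\mathbb{C}}^{m_2}$ isomorphically onto $U_n(\mathfrak{p}+\mathfrak{q})$ by the computation you wrote down.

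Where you go further than the paper is in treating the quadratic term $c_X$. The paper simply asserts that the commutative square for the linear operators $(D_X,\pi_{P_0}r_Y)+\mathfrak{p}$ and $(D_X,\pi_{P_0}r_Y)+\mathfrak{p}+\mathfrak{q}$ upgrades to one for the full maps $\widetilde{SW}_{X,n,\mathfrak{p}}$ and $\widetilde{SW}_{X,n,\mathfrak{p}+\mathfrak{q}}$, without comment. You correctly observe that $c_X(\hat\phi-L_1(w),\hat\omega)\neq c_X(\hat\phi,\hat\omega)$, so strict commutativity fails on the nonlinear part and the right vertical arrow cannot absorb the discrepancy (it depends on the source variables $\hat\phi,\hat\omega$). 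Your remedy---the linear homotopy $\mathfrak{q}_t=t\mathfrak{q}$, $L_t=tL$, together with a uniform-in-$t$ rerun of Proposition~\ref{prop:bauer-furuta-existence}---is the natural fix and is sound: the boundary flow $\varphi_n$ and hence the isolating neighborhood $A_n$ are independent of $t$, the finite-rank perturbations $\mathfrak{q}_t$ vary in a compact family, and the contradiction argument of Proposition~\ref{prop:bauer-furuta-existence} goes through after extracting a convergent subsequence $t_j\to t_\infty$, since in the limit $v_n,w_n\to 0$ and the limiting configuration is an honest Seiberg--Witten solution to which Proposition~\ref{prop compactness X, Y times R_{geq}} applies.

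In short: same strategy, but your version supplies the homotopy step that the paper's terse proof elides.
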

\begin{proof}
	Such a choice of $L$ as at the end of the statement exists for any such $\frak{p}$, $\frak{q}$, by surjectivity of (\ref{eq:perturbation}).  We show how to define maps as in the commutative diagram in terms of such $L$.  Of course, if $\mathfrak{q}=0$, this is obvious, with $L=0$.   
	
	More generally, we have the following commutative diagram:
		\begin{equation}\label{eq:perturbation-equivalence}
	\begin{tikzcd}  
	\mathcal{E}^+_{X,k}\oplus \mathbb{C}^m\oplus \mathbb{C}^{m_2} \arrow[r] \arrow[d,"\tilde{L}"]
	& \mathcal{E}^-_{X,k-1}\oplus r_Y^*(P_0\cap L^2_{k-\frac{1}{2}}) \arrow[d,"\mathrm{id}"] \\ \mathcal{E}^+_{X,k}\oplus \mathbb{C}^m\oplus \mathbb{C}^{m_2} \arrow[r]& \mathcal{E}^-_{X,k-1}\oplus r_Y^*(P_0\cap L^2_{k-\frac{1}{2}})\end{tikzcd}
	\end{equation}
	where $\tilde{L}$ is the identity on $\mathcal{E}^+_{X,k}\oplus \mathbb{C}^m$, and $L\oplus \operatorname{id}_{\mathbb{C}^{m_2}}$ on $\mathbb{C}^{m_2}$.  The horizontal arrows are $(D_X, \pi_{P_0} r_Y )\oplus \mathfrak{p}\oplus 0$ and $(D_X, \pi_{P_0} r_Y)\oplus \mathfrak{p}\oplus \mathfrak{q}$, respectively.
	
	Comparing with the definition of the Seiberg-Witten map (\ref{eq:4-sw}), we see that there is a commutative diagram analogous to (\ref{eq:perturbation-equivalence}), but with the maps $\widetilde{SW}_{X,n,\frak{p}}$ (and similarly for $\frak{q}$) from (\ref{eq:perturbed-sw-map}) along the horizontal arrows.  
	
	The definition of $\preBF_{[n]}(X,\frak{t})$ then gives the commutative diagram in the Lemma statement.  
	
\end{proof}

As in Section \ref{sec:well-def}, the proof of well-definedness is related to the definition of a families invariant.  Let $\mathcal{F}$ be a family of (metrized, $\mathrm{spin}^c$) $4$-manifolds with boundary, over a base $B$, with fiber $(X,\mathfrak{t})$, and let $\mathcal{G}$ be the boundary family (naturally over the base $B$), where we write $\partial (X,\mathfrak{t})=(Y,\mathfrak{s})$. See Section \ref{subsec:spinc family} for family of $\mathrm{spin}^c$-manifolds.   Assume that we have fixed a sequence of good spectral sections $P_n,Q_n$ on the boundary family.

 Assume also that we have fixed a sequence of good spectral sections $W_{P,n},W_{Q,n}$ of $\ast d$ of the boundary family, and assume $W_{P,0}$ is the orthogonal complement of $W_{Q,0}$. 
 
As at the beginning of the section, we now have bundles $\mathcal{E}_{\mathcal{F},k}^{\pm}$ and $\mathcal{W}_{\mathcal{F},k}$, where the fibers over $b\in B$ (with associated $4$-manifold $(X,\mathfrak{t})$) are:
\[
\begin{split}
\mathcal{E}_{\mathcal{F}, k,b}^{\pm}  
&:=  \mathcal{H}^1(\mathcal{F}_b)  \times_{H^1(X; \mathbb{Z})} L^2_k(\Gamma(\mathbb{S}_{b}^{\pm})),  \\
\mathcal{W}_{\mathcal{F},k,b} &:= \mathrm{Pic}(\mathcal{F}_b) \times  L^2_k( \Omega_{CC}^1(\mathcal{F}_b) ).
\end{split}
\] 
Furthermore, the space of sections $L^2_{k-1}(\Omega^+(\mathcal{F}))$ now defines a bundle over $B$ as well, with fiber $L^2_{k-1}(\Omega^+(\mathcal{F}_b))$, the $L^2_{k-1}$-self-dual $2$-forms on the fiber.

The $4$-dimensional Seiberg-Witten equations (\ref{eq:4-sw}) now define a fiberwise map:
\begin{equation}\label{eq:4-sw-fiberwise}
\begin{split}
&       SW_{\mathcal{F},n}:
\mathcal{E}_{\mathcal{F}, k}^+ \oplus \mathcal{W}_{\mathcal{F},k}
\rightarrow
( \mathcal{E}_{\mathcal{F}, k-1}^- \oplus L^2_{k-1}(\Omega^+(\mathcal{F})) )  \oplus
r_{ \mathcal{G}}^*(P_n  \oplus W_{P,n})
\end{split}
\end{equation}

Define $U_n$ as in (\ref{eq:4-manifold-appx-bundle}), and $V_n$ similarly.  Exactly as before, define $A_n$; note that $A_n$ is now a fiber bundle over the total space of the fibration $\mathrm{Pic}(\mathcal{F})\to B$, a fiber of this latter fibration is $\mathrm{Pic}(\mathcal{F}_b)$.  Define subspaces (themselves spaces over the total space of $\mathrm{Pic}(\mathcal{G})\to B$)) $K_{n,1}(\epsilon)$ and $K_{n,2}(\epsilon)$ with fibers $K_{n,1,b}(\epsilon)$ and $K_{n,2,b}(\epsilon)$ according to:

\begin{align*}
&K_{n, 1,b}(\epsilon)     \\
& :=  \Bigg\{       y \in A_n      : 
\begin{array}{l}
\exists (\hat{\phi}, v, \hat{\omega}) \in B_k(U_n \oplus V_n  ; R'),     
(\hat{\phi}, v) \in U_n  \subset \mathcal{E}_{X, k}^+ \oplus \underline{\mathbb{C}}^{m}, 
\hat{\omega} \in V_n, \\
\|  (SW_{X, n, \frak{p},b}, \operatorname{id}_{\mathbb{C}^m})(\hat{\phi}, v, \hat{\omega})  \|_{k-1} \leq  \epsilon,  
y = \pi_{ P_n \oplus W_{P,n} } r_{\mathcal{G}_b} ( \hat{\phi}, \hat{\omega} )
\end{array}
    \Bigg \},  
\end{align*}
and
\begin{align*}
&K_{n, 2,b}(\epsilon)    \\
& :=  \Bigg\{  y \in A_n   : 
\begin{array}{l}
\exists (\hat{\phi}, v, \hat{\omega}) \in  \partial B_k(U_n \oplus V_n ;R'),    \\
\|   (SW_{X,n, \frak{p},b}, \operatorname{id}_{\underline{\mathbb{C}}^m} ) (\hat{x}, v, \hat{\omega}) \|_{k-1} \leq \epsilon, 
y = \pi_{ P_n \oplus W_{P,n} } r_{ \mathcal{G}_{b}} ( \hat{\phi}, \hat{\omega} )
\end{array}
      \Bigg\}    \\
&  \quad \bigcup
   \big(  \partial A_n \bigcap K_{n, 1,b}(\epsilon)   \big)
\end{align*} 

The proof of Proposition \ref{prop:bauer-furuta-existence} is only changed in this setting according to the procedure in Section \ref{sec:well-def}.  In particular, the following proposition also relies on a families version of Theorem 4 of \cite{Manolescu-b1=0}; the proof thereof is only notationally different from that appearing in \cite{Manolescu-b1=0}.   A families version of Proposition \ref{prop compactness X, Y times R_{geq}} is also used, its proof is a modification of that in \cite[Section 4]{khandhawit}.
We obtain:
\begin{prop}\label{prop:bauer-furuta-existence-families}
	There is an $\epsilon_0 > 0$ such that  if $0 < \epsilon < \epsilon_0$, for $n$ large, we can find a regular fiberwise index pair $(N_n, L_n)$ of $\operatorname{inv}(A_n; \varphi_{n, k, k-\frac{1}{2}})$ with 
	\[
	K_{n, 1}(\epsilon) \subset N_n \subset A_n, \quad
	K_{n, 2}(\epsilon) \subset L_n. 
	\]
\end{prop}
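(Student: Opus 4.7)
The plan is to adapt the proof of Proposition~\ref{prop:bauer-furuta-existence} to the families setting, with the principal new input being a compactness argument across the base $B$ to extract uniform constants. Concretely, I would first establish a fiberwise analogue of Theorem~4 of~\cite{Manolescu-b1=0}: given any compact isolated invariant set of a continuous family of flows, and any pair of compact sets $K_1, K_2$ satisfying the two ``exit'' conditions (i) and (ii) of the original proposition relative to the family, one can construct a regular fiberwise index pair $(N_n,L_n)$ with $K_1\subset N_n$ and $K_2\subset L_n$. This is essentially Salamon's thickening trick, carried out parametrically; compactness of $B$ together with local triviality of the approximation bundles $F_n, W_n$ reduces it to a finite open cover where the ordinary construction applies, patched together by standard equivariant deformation.

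Assuming this, the remaining work is to verify, uniformly in $b\in B$, that the sets $K_{n,1,b}(\epsilon)$ and $K_{n,2,b}(\epsilon)$ satisfy the same two conditions as in the proof of Proposition~\ref{prop:bauer-furuta-existence}: namely
\begin{enumerate}[(i)]
\item if $y\in K_{n,1,b}(\epsilon)\cap A_{n,b}^{[0,\infty)}$, then $\varphi_{n,b}(y,t)\notin \partial A_{n,b}$ for all $t\geq 0$;
\item $K_{n,2,b}(\epsilon)\cap A_{n,b}^{[0,\infty)}=\emptyset$.
\end{enumerate}
I would argue by contradiction exactly as in Proposition~\ref{prop:bauer-furuta-existence}. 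Failure of either condition produces a sequence $b_n\in B$, $\epsilon_n\to 0$, approximate solutions $(\hat{\phi}_n,v_n,\hat{\omega}_n)$ on $\mathcal{F}_{b_n}$, and approximate Seiberg--Witten trajectories $\gamma_n$ on $\partial\mathcal{F}_{b_n}$, realizing one of cases (a)--(d) from the original proof. Using compactness of $B$, pass to a subsequence so that $b_n\to b_\infty$; then the local-triviality of $\mathcal{F}$ near $b_\infty$ lets us identify everything on a neighborhood with a fixed $4$-manifold, so that the Ascoli-type argument (Proposition~\ref{prop Ascoli}) together with the spectral bounds from Section~\ref{subsec:proof-of-isolation} gives $L^2_{k-3/2}$-convergence on compact subsets to a finite-energy Seiberg--Witten trajectory $\gamma$ on $\partial \mathcal{F}_{b_\infty}$ and a solution $(\hat\phi,\hat\omega)$ on $\mathcal{F}_{b_\infty}$ with $r_Y(\hat\phi,\hat\omega)=\gamma(0)$. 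Upgrading to $L^2_{k-1/2}$-convergence on the boundary via the gradient estimates as in Lemmas~\ref{lem gamma_n L^2_{k+1/2}} and~\ref{lem phi^-}, we contradict the families analogue of Proposition~\ref{prop compactness X, Y times R_{geq}}, which furnishes uniform-in-$b$ bounds $R_{X,k}, R_{Y,l}$ (since $B$ is compact and the constants in~\cite[Section~4]{khandhawit} depend continuously on the geometric data).

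The main obstacle, as in the closed three-manifold case, is controlling the gradient term $(\nabla_{X_H}\pi_{F_n})\phi_n$, which now also varies with $b\in B$. Here I would invoke the families versions of Propositions~\ref{prop pi P_n wighted} and~\ref{prop perturbation P}: by hypothesis the spectral sections $P_n,Q_n$ and $W_{P,n},W_{Q,n}$ vary continuously in $b$, and the weighted-Sobolev estimates depend only on the gap parameters $\mu_{n,\pm},\lambda_{n,\pm}$ of the spectral system, not on the fiber. Continuity in $b$, together with compactness of $B$, upgrades the pointwise estimates to uniform ones, so the key projection bound $\|\nabla_v\pi_{P_n}\colon L^2_{k_+,k_-}\to L^2_{\ell-5,w}\|\to 0$ holds uniformly in $b\in B$. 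Once this uniformity is in hand, the extraction of a convergent subsequence and the contradiction with the uniform compactness statement proceed verbatim as in the proof of Proposition~\ref{prop:bauer-furuta-existence}, yielding the desired $\epsilon_0$ and $n_0$ independent of $b$.
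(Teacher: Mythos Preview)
Your proposal is correct and follows essentially the same approach the paper sketches: the paper simply asserts that the proof of Proposition~\ref{prop:bauer-furuta-existence} carries over ``according to the procedure in Section~\ref{sec:well-def}'', invoking a families version of Theorem~4 of~\cite{Manolescu-b1=0} and a families version of Proposition~\ref{prop compactness X, Y times R_{geq}}. You have supplied the details of this sketch---the compactness-of-$B$ extraction, the uniform spectral estimates, and the parametric index-pair construction---all of which are exactly the ingredients the paper points to.
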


Put 
\begin{align*}
& S^{U_n \oplus V_n}_{\mathrm{Pic}(\mathcal{F})} 
:= \bigcup_{a \in \mathrm{Pic}(\mathcal{F})} B( (U_n \oplus V_n)_a; R) / 
S ( (U_n \oplus V_n )_a; R),   \\
& S_{\mathrm{Pic}(\mathcal{F})}^{ U_n' \oplus V_n' \oplus \underline{\mathbb{C}^m} }
:= \bigcup_{a \in \mathrm{Pic}(\mathcal{F})} B(  (U_n' \oplus V_n' \oplus \underline{\mathbb{C}^m} )_a; \epsilon  ) /
S (  (U_n' \oplus V_n' \oplus \underline{\mathbb{C}^m} )_a; \epsilon  ),
\end{align*} 
Let
\[I_n(\mathcal{G})=N_n\cup_{p_{\mathrm{Pic}(\mathcal{G})\mid_{L_n}}}\mathrm{Pic}(\mathcal{G}),\] 
where $p_{\mathrm{Pic}(\mathcal{G})}$ is the projection to $\mathrm{Pic}(\mathcal{G})$ of $F\times W$.  

We obtain a fiber-preserving map over $\mathrm{Pic}(\mathcal{G})$:
\begin{equation}\label{eq:pre-families-bf}
\preBF_{[n]}(\mathcal{F}): S^{U_n \oplus V_n}_{\mathrm{Pic}(\mathcal{G})}\to S_{\mathrm{Pic}(\mathcal{G})}^{ U_n' \oplus V_n' \oplus \underline{\mathbb{C}^m} }\wedge_{\mathrm{Pic}(\mathcal{G})} I_n(\mathcal{G}).
\end{equation}
Here, $S^{U_n \oplus V_n}_{\mathrm{Pic}(\mathcal{G})}$ and $ S_{\mathrm{Pic}(\mathcal{G})}^{ U_n' \oplus V_n' \oplus \underline{\mathbb{C}^m} }$ are spaces over $\mathrm{Pic}(\mathcal{G})$ by pushing forward $S^{U_n \oplus V_n}_{\mathrm{Pic}(\mathcal{F})}$ and $ S_{\mathrm{Pic}(\mathcal{F})}^{ U_n' \oplus V_n' \oplus \underline{\mathbb{C}^m} }$ along the restriction map $\mathrm{Pic}(\mathcal{F})\to \mathrm{Pic}(\mathcal{G})$ (see Section \ref{subsec:homotopy1}).    

In particular, we obtain that the homotopy class of the map $\preBF_{[n]}(X,\frak{t})$ in (\ref{eq:bauer-furuta}) is independent of the metric on $X$ used in its construction.  To be more precise:

\begin{lem}\label{lem:bf-metric-independence}
	Let $(X,\frak{t})$ be a compact $\mathrm{spin}^c$ $4$-manifold with boundary (admitting a Floer framing) $(Y,\mathfrak{s})$.  Let $g_t$ for $t\in [0,1]$ be a path of metrics on $X$, along with a path of perturbations $\frak{p}_t$ with surjectivity in (\ref{eq:perturbation}) for all $t$.  There exist good spectral sections $P_{n,t},Q_{n,t},W_{P,n,t},W_{Q,n,t}$ on the boundary $Y$, say forming a spectral system $\mathfrak{S}$.  Let $I_n=\mathcal{SWF}_{[n]}(Y,\mathfrak{s},\mathfrak{S})$ denote the family Seiberg-Witten invariant of the boundary.  Let $p$ denote the projection $p: B_Y \times I  \to B_Y$, where $I = [0,1]$.  Then there exists a map
	\[
	\preBF_{[n],I}(X,\mathfrak{t}): S_{B_X\times I}^{U_{n}\oplus V_n}\to S_{B_X\times I}^{U'_n\oplus V_n'\oplus \underline{\mathbb{C}}^m}\wedge_{B_Y \times I}p^* I_n.
	\]
 	The map $\preBF_{[n],I}(X,\mathfrak{t})$ is a map respecting the projection on each side to $B_Y \times I$.  
 	
 	In particular, for a \emph{fixed trivialization} of the families $U_{n,t},V_{n,t},U_{n,t}',V_{n,t}'$ and $I_n$ over $I_+$, together with a path of perturbations $\mathfrak{p}_t$, there is an (equivariant) homotopy equivalence from $\preBF_{[n],0,\frak{p}_0}$ and $\preBF_{[n],1,\frak{p}_1}$ which is well-defined up to (equivariant) homotopy.
\end{lem}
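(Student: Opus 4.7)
The plan is to regard the data $(X, g_t, \mathfrak{p}_t)$ for $t \in [0,1]$ as a family $\mathcal{F} = X \times [0,1]$ of $\mathrm{spin}^c$ $4$-manifolds with boundary over the base $B = [0,1]$, with boundary family $\mathcal{G} = Y \times [0,1]$, and to invoke the families version of the Bauer-Furuta construction (\ref{eq:pre-families-bf}). Concretely, I would first extend the given spectral system $\mathfrak{S}$ on $Y$ to a family of good spectral sections $P_{n,t}, Q_{n,t}, W_{P,n,t}, W_{Q,n,t}$ on $\mathcal{G}$; since $[0,1]$ is contractible this is possible by the same homotopy argument used in Section \ref{subsec:metric-independence}, where the existence of extending spectral sections along a path of metrics was already established. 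Pulling these sections back over $B_X \times [0,1]$ along the restriction map then provides all the data (bundles $U_n, U_n', \underline{V}_n, \underline{V}_n'$, subspaces $W_n^\pm$, cutoff radii $R, R'$) needed for the construction (\ref{eq:pre-families-bf}) on $\mathcal{F}$.

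The second step is to apply Proposition \ref{prop:bauer-furuta-existence-families}, the families version of Proposition \ref{prop:bauer-furuta-existence}, to obtain a regular fiberwise index pair $(N_n, L_n)$ whose fiber over $t$ contains the sets $K_{n,1,t}(\epsilon)$ and $K_{n,2,t}(\epsilon)$ arising from the $4$-dimensional Seiberg-Witten map at parameter $t$. Feeding this into the recipe of (\ref{eq:pre-families-bf}) directly produces the desired map
\[
\preBF_{[n],t}(X,\mathfrak{t})\colon S_{B_X\times [0,1]}^{U_{n}\oplus \underline{V}_n}\to S_{B_X\times [0,1]}^{U'_n\oplus \underline{V}_n'\oplus \underline{\mathbb{C}}^m}\wedge_{B_Y \times [0,1]}p^* I_n,
\]
fibered over $B_Y \times [0,1]$, whose restriction at $t = 0$ and $t = 1$ recovers $\preBF_{[n],0,\mathfrak{p}_0}$ and $\preBF_{[n],1,\mathfrak{p}_1}$ respectively.

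The step I expect to require the most care is the treatment of the perturbation $\mathfrak{p}_t$: although for each individual $t$ one can choose $\mathfrak{p}_t$ so that (\ref{eq:perturbation}) is surjective, here a path of perturbations is prescribed in the hypothesis and one must guarantee that a \emph{single} auxiliary $\mathbb{C}^{m}$-factor (independent of $t$) suffices to make surjectivity hold uniformly along $[0,1]$. By compactness of $[0,1]$ this is routine: at each $t$ pick $\mathfrak{p}_t$ achieving surjectivity, note that surjectivity is open in $t$, and take a finite cover to assemble a single stabilizing map $\mathfrak{q}$ of the form considered in Lemma \ref{lem:perturbation-independence}. The families analog of that lemma then identifies the Bauer-Furuta map built with $\mathfrak{p}_t$ (after stabilization by $\mathfrak{q}$) with the one built directly from $\mathfrak{p}_t + \mathfrak{q}$, and allows the stabilization to be undone after restricting to the endpoints. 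One also has to verify a families analog of Proposition \ref{prop compactness X, Y times R_{geq}}, but this is a direct upgrade of the argument in \cite[Section 4]{khandhawit} and presents no essentially new difficulty.

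Finally, for the \emph{In particular} clause, fixing trivializations of $U_{n,t}, \underline{V}_{n,t}, U'_{n,t}, \underline{V}'_{n,t}$ and $I_n$ over $[0,1]_+$ allows the map $\preBF_{[n],t}(X,\mathfrak{t})$ to be viewed as a homotopy in the smash product category from $\preBF_{[n],0,\mathfrak{p}_0}$ to $\preBF_{[n],1,\mathfrak{p}_1}$; its homotopy class is independent of the choices in the construction by the same continuation and connected-simple-system arguments (Theorem \ref{thm:fiberwise-deforming-conley-2}, together with the proof of Theorem \ref{thm:well-definedness-of-seiberg-witten-invariant}) used to establish well-definedness of $\preSWF$ under variation of metric, index pair, and spectral system. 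Thus the homotopy equivalence between the two endpoint maps is canonical up to homotopy, as asserted.
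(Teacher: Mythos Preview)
Your proposal is correct and follows essentially the same approach as the paper: the paper's proof is extremely terse, stating only that the existence of spectral sections follows from Section~\ref{sec:findim-appx}, that good spectral sections for $*d$ exist over $[0,1]$ since each $*d$ is a small compact perturbation of a fixed one, and that otherwise the lemma is just a restatement of the families Bauer--Furuta construction~(\ref{eq:pre-families-bf}); you have spelled out exactly these ingredients in more detail. One minor point: your discussion of assembling a single $\mathbb{C}^m$ via a finite cover is unnecessary here, since the hypothesis already hands you a path $\mathfrak{p}_t$ with surjectivity in~(\ref{eq:perturbation}) for all $t$, which in particular fixes $m$ uniformly from the outset.
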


\begin{proof}
	The existence of the spectral sections follows from Section \ref{sec:findim-appx}.  Otherwise the Lemma is a restatement of the definition of the families relative Bauer-Furuta invariant.  There is no issue in choosing a good spectral section for $*d$ of the boundary family in this situation, since on $[0,1]$, each $*d$ may be written as a (small) compact perturbation of $*_g d$, where $g$ is some fixed metric.
	
\end{proof}

Further, the homotopy class of $\preBF_{[n]}(X,\mathfrak{t})$ does not depend on the Sobolev norm used in its construction.  The proof of the following Lemma is analogous to the work in Section \ref{subsec:sobolev}, and is left to the reader.  We state the result for the unparameterized case; the parameterized case is not substantially different.
\begin{lem}\label{lem:sobolev-independence-4}
	Let  $(X,\frak{t})$ be a compact $\mathrm{spin}^c$ $4$-manifold with boundary (admitting a Floer framing) $(Y,\mathfrak{s})$.
	Let $U_n'$ be a sequence of finite dimensional subbundles of $\mathcal{E}^-_{X,k}$ for $k>11/2$, and $V'_n = B_X \times V_{n,0}'$ be a sequence of  finite dimensional subbundles of $B_X \times L^2_{k}(\Omega^+(X))$ where $V_{n,0}' \subset L^2_{k}(\Omega^+(X))$, with $\pi_{U_n'}\to \operatorname{id}_{\mathcal{E}^-_{X,k}}$ and $\pi_{V_n'}\to \operatorname{id}_{B_X \times L^2_{k}(\Omega^+(X))}$ strongly.  Let $\preBF_{[n],k+1}(X)$ and $\preBF_{[n],k}(X)$ be the pre-Bauer-Furuta invariants defined with respect to $L^2_{k+1}$ and $L^2_k$-norm respectively. 
	Write $I$ for the interval $[0,1]$.  Then there is a family of maps over the interval:
	\[
	\preBF_{[n],I}(X,\mathfrak{t}): S^{U_n\oplus V_n}_{B_{X} \times I }\to 
	  S^{U'_n\oplus V_n'\oplus \underline{\mathbb{C}}^m}_{B_{X} \times I } \wedge_{B_{Y} \times I}\preSWF_{[n]}(Y)_{I},
	\]
	where $\preSWF_{[n]}(Y)_{I}$ is the parameterized Conley index coming from the $I$-family of flows used in the proof of Proposition \ref{prop:sobolev-independence-3}.  In particular, for the given homotopy equivalence in Proposition \ref{prop:sobolev-independence-3}, the maps $\preBF_{[n],k}(X,\mathfrak{t})$ and $\preBF_{[n],k+1}(X,\mathfrak{t})$ are homotopic by a homotopy well-defined up to homotopy.
\end{lem}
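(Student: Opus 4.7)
The plan is to imitate the strategy used for Proposition \ref{prop:sobolev-independence-3}, but applied to the relative Bauer--Furuta setup on $X$. First I would introduce the interpolated inner products $g^\tau = (1-\tau)\langle \cdot,\cdot\rangle_k + \tau\langle \cdot,\cdot\rangle_{k+1}$ on $\mathcal{E}_{X,k+1}^\pm$ and $\mathcal{W}_{X,k+1}$; by restriction to $Y$ this induces the corresponding interpolation between the boundary norms $L^2_{k-1/2,k}$ and $L^2_{k+1/2,k+1}$ used in the three-dimensional finite-dimensional approximation. For each $\tau\in[0,1]$, the Seiberg--Witten map $SW_{X,n,\mathfrak{p}}$, the perturbation $\mathfrak{p}$, and the approximation bundles $U_n(\tau), V_n(\tau)$ of (\ref{eq:4-manifold-appx-bundle}) are defined using $g^\tau$; they fit together into continuous families of bundles over $B_X\times [0,1]$, and the $g^\tau$-norms are uniformly equivalent to the $g^0$-norm.

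Next I would prove a $\tau$-parametrized version of Proposition \ref{prop:bauer-furuta-existence}: there exist $\epsilon_0>0$ and $n_0$, both independent of $\tau$, so that for $n\geq n_0$ and $0<\epsilon<\epsilon_0$ one can choose regular fiberwise-over-$[0,1]$ index pairs $(N_{n,t},L_{n,t})$ for the family of boundary flows $\varphi^{t}_{n,k-1/2,k}$ produced by Proposition \ref{prop:sobolev-independence-3}, containing $K_{n,1,t}(\epsilon)$ and $K_{n,2,t}(\epsilon)$ respectively. The proof goes through unchanged save for bookkeeping, because: (a) the $g^\tau$-norms being uniformly equivalent, Proposition \ref{prop compactness X, Y times R_{geq}} furnishes uniform bounds $R_{X,k}(\tau), R_{Y,k}(\tau)$; (b) the convergence arguments for approximate Seiberg--Witten trajectories on $[0,\infty)$ already hold uniformly in the interpolation parameter by the same weighted-Sobolev estimate used in Proposition \ref{prop pi P_n wighted2}; and (c) the standard elliptic estimate on $X$ together with the Rellich lemma continues to extract a strongly convergent subsequence in $L^2_k(X)$ with constants independent of $\tau$.

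Having this $\tau$-family of regular index pairs, I define $\preBF_{[n],t}(X,\mathfrak{t})$ by the same formula as (\ref{eq:bauer-furuta}), with all norms now taken with respect to $g^t$, yielding a map
\[
S^{U_n\oplus\underline{V}_n}_{B_X\times[0,1]}\longrightarrow S^{U_n'\oplus \underline{V}_n'\oplus \underline{\mathbb{C}}^m}_{B_X\times[0,1]}\wedge_{B_Y\times[0,1]} \preSWF_{[n]}(Y,\mathfrak{s})_{[0,1]}
\]
of $B_Y\times[0,1]$-spaces, which at $\tau=0,1$ restricts to $\preBF_{[n],k}(X,\mathfrak{t})$ and $\preBF_{[n],k+1}(X,\mathfrak{t})$ respectively. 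Well-definedness up to homotopy, given a fixed trivialization of $U_{n,t},\underline{V}_{n,t},U_{n,t}',\underline{V}_{n,t}'$ over $[0,1]$, then follows by exactly the same argument as in Lemmas \ref{lem:perturbation-independence} and \ref{lem:bf-metric-independence}: any two choices of regular index pairs along the path are dominated by a common one, and the Conley-index continuation is a well-defined connected simple system of homotopy classes. Composing with the homotopy equivalence of Proposition \ref{prop:sobolev-independence-3} on the boundary factor gives the final claim.

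The main obstacle will be the uniform version of Proposition \ref{prop:bauer-furuta-existence} in $\tau$: one must verify in the ``exit-through-boundary'' subcases of the proof (cases (b) and (d)) that the higher-regularity bounds $\|\phi_n^\pm(t_n)\|_{k+1/2}\leq C$ and $\|\omega_n^\pm(t_n)\|_{k+1/2}\leq C$ are obtained with constants $C$ independent of $\tau$. This is a direct consequence of the uniform equivalence of the interpolated inner products together with the uniform-in-$\tau$ bound on the commutators $[D',\pi_{F_n}]$, $[D',\pi_{\Sigma_{n+1}}]$, and $\nabla_v \pi_{F_n}^\tau$ established in Sections \ref{subsec:variation-of-appx} and \ref{subsec:sobolev}; no new analytic input is required, only a careful transcription of the arguments.
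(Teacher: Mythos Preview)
Your proposal is correct and follows exactly the approach the paper indicates: the paper itself states that the proof ``is analogous to the work in Section~\ref{subsec:sobolev}, and is left to the reader,'' and you have carried out precisely that analogy by interpolating the Sobolev norms as in Proposition~\ref{prop:sobolev-independence-3} and rerunning the argument of Proposition~\ref{prop:bauer-furuta-existence} uniformly in the interpolation parameter.
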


We next consider the effect of stabilization on $\mathcal{BF}_{[n]}$.  There are two separate stabilizations: increasing $U_n',V_n'$, or increasing $P_n,Q_n,W^{\pm}_n$.  Fix trivializations of $U_{n+1}'/U_n'= \underline{\mathbb{C}}^{c_n}$ and $V_{n+1}'/V_n'= \underline{\mathbb{R}}^{d_n}$.  Recall the definition of a \emph{spectral system} from Definition \ref{def:spectral-system-3}.  By construction, $U_{n+1}$ is naturally identified with $U_n\oplus \underline{\mathbb{C}}^{k_{Q_n}+c_n}$ for $k_{P,n},k_{Q,n}$ as in Theorem \ref{thm change of approximation suspension}, using the isomorphism $\eta: P_{n+1}\to P_n\oplus \underline{\mathbb{C}}^{k_{P,n}}$, similarly for $k_{Q,n}$.  Analogously, $V_{n+1}$ is identified with $V_n \oplus \underline{\mathbb{R}}^{k_{W,-,n}+d_n}$.  Let $\varphi_{n+1,t}$ denote the family of flows as in Theorem \ref{thm change of approximation suspension}, with $n$ chosen large enough.  Recall that there is an induced homotopy equivalence
\[
\Sigma_{B_Y}^{\underline{\mathbb{C}}^{k_{Q,n}}\oplus \underline{\mathbb{R}}^{k_{W,-,n}}}\preSWF_{[n]}(Y)\to \preSWF_{[n+1]}(Y)
\] 
as in Theorem \ref{thm change of approximation suspension}.  

Stabilization of the Bauer-Furuta invariant is as follows.  Let $c_n'=c_n+k_{Q,n}$ and $d_n'=d_n+k_{W,-,n}$.

\begin{prop}\label{prop:bf-stabilization}
	For appropriate choices of index pairs, there is a homotopy-commuting square of parameterized spaces, defined by Conley index continuation maps:

		\begin{equation}\label{eq:commutation-of-bauer-furuta} 
	\begin{tikzpicture}[xscale=6.5,yscale=1.5]
	\node (b0) at (0,-1) {$ 	S_{B_X}^{\underline{\mathbb{C}}^{c_n'}\oplus  \underline{\mathbb{R}}^{d_n'} \oplus T_n' \oplus \underline{\mathbb{C}}^m}  \preSWF_{[n]}(Y)$};
	\node (b1) at (1,-1) {$ 	S_{B_X}^{T_{n+1}' \oplus \underline{\mathbb{C}}^m}\wedge_{B_Y}\preSWF_{[n+1]}(Y)$};
	\node (a0) at (0,0) {$S_{B_X}^{ \underline{\mathbb{C}}^{c_n'}\oplus \underline{\mathbb{R}}^{d_n'}}\wedge_{B_X}S_{B_X}^{T_n}$};
	\node (a1) at (1,0) {$S^{T_{n+1}}_{B_X}$};

	\draw[->] (a0) -- (a1) node[pos=0.5,anchor=north] {}; \draw[->] (a0) -- (b0) node[pos=0.5,anchor=east] {\scriptsize $\operatorname{id}\wedge_{B_X}\preBF_{[n]}$};
	\draw[->] (b0) -- (b1) node[pos=0.5,anchor=south east]{}; \draw[->] (a1) -- (b1) node[pos=0.5,anchor=west] {\scriptsize $\preBF_{[n+1]}$};
	
	\end{tikzpicture}
	\end{equation}
	$T_n = U_n \oplus V_n$, $T_n' = U_n' \oplus V_{n}'$. 
	In particular, (\ref{eq:commutation-of-bauer-furuta}) is a homotopy-commuting square of (unparameterized) connected simple systems.
\end{prop}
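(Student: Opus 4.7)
\textit{Proof plan.} The strategy is to exhibit both composites in the square as Conley-index continuation maps associated with a single deformation of both the 4-manifold Seiberg-Witten map and the 3-manifold approximate flow, and then to invoke the ``deformation of deformations'' principle of \cite[Section 6.3]{Salamon} to conclude that the two composites agree up to homotopy. The underlying mechanism is the split-flow analysis of Lemma \ref{lem suspension premain}, lifted to the four-dimensional Bauer-Furuta construction.

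\textit{Step 1: Identification of the stabilization maps.} Using the fixed bundle isometries $\eta^P_n,\eta^Q_n,\eta^{W_P}_n,\eta^{W_Q}_n$, identify $U_{n+1}\cong U_n\oplus \mathbb{C}^{c_n'}$, $V_{n+1}\cong V_n\oplus\mathbb{R}^{d_n'}$, $U_{n+1}'\cong U_n'\oplus\mathbb{C}^{c_n}$, $V_{n+1}'\cong V_n'\oplus\mathbb{R}^{d_n}$. Under these identifications the top horizontal map is the tautological suspension isomorphism, and the bottom horizontal map is, by Theorem \ref{thm change of approximation suspension}, the smash product of the Conley-index continuation $\Sigma_{B_Y}^{\mathbb{C}^{k_{Q,n}}\oplus\mathbb{R}^{k_{W,-,n}}}\preSWF_{[n]}(Y)\to\preSWF_{[n+1]}(Y)$ with the identity on the remaining $\mathbb{C}^{c_n}\oplus\mathbb{R}^{d_n}$ factor.

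\textit{Step 2: Split 4-manifold map.} Following the recipe of Lemma \ref{lem suspension premain}, construct a one-parameter family $\widetilde{SW}^\tau_{X,n+1,\mathfrak{p}}$, $\tau\in[0,1]$, of approximate Seiberg-Witten maps on $U_{n+1}\oplus\underline{V}_{n+1}$, with $\widetilde{SW}^1=\widetilde{SW}_{X,n+1,\mathfrak{p}}$ and $\widetilde{SW}^0$ a split map in which the projection of $r_Y(\hat\phi,\hat\omega)$ onto the new $\Sigma_{n+1}\oplus\Sigma^W_{n+1}$ factor uses only the linearization (no $c_X$ contribution), and the ambient nonlinear term $c_X$ is projected only onto $U_n'\oplus V_n'$. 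At $\tau=1$ the Bauer-Furuta map associated to $\widetilde{SW}^\tau$ agrees with $\preBF_{[n+1]}(X,\mathfrak{t})$; at $\tau=0$ it decouples into the product of $\preBF_{[n]}(X,\mathfrak{t})$ with the identity on the new factors.

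\textit{Step 3: Compatible deformation of BF and of the boundary flow.} Pair the family $\widetilde{SW}^\tau$ of Step 2 with the family $\varphi_{n+1}^\tau$ of Lemma \ref{lem suspension premain} that deforms $\varphi_{n+1}$ to $\varphi_{n+1}^{\mathrm{split}}$. Re-running the proof of Proposition \ref{prop:bauer-furuta-existence} with this $\tau$-family, tracking all estimates uniformly in $\tau$ (in particular the bootstrap bounds on $\|\phi_n^\pm(t)\|_{k_\pm+\tfrac12}$, together with the four-dimensional compactness of Proposition \ref{prop compactness X, Y times R_{geq}}), produces a single pre-BF map over $[0,1]\times S_{B_X}^{T_{n+1}}$ interpolating between $\preBF_{[n+1]}$ and the split pre-BF map.

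\textit{Step 4: Factorization at $\tau=0$ and conclusion.} At $\tau=0$ the 4-manifold map and the boundary flow both split along the $L^2_{k_+,k_-}$-orthogonal decomposition $F_{n+1}\oplus W_{n+1}=(F_n\oplus W_n)\oplus(\Sigma_{n+1}\oplus\Sigma_{n+1}^W)$, so the parameterized Conley-index construction computing the split pre-BF map factors as a smash product. One factor is the old $\preBF_{[n]}(X,\mathfrak{t})$; the other is a Thom-type identification on $\Sigma_{n+1}\oplus\Sigma^W_{n+1}$ that, by construction, is exactly the stabilization used in the bottom-horizontal arrow. Equivalently, $\preBF_{[n+1]}^{\tau=0}$ coincides with the composite going down-then-right in the square, while $\preBF_{[n+1]}^{\tau=1}=\preBF_{[n+1]}$ is the composite going right-then-down. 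The homotopy from Step 3 therefore realizes the commutation of the square. Passing to unparameterized Conley indices preserves the homotopy, giving the claim on connected simple systems, as in the proof of Theorem \ref{thm:well-definedness-of-seiberg-witten-invariant}.

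\textit{Main obstacle.} The technically delicate point is Step 3: the deformation-of-deformations argument requires that all estimates needed to apply Proposition \ref{prop:bauer-furuta-existence} (the four-dimensional compactness of Proposition \ref{prop compactness X, Y times R_{geq}}, the bootstrap boundary estimates in the proof of Theorem \ref{thm isolating nbd}, and the Fredholmness of the linearization needed to define the auxiliary bundles $U_n$ and $V_n$) must hold uniformly in $\tau$. In particular, at $\tau=0$ the split Seiberg-Witten map is no longer a bona fide gauge-theoretic equation, so the usual compactness arguments must be reproved using the auxiliary space $L^2_{K_T,\ell-5,w}$, in the manner of Lemma \ref{lem parameterized convergence} and Lemma \ref{lem parameterized convergence 2}. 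Once this uniformity is in place, the homotopy commutativity of the square follows from the continuation property of the Conley index.
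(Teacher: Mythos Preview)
Your proposal is correct and follows essentially the same approach as the paper: both deform the $(n+1)$-level four-dimensional Seiberg--Witten map, linearly removing the nonlinear contributions on the new $U_{n+1}/U_n$ and $V_{n+1}/V_n$ factors, while simultaneously running the boundary-flow deformation of Lemma~\ref{lem suspension premain}, and then re-establish the analog of Proposition~\ref{prop:bauer-furuta-existence} uniformly in the deformation parameter to produce a single Bauer--Furuta map over the interval interpolating between the two composites. Your identification of the main obstacle (uniformity of the compactness and bootstrap estimates in $\tau$, handled via the auxiliary weighted norm as in Lemmas~\ref{lem parameterized convergence}--\ref{lem parameterized convergence 2}) is exactly the point the paper leaves implicit in its sketch; the only minor remark is that the ``deformation of deformations'' principle you cite from \cite[Section~6.3]{Salamon} in your opening is not actually needed here---a single one-parameter homotopy suffices for the square itself, and the connected-simple-system statement follows directly from Theorem~\ref{thm:fiberwise-deforming-conley-2}.
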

\begin{proof}
	The proof is similar to the proof of Theorem \ref{thm change of approximation suspension}, and we will only roughly sketch the details.  Indeed, the bottom arrow of (\ref{eq:commutation-of-bauer-furuta}) is exactly the map defined in that theorem.
	
	Recall that we have fixed identifications $U_{n+1}/U_n= \underline{\mathbb{C}}^{c_n+k_{Q,n}}$
	To obtain that (\ref{eq:commutation-of-bauer-furuta}) homotopy-commutes, we deform $\widetilde{SW}_{X,n+1,\frak{p}}=\widetilde{SW}_{X,n+1,\frak{p},0}$ by a family $\widetilde{SW}_{X,n+1,\frak{p},t}$, by removing (linearly in $t$) the nonlinear terms in $SW_{X,n,\frak{p}}$ on the $U_{n+1}/U_n$ and $V_{n+1}/V_n$-factors to a map $\widetilde{SW}_{X,n+1,\frak{p},1}$ which is the sum of maps
\[	\begin{split}
	H: U_{n+1}/U_n\oplus V_{n+1}/V_n&\to U_{n+1}'/U_n'\oplus V_{n+1}'/V_n'\oplus \underline{\mathbb{C}}^{k_{Q,n}}\oplus \underline{\mathbb{R}}^{k_{W,-,n}}\\
	&\mbox{ and }\\ \widetilde{SW}_{X,n,\frak{p}}: U_n\oplus V_n&\to U_n'\oplus V_n'\oplus r_Y^* (F_n\oplus W_n) \oplus \underline{\mathbb{C}}^m.
	\end{split}
	\]
	Here $H$ is some linear isomorphism (from the linearization of $SW_{X,n}$).

	We define $A_n$ as before, and require that $A_n$ is an isolating neighborhood of the flow $\varphi_{n+1,t}$ for all $t\in [0,1]$.
	
	We then define 
   \begin{align*} 
	&K_{n, 1}(\epsilon)     \\
	& :=  \Bigg\{        (y,t)  \in A_n\times [0,1]      :  \\
	& \qquad \qquad 
	   \begin{array}{l}
	\exists (\hat{\phi}, v, \hat{\omega}) \in B_k(U_n \oplus V_n  ; R'),   
	\\
	\|  (SW_{X, n, \frak{p},t}, \operatorname{id}_{\mathbb{C}^m})(\hat{\phi}, v, \hat{\omega})  \|_{k-1} \leq  \epsilon,  
	y = \pi_{ P_n \oplus W_{P,n} } r_Y ( \hat{\phi}, \hat{\omega} )
	\end{array}
	   \Bigg\},  
    \end{align*} 
	and
	\begin{align*} 
	& K_{n, 2}(\epsilon)     \\
	& :=  \Big \{  (y,t) \in A_n\times [0,1]     :    \\ 
	& \qquad \qquad 
	\begin{array}{l}
	\exists (\hat{\phi}, v, \hat{\omega}) \in  \partial B_k(U_n \oplus V_n ;R'), \\  
	\|   (SW_{X,n, \frak{p},t}, \operatorname{id}_{\underline{\mathbb{C}}^m} ) (\hat{x}, v, \hat{\omega}) \|_{k-1} \leq \epsilon, 
	y = \pi_{ P_n \oplus W_{P,n} } r_Y ( \hat{\phi}, \hat{\omega} )
	\end{array}
         \Bigg\} \\
	& \qquad \bigcup 
	\Big(  ((\partial A_n)\times [0,1]) \bigcap K_{n, 1}(\epsilon) \Big).
	\end{align*}
	  One then establishes the analog of Proposition \ref{prop:bauer-furuta-existence} for the family of flows $\varphi_{n+1,t}$.  	

Writing $I=[0,1]$, there results a map 
\begin{align*}
  \preBF_{[n+1],I}(X,\mathfrak{t})&\from S_{B_X\times I}^{ \underline{\mathbb{C}}^{c_n+k_{Q,n}}\oplus \underline{\mathbb{R}}^{d_n+k_{W,-,n}}}\wedge_{B_Y \times I}S_{B_X\times I}^{U_{n}\oplus V_{n}}\\& \to S_{B_X\times I}^{U_{n+1}'\oplus V_{n+1}'\oplus \underline{\mathbb{C}}^m}\wedge_{B_Y \times I}\preSWF_{[n+1]}(Y).
\end{align*} 
At $t=1$ this is the composite from first going down in (\ref{eq:commutation-of-bauer-furuta}), while for $t=0$, this restricts to $\preBF_{[n+1]}$.  The homotopy commutativity of (\ref{eq:commutation-of-bauer-furuta}) follows.
	
	The claim on the well-definedness of the maps in (\ref{eq:commutation-of-bauer-furuta}) follows from Theorem \ref{thm:fiberwise-deforming-conley-2}.
\end{proof}

\begin{prop}
The map $\preBF_{[n]}$ is independent of the choice of regular index pair $(N_n, L_n)$ with $K_{n,1}(\epsilon) \subset N_n,  K_{n, 2}(\epsilon) \subset L_n$ for $n$ large and $\epsilon$ small, up to isomorphisms in $PSW_{S^1, B}$.  
\end{prop}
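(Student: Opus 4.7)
My plan is to reduce the statement to a standard compatibility argument between the Bauer–Furuta construction and the independence of the Conley index from the choice of index pair. Fix two regular index pairs $(N_n^{(0)},L_n^{(0)})$ and $(N_n^{(1)},L_n^{(1)})$ satisfying the containments $K_{n,1}(\epsilon)\subset N_n^{(i)}\subset A_n$ and $K_{n,2}(\epsilon)\subset L_n^{(i)}$, with associated parameterized Conley indices $I_n^{(0)},I_n^{(1)}$ and Bauer–Furuta maps $\preBF^{(i)}_{[n]}(X,\mathfrak{t})$.

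First, I would reduce to the case of a nested inclusion $(N_n^{(0)},L_n^{(0)})\subset (N_n^{(1)},L_n^{(1)})$. By a standard argument from parameterized Conley theory (the fiberwise analog of Salamon's construction), any two regular index pairs inside a common isolating neighborhood can be connected by a zigzag passing through a larger regular index pair $(\widetilde{N},\widetilde{L})$ inside $A_n$ with $\widetilde{N}\supset N_n^{(0)}\cup N_n^{(1)}$ and $\widetilde{L}\supset L_n^{(0)}\cup L_n^{(1)}$, which can be chosen to still contain $K_{n,1}(\epsilon)$ in its $N$-factor and $K_{n,2}(\epsilon)$ in its $L$-factor (the containments are automatic once we work within $A_n$ and use the positive invariance condition on exit sets).

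For such a nested pair, the inclusion of pairs induces a fiberwise continuous map $\iota\colon I_n^{(0)}\to I_n^{(1)}$, which by Theorem \ref{thm:fiberwise-deforming-conley-2} is a fiberwise-deforming homotopy equivalence, i.e., an isomorphism in $PSW_{S^1,B}$. I then check on the nose that the diagram
\[
\begin{tikzcd}[column sep=large]
S_{B_X}^{U_n\oplus \underline{V}_n} \arrow[r,"\preBF^{(0)}_{[n]}"] \arrow[dr,"\preBF^{(1)}_{[n]}"'] & S_{B_X}^{U_n'\oplus \underline{V}_n'\oplus \underline{\mathbb{C}}^m}\wedge_{B_X} r_Y^* I_n^{(0)} \arrow[d,"\operatorname{id}\wedge r_Y^*\iota"] \\
& S_{B_X}^{U_n'\oplus \underline{V}_n'\oplus \underline{\mathbb{C}}^m}\wedge_{B_X} r_Y^* I_n^{(1)}
\end{tikzcd}
\]
commutes pointwise: if $(\hat{\phi},v,\hat{\omega})$ satisfies (\ref{eq SW < epsilon}), then $\pi_{P_n\times W_{P,n}}r_Y(\hat{\phi},\hat{\omega})\in K_{n,1}(\epsilon)\subset N_n^{(0)}\subset N_n^{(1)}$, so both composites return the same class represented by the same point; otherwise both are the basepoint. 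Combining the two inclusions $(N_n^{(i)},L_n^{(i)})\subset (\widetilde{N},\widetilde{L})$ gives the required identification of $\preBF^{(0)}_{[n]}$ and $\preBF^{(1)}_{[n]}$ as morphisms in $PSW_{S^1,B}$.

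The main obstacle is the construction in the first step: producing a regular index pair $(\widetilde{N},\widetilde{L})$ containing both given pairs while preserving $K_{n,1}(\epsilon)\subset \widetilde{N}$ and $K_{n,2}(\epsilon)\subset \widetilde{L}$ in the parameterized equivariant setting. The standard technique is to thicken $N_n^{(0)}\cup N_n^{(1)}$ by a small forward flow tube inside $A_n$ and then take $\widetilde{L}$ to be the (positively invariant) union $L_n^{(0)}\cup L_n^{(1)}$ together with the added exit set of the thickening; the delicate part is verifying that regularity and the explicit form of $K_{n,1},K_{n,2}$ as initial/asymptotic data for the approximate flow $\varphi_n$ survive this construction uniformly in the base $B$. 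Once this is done, the nested case together with the commutativity diagram above completes the proof.
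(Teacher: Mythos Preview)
Your argument for the nested case is exactly the paper's: the inclusion $\iota$ of index pairs induces an isomorphism in $PSW_{S^1,B}$ (the paper cites \cite[Theorem 6.2]{MRS} here rather than Theorem \ref{thm:fiberwise-deforming-conley-2}), and the commutativity of the triangle is checked pointwise from the definition of $\preBF_{[n]}$, just as you do.

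The difference is in the reduction step. You propose a single larger regular pair $(\widetilde N,\widetilde L)$ containing both $(N_n^{(0)},L_n^{(0)})$ and $(N_n^{(1)},L_n^{(1)})$. The construction you sketch --- taking $\widetilde L$ to contain $L_n^{(0)}\cup L_n^{(1)}$ --- is exactly where the subtlety sits: the union of two exit sets need not be positively invariant in the union of the ambient sets (a point of $L_n^{(0)}$ can flow into $N_n^{(1)}\setminus L_n^{(1)}$). The paper avoids this by using a longer zigzag taken from \cite[Appendix]{khandhawit}: first enlarge each pair to $(N_{n,1},L_{n,1})\supset (N_n,L_n)$ and $(N_{n,1}',L_{n,1}')\supset (N_n',L_n')$, and then produce a \emph{smaller} common pair $(\tilde N_n,\tilde L_n)$ with $(K_{n,1}(\epsilon),K_{n,2}(\epsilon))\subset (\tilde N_n,\tilde L_n)\subset (N_{n,1},L_{n,1})\cap (N_{n,1}',L_{n,1}')$, regularizing by Salamon's thickening. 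The resulting diagram of inclusions has five vertices rather than three, but each arrow is a genuine inclusion of index pairs, so the nested-case argument applies at every step. Your approach would work if you supplied an honest construction of the single larger regular pair with the required containments; the paper's route trades a shorter zigzag for a citation.
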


\begin{proof}
We will follow the argument in  \cite[Apeendix]{khandhawit}.  Take another regular index pair $(N_n', L_n')$ with $K_{1, n}(\epsilon) \subset N_n',    K_{2,n}(\epsilon) \subset L_n'$ for $n$ large  and $\epsilon$ small.  Let $I_n'$ denote the parameterized Conley index associated to $(N_n', L_n')$. 

First we consider the case when $(N_n, L_n) \subset (N_n', L_n')$.  The map
\[
         \iota_n :  I_n \rightarrow I_n'
\]
induced by the inclusion is an isomorphism in $PSW_{S^1, B}$ by \cite[Theorem 6.2]{MRS} and the following diagram is commutative:
\[
          \xymatrix{
                                          &     S_{B_X}^{ U_n' \oplus V_n' \oplus  \underline{\C}^{m} } \wedge_{B_Y}  I_n  \ar[dd]^{\operatorname{id} \wedge \iota_n}  \\
                   S_{B_X}^{ U_n \oplus V_n  } \ar[ur]^{\preBF_{[n]}}  \ar[dr]_{\preBF_{[n]}'}  &      \\
                            &   S_{B_X}^{ U_n' \oplus V_n' \oplus  \underline{\C}^{m} } \wedge_{B_Y} I_n'
          } 
\]

Next we consider the general case.  As shown in p1653 of \cite{khandhawit}, we have index pairs $(\tilde{N}_n, \tilde{L}_n)$, $(N_{n,1}, L_{n,1})$, $(N_{n,1}', L_{n}')$ such that
  \begin{align*}
     & (N_n, L_n) \subset (N_{n,1}, L_{n,1}), \ (N_n', L_{n}') \subset (N_{n,1}', L_{n,1}'), \\
     &   (K_{n,1}(\epsilon), K_{2,n}(\epsilon)) \subset (\tilde{N}_n, \tilde{L}_n) \subset (N_{n,1}, L_{n, 1}) \cap (N_{n, 1}', L_{n, 1}').
  \end{align*}
We can assume that $(\tilde{N}_n, \tilde{L}_n), (N_{n,1}, L_{n,1}),  (N_{n,1}', L_{n,1}')$ are all regular by thickening the exists slightly (\cite[Remark 5.4]{Salamon}). The statement follows from the  commutative diagram:
\[
    \xymatrix{
      (N_{n,1}, L_{n,1})  &     &  (N_{n, 1}', L_{n,1}')   \\
        & (\tilde{N}_{n}, \tilde{L}_n)  \ar@{_{(}->}[lu]  \ar@{^{(}->}[ru]  &   \\
     (N_n, L_n) \ar@{^{(}->}[uu] &  & (N_{n}', L_{n}') \ar@{_{(}->}[uu]
    }
\]

\end{proof}

Recall that we have defined the virtual bundle $\ind(D_X,P)$ following equation (\ref{eq:4-manifold-appx-bundle}).  For a normal spectral system $\frak{P}$ whose $n$-th section is $P_n$, we write $\ind(D_X,\frak{P})$, since $\ind(D_X,P_{n})$ and $\ind(D_X,P_{n+1})$ are canonically identified for all $n$.  For $V=V_1\ominus V_2$ a virtual vector bundle over a base $B$, we define an element $S^V_B$ of the stable-homotopy category $\fpsw_B$ (see Definition \ref{def:sw-cat}) by $(S^{V_1}_B,-V_2)$ where $S^{V_1}_B$ is the sphere bundle associated to $V_1$; the stable-homotopy-type of this space does not depend on a choice of universe. 

For $V$ a vector bundle over $B$, let $\thom^V_B$ denote the Thom space of $V$; we will abuse notation and also write $\thom^V_B$ for the suspension spectrum of $\thom^V_B$.  Write $\ker(D_X,\frak{P})$ for the kernel of the map in (\ref{eq:perturbation}), which depends on the perturbation $\frak{p}$.

For topological spaces $W,Z$, a \emph{map class} from $W$ to $Z$ will refer to a homotopy class $W \to Z$, up to self-homotopy-equivalence of $W,Z$. 
 We can now prove Theorem \ref{thm:bauer-furuta-main} from the introduction, which we restate as follows:

\begin{cor}\label{cor:bauer-furuta-main-equivalent}
	Fix a Floer framing $\mathfrak{P}$ on $Y$.  There is a well-defined (parameterized, equivariant, stable) map class \[
	\preBF(X,\mathfrak{t}): S^{\ind(D_X,\frak{P})}_{\mathrm{Pic}(X)}\to \preSWF(Y,\mathfrak{P}).
	\]
	For a choice of perturbation $\frak{p}$ as in (\ref{eq:perturbation}), there is a well-defined (equivariant, unparameterized) weak map of spectra:
	\[
	\BF_\frak{p}(X,\mathfrak{t}): \thom_{\mathrm{Pic}(X)}^{\ker(D_X,\frak{P})}\to \Sigma^{\mathbb{C}^m}\SWFn^u(Y,\mathfrak{P}).
	\]    
	Moreover, if $\frak{p}_0$ and $\frak{p}_1$ are related by a family $\frak{p}_t$ of perturbations satisfying (\ref{eq:perturbation}), $\BF_{\frak{p}_0}$ is homotopic to $\BF_{\frak{p}_1}$.  
\end{cor}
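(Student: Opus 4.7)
The plan is to assemble the maps $\preBF_{[n]}(X,\mathfrak{t})$ constructed in (\ref{eq:bauer-furuta}) into stable maps, and verify compatibility under all the choices involved. First, I would observe that the source $S^{U_n\oplus\underline{V}_n}_{B_X}$ is a finite-dimensional approximation whose virtual dimension is \[[U_n]-[U_n'\oplus r_Y^*F_n]-[\underline{\mathbb{C}}^m] \;=\; [\ind(D_X,P_n)]\in K(B_X),\] so that after desuspending by $U_n'\oplus \underline{V}_n'\oplus \underline{\mathbb{C}}^m$ the source represents a stable object in the class $S^{\ind(D_X,\frak{P})}_{\mathrm{Pic}(X)}$, while the target $I_n$ represents $\preSWF_{[n]}(Y,\frak{s},\frak{P})$, which by Definition \ref{def: SWF parametrized homotopy type} stabilizes to $\preSWF(Y,\frak{s},\frak{P})$. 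Applying $\nu_!$ (and noting that $r_Y^*I_n$ becomes $I_n$ unparameterized over a point) identifies the unparameterized target with $\Sigma^{\mathbb{C}^m}\SWFn^u(Y,\frak{P})$, after accounting for the extra $\underline{\mathbb{C}}^m$ suspension coming from the perturbation $\frak{p}$.

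Next, I would verify compatibility with stabilization in $n$. By Proposition \ref{prop:bf-stabilization}, the squares relating $\preBF_{[n]}$ and $\preBF_{[n+1]}$ commute up to homotopy, where the right vertical map uses the continuation-map equivalence $\Sigma^{\mathbb{C}^{k_{Q,n}}\oplus \mathbb{R}^{k_{W,-,n}}}\preSWF_{[n]}(Y)\simeq \preSWF_{[n+1]}(Y)$ from Theorem \ref{thm change of approximation suspension}, and the left vertical map comes from the identifications $U_{n+1}\cong U_n\oplus\mathbb{C}^{k_{Q,n}+c_n}$, $V_{n+1}\cong V_n\oplus\mathbb{R}^{k_{W,-,n}+d_n}$ built into the definition of $\SWFn^u(Y,\frak{P})$ (Definition \ref{def:swf}). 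Together with Lemma \ref{lem:bf-metric-independence} for metric changes and Lemma \ref{lem:sobolev-independence-4} for Sobolev-norm changes, this shows that the collection $\{\preBF_{[n]}\}_n$ assembles into a weak morphism of spectra, well-defined up to homotopy. The fact that this construction actually lives over $\mathrm{Pic}(Y)$ (respectively $\mathrm{Pic}(X)$) and respects the maps $r_Y^{*}$ in (\ref{eq:pre-families-bf}) gives the parameterized statement $\preBF(X,\mathfrak{t})$.

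For the dependence on the perturbation $\frak{p}$: if $\frak{p}_0$ and $\frak{p}_1$ both satisfy the surjectivity of (\ref{eq:perturbation}), then Lemma \ref{lem:perturbation-independence} produces, for each $n$, a comparison square. In particular, if $\frak{p}_0$ and $\frak{p}_1$ are connected by a path $\frak{p}_t$ of perturbations all satisfying surjectivity (so that the bundle $U_{n,t}$ extends over $[0,1]$), then by running the construction of $\preBF_{[n]}$ in families over $[0,1]$, exactly as in Lemma \ref{lem:bf-metric-independence}, we obtain a homotopy of maps in $n$. Passing to the stable limit yields the homotopy $\BF_{\frak{p}_0}\simeq \BF_{\frak{p}_1}$.

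The main obstacle is the bookkeeping in the last two paragraphs: one must check that all the comparison homotopies — from stabilization in $n$, from change of $\frak{p}$, from change of metric, and from change of $U_n',V_n'$ — are mutually compatible, so that the resulting maps honestly form a (weak) morphism of spectra rather than merely a level-wise collection of homotopy classes. This is ultimately handled by the same continuous-families Conley index continuation arguments used in Theorem \ref{thm change of approximation suspension} and Section 6.3 of \cite{Salamon}, extended to the $4$-dimensional setting via the families version of Proposition \ref{prop:bauer-furuta-existence} given in Proposition \ref{prop:bauer-furuta-existence-families}; no essentially new estimates are needed beyond those already established in Sections \ref{sec:findim-appx}--\ref{sec:well-def} and \ref{subsec:construction-of-bf}--\ref{subsec:bauer-furuta-well-def}.
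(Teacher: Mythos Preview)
Your proposal is correct and follows essentially the same approach as the paper: well-definedness under stabilization comes from Proposition \ref{prop:bf-stabilization}, independence of the map class from $\frak{p}$ from Lemma \ref{lem:perturbation-independence}, and the unparameterized spectrum-level statement and the homotopy along a path $\frak{p}_t$ from the families argument. Your write-up is more explicit than the paper's terse proof (you spell out the metric and Sobolev-norm independence via Lemmas \ref{lem:bf-metric-independence} and \ref{lem:sobolev-independence-4}, and the bookkeeping for assembling the weak morphism), but the ingredients and logic are the same.
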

\begin{proof}
	The class $\preBF_{\frak{p}}$ is well-defined by Proposition \ref{prop:bf-stabilization}.  Independence (as a map class) from $\frak{p}$ follows from Lemma \ref{lem:perturbation-independence}.
	
	The unparameterized case follows from Proposition \ref{prop:bf-stabilization}, and an argument for families as before.  
\end{proof}

Analogous results hold for the $\mathrm{Pin}(2)$-equivariant versions, \textit{mutatis mutandis}.


\chapter{Fr{\o}yshov type invariants}\label{sec:froyshov}

In this chapter, we will generalize the Fr{\o}yshov type invariants \cite{froyshov}, \cite{Manolescu_Intersection_form} defined for rational homology 3-spheres to 3-manifolds with $b_1 > 0$, making use of the Seiberg-Witten Floer stable homotopy type constructed in this memoir. As applications, we will prove restrictions on the intersection forms of smooth 4-manifolds with boundary. 

It may be of interest to compare the material of this section with work of Levine-Ruberman, where similar invariants are defined in the Heegaard Floer setting \cite{Levine-Ruberman}; see also \cite{Behrens-Golla} for further work in the Heegaard Floer setting.

\section{Equivariant cohomology}
We will recall a basic fact about the $S^1$-equivariant Borel cohomology.  For a pointed $S^1$-CW complex $W$, we let $\tilde{H}^*_{S^1}(W;\mathbb{R})$ be the reduced  $S^1$-equivariant Borel cohomology:
\[
       \tilde{H}_{S^1}^*(W;\mathbb{R}) = \tilde{H}^*(W \wedge_{S^1} ES_+^1; \mathbb{R}), 
\]
where $ES^1_+$ is a union of $ES^1$ and a disjoint base point. 
Note that  $\tilde{H}_{S^1}^*(S^0; \mathbb{R})$ is isomorphic to $\mathbb{R}[T]$ and that  $\tilde{H}^*_{S^1}(W;\mathbb{R})$ is an $\mathbb{R}[T]$-module.   We have the following (See Proposition 1.18.2 of \cite{CW} and  Proposition 2.2 of \cite{Manolescu-triangulation}):

\begin{prop} \label{prop equivariant suspension thm}

Let $V$ be an $S^1$-representation space and $\mathcal{V}$ be the vector bundle
\[
       \mathcal{V} = (W \times ES^1) \times_{S^1} V \rightarrow  W \times_{S^1} ES^1
\]
over $W \times_{S^1} ES^1$. 
The Thom isomorphism for $\mathcal{V}$ induces an $\mathbb{R}[T]$-module isomorphism 
\[
        \tilde{H}_{S^1}^{* + \dim_{\mathbb{R}} V} (\Sigma^{V} W;\mathbb{R}) \cong \tilde{H}_{S^1}^*(W; \mathbb{R}). 
\]
\end{prop}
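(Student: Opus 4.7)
The plan is to reduce the statement to the classical (non-equivariant) Thom isomorphism theorem applied to the Borel construction, and then to verify that the resulting isomorphism respects the $\mathbb{R}[T]$-module structure.

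First, I would identify the smash product $\Sigma^V W \wedge_{S^1} ES^1_+$ with a Thom space. Writing $\Sigma^V W = V^+ \wedge W$, we have
\[
(\Sigma^V W) \wedge_{S^1} ES^1_+ \;=\; \bigl(V^+ \wedge W \wedge ES^1_+\bigr)/S^1,
\]
which is precisely the Thom space of the vector bundle
\[
\widetilde{\mathcal{V}} \;=\; \bigl(W \wedge ES^1_+\bigr) \times_{S^1} V
\]
over the based Borel construction $W \wedge_{S^1} ES^1_+$. Away from the basepoint section this agrees with the bundle $\mathcal{V}$ over $W \times_{S^1} ES^1$ in the proposition, and the Thom spaces coincide.

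Second, I would invoke the classical Thom isomorphism. Since $S^1$ is connected, any real $S^1$-representation $V$ admits an $S^1$-invariant orientation, so $\widetilde{\mathcal{V}}$ is an oriented real vector bundle of rank $\dim_{\mathbb{R}} V$. Picking a Thom class $u \in H^{\dim_{\mathbb{R}} V}(\mathrm{Th}(\widetilde{\mathcal{V}});\mathbb{R})$, cupping with $u$ gives the isomorphism
\[
\tilde{H}^{\ast+\dim_{\mathbb{R}} V}(\mathrm{Th}(\widetilde{\mathcal{V}});\mathbb{R}) \;\cong\; \tilde{H}^{\ast}(W \wedge_{S^1} ES^1_+;\mathbb{R}),
\]
which by the identification above is exactly
\[
\tilde{H}_{S^1}^{\ast+\dim_{\mathbb{R}} V}(\Sigma^V W;\mathbb{R}) \;\cong\; \tilde{H}_{S^1}^{\ast}(W;\mathbb{R}).
\]

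Third, I would verify $\mathbb{R}[T]$-linearity. The $\mathbb{R}[T]$-action on the Borel cohomology of any pointed $S^1$-space $Z$ is pullback along the collapse $Z \wedge_{S^1} ES^1_+ \to BS^1_+$, i.e.\ cupping with the image of $T$ from $H^\ast(BS^1;\mathbb{R})$. Naturality of the cup product gives
\[
(u \cup \alpha) \cup p^\ast T \;=\; u \cup (\alpha \cup p^\ast T),
\]
where $p$ is the projection of $\mathrm{Th}(\widetilde{\mathcal{V}})$ (resp.\ its base) to $BS^1_+$; this is exactly $\mathbb{R}[T]$-linearity of the Thom isomorphism.

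The only subtle point is the bookkeeping of based versus unbased Borel constructions to ensure that the identification of $\Sigma^V W \wedge_{S^1} ES^1_+$ with $\mathrm{Th}(\widetilde{\mathcal{V}})$ sends the basepoint section to the collapsed point consistently; this, however, is standard and poses no genuine obstacle.
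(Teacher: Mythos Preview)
The paper does not give its own proof of this proposition; it simply cites Proposition~1.18.2 of Costenoble--Waner and Proposition~2.2 of Manolescu's triangulation paper. Your argument is correct and is precisely the standard one underlying those references: identify the Borel construction of $\Sigma^V W$ with a Thom space, apply the ordinary Thom isomorphism (orientability being automatic since $S^1$ is connected, or alternatively since we work over $\mathbb{R}$), and observe that cupping with the Thom class commutes with cupping with classes pulled back from $BS^1$.
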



\section{Fr{\o}yshov type invariant}

  Let $B$ be a compact CW-complex and choose a base point $b_0 \in B$.   We view $B$ as an $S^1$-CW-complex, with the trivial action of $S^1$.  The following definition is an $S^1$-ex-space version of  \cite[Definition 2.7]{Manolescu-triangulation}.  

\begin{dfn}
Let ${\bf U} = (W, r, s)$ be a well-pointed $S^1$-ex-space over $B$ such that $W$ is $S^1$-homotopy equivalent to an $S^1$-CW complex.   We say that ${\bf U}$ is of SWF type at level $t$ if there is an equivalence, as ex-spaces, from $W^{S^1}\to S_B^{\mathbb{R}^t}$, and so that the $S^1$-action on $W \smallsetminus W^{S^1}$ is free. 
\end{dfn}

Note that in the situation above, $W^{S^1}$ inherits the structure of an ex-space, as a subspace of $W$, naturally.  Spaces of SWF type are meant to be the class of spaces that are produced by the Seiberg-Witten Floer homotopy type construction.  Indeed, note that in the case that $B$ is a point, spaces of SWF type over $B$ are exactly spaces of SWF type as in \cite{Manolescu-triangulation}.  For us, $B$ will always be a Picard torus.
	
Moreover, for $\mathbf{U}=\preSWF (Y)$ for some $3$-manifold $Y$ admitting a spectral section (with torsion $\mathrm{spin}^c$ structure and spectral section suppressed from the notation), more is true, in that the fixed point set $W^{S^1}$ is actually fiber-preserving homotopy-equivalent, relative to $s(B)$, to $S^{\mathbb{R}^t}_B$, although for the definition of the Fr{\o}yshov invariant, this is not strictly needed.

\begin{dfn}
Let ${\bf U} = (W, r, s)$ be a well-pointed $S^1$-ex-space of SWF type at level $t$ over $B$.  We denote by $\mathcal{I}_{\Lambda}({\bigu})$  the submodule in $\tilde{H}^*(B_+;\mathbb{R})\otimes \mathbb{R}[[T]]$, viewed as a module over the formal power series ring $\mathbb{R}[[T]]$, generated by the image of the homomorphism induced by the inclusion $\iota : W^{S^1} \hookrightarrow W $: 
\begin{align*}
      \tilde{H}_{S^1}^{* + t}(W/s(B); \mathbb{R}) &\stackrel{\iota^*}{\rightarrow }
      \tilde{H}_{S^1}^{*+t}(W^{S^1}/s(B);\mathbb{R})
      \cong \tilde{H}_{S^1}^{*+t} (S^{\mathbb{R}^{t}}\wedge B_+; \mathbb{R})\\
      &= H^*(B;\mathbb{R})\otimes \mathbb{R}[T]
      \hookrightarrow H^*(B;\mathbb{R})\otimes \mathbb{R}[[T]]. 
      \end{align*}
  We obtain a more specific invariant by considering only $H^0(B;\mathbb{R})$, in the case that $B$ is connected; we impose this condition on $B$ from now on.  Let $\mathcal{I}(\bf{U})$ denote the ideal in $\mathbb{R}[[T]]$ which is the image of 
 
\begin{align*}
   \tilde{H}_{S^1}^{* + t}(W/s(B); \mathbb{R}) 
   &\stackrel{\iota^*}{\rightarrow }
   \tilde{H}_{S^1}^{*+t}(W^{S^1}/s(B);\mathbb{R})  \\
  & \cong \tilde{H}_{S^1}^{*+t} (S^{\mathbb{R}^{t}}\wedge B_+; \mathbb{R})\to \tilde{H}^{*+t}_{S^1}(S^{\mathbb{R}^t};\mathbb{R})=\mathbb{R}[T]\hookrightarrow \mathbb{R}[[T]]
 \end{align*}
  obtained using the inclusion of a fiber $S^{\mathbb{R}^t}\to S^{\mathbb{R}^t}\wedge B_+$.
  
Then there is a non-negative integer $h$ such that $\mathcal{I}({\bf U}) = (T^{h})$.  Here $(T^{h})$ is the ideal generated by $T^h$.  We denote this integer by $h({\bf U})$.   
\end{dfn}

The invariant $h({\bf U})$ defined above is most similar to $d_{\mathrm{bot}}$ as in \cite{Levine-Ruberman}; while $\mathcal{I}_{\Lambda}(\bf{U})$ is, roughly, in line with the collection of their ``intermediate invariants".  

\begin{rem}
We also note that the cohomology group $\tilde{H}^{*}_{S^1}(W/s(B);\mathbb{R})$
admits an action by $H^*(B)$
  as follows.  Using the projection map $r: W\to B$, we have an algebra morphism $r^*: H^*(B;\mathbb{R})\to H^*(W;\mathbb{R})$.  The Mayer-Vietoris sequence for $(B,W)$ splits because of the map $s: B\to W$, and we obtain
\[
H^*(W;\mathbb{R})=H^*(W/s(B);\mathbb{R})\oplus H^*(B;\mathbb{R}),
\]
and in fact this splitting is at the level of $H^*(B;\mathbb{R})$-modules, so that the cohomology group $H^*(W/s(B);\mathbb{R})$ inherits a $H^*(B;\mathbb{R})$-action.  This is not strictly necessary in the definition of invariants from $\mathcal{I}_{\Lambda}(\bf{U})$ above, but is indicative of the structure of $\mathcal{I}_\Lambda(\bf{U})$.
\end{rem}

From Proposition \ref{prop equivariant suspension thm},  we can see the following:

\begin{lem}  \label{lem suspension h}
Let ${\bf U} = (W, r, s)$ be a well-pointed $S^1$-ex-space of SWF type over $B$. If $V$ is a real vector space, we have
\[
    h(\Sigma_B^{V} {\bf U}) = h({\bf U}). 
\]
If $V$ is a complex vector space, we have
\[
      h(\Sigma_B^{V}  {\bf U}) = h({\bf U}) + \dim_{\mathbb{C}} V. 
\]
\end{lem}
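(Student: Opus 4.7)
The plan is to reduce both identities to the equivariant Thom isomorphism in Proposition \ref{prop equivariant suspension thm}, identifying $\mathcal{I}(\Sigma_B^V \mathbf{U})$ in terms of $\mathcal{I}(\mathbf{U})$. Because $V$ is a (trivial) bundle over $B$, the quotient by the section behaves well with the fiberwise smash:
\[
\Sigma_B^V W / s'(B) \;\cong\; S^V \wedge (W/s(B)),
\]
so one can work with ordinary (non-parameterized) equivariant suspensions. The first step is to determine the SWF level of $\Sigma_B^V \mathbf{U}$. Using $(S^V \wedge W_b)^{S^1} = (S^V)^{S^1}\wedge (W_b)^{S^1}$ fiberwise, one sees that for a trivial real $V$ the fixed-point set becomes $\Sigma_B^V (W^{S^1}) = S_B^{\mathbb{R}^{t+\dim_\mathbb{R} V}}$, so the new level is $t' = t + \dim_\mathbb{R} V$, while for a complex $V$ (with standard $S^1$-action, hence $(S^V)^{S^1} = S^0$) the fixed-point set is unchanged and $t' = t$.

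Next I would factor the inclusion of fixed points $\iota' \colon (\Sigma_B^V W)^{S^1} \hookrightarrow \Sigma_B^V W$ as the composition
\[
(\Sigma_B^V W)^{S^1}/s'(B) \;=\; (S^V)^{S^1}\wedge W^{S^1}/s(B) \;\hookrightarrow\; S^V \wedge W^{S^1}/s(B) \;\hookrightarrow\; S^V \wedge W/s(B),
\]
so $(\iota')^*$ splits into restriction along $(S^V)^{S^1}\hookrightarrow S^V$ and along $\iota \colon W^{S^1}\hookrightarrow W$ smashed with $S^V$.

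In the real case ($V$ with trivial $S^1$-action), the Thom isomorphism of Proposition \ref{prop equivariant suspension thm} applies to both sides and gives a commuting square that, after the further restriction to a fiber $b_0 \in B$ (which also commutes with the Thom isomorphism, since it is induced by a map $B\to \{b_0\}$), identifies the image map for $\Sigma_B^V \mathbf{U}$ (in degree $*+t+\dim_\mathbb{R} V$) with the image map for $\mathbf{U}$ (in degree $*+t$). Hence $\mathcal{I}(\Sigma_B^V \mathbf{U}) = \mathcal{I}(\mathbf{U})$ as ideals in $\mathbb{R}[[T]]$, and $h(\Sigma_B^V \mathbf{U}) = h(\mathbf{U})$.

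In the complex case, the Thom iso still applies to the left smash factor on the domain side, shifting the degree by $2\dim_\mathbb{C} V$, but the fixed-point side is unchanged by the $V$-suspension. The key identification is that under the Thom iso the map induced by $S^0 = (S^V)^{S^1}\hookrightarrow S^V$ becomes multiplication by the $S^1$-equivariant Euler class $e(V) = T^{\dim_\mathbb{C} V} \in \tilde{H}^{2\dim_\mathbb{C} V}_{S^1}(\mathrm{pt};\mathbb{R})$. Combining these two pieces yields that the image map for $\Sigma_B^V \mathbf{U}$ equals $T^{\dim_\mathbb{C} V}$ times the image map for $\mathbf{U}$, i.e.\ $\mathcal{I}(\Sigma_B^V \mathbf{U}) = T^{\dim_\mathbb{C} V}\cdot \mathcal{I}(\mathbf{U})$, giving $h(\Sigma_B^V \mathbf{U}) = h(\mathbf{U}) + \dim_\mathbb{C} V$. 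The only real subtlety is verifying this last point about the Euler class; I would do this by noting that the composite
\[
\tilde{H}^{*}_{S^1}(W^{S^1}/s(B);\mathbb{R}) \xrightarrow{\;\sigma_V\;} \tilde{H}^{*+2\dim_\mathbb{C} V}_{S^1}(S^V \wedge W^{S^1}/s(B);\mathbb{R}) \xrightarrow{\iota_{S^V}^*} \tilde{H}^{*+2\dim_\mathbb{C} V}_{S^1}(W^{S^1}/s(B);\mathbb{R})
\]
is multiplication by $e(V)$, which is a standard fact for complex $S^1$-representations with standard action (each complex summand contributes a factor of $T$). The main step is thus just bookkeeping in an equivariant cohomology diagram.
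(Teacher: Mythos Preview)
Your proposal is correct and follows the same approach the paper intends: the paper simply states that the lemma follows from Proposition \ref{prop equivariant suspension thm} (the equivariant Thom isomorphism), and your argument is exactly the detailed unpacking of that reference, including the identification $(\Sigma_B^V W)/s'(B) \cong S^V \wedge (W/s(B))$ (which the paper uses explicitly in the analogous $K$-theoretic Lemma \ref{lem k suspension}) and the Euler-class computation in the complex case. You have supplied considerably more detail than the paper, but the underlying strategy is the same.
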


\begin{prop}  \label{prop h W_0 W_1}
Let ${\bf U}_0 = (W_0, r_0, s_0)$, ${\bf U}_1 = (W_1, r_1, s_1)$ be well-pointed $S^1$-ex-spaces of SWF type at level $t$ over $B_0$ and $B_1$, and assume given a map $\rho: B_0 \to B_1$.  Let $\rho_!{\bf U}_0$ denote the pushforward of $\bf{U}_0$, as an ex-space over $B_1$.  Assume that there is a fiberwise-deforming $S^1$-map
\[
     f :  \rho_!\bf{U}_0  \rightarrow \bf{U}_1
\]
such that the restriction to 
\[
	f^{S^1}: \rho_! W_0^{S^1}\to W_1^{S^1},
\]
as a fiberwise-deforming morphism over $B_1$, is homotopy-equivalent to 
\[
\mathrm{Id}\wedge \rho:  (\mathbb{R}^t)^+\times B_0\cup_{B_0}B_1 \to (\mathbb{R}^t)^+\times B_1.
\]
Then
\[
       h({\bf U}_0) \leq h({\bf U}_1).  
\]

\end{prop}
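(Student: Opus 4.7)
The plan is to show $\mathcal{I}({\bf U}_1)\subseteq\mathcal{I}({\bf U}_0)$ as ideals in $\mathbb{R}[[T]]$ by naturality of $S^1$-equivariant Borel cohomology applied to the map $f$; translating to the principal-generator description $\mathcal{I}({\bf U}_j)=(T^{h({\bf U}_j)})$ then gives the claimed inequality, since $(T^{h({\bf U}_1)})\subseteq (T^{h({\bf U}_0)})$ is equivalent to $h({\bf U}_0)\leq h({\bf U}_1)$.

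First I would use the pushout definition of $\rho_!$ to obtain a canonical identification $\rho_!W_0/s(B_1)\cong W_0/s_0(B_0)$ of $S^1$-spaces (and similarly on fixed-point subspaces, $(\rho_!W_0)^{S^1}=\rho_!(W_0^{S^1})$). Applying $\tilde H_{S^1}^{*+t}(-;\mathbb{R})$ to the naturality square for the fixed-point inclusions $\iota_j\colon W_j^{S^1}\hookrightarrow W_j$ produces the commutative diagram
\[
\begin{array}{ccc}
\tilde H^{*+t}_{S^1}(W_1/s_1(B_1);\mathbb{R}) & \xrightarrow{\iota_1^*} & \tilde H^{*+t}_{S^1}(W_1^{S^1}/s_1(B_1);\mathbb{R}) \\
{\scriptstyle f^*}\big\downarrow & & \big\downarrow{\scriptstyle (f^{S^1})^*} \\
\tilde H^{*+t}_{S^1}(W_0/s_0(B_0);\mathbb{R}) & \xrightarrow{\iota_0^*} & \tilde H^{*+t}_{S^1}(W_0^{S^1}/s_0(B_0);\mathbb{R}).
\end{array}
\]
The SWF-type hypothesis together with Proposition \ref{prop equivariant suspension thm} identifies the right-hand column with $\rho^*\otimes\mathrm{Id}\colon H^*(B_1;\mathbb{R})\otimes\mathbb{R}[T]\to H^*(B_0;\mathbb{R})\otimes\mathbb{R}[T]$, because by hypothesis $f^{S^1}$ is fiberwise-deforming equivalent to $\mathrm{Id}\wedge\rho$ and so induces the same map on cohomology.

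Next I would pick compatible basepoints $b_0\in B_0$ and $\rho(b_0)\in B_1$ to define $\mathcal{I}({\bf U}_j)$ via the inclusion of fibers $S^{\mathbb{R}^t}\hookrightarrow S^{\mathbb{R}^t}\wedge (B_j)_+$ as in the definition. Postcomposing the square above with these fiber restrictions, the rightmost column becomes the identity $\mathbb{R}[T]\to\mathbb{R}[T]$, since $\rho^*\colon H^0(B_1;\mathbb{R})\to H^0(B_0;\mathbb{R})$ is the identity for connected $B_0,B_1$. Consequently every class in $\mathcal{I}({\bf U}_1)$, realized as the fiber restriction of some $\iota_1^*\alpha$, equals the fiber restriction of $\iota_0^*(f^*\alpha)$ and therefore lies in $\mathcal{I}({\bf U}_0)$; this yields the containment $\mathcal{I}({\bf U}_1)\subseteq \mathcal{I}({\bf U}_0)$ and completes the argument.

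The main obstacle is not analytic but bookkeeping: verifying that the identifications $\rho_!W_j^{S^1}\cong S^{\mathbb{R}^t}_{B_j}$ and $\rho_!W_0/s(B_1)\cong W_0/s_0(B_0)$ are compatible with the $S^1$-action, the sections, and the Thom/suspension isomorphisms of Proposition \ref{prop equivariant suspension thm}, so that the naturality square above intertwines $(f^{S^1})^*$ with $\rho^*\otimes\mathrm{Id}$. Given the explicit hypothesis that $f^{S^1}$ is homotopy-equivalent as a fiberwise-deforming map to $\mathrm{Id}\wedge\rho$, this is purely formal.
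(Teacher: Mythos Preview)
Your argument is correct and follows essentially the same strategy as the paper: both build the naturality square in $S^1$-equivariant Borel cohomology from the map $f$ and the fixed-point inclusions, identify the restriction to a fiber with the identity on $\mathbb{R}[T]$, and read off the ideal containment $(T^{h({\bf U}_1)})\subset(T^{h({\bf U}_0)})$. Your write-up is in fact more explicit than the paper's, which simply displays the diagram and asserts the conclusion; the identifications you spell out (e.g.\ $\rho_!W_0/s(B_1)\cong W_0/s_0(B_0)$ and the compatible choice of basepoints) are exactly the implicit steps the paper is relying on.
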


As a special case, if $B_0$ is a point, the hypothesis is that the map $f$, restricted to fixed point sets, $f^{S^1}: W_0^{S^1}\to W_1^{S^1}/s(W_1)$, be homotopic to the inclusion of a fiber.
\begin{proof}
We have the following diagram:
\[
    \xymatrix{
          \tilde{H}^{* + t}(W_0/s(B_0);\mathbb{R})  \ar[d]_{}  & & \tilde{H}^{* + t}(W_1/s(B_1); \mathbb{R}) \ar[d]^{} \ar[ll]_{f^*} \\ 
          \tilde{H}^{* + t}((\mathbb{R}^t)^+\times B_0/s(B_0);\mathbb{R})  \ar[d]  & & \tilde{H}^{* + t}((\mathbb{R}^t)^+\times B_1/s(B_1); \mathbb{R}) \ar[d] \ar[ll]_{\rho^*} \\
     \tilde{H}^{* + t} ((\mathbb{R}^{t})^+;\mathbb{R}) = \mathbb{R}[T] \ar[dr] & &  \tilde{H}^{*+t}(  (\mathbb{R}^t)^{+}; \mathbb{R}) = \mathbb{R}[T]  \ar[ll]_{f^*}^{\cong}   \ar[ld]     \\
           &  \mathbb{R}[[T]]   
    } 
\]
From this diagram, we obtain
\[
      (T^{h({\bf U}_0)}) \supset (T^{h({\bf U}_1)}),
\]
which implies that $h({\bf U}_0) \leq h({\bf U}_1)$. 
\end{proof}

\begin{dfn}
For   $m, n \in \mathbb{Z}$ and $S^1$-ex-space ${\bf U}$ of SWF type over $B$, we define
\[
      h(\Sigma_{B}^{\mathbb{R}^m  \oplus \mathbb{C}^n }  {\bf U}) = h({\bf U}) + n. 
\]
\end{dfn}

Note that this definition is compatible with Lemma \ref{lem suspension h}.

\begin{dfn}
For  $m_0, n_0, m_0, n_1 \in \mathbb{Z}$ and $S^1$-ex-spaces ${\bf U}_0, {\bf U}_1$  of SWF type over $B$,   we say that $\Sigma_{B}^{\mathbb{R}^{m_0} \oplus \mathbb{C}^{n_0} } {\bf U}_0$ and $\Sigma_{B}^{ \mathbb{R}^{m_1} \oplus \mathbb{C}^{n_1}  } {\bf U}_1$ are locally equivalent if  there is $N \in \mathbb{Z}_{\geq 0}$  with $N + m_0, N + n_0, N + m_1, N + n_1 \geq 0$   and fiberwise-deforming maps 
\begin{align*} 
        &  f :   \Sigma_B^{\mathbb{R}^{N + m_0} \oplus \mathbb{C}^{N+n_0}} {\bf U}_0 \rightarrow \Sigma_B^{ \mathbb{R}^{N + m_1}  \oplus  \mathbb{C}^{N+n_1}}{\bf U}_1, \\
       &  g : \Sigma_B^{ \mathbb{R}^{N + m_1} \oplus \mathbb{C}^{N+n_1}} {\bf U}_1 \rightarrow \Sigma_B^{\mathbb{R}^{N + m_0} \oplus \mathbb{C}^{N+n_0}}  {\bf U}_0
   \end{align*}
such that the restrictions 
\begin{align*}
   & f^{S^1} :   \Sigma_B^{\mathbb{R}^{N + m_0} } ({\bf U}_0)^{S^1} \rightarrow  \Sigma_B^{\mathbb{R}^{N + m_1} }({\bf U}_1)^{S^1},   \\
   & g^{S^1} : \Sigma_B^{\mathbb{R}^{N + m_1} } ({\bf U}_1)^{S^1} \rightarrow \Sigma_B^{\mathbb{R}^{N + m_0} } ({\bf U}_0)^{S^1}
\end{align*}
are   homotopy equivalent to
\[
      \mathrm{Id} : B \times (\mathbb{R}^{t}) \rightarrow B \times (\mathbb{R}^{t})^+
\]
as fiberwise-deforming morphisms over $B$. 
\end{dfn}

It is easy to see that the local equivalence is an equivalence relation.

\begin{cor} \label{cor local eq h}
If $\Sigma_{B}^{ \mathbb{R}^{m_0} \oplus  \mathbb{C}^{n_0} } {\bf U}_0$ and $\Sigma_{B}^{  \mathbb{R}^{m_1}  \oplus  \mathbb{C}^{n_1}} {\bf U}_1$ are locally equivalent, 
\[
          h(\Sigma_{B}^{  \mathbb{R}^{m_0} \oplus  \mathbb{C}^{n_0}}{\bf U}_0) 
          = h(\Sigma_B^{ \mathbb{R}^{m_1} \oplus   \mathbb{C}^{n_1} }{\bf U}_1). 
\]
\end{cor}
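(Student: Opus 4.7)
\medskip

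\noindent\emph{Proof plan for Corollary \ref{cor local eq h}.}

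The plan is to reduce this to a direct application of Proposition \ref{prop h W_0 W_1}, applied in both directions, combined with the behavior of $h$ under suspension recorded in Lemma \ref{lem suspension h}. First I would unpack the hypothesis: by definition of local equivalence, there is an integer $N$ with all of $N+m_0, N+n_0, N+m_1, N+n_1 \geq 0$ together with fiberwise-deforming $S^1$-maps
\[
f : \Sigma_B^{\mathbb{R}^{N+m_0}\oplus \mathbb{C}^{N+n_0}}{\bf U}_0 \to \Sigma_B^{\mathbb{R}^{N+m_1}\oplus \mathbb{C}^{N+n_1}}{\bf U}_1
\]
and a $g$ going the other way, whose restrictions to $S^1$-fixed point sets are homotopy equivalent to the identity on $B\times (\mathbb{R}^t)^+$, for the common level $t$ of the two suspended spaces. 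In particular, note that the equality of levels $t_0 + N + m_0 = t_1 + N + m_1$ (where $t_i$ is the SWF level of ${\bf U}_i$) is already forced by the very form of the local equivalence data.

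Next I would apply Proposition \ref{prop h W_0 W_1} to $f$, taking $\rho = \mathrm{Id}_B$. The fixed-point hypothesis of the proposition requires $f^{S^1}$ to be homotopic to $\mathrm{Id}\wedge \rho = \mathrm{Id}$, which is exactly what the local equivalence provides. The conclusion is
\[
h\bigl(\Sigma_B^{\mathbb{R}^{N+m_0}\oplus \mathbb{C}^{N+n_0}}{\bf U}_0\bigr) \;\leq\; h\bigl(\Sigma_B^{\mathbb{R}^{N+m_1}\oplus \mathbb{C}^{N+n_1}}{\bf U}_1\bigr).
\]
Applying the same proposition to $g$ gives the reverse inequality, so both quantities are equal.

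Finally I would translate these equalities through suspension. By Lemma \ref{lem suspension h} together with the $\mathbb{Z}$-graded extension in the definition immediately preceding the corollary,
\[
h\bigl(\Sigma_B^{\mathbb{R}^{N+m_i}\oplus \mathbb{C}^{N+n_i}}{\bf U}_i\bigr) = h({\bf U}_i) + (N+n_i),
\]
so subtracting the common $N$ from both sides gives
\[
h({\bf U}_0) + n_0 \;=\; h({\bf U}_1) + n_1,
\]
which by the definition of $h$ on suspensions is exactly the claimed equality $h(\Sigma_B^{\mathbb{R}^{m_0}\oplus \mathbb{C}^{n_0}}{\bf U}_0) = h(\Sigma_B^{\mathbb{R}^{m_1}\oplus \mathbb{C}^{n_1}}{\bf U}_1)$. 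The argument is essentially bookkeeping, and no single step is a true obstacle; the only point that requires a little care is confirming that after suspension both sides really do become SWF spaces at a common level $t$, so that Proposition \ref{prop h W_0 W_1} applies as stated.
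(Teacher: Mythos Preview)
Your proof is correct and follows exactly the approach the paper intends: the paper's proof is the single sentence ``This is a direct consequence of Proposition \ref{prop h W_0 W_1},'' and what you have written is precisely the unpacking of that sentence, applying the proposition in both directions and then using Lemma \ref{lem suspension h} to remove the common $N$.
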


\begin{proof}
This is a direct consequence of Proposition \ref{prop h W_0 W_1}. 
\end{proof}

Let $Y$ be a closed $3$-manifold, $g$ be a Riemannian metric, $\frak{s}$  be a torsion $\mathrm{spin}^c$ structure on $Y$. Let $B_Y$ be the Picard torus $\mathrm{Pic}(Y)$ of $Y$.  Assume that $\operatorname{ind} D_Y = 0$ in $K^1(B_Y)$.  
We take  a spectral system
\[
      \mathfrak{S}=(\mathbf{P},\mathbf{Q},\mathbf{W}_P,\mathbf{W}_Q,\{\eta^P_n\}_n,\{\eta_n^Q\},\{\eta^{W_P}_n\}_n,\{\eta^{W_Q}_n\}_n)
\]
for $Y$. See Definition \ref{def:spectral-system-3}. 
Put
\[
     F_n = P_n  \cap Q_n, \quad
     W_n = W_{P, n} \cap W_{Q, n}
\]
as before.  Take half integers $k_+, k_-$ with $k_+, k_- > 5$ and with $| k_+ - k_- | \leq \frac{1}{2}$. 
We have the approximate Seiberg-Witten flow
\[ 
         \varphi_{n} = \varphi_{n, k_+, k_-} : (F_n \oplus W_n) \times \mathbb{R} \rightarrow F_n \oplus W_n.
\] 
Put
\[
     A_n = ( B_{k_+}(F_n^+ ; R) \times_{B_Y} B_{k_-}(F_n^-; R) ) \times_{B_Y} (  B_{k_+}(W_n^+;R) \times_{B_Y} B_{k_-}(W_n^-; R) )
\]
for $R \gg 0$. 
Recall that $A_n$ is an isolating neighborhood for $n \gg 0$   (Theorem \ref{thm isolating nbd}).

\begin{lem} \label{lem I_n S^1}
Let ${\bf U}_n = (I_n, r_n, s_n)$ be the $S^1$-equivariant Conley index for the isolated invariant set $\operatorname{inv}(A_n, \varphi_n)$ for $n \gg 0$. 
Then ${\bf U}_n$ is of SWF type at level $\rank_{\mathbb{R}} W_n^{-}$. 

\end{lem}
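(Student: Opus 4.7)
The plan is to verify directly the two conditions in the definition of SWF type at level $t = \dim_{\mathbb{R}} W_n^{-}$ by exploiting the fact that the $S^1$-action only moves the spinor coordinate.

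\emph{Freeness away from the fixed-point set.} The $S^1$-action on $F_n \oplus W_n$ acts by scalar multiplication on the spinor summand $F_n$ and trivially on the one-form summand $W_n$. Hence the $S^1$-fixed locus of the total space is precisely the reducible configurations $\{0\}_{F_n} \times W_n$, and the $S^1$-action on the complement is free. Since $A_n$ and the flow $\varphi_n$ are $S^1$-equivariant, this freeness descends to $I_n \setminus I_n^{S^1}$.

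\emph{Identification of $I_n^{S^1}$.} Choose an $S^1$-equivariant index pair $(N_n, L_n)$ for $\operatorname{inv}(A_n, \varphi_n)$; then $(N_n^{S^1}, L_n^{S^1})$ is an index pair for the restriction of $\varphi_n$ to the $S^1$-fixed locus in $A_n$, which is $\{0\}_{F_n} \times B_{k_+}(W_n^+;R) \times B_{k_-}(W_n^-;R)$. On this reducible locus the quadratic-in-$\phi$ terms $X_H(\phi), c_1(\gamma), c_2(\gamma)$ all vanish, and equations (\ref{eq for gamma}) reduce fiberwise over $B$ to the linear flow
\[
\frac{d\omega}{dt} = -\chi \cdot {*d}\,\omega
\]
on $W_n = W_n^+ \oplus W_n^-$. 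By construction $*d$ has positive eigenvalues on $W_n^+$ and negative eigenvalues on $W_n^-$, so $W_n^+$ is attracting and $W_n^-$ is repelling; the maximal invariant set inside the product disk bundle is the zero section $s_B(B)$.

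\emph{Computation of the Conley index on the fixed locus.} For the hyperbolic split-linear fiberwise flow above, the Conley index is the fiberwise Thom space of the unstable bundle, namely $S_B^{\underline{W_n^-}}$, the sphere bundle of the trivial vector bundle $B \times W_n^-$. Since $W_n^-$ has constant rank, this is fiber-preservingly equivalent to $S_B^{\mathbb{R}^{\dim_{\mathbb{R}} W_n^-}}$, as required.

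The only subtle point is verifying that the parameterized $S^1$-equivariant Conley index commutes with passage to fixed-point sets; this follows from standard equivariant Conley index theory, applied to an $S^1$-equivariant index pair, and the argument carries over to the parameterized setting because both $A_n$ and $\varphi_n$ are $S^1$-equivariant as ex-spaces/ex-flows over $B$. I expect this bookkeeping to be the only mild technical step, the dynamical content being transparent.
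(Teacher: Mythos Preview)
Your argument is correct and follows essentially the same route as the paper: restrict to the $S^1$-fixed locus $\{0\}_{F_n}\times W_n$, observe that the quadratic terms $X_H$, $c_1$, $c_2$ vanish there so the flow becomes the linear flow $-{*d}|_{W_n}$ fiberwise over $B$, and identify the resulting Conley index with $S_B^{W_n^-}$. The one item the paper mentions that you omit is the verification that $I_n$ has the $S^1$-homotopy type of an $S^1$-CW complex, which is part of the standing hypothesis in the definition of SWF type; the paper dispatches this by citing Proposition~\ref{prop:finiteness}. Conversely, you are more explicit than the paper about the freeness of the $S^1$-action on $I_n\setminus I_n^{S^1}$, which the paper leaves implicit.
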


\begin{proof}
We first note that $I_n$ is of the homotopy-type of a $S^1$-CW complex by Proposition \ref{prop:finiteness}.  The $S^1$-fixed point set $(I_n, r_n, s_n)^{S^1}$ is the Conley index for  
\[
   \operatorname{inv}( \varphi_n|_{W_n},    B_{k_+}(W_n^+; R) \times_{B_Y} B_{k_-}(W_n^-;R)).
\]
 Note that if $\phi = 0$, the quadratic terms $c_1(\gamma)$, $c_2(\gamma)$, $X_H(\phi)$ are all zero.  See (\ref{eq quadratic terms}). Hence  the restriction of the flow $\varphi_n$ to $W_n$ is the flow induced by the linear map $- \! * \! d |_{W_n}$.   In particular, the flow $\varphi_{n}|_{W_n}$ preserves each fiber of the trivial bundle $W_n = B_Y \times L^2_k(\im d^*)_{\lambda_{n}}^{\mu_{n}}$ over $B_Y$.  Hence there is an equivalence, as ex-spaces, $(I_n)^{S^1} \cong S_B^{W_n^-}$.  (In fact, more is true; there is a fiber preserving homotopy equivalence $(I_n)^{S^1} \cong S_B^{W_n^-}$.) 
\end{proof}

    Let $\mathcal{SWF}(Y, \frak{s}, [\frak{S}])$ be the Seiberg-Witten Floer parameterized homotopy type (Definition \ref{def: SWF parameterized homotopy type}).   
    
    Recall that $\eta_n^{P}$, $\eta_{n}^{Q}$, $\eta_{n}^{W_{P}}$, $\eta_{n}^{W_{Q}}$ are isomorphisms 
  \begin{align*}
       & P_{n+1}  \stackrel{\cong}{\rightarrow} P_n \oplus \C^{k_P, n},   \\
       & Q_{n+1} \stackrel{\cong}{\rightarrow} Q_n \oplus \C^{k_Q, n},  \\
       & W_{n+1}^{P} \stackrel{\cong}{\rightarrow } W_{n}^{+} \oplus \R^{k_{W,+,n}},  \\
       & W_{n+1}^Q \stackrel{\cong}{ \rightarrow } W_{n}^{-} \oplus \R^{k_{W, -, n}}.
  \end{align*}
These induce   an $S^1$-equivariant homotopy equivalence
\[ 
           I(\varphi_{n+1}) \cong  \Sigma_B^{\C^{k_{Q, n}}   \oplus \R^{k_{W, -, n}}   } I(\varphi_{n})
\]
for $n \gg 0$, whose restriction to the $S^1$-fixed point set is a fiber-preserving homotopy equivalence.  See Theorem \ref{thm change of approximation suspension}. This implies that the number
\[
       h (\mathcal{SWF}(Y, \mathfrak{s}, [\mathfrak{S}])) = 
       h(I(\varphi_n))  -  D_n^2
\]
is independent of the choice of $n \gg 0$ by Lemma \ref{lem suspension h} and Corollary \ref{cor local eq h}. Here $D_n^2 = \dim  (Q_n - Q_0)$.  

Also  it follows from Proposition \ref{prop:sobolev-independence-3}  that $h(\mathcal{SWF}(Y, \mathfrak{s}, [\mathfrak{S}]))$ is independent of $k_{\pm}$. 
Hence  $h(\mathcal{SWF}(Y, \frak{s}, [\frak{S}]))$ is well-defined.

We will introduce another number. 
We can take a $\mathrm{spin}^c$ 4-manifold $(X, \frak{t})$ with boundary $(Y, \frak{s})$.  Since $c_1(\frak{t})|_{Y}$ is torsion in $H^2(Y;\mathbb{Z})$, there is a positive integer $m$ such that 
\[
        mc_1(\frak{t}) \in H^2(X, Y; \mathbb{Z}). 
\] 
Put
\[
     c_1(\frak{t})^2 := \frac{1}{m} \langle ( m c_1(\frak{t}) ) \cup c_1(\frak{t}), [X] \rangle 
     \in \mathbb{Q}, 
\]
where $\langle \cdot, \cdot \rangle$ is the pairing 
\[
  H^4(X, Y ; \mathbb{Z}) \otimes H_4(X;\mathbb{Z}) \rightarrow \mathbb{Z}. 
\]
We define 
\begin{equation}\label{eq:spectral-index}
\begin{aligned}
      n(Y, g, \frak{s}, P_0) 
       &  :=\dim \operatorname{ind} (D_X, P_0) - \frac{c_1(\frak{t})^2- \sigma(X)}{8}  \in \mathbb{Q}  \\
       & = \frac{1}{2} \eta_{D, P_0} - \frac{1}{8}  \eta_{Y, sign}   . 
\end{aligned}
\end{equation}
Here $D_X$ is the Dirac operator on $X$, $\ind (D, P_0)$ is the index defined in Proposition \ref{prop:Ind(D,P)} and  $\eta_{D, P_0}, \eta_{Y, sing}$ are the $\eta$-invariants of the Dirac operator and signature operator. We have used the index formula \cite{APS1}, \cite{MP}.  See also \cite[Section 6]{Manolescu-b1=0}.

\begin{dfn}\label{def:froyshov}
We define $h(Y, \frak{s}) \in \mathbb{Q}$  by
\[
       h(Y, \frak{s}) := h( \mathcal{SWF}(Y, \frak{s}, [\frak{S}])  ) - n(Y,  g, \frak{s}, P_0). 
\]
\end{dfn}
A priori, the expression in Definition \ref{def:froyshov} may depend on both the metric and the spectral system.  However, for two spectral systems $\mathfrak{S}_0,\mathfrak{S}_1$ with $\dim \ind (D_X,P^0_0)=\dim \ind (D_X,P^1_0)$, we see that the $h$-invariants agree, since $\mathcal{SWF}(Y,\frak{s},[\frak{S}_0])$ differs from $\mathcal{SWF}(Y,\frak{s},[\frak{S}_1])$ by suspension by a virtual complex vector bundle of formal dimension zero.  In order to see this, we first note that $S^1$-equivariant Borel cohomology is an $S^1$-equivariant complex orientable cohomology theory by \cite{Cole-Greenlees-Kriz}, so that for an $S^1$-equivariant complex vector bundle $V$  over $B$ and  a $S^1$-ex-space $(X, r, s)$ over $B$, there is a canonical isomorphism
\[
   \tilde{H}_{S^1}^{*+ 2\rank_{\C} V} ( \nu_! \Sigma_B^{V} X) \cong \tilde{H}_{S^1}^{*+ 2 \rank_{\C} V}(\mathrm{Th}(r^*V))\cong \tilde{H}_{S^1}^*(X). 
\]
Here $\nu : B \rightarrow *$ and we have used (\ref{eq:tautology}).  This implies that
\[
     h(\Sigma_{B}^{V} X) = h(X) + 2\rank_{\C} V.
\]
It follows in particular that: 
\[
h(\mathcal{SWF}(Y,\frak{s},[\frak{S}_0]))=h(\mathcal{SWF}(Y,\frak{s},[\frak{S}_1])).
\]

Changes in the metric and changes in $\dim \ind(D_X,P_0)$ are treated in a similar way, so we only address the latter.  Indeed, if we replace $\frak{S}_0$ with a spectral system $\frak{S}_1$ so that the $K$-theory class is 
\[
[\frak{S}_1-\frak{S}_0]=\underline{\mathbb{C}}\in K(B_Y),
\]
then 
\[h(\mathcal{SWF}(Y,\frak{s},[\frak{S}_1]))=h(\mathcal{SWF}(Y,\frak{s},[\frak{S}_0]))-1,\]
but $n(Y,g,\frak{s},P^1_0)=n(Y,g,\frak{s},P^0_0)-1$, as needed. 

Finally, in the case that $b_1(Y)=0$, this agrees (by definition) with the $\delta$-invariant defined in \cite{Manolescu-triangulation}.

In particular, it is natural to consider the parameterized equivariant homotopy type of the formal desuspension:
\[
\Sigma^{-n(Y,g,\frak{s},P_0)\mathbb{C}}_{B_Y}\mathcal{SWF}(Y,\frak{s},[\frak{S}]),
\]
which one can think of as a desuspension so that the grading of a reducible element of $\mathcal{SWF}(Y,\frak{s},[\frak{S}])$ has been specified.  We note that $n(S^1\times S^2,g,\frak{s},P_0)=0$, where $g$ is the product metric on $S^1\times S^2$, $\frak{s}$ is the torsion $\mathrm{spin}^c$ structure, and $P_0$ is the standard spectral section (since the Dirac operator has trivial kernel for each flat connection, this is specified).  That is, with our conventions, the grading of each reducible in \[\Pic(S^1\times S^2)\simeq \mathcal{SWF}(Y,\frak{s},[\frak{S}])\]
is zero.  This differs from the convention in Heegaard-Floer homology, for which each reducible should be $-\frac{1}{2}$-graded, as in \cite{Ozsvath-Szabo-absolute-gradings}.

We will prove a generalization of \cite[Theorem 4]{froyshov}.

\begin{thm} \label{thm negative def 4-mfd}
Let $Y_0$ be a rational homology $3$-sphere and  $Y_1$ be a closed, oriented $3$-manifold such that the triple-cup prodcut
\[
   \begin{array}{ccc}
           \Lambda^3 H^1(Y_1; \mathbb{\Z}) &  \rightarrow & \mathbb{Z}  \\
             \alpha_1 \wedge \alpha_2 \wedge \alpha_3 & \mapsto & \langle \alpha_1 \cup \alpha_2 \cup \alpha_3, [Y_1] \rangle
   \end{array}
\]
is zero. 
Let $(X, \frak{t})$ be a compact, $\mathrm{spin}^c$ negative semi-definite 4-manifold with boundary $-Y_0 \coprod Y_1$ such that $c_1(\frak{t})|_{\partial X}$ is torsion.  Then we have
\[
                    \frac{c_1(\frak{t})^2 + b_2^-(X)}{8} +   h(Y_0, \frak{t}|_{Y_0}) \leq h(Y_1, \frak{t}|_{Y_1}).
\]

\end{thm}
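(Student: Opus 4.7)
The strategy is to adapt the relative Bauer-Furuta construction of Section \ref{section 4-mfd with boundary} to the cobordism setting of $X$ with two boundary components, and combine it with Proposition \ref{prop h W_0 W_1}. First, one performs finite-dimensional approximation of the Seiberg-Witten equations on $X$ with spectral-section boundary conditions on \emph{both} $Y_0$ and $Y_1$, using a spectral system $\mathfrak{S}^{(0)}$ on $Y_0$ (where, since $Y_0$ is a rational homology $3$-sphere, $\mathrm{Pic}(Y_0)$ is a point and only the usual truncation of $D_{Y_0}$ is needed) and a spectral system $\mathfrak{S}^{(1)}$ on $Y_1$ (which exists by the vanishing of the triple cup product and Lemma \ref{lem:classification-of-spectral-systems}). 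The negative semi-definite hypothesis $b_2^+(X) = 0$ forces the $L^2_{k-1}(\Omega^+(X))$ factor in (\ref{eq:4-sw}) to be trivial, so the real-valued suspensions $V_n, V_n'$ in (\ref{eq:bauer-furuta}) can be taken equal and cancel.

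The output is then an $S^1$-equivariant fiberwise-deforming stable map, over $B_{Y_1} = \mathrm{Pic}(Y_1)$, of the form
\[
f \colon \mathit{SWF}(Y_0, \mathfrak{t}|_{Y_0}) \wedge S_{B_{Y_1}}^{U_n} \longrightarrow S_{B_{Y_1}}^{U_n' \oplus \underline{\mathbb{C}}^m} \wedge_{B_{Y_1}} \preSWF(Y_1, \mathfrak{t}|_{Y_1}, [\mathfrak{S}^{(1)}]),
\]
whose formal complex suspension defect equals $[U_n] - [U_n'] - [\underline{\mathbb{C}}^m] = [\operatorname{Ind}(D_X)]$ for the Dirac operator on $X$ with the chosen APS-type boundary conditions. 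To apply Proposition \ref{prop h W_0 W_1} one must verify that the restriction of $f$ to the $S^1$-fixed-point sets agrees, up to fiberwise homotopy over $B_{Y_1}$, with an inclusion of a fiber smashed with the identity on the Picard torus. At reducible configurations the quadratic Seiberg-Witten terms $c_1, c_2, X_H$ of (\ref{eq quadratic terms}) vanish identically, so the restricted map is purely linear and is determined by the Dirac operator with spectral boundary conditions; after trivializing the index bundle, this is exactly the desired inclusion map. Applying Proposition \ref{prop h W_0 W_1} and Corollary \ref{cor local eq h} then yields
\[
h\bigl(\mathit{SWF}(Y_0, \mathfrak{t}|_{Y_0})\bigr) + \dim_{\mathbb{C}} \operatorname{Ind}(D_X) \leq h\bigl(\preSWF(Y_1, \mathfrak{t}|_{Y_1}, [\mathfrak{S}^{(1)}])\bigr).
\]

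Finally, the correction is computed from the APS index formula. By additivity of the APS index across a decomposition $-X_0 \cup X \cup (-X_1)$ with $\partial X_i = Y_i$, the $\eta$-contributions rearrange into
\[
\dim_{\mathbb{C}} \operatorname{Ind}(D_X) = n(Y_1, g_1, \mathfrak{t}|_{Y_1}, P_0^{(1)}) - n(Y_0, g_0, \mathfrak{t}|_{Y_0}, P_0^{(0)}) + \frac{c_1(\mathfrak{t})^2 - \sigma(X)}{8},
\]
and $\sigma(X) = -b_2^-(X)$ from negative semi-definiteness. Substituting into the previous inequality and invoking Definition \ref{def:froyshov} gives the theorem.

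The main technical obstacle is the verification in the middle paragraph: showing that, over the nontrivial base $B_{Y_1} = \mathrm{Pic}(Y_1)$, the restriction of the relative Bauer-Furuta map to $S^1$-fixed points is genuinely fiberwise-homotopic (not merely stably homotopic on each fiber) to the identity-crossed-fiber-inclusion map. This requires care with the continuous variation of the linearized reducible problem across the Picard torus, and compatibility with the chosen spectral systems; the rest of the argument consists of assembling ingredients that are either already in the paper or are standard APS bookkeeping.
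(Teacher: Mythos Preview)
Your overall strategy is correct: relative Bauer-Furuta plus Proposition \ref{prop h W_0 W_1} plus APS bookkeeping. But there is a genuine conceptual error in the middle paragraph, and the paper's route is simpler than the one you describe.

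\textbf{The fixed-point map is not governed by the Dirac operator.} The $S^1$-action is by scalar multiplication on spinors, so the $S^1$-fixed locus is $\{\phi=0\}$. Restricted there, the Seiberg-Witten map reduces to the \emph{form} operator
\[
D' = (d^+,\; \pi_{-\infty}^{0}\, r_{-Y_0},\; \pi_{-\infty}^{0}\, r_{Y_1})\colon \Omega^1_{CC}(X)\longrightarrow \Omega^+(X)\oplus (\mathcal{W}_{-Y_0})_{-\infty}^{0}\oplus (\mathcal{W}_{Y_1})_{-\infty}^{0},
\]
not the Dirac operator. The role of $b_2^+(X)=0$ is not to make $L^2_{k-1}(\Omega^+(X))$ ``trivial'' (it is infinite-dimensional regardless); rather, it makes $D'$ an isomorphism, and that is exactly what yields a homotopy equivalence on $S^1$-fixed points.

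\textbf{The fiberwise obstacle you flag is bypassed in the paper.} Rather than working over all of $B_{Y_1}$, the paper restricts $\mathcal{BF}_{[n]}(X,\mathfrak t)$ to a single fiber over a point $[0]\in B_X$, and then uses the duality map $I_n(Y_0)\wedge I_n(-Y_0)\to S^{F_n(Y_0)\oplus W_n(Y_0)}$ to convert the $-Y_0$ boundary contribution into a suspension of $I_n(Y_0)$. This produces a map
\[
\Sigma^{\mathbb{R}^{m_0}\oplus\mathbb{C}^{n_0+a}} I_n(Y_0)\longrightarrow \Sigma^{\mathbb{R}^{m_1}\oplus\mathbb{C}^{n_1}} \bigl(I_n(Y_1)/s_n(B_{Y_1})\bigr),
\]
and one applies Proposition \ref{prop h W_0 W_1} in its ``$B_0$ is a point'' form, where the hypothesis is only that $f^{S^1}$ be homotopic to the inclusion of a single fiber. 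This is verified directly from the fact that $D'$ is an isomorphism, so the continuous-variation issue over the Picard torus never arises.

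Your APS index paragraph is correct.
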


\begin{proof}
Since the triple-cup product is zero, we have $\operatorname{ind} D_{Y_1} = 0$ in $K^1(B_{Y_1})$ by the index formula. (See \cite[Proposition 6]{kronheimer-manolescu}.) 
 Note that the map $\mathcal{BF}_{[n]}(X, \frak{t})$ constructed in Chapter \ref{section 4-mfd with boundary} is a fiber-preserving map.  We  consider the restriction of  $\mathcal{BF}_{[n]}(X, \frak{t})$ to the fiber over a point $[0] \in  B_X$. The restriction $\mathcal{BF}_{[n]}(X, \mathfrak{t})$ to the fiber and   the duality map
\[
        I_n(Y_0) \wedge I_n(-Y_0) \rightarrow  S^{F_n(Y_0) \oplus W_n(Y_0)}, 
\]
defined in \cite[Section 2.5]{manolescu_gluing}, induce an $S^1$-map
\[
      f_n :  \Sigma^{\mathbb{R}^{m_0} \oplus \mathbb{C}^{n_0+a} }  I_n(Y_0)    \rightarrow 
                 \Sigma^{\mathbb{R}^{m_1} \oplus \mathbb{C}^{n_1}}    ( I_n(Y_1)/s_n(B_{Y_1}))
\]
for $n \gg 0$,  where
    \begin{align*}
       & m_0 - m_1 
         =  \rank_{\mathbb{R}} W_{n}(Y_1)^{-} - \dim_{\mathbb{R}} W_{n}(Y_0)^{-}  \\
       & n_0 - n_1 
       =  \rank_{\mathbb{C}} F_n(Y_1)^{-} - \dim_{\mathbb{C}} F_{n}(Y_0)^-, \\
       & a = \dim \operatorname{Ind} D_{X, P_0} = \frac{c_1(\frak{t})^2 + b_2^-(X)}{8} + n(Y_1, g|_{Y_1}, \frak{t}|_{Y_1}, P_0) -  n(Y_0, g|_{Y_0},  \frak{t}|_{Y_0}). 
     \end{align*}
The restriction of $f_n$ to the $S^1$-fixed point set $\Sigma^{\mathbb{R}^{m_0} } (I_n(Y_0))^{S^1}$ is induced by the operator 
\[
     D'=(d^+,  \pi_{-\infty}^{0} r_{-Y_0},  \pi_{-\infty}^{0} r_{Y_1})  : \Omega_{CC}^{1}(X) \rightarrow 
       \Omega^+(X) \oplus  (\mathcal{W}_{-Y_0})_{-\infty}^{0} \oplus (\mathcal{W}_{Y_1})_{-\infty}^{0}.
\]
The operator $D'$ is an isomorphism.  Therefore the restriction
\[
     f_n^{S^1} : \Sigma^{\mathbb{R}^{m_0} } (I_n(Y_0))^{S^1} 
     \rightarrow 
     \Sigma^{\mathbb{R}^{m_1}} (I_n(Y_1))_{[0]}^{S^1}. 
\]
is a homotopy equivalence.
Here $[0] \in B_{Y_1}$  is the restriction of  $[0] \in B_X$ to $Y$ and $(I_n(Y_1))_{[0]}^{S^1}$ is the fiber over $[0]$. 

By Lemma \ref{lem suspension h} and  Proposition \ref{prop h W_0 W_1}, we have
\[
          \frac{c_1(\frak{t})^2 + b_2^-(X)}{8} + h(Y_0, \mathfrak{t}|_{Y_0}) \leq  h(Y_1,  \frak{t}|_{Y_1}). 
\]

\end{proof}

\begin{rem} 
	There is an apparent discrepancy with the statement of \cite[Theorem 4.7]{Levine-Ruberman}.  We note that in the translation between these statements, we expect $h(Y,\frak{s})$ to correspond to $\frac{d_{\mathrm{bot}}(Y,\frak{s})}{2}+\frac{b_1(Y)}{4}$, due to the difference in the grading conventions on the reducible; with this observation, the statements are consistent.  
\end{rem}

\begin{rem}  \label{rem:duality}
In order to  generalize Theorem \ref{thm negative def 4-mfd} to the case $b_1(Y_0) > 0$,  we need to establish the duality for the Seiberg-Witten Floer parameterized homotopy types  $\preSWF(Y_0,  \mathfrak{t}|_{Y_0}, [\mathfrak{S}])$ and $\preSWF(-Y_0, \mathfrak{t}|_{Y_0}, [\mathfrak{S}^{\vee}_0])$  to get  the parameterized  Bauer-Furuta map  
\[
    \mathcal{SWF}(Y_0, \mathfrak{t}|_{Y_0}, [\mathfrak{S}_0]) \to \mathcal{SWF}(Y_1, \mathfrak{t}|_{Y_1}, [\mathfrak{S}_1]).  
\]    
We do not discuss it in this memoir. See  Proposition \ref{prop:orientation-reversal}.

\end{rem}

\begin{cor}
Let $Y$ be a closed, connected,  oriented $3$-manifold such that 
the triple-cup product is zero. 
Let $(X,  \frak{t})$ be a compact, negative semi-definite,  $\mathrm{spin}^c$ $4$-manifold with $\partial X = Y$ such that $c_1(\frak{t})|_{Y}$ is torsion. Then we have
\[
       \frac{c_1(\frak{t})^2  + b_2^-(X)}{8}  \leq h(Y,  \frak{t}|_{Y}). 
\]
\end{cor}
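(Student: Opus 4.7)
The plan is to reduce to Theorem \ref{thm negative def 4-mfd} by introducing $S^3$ as an auxiliary boundary component. Specifically, I would set $Y_0 = S^3$ equipped with its unique $\mathrm{spin}^c$ structure $\frak{s}_0$, $Y_1 = Y$ with $\frak{s} = \frak{t}|_Y$, and form $X' := X \setminus \mathrm{int}(B^4)$, with the induced $\mathrm{spin}^c$ structure $\frak{t}'$ obtained by restriction. Then $\partial X' = -S^3 \coprod Y$, while $b_2^-(X') = b_2^-(X)$, $\sigma(X') = \sigma(X)$, and $c_1(\frak{t}')^2 = c_1(\frak{t})^2$ (the relative pairing is unaffected since $H^2(B^4, S^3) = 0$). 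Moreover $X'$ is still negative semi-definite, $c_1(\frak{t}')|_{-S^3}$ is automatically torsion, and $S^3$ is a rational homology sphere, so the hypotheses of Theorem \ref{thm negative def 4-mfd} are satisfied.

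Applying Theorem \ref{thm negative def 4-mfd} to $(X', \frak{t}')$ then gives
\[
\frac{c_1(\frak{t})^2 + b_2^-(X)}{8} + h(S^3, \frak{s}_0) \leq h(Y, \frak{t}|_Y).
\]
So the corollary reduces to verifying $h(S^3, \frak{s}_0) = 0$.

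For this, I would use the last statement of Theorem \ref{thm:main}: since $b_1(S^3) = 0$, the Picard torus $\mathrm{Pic}(S^3)$ is a point and $\preSWF(S^3, \frak{s}_0, \frak{P}) \simeq \Sigma^{n\mathbb{C}} \mathit{SWF}(S^3, \frak{s}_0)$ for some integer $n$ depending on $\frak{P}$. Manolescu's computation in \cite{Manolescu-b1=0} gives $\mathit{SWF}(S^3, \frak{s}_0) \simeq S^0$, so $h(\preSWF(S^3,\frak{s}_0,\frak{P})) = n$. On the other hand, taking $X = B^4$ in the definition (\ref{eq:spectral-index}) with the round metric and the spectral section $P_0$ corresponding to the chosen Floer framing, one has $n(S^3, g, \frak{s}_0, P_0) = \dim \mathrm{Ind}(D_{B^4}, P_0)$ since $\sigma(B^4) = c_1(\frak{t})^2 = 0$. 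Comparing with the canonical framing on $S^3$ (for which $\dim \mathrm{Ind}(D_{B^4}, P_0) = 0$ by the Atiyah-Patodi-Singer index formula and the vanishing of the $\eta$-invariants for the round metric), the shift in $n(S^3, g, \frak{s}_0, P_0)$ exactly cancels the complex-suspension shift $n$ in $h(\preSWF(S^3,\frak{s}_0,\frak{P}))$ by Lemma \ref{lem suspension h}, giving $h(S^3, \frak{s}_0) = 0$ as an invariant of $(S^3, \frak{s}_0)$.

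The main obstacle, which is minor but not entirely routine, is bookkeeping the normalization: one must check that the integer shift in $n(S^3, g, \frak{s}_0, P_0)$ arising from a change of Floer framing matches the complex-dimensional suspension shift in $h(\mathcal{SWF})$, so that Definition \ref{def:froyshov} indeed yields a well-defined rational number independent of all choices for $S^3$. Once this is verified, the reduction to Theorem \ref{thm negative def 4-mfd} is immediate and completes the proof.
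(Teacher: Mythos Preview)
Your proof is correct and takes essentially the same approach as the paper: remove a small ball from $X$ to obtain $X'$ with boundary $S^3 \coprod Y$, then apply Theorem \ref{thm negative def 4-mfd}. The paper's proof is one sentence and does not pause to verify $h(S^3,\frak{s}_0)=0$; this is because, as noted just after Definition \ref{def:froyshov}, for $b_1(Y)=0$ the invariant $h$ agrees by construction with Manolescu's $\delta$-invariant from \cite{Manolescu-triangulation}, for which $\delta(S^3)=0$ is standard, so your careful bookkeeping of the framing shift, while correct, is unnecessary here.
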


\begin{proof}
Removing a small ball from $X$, we get a compact $\mathrm{spin}^c$ 4-manifold $X'$ with boundary $S^3 \coprod Y$. Applying Theorem \ref{thm negative def 4-mfd} to $X'$, we get the inequality. 
\end{proof}

\begin{ex} \label{ex flat 3-mfd h inv}
Let $T^2$ be a torus $(\mathbb{R} / \mathbb{Z})  \times ( \mathbb{R} / \mathbb{Z})$. 
Put
\[
        Y := \mathbb{R} \times  T^2 / (x, \theta_1, \theta_2 ) \sim (x+1, -\theta_1, - \theta_2). 
\]
Then $Y$ is a flat $T^2$ bundle over $S^1$, which has a flat metric and $b_1(Y) = 1$.  We have
\[
       H^2(Y;\mathbb{Z}) \cong H_1(Y; \mathbb{Z}) 
       \cong \mathbb{Z} \oplus  ( \mathbb{Z} / 2 \mathbb{Z}) \oplus ( \mathbb{Z} / 2 \mathbb{Z}). 
\]
There are four $\mathrm{spin}^c$ structures $\frak{s}_0, \dots, \frak{s}_3$. Let $\frak{s}_0$ be the $\mathrm{spin}^c$ structure corresponding to the $2$-plane field tangent to the fibers.    As stated in Example \ref{ex flat 3-mfd}, for $j = 1, 2, 3$,   $(Y, \frak{s}_j)$ satisfies the conditions of Theorem \ref{thm reducible I_n}.
We have
\[
     \mathcal{SWF}(Y,\frak{s}, [\frak{S}]) \cong S^0_{B_Y}. 
\]
Here $\frak{S}$ is a spectral system with $P_0 = \mathcal{E}_0(D)_{-\infty}^{0}$. 
As stated in p.2112 of \cite{KLS1}, 
\[
      n(Y, \frak{s}_j, g, P_0) =  0
\]
for $j = 1, 2, 3$. 
Therefore we obtain
\[
       h(Y, \frak{s}_j) =  h( \mathcal{SWF}(Y, \frak{s}, [\frak{S}])  )  - n(Y, \frak{s}_j, g, P_0)  = 0. 
\] 
\end{ex}

\begin{ex}  \label{ex sphere bundle}
Let $\Sigma$ be a closed, oriented surface with $g(\Sigma) > 0$ and $Y$ be the sphere bundle of the complex line bundle over $\Sigma$ of degree $d$.  Suppose that $0 < g  <  d$, where $g := g(\Sigma)$.   Let $\fs_q$ be the spin$^c$ structure in Proposition \ref{prop critical points of CSD_r}.   For $q \in \{ g, g+1, \dots, d-1 \}$,  we have
\[ 
          \mathcal{SWF}(Y, \frak{s}_q, [\frak{S}]) \cong  S_B^0
\]
by Theorem \ref{thm I_n sphere bundle}. Here $\frak{S}$ is a spectral system with $P_0 = \mathcal{E}_0(D_r)_{-\infty}^{0}$. 
The value of $n(Y, g_r, \frak{s}_q, P_0)$ was computed in \cite[Section 8.2]{KLS1} and  we have
\begin{equation} \label{eq:n(Y,s_q)}
        n(Y, g_r, \frak{s}_q, P_0) = - \frac{d-1}{8} -  \frac{(g-1-q)(d+g-1-q)}{2d}. 
\end{equation}
(Note that the definition of $n(Y, g, \frak{s}_q, P_0)$ of this memoir is $-1$ times that of \cite{KLS1}.)
Hence
 \begin{align*}
      h(Y, \frak{s}_q, g) 
     & =   h(\mathcal{SWF}(Y, \frak{s}_q, [\frak{S}]))  - n(Y, g, \frak{s}_q, P_0)  \\
    &  = \frac{d-1}{8} + \frac{(g-1-q)(d+g-1-q)}{2d}.
      \end{align*}
\end{ex}


\section{K-theoretic Fr{\o}yshov invariant} 

 In analogy to the previous section on the (homological) Fr{\o}yshov invariant, we now generalize the invariant $\kappa(Y)$ constructed in \cite{Manolescu_Intersection_form}.  For details on $\mathrm{Pin}(2)$-equivariant complex $K$-theory, we refer to \cite{Manolescu_Intersection_form}.

Let $\tilde{\mathbb{R}}$ be the non-trivial real   representation of ${\rm{Pin}}(2) = S^1 \coprod j S^1$.  Let $B$ be a compact, connected ${\rm Pin}(2)$-CW complex with a  $\mathrm{Pin}(2)$-fixed marked (though we do not consider $B$ itself to be an object in the category of pointed spaces) point $b_0 \in B^{{\rm Pin}(2)}$, such that  the $S^1$-action on $B$ is trivial and the action of $j$ is an involution.

\begin{dfn}
Let ${\bf U} = (W, r, s)$ be a well-pointed ${\rm Pin}(2)$-ex-space over $B$ such that $W$ is ${\rm Pin}(2)$-homotopy equivalent to a ${\rm Pin}(2)$-CW complex. We say that ${\bf U}$ is of SWF type at level $t$ if there is a ex-space ${\rm Pin}(2)$-homotopy equivalence  from  $W^{S^1}$ to $S_B^{\tilde{\mathbb{R}}^{t}}$  and if the $\mathrm{Pin}(2)$-action on $W \smallsetminus W^{S^1}$ is free. 

\end{dfn}
As before, in fact for us there is the stronger condition that there is a fiber-preserving (equivariant) homotopy equivalence $W^{S^1}\to S^{\tilde{\mathbb{R}}^t}_B$.

Let $R({\rm Pin}(2))$ be the representation ring of ${\rm Pin}(2)$. That is,
\[
      R({\rm Pin}(2)) \cong \mathbb{Z}[z, w]/(w^2-2w, zw - 2w), 
\]
where 
\[ 
        w =  1 - [\tilde{\mathbb{C}}], \quad
        z = 2 - [\mathbb{H}]. 
\]

We will generalize  \cite[Definition3]{Manolescu_Intersection_form} to ${\rm Pin}(2)$-ex-spaces:

\begin{dfn}
	Let ${\bf U} = (W, r, s)$ be a well-pointed $\mathrm{Pin}(2)$-ex-space of SWF type at level $2t$ over $B$ so that $W$ is $\mathrm{Pin}(2)$-homotopy equivalent to a $\mathrm{Pin}(2)$-CW complex.  We denote by $\mathcal{I}_{\Lambda}({\bigu})$  the submodule in $K_{\mathbb{Z}/2}(B)$, viewed as a module over $R(\mathrm{Pin}(2))$, generated by the image of the homomorphism induced by the inclusion $\iota : W^{S^1} \hookrightarrow W $: 
	\begin{align*}
	\tilde{K}_{\mathrm{Pin}(2)}(W/s(B)) &\stackrel{\iota^*}{\rightarrow }
	\tilde{K}_{\mathrm{Pin}(2)}(W^{S^1}/s(B))
	\cong \tilde{K}_{\mathrm{Pin}(2)} (S^{\tilde{\mathbb{C}}^{t}}\wedge B_+)\\
	&= K_{\mathbb{Z}/2}(B)
	\end{align*}
	We obtain a more specific invariant by considering only a single fiber.
	Let $\mathcal{I}(\bf{U})$ denote the ideal in $R(\mathrm{Pin}(2))$ which is the image of 
	\begin{align*}
	\tilde{K}_{\mathrm{Pin}(2)}(W/s(B))    &\stackrel{\iota^*}{\rightarrow }
	\tilde{K}_{\mathrm{Pin}(2)}(W^{S^1}/s(B))     \\
	 & \cong \tilde{K}_{\mathrm{Pin}(2)} (S^{\tilde{\mathbb{C}}^{t}}\wedge B_+)\to \tilde{K}_{\mathrm{Pin}(2)}(S^{\tilde{\mathbb{C}}^t};\mathbb{R})=R(\mathrm{Pin}(2))
	\end{align*}
	obtained using the inclusion of a fiber $S^{\mathbb{R}^t}\to S^{\mathbb{R}^t}\wedge B_+$, over  the marked  point   $b_0\in B^{\mathrm{Pin}(2)}$.  
	In particular, the invariant $k(\bf{U})$ depends on a choice of the  point $b_0 \in B$, which does not appear in the notation.  
	
	We define $k({\bf U}) \in \mathbb{Z}_{\geq 0}$ by
	\[
	k({\bf U}) =  \min \{ k \in \mathbb{Z}_{\geq 0} :  \exists x \in \mathcal{I}({\bf U}),   wx =  2^{k} w  \}. 
	\]
	If $\mathcal{I}({\bf U})$ is of the form $(z^{k})$ for some non-negative inter $k$, we say that ${\bf U}$ is $K_{{\rm Pin}(2)}$-split. 
\end{dfn}

%

\begin{lem} \label{lem k suspension}
\[
          k(\Sigma_{B}^{\tilde{\mathbb{C}}} {\bf U}) = k({\bf U}), \quad
          k( \Sigma_{B}^{\mathbb{H}} {\bf U}) = k({\bf U}) + 1. 
\]
\end{lem}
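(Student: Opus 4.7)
The plan is to reduce the statement to a computation in $\mathrm{Pin}(2)$-equivariant complex $K$-theory via Bott periodicity for complex representations. For any complex $\mathrm{Pin}(2)$-rep $V$, the Bott class $\beta_V \in \tilde{K}_{\mathrm{Pin}(2)}(S^V)$ induces an isomorphism $\Phi_V\colon \tilde{K}_{\mathrm{Pin}(2)}(X) \xrightarrow{\cong} \tilde{K}_{\mathrm{Pin}(2)}(\Sigma^V X)$, natural in pointed $\mathrm{Pin}(2)$-spaces. The decisive fact is its behavior under restriction to the $S^1$-fixed subsphere: if $V = V^{S^1} \oplus V'$, then under $S^{V^{S^1}} \hookrightarrow S^V$ one has $\beta_V|_{S^{V^{S^1}}} = \beta_{V^{S^1}} \otimes e(V')$, where $e(V') \in R(\mathrm{Pin}(2))$ is the $K$-theoretic Euler class of $V'$. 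This ``multiplicativity of Bott classes'' is standard and follows from the splitting $S^V = S^{V^{S^1}} \wedge S^{V'}$ together with the observation that $\beta_{V'}$ restricts to $e(V')$ along the inclusion of the origin $S^0 \hookrightarrow S^{V'}$.

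The two Euler classes we need are easy: because $\tilde{\mathbb{C}}$ is a self-dual $\mathrm{Pin}(2)$-rep, $e(\tilde{\mathbb{C}}) = 1 - [\tilde{\mathbb{C}}] = w$; while for $\mathbb{H}$, one computes that $\Lambda^2_{\mathbb{C}} \mathbb{H}$ is the trivial one-dimensional complex $\mathrm{Pin}(2)$-representation (the determinant of $S^1$ acting as $\mathrm{diag}(e^{i\theta}, e^{-i\theta})$ is $1$, and the determinant of the off-diagonal $j$-action is also $1$), so $e(\mathbb{H}) = 1 - [\mathbb{H}] + [\Lambda^2 \mathbb{H}] = 2 - [\mathbb{H}] = z$.

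For $V = \tilde{\mathbb{C}}$, we have $V^{S^1} = V$ and $V' = 0$, so $e(V') = 1$. The Thom isomorphism $\Phi_{\tilde{\mathbb{C}}}$ then intertwines the full diagram (restriction to the $S^1$-fixed set, followed by inclusion of the fiber $S^{\tilde{\mathbb{C}}^{t+1}} \hookrightarrow S^{\tilde{\mathbb{C}}^{t+1}} \wedge B_+$ at the marked point $b_0$) that defines $\mathcal{I}(\cdot)$; under the canonical identification $\tilde{K}_{\mathrm{Pin}(2)}(S^{\tilde{\mathbb{C}}^{t+1}}) \cong R(\mathrm{Pin}(2))$ coming from $\beta_{\tilde{\mathbb{C}}}$, the image ideal is unchanged. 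Hence $\mathcal{I}(\Sigma^{\tilde{\mathbb{C}}}_B \mathbf{U}) = \mathcal{I}(\mathbf{U})$ and therefore $k(\Sigma^{\tilde{\mathbb{C}}}_B \mathbf{U}) = k(\mathbf{U})$.

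For $V = \mathbb{H}$, instead, $V^{S^1} = 0$ and $V' = \mathbb{H}$, so the fixed set $(\Sigma^{\mathbb{H}}_B W)^{S^1} = W^{S^1}$ (the level is unchanged), but restriction now picks up the Euler class. A short diagram chase yields
\[
(\Sigma^{\mathbb{H}}\iota)^* \circ \Phi_{\mathbb{H}}(x) \;=\; z \cdot \iota^*(x),
\]
so that $\mathcal{I}(\Sigma^{\mathbb{H}}_B \mathbf{U}) = z \cdot \mathcal{I}(\mathbf{U})$. Using the relation $wz = 2w$ in $R(\mathrm{Pin}(2))$: if $x \in \mathcal{I}(\mathbf{U})$ satisfies $wx = 2^a w$, then $w(zx) = 2wx = 2^{a+1} w$; conversely every element of $z \cdot \mathcal{I}(\mathbf{U})$ is of the form $zx$, and achieves $w \cdot (zx) = 2^k w$ iff $wx = 2^{k-1} w$. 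The minimal exponent is therefore shifted up by exactly one, giving $k(\Sigma^{\mathbb{H}}_B \mathbf{U}) = k(\mathbf{U}) + 1$. The only real technical content is the compatibility of the Bott isomorphisms with restriction to the $S^1$-fixed subspace and with the fiber inclusion — the former is the multiplicativity statement above, and the latter is the naturality of $\Phi_V$ applied to $\{b_0\}_+ \hookrightarrow B_+$.
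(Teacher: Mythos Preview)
Your proof is correct and follows essentially the same approach as the paper. The paper's own proof simply notes that $(\Sigma_B^{V} W)/s(B) = \Sigma^{V}(W/s(B))$ for $V = \tilde{\mathbb{C}}, \mathbb{H}$ and then cites Lemma~3.4 of \cite{Manolescu_Intersection_form}; your argument is an explicit unpacking of that cited lemma, carrying out the Bott periodicity/Euler class computation ($e(\mathbb{H}) = z$, $wz = 2w$) that lies behind it.
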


\begin{proof}
Since 
  \begin{align*}
     &   (\Sigma_{B}^{\tilde{\mathbb{C}}} W)  / s(B)
     = \Sigma^{\tilde{\mathbb{C}}} [W/ s(B)], \\
    &    (\Sigma_{B}^{\mathbb{H}}  W)  / s(B)
     = \Sigma^{\mathbb{H}}[W/ s(B)], 
  \end{align*}
we can apply Lemma 3.4 of \cite{Manolescu_Intersection_form}.  
\end{proof}

\begin{prop}  \label{prop k W_0 W_1}
	Let ${\bf U}_0 = (W_0, r_0, s_0)$, ${\bf U}_1 = (W_1, r_1, s_1)$ be $\mathrm{Pin}(2)$-ex-spaces of SWF type at level $2t_0,2t_1$ over $B_0$ and $B_1$, and assume given an inclusion $\rho: B_0 \to B_1$.  Let $\rho_!{\bf U}_0$ denote the pushforward of $\bf{U}_0$, as an ex-space over $B_1$.  Assume that there is a fiberwise-deforming $S^1$-map
	\[
	f :  \rho_! \bf{U}_0  \rightarrow \bf{U}_1
	\]
	such that the restriction to 
	\[
	f^{S^1}: \rho_! W_0^{S^1}\to W_1^{S^1},
	\]
	as a fiberwise-deforming morphism over $B_1$, is homotopy-equivalent to 
	\[
	\ell\cup \rho:  ((\tilde{\mathbb{C}}^{t_0})^+\times B_0)\cup_{B_0} B_1 \to (\tilde{\mathbb{C}}^{t_1})^+\times B_1,
	\]
	where $\ell$ is the map on one-point compactifications induced by a map of representations $\tilde{\mathbb{C}}^{t_0}\to \tilde{\mathbb{C}}^{t_1}$, which is an inclusion if $t_0\leq t_1$.  Say that $\rho$ sends the marked point $b_0\in B_0$ to $b_1\in B_1$.
\begin{enumerate}

	\item\label{itm:k-1}
	If $t_0  \leq  t_1$, we have
	\[
	k({\bf U}_0) + t_0 \leq k({\bf U}_1) + t_1. 
	\]

	\item\label{itm:k-2}
	If $t_0 < t_1$ and  ${\bf U}_0$ is $K_{{\rm Pin}(2)}$-split, we have
	\[
	k({\bf U}_0) + t_0 + 1 \leq k({\bf U}_1) + t_1. 
	\]
\end{enumerate}
\end{prop}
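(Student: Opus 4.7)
The plan is to transport the argument of Proposition \ref{prop h W_0 W_1} into $\mathrm{Pin}(2)$-equivariant K-theory, and then exploit the relations $w^2 = 2w$ and $zw = 2w$ in $R(\mathrm{Pin}(2))$. First, I would build the commutative diagram of reduced $\tilde{K}_{\mathrm{Pin}(2)}$-groups induced by $f$, the inclusion of $S^1$-fixed point sets, and the restriction to fibers over the marked points:
\begin{equation*}
\begin{tikzcd}
\tilde{K}_{\mathrm{Pin}(2)}(W_0/s(B_0)) \arrow[d] & \tilde{K}_{\mathrm{Pin}(2)}(W_1/s(B_1)) \arrow[l, "f^{*}"'] \arrow[d] \\
\tilde{K}_{\mathrm{Pin}(2)}(W_0^{S^1}/s(B_0)) \arrow[d] & \tilde{K}_{\mathrm{Pin}(2)}(W_1^{S^1}/s(B_1)) \arrow[l] \arrow[d] \\
\tilde{K}_{\mathrm{Pin}(2)}(S^{\tilde{\mathbb{C}}^{t_0}}) = R(\mathrm{Pin}(2)) & \tilde{K}_{\mathrm{Pin}(2)}(S^{\tilde{\mathbb{C}}^{t_1}}) = R(\mathrm{Pin}(2)) \arrow[l, "\ell^{*}"]
\end{tikzcd}
\end{equation*}
Under the Thom-class identifications, the bottom map $\ell^{*}$ is multiplication by the $\mathrm{Pin}(2)$-equivariant Euler class of the complex quotient representation $\tilde{\mathbb{C}}^{t_1-t_0}$, namely $w^{t_1-t_0} \in R(\mathrm{Pin}(2))$. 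Chasing the diagram, any $x \in \mathcal{I}(\mathbf{U}_1)$ yields $w^{t_1-t_0} x \in \mathcal{I}(\mathbf{U}_0)$.

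For part (\ref{itm:k-1}), I would pick $x \in \mathcal{I}(\mathbf{U}_1)$ realizing $wx = 2^{k(\mathbf{U}_1)} w$. Then, using $w^{j+1} = 2^j w$ for $j \geq 0$,
\[
w\cdot (w^{t_1-t_0} x) \;=\; 2^{t_1-t_0}\,wx \;=\; 2^{k(\mathbf{U}_1)+t_1-t_0}\,w,
\]
so $k(\mathbf{U}_0) \leq k(\mathbf{U}_1) + t_1 - t_0$, which rearranges to the claimed inequality.

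For part (\ref{itm:k-2}), I would use the $\mathbb{Z}$-module normal form $R(\mathrm{Pin}(2)) = \mathbb{Z}[z] \oplus \mathbb{Z}\cdot w$ coming from the relations, under which $(z^{k_0}) \cap \mathbb{Z}\cdot w = 2^{k_0}\mathbb{Z}\cdot w$. Writing $x = r(z) + sw$, the identity $z^i w = 2^i w$ gives $wx = (r(2) + 2s)w$, so $r(2) + 2s = 2^{k(\mathbf{U}_1)}$. For $t_1 > t_0$, a direct computation using $w^j = 2^{j-1} w$ ($j \geq 1$) then yields
\[
w^{t_1-t_0} x \;=\; 2^{t_1-t_0-1}(r(2)+2s)\,w \;=\; 2^{k(\mathbf{U}_1)+t_1-t_0-1}\,w.
\]
Since $\mathbf{U}_0$ is $K_{\mathrm{Pin}(2)}$-split, $\mathcal{I}(\mathbf{U}_0) = (z^{k(\mathbf{U}_0)})$, so this element must lie in $2^{k(\mathbf{U}_0)}\mathbb{Z}\cdot w$, forcing $k(\mathbf{U}_0) \leq k(\mathbf{U}_1) + t_1 - t_0 - 1$ as required.

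The main obstacle is the Thom-isomorphism bookkeeping in the parameterized $\mathrm{Pin}(2)$-equivariant setting: confirming that the composite vertical map commutes with pushforward along $\rho$, and that the Thom class $\tau_{t_1}$ pulls back to $w^{t_1-t_0}\tau_{t_0}$ under $\ell \cup \rho$ on each fiber over the marked points. This is formally standard once one fixes a compatible choice of Thom classes on both sides, but requires some care when $B_0 \neq B_1$; after this is established, the remaining ring-theoretic manipulations in $R(\mathrm{Pin}(2))$ are elementary.
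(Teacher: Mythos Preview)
Your approach is correct and essentially identical to the paper's: both set up the same commutative diagram of $\tilde{K}_{\mathrm{Pin}(2)}$-groups descending from $W_i/s(B_i)$ through the $S^1$-fixed point sets to the fiber spheres over the marked points, and then reduce to a ring-theoretic computation in $R(\mathrm{Pin}(2))$. The paper simply cites Lemmas~3.10 and~3.11 of \cite{Manolescu_Intersection_form} for that last step, whereas you carry out those manipulations explicitly (and correctly); in particular your observation that $w^{t_1-t_0}x = 2^{t_1-t_0-1}(r(2)+2s)w$ when $t_1>t_0$, together with $(z^{k_0})\cap \mathbb{Z}w = 2^{k_0}\mathbb{Z}w$, is exactly the content of Manolescu's Lemma~3.11.
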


%
%
%
%
%
%

\begin{proof} 
We have the following commutative diagram:
\[
     \xymatrix{
       \tilde{K}_{{\rm Pin}(2)}(W_0/s_0(B_0))  \ar[d]_{\iota_0^*}   &  & \tilde{K}_{{\rm Pin}(2)}(W_1/s_1(B_1))  \ar[d]^{\iota_1^*} \ar[ll]_{f^*}  \\
        \tilde{K}_{{\rm Pin}(2)}( ((\tilde{\mathbb{C}}^{t_0})^+\times B_0)\cup_{B_0}B_1/s(B_1)) \ar[d]_{ \iota^*} &    &  \tilde{K}_{{\rm Pin}(2)}( (\tilde{\mathbb{C}}^{t_1})^+\times B_1/s(B_1) ) \ar[d]^{\iota^*}  \ar[ll]_-{(\ell\cup \rho)^*}  \\
        \tilde{K}_{{\rm Pin}(2)}((\tilde{\mathbb{C}}^{t_0})^+ ) \ar[d]_{ \cdot w^{t_0}} &    &  \tilde{K}_{{\rm Pin}(2)}( ((\tilde{\mathbb{C}}^{t_1})^+ ) \ar[d]^{\cdot w^{t_1}}  \ar[ll]_{\ell^*} \\
        \tilde{K}_{{\rm Pin}(2)}(  S^0 ) &  & \tilde{K}_{{\rm Pin}(2)}( S^0  )  \ar[ll]_{\id}  \\
          }
\]
 Here we have used $\iota$ to denote various inclusions.  Note that $f^*$ in the first row is well-defined, because $s_0(B_0)\subset s_0(B_1)$, using the definition of the push-forward $\rho_!\mathbf{U}_0$ (this does not require that $\rho$ be an inclusion).  In fact, more is true, in that $\rho_!W_0/s_0(B_1)$ is exactly $W_0/s_0(B_0)$.

We can apply the arguments in the proofs of Lemma 3.10 and Lemma 3.11 of \cite{Manolescu_Intersection_form} so that the result follows. 
\end{proof}

\begin{dfn}
For  $m, n \in \mathbb{Z}$ and  ${\rm Pin}(2)$-ex-space ${\bf U}$ of SWF type at even level,  we define
\[
      k(\Sigma_B^{ \tilde{\mathbb{R}}^{2m} \oplus \mathbb{H}^{n} } {\bf U}) = 
      k({\bf U}) + n. 
\]
\end{dfn}

Note that this definition is compatible with Lemma \ref{lem k suspension}. 

\begin{dfn}
For $m_0, n_0, m_1, n_1 \in \mathbb{Z}$ and ${\rm Pin}(2)$-ex-spaces ${\bf U}_0, {\bf U}_1$ of SWF type at even level over $B$,    we say that $\Sigma_B^{ \tilde{\mathbb{R}}^{2m_0}  \oplus \mathbb{H}^{n_0}} {\bf U}_0$ and $\Sigma_B^{\tilde{\mathbb{R}}^{2m_1}  \oplus \mathbb{H}^{n_1}  } {\bf U}_1$  are locally equivalent if there are $N \in \mathbb{Z}$ with  $N + m_0, N+n_0, N + m_1, N + n_1 \geq 0$ and  ${\rm Pin}(2)$-fiberwise deforming maps
  \begin{align*}
         & f : \Sigma_B^{ \tilde{\mathbb{R}}^{2(N + m_0)} \oplus  \mathbb{H}^{N + n_0} }{\bf U}_0 \rightarrow \Sigma_B^{ \tilde{\mathbb{R}}^{2(N + m_1)} \oplus  \mathbb{H}^{N + n_1}  } {\bf U}_1, \\   
        & g :  \Sigma_B^{\tilde{\mathbb{R}}^{2(N + m_1)} \oplus  \mathbb{H}^{N + n_1} }{\bf U}_1 \rightarrow \Sigma_B^{ \tilde{\mathbb{R}}^{2(N + m_0)} \oplus \mathbb{H}^{N + n_0} } {\bf U}_0
    \end{align*}
such that  the restrictions 
\[
       f^{S^1} : \Sigma_B^{ \tilde{\mathbb{R}}^{2(N+m_0)} } {\bf U}_0^{S^1} \rightarrow \Sigma_B^{ \tilde{\mathbb{R}}^{2(N+m_1)} } {\bf U}_{1}^{S^1}, \
       g^{S^1} : \Sigma_B^{ \tilde{\mathbb{R}}^{2(N+m_1)} }{\bf U}_1^{S^1} \rightarrow \Sigma_B^{ \tilde{\mathbb{R}}^{2(N+m_0)} }{\bf U}_0^{S^1} 
\]
 are homotopy equivalent to
\[
    \mathrm{Id} : B \times (\mathbb{R}^{t})^+ \rightarrow B \times (\mathbb{R}^t)^+
\]
as  $\mathrm{Pin}(2)$-fiberwise-deforming  morphisms. 
\end{dfn}

\begin{cor} \label{cor local eq k}
If  $\Sigma_B^{ \tilde{\mathbb{R}}^{2m_0} \oplus \mathbb{H}^{n_0} }{\bf U}_0$ and  $\Sigma_B^{ \tilde{\mathbb{R}}^{2m_1} \oplus  \mathbb{H}^{n_1}  }{\bf U}_1$ are locally equivalent,  we have
\[
     k(\Sigma_B^{\tilde{\mathbb{R}}^{2m_0} \oplus \mathbb{H}^{n_0}  }{\bf U}_0) = 
     k(\Sigma_B^{\tilde{\mathbb{R}}^{2m_1} \oplus \mathbb{H}^{n_1}  }{\bf U}_1). 
\]
\end{cor}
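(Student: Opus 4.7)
The plan is to imitate the proof of Corollary \ref{cor local eq h}, replacing the homological input (Proposition \ref{prop h W_0 W_1}) by its $K$-theoretic counterpart (Proposition \ref{prop k W_0 W_1}). Since the local equivalence hypothesis provides maps in both directions whose $S^1$-fixed-point restrictions are homotopic to the identity, I expect to obtain two inequalities which together force equality of the $k$-invariants. In contrast to Corollary \ref{cor local eq h}, where the homological Fr{\o}yshov invariant is manifestly monotone and Proposition \ref{prop h W_0 W_1} can be applied in one step, here I must exploit the sharper part~(\ref{itm:k-1}) of Proposition \ref{prop k W_0 W_1}, which is the version that does \emph{not} require the $K_{\mathrm{Pin}(2)}$-splitness hypothesis but which keeps track of the levels carefully.

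The first step is to unpack the local equivalence. By definition, there exist $N\in \mathbb{Z}$ with $N+m_i, N+n_i\geq 0$ and $\mathrm{Pin}(2)$-fiberwise-deforming maps
\[
f\colon \Sigma_B^{\tilde{\mathbb{R}}^{2(N+m_0)}\oplus \mathbb{H}^{N+n_0}}\mathbf{U}_0 \to \Sigma_B^{\tilde{\mathbb{R}}^{2(N+m_1)}\oplus \mathbb{H}^{N+n_1}}\mathbf{U}_1,\qquad g\colon \cdots\to \cdots
\]
whose $S^1$-fixed-point restrictions are homotopic to the identity of $B\times(\mathbb{R}^t)^+$ for a common $t$. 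In particular both suspended spaces are $\mathrm{Pin}(2)$-ex-spaces of SWF type over $B$ at the \emph{same} even level $2t$, where (using that $\mathbb{H}^{S^1}=0$) one reads off $t = t_0+(N+m_0) = t_1+(N+m_1)$ with $\mathbf{U}_i$ of SWF type at level $2t_i$. This equality of levels is the key compatibility that lets me feed $f$ and $g$ into Proposition \ref{prop k W_0 W_1}(\ref{itm:k-1}) with $B_0=B_1=B$, $\rho=\operatorname{id}$, and $\ell=\operatorname{id}_{\tilde{\mathbb{C}}^t}$.

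Applying Proposition \ref{prop k W_0 W_1}(\ref{itm:k-1}) to $f$ yields
\[
k(\Sigma_B^{\tilde{\mathbb{R}}^{2(N+m_0)}\oplus \mathbb{H}^{N+n_0}}\mathbf{U}_0)+t\;\leq\;k(\Sigma_B^{\tilde{\mathbb{R}}^{2(N+m_1)}\oplus \mathbb{H}^{N+n_1}}\mathbf{U}_1)+t,
\]
and applying it to $g$ gives the reverse inequality. Cancelling the $t$'s and invoking Lemma \ref{lem k suspension} (so that each suspension by $\tilde{\mathbb{R}}^{2(N+m_i)}$ contributes nothing and each suspension by $\mathbb{H}^{N+n_i}$ contributes $N+n_i$ to $k$) produces $k(\mathbf{U}_0)+n_0+N=k(\mathbf{U}_1)+n_1+N$, whence $k(\mathbf{U}_0)+n_0=k(\mathbf{U}_1)+n_1$. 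A final application of the definition $k(\Sigma_B^{\tilde{\mathbb{R}}^{2m_i}\oplus\mathbb{H}^{n_i}}\mathbf{U}_i)=k(\mathbf{U}_i)+n_i$ rewrites this as the desired equality.

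The main (minor) obstacle will be verifying that the hypotheses of Proposition \ref{prop k W_0 W_1}(\ref{itm:k-1}) are genuinely satisfied by the maps supplied by the local equivalence: concretely, that the common $S^1$-fixed-point level is indeed $2t$ on both sides after suspension, and that the identification of $f^{S^1}$ with the identity is an honest $\mathrm{Pin}(2)$-equivariant fiberwise-deforming homotopy equivalence (rather than just an $S^1$-equivariant one), so that the $\mathrm{Pin}(2)$-equivariant $K$-theory diagram in the proof of Proposition \ref{prop k W_0 W_1} commutes. Both checks are immediate from the definition of local equivalence and the computation $\mathbb{H}^{S^1}=0$, so no genuinely new estimates are required.
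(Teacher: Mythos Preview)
Your proof is correct and follows exactly the approach the paper has in mind: the paper's proof is the single line ``This is a direct consequence of Proposition \ref{prop k W_0 W_1},'' and you have simply (and correctly) unpacked that sentence by applying part~(\ref{itm:k-1}) to both $f$ and $g$ at the common level $t$, then invoking Lemma \ref{lem k suspension} and the definition of $k$ on formal suspensions.
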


\begin{proof}
This is a direct consequence of Proposition \ref{prop k W_0 W_1}. 
\end{proof}

Let $\frak{s}$ be a  spin structure (not just a self-conjugate $\mathrm{spin}^c$ structure, although we will also write $\frak{s}$ for the induced self-conjugate $\mathrm{spin}^c$ structure) of $Y$. Then the Seiberg-Witten equations (\ref{SW eq}) and the finite dimensional approximations (\ref{eq for gamma})  have Pin(2)-symmetry. Let $B_Y$ be the Picard torus of $Y$, which is homeomorphic to the torus $\tilde{\mathbb{R}}^{b_1(Y)} / \mathbb{Z}^{b_1(Y)}$, where we have chosen coordinates so that $0\in \tilde{\mathbb{R}}^{b_1(Y)}$ corresponds to the selected spin structure on $Y$.  We choose $[0] \in B_{Y}$ as base point.  Assume that $\operatorname{ind} D_Y = 0$ in $KQ^1(B_Y)$.  By Theorem \ref{thm Pin(2) spectral sections}, we can  choose a  ${\rm Pin}(2)$-spectral system
\[
\mathfrak{S}=(\mathbf{P},\mathbf{Q},\mathbf{W}_P,\mathbf{W}_Q,\{\eta^P_n\}_n,\{\eta_n^Q\},\{\eta^{W_P}_n\}_n,\{\eta^{W_Q}_n\}_n)
\]
for $Y$. Put
\[
       F_n = P_n \cap Q_n, \ W_n = W_{P, n} \cap W_{Q, n}. 
\]
We have the ${\rm Pin}(2)$-equivariant Conley index $(I_n, r_n, s_n)$ for the isolated invariant set $\operatorname{inv}(A_n, \varphi_{k_+, k_-, n})$ for $n \gg 0$. 

\begin{lem}
The ${\rm Pin}(2)$-equivariant Conley index $(I_n, r_n, s_n)$ is of SWF type at level $\rank_{\mathbb{R}} W_n^{-}$ for $n \gg 0$. 
\end{lem}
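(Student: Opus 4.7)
The plan is to follow the structure of the proof of Lemma \ref{lem I_n S^1}, upgrading each step from $S^1$ to $\mathrm{Pin}(2)$. First, from Proposition \ref{prop:finiteness} (and the remark following Theorem \ref{thm isolating nbd} about the $\mathrm{Pin}(2)$-equivariant Conley index) the Pin(2)-equivariant Conley index $I_n$ is of the homotopy type of a finite $\mathrm{Pin}(2)$-CW complex, so the first hypothesis in the definition of SWF type is automatic.

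Next I would identify the $S^1$-fixed subindex. The $S^1$-fixed set of $F_n \times W_n$ is exactly $\{\phi = 0\} = B_Y \times W_n$, and this is preserved by $\varphi_{n,k_+,k_-}$ because all of the quadratic terms $c_1$, $c_2$ and $X_H$ in (\ref{eq quadratic terms}) vanish when $\phi = 0$. So on $B_Y \times W_n$ the flow reduces to the linear flow induced by $-*d|_{W_n}$, which is fiberwise over $B_Y$, and its Conley index on $A_n \cap (B_Y \times W_n) = B_Y \times B_{k_+}(W_n^+;R) \times B_{k_-}(W_n^-;R)$ is, as an ex-space, the fiberwise one-point compactification $S_{B_Y}^{\underline{W_n^-}}$. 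This identifies $(I_n)^{S^1}$ as an ex-space with $S_{B_Y}^{\underline{W_n^-}}$. To see that this identification is $\mathrm{Pin}(2)$-equivariant, one invokes the Pin(2)-action on $\mathcal{H}^1(Y) \times L^2_k(\mathbb{S}) \times \mathcal{W}_k$ from Section \ref{subsec:proof-of-isolation}: $j\cdot(a,\phi,\omega) = (-a, j\phi, -\omega)$. Thus $j$ acts by $-1$ on $W_n$, so $W_n^-$ is, as a $\mathrm{Pin}(2)$-representation (with trivial $S^1$-action), isomorphic to $\tilde{\mathbb{R}}^{\dim_{\mathbb{R}} W_n^-}$, and the fixed set is ex-space isomorphic (and in fact fiberwise homotopy equivalent) to $S_{B_Y}^{\tilde{\mathbb{R}}^{\dim_{\mathbb{R}} W_n^-}}$.

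It remains to verify that $\mathrm{Pin}(2)$ acts freely on $I_n \setminus (I_n)^{S^1}$. Let $(a,\phi,\omega) \in A_n$ with $\phi \neq 0$. For $g \in S^1$, the identity $g\cdot(a,\phi,\omega) = (a,g\phi,\omega) = (a,\phi,\omega)$ forces $g\phi = \phi$, so $g = 1$ since $S^1$ acts freely on nonzero spinors. For an element $jg \in j\cdot S^1$, the identity $jg\cdot(a,\phi,\omega) = (a,\phi,\omega)$ would force $(jg)^2 \phi = \phi$; but $(jg)^2 = -1$ in $\mathrm{Pin}(2)$, so this gives $-\phi = \phi$, hence $\phi = 0$, a contradiction. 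Therefore the Pin(2)-stabilizer of any point with $\phi \neq 0$ is trivial, and since the condition $\phi \neq 0$ cuts out exactly the complement of the $S^1$-fixed set, the Pin(2)-action on $I_n \setminus (I_n)^{S^1}$ is free.

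The main step is the identification of the fixed-point set as the correct equivariant sphere bundle; the freeness of the Pin(2)-action on the complement of the $S^1$-fixed set is a short algebraic check using $(jg)^2=-1$, and the finiteness of the CW-structure is already in hand from Proposition \ref{prop:finiteness}.
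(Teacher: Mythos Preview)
Your proposal is correct and follows exactly the approach the paper indicates: the paper's own proof is simply ``The proof is similar to that of Lemma~\ref{lem I_n S^1} and omitted,'' and you have carried out precisely this upgrade from $S^1$ to $\mathrm{Pin}(2)$, supplying the details (the $\tilde{\mathbb{R}}$-identification of $W_n^-$ via $j\cdot\omega=-\omega$, and the freeness check using $(jg)^2=-1$) that the paper leaves implicit.
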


\begin{proof}
The proof is similar to that of Lemma \ref{lem I_n S^1} and omitted. 
\end{proof}

Let  $\mathcal{SWF}^{\rm{Pin}(2)}(Y, \frak{s}, [\frak{S}])$ be  the ${\rm Pin}(2)$-Seiberg-Witten Floer parameterized homotopy type.  As before, the local equivalence class of $\mathcal{SWF}^{\rm{Pin}(2)}(Y, \frak{s}, [\frak{S}])$ is independent of $k_{\pm}, n$.  
See \cite{stoffregen-connectedsum} for the study of the local equivalence class of the $\rm{Pin}(2)$-Seiberg-Witten Floer homotopy type in the case $b_1(Y) = 0$. 
We may assume that $\dim_{\mathbb{R}} W_n^-$ are even for all $n$.  Then we have the well-defined number 
 \[
      k( \mathcal{SWF}^{\rm{Pin}(2)}(Y, \frak{s}, [\frak{S}]) ) \in \mathbb{Z}.
\]

\begin{dfn}
Fix $(Y,\frak{s})$ as above.  We define $\kappa(Y, \frak{s}) \in  \mathbb{Q} \cup \{ -\infty \}$ by 
\[
   \kappa(Y, \frak{s}) := 
    \inf_{g, \frak{S}}    2\Big(  k( \mathcal{SWF}^{\rm{Pin}(2)}(Y, \frak{s}, [\frak{S}]) ) -  \frac{1}{2} n(Y, g, \frak{s}, P_0) \Big). 
\]
We say that $(Y, \frak{s})$ is Floer $K_{{\rm Pin}(2)}$-split if $(I_n, r_n,s_n)$ is $K_{{\rm Pin}(2)}$-split for $n$ large,  where $(I_n,r_n,s_n)$ realizes equality in the definition of $\kappa(Y,\frak{s})$. 

 Note that this invariant indeed depends a priori on $\frak{s}$ as a spin structure, in what we have chosen as the marked point in $B_Y$ that is used in the definition of $\kappa$.
\end{dfn}
Unlike the case for homology, we have not shown that the invariant 
\[
            k( \mathcal{SWF}^{\rm{Pin}(2)}(Y, \frak{s}, [\frak{S}]))
\]
 is invariant under changes of spectral section that lie in $\tilde{KQ}(B)$, (essentially since we do not have access to a notion of $\rm{Pin}(2)$-complex orientable cohomology theories).  We expect that the quantity appearing in the $\inf$ is, in fact, independent of $[\frak{S}]$, however.
	
	We do not know if a self-conjugate $\mathrm{spin}^c$ structure may have different $\kappa$-invariants associated to different underlying spin structures.  The invariant $\kappa(Y,\frak{s})$ for $Y$ a rational homology $3$-sphere, agrees with Manolescu's definition \cite{Manolescu_Intersection_form}, by construction.  

\begin{cor}\label{cor:rokhlin-kappa}
	The reduction mod $2$ of the $\kappa$ invariant satisfies:
	\[
	\mu(Y,\frak{s})=\kappa(Y,\frak{s})\bmod{2},
	\]
	where $\mu(Y,\frak{s})$ is the Rokhlin invariant of $(Y,\frak{s})$.
\end{cor}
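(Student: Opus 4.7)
The plan is to compute $\kappa(Y,\frak{s}) \bmod 2$ using a fixed spin filling $(X,\frak{t})$ of $(Y,\frak{s})$ (which exists since $\Omega^{\mathrm{Spin}}_3 = 0$), and then identify the result with the Rokhlin invariant. From the definition of $\kappa(Y,\frak{s})$, the factor of $2k$ is automatically even, so reducing mod $2$ gives
\[
\kappa(Y,\frak{s}) \equiv -n(Y,g,\frak{s},P_0) \pmod 2
\]
for any choice of metric $g$ and $\mathrm{Pin}(2)$-equivariant spectral system $\frak{S}$ with first spectral section $P_0$. Using the formula (\ref{eq:spectral-index}) and the vanishing $c_1(\frak{t})=0$ in the spin case, this reduces to
\[
\kappa(Y,\frak{s}) \equiv -\dim\mathrm{Ind}(D_X,P_0) - \tfrac{\sigma(X)}{8} \pmod 2.
\]
Since the Rokhlin invariant satisfies $\mu(Y,\frak{s}) \equiv \sigma(X)/8 \pmod 2$ by definition, what remains is to show that $\dim_{\mathbb{C}}\mathrm{Ind}(D_X,P_0)$ is even.

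The key step is to exhibit a quaternionic structure on $\mathrm{Ind}(D_X,P_0)$. Because $\frak{t}$ is an honest spin structure, the action of $j\in\mathrm{Pin}(2)$ on the spinor bundles $\mathbb{S}^{\pm}$ is globally defined, and the $4$-dimensional Dirac operator $D_X\colon \Gamma(\mathbb{S}^+)\to\Gamma(\mathbb{S}^-)$ commutes with $j$; equivalently, $D_X$ is quaternionic-linear. By Theorem \ref{thm Pin(2) spectral sections}, since $\operatorname{Ind} D_Y = 0$ in $KQ^1(B_Y)$ (the hypothesis built into the definition of $\kappa$ being finite), we may choose $P_0$ to be $\mathrm{Pin}(2)$-equivariant, and in particular preserved by $j$. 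The relative Fredholm problem $(D_X,\pi_{P_0}r_Y)$ then defines a $\mathrm{Pin}(2)$-equivariant family over $B_X=\mathrm{Pic}(X)$ with quaternionic-linear symbol, so its family index lies in $KQ(B_X)$, and its underlying complex virtual bundle has even rank.

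The main obstacle is verifying the last point carefully: one must check that the finite-rank perturbation $\frak{p}$ chosen in (\ref{eq:perturbation}) to produce the bundles $U_n$ used in the definition of $\mathrm{Ind}(D_X,P_0)$ can be taken $\mathrm{Pin}(2)$-equivariantly, and that with such a choice both $U_n$ and the target $U_n'\oplus r_Y^*F_n$ are quaternionic (the trivial summand $\underline{\mathbb{C}}^m$ then appears with an even $m$, and in any event cancels in the virtual difference modulo $2$). This follows from the same averaging / extension-across-cells arguments used in the proof of Theorem \ref{thm Pin(2) spectral sections}, together with the stability of quaternionic structure under the transversality perturbations; given this, the complex rank of $\mathrm{Ind}(D_X,P_0)$ is twice its quaternionic rank. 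Plugging back in gives $\kappa(Y,\frak{s})\equiv -\mu(Y,\frak{s})\equiv \mu(Y,\frak{s})\pmod 2$, completing the proof.
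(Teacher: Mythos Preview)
Your proof is correct and follows the same approach as the paper, which simply asserts that $n(Y,g,\frak{s},P_0)\bmod 2$ equals the Rokhlin invariant ``by its construction'' and then concludes from $\kappa = 2k - n$ with $k$ an integer; you have unpacked the first assertion via a spin filling and the quaternionic structure on the Dirac index. Your detour through the perturbation $\frak{p}$ is unnecessary, however: the class $\operatorname{Ind}(D_X,P_0)\in K(B_X)$ is the index of the Fredholm family $(D_X,\pi_{P_0}r_Y)$ and is defined independently of $\frak{p}$, so its quaternionic structure (hence even complex rank) follows directly from the $\mathrm{Pin}(2)$-equivariance of that family.
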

\begin{proof}
	Indeed, $n(Y,g,\frak{s},P_0)\bmod{2}$ is the Rokhlin invariant of $(Y,\frak{s})$ by its construction.  The corollary then follows from the definition of $\kappa$ and the fact that $k$ is an integer.
\end{proof}

Corollary \ref{cor:rokhlin-kappa} indicates that $\kappa(Y,\frak{s})$ may depend on $\frak{s}$, as a spin structure. Note that if $(Y,\frak{s})$ admits a $\mathrm{Pin}(2)$-equivariant spectral section, for a self-conjugate $\mathrm{spin}^c$ structure $\frak{s}$, then $\mu(Y,-)$ is constant on all spin structures underlying $\frak{s}$; by Lin's result \cite{lin_rokhlin}, this condition, coupled with the triple cup product vanishing, characterizes $3$-manifolds which admit a $\mathrm{Pin}(2)$-equivariant spectral section.  However, if the $\mathrm{Pin}(2)$-equivariant $K$-theory could be extended to $3$-manifolds without a $\mathrm{Pin}(2)$-spectral section, so that Corollary \ref{cor:rokhlin-kappa} held, it would of course also imply that $\kappa(Y,\frak{s})$ depends on the spin structure and not just the $\mathrm{spin}^c$ structure.

Using our invariant $\kappa(Y,\frak{s})$, we can prove a $\frac{10}{8}$-type inequality  for smooth 4-manifolds with boundary, which generalizes the results of \cite{furuta} and \cite{Manolescu_Intersection_form}.


\begin{thm} \label{thm 10/8 inequality}
Let  $(Y_0, \frak{s}_0)$ be a spin,  rational homology $3$-sphere and $(Y_1, \frak{s}_1)$ be a closed,  spin 3-manifold such that the index $\operatorname{ind} D_{Y_1}$ is zero in $KQ^1(B_{Y_1})$.

\begin{enumerate}
\item
 Let $(X, \frak{t})$ be a compact,  smooth, spin, negative semidefinite 4-manifold with boundary   $-(Y_0, \frak{s}_0) \coprod (Y_1, \frak{s}_1)$. Then we have
\[
        \frac{1}{8} b_2^-(X)  + \kappa(Y_0, \frak{s}_0) \leq \kappa(Y_1, \frak{s}_1). 
\]

\item
Let $(X, \frak{t})$ be a compact, smooth, spin 4-manifold with boundary  
$-(Y_0, \frak{s}_0) \coprod$ $(Y_1, \frak{s}_1)$. Then we have
we have 
\[
        - \frac{\sigma(X)}{8} +  \kappa(Y_0, \frak{s}_0)  - 1
        \leq   b^+(X) + \kappa(Y_1, \frak{s}_1).
\]
Moreover, if $Y_0$ is Floer $K_{{\rm Pin}(2)}$-split  and $b^+(X) > 0$, we have
\[
    - \frac{\sigma(X)}{8} +  \kappa(Y_0, \frak{s}_0)  + 1
        \leq   b^+(X) + \kappa(Y_1, \frak{s}_1).
\] 
\end{enumerate}

\end{thm}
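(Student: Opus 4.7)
The plan is to adapt the proof of Theorem \ref{thm negative def 4-mfd} to the $\mathrm{Pin}(2)$-equivariant $K$-theoretic setting, using the $\mathrm{Pin}(2)$-equivariant relative Bauer--Furuta invariant from Corollary \ref{cor:bauer-furuta-main-equivalent}, the duality of \cite[Section 2.5]{manolescu_gluing}, and Proposition \ref{prop k W_0 W_1}.

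First I will fix $\mathrm{Pin}(2)$-equivariant spectral systems $\frak{S}_0,\frak{S}_1$ on $(Y_0,\frak{s}_0)$ and $(Y_1,\frak{s}_1)$ that realize (or nearly realize) the infima defining $\kappa(Y_i,\frak{s}_i)$; on $Y_0$ this is automatic since $B_{Y_0}$ is a point, and on $Y_1$ such spectral systems exist by Theorem \ref{thm Pin(2) spectral sections} under the hypothesis $\operatorname{Ind} D_{Y_1}=0\in KQ^1(B_{Y_1})$. In the spin setting the Seiberg--Witten equations on $X$ are $\mathrm{Pin}(2)$-symmetric, so Corollary \ref{cor:bauer-furuta-main-equivalent} produces a $\mathrm{Pin}(2)$-equivariant fiberwise Bauer--Furuta map $\mathcal{BF}_{[n]}(X,\frak{t})$ over $B_X$. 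Restricting to the fiber over a marked point $[0]\in B_X^{\mathrm{Pin}(2)}$ (chosen so its image in $B_{Y_1}$ is the marked point used in the definition of $\kappa(Y_1,\frak{s}_1)$) and composing with the $\mathrm{Pin}(2)$-equivariant Spanier--Whitehead duality pairing
\[
I_n(Y_0)\wedge I_n(-Y_0)\longrightarrow S^{F_n(Y_0)\oplus W_n(Y_0)},
\]
produces a $\mathrm{Pin}(2)$-fiberwise-deforming map
\[
f_n\colon \Sigma^{\tilde{\mathbb{R}}^{m_0}\oplus \mathbb{H}^{n_0+a}}I_n(Y_0)\longrightarrow \Sigma^{\tilde{\mathbb{R}}^{m_1+b^+(X)}\oplus \mathbb{H}^{n_1}}\bigl(I_n(Y_1)/s_n(B_{Y_1})\bigr)\bigm|_{[0]},
\]
where $a=\dim_{\mathbb{H}}\operatorname{Ind}(D_X,P_0)$, the extra $\tilde{\mathbb{R}}^{b^+(X)}$ factor on the target encodes the harmonic self-dual $2$-forms (on which $j$ acts by $-1$, so they form a $\tilde{\mathbb{R}}$-representation), and $m_i,n_i$ are determined by the spectral sections via the same formulas as in the proof of Theorem \ref{thm negative def 4-mfd}.

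Second, I will identify $f_n^{S^1}$: it is induced by the linear operator $(d^+,\pi^0_{-\infty}r_{-Y_0},\pi^0_{-\infty}r_{Y_1})$ on $\Omega^1_{CC}(X)$. Under the double Coulomb gauge, this operator is injective and its cokernel has real dimension exactly $b^+(X)$, naturally identified with $\mathcal{H}^+(X;\tilde{\mathbb{R}})$. Hence $f_n^{S^1}$ is $\mathrm{Pin}(2)$-homotopic to the inclusion of a fiber coming from a $\mathrm{Pin}(2)$-linear injection of representations whose cokernel is exactly $\tilde{\mathbb{R}}^{b^+(X)}$, fitting the hypothesis of Proposition \ref{prop k W_0 W_1} with $t_1-t_0=b^+(X)/2$ (after the even-level convention; one may suspend by an additional $\tilde{\mathbb{R}}$ if needed to make levels even, without changing $k$).

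Third, I will apply Proposition \ref{prop k W_0 W_1}. Part (1) always yields $k(\text{source})+t_0\le k(\text{target})+t_1$; substituting $k$ using Lemma \ref{lem k suspension} and rewriting via the Atiyah--Patodi--Singer index formula
\[
2a=-\sigma(X)/8+n(Y_1,g|_{Y_1},\frak{s}_1,P_0)-n(Y_0,g|_{Y_0},\frak{s}_0,P_0)
\]
(the spin version of (\ref{eq:spectral-index}), using that $\operatorname{Ind} D_X$ is quaternionic), and then invoking the definitions of $\kappa(Y_0,\frak{s}_0),\kappa(Y_1,\frak{s}_1)$ and passing to infima over $(g,\frak{S})$, gives the stated inequalities: in the negative semidefinite case $b^+(X)=0$ and $\sigma(X)=-b_2^-(X)$, producing part (1); in the indefinite case the $b^+(X)/2$ difference of levels together with the factor of $2$ in $\kappa=2k-n$ yields the first inequality of part (2); and under the Floer $K_{\mathrm{Pin}(2)}$-split hypothesis and $b^+(X)>0$, Proposition \ref{prop k W_0 W_1}(2) gives a $+1$ improvement, producing the second inequality of part (2).

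The main obstacle is the careful bookkeeping in the $\mathrm{Pin}(2)$-equivariant parameterized setting: one must verify that the Khandhawit-type duality of \cite[Section 2.5]{manolescu_gluing} is $\mathrm{Pin}(2)$-equivariant for spin input (routine but notationally heavy), that the fiber-restriction to $[0]\in B_X^{\mathrm{Pin}(2)}$ is compatible with the $\mathrm{Pin}(2)$-action on $B_{Y_1}$ factoring through $\pi_0(\mathrm{Pin}(2))$, and that the half-integer conventions between quaternionic and real fixed-point levels are consistent; the homotopy analysis of $f_n^{S^1}$ itself reduces to the linear calculation already carried out in the proof of Theorem \ref{thm negative def 4-mfd} and in \cite[Section 3]{Manolescu_Intersection_form}, and so introduces no new difficulty.
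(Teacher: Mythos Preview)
Your proposal follows the same overall strategy as the paper: restrict the $\mathrm{Pin}(2)$-equivariant Bauer--Furuta map to the fiber over $[0]\in B_X$, combine with the Spanier--Whitehead duality of \cite{manolescu_gluing}, identify $f_n^{S^1}$ via the operator $(d^+,\pi^0_{-\infty}r_{-Y_0},\pi^0_{-\infty}r_{Y_1})$, and then invoke Proposition \ref{prop k W_0 W_1}. The index bookkeeping you describe is correct.

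There is one genuine gap: your handling of odd $b^+(X)$. The invariant $k$ and Proposition \ref{prop k W_0 W_1} are formulated only for $\mathrm{Pin}(2)$-ex-spaces of SWF type at \emph{even} level, and the levels of the source and target differ exactly by $b^+(X)$. If $b^+(X)$ is odd, the two levels have opposite parity, so no common $\tilde{\mathbb{R}}$-suspension (which shifts both levels by the same amount) can make both even simultaneously; your proposed fix ``suspend by an additional $\tilde{\mathbb{R}}$ if needed'' therefore does not work. The paper resolves this by replacing $X$ with $X\#(S^2\times S^2)$: this increases $b^+$ by $1$ (making it even) while leaving $\sigma$ unchanged, so Proposition \ref{prop k W_0 W_1} applies to the new cobordism. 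The extra $+1$ in $b^+$ is then absorbed using the congruence $\kappa(Y,\frak{s})\equiv \mu(Y,\frak{s})\pmod 2$ (Corollary \ref{cor:rokhlin-kappa}), exactly as in \cite[Proof of Theorem 1.4]{Manolescu_Intersection_form}; you should include this step.
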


\begin{proof}
Let $[0] \in B_X = \mathrm{Pic}(X)$ be the element corresponding to the flat spin connection.  
Recall that $\mathcal{BF}_{[n]}$ is a fiber-preserving map. 
The restriction $\mathcal{BF}_{[n]}(X,\frak{t})$ to the fiber over $[0]$ and the duality map
\[
      I_n(Y_0) \wedge I_n(-Y_0) \rightarrow   S^{F_n(Y_0) \oplus W_n(Y_0)}
\]
defined in \cite[Section 2.5]{manolescu_gluing},   give a ${\rm Pin}(2)$-map
\[
       f_n :  \Sigma^{\tilde{\mathbb{R}}^{m_0} \oplus \mathbb{H}^{n_0}}  I_n(Y_0)  
                 \rightarrow 
                 \Sigma^{\tilde{\mathbb{R}}^{m_1} \oplus \mathbb{H}^{n_1}} (I_n(Y_1)  / s_n(B_{Y_1}))
\]
such that 
 \begin{align*}
   &  f_n(   (\Sigma^{\tilde{\mathbb{R}}^{m_0} \oplus \mathbb{H}^{n_0}}  I_n(Y_0))^{S^1}  )   
       \subset    
       (\Sigma^{\tilde{\mathbb{R}}^{m_1} \oplus \mathbb{H}^{n_1}} I_n(Y_1)_{[0]})^{S^1}, \\
   &  f_n( (\Sigma^{\tilde{\mathbb{R}}^{m_0} \oplus \mathbb{H}^{n_0}}  I_n(Y_0))^{{\rm Pin}(2)}  ) \subset  (\Sigma^{\tilde{\mathbb{R}}^{m_1} \oplus \mathbb{H}^{n_1}}I_n(Y_1)_{[0]})^{{\rm Pin}(2)}. 
 \end{align*}
Here $[0] \in \mathrm{Pic}(Y_1)$ is the element corresponding to the flat spin  connection,  and
   \begin{align*}
         m_0 - m_1 &= \rank_{\mathbb{R}} W_{n}(Y_1)^-  - \dim_{\mathbb{R}} W_n(Y_0)^- - b^+(X), \\
         n_0 - n_1  &= \rank_{\mathbb{H}} F_n(Y_1)^{-} - \dim_{\mathbb{H}} F_n(Y_0)^-   \\
                         & \qquad +  \frac{1}{2}n(Y_1, g|_{Y_1}, \frak{t}|_{Y_1}, P_0) - \frac{1}{2} n(Y_0, g|_{Y_0}, \frak{t}|_{Y_0}) - \frac{\sigma(X)}{16}. 
   \end{align*}
The restriction of $f_n$ to $(\Sigma^{\tilde{\mathbb{R}}^{m_0} \oplus \mathbb{H}^{n_0}}   I_n(Y_0))^{S^1}$ is induced by the operator
\[
      (d^+, \pi_{-\infty}^{0} r_{-Y_0},   \pi_{-\infty}^{0}  r_{Y_1}) : \Omega^1_{CC}(X) \rightarrow \Omega^+(X) \oplus  ({\mathcal W}_{-Y_0})_{-\infty}^{0} \oplus (\mathcal{W}_{Y_1, [0]})_{-\infty}^{0}
\]
and is a homotopy equivalence
\[
          (\Sigma^{\tilde{\mathbb{R}}^{m_0} \oplus \mathbb{H}^{n_0}}  I_n(Y_0))^{{\rm Pin}(2)} 
          \rightarrow 
          (\Sigma^{\tilde{\mathbb{R}}^{m_1} \oplus \mathbb{H}^{n_1}} I_n(Y_1)_{[0]})^{{\rm Pin}(2)},
\]
indeed, both of these are just $S^0$ consisting of $0$ and the base point. 
Moreover if $b^+(X) = 0$, the restriction of $f_n$ to $(\Sigma^{\tilde{\mathbb{R}}^{m_0} \oplus \mathbb{H}^{n_0}}  I_n(Y_0))^{S^1}$ is a ${\rm Pin}(2)$-homotopy equivalence
\[
        \Sigma^{\tilde{\mathbb{R}}^{m_0}}  I_n(Y_0)^{S^1} 
        \rightarrow   
      \Sigma^{\tilde{\mathbb{R}}^{m_1}}  I_n(Y_1)_{[0]}^{S^1}.
\]
We may assume that $m_0, m_1$ are even and can use Proposition \ref{prop k W_0 W_1} (1) to get the first statement.  

If $b^+(X)$ is even,  $\Sigma^{\tilde{\mathbb{R}}^{m_0} \oplus \mathbb{H}^{n_0}}  I_n(Y_0)$ and  $\Sigma^{\tilde{\mathbb{R}}^{m_1} \oplus \mathbb{H}^{n_1}} I_n(Y_1)$ are of SWF type at even levels and  we can apply Proposition \ref{prop k W_0 W_1} (\ref{itm:k-1}), (\ref{itm:k-2}) to $f_n$ to obtain the second statement. 
If $b^+(X)$ is odd, we take a connected sum $X \# S^2 \times S^2$, then we can apply Proposition \ref{prop k W_0 W_1}.   In this second part, we take advantage of the fact that $\kappa(Y,\frak{s})\bmod{2}$ agrees with the Rokhlin invariant, as is used in \cite[Proof of Theorem 1.4]{Manolescu_Intersection_form}.

\end{proof}

\begin{cor} \label{cor:10/8}
Let $(X, \frak{t})$ be a compact spin $4$-manifold with boundary $Y$. Assume that  the index bundle $\operatorname{ind} D_{Y}$  is zero in $KQ^1(B_Y)$.  Then we have
\[
        -\frac{\sigma(X)}{8} - 1 \leq b^+(X) + \kappa(Y, \frak{t}|_{Y}). 
\]
Moreover if $b^+(X) > 0$ we have
\[
      -\frac{\sigma(X)}{8} + 1 \leq b^+(X) + \kappa(Y, \frak{t}|_{Y}).
\]
\end{cor}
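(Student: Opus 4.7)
The plan is to reduce this Corollary directly to Theorem \ref{thm 10/8 inequality}(2) by capping off a small ball. First, I would remove a small open $4$-ball from $\operatorname{Int}(X)$ to produce a compact smooth spin $4$-manifold $X'$ with boundary $-S^3 \coprod Y$, where $S^3$ carries its unique spin structure $\frak{s}_0$. Since removing an open ball does not change the topology away from a contractible set, I have $\sigma(X')=\sigma(X)$, $b^+(X')=b^+(X)$, $b_1(\partial X')=b_1(Y)$, and the $\mathrm{spin}^c$ structure $\frak{t}$ extends tautologically to $X'$ and restricts on the $S^3$ component to $\frak{s}_0$. The index $\operatorname{Ind} D_{S^3}\in KQ^1(\mathrm{Pic}(S^3))$ is zero (the Picard torus is a point), and by hypothesis $\operatorname{Ind} D_Y = 0 \in KQ^1(B_Y)$, so the input hypotheses of Theorem \ref{thm 10/8 inequality}(2) are satisfied for $X'$ with $Y_0=S^3$ and $Y_1=Y$.

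Next, I would invoke the known computation $\kappa(S^3,\frak{s}_0)=0$, which is immediate from the definition of $\kappa$ in this paper together with the fact that $\mathcal{SWF}^{\mathrm{Pin}(2)}(S^3,\frak{s}_0)\simeq S^0$ and $n(S^3,g_{\mathrm{round}},\frak{s}_0,P_0)=0$ for the canonical spectral section. Applying Theorem \ref{thm 10/8 inequality}(2) to $X'$ gives
\[
-\frac{\sigma(X)}{8} + \kappa(S^3,\frak{s}_0) - 1 \leq b^+(X) + \kappa(Y,\frak{t}|_Y),
\]
which is the first claimed inequality.

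For the sharper inequality under the assumption $b^+(X)>0$, I would additionally use that $(S^3,\frak{s}_0)$ is Floer $K_{\mathrm{Pin}(2)}$-split, a fact which follows again from $\mathcal{SWF}^{\mathrm{Pin}(2)}(S^3,\frak{s}_0)\simeq S^0$ (so that the relevant ideal $\mathcal{I}(\mathbf{U})$ is of the form $(z^0)$). Since $b^+(X')=b^+(X)>0$, the second part of Theorem \ref{thm 10/8 inequality}(2) applies and yields
\[
-\frac{\sigma(X)}{8} + \kappa(S^3,\frak{s}_0) + 1 \leq b^+(X) + \kappa(Y,\frak{t}|_Y),
\]
which is the desired refinement.

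The main obstacle, such as it is, consists of recording the two classical computations for $(S^3,\frak{s}_0)$, namely $\kappa(S^3,\frak{s}_0)=0$ and Floer $K_{\mathrm{Pin}(2)}$-splitness; both are consistent with Manolescu's original construction in \cite{Manolescu_Intersection_form} (to which the present definition reduces when $b_1=0$) and require no new work here. Everything else is formal from Theorem \ref{thm 10/8 inequality}(2).
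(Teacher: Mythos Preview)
Your proposal is correct and follows essentially the same approach as the paper: remove a small disk to obtain a cobordism from $S^3$ to $Y$, then apply Theorem \ref{thm 10/8 inequality}(2) using $\kappa(S^3)=0$ and the fact that $S^3$ is Floer $K_{\mathrm{Pin}(2)}$-split. The paper's proof is more terse but identical in content.
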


\begin{proof}
Removing a small disk from $X$, we get a bordism $X'$  with boundary $S^3 \coprod Y$. Since  $\kappa(S^3) = 0$ and $S^3$ is Floer $K_{{\rm Pin}(2)}$-split,  applying Theorem \ref{thm 10/8 inequality} to $X'$, we obtain the inequalities. 
\end{proof}

Since the spin bordism group $\Omega^{\mathrm{spin}}_3$ is zero, we obtain the following: 

\begin{cor}
$\kappa(Y, \frak{s}) > - \infty$. 
\end{cor}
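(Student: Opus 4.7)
The plan is to combine the vanishing $\Omega^{\mathrm{spin}}_3=0$ with the $10/8$-type inequality of Corollary \ref{cor:10/8}. Since $\Omega^{\mathrm{spin}}_3=0$, every closed spin $3$-manifold $(Y,\frak{s})$ is the spin boundary of some compact spin $4$-manifold $(X,\frak{t})$. First I would fix such a bounding pair and observe that the hypothesis needed to even define $\kappa(Y,\frak{s})$, namely that $\operatorname{Ind} D_{Y}=0$ in $KQ^1(B_Y)$, is the same hypothesis required to apply Corollary \ref{cor:10/8}.

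Next, I would apply Corollary \ref{cor:10/8} to the chosen $(X,\frak{t})$ to obtain
\[
-\frac{\sigma(X)}{8}-1 \;\leq\; b^+(X) + \kappa(Y,\frak{s}).
\]
Rearranging gives
\[
\kappa(Y,\frak{s}) \;\geq\; -\frac{\sigma(X)}{8} - 1 - b^+(X),
\]
and the right-hand side is a finite rational number depending only on the fixed auxiliary spin filling $X$. This immediately yields $\kappa(Y,\frak{s})>-\infty$.

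The main (and essentially only) content is the existence of the spin filling; everything else is a bookkeeping consequence of Corollary \ref{cor:10/8}. There is no real obstacle here beyond noting that we may indeed invoke the corollary under the same $KQ^1$-vanishing assumption that makes $\kappa$ well-defined; the proof is a one-line application once the bounding manifold is produced.
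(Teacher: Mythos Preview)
Your proposal is correct and follows essentially the same approach as the paper: the paper simply notes that $\Omega^{\mathrm{spin}}_3=0$ gives a spin filling, and then Corollary~\ref{cor:10/8} immediately yields a finite lower bound for $\kappa(Y,\frak{s})$. Your write-up makes explicit the rearranged inequality and the observation that the $KQ^1$-vanishing hypothesis is the one already in force, but the argument is the same.
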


\begin{ex}
Let $\frak{s}$ be a spin structure on $S^1 \times S^2$. 
Since $S^1 \times S^2$ has a positive scalar curvature metric $g$,  the conditions of Theorem \ref{thm reducible I_n} are satisfied.  Hence  $\mathcal{SWF}(Y,\frak{s}, \frak{S}) \cong S_{B_Y}^{0}$. Here $\frak{S}$ is a spectral system with $P_0 = \mathcal{E}_0(D)_{-\infty}^{0}$.   Also we have $n(S^1 \times S^2, g, \frak{s}, P_0) = 0$, because there is an orientation reversing diffeomorphism of $S^1 \times S^2$. So we obtain
\[
    \kappa(S^1 \times S^2, \mathfrak{s}) \leq 0. 
\]
Note that $\mathfrak{s}$ extends to a spin structure $\mathfrak{t}$ on $S^1 \times D^3$.  Applying Theorem \ref{cor:10/8} to $(S^1 \times D^3) \# (S^2 \times S^2)$, we get $\kappa(S^1 \times S^2, \mathfrak{s}) \geq 0$.  Hence 
\[
  \kappa (S^1 \times S^2, \mathfrak{s} ) = 0.
\]

If $X$ is an compact, oriented,  spin  $4$-manifold with boundary $S^1 \times S^2$ and with $b^+(X) > 0$, we have
\[
          -\frac{\sigma(X)}{8} + 1 \leq b^+(X)
\]
by Corollary \ref{cor:10/8}. 
This inequality can be also obtained from the $\frac{10}{8}$-inequality \cite{furuta} for the closed $4$-manifold $X \cup (S^1 \times D^3)$  and the additivity of the signature. 
\end{ex}

\begin{ex}
Let $Y$ be the flat 3-manifold and $\frak{s}_1, \frak{s}_2, \frak{s}_3$ be the $\mathrm{spin}^c$ structures  in Example \ref{ex flat 3-mfd h inv}.  As in Example \ref{ex flat 3-mfd h inv}, for any underlying spin structure, we have
\[
    \kappa(Y, \frak{s}_j) \leq 0 
\]
for $j = 1, 2, 3$. 
\end{ex}

\begin{ex}
Let $p : Y \rightarrow \Sigma$ be the sphere bundle  of the complex line bundle $N_d$ on a closed, oriented surface $\Sigma$ of degree $d$.     Assume that $d$ is even and that $0 < g(\Sigma) < \frac{d}{2} + 1$.  Using a connection on $N_d$, we have an identification  
\[
TN_d =   p^* T\Sigma \oplus p^* N_d.
\]
Let $s : Y  \rightarrow p^* N_d|_{Y}$ be the tautological section. Then we have
\begin{equation}  \label{eq:TY TSigma} 
     TY = p^* T\Sigma \oplus i \R s. 
\end{equation}
Choose  spin structures of $\Sigma$ and $N_d$. This is equivalent to choosing  complex line bundles $K_{\Sigma}^{\frac{1}{2}}$, $N_d^{\frac{1}{2}}$ and  isomorphisms $K_{\Sigma}^{\frac{1}{2}} \otimes K_{\Sigma}^{\frac{1}{2}} \cong K_{\Sigma}$, $N_d^{\frac{1}{2}} \otimes N_d^{\frac{1}{2}} \cong N_d$.   Also we consider the natural  spin structure of the trivial bundle $i\R s$.  The spin structures of $\Sigma$,  $i\R s$ and (\ref{eq:TY TSigma}) induce a spin structure $\fs'$ on $Y$. Note that $p^* (N_d^{\frac{1}{2}} \otimes N_d^{\frac{1}{2}} ) \cong p^* N_d =  \underline{\C}$ and hence the structure group of $p^* N_d^{\frac{1}{2}}$ is $\{ \pm 1 \}$.  
Put $\fs := \fs' \otimes p^*N_d^{\frac{1}{2}}$.  Then $\fs$ is a spin structure of $Y$ with spinor bundle $\mathbb{S} = p^* ( (K_{\Sigma}^{-\frac{1}{2}} \oplus K_{\Sigma}^{\frac{1}{2}})  \otimes N_d^{\frac{1}{2}})$.  The spin$^c$ structure induced by  $\fs$ is $\fs_{g-1+\frac{d}{2}}$ of Proposition \ref{prop critical points of CSD_r}.  Since $g \leq g-1+\frac{d}{2} < d$, we can apply Theorem \ref{thm I_n sphere bundle} and we get 
\[
      \mathcal{SWF}^{Pin(2)}(Y, \fs, [\mathfrak{S}]) \cong S^0_B. 
\]
Here $\mathfrak{S}$ is as in Theorem \ref{thm I_n sphere bundle}. 
Taking $q$ to be $g-1+\frac{d}{2}$ in (\ref{eq:n(Y,s_q)}), we have
\[
     n(Y, \fs, g_r, P_0) = \frac{1}{8}. 
\]
Thus we obtain
\[
    \kappa(Y,\frak{s}) \leq -\frac{1}{8}. 
\]

\end{ex}

\appendix
\chapter{The Conley Index and Parameterized Stable Homotopy}
\label{sec:homotopy}

In this chapter we define the category in which the Seiberg-Witten stable homotopy type lives, and variations thereon, as well as some background on the Conley index.    Let $G$ be a compact Lie group for this section.   In section \ref{subsec:homotopy1}, we define parameterized homotopy categories we will be interested in.  In section \ref{subsec:conley}, we give basic definitions for the Conley index.  In section \ref{subsec:spectra}, we give a definition of spectra suitable for the construction.  The main point is Theorem \ref{thm:fiberwise-deforming-conley-1}, which states that the parameterized homotopy class of the (parameterized) Conley index is well-defined as a parameterized equivariant homotopy class in $\exsp$.

\section{The Unstable parameterized homotopy category}\label{subsec:homotopy1}
This section is intended both to introduce some notation, and also to point out that the notions introduced in \cite{MRS} are compatible with parameterized, equivariant homotopy theory, as considered in \cite{CW},\cite{MS}.\footnote{Establishing that \cite{MRS} and \cite{CW},\cite{MS} are compatible is, in fact, straightforward.  However, at the time that \cite{MRS} appeared, the May-Sigurdsson parameterized homotopy category had not yet appeared.}  In the first part, we follow the discussion of Costenoble-Waner \cite[Chapter II]{CW} and \cite[Section 3]{MRS}.  In particular, we will occasionally use the notation of model categories, but the reader unfamiliar with this language may safely ignore these aspects.  The main points are Lemma \ref{lem:fib-hom-good}, which lets us translate properties from the language of \cite{MRS} to that of \cite{MS}, and Proposition \ref{prop:310}, which is used in describing the change of the Conley index of approximate Seiberg-Witten flows upon changing the finite-dimensional approximation.  

\begin{dfn}\label{def:fib-pointed-space}
	Fix a compactly generated space $Z$ with a continuous $G$-action.  A triple $\bigu=(U,r,s)$ consisting of a $G$-space $U$ and $G$-equivariant continuous maps $r\from U \to Z$ and $s\from Z \to U$ such that $r\circ s=\id_Z$ is called a(n) (equivariant) \emph{ex-space} over $Z$. \footnote{In \cite{MRS}, ex-spaces are called \emph{fiberwise-deforming spaces}.}  Let $\exsp$ be the category of ex-spaces, where morphisms $(U,r,s)\to (U',r',s')$ are given by maps $f\from U \to U'$ so that $r'f=r$ and $fs=s'$.
\end{dfn}

In comparison to the ordinary homotopy category, passing to the parameterized homotopy category results in many more maps (for a highbrow definition of the parameterized homotopy category, refer to Remark \ref{rmk:model-strucs}.  Indeed, let $(X\times I,r,s)$ and $(Y,r',s')$ be ex-spaces over $Z$.  

\begin{dfn}\label{def:fib-map}
	A \emph{fiberwise-deforming map} $f\from \bigu\to \bigu'$ is an equivariant continuous map $f\from (U,s(Z))\to (U',s'(Z))$ so that $r'\circ f$ is (equivariantly) homotopic to $r$, relative to $s(Z)$.  We say that fiberwise-pointed spaces $\bigu$ and $\bigu'$ are \emph{fiberwise-deforming homotopy equivalent}  
	if there exist continuous $G$-equivariant maps $f\from \bigu\to \bigu', g\from \bigu'\to\bigu$ so that 
	\begin{align*}
		f\circ s = s', & \qquad g\circ s'=s,\\
		r' \circ f \simeq r\, \mbox{rel}\, s(Z), & \qquad r\circ g \simeq r' \,\mbox{rel}\, s'(Z),\\
		g\circ f\simeq\id_U\, \mbox{rel}\, s(Z), &\qquad f\circ g\simeq \id_{U'}\, \mbox{rel} \, s'(Z).
	\end{align*}
	We write $[\bigu]$ for the fiberwise homotopy type of $\bigu$.  
	We will call a fiberwise-deforming map, along with the choice of a homotopy $h$ between $r'\circ f$ and $r$, a \emph{lax map}, following \cite{CW}.  
\end{dfn}

We can also consider homotopies of fiberwise-deforming maps.  A \emph{homotopy} of fiberwise-deforming maps will mean a collection of fiberwise-deforming maps  $F_t : \bigu \to \bigu'$, so that $F: U \times I \to U'$ is continuous.  Homotopy of lax maps is similar, but requiring that the homotopy involved in the definition of a lax map is compatible, as we will define below. 

\begin{rem}\label{rmk:model-strucs}
	There is a model structure (what May-Sigurdsson call the $q$-\emph{model structure}) on $\exsp$ given by declaring a map in $\exsp$ to be a weak equivalence, fibration, or cofibration, if it is such after forgetting the base $Z$, but May-Sigurdsson point out technical difficulties with this model structure.  They define a variant, the $qf$-\emph{model structure} on $\exsp$, whose weak equivalences are those of the $q$-model structure, but with a smaller class of cofibrations.  Let $\hoex$ denote the homotopy category of the $qf$-model structure; we call this the \emph{parameterized homotopy category} and write $[X,Y]_{G,Z}$ for the morphism sets of $\hoex$ - these turn out to be the same as the lax maps $X$ to $Y$ up to homotopy, as in \cite[Section 2.1]{CW}.
\end{rem}

Let $\Lambda Z$ denote the set of \emph{Moore paths} of $Z$:
\[
\Lambda Z=\{(\lambda,\ell)\in Z^{[0,\infty]}\times [0,\infty) \mid \lambda(r)=\lambda(\ell) \mbox{ for } r\geq \ell\}.
\]
Recall that Moore paths have a strictly associative composition:
\[
(\lambda\mu)(t) =\begin{cases}
\lambda(t) \mbox{ if } t\leq \ell_\lambda, \\ \mu(t-\ell_\lambda) \mbox{ if } t \geq \ell_\lambda.
\end{cases}
\]
Given $r\from X \to Z$, the \emph{Moore path fibration} $LX=L(X,r)$ is defined by
\[
LX =X \times_Z \Lambda Z,
\]
and there is an inherited projection map $Lr\from LX \to Z$ by $Lr((x,\lambda))=\lambda(\infty)$, as well as an inherited section map $Ls\from Z \to LX$ given by $Ls(b)=(s(b),b)$, the path with length zero at $s(b)$.  Finally, there is a natural inclusion $\iota \from X \to LX$, which is a weak-equivalence on total spaces, and hence a weak equivalence in the $qf$-model structure.  

Note that a lax map $X\to Y$ is equivalent to the data of a genuine map $X\to LY$ in $\exsp$ (using that $Y$ and $LY$ are weakly equivalent, and basic properties of model categories).  In particular, any lax map defines an element of $[X,Y]_{G,Z}$, which may or may not be represented by a map $X\to Y$ in $\exsp$.  The following lemma is then immediate from the definitions:

\begin{lem}\label{lem:fib-hom-good}
	Fiberwise-deforming homotopy equivalent spaces are weakly equivalent in $\exsp$.
\end{lem}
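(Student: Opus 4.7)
The plan is to promote each fiberwise-deforming map to an honest morphism in $\exsp$ by replacing the target with its Moore path fibration, then to exploit the fact that the inclusion $\iota \from X \to LX$ is a weak equivalence in $\exsp$.

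First, I would fix the correspondence between lax maps and genuine morphisms. Given a fiberwise-deforming map $f\from \bigu \to \bigu'$ together with a choice of equivariant homotopy $h\from U \times I \to Z$ from $r' \circ f$ to $r$ rel $s(Z)$, define $\tilde{f}\from U \to LU'$ by $\tilde{f}(u) = (f(u), \bar{h}_u)$, where $\bar{h}_u$ is the Moore path of length $1$ given by $t \mapsto h(u,t)$. Then $Lr' \circ \tilde{f} = r$ holds on the nose, and the condition that $h$ is relative to $s(Z)$ together with $f \circ s = s'$ forces $\tilde{f} \circ s = Ls'$. Thus $\tilde{f}$ is an honest morphism in $\exsp$, and it is straightforward to check that if two choices of $h$ differ by an equivariant homotopy rel $s(Z)$, the resulting $\tilde{f}$'s are homotopic in $\exsp$.

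Next, assume $\bigu$ and $\bigu'$ are fiberwise-deforming homotopy equivalent via $(f,g)$, with chosen homotopies of projections making them lax, producing morphisms $\tilde{f}\from U \to LU'$ and $\tilde{g} \from U' \to LU$ in $\exsp$. I would form their composite $\tilde{g} \cdot \tilde{f}\from U \to LU$ using functoriality of $L$ and Moore path concatenation (which is strictly associative, so this is clean). Unwinding definitions, the underlying map on $U$ is $g \circ f$, and the Moore path component is the concatenation of the two projection-homotopies. The fiberwise-deforming homotopy $g \circ f \simeq \id_U$ rel $s(Z)$ then lifts to a homotopy in $\exsp$ from $\tilde{g}\cdot \tilde{f}$ to $\iota \from U \to LU$: concretely, feed the $I$-parameter through a simultaneous deformation of the concatenated Moore path to the constant path at $r$, using the two projection-homotopies and the rel-$s(Z)$ hypothesis to arrange continuity of sections.

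Since $\iota$ is a weak equivalence in $\exsp$, the composite $\tilde{g}\cdot \tilde{f}$ is a weak equivalence; by a symmetric argument applied to $\tilde{f}\cdot \tilde{g} \simeq \iota \from U' \to LU'$, the 2-of-3 property for weak equivalences in the $qf$-model structure forces both $\tilde{f}$ and $\tilde{g}$ to be weak equivalences. This yields the zigzag $U \xrightarrow{\tilde{f}} LU' \xleftarrow{\iota} U'$ of weak equivalences in $\exsp$, establishing the claim. The main obstacle will be the careful verification in the third paragraph: assembling the chosen projection-homotopies from $\tilde{f}$ and $\tilde{g}$ together with the rel-$s(Z)$ homotopy $g\circ f \simeq \id_U$ into a single continuous $I$-parameter family of morphisms in $\exsp$ from $\tilde{g}\cdot \tilde{f}$ to $\iota$. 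This requires a small but explicit reparameterization of concatenated Moore paths and a check that the section is preserved throughout the deformation; once this is in place, the rest follows formally from standard properties of the $qf$-model structure.
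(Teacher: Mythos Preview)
Your proposal is correct and follows essentially the same route as the paper: the paper sets up the correspondence between lax maps and genuine $\exsp$-morphisms into the Moore path fibration $LY$, notes that $\iota\from Y \to LY$ is a weak equivalence, and then declares the lemma ``immediate from the definitions.'' Your argument is precisely the unpacking of that immediacy---promoting $f,g$ to $\tilde f,\tilde g$, showing the composites are homotopic to $\iota$, and invoking 2-of-3---so there is no substantive difference in approach, only in the level of detail supplied.
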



A \emph{homotopy} between lax maps $f_0: X \to Y$ and $f_1: X \to Y$ is a lax map $X\wedge_Z [0,1]_+\to Y$ so that $f|_{X\wedge i}=f_i$ for $i=0,1$.  By \cite[2.1]{CW} the homotopy classes of lax maps are in agreement with $[X,Y]_{G,Z}$.



We will encounter collections of fiberwise-deforming spaces related by suspensions.  We have the following definition.

\begin{dfn}[Section 3.10, \cite{MRS}]\label{def:fiberwise-smash}
	Let $\bigu=(U,r,s)$ and $\bigu'=(U',r',s')$ be ex-spaces over $Z,Z'$, where $U,Z$ are $G$-spaces and $U',Z'$ are $G'$-spaces, for $G,G'$ compact Lie groups.  Define an equivalence relation $\sim_\wedge$ on $U \times U'$ by $(u,u')\sim_\wedge (v,v')$ if $(u,u')=(v,v')$ or $u=v\in s(Z),r'(u')=r'(v')$ or $r(u)=r(v),u'=v'\in s'(Z')$.  Define the \emph{fiberwise smash product} by
	\[
	U \wedge U':= U\times U'/\sim_\wedge.
	\]
\end{dfn}

We call an ex-space $\bigu$ \emph{well-pointed} if the inclusion $s(Z)\to U$ is a cofibration in the category of $G$-spaces.  That is, we require that $s(Z)\subset U$ admits a $G$-equivariant Str{\o}m structure (for a definition see \cite[Section 3]{MRS}).  We record the following result from \cite{MRS} (the proof in the equivariant case is identical to that for the nonequivariant case).
\begin{prop}[Proposition 3.10, \cite{MRS}]\label{prop:310}
	Assume that $\bigu,\bigu',\mathbf{V},\mathbf{V}'$ are fiberwise well-pointed spaces, with $[\bigu]=[\bigu']$ and $[\mathbf{V}]=[\mathbf{V}']$.  Then $[\bigu \wedge \mathbf{V}]=[\bigu'\wedge \mathbf{V}']$.  
\end{prop}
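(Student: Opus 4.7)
\medskip

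The plan is to reduce the statement to a single naturality claim: if $f\colon \bigu \to \bigu'$ is a fiberwise-deforming homotopy equivalence between well-pointed ex-spaces, and $\mathbf{V}$ is a well-pointed ex-space, then $f\wedge \mathrm{id}_{\mathbf{V}}\colon \bigu\wedge \mathbf{V} \to \bigu'\wedge \mathbf{V}$ is again a fiberwise-deforming homotopy equivalence. Once this is established, the proposition follows by the standard two-step argument:
\[
\bigu\wedge \mathbf{V}\;\simeq\;\bigu'\wedge \mathbf{V}\;\simeq\;\bigu'\wedge \mathbf{V}',
\]
using the claim first for the $\bigu$-variable and then (by symmetry of $\wedge$) for the $\mathbf{V}$-variable. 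This also uses that $\bigu'\wedge \mathbf{V}$ is itself well-pointed, which follows from the standard fact that the pushout-product of two equivariant Str{\o}m cofibrations is a Str{\o}m cofibration — this is the place where the well-pointedness hypothesis is genuinely used.

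To prove the reduced claim, let $g\colon \bigu'\to \bigu$ be a fiberwise-deforming homotopy inverse of $f$, with homotopies $H\colon gf\simeq \mathrm{id}_{U}$ rel $s(Z)$ and $H'\colon fg\simeq \mathrm{id}_{U'}$ rel $s'(Z)$, and with chosen homotopies witnessing $r'\circ f\simeq r$ and $r\circ g\simeq r'$ rel sections. The candidate inverse of $f\wedge \mathrm{id}_{\mathbf{V}}$ is $g\wedge \mathrm{id}_{\mathbf{V}}$. Both maps preserve sections strictly by construction, since the section of $\bigu\wedge \mathbf{V}$ is exactly the image of $s(Z)\times V\cup U\times s_V(Z)$ in the quotient. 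The composition $(g\wedge \mathrm{id}_{\mathbf{V}})\circ(f\wedge \mathrm{id}_{\mathbf{V}})$ equals $(gf)\wedge \mathrm{id}_{\mathbf{V}}$, and the homotopy $H\times \mathrm{id}_V$ descends to a homotopy $H\wedge \mathrm{id}_{\mathbf{V}}$ in $\bigu\wedge \mathbf{V}$. The key point is that this descended homotopy is well-defined and continuous, and is rel the section of $\bigu\wedge \mathbf{V}$: the first holds because $H$ is already rel $s(Z)$, hence is compatible with the equivalence relation $\sim_\wedge$ on both "legs" of the wedge construction, and the second is automatic from the same compatibility.

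The projection-compatibility condition is handled similarly: the chosen homotopy $r'\circ f\simeq r$ rel $s(Z)$, together with the projection of $\mathbf{V}$ to its base, assembles into a homotopy of projection maps on $\bigu\wedge \mathbf{V}\to Z$ that is rel the section. The main technical obstacle is ensuring that all these constructions pass to the quotient defining $\wedge$ without losing continuity at the identified locus; this is exactly where the well-pointedness is essential. Concretely, one uses that the inclusions $s(Z)\hookrightarrow U$ and $s_V(Z)\hookrightarrow V$ admit equivariant Str{\o}m structures, so that the quotient map $U\times V\to U\wedge V$ is a quasifibration on the relevant neighborhoods; this lets us lift and descend the homotopies $H,H',H\times \mathrm{id}_V,H'\times \mathrm{id}_V$ rel the collapsed subspaces. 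The argument for the other composition $(f\wedge\mathrm{id}_{\mathbf{V}})\circ(g\wedge\mathrm{id}_{\mathbf{V}})\simeq \mathrm{id}$ is verbatim the same with the roles of $f,g$ reversed, completing the proof.
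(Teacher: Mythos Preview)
The paper does not give its own proof of this proposition; it simply records it as Proposition~3.10 of \cite{MRS} and notes that the equivariant case is identical to the nonequivariant one. So there is no in-paper argument to compare against.

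Your outline is the standard one and is essentially correct: reduce to showing that $f\wedge\mathrm{id}_{\mathbf V}$ is a fiberwise-deforming equivalence whenever $f$ is, then apply this twice using symmetry of $\wedge$. The observation that well-pointedness of $\bigu'\wedge\mathbf V$ follows from the pushout-product property of Str{\o}m cofibrations is exactly the right ingredient, and your check that the homotopies $H\times\mathrm{id}_V$ descend to $\sim_\wedge$ because $H$ is rel $s(Z)$ is the core of the argument.

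Two small points. First, the invocation of ``quasifibration'' is not quite the right tool here; what you actually need is that the quotient map $U\times V\to U\wedge V$ is a pushout along a Str{\o}m cofibration (the inclusion $s(Z)\times V\cup U\times s_V(Z')\hookrightarrow U\times V$), so that homotopies rel that subspace descend. This is a cofibration statement, not a fibration one. Second, note that in this paper the smash is defined for ex-spaces over possibly different bases $Z,Z'$ and different groups $G,G'$ (Definition~\ref{def:fiberwise-smash}); your argument implicitly assumes a common base, but it goes through verbatim in the general case since the equivalence relation $\sim_\wedge$ and the section of $U\wedge U'$ are defined fiberwise over $Z\times Z'$.
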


There is also a pushforward for ex-spaces defined in \cite{MS}.  Fix an ex-object $\bigu$ given by $Z\to^s U\to^r Z$ and a map $f\from Z \to Y$.  Define $f_!\bigu=(f_! U,t,q)$ by the retract diagram
\[
\begin{tikzcd}
Z \arrow[r]{f} \arrow[d]& Y\arrow[d]\\
\bigu \arrow[r]\arrow[d]& f_! U\arrow[d]\\
Z \arrow[r]& Y,
\end{tikzcd}  
\]
where the top square is a pushout, and the bottom is defined by the universal property of pushouts, along with the requirement that $q\circ t=\operatorname{id}$.

\begin{prop}[Proposition 7.3.4 \cite{MS}]\label{prop:descent}
	Say that $\bigu$ and $\bigu'$ are weakly-equivalent $G$-ex-spaces.  Then $f_!\bigu\simeq f_! \bigu'$.
\end{prop}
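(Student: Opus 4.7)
\textbf{Proof plan for Proposition \ref{prop:descent}.} The plan is to interpret the construction $f_!$ in a homotopy-invariant way as a homotopy pushout and then invoke the gluing lemma. By construction, $f_!\bigu$ is the ordinary pushout of the span $Y \xleftarrow{f} Z \xrightarrow{s} U$, so $f_! U = Y \cup_Z U$, and likewise $f_! U' = Y \cup_Z U'$. A weak equivalence $\varphi\colon \bigu \to \bigu'$ is an equivariant map of total spaces that is a weak equivalence and strictly commutes with the section and retraction data. This induces a map of spans in which the left vertical is $\id_Y$, the middle vertical is $\id_Z$, and the right vertical is $\varphi$.

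First I would apply the equivariant form of the gluing lemma for pushouts of topological spaces: given a map of spans in which all three vertical maps are weak equivalences and at least one leg of each span is a cofibration, the induced map on pushouts is a weak equivalence. In our setting, $\id_Y$ and $\id_Z$ are trivially weak equivalences, $\varphi$ is one by hypothesis, and the sections $s\colon Z \to U$ and $s'\colon Z \to U'$ are (equivariant) cofibrations by well-pointedness. Hence the induced map $f_!U \to f_!U'$ is a weak equivalence of $G$-spaces; since it manifestly commutes with the inherited sections and retractions to $Y$, it is a weak equivalence of ex-spaces, giving $f_!\bigu \simeq f_!\bigu'$ in $\mathcal{K}_{G,Y}$.

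A conceptually cleaner alternative, which I would likely run in parallel, is to recall from \cite{MS} that $(f_!, f^*)$ is a Quillen adjunction between the $qf$-model structures on $\exsp$ and $\mathcal{K}_{G,Y}$. As a left Quillen functor, $f_!$ automatically preserves weak equivalences between cofibrant objects, and every $qf$-cofibrant ex-space is well-pointed. Combining Ken Brown's lemma with functorial cofibrant replacement in the $qf$-model structure then gives the statement in full generality, without assuming well-pointedness on $\bigu$ and $\bigu'$ themselves.

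The hard part will be verifying the equivariant form of the gluing lemma in the compactly generated setting, and checking that the relevant ``one leg is a cofibration'' hypothesis holds for the precise notion of weak equivalence being used (the $qf$- versus $q$-equivalences differ, which is exactly why May--Sigurdsson introduce the $qf$-structure). Once the cofibrancy bookkeeping is in place, the argument is formal, and in our intended applications the ex-spaces coming from Conley indices are automatically well-pointed, so the first route suffices.
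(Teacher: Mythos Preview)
The paper does not supply its own proof of this proposition; it is stated as a citation from May--Sigurdsson \cite{MS}, and the reference carries the argument. Your second route, via the Quillen adjunction $(f_!,f^*)$ in the $qf$-model structure and Ken Brown's lemma, is essentially the mechanism behind the cited result, so in that sense your proposal matches the intended argument.

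Two small remarks on your first route. First, ``weakly equivalent'' in this context means isomorphic in $\hoex$, which need not be witnessed by a single map $\varphi\colon \bigu\to\bigu'$ but possibly only by a zigzag; your gluing-lemma argument tacitly assumes a direct map, so you would need to pass through a cofibrant replacement anyway, at which point you are back to the second route. Second, as you note, well-pointedness is not part of the hypothesis of the proposition as stated, so the gluing argument alone does not cover the general case. For the applications in the paper (Conley indices, which are well-pointed by \cite[Proposition~6.1]{MRS}) the first route is adequate, but the Quillen-adjunction argument is the one that actually proves the proposition as written.
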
 
Note the simple example that for $\bigu$ a sectioned spherical fibration over $Z$, and $f\from Z \to *$ the collapse, $f_!\bigu$ is the Thom complex.

For $W$ a real $G$-vector space and $\bigu\in \exsp$, we define $\Sigma^W\bigu=\bigu\wedge W^+$, where $W^+$ is considered as a parameterized space over a point (we consider $\bigu\wedge W^+$ as a $G$-fiberwise deforming space by pulling back along the diagonal map $G\to G\times G$).  By Proposition \ref{prop:descent}, this is well-defined on the level of homotopy categories.

\begin{rem}\label{rmk:bad-suspension-1}
	For two ex-spaces $\bigu,\bigu'$, there is a fiberwise product $\bigu\times_Z \bigu'$, which is naturally an ex-space (whose structure maps are inherited from the universal properties of pullbacks), and similarly we obtain a fiberwise smash product $\bigu \wedge_Z \bigu'$.  That is, we have a functor $\wedge_Z\from \hoex\times \hoex\to\hoex$.  By Proposition 7.3.1 of \cite{MS}, $\wedge_Z$ descends to homotopy categories.  The main implication of this from our perspective is that it is legitimate to suspend Conley indices by nontrivial sphere bundles over the base $Z$.  
\end{rem}

\begin{dfn}\label{def:sw-cat}
	Fix $B$ a finite $G$-CW complex.  The $G$-equivariant \emph{parameterized Spanier-Whitehead category} $\fpsw_B$ is defined as follows.  The objects are pairs $(\bigu,R)$,  also denoted by $\Sigma_B^{R} {\bf U}$,  for $\bigu$ an element of $\exsp$ (with total space $U$ a finite $G$-CW complex) and $R$ a virtual real finite-dimensional $G$-vector space (in a fixed universe).  Morphisms are given by
	\begin{equation}\label{eq:sw-morph}
	\hom((\bigu,R),(\bigu',R'))=\colim_{W} [\Sigma^{W+R}\bigu,\Sigma^{W+R'}\bigu']_{G,B},
	\end{equation} 
	where the colimit is over sufficiently large $W$.  A \emph{stable homotopy equivalence} in $\fpsw_{G, B}$ will be a stable map that admits some representative which is a weak equivalence.  We write $(\bigu,R)\simeq_{\fpsw} (\bigu',R')$ to denote stable homotopy equivalence, omitting the subscript if clear from context.
	A \emph{parameterized $G$-equivariant stable homotopy type} is an equivalence class of objects in $\fpsw_{G,B}$ up to stable homotopy equivalence.
\end{dfn}

In Definition \ref{def:sw-cat}, the colimit may be taken over any sequence of representations which is cofinal in the universe.  In particular, in the case of $S^1$ and $\mathrm{Pin}(2)$-spaces, we will fix the following definitions.

Let $\mathcal{U}_{S^1}=\underline{\mathbb{C}}^{\oplus \infty}\oplus \underline{\mathbb{R}}^{\oplus \infty}$, where $\mathbb{C}$ is the standard representation of $U(1)$, and $\mathbb{R}$ is the trivial representation.  Let $\mathcal{U}_{\mathrm{Pin}(2)}=\underline{\mathbb{H}}^{\oplus \infty}\oplus \underline{\tilde{\mathbb{R}}}^{\oplus \infty}$, where $\mathbb{H}$ is the quaternion representation of $\mathrm{Pin}(2)$, and $\tilde{\mathbb{R}}$ is the sign representation.  There is a full subcategory  $\spanierwhitehead_{S^1}$ of $\fpsw_{S^1,B}$ obtained by considering only those spaces $(\bigu,R)$ with $R=\underline{\mathbb{C}}^{\oplus n}\oplus \underline{\mathbb{R}}^{\oplus m}$, with $m,n\in\mathbb{Z}$; we use the shorthand $(\bigu,-2n,-m)$ to denote $(\bigu,R)$ in $\spanierwhitehead_{S^1}$.  Note that every element of $\fpsw_{S^1,B}$ on $\mathcal{U}_{S^1}$ is stable homotopy equivalent to an element of $\spanierwhitehead_{S^1}$.  Similarly, we write $\spanierwhitehead_{\mathrm{Pin}(2)}$ for the subcategory whose objects are tuples $(\bigu,R)$ in $\fpsw_{\mathrm{Pin}(2),B}$ with \[
R=\underline{\mathbb{H}}^{\oplus n}\oplus \underline{\tilde{\mathbb{R}}}^{\oplus m}.
\]
We write $(\bigu,-4n,-m)$ for the resulting element (so that the notation is consistent with the forgetful functor from $\mathrm{Pin}(2)$-spaces to $S^1$-spaces).



We note that $\fpsw_*$, the parameterized Spanier-Whitehead category over a point, is exactly the ordinary Spanier-Whitehead category.  The next lemma follows from the definitions:
\begin{lem}\label{lem:spanier-whitehead-descent}
	Let $f\from B \to *$.  There is an induced functor $f_!\from \fpsw_B \to \fpsw_*$ defined by $f_!(\bigu,R)=(f_!\bigu,R)$ so that $(\bigu,R)\simeq_{\fpsw_B} (\bigu',R')$ implies $f_!(\bigu,R)\simeq_{\fpsw_*} f_!(\bigu',R')$.  
\end{lem}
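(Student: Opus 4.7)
The plan is to promote the pushforward functor $f_!\colon \exsp_B \to \exsp_*$ (defined via the pushout construction before Proposition \ref{prop:descent}) to the parameterized Spanier-Whitehead category by checking three things: (i) $f_!$ is a functor on $\exsp$-morphisms, (ii) $f_!$ commutes, up to natural isomorphism, with external suspensions $\Sigma^W$ by finite-dimensional $G$-representations $W$, and (iii) $f_!$ sends weak equivalences to weak equivalences. Given these, the definition $f_!(\bigu,R) := (f_!\bigu,R)$ will immediately extend to morphisms via the colimit in (\ref{eq:sw-morph}), and will carry stable homotopy equivalences to stable homotopy equivalences.

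First I would verify (i). Given a morphism $g\colon \bigu \to \bigu'$ in $\exsp_B$ (or more generally a fiberwise-deforming map, since these coincide with morphisms in $\hoex$ up to homotopy), the universal property of the pushout defining $f_!\bigu$ produces a canonical map $f_!g\colon f_!\bigu \to f_!\bigu'$, and functoriality is immediate from the uniqueness clause in that universal property. Next, for (ii), observe that $\Sigma^W\bigu = \bigu \wedge W^+$ where $W^+$ is viewed as a trivially-parameterized space over a point, and that the fiberwise smash product commutes with pushouts in each variable. Consequently, the pushout square defining $f_!(\bigu \wedge W^+)$ is obtained from the one defining $f_!\bigu$ by smashing with $W^+$ over a point, yielding a natural identification $f_!(\Sigma^W\bigu) \cong \Sigma^W(f_!\bigu)$. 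This compatibility is needed so that $f_!$ respects the colimit system in (\ref{eq:sw-morph}) and thus sends morphisms $[\Sigma^{W+R}\bigu,\Sigma^{W+R'}\bigu']_{G,B}$ to $[\Sigma^{W+R}f_!\bigu,\Sigma^{W+R'}f_!\bigu']_{G,*}$.

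Step (iii) is where one cites Proposition \ref{prop:descent} (Proposition 7.3.4 of \cite{MS}): $f_!$ preserves weak equivalences of well-pointed $G$-ex-spaces. Combined with (ii), this implies that if $(\bigu,R)\simeq_{\fpsw_B}(\bigu',R')$, witnessed by a representative $\Sigma^{W+R}\bigu \to \Sigma^{W+R'}\bigu'$ that is a weak equivalence in $\exsp_B$, then applying $f_!$ gives a map $\Sigma^{W+R}f_!\bigu \to \Sigma^{W+R'}f_!\bigu'$ which is again a weak equivalence in $\exsp_*$. This is precisely a stable homotopy equivalence in $\fpsw_*$.

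The main technical point, and the only one that is not completely formal, is the commutation of $f_!$ with $\wedge W^+$ used in step (ii), since pushouts and smash products must be verified to commute in the category of (well-pointed) equivariant ex-spaces (as opposed to the ordinary pointed category); this is where one uses well-pointedness of $\bigu$ to ensure no homotopical pathologies occur in the pushout, and where one invokes Proposition \ref{prop:310} to conclude the identification descends to $\hoex$. Once this is in hand, the remainder of the argument is bookkeeping with the colimit defining morphisms in $\fpsw_B$.
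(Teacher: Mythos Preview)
Your proposal is correct and is, in essence, the content the paper leaves implicit: the paper gives no proof at all, asserting only that the lemma ``follows from the definitions.'' Your steps (i)--(iii) spell out exactly what that phrase must mean---functoriality of the pushout, compatibility of $f_!$ with $\Sigma^W$, and Proposition~\ref{prop:descent} for preservation of weak equivalences---and together they suffice.
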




We have the following corollary:
\begin{cor}\label{cor:types}
	Let $f\from B \to *$.  Then stable-homotopy equivalence classes in $\fpsw_B$ give well-defined stable-homotopy classes in $\fpsw_*$.  
\end{cor}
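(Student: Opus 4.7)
The plan is to derive this corollary as a direct application of Lemma~\ref{lem:spanier-whitehead-descent}. First I would fix a representative $(\bigu,R)$ of a stable-homotopy equivalence class in $\fpsw_B$ and define its image in $\fpsw_*$ to be $f_!(\bigu,R)=(f_!\bigu,R)$, using the pushforward construction from Section~\ref{subsec:homotopy1}. The task is then to verify that this assignment descends to stable-homotopy equivalence classes.

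Concretely, suppose $(\bigu,R)\simeq_{\fpsw_B}(\bigu',R')$. By Lemma~\ref{lem:spanier-whitehead-descent}, the induced functor $f_!$ sends this relation to a stable homotopy equivalence $f_!(\bigu,R)\simeq_{\fpsw_*}f_!(\bigu',R')$, which is exactly the desired well-definedness.

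The small bookkeeping to carry out is to check that the pushforward commutes with the stabilizations built into the definition~(\ref{eq:sw-morph}) of morphisms in $\fpsw_B$, so that the functor $f_!$ is genuinely defined on the Spanier--Whitehead level and not merely on unstable ex-spaces. This reduces to observing that if $W$ is a trivial $G$-representation (viewed as a parameterized space over $B$ and over $*$ in the two respective cases), then $f_!(\Sigma^W\bigu)\cong \Sigma^W(f_!\bigu)$ naturally, which follows from the pushout definition of $f_!$ together with the fact that $\wedge$ with $W^+$ is a left adjoint and hence preserves pushouts. Given this compatibility, the stabilizing colimits on each side match up and the functoriality claim of Lemma~\ref{lem:spanier-whitehead-descent} applies verbatim.

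The main (and essentially only) obstacle is the compatibility with stabilization just mentioned; once that is noted, the corollary is an immediate consequence of Lemma~\ref{lem:spanier-whitehead-descent} and Proposition~\ref{prop:descent} (which guarantees that $f_!$ preserves weak equivalences at the unstable level, so in particular preserves stable homotopy equivalences after passing to the colimit).
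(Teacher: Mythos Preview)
Your approach is correct and matches the paper's: the corollary is stated immediately after Lemma~\ref{lem:spanier-whitehead-descent} with no separate proof, so it is meant to be read as an immediate consequence of that lemma, exactly as you argue. One small slip: in your compatibility check you call $W$ a ``trivial $G$-representation,'' but the representations appearing in the colimit~(\ref{eq:sw-morph}) are arbitrary finite-dimensional $G$-representations in the universe; what is trivial is the \emph{parameterization} (i.e.\ $W^+$ is a space over a point), and your argument that $f_!(\bigu\wedge W^+)\cong (f_!\bigu)\wedge W^+$ goes through unchanged with this correction.
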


Finally, we remark that May-Sigurdsson \cite[Chapter 20-22]{MS} define many parameterized homology theories, suitably generalizing the usual definition of a (usual) homology theory, and giving convenient invariants from objects of $\fpsw_*$.



\section{The Parameterized Conley Index}\label{subsec:conley}
In this subsection, we review the \emph{parameterized Conley index} from \cite{MRS} (see also work of Bartsch \cite{bartsch}); we note that we work in considerably less generality than they present.  We start by giving the basic definitions in Conley index theory, following \cite[Section 5]{Manolescu-b1=0}.  Note that \cite{MRS} work nonequivariantly; the proofs in the equivariant case are similar.   

Let $M$ be a finite-dimensional manifold and $\varphi$ a flow on $M$; for a subset $N\subset M$, we define the following sets:
\[
\begin{split}
N^+&=\{x\in N: \forall t>0, \, \varphi_t(x)\in N\}\\
N^-&=\{x\in N: \forall t<0, \, \varphi_t(x)\in N\}\\
\operatorname{inv} \; N&= N^+\cap N^-.
\end{split}
\]

A compact subset $S\subset M$ is called an \textit{isolated invariant set} if there exists a compact neighborhood $S\subset N$ so that $S=\operatorname{inv}(N)\subset \operatorname{int} (N)$.  Such a set $N$ is called an \textit{isolating neighborhood} of $S$.  

A pair $(N,L)$ of compact subsets $L\subset N\subset M$ is an \textit{index pair} for $S$ if the following hold:
\begin{enumerate}
	\item $\operatorname{inv}\; (N \smallsetminus L) = S\subset \operatorname{int}(N \smallsetminus L)$.
	\item $L$ is an exit set for $N$, that is, for any $x\in N$ and $t>0$ so that $\varphi_t(x)\notin N,$ there exists $\tau\in [0,t)$ with $\varphi_\tau(x)\in L$.
	\item $L$ is \textit{positively invariant} in $N$.  That is, for $x\in L$ and $t>0$, if $\varphi_{[0,t]}(x)\subset N$, then $\varphi_{[0,t]}(x)\subset L$.  
\end{enumerate}

For an index pair $P=(P_1,P_2)$ of an isolated invariant set $S$, we define $\tau_P: P_1\to [0,\infty]$ by
\[
\tau_P(x)=\begin{cases}
\sup\{t\geq 0 \mid \varphi_{[0,t]}(x)\subset P_1 \smallsetminus P_2 \} \mbox{ if } x\in P_1 \smallsetminus  P_2 \\ 0 \mbox{ if } x\in P_2.
\end{cases}
\]
We say that an index pair $P$ is \emph{regular} if $\tau_P$ is continuous.  

For $Z$ a Hausdorff space, $\omega : M \to Z$ a continuous map, and a regular index pair $P=(P_1,P_2)$, define the \emph{parameterized Conley index} $I_\omega(P)$ as $P_1\cup_{\omega|_{P_2}}Z$, namely:
\[
I_\omega(P)=(Z\times 0 )\cup (P_1 \times 1) / \sim
\]
where $(x,1)\sim (\omega(x),0)$ for all $x\in P_2\times 1$.

The space $I_\omega(P)$ is naturally an ex-space, with embedding $s_P: Z \to I_{\omega}(P)$ given by $z\to [z,0]$, and projection $r_P: I_\omega(P)\to Z$ given by $r_P([x,1])=\omega(x)$, $r_P([z,0])=z$.  By construction, $r_P\circ s_P=\operatorname{id}_Z$.  

For $Z=*$, we sometimes write $I^u(P)$ for $I_{\omega}(P)$, to specify the ``unparameterized" Conley index.

\begin{thm}[{\cite[Theorem 2.1]{MRS}}]\label{thm:fiberwise-deforming-conley-1}
	If $P$ and $Q$ are two regular index pairs for an isolated invariant set $S$, then $(I_\omega(P),r_P,s_P)$ and $(I_\omega(Q),r_Q,s_Q)$ have the same equivariant homotopy type over $Z$, and are both fiberwise well-pointed.
\end{thm}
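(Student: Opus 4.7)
\medskip

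The plan is to adapt Salamon's classical argument \cite{Salamon} for the (unparameterized) Conley index to the parameterized setting, exploiting the fact that the homotopy witnessing ``fiberwise-deforming'' arises naturally from pushing the flow forward along $\omega \colon M \to Z$. First I would recall the classical framework: given two regular index pairs $P = (P_1, P_2)$ and $Q = (Q_1, Q_2)$ for the same isolated invariant set $S$, Salamon constructs, for all $T > 0$ sufficiently large, maps $f_T \colon P_1 \to Q_1$ and $g_T \colon Q_1 \to P_1$ by translating along the flow: $f_T(x) = \varphi_T(x)$ whenever this composite stays out of the exit set, else $f_T(x)$ is sent to the exit set. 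Regularity of the index pairs ensures that the induced maps $[f_T] \colon P_1/P_2 \to Q_1/Q_2$ are continuous and well-defined up to homotopy, and that $[g_T] \circ [f_T] \simeq \mathrm{id}$ and $[f_T] \circ [g_T] \simeq \mathrm{id}$ via homotopies given by further flow translation.

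Next I would upgrade each of these maps to a map of ex-spaces over $Z$. By definition of $I_\omega(P)$, a continuous map $P_1 \to Q_1$ sending $P_2$ into $Q_2$ descends to a map $\tilde{f}_T \colon I_\omega(P) \to I_\omega(Q)$ which is automatically the identity on the section copy of $Z$. To check that $\tilde{f}_T$ is fiberwise-deforming in the sense of Definition \ref{def:fib-map}, I must produce an equivariant homotopy, relative to $s_P(Z)$, between $r_Q \circ \tilde{f}_T$ and $r_P$. The crucial observation is that for $[x,1] \in I_\omega(P)$ one has $r_P([x,1]) = \omega(x)$ and $r_Q \circ \tilde{f}_T([x,1]) = \omega(\varphi_T(x))$, and these are joined by the explicit homotopy $H_s([x,1]) = \omega(\varphi_{sT}(x))$ for $s \in [0,1]$, which restricts to the identity on $s_P(Z)$. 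A parallel construction in the opposite direction, together with Salamon's flow homotopies interpreted in the same manner, produces a fiberwise-deforming homotopy inverse $\tilde{g}_T$ and witnesses $\tilde{g}_T \circ \tilde{f}_T \simeq \mathrm{id}$, $\tilde{f}_T \circ \tilde{g}_T \simeq \mathrm{id}$ through $Z$.

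For well-pointedness, I would show that the section $s_P \colon Z \to I_\omega(P)$ is a closed $G$-cofibration by exhibiting a $G$-equivariant Str\o{}m structure on the pair $(I_\omega(P), s_P(Z))$. The gluing construction $I_\omega(P) = P_1 \cup_{\omega|_{P_2}} Z$ provides the required collar: the function $\tau_P \colon P_1 \to [0,\infty]$, whose continuity is exactly regularity of the index pair, descends to a suitable Str\o{}m function on $I_\omega(P)$, with the embedded copy of $Z$ as its zero set. Equivariance is automatic since $\varphi$ and $\omega$ are $G$-equivariant.

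The main obstacle is not the construction of the flow-translate maps, which is formal, but rather verifying that the homotopies between composites $\tilde{g}_T \tilde{f}_T$ and $\mathrm{id}$ can be chosen to be fiberwise-deforming \emph{rel.\ the section}. For arbitrary $P, Q$ this forces one to pass to a common refinement (e.g.\ an intersection index pair $(P_1 \cap Q_1, (P_2 \cap Q_1) \cup (P_1 \cap Q_2))$, suitably thickened to be regular as in \cite[Remark 5.4]{Salamon}) and compare both original indices to this refinement. The point is that at every stage the homotopies one writes down factor through $\omega$ composed with a flow translation, so that the required relative condition is automatic; the verification is essentially the content of \cite[Theorem 2.1]{MRS}, which establishes the nonequivariant case, and the equivariant upgrade is immediate since every ingredient ($\varphi$, $\omega$, the index pairs after averaging, and the function $\tau_P$) is taken to be $G$-equivariant from the outset.
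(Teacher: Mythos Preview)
Your proposal is correct and follows the same route as the paper: both rely on \cite[Theorem~2.1]{MRS} for the fiberwise-deforming equivalence and on \cite[Proposition~6.1]{MRS} for well-pointedness, with the equivariant upgrade being immediate. The paper's proof simply cites these results and then invokes Lemma~\ref{lem:fib-hom-good} to pass from fiberwise-deforming equivalence to weak equivalence in $\exsp$; you have instead unpacked the MRS argument, which is fine, though you should add that final invocation of Lemma~\ref{lem:fib-hom-good} to match the statement as phrased. One technical caution: your homotopy $H_s([x,1]) = \omega(\varphi_{sT}(x))$ is not literally well-defined on the quotient $I_\omega(P)$, since for $x \in P_2$ one has $[x,1] = [\omega(x),0]$ but $\omega(\varphi_{sT}(x)) \neq \omega(x)$ in general; the actual MRS construction truncates at the exit time $\tau_P(x)$, and you correctly defer to \cite{MRS} for this bookkeeping.
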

\begin{proof}
	In \cite{MRS}, it is proved that the two indices have the same fiberwise-deforming type; Lemma \ref{lem:fib-hom-good} then implies the statement.  The well-pointedness is \cite[Proposition 6.1]{MRS}
\end{proof}

\begin{dfn}[{\cite{Conley},\cite[Definition 2.6]{Salamon}}]\label{def:connected-simple-system}
	A \textit{connected simple system} is a collection $I_0$ of pointed spaces along with a collection of $I_h$ of homotopy classes of maps among them, so that:
	\begin{enumerate}
		\item\label{itm:css1} For each pair $X,X'\in I_0$, there is a unique class $[f]\in I_h$ from $X\to X'$.
		\item\label{itm:css2} For $f,f'\in I_h$ with $f: X\to X'$ and $f': X'\to X''$, the composite $f'\circ f$ is in $I_h$.
		\item\label{itm:css3} For each $X\in I_0$, the morphism $f: X \to X$ is $[\operatorname{id}]$.  
	\end{enumerate}
\end{dfn}
Of course, the notion of a connected simple system has an obvious generalization in any category with an associated homotopy category.

\begin{thm}[{\cite{Salamon}}]\label{thm:fiberwise-deforming-conley-2}
	Fix notation as in Theorem \ref{thm:fiberwise-deforming-conley-1}.  The \emph{unparameterized Conley indices} $I^u(P)=I_\omega(P)/Z$, ranging over regular index pairs for $S$, form a \emph{connected simple system}.
\end{thm}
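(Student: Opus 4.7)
The plan is to verify axioms (\ref{itm:css1})--(\ref{itm:css3}) of Definition \ref{def:connected-simple-system} for the collection $I_0 := \{I^u(P) : P \text{ a regular index pair for } S\}$, using the classical construction of flow-induced morphisms between Conley indices due to Salamon.

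First, I would construct the morphisms comprising $I_h$. For any two regular index pairs $P = (P_1,P_2)$ and $P' = (P_1',P_2')$ of $S$, and any sufficiently large $T > 0$ (depending on $P,P'$), define a continuous pointed map $\Phi^T_{P,P'}: I^u(P) \to I^u(P')$ by sending $[x]$ to $[\varphi_{2T}(x)]$ when $\varphi_{[0,2T]}(x) \subset P_1 \setminus P_2$ and $\varphi_T(x) \in \operatorname{int}(P_1' \setminus P_2')$, and to the basepoint otherwise. Well-definedness and continuity follow from the regularity of both index pairs (so the exit-time functions $\tau_P,\tau_{P'}$ are continuous) together with a compactness argument: since $S$ is the maximal invariant set in $P_1 \setminus P_2$, any orbit segment remaining there for time $\geq 2T$ with $T$ large enough must pass into the interior of $P_1' \setminus P_2'$. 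The map is automatically $G$-equivariant because the flow $\varphi$ and both index pairs are $G$-invariant.

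Second, I would show that the homotopy class $[\Phi^T_{P,P'}]$ is independent of $T$ for $T$ large, using the continuous family $T \mapsto \Phi^T_{P,P'}$. The composition axiom (\ref{itm:css2}) then follows by observing that for a third regular index pair $P''$, the composite $\Phi^T_{P',P''} \circ \Phi^T_{P,P'}$ and the direct map $\Phi^{2T}_{P,P''}$ both arise from flowing for a sufficiently long total time, and are linked by an obvious homotopy through flow times. The identity axiom (\ref{itm:css3}) is verified by noting that $\Phi^T_{P,P}$ is homotopic to the identity via the canonical deformation $(s,[x]) \mapsto [\varphi_{2sT}(x)]$ (extended to the basepoint appropriately), again using regularity of $P$. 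Finally, uniqueness of the class in axiom (\ref{itm:css1}) follows because any two such constructions with large $T$ belong to the same homotopy class by the independence statement.

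The main obstacle is the well-definedness of $[\Phi^T_{P,P'}]$ as a homotopy class, specifically showing that the various homotopies used in checking composition are themselves compatible up to higher homotopy. This requires a homotopy-of-homotopies argument, carried out in detail in \cite[Section 6.3]{Salamon}; our setting differs only in the presence of a compact Lie group action, and the $G$-equivariant enhancement is routine since all the ingredients---the flow, the index pairs, and the resulting maps and homotopies---are constructed canonically from $G$-equivariant data and therefore inherit $G$-equivariance automatically. Thus the result is obtained as a direct $G$-equivariant adaptation of Salamon's theorem.
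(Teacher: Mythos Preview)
The paper does not give its own proof of this theorem; it is stated with a citation to \cite{Salamon} and no argument beyond that. Your proposal is a reasonable sketch of Salamon's construction of the flow-induced maps between Conley indices, which is exactly the content being cited, and your remark that the equivariant enhancement is routine matches how the paper treats the point elsewhere (e.g.\ just after Theorem~\ref{thm:fiberwise-deforming-conley-1}).

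One small imprecision: as written, your conditions $\varphi_{[0,2T]}(x)\subset P_1\setminus P_2$ and $\varphi_T(x)\in\operatorname{int}(P_1'\setminus P_2')$ do not by themselves guarantee that $\varphi_{2T}(x)\in P_1'$, which is needed for $[\varphi_{2T}(x)]$ to be a well-defined point of $I^u(P')$. Salamon's actual formula is slightly more elaborate (involving conditions over a longer time interval so that the image point is forced into $P_1'$). Since you defer the details to \cite[Section~6.3]{Salamon} anyway, this is harmless as a sketch, but if you want the displayed formula to stand on its own you should adjust it to match Salamon's.
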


We conjecture that in fact the parameterized Conley indices also have this property:
\begin{conj}\label{conj:parameterized-connected-simple-system}
	Fix notation as in Theorem \ref{thm:fiberwise-deforming-conley-1}.  Then the parameterized Conley indices $(I_\omega(P),r_P,s_P)$, running over all regular index pairs for the isolated invariant set $S$, form a connected simple system.
\end{conj}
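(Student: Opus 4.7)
The plan is to adapt Salamon's proof \cite[Section 6.3]{Salamon} of the unparameterized connected simple system property to the parameterized setting, working in $\hoex$ rather than in the pointed homotopy category, and carefully tracking that each of Salamon's homotopies lifts to a fiberwise-deforming one. The overall structure of the argument should be: define candidate continuation morphisms in $[I_\omega(P),I_\omega(Q)]_{G,Z}$, verify they are independent of auxiliary choices, and then verify functoriality.

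First I would construct, for each ordered pair of regular index pairs $(P, Q)$ for the same isolated invariant set $S$, a canonical class $\phi_{P,Q} \in [I_\omega(P), I_\omega(Q)]_{G, Z}$. For the special case in which the componentwise inclusion $P_1 \subset Q_1$, $P_2 \subset Q_2$ is exit-preserving in Salamon's sense, the inclusion of underlying pairs descends to a strict ex-space morphism $I_\omega(P) \to I_\omega(Q)$: it carries $s_P(Z)$ to $s_Q(Z)$ by construction of the pushout, and it commutes strictly with $r_P, r_Q$ because the identification used in both Conley indices comes from the same map $\omega$. For general $(P, Q)$, Salamon constructs a common ``intermediate'' regular index pair $R$ for $S$ contained in both $P$ and $Q$ after a flow-shift; the resulting zig-zag $I_\omega(P) \leftarrow I_\omega(R) \to I_\omega(Q)$ of strict inclusions, with the left arrow inverted up to a fiberwise-deforming equivalence via Theorem \ref{thm:fiberwise-deforming-conley-1}, defines our candidate $\phi_{P,Q}$.

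Second I would verify that $\phi_{P,Q}$ is independent (in $\hoex$) of the choice of $R$, and that it satisfies conditions (\ref{itm:css1})--(\ref{itm:css3}) of Definition \ref{def:connected-simple-system}. For uniqueness, one mimics Salamon: any two intermediate pairs $R_1, R_2$ admit a further common refinement $R_{12}$, producing a homotopy between the two composites. In our setting this homotopy must be fiberwise-deforming: since each strict inclusion commutes strictly with section and projection, the homotopy between composites (which is built by flowing for varying times) preserves $s(Z)$ automatically, and a homotopy $r_Q \circ H_t \simeq r_P$ rel $s_P(Z)$ is obtained by inserting the auxiliary homotopy $(t, x) \mapsto \omega(\varphi_t(x))$. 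Functoriality $\phi_{Q, R} \circ \phi_{P, Q} \simeq \phi_{P, R}$ and $\phi_{P, P} \simeq \id$ then follow from uniqueness applied to a common refinement of $P, Q, R$.

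The main obstacle, and the reason the statement is only a conjecture, is that the homotopies above must be verified to fit together \emph{coherently} with the projection data of the ex-spaces, not merely as unparameterized homotopies. Concretely, Salamon's continuation morphism and his homotopy of continuation morphisms both involve flowing by a continuous time function $\tau$, and in the parameterized setting each such flow changes the value of $\omega$, producing a non-trivial homotopy of projections that must itself be homotoped rel section in a coherent way across all the choices made in his homotopy-of-homotopies argument (in particular, across the choice of intermediate refinements and across the interpolating parameter). Making this coherence precise should be possible, but requires a careful tower of fiberwise-deforming homotopies that we have not yet carried out; it is closely analogous to the standard difficulties in promoting a levelwise statement in parameterized homotopy theory to one that holds in $\hoex$.
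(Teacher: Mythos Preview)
The statement you are attempting to prove is Conjecture~\ref{conj:parameterized-connected-simple-system}, and the paper does \emph{not} prove it. The authors explicitly leave it open: see Remark~\ref{rmk:parameterized-spectrum} (``it is not known that the parameterized Conley index forms a connected simple system''), Remark~\ref{rmk:well-defined-transitions}, and the sentence following Theorem~\ref{thm:well-definedness-of-seiberg-witten-invariant}, all of which make statements conditional on the conjecture. Throughout the paper the authors systematically retreat to the unparameterized index $I^u$ (via $\nu_!$) precisely to invoke Salamon's connected simple system property (Theorem~\ref{thm:fiberwise-deforming-conley-2}), which is known only in that setting.

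Your proposal is therefore not comparable to a proof in the paper, since none exists. That said, your outline is a reasonable sketch of how one would try to adapt Salamon's argument, and you have correctly located the genuine difficulty: Salamon's continuation maps and homotopies are built by flowing for variable times, which in the parameterized setting moves points across fibers of $\omega$, so the resulting maps are only fiberwise-deforming rather than strict ex-maps, and one must control a tower of homotopies of projections coherently. Your final paragraph accurately identifies this as the gap and acknowledges that you have not carried it out. So your proposal is not a proof but a correct diagnosis of why the conjecture remains open, consistent with the paper's own treatment.
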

In Section \ref{sec:well-def}, we encounter the parameterized Conley indices for product flows.  We have:
\begin{thm}[{\cite[Theorem 2.4]{MRS}}]\label{thm:product-flows}
	Let $S,S'$ be isolated invariant sets for $\varphi,\varphi'$.  Then\[
	I_{\omega\times \omega'}(S\times S',\varphi\times \varphi')\simeq I_{\omega}(S,\varphi)\wedge I_{\omega'}(S',\varphi').
	\]
\end{thm}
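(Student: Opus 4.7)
The plan is to work at the level of explicit index pairs and then transfer the comparison to the homotopy category. I would first choose regular index pairs $(N,L)$ and $(N',L')$ for $(S,\varphi)$ and $(S',\varphi')$, and verify that the product
\[
P\times P'\;:=\;(N\times N',\ N\times L'\cup L\times N')
\]
is a regular index pair for $(S\times S',\varphi\times\varphi')$. The three defining conditions of an index pair all follow directly from the product structure of the flow together with the corresponding properties of $(N,L)$ and $(N',L')$; regularity follows from the identity
\[
\tau_{P\times P'}(x,x')\;=\;\min\bigl\{\tau_P(x),\tau_{P'}(x')\bigr\},
\]
which is continuous as soon as both $\tau_P,\tau_{P'}$ are.

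Next I would write down an explicit comparison map of ex-spaces over $Z\times Z'$,
\[
\phi\from I_{\omega\times\omega'}(P\times P')\longrightarrow I_\omega(P)\wedge I_{\omega'}(P'),
\]
defined by sending $(x,x')\in N\times N'$ to the class of $([x,1],[x',1])$ in the fiberwise smash, and $(z,z')\in Z\times Z'$ to $[s(z),s'(z')]$. Unwinding definitions, the left-hand side is the quotient of $N\times N'\sqcup Z\times Z'$ by the identification $(x,x')\sim(\omega(x),\omega'(x'))$ on $N\times L'\cup L\times N'$, while the right-hand side is the quotient of $I_\omega(P)\times I_{\omega'}(P')$ by the smash relation $\sim_\wedge$ of Definition \ref{def:fiberwise-smash}. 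Well-definedness of $\phi$ reduces to checking that these identifications correspond: if $x\in L$, then $[x,1]=s(\omega(x))$ in $I_\omega(P)$, and one clause of $\sim_\wedge$ collapses $[s(\omega(x)),[x',1]]$ to $[s(\omega(x)),s'(\omega'(x'))]$, matching the image of $(\omega(x),\omega'(x'))$ on the left; the case $(x,x')\in L\times L'$ is handled by transitivity of $\sim_\wedge$. A set-theoretic bijectivity check together with continuity in both directions (immediate from the universal properties of the quotient topologies) then shows that $\phi$ is a homeomorphism of $(G\times G')$-ex-spaces.

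Finally, independence of the resulting class from the choice of index pairs follows from Theorem \ref{thm:fiberwise-deforming-conley-1}, which also supplies the well-pointedness needed to apply Proposition \ref{prop:310} and conclude that the smash on the right is itself a well-defined homotopy invariant of $(S,\varphi)$ and $(S',\varphi')$. The main obstacle I anticipate is the point-set bookkeeping around the asymmetric formulation of $\sim_\wedge$: the ``cross-shaped'' orbits in $I_\omega(P)\times I_{\omega'}(P')$ containing points of $s(Z)\times I_{\omega'}(P')\cup I_\omega(P)\times s'(Z')$ must be shown to collapse onto $Z\times Z'$ via $(r,r')$ in exactly the way the pushout on the left does, but once this is in hand the rest of the argument is formal.
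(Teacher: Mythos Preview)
Your argument is correct and is the standard one: the product of regular index pairs is a regular index pair for the product flow via the identity $\tau_{P\times P'}=\min\{\tau_P,\tau_{P'}\}$, and the map you write down is a homeomorphism of ex-spaces over $Z\times Z'$ once one unwinds the equivalence relation $\sim_\wedge$ generated by the clauses in Definition~\ref{def:fiberwise-smash}. The only point worth flagging is that $\sim_\wedge$ as literally stated is not transitive, so your appeal to ``transitivity of $\sim_\wedge$'' in the $L\times L'$ case should be read as working in the equivalence relation it generates; with that understood, the cross-shaped classes over $s(Z)\times s'(Z')$ collapse exactly as you anticipate.

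There is nothing to compare against in the paper itself: this theorem is quoted as background from \cite[Theorem~2.4]{MRS} and no proof is given here. Your sketch is essentially the argument one finds in that reference.
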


Moreover, the usual deformation invariance of the Conley index continues for the parameterized Conley index:

\begin{thm}[{\cite[Theorem 2.5]{MRS},\cite[Corollary 6.8]{Salamon}}]\label{thm:deformation-invariance}
	If $N$ is an isolating neighborhood with respect to flows $\varphi^\lambda$ continuously depending on $\lambda\in [0,1]$, with a continuous family of isolated invariant sets $S^\lambda$ inside of $N$, then the fiberwise-deforming homotopy type of $I_{\omega}(S^\lambda,\varphi^\lambda)$ is independent of $\lambda$.

In the case of the unparameterized Conley index, for each $\lambda_1,\lambda_2\in [0,1]$, there is a well-defined, up to homotopy, map of connected simple systems:
\[
F(\lambda_1,\lambda_2): I^u(S^{\lambda_1},\varphi^{\lambda_1})\to I^u(S^{\lambda_2},\varphi^{\lambda_2}). 
\]
Furthermore, for all $\lambda_1,\lambda_2,\lambda_3\in [0,1]$, 
\begin{align*}
F(\lambda_2,\lambda_3)\circ F(\lambda_1,\lambda_2)&\sim F(\lambda_1,\lambda_3)\\
F(\lambda_1,\lambda_1)&\sim \operatorname{id}.
\end{align*}
  \end{thm}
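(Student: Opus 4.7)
The plan is to imitate the classical continuation argument for the Conley index (as in Salamon \cite{Salamon} and Mrozek-Reineck-Srzednicki \cite{MRS}) and then upgrade it to the parameterized setting. The key point is that, although an index pair need not exist that works for the entire family at once, one \emph{does} exist locally in $\lambda$, and compactness of $[0,1]$ lets us assemble continuation maps by composition.

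First I would establish the following local statement: for every $\lambda_0\in[0,1]$ there is an open neighborhood $U_{\lambda_0}\subset[0,1]$ of $\lambda_0$ and a regular index pair $(N_0,L_0)$ such that $(N_0,L_0)$ is an index pair for $(S^\lambda,\varphi^\lambda)$ for every $\lambda\in U_{\lambda_0}$. To obtain this, I would pick any regular index pair $(N_0,L_0)$ for $(S^{\lambda_0},\varphi^{\lambda_0})$ contained in $\mathrm{int}(N)$; the defining conditions of a regular index pair (isolation, positive invariance of $L_0$, continuity of $\tau_P$) are open conditions in the $C^0$-topology on flows, provided $S^\lambda$ varies continuously, so they persist for $\lambda$ near $\lambda_0$. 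For such $\lambda$, the parameterized Conley indices $I_\omega(S^\lambda,\varphi^\lambda)$ computed from $(N_0,L_0)$ are \emph{all the same ex-space over} $Z$, because the underlying set $N_0\cup_{\omega|_{L_0}} Z$ and its projection/section do not depend on $\lambda$. Thus the fiberwise-deforming homotopy type is locally constant on $[0,1]$, and since $[0,1]$ is connected we conclude it is globally constant.

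Next I would construct $F(\lambda_1,\lambda_2)$ explicitly. Cover $[0,1]$ by finitely many neighborhoods $U_{\alpha_0}\supset\cdots\supset U_{\alpha_m}$ (in the sense of the first step), with associated index pairs $(N_i,L_i)$, chosen so that $\lambda_1\in U_{\alpha_0}$, $\lambda_2\in U_{\alpha_m}$, and $U_{\alpha_i}\cap U_{\alpha_{i+1}}\ne\emptyset$. Pick $\mu_i\in U_{\alpha_i}\cap U_{\alpha_{i+1}}$. For the fixed flow $\varphi^{\mu_i}$, both $(N_i,L_i)$ and $(N_{i+1},L_{i+1})$ are index pairs for the same isolated invariant set $S^{\mu_i}$, so by Theorem \ref{thm:fiberwise-deforming-conley-1} (for the parameterized version) or the Salamon uniqueness theorem (for the unparameterized version) there is a canonical homotopy class of maps between their Conley indices. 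Composing these canonical maps along the chain gives $F(\lambda_1,\lambda_2)$.

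Then I would check that $F(\lambda_1,\lambda_2)$ does not depend on the choices of cover, intermediate parameters $\mu_i$, or intermediate index pairs. The standard trick is to pass to a common refinement of two such covers: the refinement gives a third chain, and at each stage one uses uniqueness of the canonical map between two index pairs of the \emph{same} invariant set to see that the two resulting compositions agree up to homotopy. From this same refinement argument the functorial identities $F(\lambda_2,\lambda_3)\circ F(\lambda_1,\lambda_2)\sim F(\lambda_1,\lambda_3)$ and $F(\lambda_1,\lambda_1)\sim\operatorname{id}$ follow immediately, because concatenating the chain for $(\lambda_1,\lambda_2)$ with the chain for $(\lambda_2,\lambda_3)$ is an allowed chain for $(\lambda_1,\lambda_3)$, and the trivial one-element chain realizes the identity.

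The main technical obstacle is the local persistence result in the first step: one must verify that regularity of the index pair (continuity of $\tau_P$), positive invariance of the exit set, and the equality $\mathrm{Inv}(N_0\setminus L_0)=S^\lambda$ are all stable under small perturbations of the flow together with the continuously varying $S^\lambda$. In the setting of smooth flows depending continuously on $\lambda$ and with $S^\lambda$ continuous in the Hausdorff metric this is a standard exercise using the compactness of $N_0$ and uniform continuity of $\varphi^\lambda$ on compact sets; in equivariant and parameterized contexts nothing new enters because the condition is $Z$-fiberwise and $G$-invariance of the chosen $N_0,L_0$ is preserved.
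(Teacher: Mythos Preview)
The paper does not give its own proof of this theorem; it is stated as a citation to \cite[Theorem 2.5]{MRS} and \cite[Corollary 6.8]{Salamon}, with no accompanying argument. So there is nothing in the paper to compare your proposal against.

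Your sketch follows the standard continuation argument from those references and is correct in outline. One point deserves more care: you assert that the defining conditions of a regular index pair are ``open conditions in the $C^0$-topology on flows.'' This is not true for an arbitrary index pair; for instance, positive invariance of $L_0$ can fail under an arbitrarily small perturbation if the flow is tangent to $\partial L_0$. What is true, and what the cited references actually prove, is that one can always \emph{construct} an index pair that is stable under small perturbations (e.g.\ Salamon's isolating block construction, or the thickening procedure in \cite[Remark 5.4]{Salamon}). Once such a robust pair is in hand, the rest of your argument---local constancy of the ex-space, chaining via overlapping covers, and well-definedness via common refinement---goes through as you describe.
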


\begin{lem}\label{lem:conley-index-pushforward}
	Fix a flow $\varphi$ on a manifold $X$, along with a map $p: X \to B$, and write $\pi: B\to *$ be the map collapsing $B$ to a point.  Then the pushforward of the parameterized Conley index $I(\varphi)$, namely $\pi_{!}I(\varphi)$, is the ordinary Conley index $I^u(\varphi)$. 
\end{lem}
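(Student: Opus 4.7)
The plan is to verify the claim by directly unwinding the definitions of the parameterized Conley index, the pushforward along a map of base spaces, and the unparameterized Conley index, and checking that they agree up to canonical homeomorphism.

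First I would fix a regular index pair $P = (P_1, P_2)$ for the isolated invariant set $\inv(N, \varphi)$, so that the parameterized Conley index is, by construction,
\[
I(\varphi) = I_p(P) = P_1 \cup_{p|_{P_2}} B,
\]
equipped with the section $s_P \colon B \to I_p(P)$ sending $b \mapsto [b,0]$ and projection $r_P$ inherited from $p$. Similarly, the unparameterized Conley index is
\[
I^u(\varphi) = I_\omega(P)|_{\omega = \mathrm{pt}} = P_1 / P_2,
\]
since collapsing $P_2$ to the basepoint $*$ is exactly the pushout of $* \leftarrow P_2 \to P_1$.

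Next I would apply the pushforward construction from Section \ref{subsec:homotopy1}. By definition, $\pi_! I(\varphi)$ is defined by the pushout diagram
\[
\begin{tikzcd}
B \arrow[r, "\pi"] \arrow[d, "s_P"']& * \arrow[d]\\
I(\varphi) \arrow[r]& \pi_! I(\varphi),
\end{tikzcd}
\]
which identifies $s_P(B) \subset I(\varphi)$ with a single point. Hence $\pi_! I(\varphi)$ is canonically homeomorphic to $I(\varphi)/s_P(B)$.

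Finally, I would compute $I(\varphi)/s_P(B)$. The total space of $I_p(P)$ is obtained from the disjoint union $B \sqcup P_1$ by identifying each $x \in P_2$ with $p(x) \in B$. Collapsing the image of $s_P$ further identifies all of $B$ (and hence all of $P_2$, via its image in $B$) to a single basepoint, so the resulting quotient is canonically $P_1 / P_2 = I^u(\varphi)$. This chain of identifications is natural with respect to any equivariance, and the result follows.

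There is no substantive obstacle here; the main point is just to verify that the pushout defining $\pi_!$ and the collapse defining the unparameterized Conley index agree, which is immediate from the definitions once one carefully identifies the section $s_P$ with the inclusion of $B$ into $I_p(P)$ via the quotient of $B \sqcup P_1$.
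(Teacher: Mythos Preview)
Your proposal is correct and follows exactly the approach the paper intends: the paper's proof reads in its entirety ``This is immediate from the definitions,'' and your argument is precisely the unwinding of those definitions. The only content is the observation that $\pi_! I_p(P) = I_p(P)/s_P(B) = P_1/P_2$, which you have spelled out carefully.
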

\begin{proof}
	This is immediate from the definitions.
\end{proof}


We also note the behavior under time reversal:
\begin{thm}[{\cite[Theorem 3.5]{Cornea},\cite[Proposition 3.8]{Manolescu-triangulation}}]\label{thm:spanier-whitehead-duality-for-conley}
	Let $M$ be a stably parallelized $G$-manifold for a compact Lie group $G$.  For $\varphi$ a flow on $M$, the (unparameterized) Conley index of an isolated invariant set $S$ with respect to the time-reversed flow $-\varphi$, denoted $I^u(S,-\varphi)$, is equivariantly Spanier-Whitehead dual to $I^u(S,\varphi)$.
\end{thm}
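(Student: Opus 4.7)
The plan is to reduce the theorem to a classical Lefschetz duality between two quotient spaces of a manifold with corners, and then use the stable parallelization to convert this into genuine Spanier--Whitehead duality. I would follow the strategy already used in \cite{Cornea} and \cite{Manolescu-triangulation}, adapting the argument to the equivariant setting by keeping track of the $G$-action throughout.

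First I would choose a sufficiently nice regular index pair for $S$ with respect to $\varphi$. The goal is to produce a compact $G$-invariant codimension-zero submanifold $N \subset M$ with corners whose boundary decomposes $G$-equivariantly as $\partial N = L \cup L^{*}$, where $L$ is the exit set for $\varphi$ and $L^{*}$ is the exit set for $-\varphi$ (equivalently the entrance set for $\varphi$), meeting in a codimension-one corner $L \cap L^{*}$. Such ``doubly regular'' index pairs, with the flow transverse to $\partial N$ except at the corner, exist by the standard smoothing constructions for the Conley index (see e.g.\ \cite{Salamon} or the exit set modification in \cite[Section 6]{MRS}), and one checks that the constructions can be done $G$-equivariantly since $G$ is compact. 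With such an $N$, we have canonical homotopy equivalences $I^{u}(S,\varphi) \simeq N/L$ and $I^{u}(S,-\varphi) \simeq N/L^{*}$ as pointed $G$-spaces.

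Next I would set up the duality map. Since $N$ is a compact $G$-manifold with corners, there is an equivariant Lefschetz duality between $(N,L)$ and $(N,L^{*})$; concretely, one constructs a diagonal-type map
\[
\Delta \colon N/\partial N \longrightarrow (N/L) \wedge (N/L^{*})
\]
by sending $x$ to the smash of its images in the two quotients and using that $\partial N = L \cup L^{*}$. Thickening $N$ to an open neighborhood in $M$ (and using the Pontrjagin--Thom collapse on that neighborhood) produces, after one-point compactification, a map
\[
S^{V} \longrightarrow (N/L) \wedge (N/L^{*}),
\]
where $V$ is a $G$-representation such that $TN \oplus V \cong \underline{\mathbb{R}^{n+\dim V}}$ equivariantly; this is exactly where the stable parallelization hypothesis on $M$ enters, since it guarantees that such a $V$ exists and that the Pontrjagin--Thom collapse lands on an actual sphere rather than on a Thom space of a nontrivial bundle. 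Dually, the collapse map $N \to N/L$ composed with a tubular neighborhood retraction yields the evaluation
\[
(N/L) \wedge (N/L^{*}) \longrightarrow S^{V}.
\]

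The main obstacle, and the place that requires care, is verifying that the two composites
\[
S^{V} \wedge (N/L) \longrightarrow (N/L) \wedge (N/L^{*}) \wedge (N/L) \longrightarrow (N/L) \wedge S^{V}
\]
(and the analogous one starting with $N/L^{*}$) are stably $G$-homotopic to the identity. This is the content of the standard equivariant Lefschetz/Spanier--Whitehead duality for manifolds with boundary, and in the nonequivariant case it is exactly \cite[Theorem 3.5]{Cornea}; for general compact Lie groups $G$ one can follow the line of argument in \cite[Proposition 3.8]{Manolescu-triangulation}, which verifies the duality identities by a deformation inside $N$ of the diagonal, using the flow $\varphi$ (and its reverse) to push points onto $L$ or $L^{*}$ as appropriate. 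The equivariance of the flow makes every step of that deformation $G$-equivariant, and the stable parallelization ensures that the ambient sphere on which the duality is registered is a genuine representation sphere $S^{V}$, so the resulting duality is precisely an equivariant Spanier--Whitehead duality in the sense of Definition~\ref{def:sw-cat}.
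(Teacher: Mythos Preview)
The paper does not give its own proof of this statement; it is stated with citations to \cite[Theorem 3.5]{Cornea} and \cite[Proposition 3.8]{Manolescu-triangulation} and no further argument. Your proposal is a faithful sketch of the approach in those references: choose a manifold-with-corners index pair whose boundary splits as entrance and exit sets, use Lefschetz duality for $(N,L)$ versus $(N,L^{*})$, and invoke the stable parallelization so that the Thom space becomes a representation sphere, with equivariance carried along because the flow and all constructions are $G$-equivariant.
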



\section{Spectra}\label{subsec:spectra}

For $G$ a compact Lie group, we define a $G$-\emph{universe} $\mathcal{U}$ to be a countably infinite-dimensional orthogonal representation of $G$.  

\begin{dfn}\label{def:spectra}
      Let $\mathcal{U}$ be a universe with a direct sum decomposition $\mathcal{U}=\oplus_{i=1}^n V_i^\infty$, for finite dimensional $G$-representations $V_i$. 
	A sequential $G$-spectrum $X$  on $\mathcal{U}$ 
	is a collection $X(V)$ of spaces, indexed on the subspaces of $\mathcal{U}$ of the form $V=\oplus_{i=1,\ldots,n}V_i^{k_i}$ for some $k_i\geq 0$, along with transition maps, whenever $W\subset V$,
	\[
	\sigma_{V-W}: \Sigma^{V-W}X(W)\to X(V),
	\]
	where $V-W$ is the orthogonal complement of $W$ in $V$.  For $V=W$, the transition map is required to be the identity, and the maps $\sigma$ are required to be transitive in the usual way.  The space $X(V)$ is sometimes referred to as the $V$-th \emph{level} of the spectrum.
	
	If $\sigma_{V-W}$ is a homotopy equivalence for $V,W$ sufficiently large, we say that $X$ is a $G$-\emph{suspension spectrum}.  
\end{dfn}
We will only work with suspension spectra in this memoir.

A \emph{morphism} of spectra $X\to Y$ will be a collection of morphisms 
\[
\phi_V: X(V) \to Y(V)
\] 
compatible with the transition maps.

We will also consider a generalization of morphisms, as follows:
\begin{dfn}
	A \emph{weak} morphism of spectra $\phi: X \to Y$ is a collection of morphisms
	\[
	\phi_V: X(V)\to Y(V)
	\]
	for $V$ sufficiently large, so that the diagram 
		\[
	\begin{tikzpicture}[baseline={([yshift=-.8ex]current  bounding  box.center)},xscale=8,yscale=1.5]
	\node (a0) at (0,0) {$ 	\Sigma^{W-V}X(V)$};
	\node (a1) at (1,0) {$ 	\Sigma^{W-V}Y(V)$};
	\node (b0) at (0,-1) {$X(W)$};
	\node (b1) at (1,-1) {$Y(W)$};

	\draw[->] (a0) -- (a1) node[pos=0.5,anchor=north] {\scriptsize
		$\Sigma^{W-V} \phi_V$}; \draw[->] (a0) -- (b0) node[pos=0.5,anchor=east] {};
	\draw[->] (b0) -- (b1) node[pos=0.5,anchor=south east]{\scriptsize $\phi_W$}; \draw[->] (a1) -- (b1) node[pos=0.5,anchor=west] {};
	
	\end{tikzpicture}
	\]
	homotopy commutes for $W$ sufficiently large.  Weak morphisms $\phi_0,\phi_1$ are said to be \emph{homotopic} if there exists a weak morphism $\phi_{[0,1]}: X\wedge [0,1]_+\to Y$ restricting to $\phi_{j}$ at $X\wedge \{j\}$ for $j=0,1$.  
\end{dfn}

We will also need the notion of a \emph{connected simple system of spectra}.  Indeed, instead of using the direct generalization for spaces, the Seiberg-Witten Floer spectrum, as currently defined, requires that we work with weak morphisms instead, as follows:

\begin{dfn}\label{def:connected-simple-system-of-spectra}
	A \textit{connected simple system of $G$-spectra} is a collection $I_0$ of $G$-spectra, along with a collection $I_h$ of weak homotopy classes of maps between them, so that the analogs of (\ref{itm:css1})-(\ref{itm:css3}) of Definition \ref{def:connected-simple-system} are satisfied.
\end{dfn}

\begin{rem}\label{rmk:why-sequential}
	In Section \ref{subsec:defn-of-invar}, we could have used non-sequential $G$-spectra, but we have no need for the added generality in the memoir, and it slightly complicates the notation.
\end{rem}

\begin{rem}\label{rmk:improvements}
	If higher naturality is established for the Conley index, then it would be possible to replace \emph{weak} morphisms in the definition of $\SWFn$ and Definition \ref{def:connected-simple-system-of-spectra} could be replaced with ordinary morphisms of spectra.
\end{rem}


\chapter{Afterword: Finite-dimensional Approximation in other settings}

Outside of Seiberg-Witten theory, we expect that the notion of parameterized finite-dimensional approximation may be applicable in some cases in symplectic topology.  The methods of this memoir rely, roughly speaking, on a few special features of the Seiberg-Witten equations, relative to other Floer-type problems:
\begin{enumerate}
	\item\label{itm:sw-bundle} The configuration space is naturally a bundle over a compact, finite dimensional manifold.
	\item\label{itm:bubble} Bubbling phenomena do not occur.
	\item\label{itm:fibers} With respect to the bundle structure in (\ref{itm:sw-bundle}), the Seiberg-Witten equations are ``close to linear" on the fibers.
	\item\label{itm:spectrum} There is a relatively good understanding of the spectrum of the Dirac operator.
\end{enumerate}
Perhaps the item most likely to elicit worry more generally is (\ref{itm:sw-bundle}).  However, we note that it is classical that for any compact subset $K$ of a Hilbert manifold, there is an open sub-Hilbert-manifold $B$ containing $K$ which is diffeomorphic to the total space of a Hilbert bundle over a compact finite-dimensional manifold:
\begin{Lemma}\label{lem:hilbert}
	Let $M$ a separable Hilbert manifold, and $K\subset M$ a compact subset.  Then there exists some open $U\supset K$ diffeomorphic to $V\times H$, where $H$ is a separable Hilbert space, and $V$ is a finite-dimensional smooth manifold.
\end{Lemma}
\begin{proof}
	By compactness, choose a good open cover $\mathcal{C}'$ of $K$, with finite subcover $\mathcal{C}=\{U_i\}_i$, which is once again good, with $U=\cup_i U_i$.  The nerve $N(\mathcal{C})$ is then homotopy-equivalent to $U$.  Moreover, $N(\mathcal{C})$ may be embedded in some finite-dimensional Euclidean space and has a regular neighborhood which is a smooth manifold $V$, with $N(\mathcal{C})\simeq V$.  By \cite{burghelea-kuiper}, \cite{moulis}, separable infinite-dimensional homotopy-equivalent Hilbert manifolds are diffeomorphic.  Then $V\times H$ is diffeomorphic to $U$, as needed.
\end{proof}

  In particular, (\ref{itm:sw-bundle}) holds locally around the moduli space of (gradient flows of the Chern-Simons functional, symplectic action, etc.) in many situations of interest (there is the technical point that a version of Lemma \ref{lem:hilbert} which respected $L^2_k$-norms for multiple values of $k$ would be more appropriate, but we have not attempted it).   Although it is not at all clear how to perform finite-dimensional approximation in the presence of bubbling, nonetheless items (\ref{itm:bubble}) and (\ref{itm:spectrum}) also hold in various geometric situations.  The problem then amounts to establishing appropriate versions of (\ref{itm:fibers}) in specific situations; this appears challenging except when the configuration space is very special.

We finally note that the finite-dimensional approximation process of this memoir can also be applied locally.  In particular, it can be applied in the neighborhood of a broken trajectory.  Here, the base space is some smooth trajectory very close to the broken trajectory, so that there is a neighborhood containing the broken trajectory, and on which (\ref{itm:sw-bundle})-(\ref{itm:spectrum}) hold.  Finite-dimensional approximation then produces a sequence of flows, whose finite-energy integral curves converge to solutions of the Seiberg-Witten equations.  Assuming nondegeneracy, one may be able to assemble these locally-constructed approximating submanifolds into the data of a flow category as in \cite{CJS}.  The hoped-for result of this process would be replacing the need to give a smooth structure to the corners for the moduli spaces of the Seiberg-Witten equations themselves, with the problem of putting a smooth structure on the trajectory spaces of a finite-dimensional approximation.  The main obstruction to this approach is likely the need to establish that the approximating submanifolds constructed this way are suitably independent of the choices involved in their construction, which may be difficult.

\backmatter

\bibliographystyle{alpha}
\bibliography{bib}









\end{document}